\else\hypersetup{linktoc=none}\fi
\def\sref$#1${\protect\ref{#1}}
\def\tockludge{\dokludge{lem:ACyc}{thm:fixsmflat}{thm:necircle}@}
\def\savechapter#1{%
\immediate\write\@auxout{\unexpanded{\global\@namedef{rchap@#1}}{\arabic{chapter}}}}
\def\mlabel#1{\savechapter{#1}\oldlabel{#1}}
\def\mlabelindisplay#1{\savechapter{#1}\oldlabel@in@display{#1}}
\def\mref#1{{\def\tempo{\arabic{chapter}}%
\ifcsname rchap@#1\endcsname
  \def\tempt{\csname rchap@#1\endcsname}%
\else
  \def\tempt{0}%
\fi
\if\tempo\tempt\relax\else\ifcase\tempt\relax\or
\or II.\or III.\or IV.\or V.\or VI.\or VII.\or VIII.\or 
IX.\or X.\or XI.\or XII.\or XIII.\or XIV.\or XV.\or XVI.\or 
XVII.\or XVIII.\or XIX.\else\relax\fi\fi}\oldref{#1}}
\def\dokludge#1{\ifx#1@\def\dokludgea##1{}\fi\dokludgea{#1}}
\def\dokludgea#1{\expandafter\ifx\csname rchap@#1\endcsname\relax\@namedef{rchap@#1}{0}\fi 
\expandafter\let\csname oldrchap@#1\expandafter\endcsname\csname rchap@#1\endcsname\@namedef{rchap@#1}{0}\dokludge}
\def\untockludge{%
\def\dokludgea##1{\@namedef{rchap@##1}{\csname oldrchap@##1\endcsname}\dokludge}%
\tockludge}
\DeclareMathAlphabet{\mathscr}{U}{rsfs}{m}{n}
\newcommand{\Spectra}{\aS}
\newcommand{\Com}{\aC om}
\newcommand{\Alg}{\aA lg}
\newcommand{\Mod}{\aM od}
\newcommand{\ppc}{$p$pc\xspace}
\newcommand{\dR}{\mathbf{R}}
\newcommand{\sdde}{\rightsquigarrow}
\newcommand{\Mto}{\buildrel{\scriptscriptstyle M}\over\to}
\newcommand{\WMto}{\buildrel{\scriptscriptstyle W}\over\to}
\newcommand{\ssma}{({\sma})}
\newcommand{\KG}{K}
\newcommand{\LQ}{P}
\newcommand{\PLambda}{N}
\newcommand{\PGamma}{G}
\newcommand{\EF}[1][{\Gamma}]{EF}
\newcommand{\perm}{\pi}
\newcommand{\permp}{\pi!}
\let\comp\circ
\newcommand{\Nice}{\aE_\Phi}
\newcommand{\aACyc}{\aA^{\Cyc}}
\newcommand{\aApCyc}{\aA^{\Cyc}_{p}}
\newcommand{\ssdot}{\bullet}
\newcommand{\subdot}{_\ssdot}
\let\iso\cong
\let\sma\wedge
\renewcommand{\to}{\mathchoice{\longrightarrow}{\rightarrow}{\rightarrow}{\rightarrow}}
\newcommand{\sto}{\rightarrow}
\newcommand{\overto}[1]{\xrightarrow{\,#1\,}}
\newcommand{\overfrom}[1]{\xleftarrow{\,#1\,}}
\let\catsymbfont\mathcal
\newcommand{\aA}{{\catsymbfont{A}}}
\newcommand{\aC}{{\catsymbfont{C}}}
\newcommand{\aE}{{\catsymbfont{E}}}
\newcommand{\aF}{{\catsymbfont{F}}}
\newcommand{\aH}{{\catsymbfont{H}}}
\newcommand{\aI}{{\catsymbfont{I}}}
\newcommand{\aJ}{{\catsymbfont{J}}}
\newcommand{\aM}{{\catsymbfont{M}}}
\newcommand{\aQ}{{\catsymbfont{Q}}}
\newcommand{\aS}{{\catsymbfont{S}}}
\newcommand{\aU}{{\catsymbfont{U}}}
\newcommand{\aV}{{\catsymbfont{V}}}
\newcommand{\bC}{{\mathbb{C}}}
\newcommand{\bN}{{\mathbb{N}}}
\newcommand{\bP}{{\mathbb{P}}}
\newcommand{\bR}{{\mathbb{R}}}
\newcommand{\bS}{{\mathbb{S}}}
\newcommand{\bT}{{\mathbb{T}}}
\newcommand{\bZ}{{\mathbb{Z}}}
\def\quickop#1{\expandafter\DeclareMathOperator\csname
#1\endcsname{#1}}
\let\hoeq\hoEq
\let\Cyc\cyc
\newcommand{\pcyc}{p\cyc}
\newtheorem{thm}{Theorem}
\numberwithin{thm}{section}
\newtheorem{main}{Theorem}
\newtheorem*{addm}{Addendum~\themain}
\newtheorem*{mcor}{Corollary}
\newtheorem{cor}[thm]{Corollary}
\newtheorem{lem}[thm]{Lemma}
\newtheorem{prop}[thm]{Proposition}
\theoremstyle{definition}
\newtheorem{defn}[thm]{Definition}
\newtheorem{cons}[thm]{Construction}
\newtheorem{notn}[thm]{Notation}
\newtheorem{ter}[thm]{Terminology}
\newtheorem*{ter*}{Terminology}
\theoremstyle{remark}
\newtheorem{rem}[thm]{Remark}
\newtheorem{example}[thm]{Example}
\renewcommand*{\themain}{\Alph{main}}
\let\c@equation\c@thm\makeatother
\def\texorpdfstring#1#2{#1}\message{No texorpdfstring^^J}\fi
\newcommand{\term}[1]{\textit{#1}}
\begin{document}

\title[Cyclotomic spectra 10 years later]%
{The homotopy theory of cyclotomic spectra\\10 years later}

\author{Andrew J. Blumberg}
\address{Department of Mathematics, Columbia University, 
New York, NY \ 10027}
\email{blumberg@math.columbia.edu}
\thanks{The first author was supported in part by NSF grants
DMS-2052970, DMS-2104420}
\author{Michael A. Mandell}
\address{Department of Mathematics, Indiana University,
Bloomington, IN \ 47405}
\email{mmandell@iu.edu}
\thanks{The second author was supported in part by NSF grants
DMS-2052846, DMS-2104348}

\subjclass[2020]{Primary 55P91. Secondary 19D55.}
\keywords{Topological cyclic homology, cyclotomic spectrum, geometric
fixed points}

\begin{abstract}
This paper studies the foundations of the geometric fixed point
functor in multiplicative equivariant stable homotopy theory.  We
introduce a new class of equivariant orthogonal spectra called
generalized orbit desuspension spectra and analyze their homotopical
behavior with respect to the geometric fixed point functor and
especially the interaction with smash products and symmetric powers.
This analysis leads to several new foundational results, including the
construction of the derived functor of geometric fixed points on
equivariant commutative ring orthogonal spectra and its comparison to
the derived functor on the underlying equivariant orthogonal
spectra. In addition this theory provides foundations for the category 
of commutative ring pre-cyclotomic spectra and a formula for the
derived space of maps in this category (a formula needed in the
authors' paper with Yuan on relative $TC$). Finally, we prove a new
multiplicative tom Dieck splitting for equivariant commutative ring
spectra obtained as the pushforward of non-equivariant commutative
ring spectra.
\end{abstract}

\maketitle
\makeatletter
\let\oldlabel\label
\let\oldlabel@in@display\label@in@display
\let\label\mlabel
\let\label@in@display\mlabelindisplay
\let\oldref\ref
\let\ref\mref
\makeatother

\tockludge
\addtocontents{toc}{\protect\setcounter{tocdepth}{1}}
\tableofcontents
\untockludge

\chapter{Introduction}

The last 50 years of work on equivariant stable homotopy theory has
led to spectacular accomplishments both internally
(e.g.,~\cite{Carlsson-SegalConjecture,GreenleesMay-MUCompletion}) and
externally (e.g.,~\cite{HopkinsKuhnRavenel,
FreedHopkinsTeleman-Annals, FreedHopkinsTeleman-JAMS, BHM, HMAnnals,
BMS, HHR}.
However, despite monumental work codifying the theory by
Lewis-May-Steinberger~\cite{LMS} (later extended by
Mandell-May~\cite{MM}), the foundations of the theory,
particularly the treatment of multiplicative phenomena, remain
unsettled.  Notably, the spectacular recent advances of
Hill-Hopkins-Ravenel~\cite{HHR} depended on a new foundational
construction, the multiplicative norm, and this has led to an array of
new computations and insights (e.g., see~\cite{Slicey, BlumbergHill-NormsTransfers}).

A particularly glaring gap in the current foundations is the
interaction between geometric fixed points and the homotopy theory of
equivariant $E_{\infty}$ ring spectra. In particular, the following
basic questions do not have good answers.
\begin{enumerate}
\item Is there a derived functor of geometric fixed points on
equivariant commutative ring orthogonal spectra?

\item If so, does it agree with the derived functor on the underlying
equivariant orthogonal spectra?
\end{enumerate}
This issue was cleverly sidestepped in Hill-Hopkins-Ravenel, and arose
again in our work~\cite{BM-cycl} on the homotopy theory of cyclotomic
spectra.   Specifically, lack of
control on the derived functor of geometric fixed points
in the multiplicative setting has prevented the detailed study of
commutative ring objects in cyclotomic spectra.
For work along these lines, the existence of derived functors is
rarely enough and what is needed is some kind of additional
homotopical control, for example, a model structure where the
geometric fixed points have good properties.

The purpose of this paper is to return to these foundations and fill
in the gaps.  In particular for a compact Lie group $G$ and a closed
subgroup $H<G$, we construct a closed model structure $\aM$ (that
depends on $H$) on the category of $G$-equivariant commutative ring
orthogonal spectra with the following properties, positively answering
the questions above. 
\begin{enumerate}
\item The point-set geometric fixed point functor $\Phi^{H}$ preserves
weak equivalences between $\aM$-cofibrant objects.
\item If $R$ is $\aM$-cofibrant and $X\to R$ is a cofibrant
approximation in the standard model structure on orthogonal spectra,
then the induced map $\Phi^{H}X\to \Phi^{H}R$ is a weak equivalence.
\end{enumerate}
Here (ii) implies (i) and is established in
Theorem~\ref{main:commod} below.  The $\aM$-cofibrant objects
also have the following further properties.
\begin{enumerate}
\addtocounter{enumi}{2}
\item If $R$ is $\aM$-cofibrant, then $\Phi^{H}R$ is flat for the
smash product of $WH$-equivariant orthogonal spectra and in particular
flat for the smash product of non-equivariant orthogonal spectra. 
(Proposition~\ref{prop:mainflat})

\item If $R$ is $\aM$-cofibrant and $X$ is any $G$-equivariant 
orthogonal spectrum, then the lax monoidal structure map
\[
\Phi^{H}R\sma \Phi^{H}X\to \Phi^{H}(R\sma X)
\]
is an isomorphism. (Proposition~\ref{prop:mainsym})
\end{enumerate}
We also mention here the following foundational property of the smash
product, which is not directly related to the $\aM$ model structure
and should have been observed earlier; it is essential to our work
with Yuan in~\cite{BMY-MUP} but does not seem to be well-known.
\begin{enumerate}
\addtocounter{enumi}{4}
\item If $M,N$ are $R$-modules with
$M$ cofibrant (in the usual model structure), then the lax symmetric
monoidal structure map 
\[
\Phi^{H}M\sma_{\Phi^{H}R}\Phi^{H}N\to \Phi^{H}(M\sma_{R}N)
\]
is an isomorphism. (Proposition~\mref{prop:mainsmashover})
\end{enumerate}

We expect these new foundations to have a wide variety of
applications; we already deploy them here in a number of interesting
directions. A particularly striking new structural result about
multiplicative equivariant stable homotopy theory establishes a
previously unnoticed multiplicative version of the tom Dieck
splitting.  The tom Dieck splitting, a foundational result in
equivariant stable homotopy theory, is usually stated for suspension
spectra, but it has a version for genuine $G$-equivariant spectra
promoted from non-equivariant spectra, whether or not suspension
spectra.  In the multiplicative context, we consider the left derived
functor $\epsilon^{*}$ that makes a non-equivariant
commutative ring orthogonal spectrum into a
$G$-equivariant commutative ring orthogonal spectrum.  We prove the
following theorem in Chapter~\ref{chap:tds}.

\begin{main}[Multiplicative tom Dieck Splitting]\label{main:tds}
Let $G$ be a finite group and let $\epsilon^{*}$ denote the left derived
functor from the homotopy category of 
non-equivariant commutative ring orthogonal spectra to the homotopy
category of genuine $G$-equivariant commutative ring orthogonal spectra.  There
exists a natural weak equivalence
\[
\Phi^{G}(\epsilon^{*}R) \simeq R\otimes (\coprod BWH)
\]
where $\otimes$ denotes the left derived tensor (topologically indexed
colimit) in the category of commutative ring orthogonal spectra and
the disjoint union is over conjugacy classes of subgroups $H<G$.
\end{main}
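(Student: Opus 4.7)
The plan is to identify the derived functor $R \mapsto \Phi^{G}(\epsilon^{*}R)$ by reducing to free commutative ring spectra and computing geometric fixed points of equivariant extended powers. Since $\epsilon^{*}$ is strong symmetric monoidal, $\Phi^{G}$ is strong symmetric monoidal on $\aM$-cofibrant objects by Proposition~\ref{prop:mainsym}, and the right-hand side $R \otimes \coprod BWH$ is a symmetric monoidal left adjoint in $R$, both sides of the claimed equivalence preserve homotopy colimits and coproducts of commutative ring spectra. It therefore suffices to verify the equivalence on $R = \Sym(X)$ for $X$ a cofibrant non-equivariant spectrum. For such $R$, the adjunction between $\Sym$ and the underlying spectrum functor gives $\Sym(X) \otimes \coprod BWH \simeq \Sym(X \wedge \bigvee_{(H)} BWH_{+})$.

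The core computation identifies the geometric fixed points of the $n$-th equivariant symmetric power $\Sym^{G}_{n}(\epsilon^{*}X)$. In an $\aM$-cofibrant model this takes the form $(E_{G}\Sigma_{n})_{+} \wedge_{\Sigma_{n}} (\epsilon^{*}X)^{\wedge n}$, where $E_{G}\Sigma_{n}$ is a $(G \times \Sigma_{n})$-space that is $\Sigma_{n}$-free and universally contractible. Proposition~\ref{prop:mainsym} allows $\Phi^{G}$ to be distributed across the smash product, and Proposition~\ref{prop:mainflat} controls the quotient by $\Sigma_{n}$; together with the identity $\Phi^{G}(\Sigma^{\infty}_{+}A) = \Sigma^{\infty}_{+}(A^{G})$ for $G$-spaces $A$, these give
\[
\Phi^{G} \Sym^{G}_{n}(\epsilon^{*}X) \simeq X^{\wedge n} \wedge_{\Sigma_{n}} ((E_{G}\Sigma_{n})^{G})_{+}.
\]

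The $\Sigma_{n}$-space $(E_{G}\Sigma_{n})^{G}$ decomposes classically as a disjoint union of contractible pieces $EC_{\alpha}$ indexed by $\Sigma_{n}$-conjugacy classes of homomorphisms $\alpha : G \to \Sigma_{n}$, that is, by isomorphism classes of $n$-element $G$-sets. Decomposing each $G$-set into its orbits $\coprod_{(H)} (G/H)^{n_{(H)}}$ identifies the centralizer $C_{\alpha}$ with $\prod_{(H)} \Sigma_{n_{(H)}} \wr WH$. Summing over $n$ and regrouping the resulting contributions by conjugacy class assembles everything into $\bigwedge_{(H)} \Sym(X \wedge BWH_{+}) \simeq \Sym(X \wedge \bigvee_{(H)} BWH_{+})$, matching the identification of the right-hand side produced in the first paragraph.

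The main obstacle is this third step: justifying that the point-set $\Phi^{G}$ commutes with $\wedge_{\Sigma_{n}}$ on the $\aM$-cofibrant model of the equivariant extended power, and tracking how the $\Sigma_{n}$-orbit decomposition of $(E_{G}\Sigma_{n})^{G}$ reassembles, after summing over $n$, into a smash product of free commutative rings on $X \wedge BWH_{+}$. Here the multiplicative framework developed in the paper---particularly Propositions~\ref{prop:mainsym} and~\ref{prop:mainflat}, together with the flatness of $\aM$-cofibrant geometric fixed points---is essential to relate the point-set $\Phi^{G}$ of $\Sym^{G}_{n}$ to the derived non-equivariant symmetric powers indexed by subgroups.
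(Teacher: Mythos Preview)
Your overall strategy---reduce to free commutative rings, compute $\Phi^{G}$ of each equivariant symmetric power, decompose by homomorphisms $G\to\Sigma_{n}$ (equivalently $G$-sets of size $n$), and regroup by orbit types---is exactly the paper's strategy, and the combinatorial reorganization in your last paragraph matches the paper's verbatim.  The difference is in how the key step is carried out, and there your justification has a genuine gap.

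You model the $n$th symmetric power by the Borel construction $(E_{G}\Sigma_{n})_{+}\sma_{\Sigma_{n}}(\epsilon^{*}X)^{(n)}$ and then invoke Propositions~\ref{prop:mainsym} and~\ref{prop:mainflat} to pull $\Phi^{G}$ inside the $\Sigma_{n}$-quotient.  Neither proposition says this.  Proposition~\ref{prop:mainsym} gives $\Phi^{H}X\sma\Phi^{H}Y\iso\Phi^{H}(X\sma Y)$ when one factor is $\aM$-cofibrant; Proposition~\ref{prop:mainflat} gives flatness of $\Phi^{H}$ of an $\aM$-cofibrant object.  The Borel construction is not $\aM$-cofibrant, and neither statement addresses quotients by a permutation action.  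You acknowledge this as ``the main obstacle,'' and it is one: without further input the displayed formula $\Phi^{G}\Sym^{G}_{n}(\epsilon^{*}X)\simeq X^{(n)}\sma_{\Sigma_{n}}((E_{G}\Sigma_{n})^{G})_{+}$ is not justified.

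The paper avoids this by choosing an explicit $G$-equivariant cofibrant approximation $A=\bP Y$ with $Y=F_{\bR^{m}\oplus\bR\langle G\rangle}\Sigma^{\bR\langle G\rangle}X_{+}$, so that each $Y^{(q)}/\Sigma_{q}$ is (a cofiber of) a generalized orbit desuspension spectrum for the globally trivial $(G,\Sigma_{q})$ vector bundle with fiber $(\bR^{m}\oplus\bR\langle G\rangle)^{q}$.  Theorem~\ref{thm:bundlegeofix} then computes $\Phi^{G}(Y^{(q)}/\Sigma_{q})$ as an explicit wedge over $\sigma\colon G\to\Sigma_{q}$ of twisted-fixed-point desuspension spectra, modulo $\Sigma_{q}$.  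This is a point-set isomorphism, not a derived statement, and it is what replaces your attempted commutation of $\Phi^{G}$ with $\sma_{\Sigma_{n}}$.  The regular representation in $Y$ is exactly what makes the faithfulness hypothesis of that theorem hold.  From there the reorganization by isomorphism classes of $G$-sets and then by orbit types $G/H$ proceeds as you describe, with the free $WH$ action on the $G/H$ factor producing the $BWH$.  The paper also constructs the comparison map explicitly via the multiplicative transfers $\tau_{H}\colon\Phi^{H}A\to\Phi^{G}A$ and checks that \emph{this specific map} is an equivalence, which your proposal does not address.
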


Our original motivation for this paper was to provide the technical
foundations for our work~\cite{BMY-MUP} with Yuan on relative
cyclotomic structures.  That project requires us to construct the
homotopy theory of commutative ring objects in cyclotomic spectra and
to compute the derived mapping spaces.  The existence of a model
structure presenting the correct $(\infty,1)$-category of commutative
ring objects in cyclotomic spectra (stated as
Theorem~\ref{main:commod} below) is an immediate consequence of our 
foundational work, and using this we prove the following
characterization of the derived mapping spaces which provides the
commutative ring analogue of the main theorem on 
mapping spectra in~\cite[5.12]{BM-cycl}.

\begin{main}\label{main:map}
If 
$A$ and $B$ are commutative ring $p$-pre-cyclotomic spectra where the
underlying $\bT$-equivariant commutative ring orthogonal spectra of
$A$ and $B$ are cofibrant and fibrant, respectively, in the model
structure of Theorem~\ref{main:Tmod}.(ii) below, then the derived space of maps
of commutative ring $p$-pre-cyclotomic spectra from $A$ to $B$ is
represented by the homotopy equalizer 
\[
\xymatrix@-1pc{%
\Com^{\bT}(A,B)\ar@<-.5ex>[r]\ar@<.5ex>[r]&\Com^{\bT}(\Phi A,B)
}
\]
where $\Com^{\bT}$ denotes the point-set mapping space in
$\bT$-equivariant commutative ring orthogonal spectra.  Here one map
is induced by the cyclotomic structure map $\Phi A\to A$ 
and the other map is induced by applying $\Phi$ and post-composing
with the cyclotomic structure map $\Phi B\to B$.
\end{main}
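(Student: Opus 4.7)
The strategy is to exploit the tautological equalizer description of the pre-cyclotomic mapping set and upgrade it to the derived level. A commutative ring $p$-pre-cyclotomic spectrum is a pair $(A,\phi_{A}\colon\Phi A\to A)$ with $A\in\Com^{\bT}$, and a morphism $(A,\phi_{A})\to(B,\phi_{B})$ is an $f\in\Com^{\bT}(A,B)$ satisfying $f\phi_{A}=\phi_{B}\Phi f$. This gives the strict equalizer identity
\[
\Com^{\mathrm{pcyc}}(A,B)=\mathrm{eq}\bigl(\Com^{\bT}(A,B)\rightrightarrows\Com^{\bT}(\Phi A,B)\bigr),
\]
with the two maps as in the statement, and the identity persists after cotensoring with $\Delta^{\bullet}$ so it holds on the level of mapping spaces. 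The task is then to identify this strict equalizer with the homotopy equalizer of the correctly derived mapping spaces under the stated hypotheses on $A$ and $B$.

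I would first transfer the model structure of Theorem~\ref{main:Tmod}.(ii) to the category of commutative ring $p$-pre-cyclotomic spectra along the free--forgetful adjunction, whose left adjoint can be described by $F^{\Phi}X=\coprod_{n\geq 0}\Phi^{n}X$ (coproduct in $\Com^{\bT}$, with structure map induced by shift); the standard small-object and path-object transfer arguments apply. A cofibrant replacement $\widetilde A\xrightarrow{\simeq}A$ can then be chosen as an $F^{\Phi}$-cell object whose underlying $\bT$-ring is cofibrant in Theorem~\ref{main:Tmod}.(ii), so that $\Com^{\mathrm{pcyc}}(\widetilde A,B)$ represents the derived mapping space. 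Substituting $\widetilde A$ into the strict equalizer identity above, and arranging the $F^{\Phi}$-cell structure of $\widetilde A$ so that the restriction map $\Com^{\bT}(\widetilde A,B)\to\Com^{\bT}(\Phi\widetilde A,B)$ is a Kan fibration, converts the strict equalizer into a genuine homotopy equalizer of Kan complexes. The first leg then simplifies via the weak equivalence $\widetilde A\xrightarrow{\simeq}A$ between cofibrant $\bT$-rings mapped into the fibrant $B$, giving $\Com^{\bT}(\widetilde A,B)\simeq\Com^{\bT}(A,B)$.

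The main obstacle is simplifying the second leg, i.e., identifying $\Com^{\bT}(\Phi\widetilde A,B)$ with $\Com^{\bT}(\Phi A,B)$ up to weak equivalence. This is precisely where the new machinery of the present paper is essential: on the one hand, the $F^{\Phi}$-cell description makes $\Phi\widetilde A$ into a cell object built from $\Phi^{n}$ applied to generating cofibrations in Theorem~\ref{main:Tmod}.(ii) with $n\geq 1$, and the flatness and monoidality conclusions of Propositions~\ref{prop:mainflat} and~\ref{prop:mainsym} (applied iteratively) keep $\Phi\widetilde A$ inside a class for which mapping into the fibrant $B$ yields the correct derived mapping space; on the other hand, Theorem~\ref{main:commod}.(ii) applied with $H=C_{p}$ to the underlying $\aM$-cofibrant $\bT$-ring of $A$ identifies $\Phi\widetilde A\to\Phi A$ as a weak equivalence in $\Com^{\bT}$, so the two mapping spaces agree. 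Combining these identifications with the homotopy equalizer produced in the previous paragraph yields the formula claimed in the statement.
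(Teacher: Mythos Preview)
Your overall strategy matches the paper's: start from the strict equalizer description of $\Com^{\mathrm{pcyc}}(A,B)$, build a model structure on commutative ring \ppc spectra via transfer along a free functor (the paper's $\bC$ followed by $\bP$), take a cofibrant replacement $\widetilde A$ of $A$ there, and then compare both legs of the homotopy equalizer for $\widetilde A$ with those for $A$. The paper formalizes this as Theorem~\ref{thm:mainmap}, with the model-structure construction in Theorem~\ref{thm:crcycmodel} and the crucial cofibrancy statement in Proposition~\ref{prop:crcyccof}.

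There are two places where your citations and reasoning are off. First, to show $\Phi\widetilde A\to\Phi A$ is a weak equivalence between objects on which $\Com^{\bT}(-,B)$ is derived-correct, you invoke Theorem~\ref{main:commod} and Propositions~\ref{prop:mainflat}, \ref{prop:mainsym}. These are the wrong tools: Theorem~\ref{main:commod} produces a different model structure from Theorem~\ref{main:Tmod}, and the two propositions concern underlying spectra, not commutative rings. The paper instead uses Theorem~\ref{main:Tmod}(iii) (equivalently Theorem~\ref{thm:circlephi}(i)) directly: $\Phi$ preserves cofibrant objects in the commutative ring $\bT$-spectrum model structure, so both $\Phi\widetilde A$ and $\Phi A$ are cofibrant, and $\Phi$ preserves weak equivalences between such objects.

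Second, your step ``arranging the $F^{\Phi}$-cell structure of $\widetilde A$ so that the restriction map $\Com^{\bT}(\widetilde A,B)\to\Com^{\bT}(\Phi\widetilde A,B)$ is a Kan fibration'' is not justified: this map is precomposition with the structure map $\Phi\widetilde A\to\widetilde A$, which need not be a cofibration, so there is no reason for the induced map to be a fibration. The paper does not argue this way; it defers the comparison of the strict equalizer with the homotopy equalizer to the argument of~\cite[5.12]{BM-cycl}, which uses that the model structure is topological (so cofibrations are converted to fibrations upon mapping into a fibrant target) together with a more careful analysis of the two parallel maps. You would need to supply an analogous argument rather than assert one map is a fibration.
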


The foundation for all the theorems above is the identification and
study of a new class of $G$-equivariant orthogonal spectra that behave
well with respect to 
smash product and the point-set geometric fixed point functors.  We
call these spectra ``generalized orbit desuspension spectra''.  A
generalized orbit desuspension spectrum is a certain kind of
equivariant orthogonal spectrum $J\eta$ constructed from an
equivariant vector bundle $\eta$.  (See Terminology~\ref{term:bundle}
and Construction~\ref{cons:orbitds} for specifics on the hypotheses on
$\eta$ and the construction $J\eta$.)  The orbit desuspension spectra
$G/H_{+}\sma F_{V}S^{0}$ and the HHR complete cells
$G\sma_{K}F_{W}S^{0}$ are examples that come from the trivial
$G$-equivariant vector bundles $G/H\times V\to G/H$ and the extended
$G$-equivariant vector bundles $G\times_{K}W\to G/K$, respectively.
The multiplicative homotopy theory of $G$-equivariant orthogonal
spectra works because orbit desuspension spectra are flat for the
smash product of $G$-equivariant orthogonal spectra (smashing with one
preserves weak equivalences between arbitrary $G$-equivariant
orthogonal spectra).  The point-set geometric fixed point functors are
useful because they are homotopically correct on the orbit
desuspension spectra and they commute up to isomorphism with the smash
product of an orbit desuspension spectrum and an arbitrary
$G$-equivariant orthogonal spectrum (the latter being the main
technical result of~\cite[App.~A]{BM-cycl} that makes the homotopy
theory of pre-cyclotomic spectra work).  Under mild hypotheses,
generalized orbit desuspension spectra share these properties. Details
of the theory take up the bulk of the paper, starting in
Chapter~\ref{chap:bundleone}.

We now move on to a more detailed exposition of the homotopical
results of the paper, specifically the construction of the model
structures, with precise statements of some the most important
results.  In this exposition and most of the rest of the paper, we use
the following terminology: 
\begin{ter*}\label{ter}\indent 
\begin{itemize}
\item ``spectrum'' means orthogonal spectrum; for a compact Lie group
$G$, ``$G$-spectrum'' means $G$-equivariant orthogonal spectrum.
Often these refer to point set categories, but when homotopy theory is
implied, we understand the complete universe theory unless explicitly
indicated otherwise.
\item We abbreviate pre-cyclotomic spectrum to ``pc spectrum'' and
$p$-pre-cyclotomic spectrum to ``\ppc spectrum''.  These concepts are
defined in~\cite{BM-cycl}, and we give only minimal review here.
\item For spectra and $G$-spectra, we prepend ``ring'' or
``commutative ring'' to indicate the categories of associative monoids
or commutative monoids for the smash product (e.g., we say
``ring $G$-spectra'' for associative ring $G$-equivariant orthogonal spectra).  Similar conventions apply to module categories.
\item For \ppc spectra and pc spectra, we review ring, commutative
ring, and module categories in Section~\ref{sec:precyc}.
\end{itemize}
\end{ter*}

We start the detailed exposition with our results on commutative ring
\ppc spectra.  In~\cite{BM-cycl}, we introduced a homotopy
theory for cyclotomic spectra and \ppc spectra in order to justify a
construction of
derived mapping spectra and to prove corepresentability of
$TC$. (Corepresentability of $TC(-;p)$, now often treated as a
definition rather than a theorem, was first conjectured
in~\cite[7.9]{Kaledin-ICM2010} and proved in~\cite[1.4]{BM-cycl}.) 
The definition of the category of
\ppc spectra depends on a version of the point-set
multiplicative geometric $C_{p}$ fixed point functor $\Phi^{C_{p}}$.
This functor goes from $\bT$-spectra to
$\bT/C_{p}$-spectra, but using the $p$th root isomorphism $\rho \colon
\bT\iso \bT/C_{p}$, we get a version that is an endofunctor functor on
$\bT$-spectra, which we denote as~$\Phi$:   
\[
\Phi := \rho^{*}\Phi^{C_{p}}\colon \Spectra^{\bT}\to \Spectra^{\bT}. 
\]

A key fact that made the mapping spectra analyzable is that the
endofunctor $\Phi$ takes cofibrant $\bT$-spectra to
cofibrant $\bT$-spectra (in the standard stable model
structures and appropriate family variants of
\cite[IV\S6]{MM}). The same does not hold in the setting of
commutative ring $\bT$-spectra: for a
commutative ring $\bT$-spectrum $R$, $\Phi R$ has the canonical structure of
a commutative ring $\bT$-spectrum, but even when $R$ is
cofibrant as a commutative ring $\bT$-spectrum, $\Phi R$
may not be.  (More generally, $\Phi$ does not preserve cofibrant
objects in the model structures introduced in~\cite{Stolz-Thesis},
\cite[3.3.12]{BrunDundas}.)  We prove the following theorem in
Section~\ref{sec:circle}:

\begin{main}\label{main:Tmod}
Let $\aF$ be either the family
of $\aF_{\fin}$ of all finite subgroups of $\bT$, or the family
$\aF_{p}$ of finite subgroups of $\bT$ of order a power of $p$.
There exists a symmetric monoidal topological model structure
$\aM_{\aF}$ on $\bT$-spectra which has weak 
equivalences the genuine $\aF$-equival\-ences, is enriched over the
standard model structure on non-equivariant spectra (in the sense
of~\cite[3.3]{LewisMandell2}), and which satisfies the  
following properties:
\begin{enumerate}
\item\label{intromainq} The endofunctor $\Phi$ preserves cofibrations and acyclic cofibrations.
\item\label{intromaincommodel} The category of commutative ring
$\bT$-spectra admits a topological 
model structure with fibrations and weak equivalences created by the
forgetful functor to $\bT$-spectra.
\item The endofunctor $\Phi$ preserves cofibrant objects in the model
structure in~(\ref{intromaincommodel}).
\item\label{intromaincof} If $R$ is cofibrant in the model structure
of~(\ref{intromaincommodel}), then it is cofibrant in the
undercategory of $\bS$ in the model structure $\aM_{\aF}$, i.e., the unit
$\bS\to R$ is a cofibration in $\aM_{\aF}$.
\end{enumerate}
Moreover, the forgetful functor from $\bT$-spectra to non-equivariant
spectra takes cofibrations and fibrations in this model structure to
cofibrations and fibrations in the positive convenient $\Sigma$-model
structure of Definition~\ref{defn:Sigmodel}.
\end{main}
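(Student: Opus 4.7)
The plan is to construct $\aM_\aF$ as a cofibrantly generated model structure whose generating (acyclic) cofibrations come from smashing generalized orbit desuspension spectra $J\eta$ with the usual boundary inclusions $(S^{n-1})_+\hookrightarrow (D^n)_+$ and horn inclusions. To secure the forgetful compatibility with the positive convenient $\Sigma$-model structure of Definition~\ref{defn:Sigmodel}, I would restrict to those $\eta$ whose fibers are positive and whose total spaces satisfy the convenience hypotheses developed in the chapters on generalized orbit desuspensions. Weak equivalences are the $\aF$-equivalences, detected by $\Phi^H$ for $H\in\aF$. I would then invoke Kan's recognition theorem: smallness is immediate because the $J\eta$ are compact in the relevant sense, the enrichment is built in, and the essential content is that relative cell complexes of generating acyclic cofibrations are $\aF$-equivalences — which reduces, via the homotopical control of $\Phi^H$ on $J\eta$ already established earlier in the paper, to standard filtration arguments.

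For property~\ref{intromainq}, it suffices to check the behavior of $\Phi$ on the generators. The crucial input, proved in the earlier chapters, is that $\Phi J\eta$ is canonically isomorphic to another generalized orbit desuspension spectrum $J\eta'$ obtained from $\eta$ by taking fiberwise $C_p$-fixed points and twisting by $\rho$. Combining this with the compatibility of $\Phi$ with smash products from Proposition~\ref{prop:mainsym} and the fact that $\Phi$ preserves the colimits used to build cell complexes, one obtains preservation of both cofibrations and acyclic cofibrations.

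Property~\ref{intromaincommodel} is obtained by applying Kan's lifting theorem along the free commutative algebra functor $\Sym$. The small object argument is routine; the real obstacle, and the principal motivation for introducing generalized orbit desuspensions in the first place, is verifying that transfinite compositions of pushouts along $\Sym(A)\to \Sym(B)$ with $A\to B$ a generating acyclic cofibration produce $\aF$-equivalences. Via the standard symmetric-algebra pushout filtration, this reduces to analyzing $\Sigma_n$-quotients of smash powers of relative cells, which for cells of the form $J\eta\sma (D^n,S^{n-1})_+$ are again of the generalized orbit desuspension form by closure properties established earlier. Consequently the filtration quotients are cofibrations of $\aM_\aF$ whose acyclicity can be detected geometric-fixed-point-wise.

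Once~\ref{intromaincommodel} is in hand, the remaining two properties follow formally. A cofibrant commutative ring $\bT$-spectrum $R$ is built by transfinite composition of $\Sym$-cell attachments starting from $\bS=\Sym(*)$; the symmetric-algebra filtration of each attachment identifies $\bS\to R$ as an $\aM_\aF$-cell complex, proving~\ref{intromaincof}, and simultaneously $\Phi$ sends each $\Sym$-cell attachment to another $\Sym$-cell attachment by combining~\ref{intromainq} with the identification of $\Phi J\eta$ and the symmetric-monoidal compatibility of $\Phi$, giving the preservation of cofibrant commutative ring objects. Finally, the forgetful-functor statement is checked on generators: the positivity and convenience restrictions on $\eta$ ensure that the underlying non-equivariant spectra of $J\eta$ are cofibrant in the positive convenient $\Sigma$-model structure, so generating cofibrations forget to cofibrations there, and fibrations match by adjunction.
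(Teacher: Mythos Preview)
Your outline matches the paper's strategy: $\aM_\aF$ is the $\aA$-variant $\aV_{\permp}$-constrained $\aF$-model structure of Theorem~\ref{thm:varAmodel} for a specific set $\aA=\aACyc$ (resp.\ $\aApCyc$) of generalized orbit desuspension spectra, and properties (i)--(iv) follow from closure of $\aA$ under $\Phi_n$, smash products, and symmetric powers via the cell-by-cell arguments you sketch (with Theorem~\ref{thm:calcgeoSym} and Proposition~\ref{prop:gfsma} supplying the decomposition of $\Phi_n$ applied to a free-commutative-algebra cell attachment).

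The gap is that you have not specified the class of bundles, and ``fibers positive plus convenience hypotheses'' does not suffice; producing such a class is the substance of the proof. One needs a single set $\aA$ of spectra $J\eta$ that is \emph{simultaneously} closed up to isomorphism under $\Phi_n$, under symmetric powers, under smash products, and whose underlying non-equivariant spectra lie in $\aA_\Sigma$. None of these closures is automatic: $\Phi^{C_p}$ turns a $(\bT,Q)$-bundle into a $(\bT/C_p,Q)$-bundle, so the ambient group must change; $\Sym^k$ turns a $(\Gamma,Q)$-bundle into a $(\Gamma,\Sigma_k\wr Q)$-bundle, so $Q$ must be allowed to grow; and after iterating, one needs $(\Sym^k\eta)(\Lambda)$ to remain faithful for the enlarged group (inheritable faithfulness). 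The paper's construction (Definition~\ref{defn:ACyc}) resolves this by working over $\bT(n)=\bR/n\bZ$ for varying $n$, encoding bundles by combinatorial data $\aQ$ built from semiregular permutation representations together with integers $\ell_i$ that record which geometric fixed points have already been taken in each smash factor, and restricting the homomorphisms $\sigma\colon\Lambda(n)\to Q$ indexing $\eta(\Lambda)$ to a subset $\Hom(\aQ)$. The six parts of Lemma~\ref{lem:ACyc} verifying the closure properties are the technical core; closure under $\Phi_m$ (part~(iii)) and the faithfulness check (part~(i)) are genuinely delicate. The forgetful compatibility with the positive convenient $\Sigma$-model structure (Lemma~\ref{lem:necircle}) likewise depends on the permutation-representation hypothesis built into $D(\aACyc)$, not merely on positive fiber dimension.
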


The positive convenient $\Sigma$-model structure is the orthogonal
spectrum analogue of Shipley's ``$S$-model structure'' on symmetric
spectra from~\cite{Shipley-Convenient} and a variant of  Stolz's
``$\bS$-model structure'' from~\cite[\S1.3.3]{Stolz-Thesis}
or~\cite[3.3.12]{BrunDundas} for the trivial group.  See
Section~\ref{sec:neACyc} for details. 

By the K.\ Brown lemma, (\ref{intromainq}) implies that $\Phi$
preserves weak equivalences between cofibrant objects; combining
with~(\ref{intromaincof}), we see that for $R$ a cofibrant $\bT$-equivariant commutative
ring spectrum and $X\to R$ a cofibrant approximation in the model
structure $\aM_{\aF}$, the map $\Phi X\to \Phi R$ is a weak equivalence.

The model structures on commutative ring $\bT$-spectra in
the theorem lead to corresponding model structures on modules and
algebras.

\begin{mcor}
Let $R$ be a commutative ring $\bT$-spectra.  For the
model structure $\aM_{\aF}$ constructed in the proof of
Theorem~\ref{main:Tmod}, the categories of $R$-modules and
$R$-algebras admit model structures with the fibrations and weak
equivalences created in $\aM_{\aF}$.  Moreover, if $R$ is cofibrant in the
model structure of~(\ref{intromaincommodel}), then cofibrant
$R$-modules are cofibrant in the model structure $\aM_{\aF}$ and cofibrant
$R$-algebras are cofibrant in the model
structure $\aM_{\aF}$ on the undercategory of~$\bS$. 
\end{mcor}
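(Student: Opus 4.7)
The plan is to deduce this corollary from Theorem~\ref{main:Tmod} by the standard transfer technique for lifting model structures along monadic adjunctions, in the spirit of Schwede-Shipley~\cite{SS} and Mandell-May~\cite{MM}. First I would set up the free-forgetful adjunctions: the free $R$-module functor $R\sma(-)\colon \aM_{\aF}\to R\text{-Mod}$ and the free $R$-algebra functor $T_{R}(-)=\bigvee_{n\geq 0}(-)^{\sma_{R}n}$ from $R$-modules to $R$-algebras. I would then \emph{define} the fibrations and weak equivalences to be those detected by the forgetful functor to $\aM_{\aF}$ and take the generating (acyclic) cofibrations to be the images under the corresponding free functor of the generating (acyclic) cofibrations of $\aM_{\aF}$.

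The factorization and lifting axioms would then follow by the small object argument, provided that relative cell complexes built from the generating acyclic cofibrations are weak equivalences in $\aM_{\aF}$. For $R$-modules this amounts to the Schwede-Shipley monoid axiom, which is a consequence of the pushout-product axiom for the symmetric monoidal topological model structure $\aM_{\aF}$ together with stability. For $R$-algebras, I would analyze the pushout $A\to A\amalg_{T_{R}X}T_{R}Y$ along $T_{R}(f)$ for $f\colon X\to Y$ an acyclic cofibration of $R$-modules by the usual filtration whose subquotients are wedge-summands of $A\sma_{R} (Y/X)^{\sma_{R}n}\sma_{R} A^{\sma_{R}n}$; the pushout-product axiom, applied iteratively, then shows each subquotient inclusion is an acyclic cofibration, so the pushout is a weak equivalence.

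For the cofibrancy statements, suppose $R$ is cofibrant in the commutative ring model structure of Theorem~\ref{main:Tmod}.(\ref{intromaincommodel}). By part~(\ref{intromaincof}) of that theorem, the unit $\bS\to R$ is a cofibration in $\aM_{\aF}$, so (since $\bS$ is $\aM_{\aF}$-cofibrant) $R$ itself is $\aM_{\aF}$-cofibrant. The generating cofibrations of $R$-Mod are of the form $R\sma i$ for $i$ a generating cofibration of $\aM_{\aF}$, and by the pushout-product axiom smashing with the $\aM_{\aF}$-cofibrant object $R$ preserves cofibrations, so these are $\aM_{\aF}$-cofibrations. A cofibrant $R$-module is a retract of a cell complex built from these, hence $\aM_{\aF}$-cofibrant. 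For $R$-algebras, a cofibrant object $A$ is a retract of a cell complex built in $R$-Mod starting from $R$; the same filtration argument as above shows that the structure map $R\to A$ is a cofibration in $R$-Mod, hence an $\aM_{\aF}$-cofibration. Composing with the $\aM_{\aF}$-cofibration $\bS\to R$ exhibits $\bS\to A$ as an $\aM_{\aF}$-cofibration, i.e., $A$ is cofibrant in the undercategory of~$\bS$.

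The main potential obstacle is verifying the monoid axiom in the positive convenient $\Sigma$-model structure (which underlies $\aM_{\aF}$), since in positive model structures one must be careful that the generating acyclic cofibrations are well-behaved with respect to smash. However, this is a standard property of the Shipley-Stolz style convenient model structures established in Section~\ref{sec:neACyc}, and the pushout-product axiom asserted in Theorem~\ref{main:Tmod} packages exactly what is needed. Smallness of domains is no issue because the generators are compact equivariant CW-type objects.
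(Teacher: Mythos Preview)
Your overall strategy---transfer along the free/forgetful adjunctions using the Schwede--Shipley machinery, then deduce cofibrancy from $\bS\to R$ being an $\aM_{\aF}$-cofibration---is exactly the paper's route. The paper states this in one sentence after Theorem~\ref{thm:circlephi}: the hypotheses of Theorem~\ref{thm:multAvar} feed into the bullet points of Theorem~\ref{thm:varAmodel}, which yield the unit, monoid, and pushout-product axioms of~\cite[3.2--4]{SSAlgMod}, and those give the module and algebra model structures.

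There are, however, two inaccuracies in your justification. First, the monoid axiom does \emph{not} follow from the pushout-product axiom together with stability: the pushout-product axiom only controls smashing an acyclic cofibration with a \emph{cofibration}, whereas the monoid axiom requires smashing with an \emph{arbitrary} object. The paper instead verifies the monoid axiom directly via flatness of the generating cells: every $A\in\aACyc$ (resp.\ $\aApCyc$) is flat for the smash product (this is property~(i) of the class $\Nice$ in Definition~\ref{defn:Nice}, and Proposition~\ref{prop:ACycE} shows $\aACyc\subset\Nice$), which by the second bullet of Theorem~\ref{thm:varAmodel} gives the unit and monoid axioms. Second, your last paragraph locates the obstacle in ``the positive convenient $\Sigma$-model structure (which underlies $\aM_{\aF}$)'', but that is the \emph{non-equivariant} model structure; the axioms must be checked in $\aM_{\aF}$ itself, the $\aACyc$-variant model structure on $\bT$-spectra. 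The relationship to the non-equivariant $\Sigma$-model structure (Theorem~\ref{thm:necircle}) is about compatibility of cofibrations under the forgetful functor and plays no role here.
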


Recall that a \ppc spectrum consists of a $\bT$-spectrum $X$ together
with a \term{cyclotomic structure map}
\[
\Phi X\to X
\]
and a map of \ppc spectra is a map of spectra which
makes the diagram of cyclotomic structure maps commute.  As shown
in~\cite[5.3]{BM-cycl}, the category of \ppc
spectra have a model structure with fibrations and weak equivalences
created by the forgetful functor to the standard model structure on
$\bT$-spectra with the $\aF_{p}$-equivalences.  Using the model
structure of Theorem~\ref{main:Tmod}, we get a new model structure on
\ppc spectra and a compatible model structure on the 
category of commutative ring \ppc spectra.

\begin{main}\label{main:cycmod}
There exists a topological model structure on the category of \ppc
spectra with weak equivalences the genuine $\aF_{p}$-equivalences and
fibrations created in the model category of
$\bT$-spectra of Theorem~\ref{main:Tmod}. Moreover, it is enriched
over the standard model structure on non-equivariant spectra, and it
satisfies the following additional properties:
\begin{enumerate}
\item The forgetful functor to $\bT$-spectra sends cofibrations to
cofibrations in the model structure of Theorem~\ref{main:Tmod}.
\item The smash product $\sma$ extends to a symmetric monoidal
product on the subcategory of cofibrant \ppc
spectra in this model structure.
\item The category of commutative ring \ppc
spectra admits a topological model structure where fibrations and weak
equivalences are created by the forgetful functor to
\ppc spectra, and the forgetful functor to
\ppc spectra under $\bS$ preserves cofibrant objects.
\end{enumerate}
\end{main}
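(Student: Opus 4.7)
The plan is to follow the standard transfer-of-model-structure paradigm, taking Theorem~\ref{main:Tmod} as the main input.  A \ppc spectrum is an object equipped with a structure map $\Phi X\to X$, equivalently an algebra for the free monad $F$ sending $Y$ to $\coprod_{n\geq 0}\Phi^{n}Y$.  I would lift the model structure $\aM_{\aF_{p}}$ on $\bT$-spectra through the free/forgetful adjunction, taking as generating (acyclic) cofibrations the maps $\{Fi\}$ for $i$ a generating (acyclic) cofibration of $\aM_{\aF_{p}}$, and defining fibrations and weak equivalences by the forgetful functor to $\bT$-spectra.

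To verify the hypotheses of a Kan-style transfer theorem, the essential point is that pushouts of the generating free acyclic cofibrations along arbitrary maps of \ppc spectra remain weak equivalences.  The standard filtration argument for free-monad model structures reduces this to the statement that $\Phi$ preserves acyclic cofibrations in $\aM_{\aF_{p}}$, which is Theorem~\ref{main:Tmod}(\ref{intromainq}).  The small object argument applies because $\Phi$ preserves the relevant filtered colimits of cells in $\aM_{\aF_{p}}$.  Part~(i) is then essentially formal: every \ppc cofibration is a retract of an $F$-cell complex built from cofibrations of $\aM_{\aF_{p}}$, and since $\Phi$ (and hence $F$) preserves cofibrations of $\aM_{\aF_{p}}$, the class of cofibrations of $\aM_{\aF_{p}}$ absorbs all the operations involved.

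For part~(ii), given cofibrant \ppc spectra $A$ and $B$, by~(i) their underlying $\bT$-spectra are cofibrant in $\aM_{\aF_{p}}$ and so are built by cellular constructions from generalized orbit desuspension spectra.  The foundational result that $\Phi$ commutes up to isomorphism with smash products in which one factor is an orbit desuspension spectrum (the main technical result highlighted in the introduction, and the general version underlying Proposition~\ref{prop:mainsym}) then implies that the lax monoidal comparison $\Phi A\sma \Phi B\to \Phi(A\sma B)$ is an isomorphism.  Composing its inverse with $\phi_{A}\sma \phi_{B}$ produces the natural cyclotomic structure map
\[
\Phi(A\sma B)\iso \Phi A\sma \Phi B\xrightarrow{\phi_{A}\sma \phi_{B}} A\sma B,
\]
and the associator, unit, and symmetry of $\sma$ on $\bT$-spectra transport routinely through this construction to yield the symmetric monoidal structure on cofibrant \ppc spectra.

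Part~(iii) is a second application of the transfer paradigm, now to the free/forgetful adjunction between \ppc spectra and commutative ring \ppc spectra whose free functor $\bP X\iso \coprod_{n}X^{\sma n}/\Sigma_{n}$ is defined on cofibrant objects thanks to~(ii).  I expect this step to be the main obstacle: verifying that pushouts along generating free acyclic cofibrations of commutative ring \ppc spectra remain weak equivalences requires the standard filtration of a commutative-ring pushout in terms of symmetric powers, together with the point-set geometric fixed point formulas for commutative ring spectra that underlie Theorem~\ref{main:commod}.  Once this is in place, the claim that cofibrant commutative ring \ppc spectra are cofibrant as \ppc spectra under~$\bS$ follows by combining Theorem~\ref{main:Tmod}(\ref{intromaincof}) with the transfer description of cofibrations.
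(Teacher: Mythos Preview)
Your outline for the model structure on \ppc spectra and for parts~(i) and~(ii) matches the paper's argument: the free object is $\bC A=\bigvee_{k\geq 0}\Phi_{p^{k}}A$, the generating cofibrations are $\bC A\sma(D^{n},\partial D^{n})_{+}$ for $A\in\aApCyc$, and the fact that $\Phi$ preserves (acyclic) cofibrations in $\aM_{\aF_{p}}$ does the work. Part~(ii) is also as you say, via the class $\Nice$ and the strong monoidality of $\Phi$ on cofibrant objects.

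For part~(iii) there are two genuine gaps in your framing. First, the transfer/monadic picture is more delicate than you indicate: since $\sma$ (and hence $\bP$) is only defined on cofibrant \ppc spectra, the category of commutative ring \ppc spectra is \emph{not} the algebras for a monad on all of \ppc spectra, and it is not even clear a priori that the cell attachments $R\sma_{A}B$ needed for the small object argument exist as \ppc spectra. The paper's key input here is not the symmetric-power filtration or anything underlying Theorem~\ref{main:commod}, but rather the balanced-smash isomorphism
\[
\Phi_{p}(M\sma_{R}N)\iso \Phi_{p}M\sma_{\Phi_{p}R}\Phi_{p}N
\]
for $M$ cofibrant (Proposition~\ref{prop:gfsma}), which is what allows one to \emph{construct} the \ppc structure map on the pushout $R\sma_{A}B$ from those on $R$, $A$, $B$. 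Only once this is in hand does the usual argument run.

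Second, your deduction of the cofibrant-under-$\bS$ statement from Theorem~\ref{main:Tmod}(\ref{intromaincof}) does not work: that result concerns the forgetful functor to $\bT$-spectra, not to \ppc spectra. The paper instead checks directly that the \ppc-cells arising from $\bP\bC A$ decompose correctly, reducing to the combinatorial fact that $\bC A_{1}\sma\dotsb\sma\bC A_{r}$ is a wedge of objects of the form $\bC A$ with $A\in\aApCyc$, which in turn uses closure of $\aApCyc$ under $\Phi_{p^{k}}$ and smash products.
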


Because of the issues with $\Phi$, categories of modules and algebras
in \ppc spectra are hard to study in general, but when
the ring \ppc spectrum $R$ has underlying \ppc
spectrum cofibrant in the undercategory of $\bS$, the usual arguments
can be adapted to work.  As a
particular case, we have the following addendum to the previous
theorem for module and algebra model structures. 

\begin{addm}Let $R$ be a cofibrant commutative ring \ppc
spectrum.
\begin{enumerate}
\item The category of $R$-module \ppc spectra admits a
model structure where fibrations and weak equivalences are created by
the forgetful functor to \ppc spectra, and the forgetful
functor preserves cofibrant objects. The model structure is enriched
over the standard model structure on non-equivariant spectra.
Moreover,  $\sma_{R}$ extends to a symmetric monoidal product on the
subcategory of cofibrant $R$-module \ppc spectra.
\item The category of $R$-algebra \ppc spectra admits a
model structure where fibrations and weak equivalences are created by
the forgetful functor to \ppc spectra, and the forgetful
functor to the undercategory of $\bS$ preserves cofibrant objects.
\item The category of commutative $R$-algebra \ppc spectra admits a
model structure where fibrations and weak equivalences are created by
the forgetful functor to \ppc spectra, and the forgetful
functor to the undercategory of $\bS$ preserves cofibrant objects.
\end{enumerate}
\end{addm}

The particulars of Theorems~\ref{main:Tmod} and~\ref{main:cycmod}
allow us to extend the main result on mapping spaces of~\cite{BM-cycl}
to the multiplicative context. We
stated the case for commutative $R$-algebras as Theorem~\ref{main:map}
above; see Section~\ref{sec:precyc} for the other cases and for other
variants of cyclotomic spectra.

The fact that the geometric fixed points $\Phi^{C_{p}}$ admits a
refinement into an endofunctor $\Phi$ is a special feature of the
circle group $\bT$.  As we mentioned above, we have theorems for the
homotopical properties of geometric $H$ fixed points of commutative
ring $G$-spectra more generally, but they are not as clean to state.
The easiest statement that looks useful is the following. It is proved
at the end of Section~\ref{sec:commod}; see
Theorem~\ref{thm:Finexample} for another theorem along these lines.

\begin{main}\label{main:commod}
Let $G$ be a compact Lie group and $\aH$ a finite set of closed
subgroups of $G$.  There exists a model structure on the category of
commutative ring $G$-spectra with the genuine weak
equivalences, having the following property: when $R$ is cofibrant in this model
structure, $X\to R$ a cofibrant approximation in the (genuine) standard or positive stable
model structure on $G$-spectra, and $H\in \aH$, the induced
map on the point-set multiplicative geometric fixed point functor
$\Phi^{H}X\to \Phi^{H}R$ is a weak equivalence.  In particular, for
$H\in \aH$, $\Phi^{H}$ preserves weak equivalences between cofibrant
objects in this model structure.
\end{main}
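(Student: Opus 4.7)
The plan is to build, in parallel with Theorem~\ref{main:Tmod}, a positive convenient $\Sigma$-style model structure $\aM_{\aH}$ on $G$-spectra, whose generating (acyclic) cofibrations are assembled from generalized orbit desuspension spectra $J\eta$ chosen so that every $\Phi^{H}$ with $H\in\aH$ behaves correctly on the cells and their symmetric powers simultaneously. Because $\aH$ is finite, we can arrange this by taking equivariant vector bundles $\eta$ over orbits $G/K$ whose restrictions to each $H\in\aH$ satisfy the hypotheses in Terminology~\ref{term:bundle} and Construction~\ref{cons:orbitds}. The weak equivalences are the genuine weak equivalences, and the pushout-product axiom together with the monoid axiom in $\aM_{\aH}$ follow from the flatness statement (Proposition~\ref{prop:mainflat}) and the strict compatibility of $\Phi^{H}$ with smash products against generalized orbit desuspension spectra (Proposition~\ref{prop:mainsym}).

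Next, I would lift $\aM_{\aH}$ to the category of commutative ring $G$-spectra by creating fibrations and weak equivalences through the forgetful functor, following the pattern used for $\bT$ in Theorem~\ref{main:Tmod}(\ref{intromaincommodel}). The cofibrant commutative rings are then transfinite compositions of pushouts of maps of the form $\Sym^{n}(i)$ for $i$ a generating (acyclic) cofibration of $\aM_{\aH}$. The crucial input is that $\Sym^{n}J\eta$ is itself built from generalized orbit desuspension spectra over orbits of products, so that $\Phi^{H}$ commutes with $\Sym^{n}$ on these cells up to isomorphism, and the cofibration stays within the class on which $\Phi^{H}$ is homotopically correct. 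This gives the analogue of Theorem~\ref{main:Tmod}(\ref{intromaincof}): if $R$ is cofibrant as a commutative ring $G$-spectrum, then $\bS\to R$ is a cofibration in $\aM_{\aH}$, and hence $R$ is flat for the smash product and $\Phi^{H}R$ has the correct homotopy type for each $H\in\aH$.

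Finally, for a cofibrant commutative ring $R$ and a cofibrant approximation $X\to R$ in either the standard or positive stable model structure, I would compare both $\Phi^{H}X$ and $\Phi^{H}R$ to a common derived object: by construction of $\aM_{\aH}$, the cells defining cofibrancy of $R$ are cofibrant also in the standard/positive stable model structure (up to the usual generalized orbit desuspension replacements), so $R$ is cofibrant in $\aM_{\aH}$ as a $G$-spectrum, and $\Phi^{H}R$ computes the derived $\Phi^{H}$ of $R$. Since $X$ also computes the derived $\Phi^{H}$ of the same underlying homotopy type and the map $X\to R$ is a weak equivalence, Ken Brown's lemma applied within $\aM_{\aH}$ forces $\Phi^{H}X\to\Phi^{H}R$ to be a weak equivalence. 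The main obstacle will be verifying the symmetric-powers step: to lift $\aM_{\aH}$ to commutative rings one must show that for each $H\in\aH$ the map $\Phi^{H}\Sym^{n}(i)\to\Sym^{n}\Phi^{H}(i)$ is an isomorphism (or at least the right homotopical approximation) on the chosen generating cofibrations $i$, uniformly in $n$ and $H$, and that the resulting cellular structure is preserved under pushouts and transfinite compositions. This is precisely the combinatorial core of the generalized orbit desuspension theory developed starting in Chapter~\ref{chap:bundleone}, and it is what forces the indexing $\aH$ to be finite.
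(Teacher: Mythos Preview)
Your overall strategy---build a variant model structure on $G$-spectra with carefully chosen cells, lift it to commutative rings, then compare---matches the paper's. But you have misidentified the key technical step, and the step you propose is both false and unnecessary.

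You say the ``combinatorial core'' is to verify that $\Phi^{H}\Sym^{n}(i)\to\Sym^{n}\Phi^{H}(i)$ is an isomorphism on generating cofibrations. It is not: Theorem~\ref{thm:calcgeoSym} shows that $\Phi^{H}$ of a symmetric power is a genuinely different object (a smash of free commutative rings on pieces $X_{q}$, indexed by conjugacy classes of homomorphisms into the wreath product). The paper never needs this commutation. What it needs instead is that for each cell $A$ in the chosen set $\aA$, the point-set $\Phi^{H}A$ computes the derived $\Phi^{H}A$; this is the hypothesis of Corollary~\ref{cor:modelPhi}. The trick is to take $\aA$ to be the closure of ordinary cells $F_{V}(G/K)_{+}$ under smash products and symmetric powers \emph{from the start}, so that symmetric powers of cells are already cells. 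Then one checks, cell by cell, that each such $A$ is a generalized orbit desuspension spectrum $J\eta$ with $\eta$ and $\eta(H)$ both $Q$-faithful (Theorem~\ref{thm:Hexample}), whence Theorem~\ref{thm:derphi} gives $\Phi^{H}A\simeq L\Phi^{H}A$. No interchange of $\Phi^{H}$ with $\Sym^{n}$ is required.

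You also misattribute the role of finiteness of $\aH$. It is not about uniformity of the symmetric-power argument; it enters much more simply: one must choose a single orthogonal $G$-representation $W$ whose restriction to each $H\in\aH$ contains the regular representation of $\pi_{0}H$, and then take cells $F_{V}(G/K)_{+}$ only for $V\supset W$. This is what guarantees the faithfulness condition on $\eta(H)$ via Proposition~\ref{prop:ifcritex}(b). Finally, the paper's endgame is a five-lemma argument on the cofiber sequence $\bS\to X\to X/\bS$ versus $\bS\to R\to R/\bS$, reducing to the cofibrant case handled by Corollary~\ref{cor:modelPhi}; your Ken Brown sketch is in the right spirit but glosses over the fact that $R$ is only cofibrant \emph{under} $\bS$, not absolutely.
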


Theorem~\ref{main:commod} establishes the existence of a left derived
functor for geometric fixed points, viewed as a functor from
commutative ring $G$-spectra to commutative ring 
$WH$-spectra and a natural isomorphism from this derived functor
composed with the forgetful functor to the genuine $WH$-stable
category and the derived geometric fixed point functor on the
underlying $G$-spectrum.

\subsection*{Technical conventions}

In addition to the terminology for spectra and pre-cyclotomic spectra
set forth on page~\pageref{ter} above, we also have the following
technical conventions. 

We use the phrase ``orthogonal $G$-representation'' to mean a finite dimensional
real inner product space with action of $G$ by isometries, rather than
the isomorphism class of such an object (as the phrase is often used
in algebra).  For an orthogonal $G$-representation $V$, we write $[V]$
for its isomorphism class.

For orthogonal $G$-representations $V,W$, we write $V<W$ to denote
that $V$ is a $G$-stable vector subspace of $W$, not necessarily a
proper subspace.  We write $[V]<[W]$ to indicate that $V$ is
isomorphic to a $G$-stable vector subspace of $W$ (not necessarily a
proper subspace).

We write $H<G$ to indicate that $H$ is a closed subgroup of $G$, not
necessarily a proper subgroup.

In Chapters~\ref{chap:bundleone} and~\ref{chap:bundletwo}, $\Gamma$ denotes a compact Lie
group.  Wherever it appears in this paper, the phrase
``$\Gamma$-space'' means space with continuous $\Gamma$ action, and not
$\Gamma$-space in the sense of Segal. 

\subsection*{Acknowledgments}
This project was motivated by the authors' work with Allen Yuan on
relative cyclotomic spectra; we would not have discovered the results
here without their necessity in that work.  We would like to thank our
collaborators on~\cite{ABGHLM}, Vigleik Angeltveit, Teena Gerhardt, Mike Hill, and Tyler
Lawson for many discussions of related topics.  The first author also like to
particularly thank the Mikes Hill and Hopkins for years of illuminating discussions about geometric
fixed points and equivariant commutative ring spectra.

\chapter[Model Categories and Geometric Fixed Points]%
{Model Categories and Geometric Fixed Points on Equivariant
Commutative Ring Spectra (Theorem~\sref$main:commod$)}
\label{sec:model}

This chapter proves Theorem~\ref{main:commod} from the introduction,
which constructs the left derived functor for geometric $H$ fixed
points on commutative ring $G$-spectra and shows that it agrees with
the corresponding functor on the underlying $G$-spectra.  This
requires the construction of some new model structures, and the first
section is devoted to a review of some general facts for constructing
cofibrantly generated model structures on $G$-spectra that are Quillen
equivalent to the standard model structure.

\section{\texorpdfstring%
{$\aV$-constrained $\aF$-model categories and their variants}%
{V-constrained F-model categories and their variants}}

The purpose of this section is to specialize some standard results on
creating new model structures from old ones in the context of
the standard model structure on equivariant orthogonal spectra. We
claim no novelty on these results and in fact we regard the techniques
as well-known enough to omit all proofs.   

Let $G$ be a compact Lie group.  The \term{standard stable model
structure} on $G$-spectra has its weak
equivalences defined to be the maps that induce isomorphisms on
homotopy groups $\pi_{*}^{H}$ for all closed subgroups $H<G$; its
cofibrations are the retracts of relative cell complexes where the
cells are of the form
\[
G/H_{+}\sma F_{V}S^{0}\sma (D^{n},\partial D^{n})_{+}
\]
where $V$ is an orthogonal $G$-representation, and $F_{V}$ denotes the
functor from based $G$-spaces to $G$-spectra
that is left adjoint to the $V$th space functor.  Hill-Hopkins-Ravenel
introduced the \term{complete model structure} that is better behaved with
respect to induction along a closed subgroup.  It has the same weak
equivalences but more cofibrations; a cofibration is a retract of a
relative cell complex where the cells are of the form
\[
G_{+}\sma_{H}\sma F_{V}S^{0}\sma (D^{n},\partial D^{n})_{+}
\]
where now $V$ is allowed to be an orthogonal $H$-representation.

There are variants of the standard and complete stable model
structures in the context of a family $\aF$ of subgroups of $G$.  A
family $\aF$ is a non-empty set of closed subgroups of $G$ with the
property that when $H\in \aF$ so is every conjugate and every closed
subgroup of $H$.  Given a family $\aF$, the $\aF$-equivalences are the
maps that induce isomorphisms on homotopy groups $\pi_{*}^{H}$ for $H$
in $\aF$.  The standard and complete $\aF$-model structures have
cofibrations the retracts of relative cell complexes with the cells displayed
above (for the standard and complete stable model structures,
respectively) but with the restriction that the subgroup $H$ be in~$\aF$.

From the very beginning, dealing with commutative ring orthogonal spectra
necessitated the creation of additional model structures, the
so-called \term{positive} model structures, which put a constraint on
the orthogonal $G$-representations $V$ used in the definition of cells
for defining cofibrations.  We have positive variants of all the model
structures mentioned above where we require $V$ to contain a positive
dimensional trivial representation.  In later sections, we will see
that asking for even stricter constraints on $V$ will lead to
homotopical results on the geometric fixed points of commutative ring
orthogonal spectra.  The following definition gives the framework for
the types of constraints we consider.  

\begin{defn}\label{defn:frb}
A \term{family representation constraint} $(\aF,\aV)$ consists of a family
$\aF$ of subgroups of $G$ and a set of isomorphism classes of
orthogonal $H$-representations 
$\aV(H)$ for each $H\in \aF$ such that:
\begin{enumerate}
\item If $[V],[W]\in \aV(H)$, then $[V]\oplus [W]\in \aV(H)$.
\item For any orthogonal $H$-representation $W$, there exists $[V]\in \aV(H)$
with $[W]<[V]$.
\item If $H'=gHg^{-1}$, then $[V]\mapsto [gV]$ gives a bijection from
$\aV(H)$ to $\aV(H')$.
\item If $K<H$, then the isomorphism classes of underlying
$K$-representations of the elements of $\aV(H)$ lie in $\aV(K)$.
\end{enumerate}
We say that $\aV$ is \term{standard compatible} if it has the property that for
every orthogonal $G$-representation $W$ and every $H\in \aF$, there
exists an orthogonal $G$-representation $V$ with $W<V$ and the
underlying isomorphism class of orthogonal $H$-representations in
$\aV(H)$.
\end{defn}

We have the following basic examples.

\begin{example}\label{ex:trivposreg}
For any family, we have a constraint $\aV_{0}$ defined by taking
$\aV_{0}(H)$ to be all isomorphism classes of orthogonal
$H$-representations.  We have a constraint $\aV_{+}$ defined by
taking $\aV_{+}(H)$ to be all isomorphism classes $[V]$ such that
$V^{H}\neq 0$ (i.e., $[V]$ contains a copy of the trivial
representation).  For a family consisting of finite groups, we have a
constraint $\aV_{\reg}$ defined by taking $\aV_{\reg}(H)$ to consist
of sums of regular representations.  The constraints $\aV_{0}$ and
$\aV_{+}$ are standard compatible and $\aV_{\reg}$ is standard
compatible in the case when $G$ is finite or $G=\bT$. 
\end{example}

We also have the following example that plays an important role in Theorem~\ref{main:Tmod}.

\begin{example}\label{ex:perm}
Recall that a \term{permutation representation} of $H$ is an
orthogonal $H$-representation isomorphic to one of the form
$\bR\langle X\rangle$, where $X$ is a finite $H$-set; equivalently, it
is an orthogonal $H$-representation that has an orthonormal basis on
which the $H$ action restricts. We have a constraint $\aV_{\perm}$
defined by taking $\aV(H)$ to consist of the isomorphism classes of
permutation representations.  In the work on pc spectra, we need a
family constraint that combines $\aV_{\reg}$ and $\aV_{\perm}$.  Let
$\aV_{\permp}$ be the family constraint where $[V]\in
\aV_{\permp}(H)$ when $V\iso \bR\langle X\rangle$ for an $H$-set $X$
that has at least one free $H$-orbit. We call these representations
the \term{semiregular permutation representations}. The constraints $\aV_{\perm}$
and $\aV_{\permp}$ are standard compatible when $G$ is finite or
$G=\bT$. 
\end{example}

A family representation constraint specifies a constraint on the allowed
cell types as follows.

\begin{notn}
Given a family representation constraint $(\aF,\aV)$, let
$\aA^{\natural}(\aF,\aV)$ denote the set of $G$-spectra $G/H_{+}\sma F_{V}S^{0}$ for $H\in \aF$ and $V$ an orthogonal $G$ action on $\bR^{n}$ (for some $n\geq 0$) such that its underlying
orthogonal $H$-representation satisfies $[V]\in \aV(H)$; let
$\aA^{\comp}(\aF,\aV)$ denote the set of $G$-spectra $G_{+}\sma_{H}F_{V}S^{0}$ for $H\in \aF$ and $V$ an orthogonal $H$ action on $\bR^{n}$ (for some $n\geq 0$) that satisfies
$[V]\in \aV(H)$.
\end{notn}

The set $\aA^{\natural}(\aF,\aV)$ is only useful for us when $\aV$ is
standard compatible.  With $\aA=\aA^{\natural}(\aF,\aV)$ and
$\aA=\aA^{\comp}(\aF,\aV)$ in the following definition, we get the
standard and complete cells (respectively) with subgroup and
representation constraints specified by $\aF$ and $\aV$.

\begin{notn}
For any subset $\aA$ of $G$-spectra, let $I_{\aA}$ be the set of maps of the form
\[
A\sma (\partial D^{n}_{+}\to D^{n}_{+})
\]
for $A\in \aA$ and $n\geq 0$ (with $\partial D^{0}$ the empty set).
\end{notn}

As a technical hypothesis, we will always assume that the orthogonal spectra in $\aA$ are spacewise non-degenerately based. This hypothesis
implies that the maps in $I_{\aA}$ and the resulting cofibrations
defined below are spacewise unbased Hurewicz cofibrations.  While this
plays no role in the (omitted) proofs of the model category structures
below, it is indispensable in other contexts where Lillig's theorem is
used, for example, checking that geometric realization preserves
levelwise weak equivalences. 

\begin{defn}\label{defn:cell}
Let $\aA$ be a set of spacewise non-degenerately based
$G$-spectra.  A \term{relative $\aA$-cell
complex} is a map $X\to Y$ such that $Y\iso \colim X_{n}$ where
$X_{0}=X$, and inductively $X_{n+1}$ is formed as a pushout from
$X_{n}$ over a wedge of maps in $I_{\aA}$ (with the isomorphism to the
colimit inducing the given map $X\to Y$).  A \term{regular
$I_{\aA}$-cofibration} is a map $X\to Y$ given by transfinite
composition of pushouts along maps in $I_{\aA}$: there exists an
ordinal $\lambda$, $G$-spectra $X_{\alpha}$ for $\alpha
\leq \lambda$, and compatible maps $X_{\alpha}\to X_{\beta}$ for
$\alpha <\beta\leq \lambda$ such that $X_{0}=X$, $X_{\lambda}\iso Y$
(compatibly with the given map $X\to Y$), for every $\alpha<\lambda$,
$X_{\alpha+1}$ is formed by pushout along a map in $I_{\alpha}$, and
for every limit ordinal $\beta \leq \lambda$,
$X_{\beta}=\colim_{\alpha<\beta}X_{\alpha}$.
\end{defn}

It is easy to see by transfinite induction that for a regular
$I_{\aA}$-cofibration $X\to X_{\lambda}$, all the maps $X_{\alpha}\to
X_{\beta}$ in the composition system are Hurewicz cofibrations of
equivariant orthogonal spectra.  That observation also implies that
for any $G$-spectrum $Z$, when $\lambda$ is a
cardinal and greater than the sum of the cardinalities of the spaces
$Z(\bR^{n})$, then any map $Z\to Y$ factors through some $X_{\alpha}$
with $\alpha <\lambda$.  This allows use of the small object argument
in this context.

A relative $\aA$-cell complex is in particular a regular
$I_{\aA}$-cofibration (for example by well ordering the set of maps in
each pushout).  Under certain ``smallness'' hypotheses, the reverse is
also true.

\begin{defn}\label{defn:tpssmall}
A $G$-spectrum $A$ is topologically point-set small if the set
of maps out of $A$ commutes with transfinite compositions when the
constituent maps are spacewise closed inclusions.
\end{defn}

The orbit desuspension spectrum $F_{V}G/H_{+}$ is point-set
topologically small since maps out of it represent the functor
$X\mapsto X(V)^{H}$.  By adjunction it follows that $A$ is point-set
topologically small if and only if the function spectrum functor
$F(A,-)$ commutes with transfinite compositions when the
constituent maps are spacewise closed inclusions.  Adjunction again
implies that the smash product of topologically point-set small
$G$-spectra is topologically point-set small.
The relevance of this concept is the following proposition which
follows from a transfinite induction argument (and the observation
above that the maps in the composition system for a regular
$I_{\aA}$-cofibration are Hurewicz cofibrations and so in particular
spacewise closed inclusions).

\begin{prop}\label{prop:tpssmall}
Let $\aA$ be as in Definition~\ref{defn:cell}. If every object $A$ in
$\aA$ is topologically point-set small then every regular $I_{\aA}$-cofibration
is a relative $\aA$-cell complex.
\end{prop}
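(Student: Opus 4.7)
The plan is to regroup the single-cell attachments of a regular
$I_{\aA}$-cofibration $X=X_{0}\to\cdots\to X_{\lambda}=Y$ by a
natural-number-valued \emph{depth} $d(\alpha)<\omega$: set $Y_{0}:=X$
and define $Y_{n+1}$ to be the pushout of $Y_{n}$ along the wedge of
all attaching maps $\phi_{\alpha}\colon A_{\alpha}\sma \partial
D^{n_{\alpha}}_{+}\to X_{\alpha}$ with $d(\alpha)=n$. Once every cell
has finite depth, this exhibits $Y=\colim_{n}Y_{n}$ as a relative
$\aA$-cell complex in the sense of Definition~\ref{defn:cell}.

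The key technical input is a strengthening of topological point-set
smallness to compact smash products: if $A\in\aA$ is point-set
small and $K$ is a compact unbased space (such as $\partial
D^{n_{\alpha}}$), then any map $A\sma K_{+}\to Z$ into a sequential
colimit $Z=\colim_{n}Z_{n}$ of spacewise closed inclusions factors
through some $Z_{n}$. This follows by applying the adjunction
$(-\sma K_{+})\dashv F(K_{+},-)$, using that $F(K_{+},-)$ commutes
with such sequential colimits by compactness of $K$, and then applying
point-set smallness of $A$ to the adjoint map.

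With this in hand, I define $d(\alpha)$ by transfinite induction on
$\alpha<\lambda$. Inductively assuming $d(\beta)<\omega$ for every
$\beta<\alpha$, let $W_{\alpha,n}\subseteq X_{\alpha}$ denote the
subspectrum consisting of $X$ with all cells $c_{\beta}$ ($\beta<\alpha$,
$d(\beta)<n$) attached simultaneously. Each inclusion
$W_{\alpha,n}\to W_{\alpha,n+1}$ is a pushout along a wedge in
$I_{\aA}$, hence a Hurewicz cofibration and a spacewise closed
inclusion. Applying the smallness strengthening above to
$\phi_{\alpha}$ then gives a factoring through some $W_{\alpha,N}$, and
we set $d(\alpha):=N$, completing the induction.

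The main obstacle is verifying $X_{\alpha}=\colim_{n}W_{\alpha,n}$ as a
sequential colimit of spacewise closed inclusions, which is what
enables the smallness strengthening to apply. That every cell of
$X_{\alpha}$ lies in some $W_{\alpha,n}$ is immediate from the
inductive finite-depth hypothesis; the topological identification upgrades
set-theoretic equality to colimit identity using the Hurewicz
cofibration structure of the original transfinite composition (noted
just above Definition~\ref{defn:tpssmall}) together with a
compatibility check between the transfinite and depth filtrations of
$X_{\alpha}$. Once this identification is settled, finite depth
propagates through the induction, every $\phi_{\alpha}$ lands in the
corresponding $Y_{d(\alpha)}$, and $\{Y_{n}\}_{n\in\bN}$ presents
$X\to Y$ as a relative $\aA$-cell complex.
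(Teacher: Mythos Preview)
Your proposal is correct and matches the paper's approach: the paper does not give a proof but only remarks that the proposition ``follows from a transfinite induction argument (and the observation above that the maps in the composition system for a regular $I_{\aA}$-cofibration are Hurewicz cofibrations and so in particular spacewise closed inclusions).'' Your depth-function construction is exactly the standard way to carry out that transfinite induction, and you correctly identify and use both ingredients the paper names --- the Hurewicz cofibration/closed inclusion structure of the transfinite system, and smallness extended to $A\sma K_{+}$ via the function-spectrum adjunction already noted in the paper just before the proposition.
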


Standard techniques then give the following theorems, simultaneously
generalizing the standard, positive, and complete stable and
$\aF$-model structures to more general family representation
constraints.

\begin{thm}[$\aV$-constrained $\aF$-model structure]
\label{thm:FVmodel}
For any compact Lie group $G$ and any family representation constraint
$(\aF,\aV)$ with $\aV$ standard compatible, the category of 
$G$-spectra admits a symmetric monoidal
topological model
structure where:
\begin{enumerate}
\item The cofibrations are the retracts of
relative $\aA^{\natural}(\aF,\aV)$-cell complexes.
\item The fibrations are the maps $X\to Y$ such that for every $H\in
\aF$ and every orthogonal $G$-representation $V$ whose underlying
$H$-representation satisfies $[V]\in [\aV(H)]$:
\begin{itemize}
\item The map of non-equivariant spaces $X(V)^{H}\to
Y(V)^{H}$ is a Serre fibration, and
\item For every orthogonal $G$-representation $W$ with $[V\oplus W]\in
\aV(H)$, the square of non-equivariant spaces
\[
\xymatrix@-1pc{%
X(V)^{H}\ar[r]\ar[d]&(\Omega^{W}X(V\oplus W))^{H}\ar[d]\\
Y(V)^{H}\ar[r]&(\Omega^{W}Y(V\oplus W))^{H}
}
\]
is homotopy cartesian.
\end{itemize}
\item The weak equivalences are the $\aF$-equivalences.
\end{enumerate}
\end{thm}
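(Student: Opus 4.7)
The plan is to apply the standard Kan recognition theorem for cofibrantly generated model structures, taking $I = I_{\aA^{\natural}(\aF,\aV)}$ as generating cofibrations and constructing a matching set $J$ of generating acyclic cofibrations whose injective maps are exactly the fibrations described in (ii). First, I would verify that each cell $G/H_{+}\sma F_{V}S^{0}$ is topologically point-set small, because maps out of it represent the functor $X\mapsto X(V)^{H}$, which commutes with transfinite compositions along closed inclusions. Smashing with the non-degenerately based pairs $(D^{n},\partial D^{n})_{+}$ preserves smallness, so Proposition~\ref{prop:tpssmall} applies and the small object argument runs.

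Next I would assemble $J$ as the union of two families. The first consists of mapping-cylinder inclusions $A\sma (D^{n}_{+}\to (D^{n}\times I)_{+})$ for $A\in\aA^{\natural}(\aF,\aV)$, which forces the Serre fibration condition on $H$-fixed points. The second encodes the homotopy-cartesian condition: for each $H\in\aF$ and $V,W$ with $[V],[V\oplus W]\in\aV(H)$, include the pushout-products $(\partial D^{n}\to D^{n})_{+}\sma \tilde{m}$, where $\tilde{m}$ is a cofibrant replacement of the canonical structure map $G/H_{+}\sma F_{V\oplus W}S^{W}\to G/H_{+}\sma F_{V}S^{0}$. By design, $J$-injectivity is equivalent to the fibration condition in the statement.

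The main obstacle is showing that relative $J$-cell complexes are $\aF$-equivalences and, conversely, that maps with the right lifting property against $I$ are both fibrations and $\aF$-equivalences. For the first, the initial family of $J$ produces deformation retracts, while each map in the second family is a $\pi_{*}^{H}$-isomorphism for all $H\in\aF$. Standard compatibility of $\aV$ enters here by guaranteeing that every orthogonal $G$-representation $W$ embeds into one whose underlying $H$-representation lies in $\aV(H)$, so that the $\pi_{*}^{H}$ groups can in fact be computed from representations lying in $\aV(H)$. Because the $J$-cells are Hurewicz cofibrations by the observation following Definition~\ref{defn:cell}, and $\aF$-equivalences are preserved under pushout and transfinite composition along such cofibrations, the conclusion follows by transfinite induction. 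The converse direction uses standard compatibility again to promote the ``fibration plus $\aF$-equivalence'' condition to the right lifting property against each $I_{\aA^{\natural}}$-generator.

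Finally, for the symmetric monoidal and topological structure, I would verify the pushout-product axiom on generators. The pushout-product of $(\partial D^{m}\to D^{m})_{+}\sma G/H_{+}\sma F_{V}S^{0}$ with $(\partial D^{n}\to D^{n})_{+}\sma G/K_{+}\sma F_{W}S^{0}$ lands in wedges of cells of the form $G/L_{+}\sma F_{V\oplus W}S^{0}$ after decomposing $G/H\times G/K$ into $G$-orbits $G/L$; closure properties (i), (iii), and (iv) in Definition~\ref{defn:frb} ensure $[V\oplus W]\in\aV(L)$ for each such $L$, so the result is again an $I$-cofibration. The analogous check for $I$ against $J$, the unit axiom, and the topological enrichment are routine from the form of the generators.
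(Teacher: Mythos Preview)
The paper explicitly omits the proof of this theorem: the lead-in paragraph states ``We claim no novelty on these results and in fact we regard the techniques as well-known enough to omit all proofs,'' and the theorems are introduced with ``Standard techniques then give the following theorems.'' Your outline is precisely the standard argument the authors are alluding to---the Kan recognition theorem with $I=I_{\aA^{\natural}(\aF,\aV)}$ and a $J$ built from level acyclic cofibrations plus mapping-cylinder factorizations of the structure maps $G/H_{+}\sma F_{V\oplus W}S^{W}\to G/H_{+}\sma F_{V}S^{0}$, following the template of~\cite[\S9]{MMSS} and~\cite[III\S4--5, IV\S6]{MM}.

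A couple of minor points of care in your sketch. First, for the second family of $J$ you should factor each structure map as a cofibration followed by an acyclic fibration in the \emph{level} model structure built from the same $I$; this ensures $J\text{-cof}\subset I\text{-cof}$ so that the two classes of cofibrations match. Second, in your pushout-product check, the orbits $G/L$ appearing in $G/H\times G/K$ have $L$ a subgroup of a conjugate of $H$ (and of $K$), so $L\in\aF$ because families are closed under subgroups and conjugation; your invocation of properties (i), (iii), (iv) of Definition~\ref{defn:frb} then correctly yields $[V\oplus W]\in\aV(L)$. With these clarifications your argument is sound and is the intended one.
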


\begin{thm}[$\aV$-constrained complete $\aF$-model structure]
\label{thm:FVcmodel}
For any compact Lie group $G$ and any family representation constraint
$(\aF,\aV)$, the category of 
$G$-spectra admits a symmetric monoidal
topological model structure where:
\begin{enumerate}
\item The cofibrations are the retracts of
$\aA^{\comp}(\aF,\aV)$-cell complexes.
\item The fibrations are the maps $X\to Y$ such that for every $H\in
\aF$ and every orthogonal $H$-representation $V$ 
that satisfies $[V]\in \aV(H)$:
\begin{itemize}
\item The map of non-equivariant spaces $X(V)^{H}\to
Y(V)^{H}$ is a Serre fibration, and
\item For every orthogonal $H$-representation $W$ with $[V\oplus W]\in
\aV(H)$, the square of non-equivariant spaces
\[
\xymatrix@-1pc{%
X(V)^{H}\ar[r]\ar[d]&(\Omega^{W}X(V\oplus W))^{H}\ar[d]\\
Y(V)^{H}\ar[r]&(\Omega^{W}Y(V\oplus W))^{H}
}
\]
is homotopy cartesian.
\end{itemize}
\item The weak equivalences are the $\aF$-equivalences.
\end{enumerate}
\end{thm}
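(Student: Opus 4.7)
The plan is to apply Kan's cofibrantly generated recognition theorem, following the template of Mandell-May~\cite{MM}, suitably adapted to the family representation constraint $(\aF,\aV)$. As generating cofibrations I take $I=I_{\aA^{\comp}(\aF,\aV)}$. For generating acyclic cofibrations $J$ I take the union of two families: the mapping cylinder inclusions
\[
G_{+}\sma_{H}F_{V}S^{0}\sma (D^{n}\times\{0\}\to D^{n}\times [0,1])_{+}
\]
for $H\in \aF$ and $[V]\in \aV(H)$ (which will force the Serre fibration condition on $X(V)^{H}\to Y(V)^{H}$), together with the pushout-product (``corner'') maps formed from $(\partial D^{n}\to D^{n})_{+}$ and the mapping cylinder inclusion of the structure maps
\[
\lambda_{V,W}\colon G_{+}\sma_{H}F_{V\oplus W}S^{W}\to G_{+}\sma_{H}F_{V}S^{0}
\]
for $H\in \aF$ and $[V],[V\oplus W]\in \aV(H)$ (which will force the $\Omega$-spectrum square condition). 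By Proposition~\ref{prop:tpssmall}, every object of $\aA^{\comp}(\aF,\aV)$ is topologically point-set small, so the small object argument applies and produces functorial factorizations through relative $I$- and $J$-cell complexes.

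Adjunction identifies maps with the right lifting property against $I$ with those $X\to Y$ such that $X(V)^{H}\to Y(V)^{H}$ is an acyclic Serre fibration for every $H\in \aF$ and $[V]\in \aV(H)$, which are exactly the intended acyclic fibrations (a fibration in the sense of (ii) together with the $\aF$-equivalence condition (iii)). A similar adjunction identifies maps with the right lifting property against $J$ with the fibrations of (ii), since the lifting condition against the corner maps encodes precisely the homotopy-cartesianness of the stated square. The essential verification for Kan's theorem is that every relative $J$-cell complex is an $\aF$-equivalence: the mapping cylinder generators are spacewise strong deformation retracts, and for the corner generators one uses that $\lambda_{V,W}$ itself is an $\aF$-equivalence --- the standard stabilization fact that $\pi_{*}^{H}(F_{V\oplus W}S^{W})\to \pi_{*}^{H}(F_{V}S^{0})$ is an isomorphism when $V,W$ are $H$-representations --- so the corner pushout-products are Hurewicz cofibrations of objects with matching $\pi_{*}^{H}$. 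Since cell attachments are spacewise closed Hurewicz cofibrations (built into Definition~\ref{defn:cell} via the spacewise non-degeneracy hypothesis) and $\pi_{*}^{H}$ commutes with transfinite compositions of such maps, the $\aF$-equivalence propagates through the entire relative $J$-cell complex.

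For the symmetric monoidal pushout-product axiom I reduce to pairs of generators. The smash product of two cells $G_{+}\sma_{K}F_{V}S^{0}$ and $G_{+}\sma_{L}F_{W}S^{0}$ in $\aA^{\comp}(\aF,\aV)$ decomposes via the double coset formula as a wedge of $G_{+}\sma_{H}F_{V'\oplus W'}S^{0}$, with $H=K\cap gLg^{-1}$ ranging over double coset representatives and $V',W'$ the corresponding restrictions; closure of $\aF$ under conjugation and subgroups, combined with axioms (i), (iii), (iv) of Definition~\ref{defn:frb}, places each summand back in $\aA^{\comp}(\aF,\aV)$. The analogous computation against $J$-generators shows that pushout-products of $I$-cells with $J$-cells factor as $J$-cells, completing the pushout-product axiom; topological enrichment follows from the same analysis applied against cofibrations of based $G$-spaces. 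I expect the genuinely delicate step to be the verification that relative $J$-cell complexes are $\aF$-equivalences, as it simultaneously requires control of the Hurewicz cofibration structure of each cell attachment, the stabilization behavior of $\lambda_{V,W}$ at every relevant subgroup, and the interaction between transfinite sequential colimits and fixed-point homotopy groups --- the place where the closure axioms of Definition~\ref{defn:frb} (especially restriction closure (iv)) do most of their work.
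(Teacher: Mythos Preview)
The paper explicitly omits the proof of this theorem: the section opens by stating ``We claim no novelty on these results and in fact we regard the techniques as well-known enough to omit all proofs.'' Your proposal is precisely the standard argument the authors are gesturing at---Kan's recognition theorem with the Mandell--May template of generating sets, the stabilization maps $\lambda_{V,W}$ providing the extra acyclic generators, and the usual verification that $J$-cell complexes are $\aF$-equivalences via cofinality axiom~(ii) of Definition~\ref{defn:frb}.

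One imprecision worth flagging: your pushout-product argument asserts that the smash of two complete cells ``decomposes via the double coset formula as a wedge'' of complete cells. That is literally correct only when $G$ is finite. For a general compact Lie group, $K\backslash G/L$ is not discrete, and $(G_{+}\sma_{K}F_{V}S^{0})\sma(G_{+}\sma_{L}F_{W}S^{0})$ is not a wedge of cells but rather an $\aA^{\comp}(\aF,\aV)$-cell complex: one gives $G/L$ a $K$-CW structure (isotropy groups are $K\cap gLg^{-1}\in\aF$), trivializes the $F_{W}$-bundle over each cell, and then each cell contributes $G_{+}\sma_{H}F_{V'\oplus W'}S^{0}\sma(D^{n},\partial D^{n})_{+}$ with $[V'\oplus W']\in\aV(H)$ by axioms~(i), (iii), (iv). The conclusion is the same, but the mechanism is a filtration rather than a splitting.
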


See~\cite[2.1]{SSAlgMod} for a definition of monoidal model structure
and e.g.,~\cite[5.12]{MMSS} for a definition of topological model structure.

In the case when $\aV=\aV_{0}$, we get the standard stable model
structure and complete model structure with the family of all
subgroups or the corresponding $\aF$-model structures for an
arbitrary family.  When $\aV=\aV_{+}$, we get the corresponding
positive model structures.

For the theorems in the introduction, we need to consider more general
classes of cells $\aA$.  (For the following theorem, we note that
$\aA^{\natural}(\aF,\aV)\subset \aA^{\comp}(\aF,\aV)$.  A version
holds for constraints $\aV$ that are not standard compatible where we
replace $\aA^{\natural}(\aF,\aV)$ with $\aA^{\comp}(\aF,\aV)$ and
replace ``$\aV$-constrained $\aF$-model structure'' with ``$\aV$-constrained complete $\aF$-model structure''.) 

\begin{thm}[Variant $\aV$-constrained $\aF$-model structures]
\label{thm:varAmodel}
Let $G$ be a compact Lie group, and $(\aF,\aV)$ a family representation
constraint with $\aV$ standard compatible.  Let $\aA$ be a
set of spacewise non-degenerately based $G$-spectra with $\aA^{\natural}(\aF,\aV)\subset 
\aA$. Then the category of $G$-spectra admits a topological model structure where:
\begin{enumerate}
\item The cofibrations are the $I_{\aA}$-regular cofibrations.
\item The fibrations are the maps $X\to Y$ such that for every $A\in \aA$:
\begin{itemize}
\item The induced map of mapping spaces $\Spectra^{G}(A,X)\to
\Spectra^{G}(A,Y)$ is a Serre fibration, and 
\item For every cofibrant object $\tilde A$ in the 
$\aV$-constrained $\aF$-model structure and $\aF$-equivalence
$\tilde A\to A$, the square of
mapping spaces
\[
\xymatrix@-1pc{%
\Spectra^{G}(A,X)\ar[r]\ar[d]&\Spectra^{G}(\tilde A,X)\ar[d]\\
\Spectra^{G}(A,Y)\ar[r]&\Spectra^{G}(\tilde A,Y)
}
\]
is homotopy cartesian.
\end{itemize}
\item The weak equivalences are the $\aF$-equivalences.
\end{enumerate}
Moreover:
\begin{itemize}
\item If every object in $\aA$ is topologically point-set small, then the
cofibrations are the retracts of the relative $\aA$-cell complexes.
\item If all objects $A$ of $\aA$ are flat in the sense that
$A\sma(-)$ preserves weak equivalences, then all cofibrant objects are
flat and this model structure satisfies the unit axiom and monoid
axiom of Schwede-Shipley~\cite[3.2,3.3]{SSAlgMod}.
\item If for all $A,B\in \aA$, $A\sma B$ is cofibrant in this model
structure, then this model structure satisfies the pushout-product
axiom of Schwede-Shipley~\cite[3.4]{SSAlgMod}.
\end{itemize}
\end{thm}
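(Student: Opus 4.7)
The plan is to apply the standard recognition theorem for cofibrantly generated topological model structures, taking $I = I_{\aA}$ as the generating cofibrations and constructing a set $J$ of generating acyclic cofibrations that detects the two-part fibration condition. The cofibrations of the putative model structure are then by definition the retracts of regular $I_{\aA}$-cofibrations, and the smallness hypothesis for the small object argument is satisfied because the generators are built from spacewise non-degenerately based spectra and all pushouts and transfinite compositions in our setup run along Hurewicz cofibrations (so the topological smallness discussion preceding Proposition~\ref{prop:tpssmall} applies).

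I would construct $J = J_{1} \cup J_{2}$. The set $J_{1}$ consists of the maps $A \sma\bigl((D^{n}\times\{0\})_{+}\to (D^{n}\times[0,1])_{+}\bigr)$ for $A\in\aA$ and $n\geq 0$; these detect the Serre fibration condition on $\Spectra^{G}(A,-)$. The set $J_{2}$ detects the homotopy cartesian condition: for each $A\in\aA$, fix a cofibrant approximation $\tilde A\to A$ in the $\aV$-constrained $\aF$-model structure (which exists since $\aA^{\natural}(\aF,\aV)\subset\aA$), factor it as an $\aA^{\natural}(\aF,\aV)$-cell complex $\tilde A\to A'$ followed by a weak equivalence $A'\to A$, and let $J_{2}$ contain the pushout-products of $\tilde A\to A'$ with the boundary inclusions $\partial D^{n}_{+}\to D^{n}_{+}$, together with analogous maps built from mapping cylinders of $A'\to A$ needed to express the homotopy cartesian condition via lifts. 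Restricting the choices of $\tilde A$ to a small generating family (functorially produced by the small object argument applied to $\aA^{\natural}(\aF,\aV)$-cells) keeps $J$ a set.

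The recognition theorem then requires four verifications: (a) the $\aF$-equivalences satisfy 2-of-3 and are closed under retracts, which is inherited from the $\aV$-constrained $\aF$-model structure; (b) every $J$-cell is an $I_{\aA}$-cofibration and an $\aF$-equivalence, which follows by transfinite induction using the pushout-product axiom in the underlying $\aV$-constrained $\aF$-model structure together with the fact that $J_{1}$-attachments are levelwise homotopy equivalences; (c) right lifting against $I_{\aA}$ produces an $\aF$-acyclic fibration, which uses $\aA^{\natural}(\aF,\aV)\subset\aA$ to reduce to the known characterization in Theorem~\ref{thm:FVmodel}; and (d) right lifting against $J$ is exactly the stated fibration condition, which is a direct translation once the pushout-product formulations in $J_{2}$ are chosen correctly. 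I expect the principal obstacle to be step (d), namely setting up $J_{2}$ so that lifting against it encodes the homotopy cartesian square rather than merely a weak equivalence; this is where a careful mapping-cylinder construction and the use of pushout-products with both boundary inclusions and $J_{1}$-type horns are essential. The topological (SM7) enrichment then follows formally from the pushout-product axiom in spaces.

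For the \emph{moreover} clauses: the first is immediate from Proposition~\ref{prop:tpssmall}, which identifies regular $I_{\aA}$-cofibrations with relative $\aA$-cell complexes when every $A\in\aA$ is topologically point-set small. For the flatness clause, if each $A\in\aA$ is flat, then an induction along the transfinite filtration of an arbitrary cell complex, using that pushouts along Hurewicz cofibrations preserve weak equivalences of diagrams and that filtered colimits of $\aF$-equivalences along such cofibrations are $\aF$-equivalences, shows that every cofibrant object is flat; the unit and monoid axioms of~\cite[3.2, 3.3]{SSAlgMod} then reduce to these two ingredients in the standard way. Finally, for the pushout-product axiom, the pushout-product of two generating cofibrations $A\sma(\partial D^{m}_{+}\to D^{m}_{+})$ and $B\sma(\partial D^{n}_{+}\to D^{n}_{+})$ is $(A\sma B)\sma\bigl(\partial(D^{m}\times D^{n})_{+}\to(D^{m}\times D^{n})_{+}\bigr)$, which is a cell attachment on the cofibrant object $A\sma B$ and hence a cofibration; the full axiom follows by the usual reduction to generators, and acyclicity of one factor is propagated through by the flatness clause already established.
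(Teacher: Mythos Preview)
The paper explicitly omits the proof of this theorem: the section opens by declaring ``we regard the techniques as well-known enough to omit all proofs,'' and Theorem~\ref{thm:varAmodel} is stated without argument. Your sketch is exactly the kind of standard argument the authors have in mind, and the overall architecture (recognition theorem with $I=I_{\aA}$, a two-part $J$ detecting the level-fibration and homotopy-cartesian conditions, then the three moreover clauses handled by Proposition~\ref{prop:tpssmall} and routine cell-induction) is correct.

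One place your outline could be tightened: your factorization of $\tilde A\to A$ as an $\aA^{\natural}$-cell complex $\tilde A\to A'$ followed by a weak equivalence $A'\to A$ is a factorization in the \emph{old} model structure, which then forces you into the mapping-cylinder maneuver for $A'\to A$. The issue is that the mapping-cylinder inclusion is a Hurewicz cofibration but not obviously an $I_{\aA}$-cofibration, which you need for condition (b) ($J$-cell $\subset I$-cof). A cleaner route is to first set up the $\aA$-\emph{level} model structure (cofibrations $=I_{\aA}$-cofibrations, fibrations detected by your $J_{1}$, weak equivalences the $\aA$-level equivalences), then factor $\tilde A\to A$ in \emph{that} structure as an $I_{\aA}$-cofibration $\tilde A\to A''$ followed by an $\aA$-level acyclic fibration $A''\to A$. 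The pushout-products of $\tilde A\to A''$ with $\partial D^{n}_{+}\to D^{n}_{+}$ are then manifestly $I_{\aA}$-cofibrations and $\aF$-equivalences, and since $A''\to A$ is a level equivalence the homotopy-cartesian condition for $\tilde A\to A''$ agrees with that for $\tilde A\to A$; no mapping cylinder is needed. This is the standard Bousfield-localization packaging (as in Hirschhorn or the stable-model-structure arguments of MMSS), and is likely what the authors have in mind. Your identification of step (d) as the principal obstacle is accurate, and Proposition~\ref{prop:varAfib} confirms that a single choice of $\tilde A$ per $A\in\aA$ suffices.
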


We also have the following alternative identification of the
fibrations.  (It mainly changes the quantifier on the $\aF$-equivalences
$\tilde A\to A$ in the last bullet point.)

\begin{prop}\label{prop:varAfib}
For $\aA$, $\aF$, $\aV$ in the previous theorem, the fibrations in the
model structure there are precisely the maps $X\to Y$ that satisfy all
of the following conditions.
\begin{itemize}
\item $X\to Y$ is a fibration in the $\aV$-constrained $\aF$-model structure.
\item For every $A\in \aA$, the map $\Spectra^{G}(A,X)\to
\Spectra^{G}(A,Y)$ is a Serre fibration.
\item For every $A\in \aA$, there exists a cofibrant object $\tilde A$
in the $\aV$-constrained $\aF$-model structure and an
$\aF$-equivalence $\tilde A\to A$ such that the square 
\[
\xymatrix@-1pc{%
\Spectra^{G}(A,X)\ar[r]\ar[d]&\Spectra^{G}(\tilde A,X)\ar[d]\\
\Spectra^{G}(A,Y)\ar[r]&\Spectra^{G}(\tilde A,Y)
}
\]
is homotopy cartesian.
\end{itemize}
\end{prop}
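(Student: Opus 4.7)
The plan is to verify that the two characterizations of fibrations in Theorem~\ref{thm:varAmodel} and Proposition~\ref{prop:varAfib} agree by checking both containments. The easier direction is that a fibration $X\to Y$ in the variant model structure satisfies the proposition's conditions: the Serre fibration condition transfers directly; the ``there exists'' homotopy cartesian condition is a weakening of the ``for every'' one in the theorem; and $X\to Y$ is automatically a fibration in the $\aV$-constrained $\aF$-model structure because that model structure has the same weak equivalences as the variant but has fewer cofibrations (since $\aA^{\natural}(\aF,\aV)\subset \aA$ makes every relative $\aA^{\natural}(\aF,\aV)$-cell complex an $I_{\aA}$-regular cofibration), and hence more fibrations by the right lifting property against a smaller class of acyclic cofibrations.

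The substantive direction is the converse, and the only nontrivial point is upgrading the homotopy cartesian condition from ``there exists a cofibrant approximation'' to ``for every cofibrant approximation''. Fix $A\in\aA$, suppose the square is homotopy cartesian for one cofibrant approximation $\tilde A_0\to A$, and let $\tilde A_1\to A$ be any other cofibrant approximation. The plan is to compare both to a common thickening: factor the induced map $\tilde A_0\sqcup\tilde A_1\to A$ in the $\aV$-constrained $\aF$-model structure as a cofibration followed by an acyclic fibration $\tilde A_0\sqcup\tilde A_1\to\tilde A_2\to A$. The inclusions $\tilde A_i\to\tilde A_0\sqcup\tilde A_1\to\tilde A_2$ are then cofibrations between cofibrant objects (the first being a pushout of $\emptyset\to\tilde A_{1-i}$), and by 2-out-of-3 applied to the triangle $\tilde A_i\to\tilde A_2\to A$ with both legs weak equivalences, they are acyclic cofibrations.

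The fibration hypothesis on $X\to Y$ in the $\aV$-constrained $\aF$-model structure is then the essential ingredient: combined with the topological pushout-product axiom that holds in the symmetric monoidal topological model structure of Theorem~\ref{thm:FVmodel}, it implies that the ``transition squares''
\[
\xymatrix@-1pc{
\Spectra^{G}(\tilde A_2,X)\ar[r]\ar[d]&\Spectra^{G}(\tilde A_i,X)\ar[d]\\
\Spectra^{G}(\tilde A_2,Y)\ar[r]&\Spectra^{G}(\tilde A_i,Y)
}
\]
are homotopy cartesian for $i=0,1$ (indeed the map from $\Spectra^{G}(\tilde A_2,X)$ to the pullback is an acyclic Serre fibration). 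Two applications of the pasting lemma for homotopy cartesian squares complete the argument: first, pasting the given square for $\tilde A_0$ with the $\tilde A_0$-transition square shows that the square for $\tilde A_2$ is homotopy cartesian; then pasting that with the $\tilde A_1$-transition square delivers the desired square for $\tilde A_1$. The main obstacle throughout is this ``for all from there exists'' upgrade, and the auxiliary $\aV$-constrained fibration hypothesis is precisely what supplies the enriched pushout-product input needed to make the pasting argument go through.
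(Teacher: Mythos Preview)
Your proof is correct. The paper explicitly omits all proofs in this section, stating that the techniques are ``well-known enough to omit all proofs,'' so there is nothing to compare against; your argument supplies exactly the kind of standard model-category reasoning the authors had in mind, and the key step---using the $\aV$-constrained fibration hypothesis together with SM7 to make the transition squares homotopy cartesian and then pasting---is the right one.
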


When the model structure satisfies
the unit axiom, monoid axiom, and pushout-product axiom, we get corresponding model
structures on categories of modules and algebras by~\cite{SSAlgMod}.
For commutative rings and commutative algebras we need additional
hypotheses.

\begin{thm}\label{thm:multAvar}
Let $G$ be a compact Lie group, and $(\aF,\aV)$ a family representation
constraint with $\aV$ standard compatible.  Let $\aA$ be a set of spacewise non-degenerately based
$G$-spectra with $\aA^{\natural}(\aF,\aV)\subset \aA$.
Assume that the smash product of any two elements of $\aA$ is
cofibrant in the $\aA$-variant $\aV$-constrained $\aF$-model
structure and assume that
for every $A\in \aA$, and every $n\geq 1$:
\begin{enumerate}
\item the $n$th symmetric power $A^{(n)}/\Sigma_{n}$ is flat for the
smash product of $G$-spectra, and
\item for every cofibrant approximation $\tilde A\to A$ in the
$\aV$-constrained $\aF$-model structure, the induced map on
$n$th symmetric powers 
\[
\tilde A^{(n)}/\Sigma_{n} \to A^{(n)}/\Sigma_{n}
\]
is an $\aF$-equivalence.
\end{enumerate}
Then the forgetful functor from commutative ring
$G$-spectra to $G$-spectra with the
$\aA$-variant $\aV$-constrained $\aF$-model structure creates
fibrations and weak equivalences in a model structure.
Moreover, if $\aA$ is closed up to retract under $n$th symmetric powers for
all $n\geq 1$, then commutative ring
$G$-spectra that are cofibrant in this model structure are
cofibrant in the undercategory of $\bS$ in
the $\aA$-variant $\aV$-constrained $\aF$-model structure on
$G$-spectra. 
\end{thm}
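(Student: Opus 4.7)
The plan is to follow the standard lifting-via-small-object-argument strategy for constructing a model structure on commutative monoids in a symmetric monoidal model category, as worked out in Mandell-May~\cite{MM} and refined in~\cite{Stolz-Thesis} and~\cite[3.3.12]{BrunDundas}, adapted to the $\aA$-variant $\aV$-constrained $\aF$-model structure of Theorem~\ref{thm:varAmodel}. Specifically, the generating cofibrations and generating acyclic cofibrations for the desired model structure on commutative ring $G$-spectra are $\Sym(I_{\aA})$ and $\Sym(J)$, respectively, where $\Sym$ denotes the free commutative algebra functor and $J$ is any set of generating acyclic cofibrations of the underlying $\aA$-variant model structure. Smallness in the sense required for the small object argument is inherited: for $A \in \aA$, $\Sym(A) = \bigvee_n A^{(n)}/\Sigma_n$, and under hypothesis (i) each $A^{(n)}/\Sigma_n$ is a spacewise closed inclusion-sized colimit of topologically point-set small objects, so the smallness of Proposition~\ref{prop:tpssmall} propagates across the free-commutative-algebra construction.

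The key technical input, as always in this type of lifting, is the analysis of a pushout of commutative rings of the form $R \to R \sma_{\Sym A} \Sym B$ along a map $\Sym(A) \to \Sym(B)$ where $A \to B$ is a generating cofibration (or acyclic cofibration). One invokes the classical filtration
\[
\Sym A = P^{0}(A\to B) \to P^{1}(A\to B) \to \cdots \to P^{n}(A \to B) \to \cdots \to \Sym B
\]
whose $n$th subquotient is $(B/A)^{(n)}/\Sigma_{n}$ smashed over $\Sym A$; smashing with $R$ gives a filtration of $R\sma_{\Sym A}\Sym B$ whose associated graded pieces are built from $R$ smashed with $(B/A)^{(n)}/\Sigma_n$ and with the symmetric powers $A^{(k)}/\Sigma_k$. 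Hypothesis (i) (flatness of all symmetric powers) ensures smashing these subquotients with anything preserves weak equivalences, so the filtration stages are weak equivalences when the generators of the cell on $B/A$ come from $J$. Hypothesis (ii) is used to replace $A$ by a cofibrant approximation in the $\aV$-constrained $\aF$-model structure without changing the weak homotopy type of any $A^{(n)}/\Sigma_n$, reducing the verification of acyclicity to the flat, cofibrant case where it is standard.

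For the final clause, one unwinds the small object argument: when $\aA$ is closed up to retract under $n$th symmetric powers, then for each $A\in \aA$ and $n\geq 1$, the object $A^{(n)}/\Sigma_{n}$ lies in (the retract closure of) $\aA$. Consequently, every pushout along a generator $\Sym(A) \sma (\partial D^{k}_{+}\to D^{k}_{+})$ is, after applying the filtration above and expanding pushout-products of cells, a sequential composition of pushouts along maps in $I_{\aA}$ in the category of $G$-spectra under $\bS$. Thus the underlying map $\bS \to R$ of a cofibrant commutative ring $R$ is a retract of a relative $\aA$-cell complex, which is precisely cofibrancy in the undercategory of $\bS$ in the $\aA$-variant model structure.

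The main obstacle is the verification that pushouts along $\Sym(j)$, for $j\in J$ an acyclic cofibration, are weak equivalences in the category of commutative ring $G$-spectra, since this requires combining the filtration analysis with both symmetric-power hypotheses and with the flatness and pushout-product conclusions of Theorem~\ref{thm:varAmodel} to ensure that each subquotient is an $\aF$-equivalence that smashes correctly into $R$. Once this acyclicity is established, the rest of the model category axioms follow from standard formal arguments.
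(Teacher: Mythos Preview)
The paper explicitly omits the proof of this theorem, stating at the start of the section that ``We claim no novelty on these results and in fact we regard the techniques as well-known enough to omit all proofs.'' Your outline follows precisely the standard transfer argument (as in \cite{MM}, \cite{Stolz-Thesis}, \cite{BrunDundas}) that the authors are deferring to: generate via $\Sym(I_{\aA})$ and $\Sym(J)$, analyze pushouts along $\Sym(j)$ via the filtered colimit whose subquotients are symmetric powers of the cofiber, and use hypotheses~(i) and~(ii) to control these subquotients. The final clause about cofibrancy under $\bS$ is also handled the standard way you describe.

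One small correction: your justification of smallness is garbled---hypothesis~(i) concerns flatness, not any decomposition of $A^{(n)}/\Sigma_{n}$ as a colimit of small objects. This is not a real obstacle, since (as the paper notes just after Definition~\ref{defn:cell}) every orthogonal $G$-spectrum is small relative to spacewise closed inclusions at a sufficiently large cardinal, so the small object argument runs regardless of whether the objects in $\aA$ are topologically point-set small in the stronger sense of Definition~\ref{defn:tpssmall}. You should also be a little more careful about the shape of the generating acyclic cofibrations $J$ in the $\aA$-variant structure: they are not simply $A\sma(\partial D^{n}\to D^{n})_{+}$ but involve the additional maps encoding the homotopy-cartesian condition in Theorem~\ref{thm:varAmodel}; the filtration analysis still applies since the cofibers of these maps are built from objects of $\aA$ smashed with weakly contractible spaces.
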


We note that in the previous theorem, $G/H_{+}\sma \bS$ does not satisfy
condition~(ii) and so cannot be an element of $\aA$. In particular,
$\aV(H)$ must contain only positive dimensional isomorphism classes
for every $H\in \aF$.  

The hypotheses in Theorem~\ref{thm:multAvar} include the
extra hypotheses of Theorem~\ref{thm:varAmodel} that imply the unit,
monoid, and pushout product axioms, and in particular give model
structures on categories of $R$-modules and $R$-algebras.
Since a commutative $R$-algebra is just a map out of $R$ in the
commutative ring category, the theorem also constructs model
structures on categories of commutative $R$-algebras.

Several theorems in this section assert symmetric monoidal model
structures (in some cases under extra hypotheses).  When the
$\aA$-variant $\aV$-constrained $\aF$-model structure is
symmetric monoidal, then the model is
enriched over itself in the sense of~\cite[3.3]{LewisMandell2}, and
\textit{a fortiori} enriched over the $\aV$-constrained $\aF$-model
category of $G$-spectra. We caution that it is
not always the case that the $\aA$-variant $\aV$-constrained
$\aF$-model category of $G$-spectra is enriched over
a model structure on the category of non-equivariant spectra: this
depends on whether the objects in $\aA$ smashed with the 
non-equivariant desuspension spectra $F_{\bR^{n}}S^{0}$ are cofibrant
in the $\aA$-variant $\aV$-constrained $\aF$-model
structure. (This does not hold for example when
$G$ is finite or $G=\bT$ and $A=\aA^{\natural}(\aF,\aV_{\reg})$ for
any family $\aF$ containing a non-trivial subgroup.) Using
the formulation of enrichment in terms of the Pushout-Tensor Axiom 
of~\cite[3.4]{LewisMandell2}, this
is clearly a necessary condition and given the characterization of the
cofibrations, it is easy to see that it also sufficient.

\begin{prop}\label{prop:enriched}
With notation as in Theorem~\ref{thm:varAmodel}, assume that for every
$n\geq 1$ and every $A\in \aA$, the $G$-spectrum $A\sma
F_{\bR^{n}}S^{0}$ is cofibrant in the $\aA$-variant 
$\aV$-constrained $\aF$-model structure; then this model structure is enriched
over the standard model structure on non-equivariant spectra in the
sense of~\cite[3.3]{LewisMandell2}.
\end{prop}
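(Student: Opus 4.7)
The plan is to verify the Pushout-Tensor Axiom of~\cite[3.4]{LewisMandell2}, which is the formal content of the asserted enrichment: for any cofibration $f$ in the $\aA$-variant $\aV$-constrained $\aF$-model structure and any cofibration $g$ in the standard model structure on non-equivariant orthogonal spectra, the pushout-product map $f \square g$, formed via the smash action of non-equivariant spectra on $G$-spectra, must be a cofibration in the $\aA$-variant model structure, and acyclic whenever either $f$ or $g$ is. Both model structures are cofibrantly generated and $\square$ commutes with retracts, pushouts, wedges, and transfinite compositions in each variable, so it suffices to check the axiom when $f$ and $g$ range over the generating (acyclic) cofibrations.

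For the pure cofibration case, I would take $f = A \sma (\partial D^n_+ \to D^n_+)$ for some $A \in \aA$ and $g = F_{\bR^m}S^0 \sma (\partial D^k_+ \to D^k_+)$. Rearranging smash factors gives
\[
f \square g \iso (A \sma F_{\bR^m}S^0) \sma \bigl((\partial D^n_+ \to D^n_+) \,\square\, (\partial D^k_+ \to D^k_+)\bigr),
\]
whose second factor is a cell inclusion of based spaces. The standing hypothesis is exactly that $A \sma F_{\bR^m}S^0$ is cofibrant in the $\aA$-variant model structure, and since that model structure is topological by Theorem~\ref{thm:varAmodel}, smashing a cofibrant $G$-spectrum with a cofibration of based spaces produces a cofibration of $G$-spectra. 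The same calculation, with the space-level cell inclusion on either side replaced by the corresponding horn inclusion, produces an acyclic cofibration of based spaces in the second factor, and topological enrichment again converts this into an acyclic cofibration in the $\aA$-variant model structure. This dispenses with all pushout-products involving the horn-type generating acyclic cofibrations on the non-equivariant side.

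The only remaining case concerns the stabilization-type generating acyclic cofibrations on the non-equivariant side, namely mapping-cylinder versions of the maps $F_{\bR^{m+n}} S^n \to F_{\bR^m}S^0$ witnessing stable equivalences beyond those detected on spaces. For these, the pushout-product against $A \sma (\partial D^n_+ \to D^n_+)$ reduces to smashing $A$ against a stable equivalence between cofibrant non-equivariant spectra built from the $F_{\bR^j}S^0$, and I would need to show that the result is an $\aF$-equivalence of $G$-spectra. The main technical work is here: using the natural isomorphism $A \sma F_{\bR^m}S^0 \iso F_{\bR^m}(A)$ and the fact that $F_{\bR^m}$ is the left adjoint to the $m$th-space functor, together with the hypothesis that all $A \sma F_{\bR^m}S^0$ are cofibrant (and hence spacewise non-degenerately based Hurewicz cofibrations from the base point), one reduces to the standard computation that stabilization maps become $\aF$-equivalences after smashing with an object built from orbit desuspensions. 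Once this step is in hand, all four generator-level conditions hold, and the axiom follows by the standard closure properties.
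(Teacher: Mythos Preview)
Your approach---verify the Pushout-Tensor Axiom of~\cite[3.4]{LewisMandell2} on generators---is exactly what the paper has in mind; the paper offers no proof beyond ``it is easy to see,'' so you are supplying the missing argument. Your treatment of the cofibration case is correct, and the factorization
\[
f \mathbin{\square} g \iso (A \sma F_{\bR^m}S^0) \sma \bigl(i' \mathbin{\square} i\bigr)
\]
together with the topological enrichment is the right mechanism.

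There is, however, a genuine gap in the acyclicity analysis. You only address the case where $f$ is a horn-type generating acyclic cofibration $A \sma (\text{horn})$ on the $\aA$-variant side. But the weak equivalences in that model structure are the $\aF$-equivalences, not level equivalences, so its generating acyclic cofibrations necessarily include stable-type maps beyond the horns (the paper does not list them, but they are there). You must still show that for an arbitrary acyclic cofibration $f$ in the $\aA$-variant structure, the map $f \sma F_{\bR^m}S^0$ is again acyclic.

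Your sketch for the stabilization-type acyclics on the non-equivariant side has a related problem. The displayed isomorphism ``$A \sma F_{\bR^m}S^0 \iso F_{\bR^m}(A)$'' is not meaningful ($F_{\bR^m}$ takes a based space, not a spectrum), and the reduction to ``an object built from orbit desuspensions'' is unjustified: the hypothesis says only that $A \sma F_{\bR^m}S^0$ is cofibrant in the $\aA$-variant structure, not in any standard one, and $A$ itself is an arbitrary element of $\aA$.

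Both gaps close with a single standard fact you should invoke directly: for any $G$-spectrum $X$, the map $X \sma \lambda_m \colon X \sma F_{\bR^{m+1}}S^1 \to X \sma F_{\bR^m}S^0$ is a $\pi_*^H$-isomorphism for every closed $H<G$ (the equivariant analogue of~\cite[8.6]{MMSS}; equivalently, $F_{\bR^m}S^0$ is flat, a trivial instance of Theorem~\ref{thm:bundleflat}). This immediately gives that $- \sma F_{\bR^m}S^0$ preserves all $\aF$-equivalences, so it is a left Quillen endofunctor of the $\aA$-variant structure; that handles your missing case. And it shows $A \sma \lambda_m$ is an $\aF$-equivalence regardless of any cofibrancy or structural assumptions on $A$, which is what your stabilization paragraph actually needs.
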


\section[Theorem~\sref$main:commod$]%
{The multiplicative geometric fixed point functor on variant
model structures and Theorem~\sref$main:commod$}
\label{sec:geofix}
\label{sec:commod}

The purpose of this section is to provide enough examples of
$\aA$-variant $\aV$-constrained model structures to be able to construct
the left derived functor of the multiplicative geometric fixed point
functor $\Phi^{H}$ on the category of commutative ring
$G$-spectra.  As a consequence, we prove
Theorem~\ref{main:commod} from the introduction; at the end of the
section, we review the individual definitions and results to give a
self-contained description of a model structure with the
properties asserted in Theorem~\ref{main:commod}.

Some of the proofs in this section involve the properties of the generalized orbit
desuspension spectra studied in
Chapter~\ref{chap:bundleone}--\ref{chap:bundletwo}.
The background needed on the theory of generalized orbit desuspension
spectra is Proposition~\ref{prop:Sym}, which relates the symmetric
power construction to the theory and Theorem~\ref{thm:bundlegeofix},
which under a mild faithfulness hypothesis
(Definition~\ref{defn:Sigmafaithful}) relates the geometric fixed point
construction to the theory.  The faithfulness hypothesis of
Definition~\ref{defn:isf} is also needed in places.  With this
background, the arguments below rely on specific references to results
in Chapter~\ref{chap:bundleone}.

We begin with the following basic result.

\begin{thm}
Let $G$ be a compact Lie group, $\aF,\aV$ a family representation
constraint with $\aV$ standard compatible, and $\aA$ a set of
spacewise non-degenerately based $G$-spectra with
$\aA^{\natural}(\aF,\aV)\subset \aA$. Fix a closed subgroup $H<G$ and
a family $\aF_{WH}$ of closed subgroups of $WH$.  Assume that for
every $A\in \aA$ and every cofibrant approximation $\tilde A\to A$ in
the $\aV$-constrained $\aF$-model structure, the induced map
$\Phi^{H}\tilde A\to \Phi^{H}A$ is an 
$\aF_{WH}$-equivalence.  Then $\Phi^{H}$ sends
$\aF$-equivalences between cofibrant objects in the
$\aA$-variant $\aV$-constrained $\aF$-model structure to
$\aF_{WH}$-equivalences of $WH$-spectra.
Moreover, for any cofibrant object $X$ in the $\aA$-variant 
$\aV$-constrained $\aF$-model structure and any cofibrant approximation $\tilde
X\to X$ in the $\aV$-constrained $\aF$-model structure, the induced map
$\Phi^{H}\tilde X\to \Phi^{H}X$ is an
$\aF_{WH}$-equivalence. 
\end{thm}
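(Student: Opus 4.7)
The plan is to prove the ``moreover'' clause first by a transfinite cellular induction, and then derive the preservation statement from it using a 2-out-of-3 argument. The key geometric inputs are that $\Phi^{H}$ commutes with smashing by based spaces with trivial $G$-action, commutes with pushouts along Hurewicz cofibrations, and commutes with filtered colimits of spacewise closed inclusions.

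For the moreover clause I would first fix, for each $A\in \aA$, a specific cofibrant approximation $\tilde A\to A$ in the $\aV$-constrained $\aF$-model structure of Theorem~\ref{thm:FVmodel}, arranged to be an acyclic fibration via functorial factorization; the standing hypothesis then guarantees $\Phi^{H}\tilde A\to \Phi^{H}A$ is an $\aF_{WH}$-equivalence. Writing a cofibrant $X$ (up to retract) as a regular $I_{\aA}$-cofibration $\emptyset=X_{0}\to X_{\lambda}=X$, I would build in parallel a tower of $\aV$-constrained cofibrant approximations $\tilde X_{\alpha}\to X_{\alpha}$: at the successor stage attaching the cells $\coprod A_{i}\sma (\partial D^{n_{i}}_{+}\to D^{n_{i}}_{+})$, lift the composite $\coprod \tilde A_{i}\sma \partial D^{n_{i}}_{+}\to \coprod A_{i}\sma \partial D^{n_{i}}_{+}\to X_{\alpha}$ against the acyclic fibration $\tilde X_{\alpha}\to X_{\alpha}$ (the source is $\aV$-constrained cofibrant by topological enrichment) and form $\tilde X_{\alpha+1}$ as the corresponding pushout. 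Applying $\Phi^{H}$ to the two pushout squares, the hypothesis on each $\tilde A_{i}\to A_{i}$ together with the gluing lemma for $\aF_{WH}$-equivalences promotes the inductive hypothesis through the successor step; at limit ordinals both towers consist of Hurewicz cofibrations (by the spacewise non-degenerate basedness assumption on $\aA$), so $\Phi^{H}$ commutes with the defining colimits and the induction continues. Retracts are handled at the end using independence of the conclusion from the choice of approximation, which in turn follows from the next paragraph.

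Given the moreover clause, the first assertion follows by a standard 2-out-of-3 argument. Specializing the standing hypothesis to cells in $\aA^{\natural}(\aF,\aV)\subset \aA$ (where one may take $\tilde A = A$), the moreover clause reduces to the fact that $\Phi^{H}$ sends $\aF$-equivalences between cofibrant objects of the $\aV$-constrained $\aF$-model structure to $\aF_{WH}$-equivalences. For an $\aF$-equivalence $X\to Y$ between $\aA$-variant cofibrant objects, choose $\aV$-constrained acyclic fibration approximations $\tilde X\to X$ and $\tilde Y\to Y$, lift the composite $\tilde X\to X\to Y$ through $\tilde Y\to Y$, and note that the lift $\tilde X\to \tilde Y$ is an $\aF$-equivalence of $\aV$-constrained cofibrant objects. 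Then $\Phi^{H}\tilde X\to \Phi^{H}\tilde Y$ is an $\aF_{WH}$-equivalence by the specialization just recorded, and applying the moreover clause to $\tilde X\to X$ and $\tilde Y\to Y$ yields the desired $\aF_{WH}$-equivalence $\Phi^{H}X\to \Phi^{H}Y$.

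The main obstacle is managing coherence across the transfinite cellular construction: one must ensure that the lifted attaching maps assemble into an approximation tower whose transition maps are spacewise closed inclusions at every stage, so that $\Phi^{H}$ commutes with both the successor pushouts and the limit-ordinal colimits. This is where the spacewise non-degenerate basedness hypothesis on $\aA$ (together with Proposition~\ref{prop:tpssmall}) pays off: it forces the original cellular transition maps to be Hurewicz cofibrations, and this property is propagated to the approximating tower through the lifts against the acyclic fibrations, making the colimit compatibilities of $\Phi^{H}$ available uniformly in $\alpha$.
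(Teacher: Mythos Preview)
Your proposal is correct and is essentially a careful unpacking of what the paper compresses into two sentences: the paper merely observes that $\Phi^{H}$ carries an $I_{\aA}$-regular cofibration to an $I_{\aA^{\Phi H}}$-regular cofibration (for $\aA^{\Phi H}=\{\Phi^{H}A\mid A\in\aA\}$) and then invokes ``standard arguments''; your parallel-tower transfinite induction and subsequent 2-out-of-3 deduction are exactly those standard arguments spelled out. One small wording issue: where you say the moreover clause ``reduces to'' the fact that $\Phi^{H}$ preserves $\aF$-equivalences between $\aV$-constrained cofibrant objects, you mean it \emph{implies} that fact as the special case where $X$ itself is already $\aV$-constrained cofibrant (so $\tilde X\to X$ is any $\aF$-equivalence between such objects); the logic is sound but the phrasing momentarily suggests a circularity that is not there. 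The reference to Proposition~\ref{prop:tpssmall} is also inessential---the spacewise non-degenerate basing by itself gives the Hurewicz cofibration property you need at every stage of the transfinite composition.
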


\begin{proof}
Let $\aA^{\Phi H}$ be the set of $WH$-spectra 
\[
\aA^{\Phi H}=\{\Phi^{H}A\mid A\in \aA\}.
\]
By transfinite induction, $\Phi^{H}$ sends an
$I_{\aA}$-regular cofibration to the corresponding
$\aI_{\aA^{\Phi H}}$-regular cofibration.  Standard arguments then prove
the statements.
\end{proof}

\begin{cor}\label{cor:modelPhi}
Let $G$ be a compact Lie group, and $\aF,\aV$ a family representation
constraint with $\aV$ standard compatible.  Let $\aA$ be a set of
spacewise non-degenerately based 
$G$-spectra satisfying the hypotheses of
Theorem~\ref{thm:multAvar} that is closed under $n$th symmetric
powers for all $n\geq 1$.  Fix a closed subgroup $H<G$ and a family
$\aF_{WH}$ of closed subgroups of $WH$.  Assume that for every $A\in
\aA$ and every cofibrant approximation $\tilde A\to A$ in the
$\aV$-constrained $\aF$-model structure, the induced map
$\Phi^{H}\tilde A\to \Phi^{H}A$ is an 
$\aF_{WH}$-equivalence.  Then $\Phi^{H}$ sends
$\aF$-equivalences between cofibrant objects in the
$\aA$-variant $\aV$-constrained $\aF$-model structure on
commutative ring $G$-spectra to
$\aF_{WH}$-equivalences of commutative ring $WH$-spectra.  Moreover, for any commutative
ring $G$-spectrum $R$ that is cofibrant in the $\aA$-variant
$\aV$-constrained $\aF$-model structure and any 
cofibrant approximation $\tilde X\to R$ in the
$\aV$-constrained $\aF$-model structure on $G$-spectra, the induced map 
$\Phi^{H}\tilde X\to \Phi^{H}R$ is an $\aF_{WH}$-equivalence.
\end{cor}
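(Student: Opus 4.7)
The plan is to leverage the preceding theorem by constructing, for each cofibrant commutative ring $G$-spectrum $R$, a specific cofibrant approximation $\tilde R\to R$ in the $\aV$-constrained $\aF$-model structure on $G$-spectra such that $\tilde R$ is also cofibrant in the $\aA$-variant structure and $\Phi^H\tilde R\to\Phi^H R$ is an $\aF_{WH}$-equivalence. Both conclusions of the corollary will then follow by two-out-of-three: for the first statement, an $\aF$-equivalence $R\to R'$ between cofibrant commutative ring $G$-spectra can be lifted (via standard model category arguments applied to factorizations of $\tilde R\to R$ and $\tilde R'\to R'$ as cofibrations followed by acyclic fibrations) to an $\aF$-equivalence $\tilde R\to\tilde R'$ between objects cofibrant in the $\aA$-variant structure, at which point the preceding theorem gives the required $\aF_{WH}$-equivalence on $\Phi^H$; for the moreover clause, any other cofibrant approximation $\tilde X\to R$ in the $\aV$-constrained structure admits a lift $\tilde X\to\tilde R$ by a similar argument, and the preceding theorem applied to this lift yields the claim.

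To build $\tilde R$, I would use Theorem~\ref{thm:multAvar} to present $R$ as a transfinite composition of pushouts starting from $\bS$ along free commutative algebra maps $\Sym(A\sma\partial D^n_+)\to\Sym(A\sma D^n_+)$ for $A\in\aA$. Choose a cofibrant approximation $\bS^{c}\to\bS$ in the $\aV$-constrained structure and, for each attaching cell $A$, a cofibrant approximation $\tilde A\to A$; then mimic the cellular construction of $R$ starting from $\bS^{c}$ with $\tilde A$ in place of $A$ throughout. Proposition~\ref{prop:Sym} identifies the underlying $G$-spectra of the free commutative algebra cells in terms of the symmetric powers $A^{(n)}/\Sigma_n$, which lie in $\aA$ by the closure hypothesis, so $\tilde R$ is cofibrant in both the $\aV$-constrained and the $\aA$-variant structures. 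The comparison map $\tilde R\to R$ is an $\aF$-equivalence by transfinite induction, using the hypothesis on cofibrant approximations of cells and the standard behavior of $\aF$-equivalences under pushouts along cofibrations.

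The key step is verifying that $\Phi^H\tilde R\to\Phi^H R$ is an $\aF_{WH}$-equivalence. Here I would use the fact (from the proof of the preceding theorem) that $\Phi^H$ converts $I_\aA$-regular cofibrations to $I_{\aA^{\Phi H}}$-regular cofibrations; in particular $\Phi^H$ preserves the transfinite pushout structure of the construction. Each cell-by-cell comparison then reduces, via Proposition~\ref{prop:Sym} and the closure of $\aA$ under $n$th symmetric powers, to the hypothesis that $\Phi^H\tilde B\to\Phi^H B$ is an $\aF_{WH}$-equivalence for $B=A^{(n)}/\Sigma_n\in\aA$. Transfinite induction then assembles these equivalences into the full comparison. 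The main obstacle is precisely this cell-by-cell analysis: one must show that $\Phi^H$ of the free commutative algebra pushouts has the expected homotopical form, which depends crucially on Proposition~\ref{prop:Sym} (relating symmetric powers to generalized orbit desuspension spectra) and Theorem~\ref{thm:bundlegeofix} (relating $\Phi^H$ to this theory under the faithfulness hypothesis of Definition~\ref{defn:Sigmafaithful}).
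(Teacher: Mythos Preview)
Your approach is far more elaborate than necessary and has a gap at the step you flag as ``the main obstacle.''

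The paper's proof is two sentences. The point you are missing is the ``moreover'' clause of Theorem~\ref{thm:multAvar}: since $\aA$ is closed under symmetric powers, a cofibrant commutative ring $G$-spectrum $R$ has its unit map $\bS\to R$ a cofibration in the $\aA$-variant $\aV$-constrained $\aF$-model structure on $G$-spectra. That is, the underlying $G$-spectrum of $R$ is already cofibrant in the undercategory of $\bS$ in the $\aA$-variant structure. Both statements of the corollary then follow immediately from the preceding theorem (which is stated for $G$-spectra, not commutative rings) applied to this underlying object. No cell-by-cell reconstruction of $R$ is needed, and no analysis of $\Phi^H$ on free commutative algebra pushouts is needed.

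Your ``key step'' is genuinely problematic as written. The cell attachments building $R$ are pushouts in commutative ring $G$-spectra, i.e., coequalizers of the form $R'\sma_{\bP(A\sma\partial D^n_+)}\bP(A\sma D^n_+)$, not pushouts in $G$-spectra. The fact from the preceding theorem that $\Phi^H$ converts $I_{\aA}$-regular cofibrations to $I_{\aA^{\Phi H}}$-regular cofibrations says nothing about how $\Phi^H$ interacts with such relative smash products. Showing that $\Phi^H$ commutes with these pushouts requires extra input (compare Proposition~\ref{prop:gfsma} and Theorem~\ref{thm:calcgeoSym}, which the paper develops for exactly this purpose in the circle-group case); your appeal to Proposition~\ref{prop:Sym} and Theorem~\ref{thm:bundlegeofix} does not supply it. So even if one wanted to take your longer route, the argument as sketched does not close.
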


\begin{proof}
By Theorem~\ref{thm:multAvar}, if $R$ is cofibrant in the $\aA$-variant
$\aV$-constrained $\aF$-model structure of $G$-equivariant commutative ring
orthogonal spectra, the unit map $\bS\to R$ is a
cofibration of the underlying $G$-spectra in
the $\aA$-variant $\aV$-constrained $\aF$-model structure.  The
statements then follow from the previous theorem.
\end{proof}

To give the previous theorems substance, we need examples where the
hypotheses hold.  Here is one such example.

\begin{thm}\label{thm:Hexample}
Let $G$ be a compact Lie group.  Let $\aF$ be any family of closed
subgroups of $G$, $H$ any closed subgroup of $G$, and $\aF_{WH}$ any
family of closed subgroups of $WH$.  Let $\aV$ be a family
representation constraint for $\aF$ that is standard compatible with
the property that for every $K\in \aF$, every $[V]\in \aV(K)$ satisfies
the following conditions:
\begin{enumerate}
\item[(1)] $V$ contains a copy of the trivial representation, and 
\item[(2)] If $K<H$ and contains the identity component of $H$ then 
$V$ contains a copy of the regular representation of $\pi_{0}K$. 
\end{enumerate}
Let $\aA$ be the
closure of $\aA^{\natural}(\aF,\aV)$ or $\aA^{\comp}(\aF,\aV)$
under $n$th symmetric power ($n\geq 1$) and (finite, non-empty) smash products.
Then $\aF$, $V$, $H$, $\aF_{WH}$, and $\aA$ satisfy the hypotheses of
Corollary~\ref{cor:modelPhi}.
\end{thm}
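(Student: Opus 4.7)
The plan is to reduce every hypothesis of Corollary~\ref{cor:modelPhi} to statements about generalized orbit desuspension spectra, exploiting the fact that every element of $\aA$ is of the form $J\eta$ for a suitable equivariant vector bundle. The generators $G/K_{+}\sma F_{V}S^{0}$ and $G_{+}\sma_{K}F_{V}S^{0}$ are the $J$-constructions on the trivial and extended $G$-equivariant vector bundles over $G/K$; smash products of orbit desuspension spectra are $J$-constructions on external product bundles (over the product orbit); and by Proposition~\ref{prop:Sym}, the $n$th symmetric power $(J\eta)^{(n)}/\Sigma_{n}$ is again a generalized orbit desuspension spectrum, coming from a bundle built as the $\Sigma_{n}$-quotient of $\eta^{\times n}$ over the symmetric power of the base. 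So the entire set $\aA$ lies inside the class of generalized orbit desuspension spectra, and every object I need to analyze fits a single uniform framework.

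With this in hand I would verify the conditions separately. Closure of $\aA$ under smash products is built into the construction of $\aA$, which handles the cofibrancy-of-smash-products hypothesis of Theorem~\ref{thm:multAvar}. Flatness of $A^{(n)}/\Sigma_{n}$ and the assertion that $\tilde A^{(n)}/\Sigma_{n}\to A^{(n)}/\Sigma_{n}$ is an $\aF$-equivalence for every cofibrant approximation $\tilde A\to A$ are precisely the statements of Proposition~\ref{prop:Sym}, whose hypotheses (the $\Sigma$-faithfulness of Definition~\ref{defn:Sigmafaithful}) reduce to checking that the base bundle is sufficiently free for the $\Sigma_{n}$-action; here hypothesis (1) supplies the positive-dimensionality used to avoid degenerate behavior in the symmetric power, and the extra room in $V$ is enough to give the required faithfulness. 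For the geometric fixed points hypothesis I would apply Theorem~\ref{thm:bundlegeofix}, which identifies $\Phi^{H}(J\eta)$ with the $J$-construction on the bundle obtained by taking $H$-fixed points, provided $\eta$ satisfies the $H$-faithfulness hypothesis of Definition~\ref{defn:isf}; hypothesis (2) is designed exactly for this purpose, since the stabilizers $K$ that contribute to $\Phi^{H}$ are those with $H^{0}\le K\le H$ (so $\pi_{0}K=K/H^{0}$ is finite), and requiring a regular representation of $\pi_{0}K$ inside $V$ is precisely what guarantees the required freeness on fibers. Once Theorem~\ref{thm:bundlegeofix} applies, the map $\Phi^{H}\tilde A\to \Phi^{H}A$ is an $\aF_{WH}$-equivalence by the standard argument for orbit desuspension spectra already used in \cite{BM-cycl}.

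The main obstacle is the bookkeeping around symmetric powers: I must check that the faithfulness hypotheses of Definition~\ref{defn:Sigmafaithful} (for the $\Sigma_{n}$-action) and Definition~\ref{defn:isf} (for the $H$-action) hold simultaneously on the iterated bundles produced by alternately taking smash products and symmetric powers from the generators. The delicate point is that after a symmetric power the stabilizers of points in the base become wreath-product subgroups of $K^{n}\rtimes \Sigma_{n}$, and to verify hypothesis (2) for such a stabilizer one must combine the regular representation in the original $V$ with the permutation action of $\Sigma_{n}$ — the condition (2) closed under this interaction is exactly what makes the induction go through. Once this is checked inductively over the closure operations defining $\aA$, both Proposition~\ref{prop:Sym} and Theorem~\ref{thm:bundlegeofix} apply to every element of $\aA$, and the corollary follows.
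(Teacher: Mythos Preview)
Your overall strategy---express every $A\in\aA$ as a generalized orbit desuspension spectrum $J\eta$, verify the relevant faithfulness conditions on $\eta$, and then invoke the bundle theorems of Chapter~\ref{chap:bundleone}---is exactly the paper's approach. But several of your citations point to the wrong results, and this obscures where the actual work lies.

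First, Proposition~\ref{prop:Sym} is purely a point-set identification: it says $(J\eta)^{(q)}/\Sigma_{q}\iso J_{(\Gamma,\Sigma_{q}\wr Q)}(\Sym^{q}\eta)$, where $\Sym^{q}\eta$ is the bundle $\eta^{q}\to B^{q}$ with the \emph{group for the quotient} enlarged to $\Sigma_{q}\wr Q$ (the base is not quotiented by $\Sigma_{q}$). It has no faithfulness hypothesis and gives neither flatness nor the derived symmetric-power comparison. Flatness is Theorem~\ref{thm:bundleflat}; the statement that $\tilde A^{(n)}/\Sigma_{n}\to A^{(n)}/\Sigma_{n}$ is an $\aF$-equivalence is Theorem~\ref{thm:dersym}. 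Likewise, Theorem~\ref{thm:bundlegeofix} is a point-set isomorphism $\Phi^{H}J\eta\iso J(\eta(H))$; the homotopical statement you need---that $\Phi^{H}\tilde A\to\Phi^{H}A$ is an $\aF_{WH}$-equivalence---is Theorem~\ref{thm:derphi}, whose hypothesis is that both $\eta$ and $\eta(H)$ are $Q$-faithful.

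Second, the paper avoids your proposed induction over closure operations by writing every $A\in\aA$ in a single normal form: $A\iso J\eta$ with $\eta=\eta_{1}^{n_{1}}\times\dotsb\times\eta_{r}^{n_{r}}$ for basic bundles $\eta_{i}=G\times_{K_{i}}V_{i}\to G/K_{i}$ and $Q<\Sigma_{n_{1}}\times\dotsb\times\Sigma_{n_{r}}$. One then observes that $\eta$ is a base-restriction of $\Sym^{n}_{Q}\theta$ for $\theta=\eta_{1}\amalg\dotsb\amalg\eta_{r}$, so by Proposition~\ref{prop:Sfrestr} it suffices to check faithfulness for $\Sym^{n}_{Q}\theta$ and $\Sym^{n}_{Q}\theta(H)$. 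The first follows from Proposition~\ref{prop:trivsf} using hypothesis~(1) (positive dimensionality). The second is the substantive step: hypothesis~(2) lets you apply Proposition~\ref{prop:ifcritex}.(b) (with $Q$ trivial and $\Lambda=H$) to $\theta$, and then Theorem~\ref{thm:ifcrit} gives that $\theta$ is inheritably faithful for $\Phi^{H}$, meaning $\Sym^{n}\theta(H)$ is $\Sigma_{n}$-faithful; Proposition~\ref{prop:isfrestr} then reduces to $Q$. Your sketch of ``wreath-product stabilizers'' and ``combining the regular representation with the permutation action'' is gesturing at the content of Theorem~\ref{thm:ifcrit}, but you should name it explicitly rather than rederive it.
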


\begin{proof}
We need to check the following
properties hold for every $A\in \aA$: 
\begin{enumerate}
\item For every $n\geq 1$, $A^{(n)}/\Sigma_{n}$ is flat for the smash
product of $G$-spectra.
\item For every $n\geq 1$ and every cofibrant approximation $\tilde
A\to A$ in the $\aV$-constrained $\aF$-model structure, $\tilde
A^{(n)}/\Sigma_{n}\to A^{(n)}/\Sigma_{n}$ is an $\aF$-equivalence.
\item For every cofibrant approximation $\tilde
A\to A$ in the $\aV$-constrained $\aF$-model structure, $\Phi^{H}\tilde
A\to \Phi^{H}A$ is an $\aF_{WH}$-equivalence.
\end{enumerate}
We use the theory of
Chapter~\ref{chap:bundleone}, and particularly
the theorems in Section~\ref{sec:geobundle}, with $\KG$ the
trivial group and $G=\Gamma=\Gamma/\KG$.  
We note that by induction, every $A\in \aA$ can be expressed as a
generalized orbit desuspension spectrum $J\eta$ for some $(G,Q)$
vector bundle $\eta$ of the form
\[
\eta_{1}^{n_{1}}\times \dotsb \times \eta_{r}^{n_{r}}
\]
where $Q<\Sigma_{n_{1}}\times \dotsb \times \Sigma_{n_{r}}$ and each
$\eta_{i}$ is of the form  
\[
G\times_{K}V\to G/K 
\]
for $K\in \aF$ and $[V]\in \aV(H)$.  To apply the results of
Section~\ref{sec:geobundle}, we need to
check that $\eta$ is $Q$-faithful and for some results that
$\eta(H)$ (in the notation of
Definition~\ref{defn:philambdaone}) is $Q$-faithful.  For this, 
consider the bundle
\[
\theta = \eta_{1}\amalg \dotsb \amalg \eta_{r},
\]
and the bundle $\Sym^{n}_{Q}\theta$ (in the notation of
Proposition~\ref{prop:Sym}) where $n=n_{1}+\dotsb+n_{r}$ and we regard
$Q$ as a subgroup of $\Sigma_{n}$ via block sum.  Then $\eta$ is
obtained from $\Sym^{n}_{Q}\theta$ by the restriction (pullback) along
the inclusion of a $(G\times Q^{\op})$-stable subset of the base;
likewise $\eta(H)$ is obtained from $\Sym^{n}_{Q}\theta(H)$ by
the restriction along the inclusion of a $(G\times Q^{\op})$-stable
subset of the base.  Thus (by Proposition~\ref{prop:Sfrestr}.(ii)), to
see that $\eta$ and $\eta(H)$ are $Q$-faithful, it is enough to
show that $\Sym^{n}_{Q}\theta$ and $\Sym^{n}_{Q}\theta(H)$ are
$Q$-faithful.  Proposition~\ref{prop:trivsf} (and
Proposition~\ref{prop:Sfrestr}.(i)) shows that $\Sym^{n}_{Q}\theta$ is
$Q$-faithful.  Proposition~\ref{prop:ifcritex}.(b) applies to $\theta$
(with $\Lambda=H$) because of the requirement on regular
representations in $\aV$ and so by 
Theorem~\ref{thm:ifcrit}, $\theta$ is inheritably faithful for
$\Phi^{H}$.  This 
implies that $\Sym^{n}\theta(H)$ is $\Sigma_{n}$-faithful,
which implies that $\Sym^{n}_{Q}\theta(H)$ is $Q$-faithful
(Proposition~\ref{prop:isfrestr}).

By Theorem~\ref{thm:bundleflat}, we
then have that $A$ is flat for all $A\in \aA$, and since $\aA$ is
closed under symmetric power, property~(i) holds.  Property~(ii)
follows by Theorem~\ref{thm:dersym}.   Property~(iii) holds by
Theorem~\ref{thm:derphi}.
\end{proof}

Such a representation constraint always exists: for example, the Peter-Weyl
theorem implies that for any orthogonal $H$-representation $V$ there
exists an orthogonal $G$-representation $W$ whose underlying
$H$-representation contains a copy of $V$.  Applying this with $V$ the
regular representation of $\pi_{0}H$ and taking $\aV(K)$ (for all
$K\in \aF$) to be the set of isomorphism classes of orthogonal
$K$-representations that contain a copy of the underlying
$K$-representation of $W$ gives a representation constraint
satisfying the hypothesis of the theorem.

The previous theorem is general enough that it establishes the
existence of derived functor of $\Phi^{H}$ on 
commutative ring $G$-spectra and a comparison with the derived 
functor of $\Phi^{H}$ on $G$-spectra.

\begin{cor}
For any compact Lie group $G$ and any closed subgroup $H$, the
geometric $H$ fixed point functor from commutative
ring $G$-spectra to commutative ring
$WH$-spectra has a left derived functor, which lifts the derived
geometric $H$ fixed point functor from $G$-spectra to commutative ring
$WH$-spectra.
\end{cor}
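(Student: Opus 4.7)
The plan is to derive this corollary as a formal consequence of Corollary~\ref{cor:modelPhi}, by exhibiting, for the given $G$ and $H$, a standard compatible family representation constraint and cell set that satisfy the hypotheses of Theorem~\ref{thm:Hexample}.

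First, I would take $\aF$ to be the family of all closed subgroups of $G$ and $\aF_{WH}$ the family of all closed subgroups of $WH$, and I would construct a standard compatible constraint $\aV$ satisfying conditions~(1) and~(2) of Theorem~\ref{thm:Hexample} using the Peter--Weyl recipe spelled out in the paragraph immediately following that theorem. Explicitly: choose an orthogonal $G$-representation $W$ whose restriction to $H$ contains the regular representation of $\pi_{0}H$ and a trivial line, and define $\aV(K)$ to be the set of isomorphism classes of orthogonal $K$-representations that contain the restriction of $W$ to $K$. The axioms of Definition~\ref{defn:frb} and standard compatibility are immediate; condition~(1) holds because the trivial summand of $W$ restricts to a trivial summand on every subgroup; and condition~(2) holds because when $K<H$ contains the identity component of $H$, $\pi_{0}K$ identifies with a subgroup of $\pi_{0}H$ and the regular representation of a finite group restricts to a direct sum of copies of the regular representation of any subgroup.

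Next, I would take $\aA$ to be the closure of $\aA^{\natural}(\aF,\aV)$ (or $\aA^{\comp}(\aF,\aV)$) under finite non-empty smash products and $n$th symmetric powers for $n\geq 1$. Theorem~\ref{thm:Hexample} then ensures that the data $(\aF,\aV,H,\aF_{WH},\aA)$ satisfy the hypotheses of Corollary~\ref{cor:modelPhi}, which supplies a model structure on commutative ring $G$-spectra in which $\Phi^{H}$ preserves weak equivalences between cofibrant objects. The K.\ Brown lemma then produces the desired left derived functor.

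Finally, the lifting statement is exactly the ``Moreover'' clause of Corollary~\ref{cor:modelPhi}: for any cofibrant commutative ring $G$-spectrum $R$ and any cofibrant approximation $\tilde X\to R$ in the $\aV$-constrained $\aF$-model structure on the underlying $G$-spectra, the map $\Phi^{H}\tilde X\to \Phi^{H}R$ is an $\aF_{WH}$-equivalence, hence (since $\aF_{WH}$ is all closed subgroups of $WH$) a genuine weak equivalence of $WH$-spectra. This shows the derived functor on commutative ring $G$-spectra lifts the derived geometric fixed point functor on $G$-spectra. I anticipate no serious obstacle at this assembly stage; the substantive work is already absorbed into Theorem~\ref{thm:Hexample} and the generalized orbit desuspension machinery of Chapter~\ref{chap:bundleone} that it invokes.
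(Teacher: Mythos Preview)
Your proposal is correct and follows essentially the same approach as the paper: the corollary is placed immediately after Theorem~\ref{thm:Hexample} and the Peter--Weyl paragraph precisely so that it can be read off from Corollary~\ref{cor:modelPhi} once one knows such a constraint $\aV$ exists. Your explicit insertion of a trivial line into $W$ is a small but genuine improvement over the paper's terse paragraph, since without it condition~(1) of Theorem~\ref{thm:Hexample} need not hold for arbitrary $K\in\aF$.
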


The previous result only treats the geometric fixed points of a single
subgroup (or a finite set of subgroups) at a time.  We also have the
following variant, which handles all finite subgroups at the same
time.  The proof is identical, replacing part~(b) of
Proposition~\ref{prop:ifcritex} with part~(a) (applied with $Q$
the trivial group).

\begin{thm}\label{thm:Finexample}
Let $G$ be a compact Lie group.  Let $\aF$ be any family of finite subgroups, and let
$\aV$ be a family representation constraint for $\aF$ 
that is standard compatible and that has the property that for every
subgroup $K\in \aF$, every $[V]\in \aV(K)$ contains the
regular representation of $K$.  Let $\aA$ be the closure of
$\aA^{\comp}(\aF,\aV)$ under $n$th symmetric power ($n\geq 1$)
and smash product.  Then $\aF$, $V$, and $\aA$ satisfy the hypotheses
of Corollary~\ref{cor:modelPhi} for any finite subgroup $H$ of $G$ and
any family $\aF_{WH}$ of closed subgroups of $WH$.
\end{thm}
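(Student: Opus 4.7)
The plan is to mirror the proof of Theorem~\ref{thm:Hexample} essentially verbatim, with the single substitution of part~(a) for part~(b) of Proposition~\ref{prop:ifcritex}. As in that earlier proof, I must verify for each $A\in\aA$ the three properties required by Corollary~\ref{cor:modelPhi}: flatness of all symmetric powers $A^{(n)}/\Sigma_{n}$, that any cofibrant approximation in the $\aV$-constrained $\aF$-model structure induces an $\aF$-equivalence on $n$th symmetric powers, and that $\Phi^{H}$ preserves such a cofibrant approximation up to $\aF_{WH}$-equivalence.

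First, by induction on smash products and symmetric powers, every $A\in\aA$ can be realized as a generalized orbit desuspension spectrum $J\eta$ for some $(G,Q)$-vector bundle $\eta$ of the form $\eta_{1}^{n_{1}}\times\dotsb\times\eta_{r}^{n_{r}}$, where $Q<\Sigma_{n_{1}}\times\dotsb\times\Sigma_{n_{r}}$ and each $\eta_{i}$ is of the form $G\times_{K}V\to G/K$ for some finite $K\in\aF$ and some $[V]\in\aV(K)$. The fact that we now start from the complete cells $\aA^{\comp}(\aF,\aV)$ rather than from $\aA^{\natural}$ only affects the shape of the $\eta_{i}$ (they are the full extended bundles) and does not change the structure of the argument.

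Next, to invoke the results of Section~\ref{sec:geobundle}, I need both $\eta$ and $\eta(H)$ to be $Q$-faithful. Exactly as in Theorem~\ref{thm:Hexample}, I form $\theta=\eta_{1}\amalg\dotsb\amalg\eta_{r}$, set $n=n_{1}+\dotsb+n_{r}$, and observe via Proposition~\ref{prop:Sfrestr}.(ii) that it suffices to verify $Q$-faithfulness of $\Sym^{n}_{Q}\theta$ and $\Sym^{n}_{Q}\theta(H)$, since $\eta$ and $\eta(H)$ arise from these by restriction to $(G\times Q^{\op})$-stable subsets of the base. The $Q$-faithfulness of $\Sym^{n}_{Q}\theta$ itself follows from Proposition~\ref{prop:trivsf} together with Proposition~\ref{prop:Sfrestr}.(i), verbatim as before.

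The only real departure from Theorem~\ref{thm:Hexample}, and the point I expect to be the main (albeit small) technical obstacle, is the $Q$-faithfulness of $\Sym^{n}_{Q}\theta(H)$: here part~(b) of Proposition~\ref{prop:ifcritex} is not available and must be replaced by part~(a), applied with $Q$ the trivial group. The standing hypothesis that every $[V]\in\aV(K)$ contains the regular representation of the finite group $K$ is precisely the input that part~(a) demands. Given this, Theorem~\ref{thm:ifcrit} makes $\theta$ inheritably faithful for $\Phi^{H}$, which implies $\Sigma_{n}$-faithfulness of $\Sym^{n}\theta(H)$ and hence $Q$-faithfulness of $\Sym^{n}_{Q}\theta(H)$ via Proposition~\ref{prop:isfrestr}. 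With both faithfulness statements in hand, Theorem~\ref{thm:bundleflat} delivers the required flatness (and via the closure of $\aA$ under symmetric powers, property~(i)), Theorem~\ref{thm:dersym} gives property~(ii), and Theorem~\ref{thm:derphi} gives property~(iii), so the hypotheses of Corollary~\ref{cor:modelPhi} hold for every finite $H<G$ and every family $\aF_{WH}$.
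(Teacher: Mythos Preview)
Your proposal is correct and follows exactly the approach the paper takes: the paper's proof of this theorem is the single sentence ``The proof is identical [to that of Theorem~\ref{thm:Hexample}], replacing part~(b) of Proposition~\ref{prop:ifcritex} with part~(a) (applied with $Q$ the trivial group),'' and you have unpacked precisely that substitution with the correct supporting citations.
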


In both Theorems~\ref{thm:Hexample} and~\ref{thm:Finexample}, the
cofibrant objects are built from cells of the form $A\sma
(D^{n},\partial D^{n})_{+}$ where $A$ is a generalized orbit
desuspension spectrum $J\eta$ for some $(G,Q)$ vector bundle (for some
finite group $Q$).  Using the additional properties of
generalized orbit desuspension spectra from
Chapter~\ref{chap:bundleone}, we see that the cofibrant objects in
these model structures have additional good properties, which we
catalog below.  In the statements below, $H$ denotes the particular subgroup
in Theorem~\ref{thm:Hexample} or a subgroup in $\aF$ in
Theorem~\ref{thm:Finexample}. In each case all that is needed is that
$\eta$ and $\eta(H)$ are $Q$-faithful and have base spaces isomorphic
to $(G\times Q^{\op})$-cell complexes.  Theorem~\ref{thm:bundleflat} implies:

\begin{prop}\label{prop:mainflat}
Suppose $X$ is cofibrant or cofibrant under $\bS$ in the model
structure of Theorem~\ref{thm:Hexample} or~\ref{thm:Finexample}. Then
$X$ is flat for the smash product of $G$-spectra and $\Phi^{H}X$ is
flat for the smash product of $WH$-spectra.
\end{prop}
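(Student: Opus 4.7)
The plan is to reduce the statement to flatness of the generating cells and then propagate through the cell structure. In the proofs of Theorems~\ref{thm:Hexample} and~\ref{thm:Finexample}, each generator $A \in \aA$ was exhibited as a generalized orbit desuspension spectrum $J\eta$ for a $(G,Q)$ vector bundle $\eta$ (a symmetric power of a disjoint union of extended bundles $G \times_{K} V \to G/K$, restricted to a $(G \times Q^{\op})$-stable subset), and both $\eta$ and $\eta(H)$ were shown to be $Q$-faithful there. Together with the observation that the base of $\eta$ is $(G \times Q^{\op})$-cell, Theorem~\ref{thm:bundleflat} immediately gives that $A = J\eta$ is flat for the smash product of $G$-spectra, and, invoking Theorem~\ref{thm:bundlegeofix} to identify $\Phi^{H} A$ with the analogous generalized orbit desuspension spectrum for $\eta(H)$, that $\Phi^{H} A$ is flat for the smash product of $WH$-spectra.

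Next I would promote flatness from the generators to arbitrary cofibrant $X$. Such an $X$ is a retract of a relative $\aA$-cell complex built from $\ast$ (or from $\bS$ in the undercategory case), where by construction the transfinite composition maps are spacewise Hurewicz cofibrations (as noted right after Definition~\ref{defn:cell}, since the objects of $\aA$ are spacewise non-degenerately based). Flatness is preserved under smashing with the pairs $(D^{n}_{+}, \partial D^{n}_{+})$, under pushouts along Hurewicz cofibrations (by the gluing lemma for weak equivalences), under transfinite composition along Hurewicz cofibrations (by the telescope/colimit argument), and under retracts; hence the flatness of the cells $A$ propagates to $X$.

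For the $\Phi^{H}$ half, I would use that $\Phi^{H}$ commutes with colimits and with smashing against generalized orbit desuspension spectra up to isomorphism (this is exactly what the theory of Chapter~\ref{chap:bundleone} establishes under the $Q$-faithfulness of $\eta(H)$). Therefore $\Phi^{H} X$ carries a parallel relative cell structure with cells $\Phi^{H}(A) \sma (D^{n}_{+}, \partial D^{n}_{+})$ and spacewise Hurewicz cofibration composition maps, so the same gluing/telescope argument carries the flatness of each $\Phi^{H} A$ over to $\Phi^{H} X$. The "cofibrant under $\bS$" case is handled identically, starting the cell construction at $\bS$ rather than $\ast$.

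The main technical point to verify carefully is the stability of flatness under the cell-complex operations; this rests on the Hurewicz cofibration property of the cell attachments (which in turn needs the non-degenerate basing assumption on $\aA$) and, on the $\Phi^{H}$ side, on the isomorphism $\Phi^{H}(A \sma Y) \cong \Phi^{H}(A) \sma \Phi^{H}(Y)$ for $A$ a generalized orbit desuspension spectrum with $\eta(H)$ $Q$-faithful. Both are in place by hypothesis and by the results of Chapter~\ref{chap:bundleone}, so no further input is required.
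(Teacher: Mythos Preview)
Your proposal is correct and follows the same route the paper takes: reduce to the cells $A=J\eta$ (with $\eta$ and $\eta(H)$ $Q$-faithful over an equivariant cell base, as established in the proofs of Theorems~\ref{thm:Hexample} and~\ref{thm:Finexample}), apply Theorem~\ref{thm:bundleflat} to each cell and to $\Phi^{H}A\cong J^{WH}(\eta(H))$ via Theorem~\ref{thm:bundlegeofix}, and then propagate flatness through the cell structure. The paper compresses all of this into the single sentence ``Theorem~\ref{thm:bundleflat} implies''; you have simply unpacked the implicit cell-by-cell and colimit argument.

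One small overreach: in your last paragraph you invoke $\Phi^{H}(A\sma Y)\cong \Phi^{H}A\sma \Phi^{H}Y$ (Theorem~\ref{thm:sm}) to push the cell structure through $\Phi^{H}$, but you do not need that here. The cells are $A\sma(D^{n},\partial D^{n})_{+}$ with $(D^{n},\partial D^{n})_{+}$ a non-equivariant space, and $\Phi^{H}$ already commutes with smashing against spaces and with pushouts and sequential colimits along spacewise Hurewicz cofibrations; that is all the propagation step requires. Also, the hypothesis for Theorem~\ref{thm:sm} is $Q$-faithfulness of $\eta$ (not of $\eta(H)$), so your parenthetical there is slightly off---but since you do not actually need that theorem, this does not affect the argument.
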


Theorem~\ref{thm:sm} implies:

\begin{prop}\label{prop:mainsym}
Suppose $X$ is cofibrant or cofibrant under $\bS$ in the model
structure of Theorem~\ref{thm:Hexample} or~\ref{thm:Finexample}. Then
for any $G$-spectrum $Y$, the canonical map
\[
\Phi^{H}X\sma \Phi^{H}Y\to \Phi^{H}(X\sma Y)
\]
is an isomorphism.
\end{prop}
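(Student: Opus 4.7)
The plan is to reduce the statement to Theorem~\ref{thm:sm}, which (as the excerpt already indicates) proves the corresponding isomorphism for the generalized orbit desuspension spectra $J\eta$ with $\eta$ and $\eta(H)$ both $Q$-faithful. I treat the two hypotheses (cofibrant or cofibrant under $\bS$) in parallel: in both cases, by the construction of the variant $\aV$-constrained $\aF$-model structures in Theorem~\ref{thm:multAvar} together with the explicit descriptions of $\aA$ in Theorems~\ref{thm:Hexample} and~\ref{thm:Finexample}, $X$ is a retract of a cell complex whose cells have the form $A\sma(D^{n},\partial D^{n})_{+}$ with $A$ a generalized orbit desuspension spectrum $J\eta$ of the type analyzed in the proof of Theorem~\ref{thm:Hexample}, so that in particular both $\eta$ and $\eta(H)$ are $Q$-faithful.

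Next I would fix $Y$ and consider the class $\aC$ of $G$-spectra $X$ for which the canonical lax monoidal map
\[
\mu_{X,Y}\colon \Phi^{H}X\sma \Phi^{H}Y\to \Phi^{H}(X\sma Y)
\]
is an isomorphism. Since $\mu_{X,Y}$ is natural in $X$, since isomorphisms are closed under retracts, and since both the functor $(-)\sma \Phi^{H}Y$ and the functor $\Phi^{H}((-)\sma Y)$ preserve arbitrary colimits (the geometric fixed point functor is a left adjoint, and smash product with $Y$ preserves colimits on each side), the class $\aC$ is closed under retracts, pushouts along maps that are themselves in $\aC$, and transfinite compositions of such maps. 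It is also trivially closed under smashing with an unbased space of the form $(D^{n},\partial D^{n})_{+}$, because the spacewise smash with a space factors through both sides of $\mu$.

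Therefore it suffices to show that each generating cell $A=J\eta$ of the type appearing in the proofs of Theorems~\ref{thm:Hexample} and~\ref{thm:Finexample} lies in $\aC$. This is exactly the content of Theorem~\ref{thm:sm} applied to $\eta$ (whose $Q$-faithfulness, together with that of $\eta(H)$, was verified in the proof of Theorem~\ref{thm:Hexample} using Propositions~\ref{prop:trivsf}, \ref{prop:Sfrestr}, \ref{prop:ifcritex}, \ref{prop:isfrestr} and Theorem~\ref{thm:ifcrit}). In the ``cofibrant under $\bS$'' case, one additionally notes that the isomorphism is trivially satisfied by the unit $\bS$, so nothing changes.

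The only real subtlety, and the step I would check most carefully, is the colimit-preservation claim used to promote $\aC$-membership through pushouts and transfinite compositions of cell attachments. This is where the spacewise non-degenerate basing hypothesis and the resulting Hurewicz-cofibration structure of the transition maps (noted just after Definition~\ref{defn:cell}) are essential, so that $\Phi^{H}$ applied to a pushout along a relative cell gives the expected pushout on the nose; given this, the reduction above is purely formal.
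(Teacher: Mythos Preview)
Your overall strategy matches the paper's: the paper simply records that Theorem~\ref{thm:sm} implies the proposition, relying on the setup paragraph noting that cofibrant objects are built from cells $A\sma(D^{n},\partial D^{n})_{+}$ with $A=J\eta$ for $\eta$ a $Q$-faithful $(G,Q)$ vector bundle whose base is a $(G\times Q^{\op})$-cell complex. The cell-by-cell reduction you describe is exactly what is intended.

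There is, however, one incorrect justification you should fix. You write that $\Phi^{H}$ ``is a left adjoint'' and hence preserves arbitrary colimits. It is not: $\Phi^{H}$ is the composite of spacewise $H$-fixed points (a right adjoint) with a left Kan extension, and the paper explicitly notes (see the discussion around the tautological presentation in Section~\ref{sec:B209}) that $\Phi^{H}$ fails to preserve general reflexive coequalizers. What is true, and what you correctly invoke in your final paragraph, is that the cell attachment maps are spacewise Hurewicz cofibrations (hence closed inclusions), and $\Phi^{H}$ does commute with pushouts along and sequential colimits of such maps. So the closure of your class $\aC$ under the relevant colimits is correct, but the reason is the Hurewicz-cofibration structure of the transition maps, not any left-adjoint property. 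You should delete the left-adjoint claim and promote the last paragraph's observation to the actual justification.

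A minor point: Theorem~\ref{thm:sm} needs only that $\eta$ is $Q$-faithful and that its base admits a $(\Gamma\times Q^{\op})$-cell structure; the faithfulness of $\eta(H)$ is not required here (it enters in Propositions~\ref{prop:mainflat} and the homotopical statements, not this one). The base-space hypothesis is satisfied because the bases in question are finite products of homogeneous spaces $G/K$.
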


The previous proposition and a cell by cell argument also gives the
following result.

\begin{prop}\label{prop:mainsmashover}
Let $R$ be an associative ring $G$-spectrum. Let $M$ and $N$ be left and
right $R$-modules (respectively) and assume one of $M$, $N$ is
cofibrant either in the standard model structure, the model
structure of Theorem~\ref{thm:Hexample}, or the model structure of
Theorem~\ref{thm:Finexample} for its respective category of
$R$-modules. Then the canonical map 
\[
\Phi^{H}M\sma_{\Phi^{H}R} \Phi^{H}N\to \Phi^{H}(M\sma_{R} N)
\]
is an isomorphism.
\end{prop}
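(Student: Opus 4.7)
The plan is to perform a cell induction that reduces the statement to Proposition~\ref{prop:mainsym}. By the symmetry of the hypothesis we may assume $M$ is the cofibrant object (swapping left and right $R$-modules if necessary). In each of the three candidate model structures, the $R$-module model structure is created by the forgetful functor, so $M$ is a retract of a transfinite cell complex whose generating cells have the form $R\sma A\sma(D^n,\partial D^n)_+$, where $A$ is either a standard orbit desuspension spectrum $G/H_+\sma F_V S^0$ (for the standard model structure) or a generalized orbit desuspension spectrum belonging to the set $\aA$ of Theorem~\ref{thm:Hexample} or~\ref{thm:Finexample}.

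The natural comparison map is natural in $M$, and both functors
\[
M\longmapsto \Phi^{H}M\sma_{\Phi^{H}R}\Phi^{H}N,\qquad M\longmapsto \Phi^{H}(M\sma_{R}N)
\]
preserve colimits in $M$: the point-set geometric fixed point functor $\Phi^{H}$ is a left adjoint (defined via a left Kan extension) and hence preserves all colimits, while smash product over a ring spectrum preserves colimits in each variable. Consequently it suffices to verify that the map is an isomorphism on the generating cells, since the general case then follows by transfinite induction on the cell filtration and passage to retracts.

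For a generating cell of the form $R\sma A\sma D^{n}_+$, smashing over $R$ with $N$ collapses the leading factor, giving $(R\sma A\sma D^{n}_+)\sma_{R}N\iso A\sma D^{n}_+\sma N$. Applying $\Phi^{H}$ and invoking Proposition~\ref{prop:mainsym} with cofibrant object $A\sma D^{n}_+$ yields
\[
\Phi^{H}(A\sma D^{n}_+\sma N)\iso \Phi^{H}A\sma D^{n}_+\sma \Phi^{H}N.
\]
On the other side, Proposition~\ref{prop:mainsym} applied to $A\sma D^{n}_+$ gives $\Phi^{H}(R\sma A\sma D^{n}_+)\iso \Phi^{H}R\sma \Phi^{H}A\sma D^{n}_+$, and then smashing over $\Phi^{H}R$ with $\Phi^{H}N$ strips off the $\Phi^{H}R$ factor to produce the same $WH$-spectrum. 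A direct chase of the definitions shows that the canonical lax monoidal comparison map is the identity under these two identifications. (For the standard model structure, $A$ is a trivial-bundle generalized orbit desuspension spectrum, covered by Proposition~\ref{prop:mainsym} directly or by the classical commutation established in~\cite[App.~A]{BM-cycl}.)

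The step I expect to require the most care is the bookkeeping through cell attachments: given $M_{k+1}=M_{k}\cup_{R\sma A\sma \partial D^{n}_+}R\sma A\sma D^{n}_+$, one must check that the inductive identification on $M_{k}$ propagates to $M_{k+1}$ in a manner compatible with the $\Phi^{H}R$-module structure on $\Phi^{H}M_{k}$ that arises from the lax monoidal natural transformation. This is forced by the colimit-preservation properties cited above together with the base-case identification, but the verification that the comparison map assembles correctly at the level of pushout corners, including along the attaching sphere $\partial D^{n}_+$, is where the notational care is concentrated; no further homotopical input is needed.
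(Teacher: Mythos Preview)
Your argument is correct and matches the paper's own proof, which simply says the result follows from Proposition~\ref{prop:mainsym} ``and a cell by cell argument.'' One small correction: $\Phi^{H}$ is \emph{not} a left adjoint---it is the composite of the spacewise fixed point functor $\Fix^{H}$ (a right adjoint) with a left Kan extension---so your justification for colimit preservation is inaccurate as stated; what you actually need (and what is true) is that $\Phi^{H}$ preserves pushouts along spacewise closed inclusions and transfinite compositions of such, since spacewise fixed points do.
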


\subsection*{Model structures for Theorem~\protect\ref{main:commod}}
Theorem~\ref{thm:Hexample} combined with Theorem~\ref{thm:multAvar}
gives a model structure that works for an arbitrary finite set $\aH$
of closed subgroups.  The following is a step-by-step guide through
the definitions and results above to see this.  (We now assume that
$\aH$ is non-empty, as otherwise there is nothing to show.)

Let $W$ be an orthogonal $G$-representation with the property that for
every $H\in \aH$, the restriction of $W$ as an orthogonal
$H$-representation contains a copy of the regular representation of 
$\pi_{0}H$.  Let $\aA^{\natural}_{W}$ be the set of spectra of the
form
\[
F_{V}(G/K)_{+}
\]
for $K$ a closed subgroup of $G$ and $V$ an orthogonal
$G$-representation containing $W$. Let $\aA^{\ssma}_{W}$ denote the
closure of $\aA^{\natural}_{W}$ under (finite, non-empty) smash
products and symmetric powers: an 
element of $\aA^{\ssma}_{W}$ is in particular a spectrum of the form
\[
((F_{V_{1}}(G/K_{1})_{+})^{(q_{1})}
\sma \dotsb \sma 
(F_{V_{r}}(G/K_{r})_{+})^{(q_{r})})/Q
\]
where $r\geq 1$, $q_{i}\geq 1$, each $V_{i}$ contains $W$, and
$Q$ is a subgroup of the product of symmetric groups
$\Sigma_{q_{1}}\times \dotsb \times \Sigma_{q_{r}}$.  The objects in
$\aA^{\natural}_{W}$ are 
topologically point-set small (Definition~\ref{defn:tpssmall}) because
$F_{V}(G/K)_{+}$ is and
the class of topologically point-set small objects is closed under
smash products and quotients.  

We get two model structures on
$G$-spectra with 
$\aA=\aA^{\natural}_{W}$ and $\aA=\aA^{\ssma}_{W}$, resp., where:
\begin{itemize}
\item The cofibrations are the maps $X\to Y$ that are retracts of
relative $\aA$-cell complexes (see Definition~\ref{defn:cell}).
\item The weak equivalences are the standard weak equivalences
\end{itemize}
In the case of $\aA^{\natural}_{W}$, a map $X\to Y$ is a fibration if
and only if for every orthogonal $G$-representation $V$ containing
$W$, the map $X(V)\to Y(V)$ is a $G$-equivariant Serre fibration and
for every orthogonal $G$-representation $U$ containing $V$ (which
contains $W$) the diagram  
\[
\xymatrix{%
X(V)\ar[r]\ar[d]&\Omega^{U-V}X(U)\ar[d]\\
Y(V)\ar[r]&\Omega^{U-V}Y(U)
}
\]
is homotopy cartesian.  In the case of $\aA^{\ssma}_{W}$, a map $X\to Y$
is a fibration if and only if:
\begin{itemize}
\item $X\to Y$ is a fibration in the model structure for $\aA^{\natural}_{W}$,
\item For every $A\in \aA^{\ssma}_{W}$, the map $\Spectra^{G}(A,X)\to
\Spectra^{G}(A,Y)$ is a Serre fibration, and
\item For every $A\in \aA^{\ssma}_{W}$, there exists a cofibrant object
$\tilde A$ in the model structure for $\aA^{\natural}_{W}$ and a weak
equivalence $\tilde A\to A$ such that the diagram
\[
\xymatrix{%
\Spectra^{G}(A,X)\ar[r]\ar[d]
&\Spectra^{G}(\tilde A,X)\ar[d]\\
\Spectra^{G}(A,Y)\ar[r]
&\Spectra^{G}(\tilde A,Y)
}
\]
is homotopy cartesian.
\end{itemize}
See for example Theorem~\ref{thm:FVmodel} for $\aA^{\natural}_{W}$
and Theorem~\ref{thm:varAmodel} with Proposition~\ref{prop:varAfib}
for $\aA^{\ssma}_{W}$.

The identity gives a zigzag of Quillen equivalences of model
structures relating these two model structures and the standard (or
positive) stable model structure on $G$-spectra:
the identity is a Quillen left adjoint from the model
structure for $\aA^{\natural}_{W}$ to the standard (or positive) model
structure and to the model structure for $\aA^{\ssma}_{W}$.

Theorem~\ref{thm:Hexample} implies that every $A\in \aA^{\ssma}_{W}$ satisfies
the following properties:
\begin{itemize}
\item For any $n\geq 1$, $A^{(n)}/\Sigma_{n}$ is flat for the smash product of
$G$-spectra
\item For any $n\geq 1$ and any cofibrant approximation $\tilde A\to A$ in the model
category for $\aA^{\natural}_{W}$, the induced map $\tilde
A^{(n)}/\Sigma_{n}\to A^{(n)}/\Sigma_{n}$ is a weak equivalence.
\end{itemize}
(While the statement of Theorem~\ref{thm:Hexample} is peripatetic in
this regard, this assertion is also made in the first few lines of its
proof.) Since $\aA^{\natural}_{W}\subset \aA^{\ssma}_{W}$,
Theorem~\ref{thm:multAvar} applies to show that the forgetful functor
from commutative ring $G$-spectra to
$G$-spectra creates a model structure using
either the model structure for $\aA^{\natural}_{W}$ or the model
structure for $\aA^{\ssma}_{W}$; moreover, using the latter model
structure, cofibrant commutative ring $G$-spectra are cofibrant in the undercategory of $\bS$ in
$G$-spectra.  

Now assume that $R$ is a cofibrant commutative ring
$G$-spectrum in the model structure for $\aA^{\ssma}_{W}$ (which
holds in particular if $R$ a cofibrant commutative
ring $G$-spectrum in the model structure for
$\aA^{\natural}_{W}$).  We need to show that for any cofibrant
approximation $X\to R$ in the standard model structure on
$G$-spectra, the induced map $\Phi^{H}X\to
\Phi^{H}R$ is a weak equivalence for every $H\in \aH$.  Since
$\Phi^{H}$ preserves weak equivalences between cofibrant objects in
the standard model structure, it suffices to show that there exists a
cofibrant approximation with this property.  Let $X\to R$ be a
cofibrant approximation of $R$ in the undercategory of $\bS$ in the
model structure for $\aA^{\natural}_{W}$ on $G$-spectra; then $X\to R$ is a cofibrant approximation in the standard
model structure on the (absolute) category of $G$-spectra.  The inclusions of $\bS$ in $X$ and $R$ are
Hurewicz cofibrations and $\Phi^{H}$ preserves Hurewicz cofibrations
and point-set quotients by Hurewicz cofibrations, so we get a map of
long exact sequences of homotopy groups
\[
\xymatrix@-1pc{%
\dotsb\ar[r]&\pi_{q}^{K}(\Phi^{H}\bS)\ar[r]\ar[d]_{=}
&\pi_{q}^{K}(\Phi^{H}X)\ar[r]\ar[d]
&\pi_{q}^{K}(\Phi^{H}(X/\bS))\ar[r]\ar[d]
&\pi_{q-1}^{K}(\Phi^{H}\bS)\ar[d]_{=}\ar[r]
&\dotsb\\
\dotsb\ar[r]&\pi_{q}^{K}(\Phi^{H}\bS)\ar[r]
&\pi_{q}^{K}(\Phi^{H}R)\ar[r]
&\pi_{q}^{K}(\Phi^{H}(R/\bS))\ar[r]
&\pi_{q-1}^{K}(\Phi^{H}\bS)\ar[r]
&\dotsb
}
\]
for every $K<WH$ (for
example, by \cite[III.3.5.(iv),(vi)]{MM}).  The map $X/\bS\to Y/\bS$
is a weak equivalence of cofibrant objects in the model structure for
$\aA^{\ssma}_{W}$ on $G$-spectra and so induces
a weak equivalence on $\Phi^{H}$ by Corollary~\ref{cor:modelPhi},
which holds by Theorem~\ref{thm:Hexample}.  It follows that
$\Phi^{H}X\to \Phi^{H}R$ is a weak equivalence.

\chapter{The Circle Group and Cyclotomic Spectra (Theorems~B, C, D)}

This chapter treats the case when the compact Lie group is the circle
group $\bT$ of unit complex numbers.  In this case, we have significantly better results
on geometric fixed point functors than we have for general compact Lie
groups.  We study this in Section~\ref{sec:circle}, where we prove
Theorem~\ref{main:Tmod}.  We then apply the results on the circle
group to study \ppc and pc spectra in Section~\ref{sec:precyc}, where
we prove Theorems~\ref{main:map} and~\ref{main:cycmod}.  The
construction of the model structure used in these sections is stated
in terms of a black box set of $\bT$-spectra $\aACyc$ (and its subset
$\aApCyc$); Section~\ref{sec:ACyc} opens this box, filling in the
details and proving the equivariant properties of the $\bT$-spectra in
$\aACyc$ needed to prove the geometric fixed point results.
Section~\ref{sec:convenient} fills in the properties of $\aACyc$
needed to compare the model structure on $\bT$-spectra to the positive
convenient $\Sigma$-model structure on non-equivariant spectra.

\section{The circle group and Theorem~\sref$main:Tmod$}
\label{sec:circle}

The purpose of this section is to begin the proof of
Theorem~\ref{main:Tmod} from the introduction.  Here we have broken it
into two parts: Theorem~\ref{thm:circlephi} below generalizes the
model structure statement of Theorem~\ref{main:Tmod} and its
properties that involve just the equivariant categories, while
Theorem~\ref{thm:necircle} restates the strong relationship asserted
between the model structures on the equivariant and non-equivariant
categories.  As in the statement of Theorem~\ref{main:Tmod}, we let
$\aF_{\fin}$ denote the family of finite subgroups of $\bT$ and
$\aF_{p}$ denote the family of finite $p$-subgroups of $\bT$.  In
contrast, we use slightly different notation for the geometric fixed
point endofunctor denoted $\Phi$ in the introduction in order to
consider the construction for more general subgroups of $\bT$.

\begin{notn}\label{notn:Phin}
Define endofunctors $\Phi_{n}$, $n\geq 1$ by 
\[
\Phi_{n}:=\rho^{*}_{n}\Phi^{C_{n}}.
\]
Here we first take the geometric fixed points $\Phi^{C_{n}}$ to
$\bT/C_{n}$-spectra and then use the $n$th root
isomorphism $\rho_{n}\colon \bT\iso \bT/C_{n}$ to convert back to
$\bT$-spectra.  
\end{notn}

To establish the model structure, we use the general result of
Theorem~\ref{thm:multAvar}, applied with the family $\aF_{\fin}$ or
$\aF_{p}$ and the representation constraint $\aV_{\permp}$ from
Example~\ref{ex:trivposreg}.  Namely, for $C\in \aF_{\fin}$ (or
$\aF_{p}$), $\aV_{\permp}(C)$ consists of the semiregular permutation
representations, that is, those orthogonal $C$-representations which
admit an orthonormal basis to which the $C$ action restricts, and
which has at least one free orbit.  The set of model cells that we
take for $\aA$ in Theorem~\ref{thm:multAvar} is denoted $\aACyc$ (in
the case of $\aF_{\fin}$) and $\aApCyc$ (in the case of $\aF_{p}$).
The $\bT$-spectra in these sets are complicated to specify, but have
very nice properties, as stated in Lemma~\ref{lem:ACyc}.  We postpone
the details of the construction of 
these $\bT$-spectra to Section~\ref{sec:ACyc}, as 
the list of properties suffices for what we need here.

\begin{thm}\label{thm:circlephi}
The sets $\aACyc$ and $\aApCyc$ of $\bT$-spectra in
Definition~\ref{defn:ACyc} below satisfy the hypotheses of
Theorem~\ref{thm:multAvar} for $\aF=\aF_{\fin}$, $\aV=\aV_{\permp}$ and
for $\aF=\aF_{p}$, $\aV=\aV_{\permp}$, respectively. In particular, 
the forgetful functor from commutative ring
$\bT$-spectra to $\bT$-spectra creates
model structures for both the 
$\aACyc$-variant $\aV_{\permp}$-constrained $\aF_{\fin}$-model
structure and the 
$\aApCyc$-variant $\aV_{\permp}$-constrained $\aF_{p}$-model structure.  
These model structures are enriched over the standard model structure
in non-equivariant spectra.
Moreover in these
variant model structures:
\begin{enumerate}
\item For $\aACyc$, the endofunctors $\Phi_{n}$ preserve cofibrations
and acyclic cofibrations of both $\bT$-spectra
and commutative ring $\bT$-spectra. 
For $\aApCyc$, the endofunctors $\Phi_{p^{n}}$ preserve cofibrations
and acyclic cofibrations of both $\bT$-spectra
and commutative ring $\bT$-spectra.
\item If $R$ is a cofibrant commutative ring
$\bT$-spectrum, then $\bS\to R$ is a cofibration of
$\bT$-spectra.
\end{enumerate}
\end{thm}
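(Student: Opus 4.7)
The strategy is to verify the hypotheses of Theorem~\ref{thm:multAvar} for the pairs $(\aACyc, \aF_{\fin}, \aV_{\permp})$ and $(\aApCyc, \aF_{p}, \aV_{\permp})$, using the structural properties of these cell sets that are catalogued in Lemma~\ref{lem:ACyc} (whose verification is the work of Section~\ref{sec:ACyc}). The hypotheses needed are: containment $\aA^{\natural}(\aF,\aV_{\permp})\subset \aA$; topological point-set smallness of the cells; closure of $\aA$ under smash products and $n$th symmetric powers; flatness of each $A^{(n)}/\Sigma_{n}$; and the homotopical compatibility of symmetric powers with cofibrant approximation from the $\aV_{\permp}$-constrained $\aF$-model structure. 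Granted these, Theorem~\ref{thm:varAmodel} produces the variant model structures on $\bT$-spectra, Theorem~\ref{thm:multAvar} lifts them via the forgetful functor to commutative ring $\bT$-spectra, and its final clause on closure under symmetric powers yields item~(ii) directly. For the enrichment over non-equivariant spectra, Proposition~\ref{prop:enriched} reduces the claim to the additional assertion that $A\sma F_{\bR^{n}}S^{0}$ is cofibrant for each $A$ in the cell set and each $n\geq 1$, which will be arranged as part of the design of $\aACyc$ and $\aApCyc$.

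For item~(i), the proof that $\Phi_{n}$ preserves cofibrations proceeds by transfinite induction from the key design property built into $\aACyc$: the endofunctor $\Phi_{n}$ sends each $A\in \aACyc$ into $\aACyc$ up to retract (and similarly for $\Phi_{p^{n}}$ on $\aApCyc$). Because $\Phi_{n}=\rho_{n}^{*}\Phi^{C_{n}}$ is defined via a coend it commutes with all colimits of orthogonal spectra, and it also commutes with smashing with a based space; therefore it sends a generating map $A\sma (\partial D^{q}_{+}\to D^{q}_{+})$ to the corresponding generator built on $\Phi_{n}A$, and hence sends relative $I_{\aACyc}$-cell complexes to relative $I_{\aACyc}$-cell complexes. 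For the commutative ring version, one combines this with the fact that $\Phi_{n}$ is strong symmetric monoidal on cofibrant $\bT$-spectra (the $\bT$-analogue of Proposition~\ref{prop:mainsym}), so that $\Phi_{n}$ commutes with the free commutative ring monad applied to cells. The acyclic case then follows from the separate assertion that $\Phi_{n}$ carries $\aF_{\fin}$-equivalences (respectively $\aF_{p}$-equivalences) between cofibrant objects to weak equivalences; this is precisely Corollary~\ref{cor:modelPhi} applied with $H=C_{n}$, whose hypotheses on symmetric powers and geometric fixed points are guaranteed by the structural properties of $\aACyc$.

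The main obstacle is therefore the construction of a set $\aACyc$ that is simultaneously closed under smash products, $n$th symmetric powers, \emph{and} each of the endofunctors $\Phi_{n}$, while retaining the flatness and inheritable faithfulness (Definition~\ref{defn:isf}) needed for Theorem~\ref{thm:bundleflat} and the derived symmetric power and derived $\Phi$ comparisons of Section~\ref{sec:geobundle}. This is exactly what the theory of generalized orbit desuspension spectra developed in Chapters~\ref{chap:bundleone}--\ref{chap:bundletwo} is designed to provide: one takes $\aACyc$ to be the closure of the orbit desuspension cells $\bT/C_{+}\sma F_{V}S^{0}$ (for $C\in \aF_{\fin}$ and $V$ a semiregular permutation $C$-representation) under smash products, symmetric powers, and the $\Phi_{n}$, realized as generalized orbit desuspension spectra $J\eta$ of explicitly constructed $(\bT,Q)$-equivariant vector bundles. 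The semiregularity of the permutation representations is what triggers Proposition~\ref{prop:ifcritex}.(b) and hence inheritable $\Phi^{C_{n}}$-faithfulness of the underlying bundles, and this faithfulness is the precise condition that lets the flatness, symmetric-power, and geometric-fixed-point compatibilities all cohere; producing that family of bundles and checking stability under $\Phi_{n}$ is the genuine content that must be deferred to Section~\ref{sec:ACyc}.
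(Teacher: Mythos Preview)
Your outline matches the paper's proof for the first paragraph (verifying the hypotheses of Theorem~\ref{thm:multAvar} via Lemma~\ref{lem:ACyc}, with enrichment via Proposition~\ref{prop:enriched} and statement~(ii) via closure under symmetric powers) and for the $\bT$-spectrum half of statement~(i) (cell-by-cell, using closure of $\aACyc$ under $\Phi_{n}$).

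The gap is in the commutative ring half of statement~(i). Your claim that strong symmetric monoidality of $\Phi_{n}$ on cofibrant objects implies that ``$\Phi_{n}$ commutes with the free commutative ring monad applied to cells'' is false. Strong monoidality gives $\Phi_{n}(X^{(k)})\iso (\Phi_{n}X)^{(k)}$, but the passage to the $\Sigma_{k}$-quotient does \emph{not} commute with the fixed-point stage of $\Phi_{n}$: in general $\Phi_{n}(X^{(k)}/\Sigma_{k})\not\iso (\Phi_{n}X)^{(k)}/\Sigma_{k}$, and hence $\Phi_{n}\bP X\not\iso \bP(\Phi_{n}X)$. This is exactly the phenomenon governed by Theorem~\ref{thm:calcgeoSym}, which identifies $\Phi^{\Lambda/\KG}(\bP(J^{\Gamma/\KG}\eta))$ not as $\bP$ of a single spectrum but as a smash product $\bigwedge_{q}\bP(X_{q}(\eta))$ indexed over $1\leq q\leq \#\pi_{0}\Lambda$, with $X_{q}$ a retract of $\Phi^{\Lambda/\KG}((J\eta)^{(q)}/\Sigma_{q})$.

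The paper's argument for this step is accordingly more delicate: for a cell attachment $R\to R\sma_{\bP(A\sma\partial D^{k}_{+})}\bP(A\sma D^{k}_{+})$, one first applies Proposition~\ref{prop:gfsma} to distribute $\Phi_{n}$ over the balanced smash, reducing to showing that $\Phi_{n}\bP(A\sma\partial D^{k}_{+})\to \Phi_{n}\bP(A\sma D^{k}_{+})$ is a cofibration. Then Theorem~\ref{thm:calcgeoSym} (together with its naturality in the space $D$) exhibits this map as a retract of a finite smash of maps $\bP(X\sma (\partial D^{k})^{q}_{+})\to \bP(X\sma (D^{k})^{q}_{+})$ with $X$ a retract of $A^{(q)}/\Sigma_{q}$; since $A^{(q)}/\Sigma_{q}$ is again in $\aACyc$ by Lemma~\ref{lem:ACyc}.(iv), each factor is a generating cofibration and the conclusion follows. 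Your proposal needs to invoke Theorem~\ref{thm:calcgeoSym} and Proposition~\ref{prop:gfsma} here rather than the (incorrect) commutation of $\Phi_{n}$ with $\bP$.
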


The remainder of Theorem~\ref{main:Tmod} is the relationship to
non-equivariant spectra.  We state the following result here but prove
it in Section~\ref{sec:neACyc}. 

\begin{thm}\label{thm:necircle}
The forgetful functor from $\bT$-spectra to non-equivariant spectra
takes cofibrant objects in the $\aACyc$-variant 
$\aV_{\permp}$-constrained $\aF_{\fin}$-model structure to cofibrant objects in the
positive convenient $\Sigma$-model structure of
Definition~\ref{defn:Sigmodel}.  The free functor from non-equivariant
spectra to $\bT$-spectra takes cofibrant objects in the positive
convenient $\Sigma$-model structure on spectra of Definition~\ref{defn:Sigmodel}
to cofibrant objects in the $\aApCyc$-variant 
$\aV_{\permp}$-constrained $\aF_{p}$-model structure on $\bT$-spectra.
\end{thm}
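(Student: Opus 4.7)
The plan is to reduce both assertions to checking behavior on generating cells. Cofibrant objects in each of the three model structures in play are retracts of cell complexes built from an explicit generating set (applied to the zero object), and both the forgetful functor and its left adjoint (the free functor) preserve colimits and retracts. Consequently, once we know each functor carries the generating cells of its source into the cofibrations of its target, the full statement follows.

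For the forgetful direction, the generating cells in the $\aACyc$-variant $\aV_{\permp}$-constrained $\aF_{\fin}$-model structure are of the form $A\sma (\partial D^n \to D^n)_+$ for $A\in \aACyc$, where the elements of $\aACyc$ are by construction generalized orbit desuspension spectra $J\eta$ attached to certain $(\bT,Q)$-equivariant vector bundles $\eta$ whose fiberwise representations land in $\aV_{\permp}$ (Definition~\ref{defn:ACyc}). The strategy is to describe the underlying non-equivariant orthogonal spectrum of $J\eta$ in terms of the data of $\eta$, using the functorial description of $J\eta$ from Chapter~\ref{chap:bundleone}, and then match this description against the generating cofibrations of the positive convenient $\Sigma$-model structure of Definition~\ref{defn:Sigmodel}. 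The point is that the permutation-representation constraint $\aV_{\permp}$ together with the semiregular condition (at least one free orbit) is precisely what is needed to ensure that the underlying non-equivariant structure of $J\eta$ is built from cells of the form $(\Sigma_n/Q)_+ \sma F_{V} S^0$ with $V$ of positive dimension, which is what the positive convenient $\Sigma$-model structure demands.

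For the free direction, the free functor is the left adjoint $L$ to the forgetful functor. It sends a non-equivariant generating cell $F_V S^0 \sma (\partial D^n \to D^n)_+$ to the $\bT$-spectrum $LF_V S^0 \sma (\partial D^n \to D^n)_+$. The task is to recognize $LF_V S^0$ as a retract of an $\aApCyc$-cell complex. Here I expect $LF_V S^0$ to be expressible as a generalized orbit desuspension spectrum for a bundle over a free $\bT$-space (essentially $\bT\times V \to \bT$), whose isotropy on representations is trivial and hence automatically lies in $\aV_{\permp}$ at every subgroup in $\aF_p$. Because the family $\aF_p$ only demands control at $p$-group subgroups and $LF_V S^0$ has free $\bT$-action on its indexing base, the $\aApCyc$-cell description should be cleaner than it would be with $\aF_{\fin}$ (which is why the theorem specifies $\aACyc$ on one side and $\aApCyc$ on the other).

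The main obstacle is the explicit matching of cells: working out a closed form for the underlying non-equivariant spectrum of $J\eta$ for $\eta$ drawn from the bundles of Definition~\ref{defn:ACyc}, and proving that what comes out is indeed a cofibrant object of the positive convenient $\Sigma$-model structure. This requires understanding the interaction between the symmetric power construction, the $(\bT,Q)$-bundle machinery of Chapter~\ref{chap:bundleone}, and the permutation action of the $\Sigma$ in the convenient model structure. A good deal of this bookkeeping is deferred to Section~\ref{sec:convenient}, whose role is precisely to verify these compatibilities; the present theorem should then be an essentially formal consequence of those compatibilities plus the cell-wise reductions above.
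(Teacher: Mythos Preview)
Your high-level plan---reduce to generating cells and use that both functors preserve the relevant colimits and retracts---matches the paper's strategy, and you correctly identify the free direction as the easy one. But there are some inaccuracies and a real gap.

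On the free direction: the generating cells of the positive convenient $\Sigma$-model structure are $(F_{\bR^q}S^0)/Q$ with $Q<\Sigma_q$ acting through $O(q)$, not bare $F_VS^0$'s. The paper identifies $\bT_+\sma (F_{\bR^q}S^0)/Q$ as an explicit $J^\bT\theta_{\aQ,\Delta}$ for data $\aQ$ with $n^\aQ=r^\aQ=m_1^\aQ=\ell_1^\aQ=1$, $q_1^\aQ=q$, $V_1^\aQ=\bR$, restricted to the diagonal $\Delta\subset\bT^q$. The reason this lands in $\aApCyc$ is simply that $n^\aQ=m_1^\aQ=1$ are powers of $p$; it has nothing to do with ``$\aF_p$ only demands control at $p$-group subgroups'' or freeness of the base.

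On the forgetful direction: your claim that the underlying non-equivariant spectrum is ``built from cells of the form $(\Sigma_n/Q)_+\sma F_VS^0$'' is not the right shape---there is no orbit-space base point factor; the target cells are $(F_{\bR^k}S^0)/H$ with $H<\Sigma_k$ acting through the desuspending representation. More importantly, you supply no mechanism for why this holds. The paper's content is Lemma~\ref{lem:necircle}: for any $Q^{\op}$-orbit mapped into the base of $\theta_\aQ$, the pullback bundle gives $J_{(1,Q)}f^*\theta_\aQ\iso(F_{\bR^k}S^0)/H$. The proof analyzes the fiber $E_x$ over a chosen point, decomposes it as a product of twisted $\Lambda(n)$-fixed subspaces via the diagonal fixed point Lemma~\ref{lem:dfplem}, and then---this is the key step you omit---shows that because each $V_i$ is a permutation representation of the relevant cyclic quotient, each twisted-fixed-point factor admits an orthonormal basis on which the isotropy group $\LQ$ acts by permutations. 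That basis-level argument is what converts the fiber into $\bR^k$ with a permutation action by some $H<\Sigma_k$. The semiregular condition (at least one free orbit) is not what drives this step; it is used elsewhere for faithfulness. Saying the bookkeeping is ``deferred to Section~\ref{sec:convenient}'' is circular, since that section \emph{is} the proof.
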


The hypotheses of Theorem~\ref{thm:multAvar} imply in particular the
last two bullet point hypotheses of Theorem~\ref{thm:varAmodel}, which
give the unit, monoid, and pushout-product axioms
of~\cite[3.2--4]{SSAlgMod}, which then give model structures on module
and algebra categories.  When $R$ is a commutative
ring $\bT$-spectrum whose underlying $\bT$-spectrum is cofibrant in
the undercategory of $\bS$, then cofibrant
$R$-algebras are cofibrant as $\bT$-spectra in
the undercategory of $\bS$ and cofibrant $R$-modules are cofibrant as
$\bT$-spectra in these model structures.

The definition of $\aACyc$ and $\aApCyc$ is in terms of specific
generalized orbit desuspension spectra described in detail in
Section~\ref{sec:ACyc}.  What we need for the proof of the theorem is
given by the following lemma, proved in Section~\ref{sec:ACyc}.

\begin{lem}\label{lem:ACyc}
Let $A\in \aACyc$, $k\geq 1$.
\begin{enumerate}
\item $A$ is $i^{*}J^{\Gamma/\KG}\theta_{A}$, for some
$\Gamma/\KG$-compatible $Q$-faithful positive dimensional $(\Gamma,Q)$ vector
bundle $\theta_{A}$, where $\KG\lhd\Gamma$ is a finite
normal subgroup, $i$ is an isomorphism $i\colon \bT\iso
\Gamma/\KG$, and the vector bundle $\theta_{A}$ has base space a
finite product of orbits of $\Gamma$. Moreover, for every $\KG<\Lambda
\lhd \Gamma$, the $(\Gamma,Q)$ vector
bundle $\theta_{A}(\Lambda |\KG)$ is $Q$-faithful.
\item If $B\in \aACyc$, then $A\sma B$ is isomorphic to an element of
$\aACyc$; if $A,B\in \aApCyc$, the $A\sma B$ is isomorphic to an
element of $\aApCyc$.
\item $\Phi_{m}A$ is isomorphic to an element of $\aACyc$; if $A\in
\aApCyc$ and $m$ is a power of $p$, then $\Phi_{m}A$ is isomorphic to
an element of $\aACyc_{p}$.
\item $A^{(k)}/\Sigma_{k}$ is isomorphic to an element of $\aACyc$; if
$A\in \aApCyc$, then $A^{(k)}/\Sigma_{k}$ is isomorphic to an element
of $\aApCyc$.
\item There exists a cofibrant object $\tilde A$ in the complete
positive model structure on $\bT$-spectra and a
weak equivalence $\tilde A\to A$ such that the induced map $\tilde
A^{(k)}/\Sigma_{k}\to A^{(k)}/\Sigma_{k}$ is a weak equivalence.
\item For any $n\geq 1$, $A\sma F_{\bR^{n}}S^{0}$ is isomorphic to an
object of $\aACyc$, which is in $\aApCyc$ if $A$ is.
\end{enumerate}
\end{lem}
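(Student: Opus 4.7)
The plan is to define $\aACyc$ (and $\aApCyc$) in Section~\ref{sec:ACyc} as the closure, under the operations appearing in (ii)--(vi), of a generating family of generalized orbit desuspension spectra of the form $i^{*}J^{\Gamma/\KG}\theta$ where $\theta$ is a positive-dimensional $\Gamma/\KG$-compatible $(\Gamma,Q)$ vector bundle satisfying the $Q$-faithfulness conditions spelled out in~(i). With this construction in hand, property~(i) will hold by definition, and properties~(ii)--(vi) will reduce to verifying that each of the relevant operations preserves the defining class.

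For~(ii), I would use the compatibility between smash products of orbit desuspension spectra and products of vector bundles, roughly $J^{\Gamma/\KG}\theta \sma J^{\Gamma'/\KG'}\theta' \iso J^{(\Gamma\times\Gamma')/(\KG\times\KG')}(\theta\times\theta')$, and arrange for a circle isomorphism $i$ via the diagonal embedding of $\bT$ into the product. The $Q$-faithfulness and the $(\Lambda|\KG)$ faithfulness conditions are preserved by products, using Proposition~\ref{prop:Sfrestr} to pass between restriction and pullback. For~(iii), I would invoke Theorem~\ref{thm:bundlegeofix} to express $\Phi_{m}(i^{*}J\theta)$ again as an orbit desuspension spectrum built from the bundle $\theta(\Lambda|\KG)$ attached to the preimage $\Lambda$ of $C_{m}$ in $\Gamma$; the last clause of~(i) is precisely what guarantees that $\theta(\Lambda|\KG)$ is $Q$-faithful, so the output is of the required form. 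For~(iv), I would apply Proposition~\ref{prop:Sym} to realize $A^{(k)}/\Sigma_{k}$ as $J$ of the symmetric-power bundle $\Sym^{k}_{\Sigma_{k}}\theta$, which is again positive-dimensional and of the required structural type as a $(\Gamma, Q\times\Sigma_{k})$ bundle. For~(vi), smashing with $F_{\bR^{n}}S^{0}$ corresponds to fiberwise adding a trivial summand to $\theta$, which manifestly preserves all of the structural hypotheses.

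The subtle clause is~(v). Here I would build a cofibrant approximation $\tilde\theta\to\theta$ of bundles and use it to produce $\tilde A \to A$, then invoke Theorem~\ref{thm:dersym}; the hypothesis of that theorem is the inheritable $\Sigma_{k}$-faithfulness of $\theta(H)$ for the relevant subgroups, and this is supplied by the regularity/semiregularity condition built into the base group $\Gamma$ of the generating bundles via Proposition~\ref{prop:ifcritex} and Theorem~\ref{thm:ifcrit}. Once inheritable faithfulness is established, Theorem~\ref{thm:dersym} delivers the weak equivalence on symmetric powers.

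The main obstacle is bookkeeping, particularly ensuring that all of the closure operations remain compatible with the prime-restriction defining $\aApCyc$: applied to a bundle in which $\KG$ is a $p$-group, each of smash product, $\Phi_{p^{n}}$, and symmetric power must again yield data in which the analog of $\KG$ is a $p$-group. Products and symmetric powers preserve the $p$-group property trivially, and $\Phi_{p^{n}}$ does so because the relevant group extension has $\Lambda/\KG\iso C_{p^{n}}$, so $\Lambda$ remains a $p$-group when $\KG$ is. Once the ambient parameters $\Gamma$, $\KG$, and $Q$ are tracked carefully through all the constructions, the six properties follow from the cited results in Chapters~\ref{chap:bundleone}--\ref{chap:bundletwo}.
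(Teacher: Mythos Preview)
Your closure-based definition reverses where the difficulty lies: if you \emph{define} $\aACyc$ as the closure of a generating class under (ii)--(vi), then those closure properties are tautological, but (i) is no longer ``by definition''---you must prove that every operation preserves the full structural hypothesis in~(i), including the clause ``for every $\KG<\Lambda\lhd\Gamma$, $\theta_A(\Lambda|\KG)$ is $Q$-faithful.'' You address this only superficially. For~(iii), knowing that $\theta(\Lambda|\KG)$ is $Q$-faithful tells you $\Phi_m A$ is a generalized orbit desuspension spectrum, but to land back in the class you need the \emph{iterated} condition: $(\theta(\Lambda|\KG))(M|\Lambda)$ must be $Q$-faithful for every further $M$. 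For~(iv), taking $\Sym^k$ changes the quotient group from $Q$ to $\Sigma_k\wr Q$, and you now need $(\Sym^k\theta)(\Lambda)$ to be $(\Sigma_k\wr Q)$-faithful for all $\Lambda$---this is the inheritable faithfulness of Definition~\ref{defn:isf}, and it does not follow from $\theta$ satisfying~(i) alone. Iterating symmetric powers compounds the problem.

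The paper takes the opposite route: it defines $\aACyc$ via explicit combinatorial data $\aQ=(n,r,(q_i),(m_i),(\ell_i),(V_i),Q)$ with $Q<\Sigma_{q_0}\times\dotsb\times\Sigma_{q_r}$ satisfying a specific nontriviality condition on the positive factors, and with each $V_i$ a semiregular permutation representation. Each of (ii)--(vi) is then proved by explicitly constructing new data of the same shape (concatenation after an lcm rescaling for~(ii), a root-isomorphism transport for~(iii), a wreath-type group $Q^{S^k\aQ}<\Sigma_{kq_0}\times\dotsb\times\Sigma_{kq_r}$ for~(iv), bumping $q_0$ for~(vi)). The longest argument is~(i), where the faithfulness of $\theta_A$ and of every $\theta_A(C_{kn}(n)|\Lambda(n))$ is proved directly from the semiregular-permutation hypothesis via Theorem~\ref{thm:ifcrit} and Proposition~\ref{prop:ifcritex}. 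Your proposal invokes those results only for~(v), but in the paper's architecture they are the engine behind~(i), and~(v) is then a one-line consequence of~(i) and Theorem~\ref{thm:dersym}. Your product-of-groups idea for~(ii) (diagonal $\bT$ in $\Gamma\times\Gamma'$) would also take you outside any fixed structural form; the paper's lcm construction keeps everything over a single $\bT(n)$.
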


As a
consequence of the lemma, objects of $\aACyc$ behave well with respect
to smash product and the functors $\Phi_{n}$.  For other statements,
it is useful to provide a label for the spectra with the properties
listed below.

\begin{defn}\label{defn:Nice}
Let $\Nice$ be the class of $\bT$-spectra $A$
that satisfy all of the following properties:
\begin{enumerate}
\item For every $m\geq 1$, $\Phi_{m}A$ is flat for the smash product
of $\bT$-spectra.
\item For every $m\geq 1$ and every cofibrant approximation $\tilde
A\to A$ in the $\aV_{\permp}$-constrained $\aF_{\fin}$-model
structure, the map $\Phi_{m}\tilde A\to \Phi_{m}A$ is a
$\aF_{\fin}$-equivalence. 
\item For every $m,n\geq 1$, the canonical map $\Phi_{mn}A\to
\Phi_{n}\Phi_{m}A$ of~\cite[2.6]{BM-cycl} is an isomorphism.
\item For every $m,n\geq 1$ and every $\bT$-spectrum $X$, the canonical map
\[
\Phi_{n}(\Phi_{m}A)\sma \Phi_{n}X\to
\Phi_{n}(\Phi_{m}A\sma X)
\]
is an isomorphism.
\end{enumerate}
\end{defn}

It is clear that $\Nice$ is closed under $\Phi_{m}$, retracts, smash
products, pushouts along Hurewicz cofibrations, and transfinite
compositions of Hurewicz cofibrations.  We note that $\bS\in \Nice$ and
more generally, the cofibrant objects in the standard stable and
complete model structures
on $\bT$-spectra are in $\Nice$. 

\begin{prop}\label{prop:ACycE}
$\aACyc\subset \Nice$.
\end{prop}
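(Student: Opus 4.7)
The plan is to verify each of the four defining conditions of $\Nice$ in turn, leveraging Lemma~\ref{lem:ACyc} together with the bundle-theoretic results from Chapter~\ref{chap:bundleone}, especially Theorems~\ref{thm:bundleflat}, \ref{thm:sm}, \ref{thm:derphi}, and \ref{thm:bundlegeofix}. The starting point is Lemma~\ref{lem:ACyc}.(i), which realizes every $A\in\aACyc$ as $i^{*}J^{\Gamma/\KG}\theta_{A}$ for a $Q$-faithful $(\Gamma,Q)$ vector bundle whose base is a finite product of $\Gamma$-orbits. Crucially, the extra hypothesis supplied by (i) is that $\theta_{A}(\Lambda\mid\KG)$ remains $Q$-faithful for every intermediate finite normal subgroup $\KG<\Lambda\lhd\Gamma$, and this is exactly the input required for the iterated geometric fixed-point computations needed for condition~(iii). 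Throughout, I would translate the endofunctors $\Phi_{m}=\rho_{m}^{*}\Phi^{C_{m}}$ into statements about the bundles $\theta_{A}(\Lambda\mid\KG)$ in the notation of Definition~\ref{defn:philambdaone} by taking $\Lambda$ to be the preimage of $C_{m}$ under $i$.

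For condition~(i), I would proceed in two steps. First I would show that every element of $\aACyc$ is flat for the smash product: since $\theta_{A}$ is $Q$-faithful and its base is cellular, Theorem~\ref{thm:bundleflat} applies directly to $J^{\Gamma/\KG}\theta_{A}$, and restriction along the isomorphism $i$ preserves flatness. Then, since Lemma~\ref{lem:ACyc}.(iii) asserts $\Phi_{m}A$ is (isomorphic to) an element of $\aACyc$, flatness of $\Phi_{m}A$ follows. For condition~(ii), the statement is precisely the content of Theorem~\ref{thm:derphi} applied to $\theta_{A}(\Lambda\mid\KG)$ for $\Lambda$ corresponding to $C_{m}$: the $Q$-faithfulness of $\theta_{A}(\Lambda\mid\KG)$ guaranteed by Lemma~\ref{lem:ACyc}.(i) is exactly the hypothesis of that theorem, and it supplies the weak equivalence $\Phi_{m}\tilde A\to\Phi_{m}A$ on cofibrant approximations in the $\aV_{\permp}$-constrained $\aF_{\fin}$-model structure.

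For condition~(iv), I would use Lemma~\ref{lem:ACyc}.(iii) to rewrite $\Phi_{m}A$ itself as an element of $\aACyc$; then Theorem~\ref{thm:sm} (the analogue of Proposition~\ref{prop:mainsym} for $\Phi_{n}$) gives the desired commutation with smash products. This reduces condition~(iv) to the already-established structural properties of $\aACyc$. Condition~(iii) is the one that I expect to require the most care. The canonical map $\Phi_{mn}A\to\Phi_{n}\Phi_{m}A$ of~\cite[2.6]{BM-cycl} is generally not an isomorphism, but for $A=i^{*}J^{\Gamma/\KG}\theta_{A}$ both sides admit explicit bundle-theoretic descriptions: $\Phi_{mn}A$ corresponds to $\theta_{A}(\Lambda_{mn}\mid\KG)$ and $\Phi_{n}\Phi_{m}A$ corresponds to the iterated construction $\theta_{A}(\Lambda_{mn}\mid\Lambda_{m})$ applied after passage to $\theta_{A}(\Lambda_{m}\mid\KG)$. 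I would verify that these two bundle constructions are naturally isomorphic using the functoriality of the $(\Lambda\mid K)$-operation, and that Theorem~\ref{thm:bundlegeofix} identifies each as the corresponding $J$-construction, so that the canonical map matches the bundle-level identification and is therefore an isomorphism.

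The main obstacle will be making the bundle-theoretic identification for condition~(iii) fully rigorous — tracking the compatibilities between the $p$th-root conjugations $\rho_{m},\rho_{n},\rho_{mn}$ and the subgroup lattice $\KG<\Lambda_{m}<\Lambda_{mn}<\Gamma$, and matching these with the natural transformation of~\cite[2.6]{BM-cycl} on the level of generalized orbit desuspension spectra. Once this diagram of isomorphisms is in place, the remaining arguments are immediate applications of the flatness, smash product, and derived-fixed-point theorems from Chapter~\ref{chap:bundleone}, and the four conditions of Definition~\ref{defn:Nice} all follow.
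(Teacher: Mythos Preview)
Your proposal is correct and follows essentially the same route as the paper: use Lemma~\ref{lem:ACyc}.(i),(iii) to reduce to bundle-theoretic statements, then invoke Theorem~\ref{thm:bundleflat} for~(i), Theorem~\ref{thm:derphi} for~(ii), and Theorem~\ref{thm:sm} for~(iv). The one place you do more work than needed is condition~(iii): the bundle-level identification you describe (comparing $\theta_{A}(\Lambda_{mn}\mid\KG)$ with the iterated $(\Lambda\mid\KG)$-construction and matching it with the canonical map) is exactly the content and proof of Theorem~\ref{thm:bundleiterphi}, which the paper simply cites; you are in effect reproving that theorem rather than invoking it.
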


\begin{proof}
This is simplified by remembering that if $A\in \aACyc$, then
$\Phi_{n}A$ is isomorphic to an object of $\aACyc$ by
Lemma~\ref{lem:ACyc}.(iii). Applying Lemma~\ref{lem:ACyc}.(i), we see
that objects of $\aACyc$ have property~(i) by
Theorem~\ref{thm:bundleflat}, have property~(ii) by
Theorem~\ref{thm:derphi}, have property~(iii) by
Theorem~\ref{thm:bundleiterphi}, and have property~(iv) by
Theorem~\ref{thm:sm}.
\end{proof}

Before beginning the proof of Theorem~\ref{thm:circlephi},
we state some auxiliary results that simplify some aspects of the
proof and are also
needed in other contexts.
The first allows us to use relative cell complexes in place of regular
cofibrations.  It is an easy consequence of the fact that in the
equivariant vector bundles used to construct objects of $\aACyc$, the
base spaces are compact.

\begin{prop}\label{prop:aACycsmall}
Objects in $\aACyc$ are topologically point-set small.
\end{prop}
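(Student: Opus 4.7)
The plan is to exploit the explicit structure given by Lemma~\ref{lem:ACyc}(i), which presents every $A\in\aACyc$ as $i^{*}J^{\Gamma/\KG}\theta_{A}$ for a $(\Gamma,Q)$-vector bundle $\theta_{A}$ whose base space is a finite product of orbits of $\Gamma$, and hence compact. Topological point-set smallness depends only on the underlying set-valued functor $\Spectra^{\bT}(A,-)$, and pullback along the group isomorphism $i\colon \bT\iso \Gamma/\KG$ does not change this functor (it only relabels the group action, leaving the underlying sets and the effect of transfinite composition alone). So it suffices to show that $J^{\Gamma/\KG}\theta_{A}$ is topologically point-set small as a $\Gamma/\KG$-spectrum.

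Next, I would use the explicit construction of $J\theta$ from Chapter~\ref{chap:bundleone} to exhibit a natural bijection between maps $J^{\Gamma/\KG}\theta_{A}\to X$ of $\Gamma/\KG$-spectra and continuous equivariant maps out of a fixed compact space (built from the base and fibers of $\theta_{A}$) into finitely many of the component spaces $X(V)$ of $X$. This is the direct generalization of the paragraph preceding Definition~\ref{defn:tpssmall}, where maps out of $F_{V}(G/H)_{+}$ are identified with $X(V)^{H}$; the compactness of the base of $\theta_{A}$ is what keeps the representing space compact.

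The final step is the standard one. Evaluation of a $\Gamma/\KG$-spectrum at a representation $V$ commutes with arbitrary colimits in $\Spectra^{\Gamma/\KG}$, since colimits are computed spacewise. Taking equivariant continuous maps out of a compact space commutes with transfinite compositions of topological spaces along closed inclusions, and the remark after Definition~\ref{defn:cell} ensures that a transfinite composition of spacewise closed inclusions is still spacewise a closed inclusion on each representation. Combining these with the representability in the previous paragraph shows that $\Spectra^{\Gamma/\KG}(J^{\Gamma/\KG}\theta_{A},-)$ commutes with transfinite compositions of spacewise closed inclusions, which is exactly topological point-set smallness.

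The only real obstacle is the bookkeeping step of pinning down the precise compact representing space for $J\theta$ from the construction in Chapter~\ref{chap:bundleone} and checking that its formation respects the relevant colimits; once that is in hand, the smallness argument is immediate from compactness plus the spacewise nature of colimits of orthogonal spectra.
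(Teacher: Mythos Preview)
Your approach is correct and matches the paper's, which gives no detailed proof but simply remarks just before the proposition that it ``is an easy consequence of the fact that in the equivariant vector bundles used to construct objects of $\aACyc$, the base spaces are compact.'' Your outline supplies the details the paper omits; the one imprecision---phrasing the representability as a single compact space rather than a finite diagram of representables $F_{V}C_{+}$ coming from the finite cell structure on the compact base---is exactly the bookkeeping you flag at the end, and once filled in the argument is complete.
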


The other result we need regards the interaction of the balanced smash
product with the endofunctors $\Phi_{n}$.  In the
statement, the undercategory case is designed to handle cofibrations
of commutative ring orthogonal spectra: if $R\to A$ is a cofibration
of commutative ring orthogonal spectra in one of the model categories
of Theorem~\ref{thm:circlephi}, then $R\to A$ is a cofibrant object in
the undercategory of $R$ in the category of (symmetric) $R$-modules.

\begin{prop}\label{prop:gfsma}
Let $R$ be a associative ring $\bT$-spectrum.  Let $M$ be a right $R$-module and
$N$ a left $R$-module.  If either $M$ or $N$ is cofibrant in the
respective $R$-module model structure (from
Theorem~\ref{thm:circlephi}) or in the undercategory of $R$ in the
respective $R$-module model structure, then for any $m\geq 1$, the canonical map
\[
\Phi_{m}M\sma_{\Phi_{m}R}\Phi_{m}N\to \Phi_{m}(M\sma_{R}N)
\]
is an isomorphism.
\end{prop}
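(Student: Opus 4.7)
The plan is to run a cell induction argument on the cofibrant module (say $M$), using the generating cells $R\sma A\sma(\partial D^n_+\hookrightarrow D^n_+)$ with $A\in\aACyc$ (or $\aApCyc$), and to reduce everything to a single invocation of property~(iv) of Definition~\ref{defn:Nice}, which is available for all $A\in\aACyc$ by Proposition~\ref{prop:ACycE}.

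The first step is to observe that both bifunctors $(-)\sma_R N$ and $\Phi_m(-)\sma_{\Phi_m R}\Phi_m N$ preserve colimits in the first variable, and that $\Phi_m$ commutes with pushouts along Hurewicz cofibrations and with transfinite composites of such (a standard fact, since the constituent maps in a cellular presentation are spacewise closed inclusions). Hence each side of the comparison turns a cellular presentation of $M$ into a filtered system of pushouts along Hurewicz cofibrations, and the comparison map becomes the colimit of the cell-by-cell comparisons. So it suffices to verify the base case of the induction together with a single cell-attachment step.

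For the base case: when $M$ is the initial $R$-module $*$, both sides vanish, and in the undercategory situation when $M=R$ itself, both sides reduce to $\Phi_m N$ with the comparison equal to the identity. For the inductive step, the pushout-preservation already noted reduces the check to cells, i.e.\ to the case $M=R\sma A$ with $A\in\aACyc$. There the left-hand side equals $\Phi_m(A\sma N)$, while the right-hand side equals $\Phi_m A\sma \Phi_m N$ after using property~(iv) of $\Nice$ (with trivial inner exponent) to rewrite $\Phi_m(R\sma A)\iso \Phi_m R\sma \Phi_m A$; a diagram chase identifies the comparison with the canonical map $\Phi_m A\sma \Phi_m N\to \Phi_m(A\sma N)$, which is an isomorphism by a second application of property~(iv) to~$A$.

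The step I expect to be the main obstacle is the bookkeeping required to verify that the naturally constructed comparison map, built from the lax monoidality of $\Phi_m$ together with the two coequalizers defining the balanced smash products, genuinely reduces at each cellular stage to the canonical map supplied by property~(iv). Concretely, we need to check that the isomorphism $\Phi_m(R\sma A)\iso \Phi_m R\sma \Phi_m A$ intertwines the left $\Phi_m R$-action on $\Phi_m(R\sma A)$ (pulled back from the composite $\Phi_m R\sma \Phi_m R\to \Phi_m(R\sma R)\to \Phi_m R$) with the obvious left action on $\Phi_m R\sma \Phi_m A$, and we need the analogous compatibility on the $N$-side to identify the comparison of coequalizers. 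These coherences should follow from the naturality of the structure maps of $\Phi_m$ and the associativity of the monoidal structure, but they need to be verified with care to keep the induction clean and to ensure that the undercategory variant goes through identically.
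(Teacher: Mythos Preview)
Your proposal is correct and follows essentially the same approach as the paper's proof: a cell-by-cell reduction to the case $M=R\sma A$ with $A\in\aACyc$, where the claim reduces to property~(iv) of Definition~\ref{defn:Nice} (with trivial inner exponent, i.e., $\Phi_{1}=\id$). The paper's proof is a terse one-paragraph sketch that simply says ``working cell by cell, the result follows from property~(iv),'' whereas you have spelled out the base cases, the colimit-preservation needed for the induction, and the coherence checks required to identify the comparison map at each stage; your last paragraph correctly flags exactly the bookkeeping the paper suppresses.
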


\begin{proof}
Without loss of generality, we can assume that the cofibrant object is
a cell complex.  The $R$-module cells are then in particular of the form 
\[ R\sma A\sma (D^{n},\partial D^{n})_{+} \]
for $A\in \aACyc$, with the left (for $M$) or right (for $N$)
$R$-module structure on $R$.  Working cell by cell, the result follows from
property~(iv) defining~$\Nice$ for the objects in $\aACyc$.
\end{proof}

We are now ready for the proof of Theorem~\ref{thm:circlephi}.

\begin{proof}[Proof of Theorem~\ref{thm:circlephi}]
First, to check the hypotheses of
Theorem~\ref{thm:multAvar}, we observe the following:
\begin{itemize}
\item Smash product of objects of $\aACyc$ or $\aApCyc$ are isomorphic
to objects of $\aACyc$ or $\aApCyc$ (Lemma~\ref{lem:ACyc}.(ii)) and
are therefore cofibrant in their variant model structures.
\item The symmetric power of any object of $\aACyc$ is flat for the
smash product by Lemma~\ref{lem:ACyc}.(iv) and
Proposition~\ref{prop:ACycE}
(q.v.~Definition~\ref{defn:Nice}.(i)). 
\item Any $\aF_{\fin}$- or $\aF_{p}$-equivalence
$\tilde A\to A$ from a positive complete cofibrant object to an object
of $\aACyc$ induces an $\aF_{\fin}$- or $\aF_{p}$-equivalence on
symmetric powers by Lemma~\ref{lem:ACyc}.(v).   
\end{itemize}
The last bullet point holds because the symmetric power functor
preserves $\aF_{\fin}$- and $\aF_{p}$-equivalences between cofibrant
objects in the positive complete model structure (e.g.,
by applying~\cite[B.111,B.116]{HHR} and restricting the group from $\bT$
to $C_{m}$ or $C_{p^{k}}$).  Objects that are cofibrant in the
$\aV_{\permp}$-constrained $\aF_{\fin}$-model structure and in the 
$\aV_{\permp}$-constrained $\aF_{p}$-model structure are in particular
cofibrant in 
the positive complete model structure, so the last bullet point in
particular implies hypothesis~(ii) of Theorem~\ref{thm:multAvar}.
We therefore get $\aACyc$ and $\aApCyc$ variant model structures on
$\bT$-equivariant commutative ring spectra. 

To check the enrichment statement, we apply Lemma~\ref{lem:ACyc}.(vi) and
Proposition~\ref{prop:enriched}.

To check statement~(ii), that cofibrant commutative ring $\bT$-spectra have underlying
$\bT$-spectra cofibrant in the undercategory of $\bS$, we need to
see that $\aACyc$ and $\aApCyc$ are
closed up to isomorphism under symmetric powers; this is
Lemma~\ref{lem:ACyc}.(iv).

To check statement~(i) in the case of $\bT$-spectra, we use the fact
that a cofibration is a retract of a relative cell complex, with cells
of the form 
\[
A\sma (D^{n},\partial D^{n})_{+}
\]
for $A$ in $\aACyc$ or $\aApCyc$.
Because $\aACyc$ is closed up to isomorphism under $\Phi_{n}$,
$\Phi_{n}$ preserves cell attachments of this form (with $A$ replaced
by $\Phi_{n}A$).  Likewise $\aApCyc$ is closed up to isomorphism under
$\Phi_{p^{n}}$.  The endofunctors $\Phi_{n}$ preserve Hurewicz cofibrations,
and pushouts along Hurewicz cofibrations give Mayer-Vietoris sequences
of homotopy groups in $\bT$-spectra, so
$\Phi_{n}$ and $\Phi_{p^{n}}$ also preserve acyclic cofibrations in
the $\aACyc$-variant $\aV_{\permp}$-constrained $\aF_{\fin}$-model
structure and $\aApCyc$-variant $\aV_{\permp}$-constrained
$\aF_{p}$-model structure on $\bT$-spectra, respectively.

To check statement~(i) in the case of commutative ring $\bT$-spectra,
the main new tool is 
Theorem~\ref{thm:calcgeoSym}.  The usual retract and colimit
argument reduces the statement to showing that applying $\Phi_{n}$ to
a cell attachment in this model structure
\[
R\to R\sma_{\bP(A\sma \partial D^{k}_{+})}(\bP(A\sma D^{k}_{+})) 
\]
gives a cofibration in this model structure. (Here $\bP$ denotes the
free functor from $\bT$-spectra to
commutative ring $\bT$-spectra.)  By
Proposition~\ref{prop:gfsma}, we have
\[
\Phi_{n}(R\sma_{\bP(A\sma \partial D^{k}_{+})}(\bP(A\sma D^{k}_{+})))
\iso (\Phi_{n}R)\sma_{\Phi_{n}(\bP(A\sma \partial D^{k}_{+}))}
\Phi_{n}(\bP(A\sma D^{k}_{+})), 
\]
so it suffices to check that 
\[
\Phi_{n}(\bP(A\sma \partial D^{k}_{+}))\to
\Phi_{n}(\bP(A\sma D^{k}_{+}))
\]
is a cofibration when $A\in \aACyc$.  By Theorem~\ref{thm:calcgeoSym},
this map is the retract of a finite smash product of maps of the
form
\[
\bP(X\sma (\partial D^{k})^{q}_{+})\to 
\bP(X\sma (D^{k})^{q}_{+})
\]
where $X$ is a retract of $A^{(q)}/\Sigma_{q}$. Since
$A^{(q)}/\Sigma_{q}$ is a isomorphic to an object of $\aACyc$, $X$ is a
retract of an object of $\aACyc$, and the map is a cofibration in the
$\aACyc$-variant model structure for commutative
ring $\bT$-spectra.  Any acyclic cofibration is a transfinite
composition of maps, each of which is formed as a pushout along an
acyclic cofibration of cofibrant objects.  Because as just shown
$\Phi_{n}$ preserves cofibrations and because it preserves weak
equivalences between cofibrant objects, it follows that $\Phi_{n}$
preserves acyclic cofibrations in the $\aACyc$-variant model structure
for commutative ring $\bT$-spectra.  The
argument for the case of $\aApCyc$ is entirely similar.
\end{proof}

\subsection*{Model structures for Theorem~\protect\ref{main:Tmod}}
Theorem~\ref{thm:circlephi} combined with Theorem~\ref{thm:multAvar}
gives a model structure that satisfies the conclusions of
Theorem~\ref{main:Tmod}.  The following is a step-by-step guide
through the definitions and results above to see this.
Theorem~\ref{main:Tmod} gives two sets of statements, one for
$\aF_{\fin}$ and one for $\aF_{p}$; let $\aF$ denote $\aF_{\fin}$ or
$\aF_{p}$ in the respective cases, and let $\aA$ denote the set
$\aACyc$ or $\aApCyc$ of Definition~\ref{defn:ACyc} in the respective
cases.  Write $\aA^{\comp}$ for the set of $\bT$-spectra of the form
\[
\bT_{+}\sma_{C}F_{V}S^{0}
\]
where $C\in \aF$ and $V$ is an orthogonal $C$-representation
whose isomorphism class is in $\aV_{\permp}(C)$: $V$ is non-zero and
has an orthonormal basis to which the $C$ action restricts. Finally,
let $\aA^{\natural}$ be the set of $\bT$-spectra of the form
$\bT/C_{+}\sma F_{V}S^{0}$ where $C\in \aF$ and $V$ is an orthogonal
$\bT$-representation whose underlying isomorphism class of orthogonal
$C$-representation is in $\aV_{\permp}(C)$.  (We will more concisely
write this condition as ``$V$ is in $\aV_{\permp}(C)$'' in what follows.)

We then get model structures on
$\bT$-spectra for $\aA$, $\aA^{\comp}$, and $\aA^{\natural}$, where:
\begin{itemize}
\item The cofibrations are the maps $X\to Y$ that are retracts of
relative $\aA$-, $\aA^{\comp}$, and $\aA^{\natural}$-cell complexes,
respectively (see Definition~\ref{defn:cell}). 
\item The weak equivalences are the $\aF$-equivalences
\end{itemize}
In the case of $\aA^{\natural}$ (or $\aA^{\comp}$), a map $X\to Y$ is a
fibration if and 
only if for every $C\in \aF$ and every orthogonal $\bT$-representation
(or $C$-representation, respectively)
$V$ in $\aV_{\permp}(C)$, the map $X(V)\to 
Y(V)$ is a $C$-equivariant Serre fibration (a Serre fibration on all
fixed point spaces) and for every orthogonal $\bT$-representation (or
$C$-representation, respectively) $W$ in $\aV_{\permp}(C)$
containing such a $V$, the diagram of $C$-spaces
\[
\xymatrix{%
X(V)\ar[r]\ar[d]&\Omega^{W-V}X(W)\ar[d]\\
Y(V)\ar[r]&\Omega^{W-V}Y(W)
}
\]
is homotopy cartesian.  In the case of $\aA$, a map $X\to Y$
is a fibration if and only if:
\begin{itemize}
\item $X\to Y$ is a fibration in the model structure for $\aA^{\natural}$,
\item For every $A\in \aA$, the map $\Spectra^{G}(A,X)\to
\Spectra^{G}(A,Y)$ is a Serre fibration, and
\item For every $A\in \aA$, there exists a cofibrant object
$\tilde A$ in the model structure for $\aA^{\natural}$ and a weak
equivalence $\tilde A\to A$ such that the diagram
\[
\xymatrix{%
\Spectra^{G}(A,X)\ar[r]\ar[d]
&\Spectra^{G}(\tilde A,X)\ar[d]\\
\Spectra^{G}(A,Y)\ar[r]
&\Spectra^{G}(\tilde A,Y)
}
\]
is homotopy cartesian.  
\end{itemize}
(The fibrations in the model structure in the case of $\aA$ can
alternatively be characterized in terms of $\aA^{\comp}$, just
replacing $\aA^{\natural}$ by~$\aA^{\comp}$ in the statements.)

These model structures follow for example from
Theorems~\ref{thm:FVmodel} and~\ref{thm:FVcmodel} in the case of
$\aA^{\natural}$ and $\aA^{\comp}$ (respectively)
and Theorem~\ref{thm:varAmodel} with Proposition~\ref{prop:varAfib}
and Proposition~\ref{prop:aACycsmall} in the case of $\aA$.

The identity gives a zigzag of Quillen equivalences of model
structures relating these two model structures and the standard (or
positive) complete stable model structure on $\bT$-spectra: the identity is a Quillen left adjoint from the
model structure for $\aA^{\natural}$ to the standard (or positive)
model structure and to the model structure for $\aA$ and the
identity is also a Quillen left adjoint from the model structure for
$\aA^{\comp}$ to the complete (or positive complete) model structure
and to the model structure for $\aA$.  Writing the functors in the
direction of Quillen left adjoints, we have the following zigzags of
Quillen equivalences of model structures on the category of
$\bT$-spectra with underlying functor the identity.
\begin{equation}\label{eq:quillenzigzag}
\begin{gathered}
\xymatrix@R-2pc@C-1pc{%
\text{standard}&\text{positive}\ar[l]
&\text{model}(\aA^{\natural})\ar[l]\ar[r]
&\text{model}(\aA)\\
\text{complete}&\text{pos. complete}\ar[l]
&\text{model}(\aA^{\comp})\ar[l]\ar[r]
&\text{model}(\aA)
}
\end{gathered}
\end{equation}

Theorem~\ref{thm:circlephi} now proves Theorem~\ref{main:Tmod}:
Theorem~\ref{thm:circlephi} asserts that $\Phi_{n}$ preserves
cofibrations and acyclic cofibrations in the model structure for $\aA$ on $\bT$-spectra, which is statement~(i) of Theorem~\ref{main:Tmod}.

Theorem~\ref{thm:circlephi} asserts that
the hypotheses of Theorem~\ref{thm:multAvar} hold which implies that
the forgetful functor creates a topological model structure on
commutative ring $\bT$-spectra.  This is
statement~(ii) of Theorem~\ref{main:Tmod}.

Theorem~\ref{thm:circlephi} asserts that $\Phi_{n}$ preserves
cofibrations in the model structure for $\aA$ on the category of
commutative ring $\bT$-spectra, and this in
particular implies statement~(iii) of Theorem~\ref{main:Tmod}.

Theorem~\ref{thm:circlephi} asserts that in the model structures for
$\aA$, the forgetful functor takes the cofibrant
commutative ring $\bT$-spectra to cofibrant
objects in the undercategory of $\bS$ in $\bT$-spectra.  This is
statement~(iv) of Theorem~\ref{main:Tmod}.

The final part of Theorem~\ref{main:Tmod}, relating the model
structures on $\bT$-spectra to the positive convenient $\Sigma$-model
on non-equivariant spectra is proved as follows.  Since cofibrations
both equivariantly and non-equivariantly are formed as sequential
colimits of pushouts along certain maps between certain cofibrant
objects, Theorem~\ref{thm:necircle} (and the fact that $\aApCyc\subset
\aACyc$) implies that both the forgetful functor and free functor
preserve cofibrations.  Since the free functor preserves all weak
equivalences, by adjunction, the forgetful functor also preserves
fibrations.

\section{{P}re-cyclotomic spectra and Theorems~\sref$main:cycmod$
and~\sref$main:map$}
\label{sec:precyc}

The purpose of this section is to prove Theorem~\ref{main:map} and
Theorem~\ref{main:cycmod} (and its addendum) using the results of the
previous section.

The model structure on \ppc spectra alluded to in the
statement of Theorem~\ref{main:cycmod} is the one arising out of
Theorem~\ref{thm:circlephi} above.  Here is the precise statement both
in this case and in the case of pc spectra.  The proof
given in~\cite{BM-cycl} for the model structure considered there
extends to prove this case.

\begin{thm}\label{thm:cycmodel}
The category of \ppc spectra (respectively,
pc spectra) admits a topological model
structure with fibrations and weak equivalences created by the
forgetful functor to $\bT$-equivariant spectra with the
$\aApCyc$-variant $\aV_{\permp}$-constrained $\aF_{p}$-model
structure (resp., the $\aACyc$-variant 
$\aV_{\permp}$-constrained $\aF_{\fin}$-model structure).  This model structure is
enriched over the standard model structure on non-equivariant
spectra. 
\end{thm}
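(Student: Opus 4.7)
The plan is to adapt the proof of the analogous theorem from~\cite{BM-cycl}, where the authors lifted a model structure on pc spectra from the standard stable model structure on $\bT$-spectra. Here I would instead lift from the $\aApCyc$-variant $\aV_{\permp}$-constrained $\aF_{p}$-model structure on $\bT$-spectra established in Theorem~\ref{thm:circlephi} (respectively, the $\aACyc$-variant $\aV_{\permp}$-constrained $\aF_{\fin}$-model structure in the pc case). The forgetful functor $U$ from \ppc spectra to $\bT$-spectra admits a left adjoint $F$ given on a $\bT$-spectrum $Y$ by $F(Y) = \bigvee_{n\geq 0}\Phi^{n}Y$ with cyclotomic structure map the evident shift inclusion; the pc construction is analogous using the full system of functors $\Phi_m$ together with the compatibility isomorphisms from Definition~\ref{defn:Nice}.(iii).

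With this adjunction in hand, I would apply the standard transfer theorem for cofibrantly generated model structures (Kan's lemma), exactly as in~\cite{BM-cycl}, taking as generating cofibrations and generating acyclic cofibrations the images under $F$ of the corresponding sets in the base model structure on $\bT$-spectra. The smallness condition holds because the generators of the base model structure have topologically point-set small domains and codomains by Proposition~\ref{prop:aACycsmall}, and this property is preserved under $\Phi_{n}$ and wedge. The key condition to verify is that every regular cell complex built from images under $F$ of generating acyclic cofibrations is a weak equivalence when viewed in the base category.

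The main obstacle is this acyclicity condition. A pushout in \ppc spectra along $F(j)$ is not simply a pushout along $j$ at the level of $\bT$-spectra; rather, its underlying map decomposes as a wedge of pushouts along $\Phi^{n}(j)$ for $n\geq 0$. The crux of the argument is Theorem~\ref{thm:circlephi}.(i), which states that $\Phi_{n}$ preserves acyclic cofibrations in the relevant variant model structure on $\bT$-spectra; hence each $\Phi^{n}(j)$ is an acyclic cofibration, and standard Hurewicz cofibration arguments (using that pushouts along Hurewicz cofibrations produce Mayer--Vietoris sequences in $\bT$-spectra) handle the wedge assembly and the transfinite composition. In the original~\cite{BM-cycl} treatment for the standard model structure, this step required substantially more delicate analysis because the endofunctor $\Phi$ does not preserve cofibrations between arbitrary cofibrant objects in the standard model structure; the new variant model structures were engineered precisely so that this step becomes direct.

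Finally, the topological enrichment is inherited from the base model structure: the forgetful functor $U$ creates tensors and cotensors with based topological spaces (since $\Phi$ commutes with these operations on the nose), so the pushout-tensor axiom of~\cite[3.4]{LewisMandell2} transfers immediately from the version on $\bT$-spectra proved in Theorem~\ref{thm:circlephi}. The enrichment over non-equivariant spectra in the standard model structure then follows by combining this with the corresponding enrichment on the base model structure.
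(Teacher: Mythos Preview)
Your proposal is correct and takes essentially the same approach as the paper, which simply states that ``the proof given in~\cite{BM-cycl} for the model structure considered there extends to prove this case.'' You have correctly identified the free--forgetful adjunction (the paper writes the free functor as $\bC$), the transfer argument, the smallness verification, and the crucial role of Theorem~\ref{thm:circlephi}.(i) in the acyclicity step.

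One minor historical correction: your remark that in~\cite{BM-cycl} ``this step required substantially more delicate analysis because the endofunctor $\Phi$ does not preserve cofibrations between arbitrary cofibrant objects in the standard model structure'' is not accurate. In the standard $\aF_p$-model structure used in~\cite{BM-cycl}, the endofunctor $\Phi$ \emph{does} preserve cofibrations and acyclic cofibrations, and the transfer argument there runs exactly as you describe. The genuine novelty of the variant model structures of Theorem~\ref{thm:circlephi} lies not in this theorem but downstream, in the commutative ring setting, where $\Phi$ fails to preserve cofibrant objects for the standard positive model structure but does for the new variant. This does not affect the correctness of your argument, only its framing.
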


The generating cofibrations in the model structure are of the form
$(\bC A)\sma (D^{n},\partial D^{n})_{+}$ where
\[
\bC A=\bigvee_{k\geq 0} \Phi_{p^{k}} A
\]
for $A\in \aApCyc$, or in the pc case
\[
\bC A=\bigvee_{n\geq 1}\Phi_{n}A
\]
for $A\in \aACyc$.  We see in both cases that these generating
cofibrations are cofibrations in the corresponding model structure of
Theorem~\ref{thm:circlephi}, which implies the following proposition. 

\begin{prop}\label{prop:cyccof}
The forgetful functor from \ppc spectra (resp.,
pc spectra) to $\bT$-spectra
preserves cofibrations and acyclic cofibrations for the relevant model
structures in Theorem~\ref{thm:cycmodel}.
\end{prop}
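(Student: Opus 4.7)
The plan is to verify the claim on the generating (acyclic) cofibrations of the pc (resp., $p$pc) model structure from Theorem~\ref{thm:cycmodel} and then extend by preservation of the colimits used to build cell complexes.

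For the generating cofibrations, as recorded in the paragraph preceding the proposition, these have the form $(\bC A)\sma(\partial D^{n}_{+}\to D^{n}_{+})$ with $A\in \aACyc$ in the pc case (resp., $A\in \aApCyc$ in the $p$pc case), where $\bC A=\bigvee_{n\geq 1}\Phi_{n}A$ (resp., $\bigvee_{k\geq 0}\Phi_{p^{k}}A$). By Lemma~\ref{lem:ACyc}.(iii), each wedge summand $\Phi_{n}A$ (resp., $\Phi_{p^{k}}A$) is isomorphic to an element of $\aACyc$ (resp., $\aApCyc$), so the underlying $\bT$-spectrum of $\bC A$ is isomorphic to a wedge of cell sources for the variant model structure of Theorem~\ref{thm:circlephi}, and therefore cofibrant there. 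Smashing with $(\partial D^{n}_{+}\to D^{n}_{+})$ gives a wedge of generating cofibrations in the variant model structure, hence a cofibration. The analogous statement for generating acyclic cofibrations follows from the topological enrichment together with the fact, supplied by Theorem~\ref{thm:circlephi}.(i), that each $\Phi_{n}$ (resp., $\Phi_{p^{k}}$) preserves acyclic cofibrations in the variant model structure on $\bT$-spectra.

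To pass from generating cofibrations to all cofibrations, I would argue that the forgetful functor preserves the pushouts along generating cofibrations and the transfinite compositions used to form relative cell complexes. Concretely, every generating cofibration is a Hurewicz cofibration, and the endofunctors $\Phi_{n}$ preserve pushouts along and transfinite compositions of Hurewicz cofibrations; this means that the underlying $\bT$-spectrum of a pc (resp., $p$pc) pushout or sequential colimit agrees with the corresponding colimit of underlying $\bT$-spectra, with the cyclotomic structure map uniquely determined by the colimit property. Since cofibrations in the variant $\bT$-model structure are closed under retracts, pushouts along cofibrations, and transfinite compositions of cofibrations, the forgetful functor carries every pc (resp., $p$pc) cofibration to a cofibration of $\bT$-spectra, and the same argument applied to the generating acyclic cofibrations gives the acyclic case.

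The mild obstacle is the colimit-comparison step: it must be checked that the pc (resp., $p$pc) colimit of a cell-attachment diagram really is computed on underlying $\bT$-spectra. This reduces to the assertion that $\Phi_{n}$ (resp., $\Phi_{p^{k}}$) commutes with such colimits — a standard property of geometric fixed points along Hurewicz cofibrations — after which both halves of the proposition follow by a routine retract/pushout/transfinite-composition argument.
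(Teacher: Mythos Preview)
Your argument is correct and follows the same line as the paper's (very terse) proof: the paper simply observes that the generating cofibrations $(\bC A)\sma(D^{n},\partial D^{n})_{+}$ have underlying $\bT$-spectrum maps that are cofibrations in the variant model structure (via Lemma~\ref{lem:ACyc}.(iii)), and declares this to imply the proposition. You spell out the colimit-comparison step that the paper leaves implicit, and your justification---that $\Phi_{n}$ preserves pushouts along and transfinite compositions of Hurewicz cofibrations, so colimits of cell-attachment diagrams in (p)pc spectra are computed on underlying $\bT$-spectra---is exactly the right one.

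One simplification: your separate treatment of acyclic cofibrations is unnecessary. By construction (Theorem~\ref{thm:cycmodel}), weak equivalences in the (p)pc model structure are \emph{created} by the forgetful functor, so once you know cofibrations are preserved, acyclic cofibrations (being cofibrations that are weak equivalences) are preserved automatically. The appeal to topological enrichment and Theorem~\ref{thm:circlephi}.(i) for the acyclic case is correct but roundabout.
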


Proposition~\ref{prop:ACycE} implies that cofibrant objects in the
model structure above are in the class $\Nice$ of~\ref{defn:Nice}.  By
definition, if $X\in \Nice$ and $Y$ is any $\bT$-spectrum, then the canonical map 
\[
\Phi_{n}X\sma \Phi_{n}Y\to \Phi_{n}(X\sma Y)
\]
is an isomorphism.  For $X$ and $Y$ \ppc spectra whose
underlying $\bT$-spectra are in $\Nice$, $X\sma
Y$ gets a \ppc structure map
\[
\Phi_{p}(X\sma Y)\iso \Phi_{p}X\sma \Phi_{p}Y\to X\sma Y
\]
from the smash product of the \ppc structure maps on $X$
and $Y$.  This constructs a symmetric monoidal product on the
full subcategory of \ppc spectra whose underlying
objects are in $\Nice$.  In particular, this establishes part~(i) of
Theorem~\ref{main:cycmod}.  The analogue for pc spectra holds by the same
observation. 

For part~(ii) of Theorem~\ref{main:cycmod}, we recall the definition
of associative ring and commutative ring \ppc spectra.
An associative ring \ppc (respectively, pc)
spectrum consists of a \ppc (resp., pc)
spectrum $A$ together with a \term{unit} map $\bS\to A$ that is a map
of \ppc (resp., pc) spectra and a
\term{multiplication} map of $\bT$-spectra
$A\sma A\to A$ such that the unit and multiplication give $A$ the
structure of an associative ring $\bT$-spectrum
and the diagram
\[
\xymatrix{%
\Phi_{p} A\sma \Phi_{p}A\ar[r]\ar[d]&\Phi_{p}(A\sma A)\ar[r]&\Phi_{p}A\ar[d]\\ 
A\sma A\ar[rr]&&A
}
\]
constructed from the multiplication and structure maps commutes (for
all primes $p$ in the pc case).  An associative ring
\ppc (resp., pc) spectrum is a commutative
ring when the multiplication is commutative.

Proposition~\ref{prop:gfsma} is the key to the construction of the
model structure for commutative ring \ppc spectra.  The
generating cofibrations in this model structure are obtained by
applying the free commutative ring functor
\[
\bP X = \bigvee_{k\geq 0} X^{(k)}/\Sigma_{k}
\]
to the generating cofibrations for the model structure on
\ppc spectra of Theorem~\ref{thm:cycmodel}.  In the
resulting map of commutative ring \ppc spectra, both the
source and target have underlying $\bT$-spectra
formed as the wedge of $\bS$ with a wedge of objects in
$\aApCyc$ (by Lemma~\ref{lem:ACyc}), and in particular are in
$\Nice$.  If we write such a cell as $A\to B$, then for any
commutative ring \ppc spectrum $R$, and map $A\to R$, the cell attachment
$R\sma_{A}B$ satisfies
\[
\Phi_{p}(R\sma_{A}B)\iso \Phi_{p}R\sma_{\Phi_{p}A} \Phi_{p}B
\]
and we can give it the \ppc structure map induced by the
\ppc structure maps on $R$, $A$, and $B$ (which by
hypothesis are maps of commutative ring $\bT$-spectra). Thus, we can form the required cell attachments in
commutative ring \ppc spectra to reproduce the usual
argument constructing a model structure on the commutative ring
objects.  This gives the model structure for part~(ii) of
Theorem~\ref{main:cycmod}. The case for commutative ring
pc spectra is entirely similar.

\begin{thm}\label{thm:crcycmodel}
The category of commutative ring \ppc spectra
(resp., pc spectra)
admits a model structure with fibrations and weak equivalences created
in the model structure of \ppc spectra (resp.,
pc spectra) of Theorem~\ref{thm:cycmodel}. 
\end{thm}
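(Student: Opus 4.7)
The plan is to transfer the model structure of Theorem~\ref{thm:cycmodel} along the free-forgetful adjunction between \ppc spectra (resp.\ pc spectra) and their commutative ring objects, in the style of~\cite[4.1]{SSAlgMod}, taking as generating (acyclic) cofibrations the images $\bP(I)$ and $\bP(J)$ of the generating (acyclic) cofibrations from Theorem~\ref{thm:cycmodel}. The small object argument is immediate since the generators in Theorem~\ref{thm:cycmodel} have domains and codomains built from objects of $\aApCyc$ (resp.\ $\aACyc$), which are topologically point-set small by Proposition~\ref{prop:aACycsmall}.

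The first substantive step is to verify that cell attachments of the form $R \to R \sma_{\bP X} \bP Y$, formed as pushouts of commutative ring $\bT$-spectra, inherit a canonical commutative ring \ppc structure. This is precisely where the input from Proposition~\ref{prop:gfsma} enters: by Lemma~\ref{lem:ACyc}.(iv) and Proposition~\ref{prop:ACycE}, the underlying $\bT$-spectra of $\bP X$ and $\bP Y$ are wedges of $\bS$ with objects in $\aApCyc\subset \Nice$, so Proposition~\ref{prop:gfsma} supplies the isomorphism $\Phi_{p}(R \sma_{\bP X}\bP Y) \iso \Phi_{p}R \sma_{\Phi_{p}\bP X}\Phi_{p}\bP Y$, from which the cyclotomic structure map on the pushout assembles from those of $R$, $\bP X$, and $\bP Y$ viewed as commutative ring $\bT$-spectra. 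This promotes the pushout square to one in commutative ring \ppc spectra.

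The main obstacle will be the acyclicity of pushouts along $\bP(j)$ for $j \in J$ a generating acyclic cofibration of \ppc spectra: one must show that $R \to R \sma_{\bP X} \bP Y$ is an $\aF_{p}$-equivalence of underlying $\bT$-spectra. I would argue this using the standard symmetric-power filtration of $\bP Y$ over $\bP X$ in the EKMM/MMSS style, whose associated graded pieces are pushouts involving $X^{(n)}/\Sigma_{n} \to Y^{(n)}/\Sigma_{n}$ smashed with $R$ and with smash powers of $\bP X$-type objects. Acyclicity of these pieces then reduces to Lemma~\ref{lem:ACyc}.(v) (preservation of $\aF_{p}$-equivalences by the symmetric power functor on $\aApCyc$-cells after positive complete cofibrant replacement) together with the flatness of symmetric powers of $\aApCyc$-cells (Proposition~\ref{prop:ACycE}, via Theorem~\ref{thm:bundleflat}). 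Transfinite composition along Hurewicz cofibrations propagates the $\aF_{p}$-equivalence through the entire cell attachment process.

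The pc case is formally identical after replacing $\aApCyc$, $\aF_{p}$, and $\Phi_{p}$ by $\aACyc$, $\aF_{\fin}$, and the family $\{\Phi_{n}\}_{n\geq 1}$, using property~(iii) of Definition~\ref{defn:Nice} to verify compatibility of the induced cyclotomic structure maps across the different $n$. Finally, the topological enrichment and the enrichment over non-equivariant spectra are inherited from Theorem~\ref{thm:cycmodel} by the usual pushout-product verification applied to $\bP(I)$ and $\bP(J)$.
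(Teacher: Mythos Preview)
Your proposal is correct and follows essentially the same route as the paper. The paper's argument (given in the paragraphs immediately preceding the theorem) identifies the same key step---that Proposition~\ref{prop:gfsma} lets one form cell attachments $R\sma_{\bP X}\bP Y$ in commutative ring \ppc spectra by endowing the pushout with the cyclotomic structure map coming from $\Phi_{p}(R\sma_{\bP X}\bP Y)\iso \Phi_{p}R\sma_{\Phi_{p}\bP X}\Phi_{p}\bP Y$---and then simply says this lets one ``reproduce the usual argument constructing a model structure on the commutative ring objects.'' That phrase is shorthand for: once cell attachments exist in the \ppc category, the forgetful functor to commutative ring $\bT$-spectra preserves them, so acyclicity follows immediately from the already-established model structure of Theorem~\ref{thm:circlephi} (which in turn rests on Theorem~\ref{thm:multAvar}). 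Your version unwinds this by redoing the symmetric-power filtration directly; that works, but note that Lemma~\ref{lem:ACyc}.(v) alone is not quite the right citation for acyclicity of $\bP(j)$-pushouts---it concerns cofibrant approximations of single objects in $\aACyc$, and the full acyclicity argument needs the whole package verified in the proof of Theorem~\ref{thm:circlephi} (closure under symmetric powers and smash products, flatness, and preservation of weak equivalences between positive complete cofibrant objects). The paper's shortcut is to invoke that theorem wholesale rather than reassemble its ingredients.
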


The cells that we used to construct cofibrant commutative ring
\ppc spectra are of the form 
\[
\bP\bC (A\sma \partial D^{n}_{+})\to 
\bP\bC (A\sma D^{n}_{+})
\]
for $A\in \aApCyc$.  On underlying commutative ring
$\bT$-spectra, this is a cofibration of cofibrant objects in the
$\aApCyc$-variant $\aV_{\permp}$-constrained $\aF_{p}$-model
structure (by Proposition~\ref{prop:cyccof}). We see that the
forgetful functor from commutative ring \ppc spectra to 
commutative ring $\bT$-spectra preserves
cofibrations.  Analogous observations hold in the case of pc
spectra. 

\begin{prop}\label{prop:crcyccof}
The forgetful functor from commutative ring \ppc spectra (resp.,
pc spectra) to commutative ring $\bT$-spectra
preserves cofibrations and acyclic cofibrations for the model
structure in Theorem~\ref{thm:crcycmodel} on the source and the
$\aApCyc$-variant $\aV_{\permp}$-constrained  $\aF_{p}$-model
structure (resp.,
$\aACyc$-variant $\aV_{\permp}$-constrained $\aF_{\fin}$-model
structure) on the target.
\end{prop}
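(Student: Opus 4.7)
The plan is to proceed cell by cell, exploiting the explicit description of cofibrations of commutative ring \ppc spectra furnished by the construction of Theorem~\ref{thm:crcycmodel}. A cofibration there is a retract of a transfinite composition of pushouts along coproducts of the generating cofibrations $\bP\bC(A\sma \partial D^n_+)\to \bP\bC(A\sma D^n_+)$, $A\in \aApCyc$. Since cofibrations in the target are closed under coproducts, pushouts, transfinite compositions, and retracts, it suffices to check two things: (a) each generating cofibration maps to a cofibration of commutative ring $\bT$-spectra, and (b) the forgetful functor takes cell attachments and transfinite compositions in the source to the analogous constructions in the target.

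For (a), I would observe that upon forgetting the cyclotomic structure, $\bP\bC$ is just the free commutative ring functor $\bP$ applied to the $\bT$-spectrum $\bC A = \bigvee_{k\geq 0}\Phi_{p^k}A$. By Lemma~\ref{lem:ACyc}.(iii), each $\Phi_{p^k}A$ is isomorphic to an object of $\aApCyc$, so $\bC A\sma \partial D^n_+ \to \bC A\sma D^n_+$ is a wedge of generating cofibrations of the $\aApCyc$-variant $\aV_{\permp}$-constrained $\aF_p$-model structure on $\bT$-spectra, hence a cofibration there. Applying the left Quillen functor $\bP$ from Theorem~\ref{thm:circlephi} produces the required cofibration of commutative ring $\bT$-spectra.

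For (b), the key mechanism---already deployed in constructing the source model structure in Theorem~\ref{thm:crcycmodel}---is that a cell attachment $R\to R\sma_A B$ in commutative ring \ppc spectra is computed as the pushout in commutative ring $\bT$-spectra, equipped with the cyclotomic structure induced through the isomorphism $\Phi_p(R\sma_A B)\iso \Phi_p R\sma_{\Phi_p A}\Phi_p B$. This isomorphism rests on Proposition~\ref{prop:gfsma} and the fact that the underlying $\bT$-spectra of the generating cells lie in $\Nice$ (Proposition~\ref{prop:ACycE}). Transfinite compositions are preserved by the same niceness, since $\Phi_p$ commutes with sequential colimits along Hurewicz cofibrations. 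Inductively, then, the forgetful image of any relative cell complex in commutative ring \ppc spectra is a relative cell complex in commutative ring $\bT$-spectra, and in particular a cofibration.

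The acyclic case follows at once: the weak equivalences in both model structures are created by the further forgetful functor to $\bT$-spectra with the $\aApCyc$-variant model structure, so the forgetful functor under consideration preserves weak equivalences on the nose; combined with preservation of cofibrations, it preserves acyclic cofibrations. The pc case is identical after replacing $\aApCyc$ by $\aACyc$ and $\Phi_p$ by the full family $\Phi_n$. The real substance is in step~(b)---the identification of pushouts in the ring-of-\ppc-spectra category with pushouts in commutative ring $\bT$-spectra carrying an induced cyclotomic structure---but this work has already been carried out in establishing Theorem~\ref{thm:crcycmodel}, so the proposition itself reduces to bookkeeping of the cell-attachment process.
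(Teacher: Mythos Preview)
Your proposal is correct and follows the same line as the paper's argument, which is the short paragraph immediately preceding the proposition. The paper reduces to the generating cofibrations $\bP\bC(A\sma\partial D^n_+)\to\bP\bC(A\sma D^n_+)$ and observes that on underlying commutative ring $\bT$-spectra this is $\bP$ applied to a cofibration of $\bT$-spectra, citing Proposition~\ref{prop:cyccof} rather than unwinding $\bC A$ via Lemma~\ref{lem:ACyc}.(iii) as you do; your part~(b) and the acyclic-cofibration argument make explicit steps the paper leaves implicit.
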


To finish the proof of Theorem~\ref{main:cycmod}, we need to see that
for a cofibrant commutative ring \ppc spectrum, the
underlying \ppc spectrum is cofibrant in the
undercategory of $\bS$.  This easily reduces to showing that for
$A_{1},\dotsc,A_{r}\in \aApCyc$,
\[
\bC A_{1}\sma\dotsb\sma\bC A_{r}
\]
is a wedge of \ppc spectra of the form $\bC A$ with
$A\in \aApCyc$.  This is then an easy combinatorial argument using the fact
that $\aApCyc$ is closed (up to isomorphism) under $\Phi_{p^{k}}$.

For Addendum~\ref{main:cycmod}, recall that for $R$ an associative 
ring \ppc (resp., pc) spectrum an
$R$-module \ppc (resp., pc) spectrum consists of an
$R$-module $M$ (in $\bT$-spectra) together with
a \ppc (resp., pc) structure on $M$ such
that the diagram  
\[
\xymatrix{%
\Phi_{p}R\sma \Phi_{p}M\ar[r]\ar[d]
&\Phi_{p}(R\sma M)\ar[r]
&\Phi_{p}M\ar[d]\\
R\sma M\ar[rr]&&M
}
\]
constructed from the structure maps and the action map commutes (for
all primes $p$ in the pc case).  When $R$ is commutative,
we also have notions of associative $R$-algebras and commutative
$R$-algebras in \ppc spectra (resp., pc
spectra): an associative $R$-algebra consists of a map of associative
ring \ppc spectra (resp., pc spectra) $\eta
\colon R\to A$ with the property that $\eta$ is also a map of associative
ring \ppc spectra (resp., pc spectra) $R\to
A^{\op}$. A commutative ring \ppc spectrum (resp.,
pc spectrum) is just a map of commutative ring
\ppc spectra (resp., pc spectra) $R\to A$.
The statements in the Addendum and the corresponding results for
pc spectra are clear from the work above.
We also have the following generalization to the case when $R$ is just
in $\Nice$.

\begin{thm}\label{thm:cycmodnice}
Let $R$ be a commutative ring \ppc (resp.,
pc) spectrum whose underlying $\bT$-spectrum is in $\Nice$ (e.g., if it cofibrant in the model structure
of Theorem~\ref{thm:crcycmodel}).  
\begin{enumerate}

\item The category of $R$-module \ppc (resp., pc) spectra admits a
topological model structure with fibrations and weak equivalences
created by the forgetful functor to the model structure on \ppc
(resp., pc) spectra of Theorem~\ref{thm:cycmodel}.  This model
category is enriched over the standard model category of
non-equivariant spectra. 
\item The category of $R$-algebra \ppc (resp., pc) spectra admits a
topological model structure with fibrations and weak equivalences
created by the forgetful functor to the model structure on \ppc
(resp., pc) spectra of Theorem~\ref{thm:cycmodel}.
\item The category of commutative $R$-algebra \ppc (resp., pc) spectra
admits a topological model structure with fibrations and weak
equivalences created by the forgetful functor to the model structure
on \ppc (resp., pc) spectra of Theorem~\ref{thm:cycmodel}.
\end{enumerate} 
Moreover, in each of these model structures, the underlying
$\bT$-spectra of the cofibrant objects in this model category lie in
$\Nice$. 
\end{thm}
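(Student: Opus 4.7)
The plan is to relativize over $R$ the constructions from Theorems~\ref{thm:cycmodel} and~\ref{thm:crcycmodel}. The hypothesis $R\in\Nice$ is exactly what makes this work: by Definition~\ref{defn:Nice}(iv), the canonical map $\Phi_p R\sma \Phi_p X\to \Phi_p(R\sma X)$ is an isomorphism for every $\bT$-spectrum $X$, and (as noted after Definition~\ref{defn:Nice}) $\Nice$ is closed under smash products, retracts, pushouts along Hurewicz cofibrations, and transfinite compositions of such. Consequently, every cell-by-cell construction of a \ppc structure from the absolute setting passes to the relative setting.

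For part~(i), I would take as generating (acyclic) cofibrations of $R$-module \ppc spectra the images under $R\sma(-)$ of the generators from Theorem~\ref{thm:cycmodel}; concretely, these are maps of the form $R\sma\bC A\sma(\partial D^n\to D^n)_+$ for $A\in\aApCyc$ (respectively $A\in\aACyc$ in the pc case). Since $\aApCyc\subset\Nice$ by Proposition~\ref{prop:ACycE}, both source and target of each generator lie in $\Nice$, and the $\Nice$-isomorphism above assembles the \ppc structure maps on $R$ and $\bC A$ into a canonical $R$-module \ppc structure on each generator. A cell attachment $M\to M'$ is a pushout of $R$-modules along a Hurewicz cofibration and inherits a unique \ppc structure because $\Phi_p$ preserves such pushouts. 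The small object argument then gives the cofibrantly generated model structure; the monoid and pushout-product axioms follow from flatness of $\Nice$ objects; and topological enrichment over non-equivariant spectra descends from Theorem~\ref{thm:cycmodel} via Lemma~\ref{lem:ACyc}(vi). Parts~(ii) and~(iii) follow the same template, with $R\sma(-)$ replaced by the free associative, respectively commutative, $R$-algebra functor. The moreover clause is automatic from the construction and the closure properties of $\Nice$: cofibrant objects are iterated pushouts along Hurewicz cofibrations starting from $R\in\Nice$, with attaching cells built by wedges, smash products, and (in case~(iii)) symmetric powers of elements of $\aApCyc\subset\Nice$.

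The main obstacle is case~(iii), the commutative $R$-algebra case, where one must verify that $\Phi_p$ applied to a free commutative ring cell attachment relative to $R$ again yields a cofibration carrying a consistent \ppc structure. The computation of $\Phi_p$ on such a cell is exactly the content of Theorem~\ref{thm:calcgeoSym}, which reduces it to a smash product over factors built from symmetric powers $A^{(q)}/\Sigma_q$ for $A\in\aApCyc$; by Lemma~\ref{lem:ACyc}(iv) these remain isomorphic to objects of $\aApCyc$ and hence lie in $\Nice$. The identification $\Phi_p(R\sma_B B')\iso \Phi_p R\sma_{\Phi_p B}\Phi_p B'$ needed along each cell attachment is supplied by Proposition~\ref{prop:gfsma}, whose cofibrancy hypothesis is witnessed by the cell-complex construction of the cells in the $R$-module model structure of Theorem~\ref{thm:circlephi}. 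From here the argument is a direct transcription of the absolute case treated in the proof of Theorem~\ref{thm:circlephi}, carried out entirely within the $R$-module category.
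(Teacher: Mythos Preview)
Your proposal is correct and follows essentially the same route as the paper, which gives no explicit proof but treats the theorem as a direct relativization of the arguments for Theorem~\ref{thm:crcycmodel} and Addendum~\ref{main:cycmod}: the key inputs are exactly the closure properties of $\Nice$ and Proposition~\ref{prop:gfsma}, as you identify. One small remark: your appeal to Theorem~\ref{thm:calcgeoSym} in part~(iii) is more than is needed---the paper's simpler observation (made just before Theorem~\ref{thm:crcycmodel}) that the generating cells $\bP(\bC A\sma D^n_+)$ have underlying $\bT$-spectrum a wedge of $\bS$ with objects of $\aApCyc$, hence lie in $\Nice$, already suffices together with Proposition~\ref{prop:gfsma}; and since commutative $R$-algebras are just the undercategory of $R$ in commutative ring \ppc spectra, part~(iii) inherits its model structure directly from Theorem~\ref{thm:crcycmodel} with the hypothesis $R\in\Nice$ only entering for the ``moreover'' clause.
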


When $R$ is a cofibrant commutative ring \ppc spectrum,
or more generally any commutative ring \ppc spectrum
whose underlying $\bT$-spectrum is in $\Nice$,
the smash product $\sma_{R}$ can be partially defined just like the
smash product $\sma$ on \ppc spectra.  When $M$ is a
cofibrant $R$-module in \ppc spectra and $X$ is any
$R$-module in \ppc spectra, we get a canonical
\ppc structure on the smash product $M\sma_{R}X$ of
their underlying $\bT$-spectra with the structure map
\[
\Phi_{p}(M\sma_{R}X)\iso \Phi_{p}M\sma_{\Phi_{p}R}\Phi_{p}X
\to M\sma_{R}X
\]
induced by the structure maps on $M$, $R$, and $X$, using the
isomorphism of Proposition~\ref{prop:gfsma}.  This in particular
constructs a symmetric monoidal smash product on the full subcategory
of cofibrant $R$-modules in \ppc spectra.  

Before turning to Theorem~\ref{main:map}, we make a brief remark about
monadicity.  While equivariant commutative ring spectra may be defined
in terms of the monad $\bP$ on equivariant orthogonal spectra, there
is no corresponding definition for commutative ring \ppc
spectra, because the smash product does not extend to the whole
category of \ppc spectra.  However, by the work above,
if we restrict to the subcategory of \ppc spectra under
$\bS$ that are cofibrant, we do have a smash product, and on this
category we can define the monad $\tilde P$ by
\[
\tilde P X = \coprod_{n=1}^{\infty}X^{(n)}/\Sigma_{n} 
\]
(where the coproduct is taken in the undercategory of $\bS$, i.e., it
is wedge with all copies of $\bS$ identified).  Analogous observations
hold for pc spectra.  Formally, we get the
following observation.

\begin{prop}
The category of algebras over the monad $\tilde P$ in the category of
cofibrant objects in the undercategory of $\bS$ in \ppc
spectra (respectively, pc spectra) with the model
structure of Theorem~\ref{thm:cycmodel} is the full subcategory of the
category of commutative ring \ppc spectra (respectively,
commutative ring pc spectra) whose underlying
\ppc spectra (respectively, pc spectra)
under $\bS$ are cofibrant.
\end{prop}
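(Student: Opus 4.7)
The plan is to establish, in the restricted setting where both sides are defined, the standard correspondence between commutative monoids in a symmetric monoidal category and algebras over the free commutative monoid monad. Cofibrant objects in the undercategory of $\bS$ in \ppc spectra have underlying $\bT$-spectra in $\Nice$ (by Proposition~\ref{prop:ACycE}, together with closure of $\Nice$ under wedge, pushouts along Hurewicz cofibrations, and transfinite composition), so the smash product, symmetric powers, and smash-over-$R$ constructions that I need are all well-defined on this subcategory via the partial symmetric monoidal structure guaranteed by Theorem~\ref{main:cycmod}.(i).

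First I would verify that $\tilde P$ genuinely defines a monad on the subcategory. Given a cofibrant $X$ under $\bS$, each symmetric power $X^{(n)}/\Sigma_n$ inherits a \ppc (resp.\ pc) structure because $X$ is in $\Nice$: property (iv) in Definition~\ref{defn:Nice} gives the isomorphism $\Phi_p(X^{(n)}/\Sigma_n)\iso (\Phi_p X)^{(n)}/\Sigma_n$ needed to transport the structure map. Closure of $\aApCyc$ (resp.\ $\aACyc$) under symmetric powers (Lemma~\ref{lem:ACyc}.(iv)) combined with an inductive cell argument shows $X^{(n)}/\Sigma_n$ remains cofibrant under $\bS$, and the coproduct in this undercategory—wedge with unit maps identified—preserves this. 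The monad unit and multiplication come from the universal property of the free commutative monoid applied levelwise, with compatibility with cyclotomic structures again supplied by property (iv) of $\Nice$.

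Next, given a commutative ring \ppc spectrum $A$ whose underlying \ppc spectrum under $\bS$ is cofibrant, I produce a $\tilde P$-algebra structure by assembling the iterated multiplications $A^{(n)}\to A$, which factor through $A^{(n)}/\Sigma_n$ by commutativity. The defining diagram for a commutative ring \ppc spectrum, relating $\Phi_p(A\sma A)\iso \Phi_p A\sma\Phi_p A$ to the multiplication and cyclotomic structure, iterates to show each $A^{(n)}/\Sigma_n\to A$ is a map of \ppc spectra, so the assembled map $\tilde P A\to A$ lies in the subcategory. Conversely, a $\tilde P$-algebra structure on $X$ yields a multiplication $X\sma X\to X^{(2)}/\Sigma_2\to X$ in \ppc spectra; the associativity, unital, and commutativity diagrams, as well as compatibility with $\Phi_p$, follow from the $\tilde P$-algebra axioms together with the isomorphism $\Phi_p(X\sma X)\iso \Phi_p X\sma\Phi_p X$ furnished by $X\in \Nice$. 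That these two constructions are mutually inverse is a formal verification on the underlying $\bT$-spectra, where the equivalence of commutative ring objects and algebras over the symmetric algebra monad is classical.

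The main obstacle will be pure bookkeeping: every monoidal operation I invoke is only partially defined on \ppc spectra, so at each step I must confirm that the objects involved remain in the subcategory where the operation makes sense and that all the cyclotomic structure maps assemble coherently. The technical backbone for this is the combination of Lemma~\ref{lem:ACyc} (closure of the cell classes under smash product and symmetric powers) with the properties defining $\Nice$ (especially (iv), which makes $\Phi_p$ strict monoidal against objects in $\Nice$); once these are in hand the argument is formal.
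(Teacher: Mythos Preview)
The paper does not give a proof of this proposition: it is stated as a ``formal observation'' following the discussion that on the subcategory of cofibrant objects under $\bS$ one has a smash product and can define the monad $\tilde P$. Your proposal is exactly the kind of unpacking the paper is implicitly invoking---verifying that $\tilde P$ is a monad on this subcategory and then running the standard correspondence between commutative monoids and algebras over the free commutative monoid monad, with the cyclotomic structure maps carried along by property~(iv) of $\Nice$. So your approach is correct and is essentially the argument the paper has in mind but does not write out.

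One small refinement: rather than arguing directly that $X^{(n)}/\Sigma_n$ is again cofibrant under $\bS$ via a cell-by-cell induction, it is cleaner to observe that the subcategory of cofibrant objects under $\bS$ is symmetric monoidal (this is part of Theorem~\ref{main:cycmod} and its proof), and then the standard identification of commutative monoids with $\tilde P$-algebras in any symmetric monoidal category with the relevant colimits applies verbatim. The cofibrancy of symmetric powers is already baked into the model structure work (Theorem~\ref{thm:multAvar} and Lemma~\ref{lem:ACyc}.(iv)), so you need not reprove it.
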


For Theorem~\ref{main:map}, the main ingredient beyond
Proposition~\ref{prop:crcyccof}  we need is that the
model structure above is topological, which implies
that maps into a fibrant object convert cofibrations into fibrations.
The proof of the corresponding result~\cite[5.13]{BM-cycl} for mapping
spectra of \ppc spectra in the standard model structure
works in this context to prove the mapping space result for
commutative ring \ppc spectra in this model structure.
The following is a restatement of Theorem~\ref{main:map} naming the
specific model structure.

\begin{thm}\label{thm:mainmap}
If $A$ and $B$ are commutative ring \ppc spectra where
the underlying commutative ring $\bT$-spectra
of $A$ and $B$ are cofibrant and fibrant, respectively, in the
$\aApCyc$-variant $\aV_{\permp}$-constrained $\aF_{p}$-model
structure, then the derived space of maps of commutative ring
\ppc spectra from $A$ to $B$, $\dR\Com^{\pcyc}(A,B)$, is
given by the homotopy equalizer
\[
\dR\Com^{\pcyc}(A,B) \simeq
\xymatrix@-1pc{%
\hoeq\big(\Com^{\bT}(A,B)\ar@<-.5ex>[r]\ar@<.5ex>[r]&\Com^{\bT}(\Phi A,B)\big)
}
\]
where $\Com^{\bT}$ denotes the point-set mapping space in
commutative ring $\bT$-spectra.  Here one map
is induced by the cyclotomic structure map $\Phi A\to A$ 
and the other map is induced by applying $\Phi$ and post-composing
with the cyclotomic structure map $\Phi B\to B$.
\end{thm}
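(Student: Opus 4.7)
The plan is to recognize $\Com^{\pcyc}(A, B)$ as a strict equalizer and then verify that this strict equalizer models the derived mapping space under the given cofibrancy and fibrancy hypotheses. By definition, a morphism of commutative ring \ppc spectra from $A$ to $B$ is a morphism $f\colon A \to B$ of commutative ring $\bT$-spectra such that $f \circ (\Phi A \to A) = (\Phi B \to B) \circ \Phi f$. Hence at the point-set level, $\Com^{\pcyc}(A, B)$ is literally the strict equalizer of the two maps $\Com^{\bT}(A, B) \rightrightarrows \Com^{\bT}(\Phi A, B)$ displayed in the statement.

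Next I would argue that each term of the equalizer diagram models the corresponding derived mapping space in the topological model structure of Theorem~\ref{main:Tmod}.(ii). For $\Com^{\bT}(A, B)$, this is immediate from the hypothesis that $A$ is cofibrant and $B$ fibrant there. For $\Com^{\bT}(\Phi A, B)$, one applies Theorem~\ref{thm:circlephi}.(i): since $\Phi$ preserves cofibrations of commutative ring $\bT$-spectra, $\Phi A$ is cofibrant, while $B$ remains fibrant.

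The main step is to promote the strict equalizer to a homotopy equalizer modeling $\dR\Com^{\pcyc}(A, B)$. Take a cofibrant approximation $\tilde A \to A$ in the model category of commutative ring \ppc spectra of Theorem~\ref{thm:crcycmodel}. By Proposition~\ref{prop:crcyccof}, $\tilde A$ is also cofibrant as a commutative ring $\bT$-spectrum, so $\tilde A \to A$ is a weak equivalence of cofibrant objects and induces a weak equivalence $\Com^{\bT}(A, B) \simeq \Com^{\bT}(\tilde A, B)$; combining Theorem~\ref{thm:circlephi}.(i) with the K.\ Brown lemma gives that $\Phi \tilde A \to \Phi A$ is a weak equivalence of cofibrant commutative ring $\bT$-spectra, hence $\Com^{\bT}(\Phi A, B) \simeq \Com^{\bT}(\Phi \tilde A, B)$. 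Thus the homotopy equalizer of the original diagram agrees with that for $\tilde A$, which equals $\dR\Com^{\pcyc}(A, B) \simeq \Com^{\pcyc}(\tilde A, B)$ provided the strict equalizer for $\tilde A$ is indeed a homotopy equalizer.

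The hard part will be identifying the derived mapping space $\Com^{\pcyc}(\tilde A, B)$ with the homotopy equalizer of $\Com^{\bT}(\tilde A, B) \rightrightarrows \Com^{\bT}(\Phi \tilde A, B)$. This is the analogue of BM-cycl~[5.13] for the commutative ring case. The argument inducts along the cell structure of $\tilde A$, whose cells have the form $\bP\bC(A' \sma \partial D^n_+) \to \bP\bC(A' \sma D^n_+)$ for $A' \in \aApCyc$; using the topological enrichment together with the controlled $\Phi$-behavior of free commutative rings (as encoded by Theorem~\ref{thm:calcgeoSym} and Proposition~\ref{prop:gfsma}, already exploited in the proof of Theorem~\ref{thm:circlephi}), one obtains at each stage the requisite Serre fibration property of the mapping space maps needed to identify the strict equalizer with the homotopy equalizer.
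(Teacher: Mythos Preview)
Your outline is correct and matches the paper's argument closely: identify $\Com^{\pcyc}(A,B)$ as the strict equalizer, pass to a cofibrant replacement $\tilde A\to A$ in commutative ring \ppc spectra (Proposition~\ref{prop:crcyccof} guarantees $\tilde A$ is cofibrant in $\Com^{\bT}$, so the homotopy equalizer is unchanged), and then invoke the argument of \cite[5.12]{BM-cycl} to identify the strict equalizer for $\tilde A$ with the homotopy equalizer. Your observation that no fibrant replacement of $B$ is needed is right: fibrancy in the commutative ring \ppc model structure is created by the forgetful functor, so $B$ is already fibrant there.

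One small simplification for your last paragraph: you do not need to re-derive anything from Theorem~\ref{thm:calcgeoSym} or Proposition~\ref{prop:gfsma} directly. Those results feed into Theorem~\ref{thm:circlephi}.(i), and once you know $\Phi$ preserves cofibrations of commutative ring $\bT$-spectra, the only remaining input for the \cite[5.12]{BM-cycl} argument is that the model structure of Theorem~\ref{thm:crcycmodel} is \emph{topological}, so that mapping into the fibrant object $B$ converts cofibrations to Serre fibrations. The cell-by-cell induction you sketch is exactly how that fibration property gets used, but you can package it by citing the topological enrichment rather than unwinding the $\Phi$-behavior on free objects again.
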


The proof works just like the proof of~\cite[5.13]{BM-cycl}: A check
of definitions shows that the point-set space of maps of 
\ppc spectra from $A$ to $B$, $\Com^{\pcyc}(A,B)$ is
the equalizer
\[
\xymatrix@-1pc{%
\Eq\big(\Com^{\bT}(A,B)\ar@<-.5ex>[r]\ar@<.5ex>[r]&\Com^{\bT}(\Phi A,B)\big)
}
\]
and the point-set mapping space represents the derived mapping space
when $A$ is cofibrant and $B$ is fibrant in the model structure on
commutative ring \ppc spectra of
Theorem~\ref{thm:crcycmodel}.  The proof of~\cite[5.12]{BM-cycl}
generalizes to show that the map from the equalizer to the homotopy
equalizer is a weak equivalence under these hypotheses.  If $A$ and
$B$ are just cofibrant and fibrant, respectively, as commutative ring $\bT$-spectra, their cofibrant and fibrant
approximation $A_{c}\to A$ and $B\to B_{f}$, respectively, in
commutative ring \ppc spectra calculate the derived
mapping space, and the map from the homotopy equalizer for $A,B$ to
the homotopy equalizer for $A_{c},B_{f}$ is a weak equivalence.

In the case when $R$ is cofibrant as a commutative ring
\ppc spectrum (or has underlying $\bT$-spectrum in $\Nice$), we also
get the corresponding result in 
the categories of $R$-modules and $R$-algebras. For convenience we
write $\Alg^{\bT}_{R}$ for the category of associative $R$-algebras in
$\bT$-spectra and $\Alg^{\pcyc}_{R}$ for the category of associative
$R$-algebras in \ppc spectra.  We also write $\Alg^{\bT}_{R}(-,-)$ and
$\Alg^{\pcyc}(-,-)$ for the corresponding (point-set) mapping spaces and
$\dR\Alg^{\bT}_{R}(-,-)$ and $\dR\Alg^{\pcyc}_{R}(-,-)$ for the derived mapping
spaces.  Checking definitions, we have that for $R$-algebras $A$ and
$B$, $\Alg^{\pcyc}_{R}(A,B)$ is the (point-set) equalizer
\[
\xymatrix@-1pc{%
\Eq\big(\Alg^{\bT}_{R}(A,B)\ar@<.5ex>[r]\ar@<-.5ex>[r]
&\Alg^{\bT}_{\Phi_{p}R}(\Phi_{p}A,B)\big)
}
\]
where on the right $B$ becomes a $\Phi_{p}R$-algebra via the structure
map $\Phi_{p}R\to R$.  In the equalizer one map is induced by the
reduction of scalars map $\Alg^{\bT}_{R}\to \Alg^{\bT}_{\Phi_{p}R}$ and the
structure map $\Phi_{p}A\to A$ and the other by applying the
continuous functor $\Phi_{p}\colon \Alg^{\bT}_{R}\to \Alg^{\bT}_{\Phi_{p}R}$ and
using the structure map $\Phi_{p}B\to B$.  From here, the following
result follows from the same argument outline as~\cite[5.13]{BM-cycl}.

\begin{thm}
Let $R$ be a commutative ring \ppc spectrum, whose underlying
$\bT$-spectrum is in $\Nice$ (which holds for example if 
$R$ is cofibrant).  Let $A$ and $B$ be associative $R$-algebras in
\ppc spectra.  Assume that on underlying $\bT$-equivariant associative $R$-algebras, $A$ is cofibrant as a $\bT$-equivariant associative
$R$-algebra and $B$ is fibrant as a $\bT$-equivariant associative
$R$-algebra in the $\aApCyc$-variant
$\aV_{\permp}$-constrained  $\aF_{p}$-model structure.  Then the derived mapping space
$\dR\Alg_{R}^{\pcyc}(A,B)$ is given by the homotopy equalizer
\[
\dR\Alg_{R}^{\pcyc}(A,B)\simeq
\xymatrix@-1pc{%
\hoeq\big(\Alg^{\bT}_{R}(A,B)\ar@<.5ex>[r]\ar@<-.5ex>[r]
&\Alg^{\bT}_{\Phi_{p}R}(\Phi_{p}A,B)\big).
}
\]
\end{thm}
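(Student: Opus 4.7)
The plan is to follow the blueprint of the proof of Theorem~\ref{thm:mainmap} (itself modelled on~\cite[5.13]{BM-cycl}), adapted from the commutative-ring setting to that of associative $R$-algebras. As a first step, a direct check of definitions identifies the point-set mapping space $\Alg^{\pcyc}_{R}(A,B)$ with the literal equalizer
\[
\Eq\big(\Alg^{\bT}_{R}(A,B)\rightrightarrows \Alg^{\bT}_{\Phi_{p}R}(\Phi_{p}A,B)\big),
\]
with the two parallel arrows given by the restriction-of-scalars description recorded just before the theorem statement.

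Next I would invoke Theorem~\ref{thm:cycmodnice}.(ii) to equip the category of $R$-algebra \ppc spectra with a topological model structure whose fibrations and weak equivalences are created in \ppc spectra; this uses the hypothesis $R\in\Nice$. In that model structure the point-set mapping space represents $\dR\Alg^{\pcyc}_{R}(A,B)$ as soon as $A$ is cofibrant and $B$ is fibrant. To go from the hypotheses given in the theorem (cofibrant and fibrant only as $\bT$-equivariant $R$-algebras) to this stronger hypothesis, I would choose a cofibrant approximation $A_{c}\to A$ and a fibrant approximation $B\to B_{f}$ in $R$-algebra \ppc spectra and argue, mimicking the proof of~\cite[5.12]{BM-cycl}, that the induced comparison between the two homotopy equalizers is a weak equivalence. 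This reduces the theorem to the case where $A$ is cofibrant and $B$ is fibrant as $R$-algebra \ppc spectra.

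In that strengthened setting the key remaining point is that the canonical map from the equalizer to the homotopy equalizer is a weak equivalence, which follows as soon as one of the two parallel arrows is a Serre fibration. For this I would check that, writing $A$ as a $\Phi_{p}R$-algebra via the cyclotomic structure map $\Phi_{p}R\to R$, the map $\Phi_{p}A\to A$ is a cofibration of $\Phi_{p}R$-algebras in $\bT$-spectra. This uses Proposition~\ref{prop:crcyccof} and Theorem~\ref{thm:circlephi}.(i) so that $\Phi_{p}$ preserves the relevant cofibrations, together with Proposition~\ref{prop:gfsma} to identify $\Phi_{p}$ of the balanced smash products appearing in the cellular description of $R$-algebra cofibrations. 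Because the model structure of Theorem~\ref{thm:cycmodnice} is topological, mapping such a cofibration into the fibrant $B$ then produces a Serre fibration of spaces, and the equalizer of a parallel pair with one leg a Serre fibration is canonically weakly equivalent to the homotopy equalizer.

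The main obstacle I expect is the reduction-of-scalars bookkeeping at the last step: verifying that a cofibrant $R$-algebra \ppc spectrum restricts to a cofibrant $\Phi_{p}R$-algebra in the appropriate $\aApCyc$-variant $\aV_{\permp}$-constrained $\aF_{p}$-model structure, and that $\Phi_{p}$ of the cellular filtration of $A$ is still a cellular cofibration in that category. Once this compatibility between $\Phi_{p}$, restriction of scalars along $\Phi_{p}R\to R$, and the cellular description of $R$-algebra cofibrations is pinned down (which is where the hypothesis $R\in\Nice$ and Proposition~\ref{prop:gfsma} do their work), the remainder of the argument is a routine translation of~\cite[5.12,5.13]{BM-cycl} to the $R$-algebra context.
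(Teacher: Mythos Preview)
Your proposal is correct and follows essentially the same approach as the paper, which simply states that the result ``follows from the same argument outline as~\cite[5.13]{BM-cycl}'' after recording the point-set equalizer description of $\Alg^{\pcyc}_{R}(A,B)$. Your more detailed unpacking of that outline---the reduction via cofibrant/fibrant replacement in the model structure of Theorem~\ref{thm:cycmodnice}.(ii), and the identification of the equalizer with the homotopy equalizer by exhibiting one leg as a fibration---matches the discussion the paper gives for Theorem~\ref{thm:mainmap} and is exactly what is intended here.
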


For $R$ any associative ring \ppc spectrum and $X$, $Y$
any $R$-modules in \ppc spectra (without hypotheses), we have the mapping
spectrum $F^{\pcyc}_{R}(X,Y)$, which is defined to be the orthogonal spectrum
obtained as the equalizer
\[
\xymatrix@-1pc{%
\Eq\big(F^{\bT}_{R}(X,Y) \ar@<-.5ex>[r]\ar@<.5ex>[r]&F^{\bT}_{\Phi_{p}R}(\Phi_{p}X,Y)\big)
}
\]
where $F^{\bT}_{R}$ denotes the (non-equivariant) spectrum of
$R$-module maps of the underlying $\bT$-equivariant $R$-modules.
The maps are analogous to those above: one map is induced by the
reduction of scalars from $R$-modules to $\Phi_{p}R$-modules followed
by the structure map $\Phi_{p}X\to X$ for $X$ and the other is induced
by the spectrally enriched functor $\Phi_{p}$ from $R$-modules to
$\Phi_{p}R$-modules and the structure map $\Phi_{p}Y\to Y$ for $Y$.
The space of $R$-module \ppc spectrum maps from $X$ to
$Y$ is the zeroth space of the orthogonal spectrum $F^{\pcyc}_{R}$
above (because $F^{\bT}_{R}$ has this property for the category of
$R$-modules and zeroth space commutes with equalizers). This
construction satisfies the spectral version of Quillen's Axiom~SM7.
(This is precisely the assertion of Theorem~\ref{thm:cycmodnice}.(i)
of enrichment over non-equivariant spectra.)

This implies that the derived functor $\dR F_{R}^{\pcyc}(X,Y)$ exists and
is calculated by cofibrant approximation of $X$ and fibrant
approximation of $Y$.  Again the argument of~\cite[5.13]{BM-cycl} generalizes to prove the
following result.

\begin{thm}
Let $R$ be an associative ring \ppc spectrum, whose underlying
$\bT$-spectrum is in $\Nice$ (which holds for example if 
$R$ is cofibrant or cofibrant commutative).
Let $X$ and $Y$ be $R$-modules in
\ppc spectra.    Assume that on underlying $\bT$-equivariant
$R$-modules, $A$ is cofibrant as a $\bT$-equivariant 
$R$-module and $B$ is fibrant as a $\bT$-equivariant 
$R$-module in either the standard $\aF_{p}$-model structure or the
model structure from Theorem~\ref{thm:cycmodnice}.  Then the derived mapping space
$\dR\Mod_{R}^{\pcyc}(A,B)$ is given by the homotopy equalizer
\[
\dR\Mod_{R}^{\pcyc}(A,B)\simeq
\xymatrix@-1pc{%
\big(\Mod^{\bT}_{R}(A,B)\ar@<.5ex>[r]\ar@<-.5ex>[r]
&\Mod^{\bT}_{\Phi_{p}R}(\Phi_{p}A,B)\big)
}
\]
and the derived mapping spectrum $\dR F_{R}^{\pcyc}(X,Y)$ is
given by the homotopy equalizer of
\[
\dR F_{R}^{\pcyc}(X,Y)\simeq
\xymatrix@-1pc{%
\hoeq\big(F^{\bT}_{R}(A,B)\ar@<.5ex>[r]\ar@<-.5ex>[r]
&F^{\bT}_{\Phi_{p}R}(\Phi_{p}A,B)\big).
}
\]
\end{thm}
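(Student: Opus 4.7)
The plan is to follow the template of~\cite[5.13]{BM-cycl} verbatim, replacing the ingredients about the standard model structure on $R$-module \ppc spectra with their commutative-ring/cofibrant-commutative-$R$-algebra analogues already established in the preceding theorems. First I would verify, purely at the point-set level, that the assertion
\[
F^{\pcyc}_{R}(X,Y)
=\Eq\big(F^{\bT}_{R}(X,Y)\rightrightarrows F^{\bT}_{\Phi_{p}R}(\Phi_{p}X,Y)\big)
\]
is the correct definition of the mapping spectrum in $R$-module \ppc spectra, and likewise that $\Mod^{\pcyc}_{R}(A,B)$ is the zeroth space of this spectrum; this is the same bookkeeping used in the previous theorem, applied to $R$-modules instead of associative $R$-algebras.

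Next I would handle the special case where $A$ is cofibrant and $B$ is fibrant as $R$-modules in \ppc spectra in the model structure of Theorem~\ref{thm:cycmodnice}.(i). Because that model structure is topological and enriched over the standard model structure on non-equivariant spectra, the point-set $F^{\pcyc}_{R}(A,B)$ already represents $\dR F^{\pcyc}_{R}(A,B)$, and similarly $\Mod^{\pcyc}_{R}(A,B)$ represents $\dR\Mod^{\pcyc}_{R}(A,B)$. The key step is then to show that the point-set equalizer above is a \emph{homotopy} equalizer under these hypotheses. This uses that the right-hand map in the cospan is a fibration of non-equivariant spectra (respectively, spaces): since $B$ is fibrant as an $R$-module \ppc spectrum, the restricted-scalars map $F^{\bT}_{R}(A,B)\to F^{\bT}_{\Phi_{p}R}(\Phi_{p}A,B)$ is a fibration by SM7 applied to the cofibration of $\bT$-equivariant $R$-modules $\Phi_{p}A\to A$ induced by the cyclotomic structure map of a cofibrant $R$-module \ppc spectrum (whose underlying $\bT$-equivariant $R$-module is cofibrant because the forgetful functor preserves cofibrations, by the analogue of Proposition~\ref{prop:crcyccof} for $R$-modules, which is where Theorem~\ref{thm:cycmodnice} and the fact that $R$ lies in $\Nice$ enter).

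Finally I would reduce the general case to the special case by taking a cofibrant approximation $A_{c}\to A$ and a fibrant approximation $B\to B_{f}$ in $R$-module \ppc spectra. The left derived mapping objects are represented by $F^{\pcyc}_{R}(A_{c},B_{f})$ and $\Mod^{\pcyc}_{R}(A_{c},B_{f})$ by the special case. I must then show that the maps on the two candidate homotopy equalizers
\[
\hoeq\bigl(F^{\bT}_{R}(A_{c},B_{f})\rightrightarrows F^{\bT}_{\Phi_{p}R}(\Phi_{p}A_{c},B_{f})\bigr)
\to
\hoeq\bigl(F^{\bT}_{R}(A,B)\rightrightarrows F^{\bT}_{\Phi_{p}R}(\Phi_{p}A,B)\bigr)
\]
and the analogous map of spaces are weak equivalences. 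By the five-lemma for homotopy equalizers, it is enough to check that both arms of the cospan induce weak equivalences on $F^{\bT}$ and $F^{\bT}_{\Phi_{p}R}$. For the $F^{\bT}_{R}$ terms this is just SM7 in the hypothesized underlying $\bT$-equivariant $R$-module structure. For the $F^{\bT}_{\Phi_{p}R}$ terms we need $\Phi_{p}A_{c}\to \Phi_{p}A$ to be a weak equivalence, which is precisely where the main input of the paper feeds in: cofibrant $R$-modules in \ppc spectra are cofibrant as $\bT$-equivariant $R$-modules (via Proposition~\ref{prop:crcyccof} adapted to modules, using Theorem~\ref{thm:cycmodnice} that cofibrants lie in $\Nice$), $\Phi_{p}$ preserves weak equivalences between such objects by Theorem~\ref{main:Tmod}.(i) together with Theorem~\ref{main:Tmod}.(iv), and the corresponding statement in the standard $\aF_{p}$-model structure is classical.

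The main obstacle is the last step: ensuring that $\Phi_{p}A_{c}\to \Phi_{p}A$ is a weak equivalence when $A$ is only assumed cofibrant as an underlying $\bT$-equivariant $R$-module (not as an $R$-module \ppc spectrum). This is precisely the kind of compatibility between the \ppc cofibrancy and the underlying $\bT$-equivariant cofibrancy that the model structures constructed in Theorem~\ref{main:Tmod} and Theorem~\ref{thm:cycmodnice} were designed to provide, via the $\Nice$ hypothesis on $R$ and the fact that $\Phi_{p}$ is homotopically well behaved on cofibrant objects under $\bS$ in these variant model structures. Everything else in the argument is formal manipulation of (homotopy) equalizers and SM7.
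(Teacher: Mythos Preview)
Your high-level strategy matches the paper's exactly: both simply defer to the argument of~\cite[5.13]{BM-cycl}, substituting the model-structure inputs from Theorems~\ref{thm:cycmodel} and~\ref{thm:cycmodnice} for the standard ones. The paper provides no independent proof for this particular theorem beyond that sentence.

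There is, however, a genuine gap in your reconstruction of the ``equalizer equals homotopy equalizer'' step. You assert that the restricted-scalars arrow is a fibration via SM7 applied to ``the cofibration of $\bT$-equivariant $R$-modules $\Phi_{p}A\to A$,'' but this does not type-check: $\Phi_{p}A$ is a $\Phi_{p}R$-module, not an $R$-module (the cyclotomic structure map goes $\Phi_{p}R\to R$, not the reverse, so there is no way to give $\Phi_{p}A$ an $R$-action). Even reinterpreted as a map of $\Phi_{p}R$-modules, the structure map need not be a cofibration: on a free cell $A=R\sma\bC X$ it factors through $(\Phi_{p}R\to R)\sma\id$, and there is no reason $\Phi_{p}R\to R$ should be a cofibration of $\Phi_{p}R$-modules. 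The actual mechanism in~\cite[5.12]{BM-cycl} exploits the explicit form of the free cells instead: under the free-module adjunction, the restricted-scalars arrow for $A=R\sma\bC X$ becomes the product projection
\[
\prod_{k\geq 0}F^{\bT}(\Phi_{p^{k}}X,B)\to\prod_{k\geq 1}F^{\bT}(\Phi_{p^{k}}X,B),
\]
which is a fibration for elementary reasons, and this is what propagates through the cell filtration of a general cofibrant $A$. Your remaining steps (the point-set identification of $F^{\pcyc}_{R}$ as the strict equalizer, and the reduction to the fully cofibrant/fibrant case via the fact that cofibrant $R$-module \ppc spectra have underlying $\bT$-spectra in $\Nice$) are correct.
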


The pc case is more complicated than the
\ppc case in precisely the same way as in the
unstructured case of~\cite[\S5]{BM-cycl}.   
For commutative ring pc spectra, the proof
of~\cite[5.18]{BM-cycl} works in the current context to prove the
following theorem, the pc analogue of Theorem~\ref{main:map}.  See~\cite[5.15]{BM-cycl} for notation.

\begin{thm}
If $A$ and $B$ are commutative ring pc spectra where
the underlying commutative ring $\bT$-spectra
of $A$ and $B$ are cofibrant and fibrant, respectively, in the
$\aACyc$-variant $\aV_{\permp}$-constrained $\aF_{fin}$-model
structure, then the derived space of maps of commutative ring
pc spectra from $A$ to $B$, $\dR\Com^{\cyc}(A,B)$ is
given by the homotopy limit
\[
\dR\Com^{\cyc}(A,B)\simeq \holim_{\Theta}\Com^{\bT}(\Phi_{n}A,B)
\]
with maps as in~\cite[(5.14)]{BM-cycl}.
\end{thm}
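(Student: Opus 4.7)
The plan is to follow the blueprint of the \ppc case (Theorem~\ref{thm:mainmap}) and the unstructured pc case~\cite[5.18]{BM-cycl}, substituting the multiplicative model structure of Theorem~\ref{thm:crcycmodel} for the one used there, and using Theorem~\ref{thm:circlephi}.(i) to control each $\Phi_n$. The first task is to unwind the definition of a map of commutative ring pc spectra: such a map is a map of commutative ring $\bT$-spectra compatible with the cyclotomic structure maps $\Phi_n A\to A$ and $\Phi_n B\to B$ for every $n\geq 1$, where the compatibility for various $n$ is linked by the isomorphisms $\Phi_{mn}\iso \Phi_n\Phi_m$. Packaging these conditions over the indexing category $\Theta$ of~\cite[5.15]{BM-cycl} exactly as in the unstructured case, the point-set space $\Com^{\cyc}(A,B)$ is identified with the point-set limit $\lim_{\Theta}\Com^{\bT}(\Phi_n A,B)$ with the maps of~\cite[(5.14)]{BM-cycl}.

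Next I would observe that since Theorem~\ref{thm:crcycmodel} equips commutative ring pc spectra with a topological model structure whose fibrations and weak equivalences are created in the $\aACyc$-variant $\aV_{\permp}$-constrained $\aF_{\fin}$-model structure, whenever $A$ is cofibrant and $B$ is fibrant in commutative ring pc spectra the point-set mapping space $\Com^{\cyc}(A,B)$ already represents $\dR\Com^{\cyc}(A,B)$. It therefore suffices to prove that the canonical comparison
\[
\lim_{\Theta}\Com^{\bT}(\Phi_n A,B)\;\longrightarrow\;\holim_{\Theta}\Com^{\bT}(\Phi_n A,B)
\]
is a weak equivalence. I would do this by checking that the $\Theta$-diagram $n\mapsto \Com^{\bT}(\Phi_n A,B)$ is Reedy fibrant, exactly as in~\cite[5.12, 5.18]{BM-cycl}. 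The required input is that each matching map is a Serre fibration. This reduces by adjunction to a pushout-product-type statement in commutative ring $\bT$-spectra, which is available because (a)~$B$ is fibrant in the $\aACyc$-variant model structure on commutative ring $\bT$-spectra, (b)~by Theorem~\ref{thm:circlephi}.(i) every $\Phi_n$ preserves cofibrations and acyclic cofibrations of commutative ring $\bT$-spectra, and (c)~the $\aACyc$-variant model structure is topological, so maps between cofibrant objects and a fibrant target induce Serre fibrations on mapping spaces.

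Finally, to handle the stated hypotheses — $A$ and $B$ cofibrant and fibrant respectively only as commutative ring $\bT$-spectra, not necessarily as pc spectra — I would pass to a cofibrant approximation $A_c\to A$ and fibrant approximation $B\to B_f$ in commutative ring pc spectra. The previous steps compute $\dR\Com^{\cyc}(A_c,B_f)$ as the homotopy limit for $(A_c,B_f)$. To finish, one checks that the induced map of $\Theta$-diagrams $\Com^{\bT}(\Phi_n A,B)\to \Com^{\bT}(\Phi_n A_c,B_f)$ is a levelwise weak equivalence; levelwise this is Proposition~\ref{prop:crcyccof} combined with the fact that $\Phi_n$ preserves weak equivalences between cofibrant commutative ring $\bT$-spectra (from Theorem~\ref{thm:circlephi}.(i) and K.~Brown). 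Taking homotopy limits over $\Theta$ then yields the claimed equivalence.

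The main obstacle is the Reedy fibrancy check in the second paragraph: verifying that the multiplicative structure does not obstruct the matching-map fibration condition that made the argument of~\cite[5.12]{BM-cycl} work in the non-multiplicative setting. The resolution lies in applying Theorem~\ref{thm:circlephi}.(i) in the category of commutative ring $\bT$-spectra (rather than just $\bT$-spectra), together with the topological nature of the $\aACyc$-variant model structure, to convert cofibrations in the source to fibrations in the target for the mapping spaces $\Com^{\bT}(\Phi_n A,B)$.
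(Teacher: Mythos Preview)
Your proposal is correct and follows essentially the same approach as the paper: the paper simply asserts that the proof of~\cite[5.18]{BM-cycl} works in the current multiplicative context, and the outline you give (point-set limit over $\Theta$, Reedy-type fibrancy via Theorem~\ref{thm:circlephi}.(i) and the topological model structure, then approximation to handle the weaker hypotheses) is exactly that argument spelled out. If anything, you have given more detail than the paper does.
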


The argument works the same for $R$-algebras and $R$-modules to give the following results.

\begin{thm}
Let $R$ be a commutative ring pc spectrum, whose underlying
$\bT$-spectrum is in $\Nice$ (which holds for example if 
$R$ is cofibrant). 
Let $A$ and $B$ be associative $R$-algebras in
pc spectra.   Assume that on underlying $\bT$-equivariant associative $R$-algebras, $A$ is cofibrant as a $\bT$-equivariant associative
$R$-algebra and $B$ is fibrant as a $\bT$-equivariant associative
$R$-algebra in the $\aACyc$-variant 
$\aV_{\permp}$-constrained $\aF_{\fin}$-model structure.  Then the derived mapping space
$\dR\Alg_{R}^{\cyc}(A,B)$ is given by the homotopy limit
\[
\dR\Alg_{R}^{\cyc}(A,B)\simeq
\holim_{\Theta}\Alg^{\bT}_{\Phi_{n}R}(\Phi_{n}A,B)
\]
with maps as in~\cite[(5.14)]{BM-cycl}.
\end{thm}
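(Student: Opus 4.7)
The plan is to follow the template established by the \ppc case immediately above and the proof of~\cite[5.18]{BM-cycl}. First I would identify the point-set mapping space $\Alg_R^{\cyc}(A,B)$ as a point-set limit indexed by the category $\Theta$ of~\cite[5.15]{BM-cycl}, whose value at~$n$ is $\Alg^{\bT}_{\Phi_n R}(\Phi_n A, B)$, with $B$ regarded as a $\Phi_n R$-algebra via the iterated cyclotomic structure map $\Phi_n R\to R$. That this identification works $R$-algebra-wise, and not merely on underlying $\bT$-spectra, rests on the hypothesis that the underlying $\bT$-spectrum of $R$ lies in $\Nice$: by Definition~\ref{defn:Nice}.(iii)--(iv) the canonical maps $\Phi_{mn}\to \Phi_n\Phi_m$ are isomorphisms and each $\Phi_n$ commutes up to isomorphism with smash products involving iterates of $R$, so iterating $\Phi$ transports the associative $R$-algebra structure on $A$ coherently through~$\Theta$. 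A direct check of definitions, along the lines of~\cite[5.15--17]{BM-cycl}, then identifies $\Alg_R^{\cyc}(A,B)$ with this point-set limit.

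Next I would verify that, under the stated cofibrancy and fibrancy hypotheses, this point-set mapping space already computes the derived one. The relevant model structure on associative $R$-algebras in pc spectra is supplied by Theorem~\ref{thm:cycmodnice}.(ii); it is topological, and its cofibrations are built from $R$-algebra cells whose underlying $\bT$-equivariant $R$-algebras are cofibrant in the $\aACyc$-variant $\aV_{\permp}$-constrained $\aF_{\fin}$-model structure. Propositions~\ref{prop:cyccof} and~\ref{prop:crcyccof} (in their $R$-algebra forms) then imply that cofibrant and fibrant replacement in $R$-algebra pc spectra respect the given cofibrancy of $A$ and fibrancy of $B$ as $\bT$-equivariant associative $R$-algebras, so the derived mapping space can be computed by the point-set mapping space (and hence the point-set limit above) on a cofibrant-fibrant pair.

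The main obstacle, and the technical heart of the proof, is comparing the point-set limit over $\Theta$ with the homotopy limit. Here I would follow~\cite[5.18]{BM-cycl} closely: the topological enrichment of the $\aACyc$-variant model structure from Theorem~\ref{thm:circlephi} (together with Quillen's SM7 in its $R$-algebra form) shows that each space $\Alg^{\bT}_{\Phi_n R}(\Phi_n A,B)$ is fibrant and that the structure maps in the $\Theta$-diagram are Serre fibrations. The essential input is that each $\Phi_n A$ is cofibrant as a $\Phi_n R$-algebra, which reduces via Theorem~\ref{thm:circlephi}.(i) applied cell-by-cell to the fact that $\aACyc$ is closed up to isomorphism under $\Phi_n$, symmetric powers, and smash products (Lemma~\ref{lem:ACyc}); Proposition~\ref{prop:gfsma} then commutes $\Phi_n$ through the balanced smash product over $R$ appearing in $R$-algebra cell attachments, and the $\Nice$ hypothesis on $R$ ensures the coherent iteration required at every stage.

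Once the $\Theta$-diagram is a diagram of fibrant spaces whose restriction maps are fibrations, its point-set limit agrees with its homotopy limit. The argument of~\cite[5.18]{BM-cycl} also shows that cofibrant-fibrant replacement in $R$-algebra pc spectra induces a weak equivalence on the associated homotopy limits, so the derived mapping space of the original $A,B$ is equivalent to $\holim_{\Theta}\Alg^{\bT}_{\Phi_n R}(\Phi_n A, B)$, as claimed. The subtle point requiring the most care will be ensuring that the restriction maps in the $\Theta$-diagram are indeed fibrations at every level; this is precisely where the $\Nice$ hypothesis on $R$, the $R$-algebra cofibrancy of $A$, and the iterative cofibration-preservation of $\Phi_n$ must all cooperate.
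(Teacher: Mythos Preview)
Your proposal is correct and follows essentially the same approach as the paper: the paper itself does not spell out a proof but simply asserts that ``the argument works the same for $R$-algebras and $R$-modules,'' referring back to the proof of~\cite[5.18]{BM-cycl} adapted to the present model structures, which is exactly the outline you have given. Your identification of the key inputs---Theorem~\ref{thm:cycmodnice}.(ii) for the model structure, Theorem~\ref{thm:circlephi}.(i) and Lemma~\ref{lem:ACyc} for the cofibration-preservation of $\Phi_n$, Proposition~\ref{prop:gfsma} for the balanced smash product, and the $\Nice$ hypothesis on $R$ for coherence of iterated $\Phi_n$---is precisely what is needed to make the cited argument go through in the associative $R$-algebra setting.
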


\begin{thm}
Let $R$ be an associative ring pc spectrum, whose
underlying $\bT$-spectrum is in $\Nice$
(which holds for example if $R$ is cofibrant or cofibrant
commutative). Let $X$
and $Y$ be $R$-modules in pc spectra.   Assume that on underlying $\bT$-equivariant
$R$-modules, $A$ is cofibrant and $B$ is fibrant in either the standard
$\aF_{\fin}$-model structure or the model structure of
Theorem~\ref{thm:cycmodnice}.  Then the derived mapping space 
$\dR\Mod_{R}^{\cyc}(A,B)$ is given by the homotopy limit
\[
\dR\Mod_{R}^{\cyc}(A,B)\simeq
\holim_{\Theta}\Mod^{\bT}_{\Phi_{n}R}(\Phi_{n}X,Y)
\]
and the derived mapping spectrum $\dR F_{R}^{\cyc}(X,Y)$ is
given by the homotopy limit
\[
\dR F_{R}^{\cyc}(X,Y)\simeq
\holim_{\Theta}F^{\bT}_{\Phi_{n}R}(\Phi_{n}X,Y)
\]
with maps as in~\cite[(5.14)]{BM-cycl}.
\end{thm}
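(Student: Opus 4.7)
The plan is to generalize the commutative ring pc theorem proved immediately above to the $R$-module and $R$-algebra cases, paralleling the way in which the $R$-module \ppc theorem extended the commutative ring \ppc result, and following the template of~\cite[5.18]{BM-cycl}. First, unraveling the definition of a map of pc $R$-modules---a $\bT$-equivariant $R$-module map compatible with every pc structure map $\Phi_{n}X\to X$---identifies the point-set mapping space $\Mod^{\cyc}_{R}(X,Y)$ and mapping spectrum $F^{\cyc}_{R}(X,Y)$ as the point-set limits over $\Theta$ of $\Mod^{\bT}_{\Phi_{n}R}(\Phi_{n}X,Y)$ and $F^{\bT}_{\Phi_{n}R}(\Phi_{n}X,Y)$ respectively, with structural maps exactly as in~\cite[(5.14)]{BM-cycl}. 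For these composites to make sense on the nose, one invokes Proposition~\ref{prop:gfsma}---which requires $R$ (and hence each $\Phi_{n}R$, by closure of $\Nice$ under $\Phi_{m}$) to lie in $\Nice$---together with the canonical isomorphisms $\Phi_{m}\Phi_{n}R\iso \Phi_{mn}R$ from Definition~\ref{defn:Nice}.(iii).

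Second, Theorem~\ref{thm:cycmodnice} furnishes pc $R$-module and pc $R$-algebra model structures that are enriched over non-equivariant spectra, so that when $X$ is cofibrant and $Y$ fibrant in these structures, the point-set $\Mod^{\cyc}_{R}(X,Y)$ and $F^{\cyc}_{R}(X,Y)$ compute the respective derived mapping objects. The crux of the argument is then to show that the canonical map from the point-set limit over $\Theta$ to the homotopy limit is a weak equivalence. Following~\cite[5.18]{BM-cycl}, this reduces to checking that the tower $\{F^{\bT}_{\Phi_{n}R}(\Phi_{n}X,Y)\}$ is Reedy fibrant over $\Theta$, which in turn reduces to showing that $\Phi_{n}$ preserves the relevant cofibrations between cofibrant pc $R$-modules. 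For cells $R\sma\bC A\sma(D^{k},\partial D^{k})_{+}$ with $A\in\aACyc$, this follows from Proposition~\ref{prop:gfsma}, Lemma~\ref{lem:ACyc}.(iii) (closure of $\aACyc$ under $\Phi_{m}$), and topologicality of the $\bT$-equivariant $R$-module model structure, so that $F^{\bT}_{\Phi_{n}R}(-,Y)$ converts cofibrations into fibrations when $Y$ is fibrant.

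Finally, to relax the hypotheses to only $\bT$-equivariant cofibrancy/fibrancy of $X$ and $Y$ (in either the standard $\aF_{\fin}$-model structure or the $\aACyc$-variant one), I would take pc $R$-module cofibrant and fibrant approximations $X_{c}\to X$ and $Y\to Y_{f}$ in the appropriate model structure. By the module analogue of Proposition~\ref{prop:crcyccof}, $X_{c}$ has underlying $\bT$-spectrum in $\Nice$; by Definition~\ref{defn:Nice}.(ii) the maps $\Phi_{n}X_{c}\to\Phi_{n}X$ are $\aF_{\fin}$-equivalences of $\bT$-spectra for every $n$, and combined with fibrancy of $Y_{f}$ this yields levelwise weak equivalences between the homotopy-limit diagrams for $(X_{c},Y_{f})$ and $(X,Y)$. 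The main obstacle is the Reedy fibrancy verification over the non-constant family of base rings $\{\Phi_{n}R\}$ (a subtlety absent from the $\ppc$ equalizer case and more delicate in the module than in the commutative ring setting); once that bookkeeping is done, everything else reduces to standard Reedy/holim arguments together with the flatness and preservation properties packaged into $\Nice$ and proved in Chapter~\ref{chap:bundleone}.
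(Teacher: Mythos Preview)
Your proposal is correct and follows essentially the same route as the paper: the paper simply asserts that ``the argument works the same for $R$-algebras and $R$-modules'' as in the commutative ring pc case (itself a transplant of~\cite[5.18]{BM-cycl}), and your sketch fills in precisely those details---identifying the point-set mapping object as the $\Theta$-limit, invoking the enriched model structure of Theorem~\ref{thm:cycmodnice}, and reducing the limit-to-holim comparison to a Reedy fibrancy check using the $\Phi_{n}$-preservation properties of cells built from $\aACyc$. Your flagged ``main obstacle'' (Reedy fibrancy over the varying base rings $\Phi_{n}R$) is exactly the bookkeeping the paper leaves implicit, and it is handled by the same mechanism as in~\cite[5.18]{BM-cycl} once one has Proposition~\ref{prop:gfsma} and the closure of $\aACyc$ under $\Phi_{m}$.
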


\section{Construction of \texorpdfstring{$\aACyc$}{ACyc} and proof of Lemma~\sref$lem:ACyc$}
\label{sec:ACyc}

We construct $\aACyc$ and $\aApCyc$ by specifying explicit generalized
orbit desuspension spectra that we can show have the symmetric power
property required for the hypotheses of Theorem~\ref{thm:multAvar}
while at the same time form sets that are closed (up to
isomorphism) under $\Phi_{n}$ (or $\Phi_{p^{n}}$ in the case of
$\aApCyc$), smash products, and symmetric powers.  This requires
leaving the context of the Lie group $\bT$ and working with finite
extensions.  The most convenient way to set this up is as follows.
For $n\geq 1$, let
\[
\bT(n):=\bR/n\bZ, \qquad C_{m}(n)=\langle \tfrac{n}m\rangle<\bT(n),
\qquad \Lambda(n)=\langle1\rangle = C_{n}(n)<\bT(n).
\]
($C_{m}(n)<\bT(n)$ is the subgroup with $m$ elements.)
We regard $\bT(1)$ as canonically isomorphic to $\bT$ (via the map
$t\mapsto e^{2\pi i t}$) and use this isomorphism to convert
$\bT(1)$-spectra to $\bT$-spectra without comment or notation.  We also use the
canonical isomorphism $\bT(1)\iso\bT(n)/\Lambda(n)$ without additional
notation.  Combining these conventions, we write $\bT$-compatible and
$J^{\bT}$ for $\bT(n)/\Lambda(n)$-compatible and
$J^{\bT(n)/\Lambda(n)}$ in the context of
Terminology~\ref{term:compatible} and Notation~\ref{notn:compatible}.

The following gives the first basic building block for the definition of
the objects in $\aACyc$ and $\aApCyc$.

\begin{notn}\label{notn:hhrcells}
Let $n\geq 1$, $m\geq 1$, and let $V$ be an orthogonal $C_{m}(n)$-representation. Let $\eta[n,m,V]$ be the $(\bT(n),1)$
vector bundle
\[
\bT(n)\times_{C_{m}(n)}V\to \bT(n)/C_{m}(n).
\]
\end{notn}

The definition of the objects in $\aACyc$ and $\aApCyc$ also needs the
ideas from Definition~\ref{defn:philambdatwo}.  Specifically, we need the
following variant of the constructions there.

\begin{defn}\label{defn:acyccells}
For the purposes of this section, \term{data} consists of
\begin{itemize}
\item Integers $n\geq 1$, $r\geq 1$, $q_{0}\geq 0$,
$q_{1},\dotsc,q_{r}\geq 1$, $m_{1},\dotsc,m_{r}\geq 1$,
$\ell_{1},\dotsc,\ell_{r}\geq 1$ with $\ell_{i}$ dividing both $n$ and
$m_{i}$.  
\item For $i=1,\dotsc,r$, a
$C_{\ell_{i}}(n)$-trivial orthogonal $C_{m_{i}}(n)$-representation $V_{i}$. 
\item A subgroup $Q<\Sigma_{q_{0}}\times \dotsb \times \Sigma_{q_{r}}$.  
\end{itemize}
We abbreviate this data as
\[
\aQ=(n,r,(q_{0},\dotsc,q_{r}),(m_{1},\dotsc,m_{r}),(\ell_{1},\dotsc,\ell_{r}),(V_{1},\dotsc,V_{r}),Q),
\]
and write $\Sigma=\Sigma_{q_{0}}\times \dotsb \times \Sigma_{q_{r}}$.
Given data $\aQ$ write $n^{\aQ}$, $r^{\aQ}$, $q_{i}^{\aQ}$,
$m_{i}^{\aQ}$, $\ell_{i}^{\aQ}$, $V_{i}^{\aQ}$, and $Q^{\aQ}$ for its
constituent elements.  Let $\eta_{\aQ}$ be the $(\bT(n),Q)$ vector bundle
obtained from
\[
\Sym^{q_{0}}(\bR\to *)\times \Sym^{q_{1}}\eta[n,m_{1},V_{1}]\times \dotsb \times 
 \Sym^{q_{r}}\eta[n,m_{r},V_{r}]
\]
by reducing the group for quotients from $\Sigma$ to $Q$.
Let $\Hom(\aQ)$ denote the set of homomorphisms $\sigma \colon
\Lambda(n)\to Q$ such that for each $i=1,\dotsc,r$, the composite
\[
\Lambda(n)\to Q\to \Sigma_{q_{1}}\times \dotsb \times
\Sigma_{q_{r}}\to \Sigma_{q_{i}}
\]
has kernel containing $C_{\ell_{i}}(n)$; we give $\Hom(\aQ)$ the right
$Q$ action by conjugation (and the trivial $\bT(n)$ action).  Writing
$(-)^{\Lambda(n),\sigma}$ for $\sigma$-twisted $\Lambda(n)$ fixed
points (as in Definition~\ref{defn:philambdaone}), let
$\theta_{\aQ}$ be the $(\bT(n),Q)$ vector bundle
\[
\coprod_{\sigma \in \Hom(\aQ)} \eta_{\aQ}^{\Lambda(n),\sigma}
\]
with $Q$ action
coming from the $Q$ action on both $\Hom(\aQ)$ and
$\eta_{\aQ}$. Finally, when $X$ is a subset of the base of
$\theta_{\aQ}$ that is a $(\bT(n)\times Q^{\op})$-equivariant subcomplex
for a $(\bT(n)\times Q^{\op})$-equivariant cell complex structure, then
we let $\theta_{\aQ,X}$ be the $(\bT(n),Q)$ vector bundle obtained by
base change to $X$.
\end{defn}

In other words, $\theta_{\aQ}$ is the subset of 
\[
\eta_{\aQ}(\Lambda(n))=
\coprod_{\sigma \colon \Lambda(n)\to Q} \eta_{\aQ}^{\Lambda(n),\sigma}
\]
consisting of the summands indexed by $\sigma \in \Hom(\aQ)$.
In particular $\theta_\aQ$ is $\bT$-compatible as is $\theta_{\aQ,X}$
for any $X$.

\begin{defn}\label{defn:ACyc}
Let $D(\aACyc)$ be the set of ordered pairs $(\aQ,X)$ where $\aQ$ is
data as above, $X$ is an equivariant subcomplex as above, $Q$
satisfies the property 
\begin{itemize}
\item For any non-identity $s \in Q$, there exists $j>0$ such
that $s$ is not in the kernel of the map $Q\to \Sigma \to
\Sigma_{q_{j}}$, 
\end{itemize}
and for each $i$, the
$C_{m_{i}^{\aQ}}(n^{\aQ})/C_{\ell_{i}^{\aQ}}(n^{\aQ})$-representation
$V_{i}^{\aQ}$ satisfies the following properties:
\begin{itemize}
\item $V_{i}^{\aQ}$ is a semiregular permutation representation (a
permutation representation containing at
least one copy of the regular representation),
and
\item $V_{i}^{\aQ}$ has underlying non-equivariant inner product space given by
$\bR^{k}$ for some $k$.
\end{itemize}
Let $D(\aApCyc)$ be the subset of $D(\aACyc)$ consisting of those
$(\aQ,X)$ where $n^{\aQ}$ and $m_{i}^{\aQ}$ are powers of $p$.  Let
$\aACyc$ be the set of $\bT$-spectra
$J^{\bT}(\theta_{\aQ,X})$ for $(\aQ,X)\in D(\aACyc)$ and let
$\aApCyc\subset \aACyc$ be the set of $\bT$-spectra
$J^{\bT}(\theta_{\aQ,X})$ for $(\aQ,X)\in D(\aApCyc)$.
\end{defn}

We note that $\aACyc \supset \aA^{\comp}(\aF_{\fin},\aV_{\permp})$ and
$\aApCyc \supset \aA^{\comp}(\aF_{p},\aV_{\permp})$ with the elements of
$\aA^{\comp}(\aF_{\fin},\aV_{\permp})$ and
$\aA^{\comp}(\aF_{p},\aV_{\permp})$ given by the $J^{\bT}(\theta_{\aQ})$
where $n^{\aQ}=1$, $r^{\aQ}=1$, $q^{\aQ}_{0}=0$, and $q^{\aQ}_{1}=1$.  In general, for
$A\in \aACyc$, the $(\aQ,X)\in D(\aACyc)$ for which
$A=J^{\bT}(\theta_{\aQ,X})$ may not be unique, but choosing one, we
write $\aQ_{A}=\aQ$, $X_{A}=X$, $\eta_{A}=\eta_{\aQ_{A}}$,
$\theta_{A}=\theta_{\aQ_{A}}$, $\theta_{A,X}=\theta_{\aQ,X}$, and in
the notation for the data in 
$\aQ_{A}$, abbreviate the superscript $\aQ_{A}$ to just $A$.  (In the
case when $A\in \aApCyc$, we choose $(\aQ,X)\in D(\aApCyc)$.) 

To explain the idea behind the definition, we introduce the following
construction and the following observation about it.

\begin{cons}\label{cons:kQ}
Let $\aQ$ be data as in Definition~\ref{defn:acyccells} and let $k\geq 1$.
define $k\aQ$ by 
\[
n^{k\aQ}:= kn^{\aQ}, r^{k\aQ}:=r^{\aQ}, 
m^{k\aQ}_{i}:=km^{\aQ}_{i},
\ell^{k\aQ}_{i}:=k\ell^{\aQ}_{i},
q^{k\aQ}_{i}:=q^{\aQ}_{i}, 
Q^{k\aQ}:=Q^{\aQ}, 
\]
and take $V_{i}^{k\aQ}$ to be $V_{i}^{\aQ}$ converted to a
$C_{k\ell_{i}^{\aQ}}(kn^{\aQ})$-acyclic orthogonal 
$C_{km_{i}^{\aQ}}(kn^{\aQ})$-representation using the
canonical isomorphism
\[
C_{km_{i}^{\aQ}}(kn^{\aQ})/C_{k\ell_{i}^{\aQ}}(kn^{\aQ})\iso
C_{m_{i}^{\aQ}}(n^{\aQ})/C_{\ell_{i}^{\aQ}}(n^{\aQ}).
\]
Let $\Hom_{k}(k\aQ)\subset \Hom(k\aQ)$ consist of those homomorphisms $\sigma \colon
\Lambda(n^{k\aQ})\to Q^{k\aQ}$ where the composite 
\[
\Lambda(n^{k\aQ})\to Q^{k\aQ}
\to \Sigma_{q_{0}^{k\aQ}} \times \dotsb \times \Sigma_{q^{k\aQ}_{r^{k\aQ}}}
\to \Sigma_{q_{0}^{k\aQ}}
\]
has kernel containing $C_{k}(n^{k\aQ})$.  Let $B_{k}(k\aQ)$ denote the
subset of the base of $\theta_{k\aQ}$ consisting of 
those components indexed by homomorphisms in $\Hom_{k}(k\aQ)$.
\end{cons}

\begin{prop}\label{prop:kQ}
Let $\aQ,X$ be as in Definition~\ref{defn:acyccells}, let $k\geq
1$, and write $Q=Q^{\aQ}=Q^{k\aQ}$. Then:
\begin{enumerate}
\item The action of $C_{k}(n^{k\aQ})$ on $B_{k}(k\aQ)$ is trivial and
under the canonical isomorphism 
\[
\bT(n^{k\aQ})/C_{k}(n^{k\aQ})\times (Q^{k\aQ})^{\op}
\iso \bT(n^{\aQ})\times (Q^{\aQ})^{\op},
\]
$B_{k}(k\aQ)$ is equivariantly isomorphic to the base of $\theta_{\aQ}$.
\item There is an isomorphism of $\bT$-spectra
$J^{\bT}\theta_{k\aQ,kX}\iso J^{\bT}\theta_{\aQ,X}$, where $kX$ is
subcomplex of $B_{k}(k\aQ)$ corresponding to $X$ under the isomorphism of
(ii).  
\item When $\aQ$ is
in $D(\aACyc)$ then so is $k\aQ$; when $k$ is a power of $p$, if $\aQ$
is in $D(\aApCyc)$, then so is $k\aQ$.
\end{enumerate}
\end{prop}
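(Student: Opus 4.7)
The plan is to deduce all three parts by systematically matching the data and constructions attached to $\aQ$ and $k\aQ$. The key observation is that, under $\aQ \mapsto k\aQ$, the ambient group is replaced by $\bT(kn^{\aQ}) \supset \bT(n^{\aQ})$ and the subgroups are enlarged compatibly through multiplication by $k$ on indices, while the group $Q$ and the orthogonal spaces $V_i^{k\aQ} = V_i^{\aQ}$ are preserved. Starting with part~(iii), I would check each condition directly: the faithfulness condition defining $D(\aACyc)$ depends only on $Q$ and the $q_i$'s, all unchanged; the semiregular permutation condition on $V_i^{k\aQ}$ is preserved because the underlying orthogonal space is unchanged and the new acting quotient $C_{km_i^{\aQ}}(kn^{\aQ})/C_{k\ell_i^{\aQ}}(kn^{\aQ})$ is canonically identified with $C_{m_i^{\aQ}}(n^{\aQ})/C_{\ell_i^{\aQ}}(n^{\aQ})$ by construction. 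For the $p$-primary case, if $k$, $n^{\aQ}$, and each $m_i^{\aQ}$ are powers of $p$, then so are $kn^{\aQ}$ and $km_i^{\aQ}$.

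For part~(i), I would proceed in two steps. First, the triviality of the $C_k(kn^{\aQ})$-action on $B_k(k\aQ)$ follows because $k$ divides each $km_i^{\aQ}$, so $C_k(kn^{\aQ}) \subset C_{km_i^{\aQ}}(kn^{\aQ})$, making the $C_k$-action trivial on every orbit $\bT(kn^{\aQ})/C_{km_i^{\aQ}}(kn^{\aQ})$ and hence on the whole base of $\eta_{k\aQ}$. Second, I would establish a bijection $\Hom_k(k\aQ) \iso \Hom(\aQ)$: for $\sigma \in \Hom_k(k\aQ)$, the defining condition (the kernel of the composite $\Lambda(kn^{\aQ}) \to Q \to \Sigma_{q_0^{\aQ}}$ contains $C_k(kn^{\aQ})$), together with $C_k(kn^{\aQ}) \subset C_{k\ell_i^{\aQ}}(kn^{\aQ})$, forces $\sigma(C_k(kn^{\aQ}))$ into $\ker(Q \to \Sigma_{q_i^{\aQ}})$ for every $i \geq 0$. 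The faithfulness hypothesis on $Q$ (giving triviality of the intersection of these kernels for $i \geq 1$) then forces $\sigma|_{C_k(kn^{\aQ})} = 1$, so $\sigma$ descends to $\sigma'\colon \Lambda(kn^{\aQ})/C_k(kn^{\aQ}) \iso \Lambda(n^{\aQ}) \to Q$. Under the canonical identifications $C_{k\ell_i^{\aQ}}(kn^{\aQ})/C_k(kn^{\aQ}) \iso C_{\ell_i^{\aQ}}(n^{\aQ})$, the remaining conditions translate to show $\sigma' \in \Hom(\aQ)$. Finally, using the canonical isomorphism $\bT(kn^{\aQ})/C_{km_i^{\aQ}}(kn^{\aQ}) \iso \bT(n^{\aQ})/C_{m_i^{\aQ}}(n^{\aQ})$ and passing to $\sigma$-twisted fixed points fiberwise yields the desired equivariant identification of bases.

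For part~(ii), I would extend the identification from part~(i) to the full vector bundle and then to the $\bT$-spectrum. The fibers of $\eta_{k\aQ}$ are built from the same orthogonal spaces $V_i^{k\aQ} = V_i^{\aQ}$ as those of $\eta_\aQ$, and under the bijection $\Hom_k(k\aQ) \iso \Hom(\aQ)$ the $\sigma$-twisted $\Lambda(kn^{\aQ})$ fixed points match the $\sigma'$-twisted $\Lambda(n^{\aQ})$ fixed points; this upgrades to an isomorphism of $(\bT(n^{\aQ}), Q)$ vector bundles after descent along the trivially-acting $C_k(kn^{\aQ})$. Because the $\bT(kn^{\aQ})$-action on $\theta_{k\aQ,kX}$ factors through $\bT(kn^{\aQ})/C_k(kn^{\aQ}) \iso \bT(n^{\aQ})$ by part~(i), applying $J^{\bT(kn^{\aQ})/\Lambda(kn^{\aQ})}$ to $\theta_{k\aQ,kX}$ gives the same $\bT$-spectrum as applying $J^{\bT(n^{\aQ})/\Lambda(n^{\aQ})}$ to $\theta_{\aQ,X}$. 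The main obstacle is the careful bookkeeping of the twisted-fixed-point constructions through the bijection of homomorphism sets; once the indexing data is matched, the compatibility of $J^{\bT}$ under pullback along the trivial $C_k$-quotient is essentially formal.
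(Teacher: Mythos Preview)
Your approach matches the paper's proof closely: both establish the bijection $\Hom_k(k\aQ)\iso\Hom(\aQ)$ by showing that $C_k(kn^{\aQ})$ lies in the kernel of every $\sigma\in\Hom_k(k\aQ)$, then identify the bundles and pass to $J^{\bT}$.

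One small correction in part~(i): you invoke ``the faithfulness hypothesis on $Q$'' (the condition that $\bigcap_{j\geq 1}\ker(Q\to\Sigma_{q_j})=1$), but that condition is part of the definition of $D(\aACyc)$, not of Definition~\ref{defn:acyccells}, and the proposition only assumes the latter. Fortunately you don't need it: you have already shown that $\sigma(C_k(kn^{\aQ}))$ lies in $\ker(Q\to\Sigma_{q_i})$ for \emph{every} $i\geq 0$, and since $Q$ is by definition a subgroup of $\Sigma_{q_0}\times\dotsb\times\Sigma_{q_r}$, the intersection of all these kernels (including $i=0$) is automatically trivial. So simply drop the appeal to the $D(\aACyc)$ condition and conclude directly from the embedding $Q\hookrightarrow\prod_{i\geq 0}\Sigma_{q_i}$. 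This is how the paper argues it as well.
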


\begin{proof}
Since
\[
C_{k}(n^{k\aQ}) <
C_{\ell_{i}^{k\aQ}}(n^{k\aQ}) <
C_{m_{i}^{k\aQ}}(n^{k\aQ}),\qquad 
C_{k}(n^{k\aQ}) < \Lambda(n^{k\aQ})
\]
(and $C_{k}(n^{k\aQ})$ is abelian),
we have that $C_{k}(n^{k\aQ})$ acts trivially on the vector
bundle $\theta_{k\aQ}$ before restricting the base to $B_{k}(k\aQ)$.
Since $C_{k}(n^{k\aQ}) <
C_{\ell_{i}^{k\aQ}}(n^{k\aQ})$ for all $i$, any $\sigma \in
\Hom_{k}(k\aQ)$ has $C_{k}(n^{k\aQ})$ in
the kernel. The canonical isomorphism 
\[
\Lambda(n^{\aQ})\iso \Lambda(kn^{\aQ})/C_{k}(kn^{\aQ})
\]
therefore induces a bijection between $\Hom(\aQ)$ and
$\Hom_{k}(k\aQ)$.  This proves statement~(i).
For the proof of statement~(ii), we use the notation of
Construction~\ref{cons:orbitds} for the specifics of the $\bT$-spectra
$J$.  Let $W$ be an orthogonal $\bT$-representation. The observations
above about the relations of the subgroups then give  isomorphisms of 
$(\bT(n^{\aQ})\times Q^{\op})$-equivariant fiber bundles 
\[
\aI(\theta_{k\aQ,B_{k}(k\aQ)},W)\iso
\aI(\theta_{\aQ},W)
\]
and $(\bT(n^{\aQ})\times Q^{\op})$-equivariant vector bundles
\[
\Im^{\perp}\aI(\theta_{k\aQ,B_{k}(k\aQ)},W)\iso
\Im^{\perp}\aI(\theta_{\aQ},W).
\]
Since $J$ is constructed as the spectrum formed by the Thom spaces of
these vector bundles, this proves~(ii).  For~(iii), we note that if
$V_{i}^{\aQ}$ is a semiregular permutation representation of
$C_{m_{i}^{\aQ}}(n^{\aQ})/C_{\ell_{i}^{\aQ}}(n^{\aQ})$, then
$V_{i}^{k\aQ}$ is a semiregular permutation representation of
$C_{m_{i}^{k\aQ}}(n^{k\aQ})/C_{\ell_{i}^{k\aQ}}(n^{k\aQ})$.
\end{proof}

\begin{notn}
For $\aQ,X$ as in Definition~\ref{defn:acyccells} and $k\geq
1$, let $k(\aQ,X)=(k\aQ,kX)$ where $kX$ is as in~(iii) above.
\end{notn}

The idea behind the definition of $\aACyc$ is as follows. Looking at
data $\aQ$ (and dropping the $\aQ$ superscripts from the notation),
when $q_{0}=0$ and $Q=Q_{1}\times \dotsb \times Q_{r}$ for $Q_{i}<\Sigma_{q_{i}}$,
the previous proposition combined with Proposition~\ref{prop:Sym} and
Theorem~\ref{thm:bundlegeofix} implies that under faithfulness
hypotheses (that we will show to hold when $\aQ\in D(\aACyc)$)
\[
J^{\bT}\theta_{\aQ}\iso
\Phi_{n/\ell_{1}}((J\eta[\tfrac{n}{\ell_{1}},\tfrac{m_{1}}{\ell_{1}},V'_{1}])^{(q_{1})}/Q_{1})
\sma\dotsb\sma
\Phi_{n/\ell_{r}}((J\eta[\tfrac{n}{\ell_{r}},\tfrac{m_{r}}{\ell_{r}},V'_{r}])^{(q_{r})}/Q_{r})
\]
where $V'$ denotes $V$ converted to the appropriate type of
representation by the canonical isomorphism of quotient groups (the
reverse of the conversion in Construction~\ref{cons:kQ}).  The purpose
of the $\ell_{i}$ is thus to put together a smash product of geometric
fixed points for differing subgroups.
At the cost of passing to a retract (which we handle by using a
subspace $X$ of the base), the functors $\Phi_{n}$
pull outside a symmetric power, and allowing $Q$ to be a
more general subgroup of $\Sigma$ than a product treats the case of a
smash product inside of a symmetric power.  (The restriction on the
group $Q$ when $q_{0}>0$ is purely technical and arises from the
restriction imposed by the requirement that the $Q$ action be
faithful; see the proof of part~(i) of Lemma~\ref{lem:ACyc} below.)

It remains to prove Lemma~\ref{lem:ACyc}.  We do this one part at a time.

\begin{proof}[Proof of Lemma~\ref{lem:ACyc}.(i)] 
Most of the statement follows by construction: we just need to check that
$\theta_{\aQ_{A},X_{A}}$ is $Q^{A}$-faithful and also that for every $k\geq
1$, $\theta_{\aQ_{A},X_{A}}(C_{kn}(n)|\Lambda(n))$ is
$Q^{A}$-faithful.  For this, it is enough to show that $\theta_{A}$
and $\theta_{A}(C_{kn}(n)|\Lambda(n))$ are $Q^{A}$-faithful. 
Let $\aQ=\aQ_{A}$. Since only one object is involved, we drop the
superscripts $\aQ$ or $A$ on the data of $\aQ$ and the subscripts
$\aQ$ or $A$ on the bundles. Fix $k$, and let $\aH_{k}$ be the set of
homomorphisms $\sigma \colon C_{kn}(n)\to Q$ such that the composite
to $\Sigma_{q_{i}}$ contains $C_{\ell_{i}}(n)$ for all
$i=1,\dotsc,r$, and let 
\[
\theta\langle k\rangle=\coprod_{\sigma \in \aH_{k}}\eta^{C_{kn}(n),\sigma}.
\]
Then when $k=1$, $\aH_{1}=\Hom(\aQ)$ and $\theta\langle
1\rangle=\theta$, and so it 
suffices to show that $\theta\langle k\rangle$ is $Q$-faithful: the case $k=1$
shows that $\theta$ is $Q$-faithful, and then the proof of
Theorem~\ref{thm:bundleiterphi} in Section~\ref{sec:qfpl} shows that 
$\theta\langle k\rangle=\theta(C_{kn}(n)|\Lambda(n))$. 

We use the notation $E(\eta)$ and $B(\eta)$ for the base and total
spaces of $\eta$ and $E$ and $B$ for the base and total spaces of
$\theta\langle k\rangle$: 
$\eta \colon E(\eta)\to B(\eta)$ and $\theta\langle k\rangle \colon E\to B$.   Let
\[
s\in Q<\Sigma_{q_{1}}\times \dotsb \times \Sigma_{q_{r}}
\]
not the identity, and let $b\in B$ with $bs=b$; we need to show there
exists an element $e\in E_{b}$ with $es\neq e$.  Let $\sigma\in \aH_{k}$
be the homomorphism indexing the summand containing $b$.  Let $b'$ be
the corresponding element of $B(\eta)^{C_{kn}(n),\sigma}$.  Since
$s$ is not the identity, there exists an $i>0$ so that the projection
$s_{i}$ of $s$ to $\Sigma_{q_{i}}$ is not the identity; let $\sigma_{i}\colon
C_{kn}(n)\to \Sigma_{q_{i}}$ be the composite of $\sigma$ with the
projection.  By definition of $\aH_{k}$, $\sigma_{i}$ has kernel
containing $C_{\ell_{i}}(n)$. 

While we have defined $\eta[n,m_{i},V_{i}]$
as a $(\bT(n),1)$ vector bundle, because $V_{i}$ is
$C_{\ell_{i}}(n)$-trivial, the group of equivariance restricts to
$\bT(n)/C_{\ell_{i}}(n)\iso \bT(n/\ell_{i})$. Viewing
$\eta[n,m_{i},V_{i}]$ as a $(\bT(n/\ell_{i}),1)$ vector bundle, it is
canonically isomorphic to 
\[
\bT(n/\ell_{i})\times_{C_{m_{i}}(n)/C_{\ell_{i}}(n)}V_{i}\to 
\bT(n/\ell_{i})/C_{m_{i}/\ell_{i}}(n/\ell_{i}),
\]
and the $C_{m_{i}/\ell_{i}}(n/\ell_{i})\iso
C_{m_{i}}(n)/C_{\ell_{i}}(n)$-representation $V_{i}$
contains a copy of the regular representation of
$C_{m/\ell_{i}}(n/\ell_{i})$.  Then
\[
\xi := (\Sym^{q_{i}}\eta[n,m_{i},V_{i}])(C_{kn/\ell_{i}}(n/\ell_{i}))
=\hspace{-2em}
\coprod_{\tau \colon C_{kn/\ell_{i}}(n/\ell_{i})\to \Sigma_{q_{i}}}
\hspace{-2em}
 (\Sym^{q_{i}}\eta[n,m_{i},V_{i}])^{C_{kn/\ell_{i}}(n/\ell_{i}),\tau}
\]
is $\Sigma_{q_{i}}$-faithful by Proposition~\ref{prop:ifcritex}.(a) and
Theorem~\ref{thm:ifcrit}. Write
\[
\bar\sigma_{i}\colon C_{kn/\ell_{i}}(n/\ell_{i})\to \Sigma_{q_{i}}
\]
for the
composite of the canonical isomorphism $C_{kn/\ell_{i}}(n/\ell_{i})\iso
C_{kn}(n)/C_{\ell_{i}}(n)$ and the homomorphism
$C_{kn}(n)/C_{\ell_{i}}(n)\to \Sigma_{q_{i}}$ induced by $\sigma_{i}$.
Let $b_{i}$ be the element in the base of
$\xi$ in the $\bar\sigma_{i}$ summand corresponding to the
projection of $b'$.  Because $\sigma$ is fixed by the action of $s$ on
$\aH_{k}$, $\bar\sigma$ is fixed by the action of $s_{i}$ on
$\Hom(C_{kn/\ell_{i}}(n/\ell_{i}),\Sigma_{q_{i}})$, and because $bs=b$,
$b_{i}s_{i}=b_{i}$. We can therefore find $e_{i}$ in the fiber of
$\xi$ at $b_{i}$ such that $e_{i}s_{i}\neq e_{i}$. 

Let $e'$ be the element of $E(\eta)_{b'}$ that is zero in the
coordinates for $\eta[n,m_{j},V_{j}]^{q_{j}}$ with $j\neq i$ and is given by
$e_{i}$ in the coordinate for $\eta[n,m_{i},V_{i}]^{q_{i}}$.  Because
$e_{i}$ is a $\bar\sigma_{i}$-twisted $C_{kn/\ell_{i}}(n/\ell_{i})$ fixed point
of the total space of $\Sym^{q_{i}}\eta[n,m_{i},V_{i}]$ viewed as a
$(\bT(n/\ell_{i}),\Sigma_{q_{i}})$ vector bundle, it is a
$\sigma_{i}$-twisted $C_{kn}(n)$ fixed point of the total space of 
$\Sym^{q_{i}}\eta[n,m_{i},V_{i}]$ viewed as a
$(\bT(n),\Sigma_{q_{i}})$ vector bundle.  It follows that $e'$ is a
$\sigma$-twisted $C_{kn}(n)$ fixed point of $E(\eta)_{b'}$.  We then
have a corresponding point $e$ in $E_{b}$, and because $e_{i}s_{i}\neq
e_{i}$, $es\neq e$.
\end{proof}

\begin{proof}[Proof of Lemma~\ref{lem:ACyc}.(ii)]
Let $A,B\in \aACyc$.  
Define $\aQ$ by taking $n=\lcm(n^{A},n^{B})$ and concatenating the
data for $(n/n^{A})\aQ_{A}$ and $(n/n^{B})\aQ_{B}$:
\begin{align*}
r&=r^{A}+r^{B}&Q&=Q^{A}\times Q^{B}\\
m_{i}&=\begin{cases}
(n/n^{A})m^{A}_{i}&i\leq r^{A}\\
(n/n^{B})m^{B}_{i-r_{A}}&i>r^{A}
\end{cases}
&q_{i}&=\begin{cases}
q^{A}_{0}+q^{B}_{0}&i=0\\
q^{A}_{i}&0<i\leq r^{A}\\
q^{B}_{i-r_{A}}&i>r^{A}
\end{cases}\\
\ell_{i}&=\begin{cases}
(n/n^{A})\ell^{A}_{i}&i\leq r^{A}\\
(n/n^{B})\ell^{B}_{i-r_{A}}\ &i>r^{A}
\end{cases}
&V_{i}&=\begin{cases}
V^{(n/n^{A})\aQ_{A}}_{i}&i\leq r^{A}\\
V^{(n/n^{B})\aQ_{B}}_{i-r_{A}}&i>r^{A}.
\end{cases}
\end{align*}
(We use block sum to identify $\Sigma_{q_{0}^{A}}\times
\Sigma_{q_{0}^{B}}$ as a subgroup of $\Sigma_{q_{0}}$, and define $Q$
as the subgroup of $\Sigma_{q_{0}}\times \dotsb \times \Sigma_{q_{r}}$
corresponding to $Q^{A}\times Q^{B}$.)
Since $\aQ_{A},\aQ_{B}\in D(\aACyc)$, so are $(n/n^{A})\aQ_{A}$,
$(n/n^{B})\aQ_{B}$ and it is easy to see that $\aQ\in D(\aACyc)$.
We then have $n^{(n/n^{A})\aQ_{A}}=n=n^{(n/n^{B})\aQ_{B}}$, and an
isomorphism of $(\bT(n),Q)$ vector bundles
\[
\theta_{\aQ}\iso \theta_{(n/n^{A})\aQ_{A}}\times \theta_{(n/n^{B})\aQ_{B}}.
\]
Let $X$ be the subspace of the base of $\theta_{\aQ}$ corresponding to
$X_{A}\times X_{B}$ under this isomorphism (and the isomorphism of
Proposition~\ref{prop:kQ}.(ii)).  Combining this isomorphism with the isomorphisms from
Proposition~\ref{prop:bundleprod} and~\ref{prop:kQ}.(iii), we get an isomorphism
\[
J^{\bT}(\theta_{\aQ,X})\iso 
J^{\bT}(\theta_{(n/n^{A})(\aQ_{A},X_{A})})\sma J^{\bT}(\theta_{(n/n^{B})(\aQ_{B},X_{B})})
\iso A\sma B.
\]
When $A$ and $B$ are in $\aApCyc$, $n$ and $m_{i}$ are powers of $p$
and so $\aQ\in D(\aApCyc)$. 
\end{proof}

For the proof of Lemma~\ref{lem:ACyc}.(iii), write $\rho_{k}(kn)$ for
the $k$th root isomorphism $\bT(kn)\iso \bT(n)$ and $\rho_{k}(kn)^{*}$
(universally) for the resulting isomorphisms from categories of
$\bT(n)$-equivariant objects to the corresponding categories of
$\bT(kn)$-equivariant objects.  These change of group functors
$\rho_{k}(kn)^{*}$ and the change of group functors $\rho_{k}^{*}$ (for
$\bT\to \bT/C_{k}$) typically commute in the expected way with
reasonable functors between $\bT(n)$-spectra and
$\bT(n')$-spectra.  For example, we have a natural
isomorphism
\[
\Phi_{k}=\rho_{k}^{*}\circ \Phi^{C_{k}}\iso \Phi^{\Lambda(k)}\circ \rho_{k}(k)^{*}.
\]
We have $(\rho_{k}(kn))^{-1}(\Lambda(n))=C_{n}(kn)$, so for 
$\aQ,X$ as in Definition~\ref{defn:acyccells}, we have that $\rho_{k}(kn)^{*}(\theta_{\aQ,X})$ is a
$\bT(k)\iso\bT(kn)/C_{n}(kn)$-compatible $(\bT(kn),Q^{\aQ})$ vector
bundle and we get an isomorphism 
\[
\rho_{k}(k)^{*}(J^{\bT}\theta_{\aQ,X})\iso 
J^{\bT(k)}(\rho_{k}(kn)^{*}(\theta_{\aQ,X})),
\]
Combining the previous two isomorphisms,
we get an isomorphism
\[
\Phi_{k}(J^{\bT}\theta_{\aQ,X})\iso
\Phi^{\Lambda(k)}(J^{\bT(k)}(\rho_{k}(kn)^{*}(\theta_{\aQ,X}))). 
\]

\begin{proof}[Proof of Lemma~\ref{lem:ACyc}.(iii)]
Given $(\aQ,X)\in D(\aACyc)$, it suffices to construct a pair
$(\aQ(k),X(k))$ in $D(\aACyc)$ and an isomorphism 
\[
\Phi^{\Lambda(k)}(J^{\bT(k)}(\rho_{k}(kn)^{*}(\theta_{\aQ,X})))
\iso J^{\bT}\theta_{\aQ(k),X(k)}
\]
where $n:=n^{\aQ}$.
Since $\theta_{\aQ,X}$ is $Q:=Q^{\aQ}$-faithful, so is
$\rho_{k}(kn)^{*}(\theta_{\aQ,X})$, and we will take $Q^{\aQ(k)}=Q$.
Then by Theorem~\ref{thm:bundlegeofix}, it suffices to construct
an isomorphism of $(\bT(kn),Q)$ vector bundles
\[
(\rho_{k}(kn)^{*}(\theta_{\aQ,X}))(\Lambda(kn)|C_{n}(kn))
\iso \theta_{\aQ(k),X(k)}.
\]

We begin with the definition of $\aQ(k)$.  Let
\begin{align*}
n^{\aQ(k)}&:= kn, &r^{\aQ(k)}&:=r:=r^{\aQ}, \\
m^{\aQ(k)}_{i}&:=m_{i}:=m^{\aQ}_{i},
&\ell^{\aQ(k)}_{i}&:=\ell_{i}:=\ell^{\aQ}_{i},\\
q^{\aQ(k)}_{i}&:=q_{i}:=q^{\aQ}_{i}, 
&Q^{\aQ(k)}&:=Q:=Q^{\aQ}, \\
V_{i}^{\aQ(k)}&:=\rho_{k}(kn)^{*}(V_{i}^{\aQ}).
\end{align*}
We note that since $V_{i}^{\aQ}$ is a semiregular permutation representation of
$C_{m_{i}}(n)/C_{\ell_{i}}(n)$,
$V_{i}^{\aQ(k)}$ is a semiregular permutation representation of
$C_{m_{i}}(kn)/C_{\ell_{i}}(kn)$.  We see that since
$(\aQ,X)\in D(\aACyc)$, so is $(\aQ(k),X(k))$ for any choice of
$X(k)$, and when $k$ is a power of $p$, if 
$(\aQ,X)\in D(\aApCyc)$, then so is $(\aQ(k),X(k))$.

Next we analyze $\rho_{k}(kn)^{*}(\theta_{\aQ})$.  First we note that 
\[
\rho_{k}(kn)^{*}(\eta[n,m,V])=\eta[kn,m,\rho_{k}(kn)^{*}(V)]
\]
and so we get 
\[
\rho_{k}(kn)^{*}(\eta_{\aQ})=\eta_{\aQ(k)}.
\]
Let $\rho_{k}(kn)^{*}\Hom(\aQ)$ denote the set of homomorphisms
$C_{n}(kn)\to Q$ such that for each $i=1,\dotsc,r$ the composite 
\[
C_{n}(kn)\to Q\to \Sigma_{q_{0}}\times \dotsb \times \Sigma_{q_{r}}\to 
\Sigma_{q_{i}}
\]
has kernel containing $C_{\ell_{i}}(kn)$.  Then we have
\[
\rho_{k}(kn)^{*}(\theta_{\aQ})\iso
\coprod_{\tau \in \rho_{k}(kn)^{*}\Hom(\aQ)} \eta_{\aQ(k)}^{C_{n}(kn),\tau}.
\]
Write $B=\coprod B_{\tau}$ and $E=\coprod E_{\tau}$ for the base and
total space of $\rho_{k}(kn)^{*}(\theta_{\aQ})$ where we write
$B_{\tau}$ and $E_{\tau}$ for the base and total space of
$\eta_{\aQ(k)}^{C_{n}(kn),\tau}$. 

By Definition~\ref{defn:philambdatwo},
$(\rho_{k}(kn)^{*}(\theta_{\aQ}))(\Lambda(kn)|C_{n}(kn))$ is the
restriction of 
\[
(\rho_{k}(kn)^{*}(\theta_{\aQ}))(\Lambda(kn))=
\coprod_{\sigma\colon \Lambda(kn)\to Q} 
(\rho_{k}(kn)^{*}\theta(\aQ))^{\Lambda(kn),\sigma}
\]
along the inclusion of the subset 
\[
\coprod_{\sigma \colon \Lambda(kn)\to Q} B^{\Lambda(kn),\sigma}\{C_{n}(kn)\}
\subset 
\coprod_{\sigma \colon \Lambda(kn)\to Q} B^{\Lambda(kn),\sigma}
\]
of the base of $(\rho_{k}(kn)^{*}(\theta_{\aQ}))(\Lambda(n))$.
By definition, an element $b\in B^{\Lambda(kn),\sigma}$ in the $\sigma$
summand is in the subset $B^{\Lambda(kn),\sigma}\{C_{n}(kn)\}$ when
the $\sigma|_{C_{n}(kn)}$-twisted $C_{n}(kn)$ action on the fiber $E_{b}$ is
trivial.  Since $\rho_{k}(kn)^{*}(\theta_{\aQ})$ is $Q$-faithful, this
happens exactly when $b$ is in the $\tau =\sigma|_{C_{n}(kn)}$ summand
of $B$.  This subset is either empty (when $\sigma|_{C_{n}(kn)}\notin
\rho_{k}(kn)^{*}\Hom(\aQ)$) or consists of the summand
$B_{\sigma|_{C_{n}(kn)}}\subset B$ (when $\sigma|_{C_{n}(kn)}\in
\rho_{k}(kn)^{*}\Hom(\aQ)$).  The condition that a homomorphism
$\sigma \colon \Lambda(kn)\to Q$ satisfy $\sigma|_{C_{n}(kn)}\in
\rho_{k}(kn)^{*}\Hom(\aQ)$ is a hypothesis on kernels that coincides
with the condition $\sigma \in \Hom(\aQ(k))$. This gives an
isomorphism 
\[
(\rho_{k}(kn)^{*}(\theta_{\aQ}))(\Lambda(kn)|C_{n}(kn))
\iso 
\coprod_{\sigma\in \Hom(\aQ(k))}
\eta_{\aQ(k)}^{\Lambda(kn),\sigma}=\theta_{\aQ(k)}.
\]
Taking $X(k)$ to be the subspace of the base of $\theta_{\aQ(k)}$
corresponding under this isomorphism to the subspace of the base of 
$(\rho_{k}(kn)^{*}(\theta_{\aQ}))(\Lambda(kn)|C_{n}(kn))$
given by the base of
$(\rho_{k}(kn)^{*}(\theta_{\aQ,X}))(\Lambda(kn)|C_{n}(kn))$, 
the isomorphism above restricts to give the isomorphism 
\[
(\rho_{k}(kn)^{*}(\theta_{\aQ,X}))(\Lambda(kn)|C_{n}(kn))
\iso 
\theta_{\aQ(k),X(k)}.
\qedhere
\]
\end{proof}

\begin{proof}[Proof of Lemma~\ref{lem:ACyc}.(iv)]
Given $\aQ,X$ as in Definition~\ref{defn:acyccells} and $k\geq 1$, we
construct  $S^{k}(\aQ,X):=(S^{k}\aQ,S^{k}X)$ as follows.
Let $S^{k}\aQ$ be the data with the
same values for $n$, $r$, $m_{i}$, $\ell_{i}$, and $V_{i}$ as $\aQ$, but with
$q_{i}^{S^{k}\aQ}=kq_{i}$ and 
\[
Q^{S^{k}\aQ}<\Sigma_{k}\wr \Sigma_{q_{0}}\times \dotsb \times
\Sigma_{k}\wr \Sigma_{q_{r}}
<\Sigma_{kq_{0}}\times \dotsb \times \Sigma_{kq_{r}}
\]
the subset where the projection to $(\Sigma_{k})^{r}$ lands in the
diagonal and for each $j=1,\dotsc,k$ the $j$th factor projection 
\[
\Sigma_{k}\wr \Sigma_{q_{0}}\times \dotsb \times
\Sigma_{k}\wr \Sigma_{q_{r}}=
\Sigma_{k}\ltimes \Sigma_{q_{0}}^{k}\times \dotsb \times
\Sigma_{k}\ltimes \Sigma_{q_{r}}^{k}
\to \Sigma_{q_{0}}\times \dotsb \times \Sigma_{q_{r}}
\]
lands in $Q$.  In
other words, the image of $Q^{S^{k}\aQ}$ in
$\Sigma_{kq_{1}+\dotsb+kq_{r}}$ is conjugate to the image of
$\Sigma_{k}\wr Q$ via the permutation associated to the canonical
bijection
\[
(\{1,\dotsc,q_{0}\}\amalg \dotsb \amalg \{1,\dotsc,q_{r}\})^{\amalg k}
\iso 
\{1,\dotsc,q_{0} \}^{\amalg k}\amalg \dotsb \amalg \{1,\dotsc,q_{r} \}^{\amalg k}.
\]
Writing $\psi$ for the isomorphism $Q^{S^{k}\aQ}\iso \Sigma_{k}\wr Q$,
we get an isomorphism of $(\bT(n),Q^{S^{k}\aQ})$ vector bundles
\[
\psi^{*}(\Sym^{k}\eta_{\aQ})\iso \eta_{S^{k}\aQ}.
\]
Let $S^{k}\Hom(\aQ)$ be the subset of $\Hom(S^{k}\aQ)$ of those
homomorphisms $\sigma \colon \Lambda(n)\to Q^{S^{k}\aQ}$ whose
composite with $\psi$
\[
\Lambda(n)\to Q^{S^{k}\aQ}\overto{\iso}\Sigma_{k}\wr Q
\]
lands in $Q^{k}$.  Then we get an isomorphism of
$(\bT(n),\aQ^{S^{k}\aQ})$ vector bundles
\[
\psi^{*}(\Sym^{k}\theta_{\aQ})\iso 
\coprod_{\sigma \in S^{k}\Hom(\aQ)}\eta_{S^{k}\aQ}^{\Lambda(n),\sigma}.
\]
This is the restriction of $\theta_{S^{k}\aQ}$ to the summands indexed
by $\sigma \in S^{k}\Hom(\aQ)\subset \Hom(S^{k}\aQ)$.  Let $S^{k}X$ be
the subspace of the base of $\theta_{S^{k}\aQ}$ corresponding to the
subspace $\psi^{*}X^{k}$ in the base of
$\psi^{*}\Sym^{k}\theta_{\aQ}$.  The isomorphism above then induces an
isomorphism
\[
J^{\bT}_{(\bT(n),\Sigma_{k}\wr Q)}(\Sym^{k}\theta_{\aQ,X})\iso 
J^{\bT}_{(\bT(n),Q^{S^{k}\aQ})}(\psi^{*}(\Sym^{k}\theta_{\aQ,X}))\to
J^{\bT}\theta_{S^{k}(\aQ,X)}.
\]
We note that when $(\aQ,X)$ is in $D(\aACyc)$ or $D(\aApCyc)$, then so
is $S^{k}(\aQ,X)$. 
\end{proof}

\begin{proof}[Proof of Lemma~\ref{lem:ACyc}.(v)]
This follows from part~(i) and Theorem~\ref{thm:dersym}.
\end{proof}

\begin{proof}[Proof of Lemma~\ref{lem:ACyc}.(vi)]
Given $\aQ,X$ as in Definition~\ref{defn:acyccells}, define $\aQ'$ to
have the same values for $n$, $r$, $m_{i}$, $\ell_{i}$, $V_{i}$, and
$q_{i}$ for $i\geq 1$ as $\aQ$, but with
$q_{0}^{\aQ'}=q_{0}^{\aQ}+1$.  Regarding $\Sigma_{q}$ as a subgroup of
$\Sigma_{q+1}$ in the usual way, we can view $Q^{\aQ}$ as a subgroup
of
\[
\Sigma_{q^{\aQ}_{0}+1}\times 
\Sigma_{q^{\aQ}_{1}}\times \dotsb \times \Sigma_{q^{\aQ}_{r^{\aQ}}}
=\Sigma_{q^{\aQ'}_{0}}\times \Sigma_{q^{\aQ'}_{r^{\aQ'}}}
\]
and we take $Q^{\aQ'}=Q^{\aQ}$.  Then $\theta_{\aQ'}$ and
$\theta_{\aQ}$ have the same base space, so $\theta_{\aQ',X}$ makes
sense, and it is easy to see that $J^{\bT}\theta_{\aQ',X}$ is
isomorphic to $J^{\bT}\theta_{\aQ,X}\sma F_{\bR}S^{0}$.
\end{proof}

\section{\texorpdfstring%
{The positive convenient $\Sigma$-model structure}%
{The positive convenient Sigma-model structure} and Theorem~\sref$thm:necircle$}
\label{sec:neACyc}
\label{sec:convenient}

This section defines the positive convenient $\Sigma$-model structure
on non-equivariant spectra and proves Theorem~\ref{thm:necircle} which
compares cofibrations in this structure with cofibrations in the model
structures on $\bT$-spectra from Theorem~\ref{thm:circlephi}.  

We begin with the definition of the model structure.  The model
category results from Section~\ref{sec:model} apply in particular to
non-equivariant spectra by taking the compact Lie group $G$ to be the
trivial group $1$.  Using this theory, we specify the positive
convenient $\Sigma$-model structure by specifying a set of spectra
containing the set $\aA(\aF,\aV_{+})$ (where $\aF$ is the unique
family of subgroups of the trivial group and $\aV_{+}$ is the
representation constraint of Example~\ref{ex:trivposreg}).  In the
definition, we use the $\Sigma_{q}$ action on $F_{\bR^{q}}S^{0}$
induced by the action of $O(q)$ on $\bR^{q}$.

\begin{defn}\label{defn:Sigmodel}
Let $\aA_{\Sigma}$ denote the set of spectra $(F_{\bR^{q}}S^{0})/Q$
where $Q<\Sigma_{q}$.  The \term{positive complete $\Sigma$ model
structure} on spectra is the $\aA_{\Sigma}$-variant $\aV_{+}$-constrained
model structure of Theorem~\ref{thm:varAmodel}.
\end{defn}

One direction of Theorem~\ref{thm:necircle} is easy.  Applying the
free functor to a spectrum $(F_{\bR^{n}}S^{0})/Q$ in $\aA_{\Sigma}$,
we get the $\bT$-spectrum $\bT_{+}\sma (F_{\bR^{n}}S^{0})/Q$.  This is
isomorphic to the $\bT$-spectrum $J\theta_{\aQ,\Delta}$ in $\aApCyc$
where $\Delta$ is the diagonal of $\bT^{q}=\bT(1)^{q}$ and $\aQ$ is
the data where $n=1$, $r=1$, $q_{0}=0$, $q_{1}=q$, $m_{1}=1$, $\ell_{1}=1$,
$Q=Q$, and $V=\bR^{1}$. In other words, it is the restriction to the
diagonal $\bT$ in the base of the $Q^{\op}$-equivariant vector bundle
\[
\Sym^{q}(\bT \times \bR\to \bT)
\]
with the group for quotients reduced to $Q<\Sigma_{q}$.  Since the
free functor sends spectra in $\aA_{\Sigma}$ to spectra in $\aApCyc$
(up to isomorphism), it follows that it sends cofibrations in the
positive convenient $\Sigma$-model structure on spectra to
cofibrations in either of the model structures on $\bT$-spectra in
Theorem~\ref{thm:circlephi}.

The proof of Theorem~\ref{thm:necircle} is then completed by showing
that the forgetful functor sends objects in $\aACyc$ to cofibrant
objects in the positive convenient $\Sigma$-model structure.  
The main observation we need for that is the following lemma.

\begin{lem}\label{lem:necircle}
Let $\aQ\in D(\aACyc)$, let $Q=Q^{\aQ}$, and let $f$ be a $Q^{\op}$-equivariant map
from an orbit $Q^{\op}/\LQ^{\op}$ to the base of $\theta_{\aQ}$.  Then
the spectrum $J_{(1,Q)}f^{*}\theta_{\aQ}$ constructed from the
$(1,Q^{\op})$ vector bundle $f^{*}\theta_{\aQ}$ is isomorphic to
an object in $\aA_{\Sigma}$. 
\end{lem}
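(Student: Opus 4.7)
The plan is to identify $f^{*}\theta_{\aQ}$ as an extended bundle, show that its fiber is a permutation representation of $\LQ^{\op}$, and then apply the standard formula for $J$ on extended bundles. Since $f$ is $Q^{\op}$-equivariant and $\LQ^{\op}$ is the stabilizer of the basepoint $e\LQ^{\op}$, it stabilizes $f(e\LQ^{\op})$. Setting $W := (\theta_{\aQ})_{f(e\LQ^{\op})}$ with its induced $\LQ^{\op}$-action, the bundle $f^{*}\theta_{\aQ}$ is canonically isomorphic as a $(1,Q^{\op})$-vector bundle to the extended bundle $Q^{\op}\times_{\LQ^{\op}}W\to Q^{\op}/\LQ^{\op}$.

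The key technical step is to show $W$ is a permutation $\LQ^{\op}$-representation. Unwinding Definition~\ref{defn:acyccells}, the point $f(e\LQ^{\op})$ lies in a summand $\eta_{\aQ}^{\Lambda(n),\sigma}$ for some $\sigma \in \Hom(\aQ)$ over a $\sigma$-twisted fixed point $b$ of the base of $\eta_{\aQ}$, so $W = ((\eta_{\aQ})_{b})^{\Lambda(n),\sigma}$. Because $\eta_{\aQ}$ is built from symmetric powers of the bundles $\eta[n,m_{i},V_{i}]$ and each $V_{i}$ is a semiregular permutation representation, the fiber $(\eta_{\aQ})_{b}$ carries a canonical orthonormal basis $\aB$ (formed from products of the permutation bases of the $V_{i}$) on which the stabilizer of $b$ inside $\Lambda(n)\times Q^{\op}$ acts by permutations. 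The twisted fixed subspace $W$ is then spanned by normalized sums over the orbits of the $\sigma$-twisted $\Lambda(n)$-action on $\aB$, and these orbit sums form an orthonormal basis $\aB'$ of $W$. Since $\LQ^{\op}$ commutes with $\Lambda(n)$ inside the stabilizer of $b$, its action on $W$ permutes $\aB'$, exhibiting $W$ as a permutation $\LQ^{\op}$-representation.

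Writing $q = \dim W$ and choosing an orthonormal identification $W \iso \bR^{q}$ sending $\aB'$ to the standard basis, the $\LQ^{\op}$-action factors through a homomorphism $\LQ^{\op} \to \Sigma_{q}$ with image a subgroup $\LQ' < \Sigma_{q}$. The standard identification of $J$ on an extended bundle (in the spirit of the formula $J(G\times_{K}V)\iso G/K_{+}\sma F_{V}S^{0}$ recalled in the introduction) yields
\[
J_{(1,Q^{\op})}(Q^{\op}\times_{\LQ^{\op}}W) \iso F_{W}S^{0}/\LQ^{\op} \iso (F_{\bR^{q}}S^{0})/\LQ',
\]
the second isomorphism using that the kernel of $\LQ^{\op}\to\LQ'$ acts trivially on $F_{\bR^{q}}S^{0}$. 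This is an element of $\aA_{\Sigma}$, completing the proof. The main obstacle is the combinatorial bookkeeping in the middle paragraph: tracking the orthonormal permutation basis through the product and symmetric-power operations that build $\eta_{\aQ}$, and then verifying that the $\sigma$-twisted $\Lambda(n)$-fixed-point functor carries the basis of $(\eta_{\aQ})_{b}$ to the normalized orbit-sum basis of $W$ while preserving compatibility with the residual $\LQ^{\op}$-action; the other steps are formal once this is established.
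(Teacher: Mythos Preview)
Your proposal is correct and takes essentially the same approach as the paper: both reduce to showing the fiber $W$ is a permutation $\LQ$-representation by choosing lifts $\bar x_{i,j}$ of the base points so that the relevant actions become ``permute the factors and act by an element of $C_{m_i}(n)$'', which preserves the permutation basis of each $V_i$. The paper organizes the bookkeeping slightly differently---it first applies the diagonal fixed point Lemma~\ref{lem:dfplem} to identify $W$ factorwise as a product of $V^{H_j}$'s and then explicitly computes the $\LQ$ action on that basis, rather than working with a basis $\aB$ upstairs and passing to orbit sums---and your basis $\aB$ is not ``canonical'' since it depends on the choice of lifts, but the substance is the same.
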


\begin{proof}[Proof of Theorem~\ref{thm:necircle} from Lemma~\ref{lem:necircle}]
As observed above, it suffices to show that the forgetful functor
sends objects in $\aACyc$ to cofibrant objects in the positive
convenient $\Sigma$-model structure.  Let $(\aQ,X)\in D(\aACyc)$. The
underlying non-equivariant spectrum of the corresponding element
$J^{\bT}\theta_{\aQ,X}$ of $\aACyc$ is the generalized orbit
desuspension spectrum $J_{(1,Q)}\theta_{\aQ,X}$ (for the trivial
group).  Ignoring the $\bT(n)$ action, $X$ is a $Q^{\op}$-cell complex
and for any $Q^{\op}$-cell $\alpha \colon Q^{\op}/\LQ^{\op}\times
D^{s}\to X$, base change gives a $(1,Q)$ vector bundle
$\alpha^{*}\theta_{\aQ,X}$ and a spectrum
$J_{(1,Q)}(\alpha^{*}\theta_{\aQ,X})$.  We write
$J_{(1,Q)}(\partial\alpha^{*}\theta_{\aQ,X})$ for the spectrum obtained
the same way on the boundary $\partial \alpha \colon Q/\LQ\times
\partial D^{s}\to X$.  The spectrum $J_{(1,Q)}\theta_{\aQ,X}$ is built
as a sequential colimit of pushouts over cofibrations of these
restrictions.  It is therefore enough to show that for every
$Q^{\op}$-cell $\alpha$, the pair
\[
(J_{(1,Q)}(\alpha^{*}\theta_{\aQ,X}),
J_{(1,Q)}(\partial\alpha^{*}\theta_{\aQ,X}))
\]
is isomorphic to a cell $A\sma (D^{s},\partial D^{s})$ for $A$ in
$\aA_{\Sigma}$.  Using the contraction to the center of the disk, we
see that 
\[
(\alpha^{*}\theta_{\aQ,X},\partial \alpha^{*}\theta_{\aQ,X})\iso \eta
\times (D^{s},\partial D^{s})
\]
for the $(1,Q)$ vector bundle $\eta$ over $Q^{\op}/\LQ^{\op}$ obtained
by base change over the restriction of $\alpha$ to the center point.
The result now follows from Lemma~\ref{lem:necircle}.
\end{proof}

The rest of the section proves Lemma~\ref{lem:necircle}.  We fix
\[
\aQ=(n,r,(q_{0},\dotsc,q_{r}),(m_{1},\dotsc,m_{r}),
(\ell_{1},\dotsc,\ell_{r}),(V_{1},\dotsc,V_{r}))\in D(\aACyc)
\]
and a point $x=(x_{1,1},\dotsc,x_{r,q_{r}})$ in the base of $\theta_{\aQ}$.
Let  $\sigma \colon \Lambda(n)\to Q$ in $\Hom(\aQ)$ denote the
homomorphism indexing the component of $x$.  We further fix a subgroup $\LQ<Q$
which is contained in the isotropy subgroup of $x$ for the
right $Q$ action on $\theta_{\aQ}$, which then specifies the
$Q^{\op}$-equivariant map $f$ from $Q^{\op}/\LQ^{\op}$ to the base of
$\theta_{\aQ}$ (sending the base point of the orbit to the point
$x$).  Let $\theta =f^{*}\theta_{\aQ}$; we need to show that
$J_{(1,Q)}\theta$ is isomorphic to an object of $\aA_{\Sigma}$.  

The argument proceeds by studying the fiber of $\theta$ over the
basepoint and the inherent right $\LQ$ action on it.  Writing $E_{x}$ for
this fiber and $V_{i,j}$ for the fiber of
$\eta[n,m_{i},V_{i}]$ over the point $x_{i,j}\in \bT(n)/C_{m_{i}}(n)$,
the construction of $\theta_{\aQ}$ identifies $E_{x}$ as  
\[
(\bR^{q_{0}}\times V_{{1,1}}\times \dotsb \times V_{{r,q_{r}}})^{\Lambda(n),\sigma}.
\] 
Writing $\sigma_{i}\colon \Lambda(n)\to \Sigma_{q_{i}}$ for the
composite map 
\[
\sigma_{i}\colon \Lambda(n)\to Q\to \Sigma_{q_{0}}\times \dotsb \times
\Sigma_{q_{r}}\to \Sigma_{q_{i}},
\]
we can further identify $E_{x}$ as
\[
E_{x}=
(\bR^{q_{0}})^{\Lambda(n),\sigma_{0}}\times 
(V_{{1,1}}\times \dotsb \times V_{1,q_{1}})^{\Lambda(n),\sigma_{1}}
\times \dotsb \times 
(V_{{r,1}}\times \dotsb \times V_{{r,q_{r}}})^{\Lambda(n),\sigma_{r}}.
\]

The zeroth factor $(\bR^{q_{0}})^{\Lambda(n),\sigma_{0}}$ is the fixed
point set of the action of the subgroup
$\sigma_{0}(\Lambda(n))<\Sigma_{q_{0}}$ given by 
the image of $\sigma_{0}$.  Since
$\bR^{q_{0}}=\bR\langle\{1,\dotsc,q_{0}\}\rangle$ is a permutation
representation of $\Sigma_{q_{0}}$, the fixed point set is also a permutation
representation
\[
(\bR^{q_{0}})^{\Lambda(n),\sigma_{0}}\iso 
\bR\langle\{1,\dotsc,q_{0}\}/\sigma_{0}(\Lambda(n))\rangle.
\]
A choice of enumeration of $\{1,\dotsc,q_{0}\}/\sigma_{0}(\Lambda(n))$
as $\{1,\dotsc,k_{0}\}$ defines a homomorphism $\Sigma_{q_{0}}\to
\Sigma_{k_{0}}$ and an isomorphism of $\Sigma_{q_{0}}$-representations  
\[
(\bR^{q_{0}})^{\Lambda(n),\sigma_{0}}\iso \bR^{k_{0}}
\]
with the action of $\Sigma_{q_{0}}$ on $\bR^{k_{0}}$ via the homomorphism,
and in particular, by permutation of coordinates.

To complete the proof of Lemma~\ref{lem:necircle}, it suffices to show
that for each $i$ there exists an orthonormal basis of
\[
(V_{{i,1}}\times \dotsb \times V_{{i,q_{i}}})^{\Lambda(n),\sigma_{i}}
\]
such that the $\LQ$ action on
the vector space restricts to an action on the basis: the choice of
these bases specify a homomorphism 
\[
\LQ\to \Sigma_{k_{1}}\times \dotsb \times \Sigma_{k_{r}}\to \Sigma_{k}
\]
(for $k=\sum k_{r}$) and an isomorphism of $(1,Q)$ vector bundles
\[
\theta \iso \bR^{k}\times_{\LQ}Q\to \LQ\backslash Q,
\]
which then induces an isomorphism of spectra
\[
J_{(1,Q)}\theta \iso (F_{\bR^{k}}S^{0})/\LQ\iso
(F_{\bR^{k}}S^{0})/H
\]
where $H$ is the image of $\LQ$ in $\Sigma_{k}$.

We can now concentrate on an individual $i$, which amounts to studying
the $\sigma_{i}$-twisted $\Lambda(n)$ fixed points of the vector
bundle $\Sym^{q_{i}}\eta[n,m_{i},V_{i}]$.  Fixing $i$, we can tighten up
notation significantly.  
As a first step, since the action of $C_{\ell_{i}}(n)$ on
$V_{i}$ is trivial and $C_{\ell_{i}}(n)$ is in the kernel of
$\sigma_{i}$, we get an equivalent problem if we replace
$\bT(n)$ with $\bT(n)/C_{\ell_{i}}(n)$, $\Lambda(n)$ with
$\Lambda(n)/C_{\ell_{i}}(n)$ and $C_{m_{i}}(n)$ with
$C_{m_{i}}(n)/C_{m_{i}}(\ell_{i})$: let 
\[
\Gamma=\bT(n)/C_{\ell_{i}}(n), \quad 
\Lambda=\Lambda(n)/C_{\ell_{i}}(n),\quad 
C=C_{m_{i}}(n)/C_{\ell_{i}}(n).
\]
We also let $q=q_{i}$, $V=V_{i}$, $W_{j}=V_{i,j}$, and $\sigma'=\sigma_{i}$. 
In this notation, the hypothesis that $\aQ\in D(\aACyc)$ implies that
$V$ and each $W_{j}$ is a non-zero permutation representation of $C$,
and we are studying the pullback $(1,Q)$ vector bundle
\[
\xymatrix@R-1pc{%
Q^{\op}\times_{\LQ^{\op}}(V^{q})^{\Lambda,\sigma'}
\ar@{.>}[r]\ar[d]_{\theta'}
&((\Gamma \times_{C} V)^{q})^{\Lambda,\sigma'}\ar[d]\\
Q^{\op}/\LQ^{\op}\ar@{.>}[r]
&((\Gamma/C)^{q})^{\Lambda,\sigma'}.
}
\]
Let $x'=(x'_{1},\dotsc,x'_{q})$ for $x'_{j}=x_{i,j}$ be the point in
$\theta'$ obtained by restricting coordinates. 
We now recycle the symbols $i$, $\ell$, $m$, $n$, and $r$ for reuse. 

Section~\ref{sec:dfpl} studies twisted fixed points on cartesian
powers of bundles.  (To apply this section, understand the group $Q$
in the work there to be the trivial group.)  The homomorphism
$\sigma'\colon \Lambda \to \Sigma_{q}$ endows the set $\{1,\dotsc,q\}$
with a $\Lambda$ action.  Let $O_{1},\dotsc,O_{k}$ be the orbits for
this action, and choose an element $m_{j}\in O_{j}$ for each
$j=1,\dotsc,k$. Let $H_{j}<\Lambda$ be the isotropy subgroup of
$m_{j}$. Since $\Lambda$ is abelian, $H_{j}$ is the isotropy subgroup
for every element in $O_{j}$.  Moreover, we note that
$H_{j}<\Lambda<\Gamma$ is also a subgroup of $C$ since every element
in it fixes $x'_{m_{j}}\in \Gamma/C$. For each $i\in \{1,\dotsc,q\}$,
define $j_{i}$ by $i\in O_{j_{i}}$ and choose $\ell_{i}\in \Lambda$
such that $\ell_{i}\cdot m_{j_{i}}=i$, with $\ell_{m_{j}}=\id$.
Then for the inherent action of $\Gamma$ on the vector bundle $\Gamma
\times_{C}V\to \Gamma/C$, $\ell_{i}$ sends $W_{m_{j_{i}}}$ (the fiber
over $x_{m_{j_{i}}}$) to $W_{i}$ (the fiber over $x_{i}$).
Define
\[
\phi \colon W_{m_{i}}^{H_{m_{1}}}\times \dotsb \times W_{m_{k}}^{H_{m_{k}}}\to W_{1}\times \dotsb \times W_{q}
\]
by
\[
\phi(w_{1},\dotsc,w_{k}):=
\left(\tfrac{1}{\sqrt{\#O_{j_{1}}}}\ell_{1}w_{j_{1}},\dotsc,
\tfrac{1}{\sqrt{\#O_{j_{q}}}}\ell_{q}w_{j_{q}}\right).
\]
The scaling ensures that this is an isometry, and
Lemma~\ref{lem:dfplem} shows that $\phi$ is an isomorphism onto the
$\sigma'$-twisted $\Lambda$ fixed points $(W_{1}\times \dotsb \times
W_{q})^{\Lambda,\sigma'}$. 

For each $i=1,\dotsc,q$, choose an element $\bar x_{i}$ in $\Gamma$
lifting the element $x'_{i}\in \Gamma/C$.  The choice of $\bar x_{i}$
induces a $C$-equivariant isomorphism $\psi_{i}\colon V\to W_{i}$.  If
$x'_{i}=x'_{i'}$ for some $i'$, then $\bar x_{i'}\bar x_{i}^{-1}$
is an element of $C$ and the composite endomorphism 
\[
V\overto{\psi_{i}} W_{i}=W_{i'}\overto{\psi_{i'}^{-1}} V
\]
is induced by $\bar x_{i}^{-1}\bar x_{i'}\in C$ for the
inherent $C$ action on $V$.  More generally if under the
$\sigma'$-twisted $\Lambda$ action, the element $\lambda \in \Lambda$ takes
$x'_{i}$ to $x'_{i'}$, then $\bar x_{i'}\lambda^{-1}\bar x_{i}^{-1}$
with the composite endomorphism of $V$ given by multiplication by $\bar
x_{i}^{-1}\lambda^{-1} \bar x_{i'}$.  (These formulas and the
$C$-equivariance of the $\psi_{i}$ strongly use the fact that $\Gamma$
is abelian.)

The isomorphisms $\psi_{i}$ together with the
isomorphism $\phi$ above identify the fiber of $x'$ in $\theta'$ as 
\[
V^{H_{m_{1}}}\times \dotsb \times V^{H_{m_{k}}}.
\]
Because $V$ is a permutation representation of $C$, each
$V^{H_{j}}$ is a permutation representation of $C/H_{j}$.  We can
therefore choose an orthonormal basis of $V^{H_{j}}$ on which the $C$
action restricts.  If $H_{j}=H_{j'}$, we choose the same basis for
$V^{H_{j}}$ and $V^{H_{j'}}$.  We assemble the bases of $V^{H_{j}}$ to
an orthonormal basis of the fiber of $x'$ in $\theta'$.

The action of $\LQ$ on $\theta'$ is induced by the permutation action.
Since $\LQ$ is contained in the isotropy subgroup of $x'$ for the
$Q$ action on the base of $\theta'$, every element of $\LQ<Q$ fixes
the homomorphism $\sigma'$, and that is equivalent to saying that the
right action on $\LQ$ on $\{1,\dotsc,q\}$ commutes with the
$\Lambda$ action induced by $\sigma'$.  In particular, if some element
of $\LQ$ sends $i$ to $i'$, then it induces an isomorphism of
$\Lambda$-sets $O_{j_{i}}\to O_{j_{i'}}$ and we must
have $H_{j_{i}}=H_{j_{i'}}$.  Because for each $j$, the elements
$\ell_{i}$ for $i\in O_{j}$ form a complete list of coset
representatives for $\Lambda/H_{j}$, under the identification above,
the action of each element $\rho\in \LQ$ on $V^{H_{m_{1}}}\times \dotsb
\times V^{H_{m_{k}}}$ does the permutation on the orbits followed by
the action of an element of $C$ on each factor (namely, on the
$j$th factor, it acts by 
\[ 
\ell_{i}\bar x_{m_{j_{i}}}\bar x_{i}^{-1}\bar x_{i}\bar x_{m_{j}}^{-1}
=\ell_{i}\bar x_{m_{j_{i}}}\bar x_{m_{j}}^{-1},
\]
where $\rho\in \LQ<Q<\Sigma_{q}$ 
sends $m_{j}$ to $i$). It follows that the action of $\LQ$ on the 
fiber of $x'$ restricts to an action the orthonormal basis constructed 
above.

\chapter{The Multiplicative tom Dieck Splitting (Theorem~\sref$main:tds$)}
\oldlabel{chap:tds}

A fundamental structural property of the equivariant stable category
is the tom Dieck splitting, which in its original form gave a formula
for the categorical fixed points of a suspension spectrum.  It arises
from the existence of transfers on categorical fixed points.  In the
multiplicative context, equivariant commutative ring orthogonal
spectra come with certain transfers on their geometric fixed points,
which we review below.  When the equivariant commutative ring spectrum
comes from a non-equivariant commutative ring spectrum, these
transfers are independent of each other and assemble to a
multiplicative splitting that is a multiplicative analogue
of the tom Dieck splitting.  We gave a concrete statement in
Theorem~\ref{main:tds}, which we restate as Theorem~\ref{thm:tomdieck}
in Section~\ref{sec:multtD}.  This chapter just 
scratches the surface of the properties of the multiplicative
transfers, which we believe to be foundational in the equivariant
multiplicative theory and intend to return to in future work.
Section~\ref{sec:2tds} provides some additional information on
the pieces in the splitting in special cases; little beyond this
seems to currently be known.

\section{Construction of the multiplicative splitting}\label{sec:multtD}

We begin with the construction of the multiplicative transfer in the
context of commutative ring $G$-spectra (with $G$ a
finite group). For $H<G$, the norm $N_{H}^{G}$ restricts to a functor
from commutative ring $H$-spectra to
commutative ring $G$-spectra and it is left adjoint to the
reduction of structure functor $i_{H}^{*}$ from 
commutative ring $G$-spectra to
commutative ring $H$-spectra
\[
\xymatrix{%
\Com_{H}\ar@<.5ex>[r]^{N_{H}^{G}}
&\Com_{G}.\ar@<.5ex>[l]^{i_{H}^{*}}
}
\]
In particular, for $A$ a
commutative ring $G$-spectrum, the counit of
the adjunction gives a natural map of commutative ring
$G$-spectra
\[
N_{H}^{G}i_{H}^{*}A\to A.
\]
Applying $\Phi^{G}$ to this map and composing with 
the Hill-Hopkins-Ravenel diagonal map
\[
\Phi^{H}i_{H}^{*}A\to \Phi^{G}N_{H}^{G}i_{H}^{*}A
\]
(which is an isomorphism when $A$ is nice enough) 
we get a natural map
\[
\tau_{H}\colon \Phi^{H}i_{H}^{*}A\to \Phi^{G}N_{H}^{G}i_{H}^{*}A\to \Phi^{G}A.
\]
of non-equivariant commutative ring orthogonal spectra on the
point-set level, or working with derived functors, on the homotopy
category level.  

\begin{defn}
The \term{multiplicative transfer} for $H<G$ is the natural map
$\tau_{H}$ displayed above. 
\end{defn}

For the multiplicative tom Dieck splitting, the first observation we
need is that $\tau_{H}$ is actually equivariant, where we regard
$\Phi^{H}A$ with its natural $WH$ action and $\Phi^{G}A$ with its
trivial action. The following gives a precise statement.

\begin{thm}
The multiplicative transfer $t_{H}\colon \Phi^{H}A\to \Phi^{G}A$ is a
map of left $WH$-objects in the category of non-equivariant commutative ring orthogonal spectra, where $\Phi^{H}A$ is a
$WH$-object by neglect of structure (from its commutative ring $WH$-spectrum structure) and $\Phi^{G}A$ is a
$WH$-object with trivial action.
\end{thm}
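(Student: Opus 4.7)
The plan is to show that for each $n \in NH$, the endomorphism $[n]_{*} \colon \Phi^{H} A \to \Phi^{H} A$ (induced by the $WH$-action on $\Phi^{H} A$ coming from the $NH$-equivariant structure on $A$ via restriction from $G$) satisfies $\tau_{H} \circ [n]_{*} = \tau_{H}$. The underlying heuristic is clear: $\tau_{H}(x)$ corresponds morally to the commutative product $\prod_{[g] \in G/H} g \cdot x$ in $\Phi^{G} A$, and this product is invariant under right translation of $G/H$ by $n \in NH$ (which is what $[n]_{*}$ implements), so invariance follows from commutativity of the ring structure. The formal proof just promotes this calculation to the derived setting.

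To make this rigorous, I would first unpack the construction as $\tau_{H} = \Phi^{G}(\epsilon_{A}) \circ \Delta_{i_{H}^{*} A}$, where $\epsilon_{A} \colon N_{H}^{G} i_{H}^{*} A \to A$ is the counit of $N_{H}^{G} \dashv i_{H}^{*}$ and $\Delta$ is the HHR diagonal. Because $A$ restricts to an $NH$-spectrum, multiplication by $n \in NH$ defines an $H$-equivariant automorphism $\nu_{n} \colon i_{H}^{*} A \to i_{H}^{*} A$, and the $WH$-action on $\Phi^{H} A$ is exactly $[n]_{*} = \Phi^{H}(\nu_{n})$. Naturality of the HHR diagonal gives
\[
\Delta \circ \Phi^{H}(\nu_{n}) = \Phi^{G}(N_{H}^{G}(\nu_{n})) \circ \Delta.
\]

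The crucial step is then to establish the identity
\[
\epsilon_{A} \circ N_{H}^{G}(\nu_{n}) = \mu_{n} \circ \epsilon_{A},
\]
where $\mu_{n} \colon A \to A$ is the map implementing the action of $n \in G$ on $A$. Granting this, applying $\Phi^{G}$ and combining with the HHR naturality above yields $\tau_{H} \circ [n]_{*} = \Phi^{G}(\mu_{n}) \circ \tau_{H}$. The last factor $\Phi^{G}(\mu_{n})$ is the identity on $\Phi^{G} A$ because geometric $G$-fixed points are invariant under the ambient $G$-action: on fixed-point data, every element of $G$ acts trivially. Combining everything, $\tau_{H} \circ [n]_{*} = \tau_{H}$, proving equivariance.

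The main obstacle is justifying the identity $\epsilon_{A} \circ N_{H}^{G}(\nu_{n}) = \mu_{n} \circ \epsilon_{A}$ at the point-set level, since $\nu_{n}$ is only $H$-equivariant in a twisted sense and $\mu_{n}$ is not literally a $G$-equivariant map (it intertwines the $G$-action with its conjugate by $c_{n}$). The cleanest way to handle this is to work inside the category of $NH$-spectra, where $A$ carries a genuine $NH$-action and where $\nu_{n}$ and $\mu_{n}$ assemble as components of a single natural automorphism; the counit $\epsilon$ is natural with respect to $NH$-equivariant maps, and tracking the $c_{n}$-twisting through the norm $N_{H}^{G}$ produces the required identity. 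Alternatively, one can argue directly on generalized orbit desuspension cells from Chapter~\ref{chap:bundleone}, where the norm and counit have explicit geometric descriptions that make the commutative-product picture above literal.
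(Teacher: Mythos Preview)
Your approach has a real gap that you yourself flag but do not resolve. The map $\nu_n$ given by multiplication by $n\in NH$ on $i_H^*A$ is \emph{not} $H$-equivariant in general; it satisfies $\nu_n(h\cdot a)=(nhn^{-1})\cdot\nu_n(a)$, so it is only $c_n$-twisted $H$-equivariant. Consequently the expressions $\Phi^H(\nu_n)$ and $N_H^G(\nu_n)$ do not literally make sense, and the ``naturality of the HHR diagonal'' you invoke is not available in the form you use it. You acknowledge this at the end, but the suggested remedies---working inside $NH$-spectra, or passing to generalized orbit desuspension cells---are too vague to constitute a proof: the norm $N_H^G$ does not stay inside $NH$-spectra, and tracking the $c_n$-twisting through both $N_H^G$ and $\Phi^G$ is exactly the hard part you have not done.

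The paper's argument sidesteps all of this twisted-equivariance bookkeeping by a single structural observation: $N_H^G i_H^* A\cong G/H\otimes A$ as commutative ring $G$-spectra (the tensor with the $G$-set $G/H$). The right $WH$-action on $G/H$ then gives a $WH$-action on $N_H^G i_H^* A$ by \emph{genuine $G$-equivariant} automorphisms, so all functors ($\Phi^G$, the counit) apply on the nose with no twisting. Under this identification the counit is induced by the $G$-map $G/H\to *$, which is trivially $WH$-invariant; that is your identity $\epsilon_A\circ N_H^G(\nu_n)=\mu_n\circ\epsilon_A$, but obtained for free. The remaining point---that the diagonal $\Phi^H A\to\Phi^G N_H^G i_H^* A$ is $WH$-equivariant---is handled by reducing to the free case $A=\bP(F_V G/H_+)$ via a categorical epimorphism argument, where the diagonal is explicit. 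This is precisely the missing idea: lift the $WH$-action to the $G$-spectrum level via the tensor description, rather than trying to push twisted maps through the functors.
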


\begin{proof}
The key observations are:
\begin{enumerate}
\item $N_{H}^{G}i_{H}^{*}A$ has a natural action of $WH$
in the category of commutative ring $G$-spectra,
\item The counit map $N_{H}^{G}i_{H}^{*}A\to A$ is a map of
$WH$-objects in the category of commutative ring
$G$-spectra where we give the target the trivial action, and
\item The diagonal map $\Phi^{H}A\to \Phi^{G}N_{H}^{G}i_{H}^{*}A$ is a
map of $WH$-objects in non-equivariant commutative ring orthogonal spectra.
\end{enumerate}
For point~(i), we use the fact~\cite[2.28]{HHR} that
$N_{H}^{G}i_{H}^{*}A$ is naturally isomorphic to the tensor
$G/H\otimes A$ of $A$ with the left $G$-space $G/H$, and under this
isomorphism, the map $N_{H}^{G}i_{H}^{*}A\to A$ is induced by the
$G$-map $G/H\to *$.  The commutative ring
$G$-spectrum $N_{H}^{G}i_{H}^{*}A$ then obtains a $WH$ action (in that
category) from the right action of $WH$ on $G/H$.  For~(ii),
formulated this way, the map is obviously a map of $WH$-objects where
the target has the trivial action.  For~(iii), 
\cite[B.190]{HHR} implies that the wedge of maps from
$\Phi^{H}F_{V}X(V)$ to $\Phi^{H}X$ is a categorical epimorphism, so it
suffices to check~(iii) in the case when $A$ is free on
$F_{V}G/H_{+}$, where it is easy from the description of the diagonal
in~\cite[B.209]{HHR}.
\end{proof}

The previous theorem was a point set statement. While it is true that
the point-set category of left $WH$-objects in non-equivariant
orthogonal spectra is equivalent to the point-set category of
$WH$-equivariant orthogonal spectra, for our purposes, the
corresponding homotopy theory statement we need is in terms of the
$WH$-Borel equivariant category of commutative ring spectra.  The
Borel category is the category of $WH$-objects in non-equivariant
commutative ring orthogonal spectra where we understand the weak
equivalences to be the maps that are weak equivalences of the
underlying non-equivariant spectra.  Homotopically, then we should
consider the map
\[
\Phi^{H}A\otimes EWH\to \Phi^{G}A
\]
obtained by tensoring with the universal free $WH$-space.  Since the
target is trivial, this factors through the quotient (in the category
of commutative ring equivariant spectra)
\begin{equation}\label{eq:mapabove}
\Phi^{H}A\otimes_{WH} EWH\to \Phi^{G}A.
\end{equation}

We can now begin to study the case when the commutative ring
$G$-spectrum comes from a non-equivariant commutative ring spectrum
$R$.  We have a point-set functor $\epsilon^{*}$ that takes
non-equivariant commutative ring spectra to commutative ring
$G$-spectra by giving it the trivial action: precisely, it is the
restriction of action along the final homomorphism $G\to 1$.  A check
of basic definitions shows that $\epsilon^{*}$ is left adjoint to the
categorical $G$ fixed point functor, and since categorical fixed point
functors preserve fibrations and acyclic fibrations, $\epsilon^{*}$ is
a Quillen left adjoint.  In particular, $\epsilon^{*}$ admits a left
derived functor that is calculated by applying it to a cofibrant
object.  In what follows, we use $\epsilon^{*}$ to denote this left
derived functor, and for convenience, we assume without loss of
generality that $R$ is a cofibrant commutative ring spectrum, so that
$\epsilon^{*}R$ is represented by the point-set functor applied to $R$.

We want to study the left derived geometric fixed points of
$\epsilon^{*}R$, and for this we take the commutative ring
$G$-spectrum $A$ in the above to be a
cofibrant approximation in a model structure satisfying the hypotheses
of Theorem~\ref{thm:Finexample}.  We get a map in the homotopy
category of non-equivariant commutative ring spectra
\[
R\to L\Phi^{H}\epsilon^{*}R\simeq \Phi^{H}A
\]
as the composite with the canonical map from $R$ to the derived
categorical $H$ fixed points of $\epsilon^{*}R$ with the canonical map
from the categorical fixed points to the geometric fixed points.
Formally, these are maps in the $WH$-Borel
category of commutative ring orthogonal spectra where $R$ has the
trivial action.  Composing with the map~\eqref{eq:mapabove} above, we
get a well defined 
map in the homotopy category of non-equivariant commutative ring
orthogonal spectra
\[
R\otimes^{L} BWH \iso R\otimes^{L}_{WH}EWH\to \Phi^{H}A\otimes^{L}_{WH}EWH\to 
L\Phi^{G}\epsilon^{*}R.
\]
(This is not a point-set map.)
Taking the coproduct over conjugacy classes of subgroups $H$, we get a
map of non-equivariant commutative ring orthogonal spectra
\[
\bigwedge_{[H]}R\otimes^{L} BWH\to L\Phi^{G} \epsilon^{*}R.
\]
We prove the following theorem.

\begin{thm}[Multiplicative tom Dieck splitting for $\epsilon^{*}$]
\label{thm:tomdieck}
Let $R$ be a non-equi\-variant commutative ring orthogonal spectrum, and
let $\epsilon^{*}$ denote the derived functor from non-equivariant to
commutative ring $G$-spectra. The map 
\[
\bigwedge_{[H]}R\otimes^L BWH\to L\Phi^{G} \epsilon^{*}R.
\]
induced by the multiplicative transfers is a weak equivalence.
\end{thm}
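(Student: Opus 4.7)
The plan is to reduce to the case of a free commutative ring $R=\bP X$ and then compute both sides directly using the interaction of $\Phi^{G}$ with symmetric powers from Chapter~\ref{chap:bundleone} together with the additive equivariant tom Dieck splitting for the underlying $G$-spectrum.

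For the reduction, both sides of the claimed equivalence are derived functors on non-equivariant commutative ring spectra that preserve (homotopy) sifted colimits, and they agree on $\bS$ (both yield $\bS$, since $\bS$ is the initial commutative ring and $\Phi^{G}\epsilon^{*}\bS\simeq \bS$). Since every cofibrant commutative ring spectrum is built via sequential colimits of pushouts along maps $\bP X\to \bP Y$ for cofibrations $X\to Y$ of non-equivariant spectra, standard cell induction reduces the statement to $R=\bP X$ for a cofibrant non-equivariant spectrum $X$.

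For $R=\bP X$, the left-hand side unfolds as
\[
\bigwedge_{[H]}\bP X\otimes BWH \;\iso\; \bP\Big(X\wedge \bigvee_{[H]}BWH_{+}\Big),
\]
using the adjunction identity $\bP X\otimes Y\iso \bP(X\wedge Y_{+})$ for an unbased space $Y$ together with the fact that smash product is the coproduct in commutative rings. For the right-hand side, I would choose a cofibrant approximation $\tilde X\to X$ of $G$-spectra (where $X$ is viewed with trivial $G$-action) in a model structure underlying a commutative ring model structure satisfying Theorem~\ref{thm:Finexample} (built from complete cells $G_{+}\wedge_{K}F_{V}S^{0}$ with $V$ containing the regular representation of $K$). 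Then $\bP\tilde X\to \epsilon^{*}\bP X$ is a cofibrant approximation of commutative ring $G$-spectra, and Theorem~\ref{main:commod} yields
\[
L\Phi^{G}\epsilon^{*}\bP X\;\simeq\; \Phi^{G}\bP\tilde X\;=\;\bigvee_{n\geq 0}\Phi^{G}(\tilde X^{(n)}/\Sigma_{n}).
\]
Applying Theorem~\ref{thm:calcgeoSym} to decompose each symmetric-power summand of $\Phi^{G}$ into pieces indexed by conjugacy classes of subgroups of $G\times \Sigma_{n}$ surjecting to $G$, and reassembling across $n\geq 0$, gives $\bP$ applied to a wedge of summands indexed by conjugacy classes $[H]$ of subgroups of $G$. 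The $[H]$-summand, via the additive tom Dieck splitting applied to the cofibrant replacement $\tilde X$ of the trivial-action $G$-spectrum on $X$, is identified with $X\wedge BWH_{+}$; this reproduces $\bP(X\wedge \bigvee_{[H]}BWH_{+})$, matching the left-hand side.

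The hard part will be verifying that the comparison map thus constructed coincides with the one induced by the multiplicative transfers $\tau_{H}$. For this I would trace through the definitions: the transfer $\tau_{H}$ is built from the norm $N_{H}^{G}i_{H}^{*}A\iso G/H\otimes A$ followed by the Hill-Hopkins-Ravenel diagonal, and the explicit formula for the diagonal on free cells (used in the proof of $WH$-equivariance of $\tau_{H}$ earlier in this chapter) shows that $\tau_{H}$ lands in the $[H]$-summand of the decomposition above. Combining this cell-level computation with the $WH$-equivariance of $\tau_{H}$ and descent to $WH$-homotopy orbits then yields the identification of the assembly map with the one built from the transfers.
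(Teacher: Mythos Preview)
Your overall strategy is the paper's: reduce to the free case $R=\bP(F_{m}X_{+})$, analyze $\Phi^{G}$ of the symmetric powers, reorganize the result as $\bP$ of a wedge indexed by conjugacy classes $[H]$, and identify the comparison map via the HHR diagonal. The differences are in the execution, and two of your steps would not go through as written.

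First, Theorem~\ref{thm:calcgeoSym} is stated for $\bP(J^{\Gamma/\KG}\eta)$ with $\eta$ a single $(\Gamma,Q)$ vector bundle, not for $\bP\tilde X$ with $\tilde X$ an arbitrary cell complex; the free functor $\bP$ mixes cells multiplicatively, so cell-by-cell application does not suffice. The paper avoids this by reducing all the way down to a single generating cell and choosing the explicit model $Y=F_{\bR^{m}\oplus \bR\langle G\rangle}\Sigma^{\bR\langle G\rangle}X_{+}$. It then computes $\Phi^{G}(Y^{(q)}/\Sigma_{q})$ directly from Theorem~\ref{thm:bundlegeofix}, noting that the reorganization into conjugacy classes is a concrete instance (with $Q$ trivial) of the proof of Theorem~\ref{thm:calcgeoSym}, rather than an application of its statement.

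Second, the identification of the $[H]$-summand with $X\sma BWH_{+}$ is not ``the additive tom Dieck splitting''; that splitting concerns categorical fixed points and does not produce the piece you need here. In the paper's computation the $[H]$-summand is explicitly $\bigl(F_{\bR^{m}\oplus \bR\langle G/H\rangle}\Sigma^{\bR\langle G/H\rangle}X_{+}\bigr)/WH$, and the identification with $F_{\bR^{m}}X_{+}\sma BWH_{+}$ comes from two elementary facts: the canonical equivalence $F_{\bR^{m}\oplus \bR\langle G/H\rangle}\Sigma^{\bR\langle G/H\rangle}X_{+}\simeq F_{\bR^{m}}X_{+}$, and the observation that the $WH$ action on the constituent spaces is free (because $WH$ acts faithfully on $G/H$), so the strict quotient by $WH$ agrees with the homotopy orbits. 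Your tracing of the transfer map through the HHR diagonal to the $q=1$ summand of the $[H]$-factor is exactly what the paper does.
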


\begin{proof}
We take $R$ to be a cofibrant non-equivariant commutative ring
orthogonal spectrum.  Both the domain and codomain of the natural map
in the statement preserve homotopy colimits in commutative ring
spectra, and so it suffices to check
the case when $R$ is the domain and codomain of a cell, that is, when
$R=\bP(F_{m}X_{+})$, $q\geq 1$, the free commutative ring 
spectrum on $F_{m}X_{+}$ for $X=D^{n}$ or $\partial D^{n}$.  We can
then take $A$ to be the free commutative ring
$G$-spectrum $\bP Y$ on $Y=F_{\bR^{m}\oplus
\bR\langle G\rangle}\Sigma^{\bR\langle G\rangle}X_{+}$, where we have written 
$\bR\langle G\rangle$ for the regular representation of $G$.  While the symmetric
powers $Y^{(q)}/\Sigma_{q}$ of $Y$ are not themselves 
generalized orbit desuspension spectra, they are each the quotient of
a Hurewicz cofibration of generalized orbit desuspension spectra, for
the (globally trivial) $(G,\Sigma_{q})$ vector bundles
\[
\xymatrix{%
(X^{q}\times \partial D(\bR\langle G\rangle^{q}))\times (\bR^{m}\oplus \bR\langle G\rangle)^{q}\ar[d]
\ar@{.>}[r]
&(X^{q}\times D(\bR\langle G\rangle^{q}))\times (\bR^{m}\oplus \bR\langle G\rangle)^{q}\ar[d]\\
X^{q}\times \partial D(\bR\langle G\rangle^{q})\ar@{.>}[r]
&X^{q}\times D(\bR\langle G\rangle^{q})
}
\]
for $q\geq 1$.  Since the geometric fixed points functor $\Phi^{G}$
commutes with quotients by spacewise closed inclusions,
Theorem~\ref{thm:bundlegeofix} then gives the isomorphism
\[
\Phi^{G}(Y^{(q)}/\Sigma_{q})\iso
\biggl(\bigvee_{\sigma \colon G\to \Sigma_{q}}
F_{((\bR^{m}\oplus \bR\langle G\rangle)^{q})^{(G,\sigma)}}
(\Sigma^{\bR\langle G\rangle^{q}}X^{q}_{+})^{(G,\sigma)}
\biggr)/\Sigma_{q}
\]
for $q\geq 1$. 

The remainder of the argument is a version of the proof of
Theorem~\ref{thm:calcgeoSym} where (in the notation 
there) $Q$ is the trivial group.  The argument becomes significantly
more concrete in this case and we include it to make
this section more self-contained.

If we think of a homomorphism $G\to \Sigma_{q}$ as a $G$ action on
$\{1,\dotsc,q\}$, much of the quotient by $\Sigma_{q}$ above is relating
isomorphic $G$-sets, and we can rewrite the above in terms of
isomorphism classes of $G$-sets of size $q$:
\[
\Phi^{G}(Y^{(q)}/\Sigma_{q})\iso
\bigvee_{[Z], \#Z=q}
(F_{((\bR^{m}\oplus \bR\langle G\rangle)\otimes \bR\langle Z\rangle)^G}
(\Sigma^{\bR\langle G\times Z\rangle}X^{Z}_{+})^{G}
/\Aut_{G}(Z))
\]
where the action is the combined action of $G$ on $Z$ and the other
terms.  The previous formula was only for $q\geq 1$, but when $Z$ is the
empty set the formula reduces to a copy of $\bS$, which is the
zero summand of $A$.  Adding up over $q\geq 0$, we get a wedge over
isomorphism classes of finite $G$-sets $Z$
\[
\Phi^{G}A\iso 
\bigvee_{[Z]}
(F_{((\bR^{m}\oplus \bR\langle G\rangle)\otimes \bR\langle Z\rangle)^G}
(\Sigma^{\bR\langle G\times Z\rangle}X^{Z}_{+})^{G}
/\Aut_{G}(Z)).
\]
The construction 
\[
F_{((\bR^{m}\oplus \bR\langle G\rangle)\otimes \bR\langle Z\rangle)^G}
(\Sigma^{\bR\langle G\times Z\rangle}X^{Z}_{+})^{G}
\]
takes coproducts in $Z$ to smash products of spectra.  Reorganizing
the wedge above in terms of isomorphism classes of orbits then gives
\[
\Phi^{G}A\iso \bigwedge_{[H]}\biggl(\bigvee_{q\geq 0}
(F_{(\bR^{m}\oplus \bR\langle G/H\rangle)^{q}}
(\Sigma^{\bR\langle G/H\rangle}X_{+})^{(q)})/\Sigma_{q}\wr WH
\biggr)
\]
with the outer smash product indexed over conjugacy classes of
subgroups and where we have identified
\[
\Aut_{G}(G/H_{1}^{q_{1}}\amalg \dotsb \amalg G/H_{r}^{q_{r}})\iso
(\Sigma_{q_{1}}\wr WH_{1})\times \dotsb \times (\Sigma_{q_{r}}\wr WH_{r})
\]
for pairwise non-conjugate subgroups $H_{1},\dotsc,H_{r}$.  Finally,
moving the quotients by $WH$ inside the quotients by $\Sigma_{q}$, we
get
\[
\Phi^{G}A\iso \bigwedge_{[H]}\biggl(\bigvee_{q\geq 0}
((F_{\bR^{m}\oplus \bR\langle G/H\rangle}
(\Sigma^{\bR\langle G/H\rangle}X_{+}))/WH)^{(q)}/\Sigma_{q}
\biggr)
\]
The map in the homotopy category of non-equivariant commutative ring
orthogonal spectra $R\to \Phi^{G}A$ induced by
$\tau_{H}$ is determined by a map in the derived category of
non-equivariant spectra  
\[
F_{\bR^{m}}X_{+}
\to \Phi^{G}A.
\]
Using the description of the diagonal map from~\cite[B.209]{HHR}, we
can identify this as the map
\[
F_{\bR^{m}}X_{+}
\overfrom{\simeq}
F_{\bR^{m}\oplus \bR\langle G/H\rangle}\Sigma^{\bR\langle G/H\rangle}X_{+}
\to (F_{\bR^{m}\oplus \bR\langle G/H\rangle}
(\Sigma^{\bR\langle G/H\rangle}X_{+}))/WH
\]
followed by the inclusion of the latter as the $q=1$ summand in the
smash factor for $G/H$.  Because the action of $WH$ on $G/H$ is
faithful, the constituent spaces of $F_{\bR^{m}\oplus \bR\langle G/H\rangle} 
(\Sigma^{\bR\langle G/H\rangle}X)$ are free based $WH$-CW complexes and the maps 
\begin{align*}
F_{\bR^{m}}X_{+}\sma BWH&=(F_{\bR^{m}}X_{+})\sma_{WH}EWH\\
&\overfrom{\simeq}
(F_{\bR^{m}\oplus \bR\langle G/H\rangle}
(\Sigma^{\bR\langle G/H\rangle}X_{+}))\sma_{WH}EWH\\
&\to (F_{\bR^{m}\oplus \bR\langle G/H\rangle}
(\Sigma^{\bR\langle G/H\rangle}X_{+}))/WH
\end{align*}
are weak equivalences.  It follows that the map $R\otimes BWH\to
\Phi^{G}A$ induces a weak equivalence into the $G/H$ smash factor, and
the smash of these maps is a weak equivalence.
\end{proof}

\section{The factors in the multiplicative splitting}
\label{sec:2tds}

For $R$ a (non-equivariant) commutative ring orthogonal spectrum, the
commutative ring orthogonal spectra $R\otimes^{L} BWH$ in the tom
Dieck splitting are examples of \term{Loday constructions} on $R$.
Most work to date on Loday constructions $R\otimes^{L}X$ study
examples when $X$ is a finite complex, most often a sphere or product
of spheres.  The multiplicative tom Dieck splitting of the previous
section provides new motivation to study Loday constructions when $X$
is the classifying space of a finite group.  While little seems to be
known calculationally about this case, we do offer the following
result.

\begin{thm}\label{thm:loday}
Let $R$ be a connective commutative ring orthogonal spectrum and let $G$ be a
finite group.  If the cardinality of $G$ is invertible in $\pi_{0}R$,
then the map $R\to R\otimes^{L}BG$ induced by the inclusion of the
base point of $BG$ is a weak equivalence.
\end{thm}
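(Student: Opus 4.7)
The plan is to use the tensor--cotensor adjunction for the Loday construction and reduce the statement to an obstruction-theoretic computation. The key adjunction is
\[
\mathrm{Map}_{\mathrm{CAlg}}(R \otimes^{L} X, S) \simeq \mathrm{Map}\bigl(X,\mathrm{Map}_{\mathrm{CAlg}}(R, S)\bigr)
\]
for any space $X$ and commutative ring spectra $R,S$. By the Yoneda lemma, the unit $R \to R \otimes^{L} BG$ (which under the adjunction corresponds to precomposition with $* \to BG$) is a weak equivalence iff, for every $S$, the inclusion-of-constants map $Y \to \mathrm{Map}(BG, Y)$ is a weak equivalence of spaces, where $Y := \mathrm{Map}_{\mathrm{CAlg}}(R, S)$.

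If $Y$ is empty there is nothing to prove. Otherwise, any commutative ring map $R \to S$ sends the invertible element $|G| \in \pi_0 R$ to an invertible element of $\pi_0 S$, so $|G|$ is invertible in $\pi_0 S$. The homotopy groups $\pi_n Y$ (at any basepoint $f\colon R \to S$) are naturally $\pi_0 S$-modules: they are identified via the cotangent complex with topological Andr\'e--Quillen cohomology $\mathrm{TAQ}^{-n}(R/\mathbb{S}; S)$, which carries an intrinsic $\pi_0 S$-module structure. Consequently $|G|$ acts invertibly on $\pi_n Y$ for every $n \geq 0$.

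The remaining step is the classical obstruction-theoretic fact that if $Y$ is a nilpotent space and $|G|$ acts invertibly on every $\pi_n Y$, then $Y \to \mathrm{Map}(BG, Y)$ is a weak equivalence. This is a Postnikov-tower induction: at each stage one reduces to showing $\mathrm{Map}(BG, K(A,n)) \simeq K(A,n)$ for abelian groups $A$ on which $|G|$ acts invertibly; this follows from the vanishing of $H^{i}(BG;A)$ for $i>0$, a classical consequence of the $|G|$-transfer for $EG \to BG$ on group (co)homology.

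The main obstacle is setting up the identification $\pi_n Y \cong \mathrm{TAQ}^{-n}(R/\mathbb{S}; S)$ together with its $\pi_0 S$-module structure in the point-set framework of orthogonal spectra used in this paper, and verifying that $Y$ is nilpotent so that the Postnikov-tower argument applies. Both are where the connectivity hypothesis on $R$ enters essentially: connectivity of $R$ guarantees that the cotangent complex $L_{R/\mathbb{S}}$ is connective, which forces the TAQ groups to be $\pi_0 S$-modules coherently and ensures $Y$ has the nilpotence needed for Postnikov obstruction theory to detect weak equivalences.
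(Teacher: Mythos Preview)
Your approach is genuinely different from the paper's and, while plausible, rests on machinery the paper does not develop.

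The paper deduces the result from a more general statement: for any commutative ring spectrum $R$ and any map $X\to Y$ of CW spaces inducing an isomorphism on $R$-homology, the induced map $R\otimes^{L}X\to R\otimes^{L}Y$ is a weak equivalence. The theorem about $BG$ then follows in one line: when $R$ is connective and $|G|$ is invertible in $\pi_{0}R$, every $\pi_{q}R$ is a $\pi_{0}R$-module on which $|G|$ acts invertibly, so the standard transfer argument gives $\widetilde{H}_{*}(BG;\pi_{q}R)=0$ and hence $*\to BG$ is an $R$-homology isomorphism. The general statement is proved by writing $R\otimes^{L}Z$ as a homotopy coend over the category $\mathbf{F}$ of nonempty finite sets (following Dundas--Tenti), comparing to the strict coend via an extra-degeneracy argument on discrete simplices, and then observing that an $R$-homology isomorphism $X\to Y$ makes each $R^{(m)}\wedge X^{n}_{+}\to R^{(m)}\wedge Y^{n}_{+}$ a weak equivalence, so the homotopy coends agree.

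Your route via Yoneda, the tensor--cotensor adjunction, and obstruction theory on $Y=\mathrm{Map}_{\mathrm{CAlg}}(R,S)$ could in principle be completed, but the two obstacles you flag are real and substantial. The identification $\pi_{n}Y\cong\mathrm{TAQ}^{-n}(R/\mathbb{S};S)$ together with its $\pi_{0}S$-module structure, and the nilpotence of each component of $Y$, both amount to setting up a Goerss--Hopkins style Postnikov tower for $E_{\infty}$ mapping spaces in orthogonal spectra; once that tower is in place, nilpotence is automatic and your argument goes through. The paper's approach sidesteps all of this by working on the input side (the space $BG$ and its $R$-homology) rather than the output side (mapping spaces out of $R\otimes^{L}BG$), and requires nothing beyond elementary group cohomology and a bar construction.
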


The theorem is an immediate consequence of the following theorem,
which is well-known to experts.

\begin{thm}
Let $R$ be a commutative ring orthogonal spectrum and $X\to
Y$ a map of CW spaces that induces an isomorphism on $R$-homology.
Then $R\otimes^{L}X\to R\otimes Y$ is a weak equivalence.
\end{thm}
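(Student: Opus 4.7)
My plan rests on two basic properties of the Loday construction $R\otimes(-)$ from unbased CW spaces to commutative ring orthogonal spectra. First, the functor $R\otimes(-)$ is a left adjoint (its right adjoint sends a commutative ring spectrum $A$ to the mapping space $\Com(R,A)$), and so preserves all homotopy colimits. Second, for a CW space $X$ realized as $|X_\bullet|$ for a simplicial set $X_\bullet$, we have a natural equivalence
\[
R\otimes X \simeq \bigl|\,[n]\mapsto R^{\sqcup X_n}\,\bigr|,
\]
where $R^{\sqcup S}$ denotes the $|S|$-fold coproduct of $R$ in commutative ring orthogonal spectra (i.e., the $|S|$-fold smash product over $\bS$).

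The first step is to reduce, via the mapping cylinder, to the case where $f\colon X\hookrightarrow Y$ is a CW inclusion. I would then observe that the class $\aE$ of maps $g$ of CW spaces for which $R\otimes g$ is a weak equivalence is closed under homotopy colimits (by the left-adjoint property), two-out-of-three, and retracts. The class of $R$-homology equivalences $\aE_R$ is also closed under these operations. Since both classes contain the weak equivalences, it suffices to find a class of ``test maps'' that generates $\aE_R$ under these closure operations and which visibly lies in $\aE$. A natural candidate is the set of maps $\ast\to X$ for $X$ a pointed CW complex with trivial reduced $R$-homology, which by a standard Bousfield saturation argument generates $\aE_R$; alternatively, work directly via the cofiber $Z=Y/X$.

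The core and hardest step is to verify that when $Z=Y/X$ has trivial reduced $R$-homology, the map $R\otimes X\to R\otimes Y$ is a weak equivalence. My approach is to set up a spectral sequence converging to $\pi_*(R\otimes X)$ whose $E^2$-page is determined only by the $R$-homology of $X$. Using the simplicial model above, the Bousfield--Kan spectral sequence of the simplicial commutative ring spectrum $[n]\mapsto R^{\sqcup X_n}$ converges under mild hypotheses to $\pi_*(R\otimes X)$, and by a Künneth/Tor calculation the $E^2$-term identifies with the derived functors of $\pi_*R$-valued homology of $X$. Naturality in $X$ then shows that an $R$-homology equivalence $f$ induces an isomorphism on $E^2$-pages and hence on abutments.

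The main obstacle is verifying the convergence and the identification of the $E^2$-page for a general (possibly non-connective) commutative ring spectrum $R$ and non-simply-connected $X$. For connective $R$ and simply connected $X$ this is essentially classical Eilenberg--Moore; in the general case I would handle convergence by filtering $R$ by its Postnikov tower and passing to the limit, using that both source and target of the map $R\otimes X\to R\otimes Y$ are compatible with Postnikov decomposition of $R$. Combining these ingredients yields the weak equivalence asserted in the theorem.
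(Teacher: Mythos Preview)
Your outline has two genuine gaps. First, the reduction to test maps $*\to Z$ with $\tilde H_*(Z;R)=0$ does not go through: from a CW inclusion $f\colon X\hookrightarrow Y$ with cofiber $Z$, the map $*\to Z$ is the \emph{cobase change} of $f$ along $X\to *$, and the closure operations you list (homotopy colimits in the arrow category, two-out-of-three, retracts) do not let you run this backwards to recover $f$. Concretely, knowing $R\to R\otimes Z$ is an equivalence tells you only that $(R\otimes Y)\sma_{R\otimes X}R\simeq R$, and base change along $R\otimes X\to R$ can lose information; no ``standard Bousfield saturation'' closes this. Second, the spectral sequence step is not merely a convergence technicality: the $E^1$-page of the Bousfield--Kan spectral sequence for $[n]\mapsto R^{\sqcup X_n}$ is $\pi_*(R^{\sqcup X_n})$, and identifying this or the resulting $E^2$ as a functor of $R_*(X)$ requires a K\"unneth isomorphism that fails for general $R$. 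Your Postnikov fix does not help either, since $R\otimes X$ is not in general the limit of $(\tau_{\le n}R)\otimes X$.

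The paper avoids both problems by never passing to homotopy groups and never isolating the cofiber. Following Dundas--Tenti, it writes $R\otimes Z$ as the coend $\int^{\mathbf n\in\mathbf F}R^{(n)}\sma Z^n_+$ over the category $\mathbf F$ of nonempty finite sets, then replaces this by the two-sided bar construction $B(R^{(-)},\mathbf F,Z^{-})$ and checks that the comparison map is a weak equivalence when $Z$ is the realization of a simplicial set. The simplicial levels of this bar construction are wedges of spectra of the form $R^{(m)}\sma Z^n_+$; since each $R^{(m)}$ is an $R$-module and $R$ is cofibrant, an $R$-homology equivalence $X\to Y$ gives weak equivalences $R^{(m)}\sma X^n_+\to R^{(m)}\sma Y^n_+$ directly, one smash factor at a time, with no spectral sequence and no K\"unneth hypothesis. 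The missing idea in your approach is to resolve the functor $n\mapsto R^{(n)}$ rather than the space.
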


For the proof, we follow the outline of \cite[2.7]{DundasTenti}
(and~\cite[2.2.1.3]{DundasGoodwillieMcCarthy}).  

\begin{proof}
We can assume without loss of generality that $R$ is a cofibrant
commutative ring spectrum, and then
the derived tensor $R\otimes^{L}X$ is calculated by the point-set
tensor $R\otimes X$ for any CW complex $X$. Because $R\otimes X$ is
continuous in $X$, it preserves homotopies in $X$, and so it suffices
to consider the case when $X$, $Y$ are geometric realizations of
simplicial sets $X\subdot, Y\subdot$, and the map $X\to Y$ is the
geometric realization of a simplicial map.  We also assume that
$X\subdot$ and $Y\subdot$ are non-empty.

Let $\mathbf{F}$ denote the skeleton of the category of non-empty finite sets,
with objects $\mathbf{n}=\{1,\dotsc,n\}$ for $n> 0$.  Since $R$ is a
commutative monoid object for $\sma$, this structure gives a covariant functor
$\mathbf{n}\mapsto R^{(n)}$, while the structure on a non-empty space
$Z$ of being a commutative comonoid object for $\times$ (with diagonal map $Z\to
Z\times Z$ and unique map $Z\to *$) gives a contravariant functor
$\mathbf{n}\mapsto Z^{n}$.  Regarding $\mathbf{n}$ as a discrete
space, $Z^{n}$ is the space of maps from $\mathbf{n}$
to $Z$.  The coend
\[
CE(Z):=\int^{\mathbf{n}\in \mathbf{F}}R^{(n)}\sma Z^{n}_{+}
\]
is then easily identified as the left Kan extension of the functor
$R^{(-)}$ along the inclusion of $\mathbf{F}$ in the category of
non-empty spaces.  Since the tensor functor $Z\mapsto R\otimes Z$ is a functor
from non-empty spaces to orthogonal spectra that restricts to $\mathbf{n}\mapsto
R^{(n)}$ on $\mathbf{F}$, we get a natural transformation $CE(Z)\to
R\otimes Z$, which is an isomorphism when $Z=\mathbf{n}$.  Commuting
colimits then show that it is an isomorphism when the non-empty space
$Z$ is discrete, 
and the properties of geometric realization (commuting with colimits
and finite products) show that it is an isomorphism when $Z$ is the
geometric realization of a non-empty simplicial set.  (In fact, it is an
isomorphism for all non-empty spaces, but we do not need that here.)

Next consider the homotopy coend
\[
HCE(Z):=B(R^{(-)},\mathbf{F},Z^{-})
\]
constructed using the categorical bar construction: it is the
geometric realization of the simplicial orthogonal spectrum
$HCE\subdot(Z)$ that in
degree $q$ is 
\[
HCE_{q}(Z):=\bigvee_{n_{0},\dotsc,n_{q}}
R^{(n_{q})}\sma \mathbf{F}(\mathbf{n}_{q-1},\mathbf{n}_{q})_{+}
\sma\dotsb\sma \mathbf{F}(\mathbf{n}_{0},\mathbf{n}_{1})_{+}
\sma Z^{n_{0}}_{+}
\]
(with degeneracies induced by inserting identity maps in $\mathbf{F}$
and face maps induced by the composition in $\mathbf{F}$ and the
action of $\mathbf{F}$ on $R^{(-)}$ and $Z^{-}$).  We have a canonical
map from the homotopy coend to the coend
\[
HCE(Z)\to CE(Z)
\]
and the usual extra degeneracy argument shows that it is a homotopy
equivalence when $Z=\mathbf{n}$.  Because $R$ is cofibrant as a
commutative ring orthogonal spectrum, the map is a weak equivalence
when $Z$ is discrete: the homotopy groups of both sides can be
calculated as the colimit over the non-empty finite subsets.  When $Z$ is
discrete, $CE(Z)=R\otimes Z$ is the coproduct in commutative ring
orthogonal spectra of $Z$ copies of $R$. Since $R$ is cofibrant, it
follows that for any inclusion of discrete spaces $Z\to Z'$, the map
$CE(Z)\to CE(Z')$ is a cofibration of cofibrant commutative ring
orthogonal spectra.  In particular, for any simplicial set $Z\subdot$, the
degeneracies in the simplicial object $CE(Z\subdot)$ are
cofibrations of cofibrant commutative ring orthogonal spectra
and in particular spacewise Hurewicz cofibrations of non-degenerately
based spaces.  The degeneracies of $HCE(Z\subdot)$ are likewise
spacewise Hurewicz cofibrations of non-degenerately based spaces.
These technical checks suffice to imply that geometric realization
preserves level weak equivalences.
Because geometric realization commutes with both $HCE(-)$ and $CE(-)$,
we see that the map is a weak equivalence whenever $Z$ is the
geometric realization of a simplicial set.

Returning to the proof of the theorem, by the work above it now
suffices to show that the map $HCE(X)\to HCE(Y)$ is a weak
equivalence, and for this, it suffices to show that the map
$HCE_{q}(X)\to HCE_{q}(Y)$ is a weak equivalence for all $q$.  To show
this, it is enough to see that
\[
R^{(m)}\sma X^{n}\to R^{(m)}\sma Y^{n}
\]
is a weak equivalence for all $m,n > 0$. This is an easy consequence
of the hypothesis that $X\to Y$ is an $R$-homology isomorphism. 
\end{proof}

\chapter{Generalized Orbit Desuspension Spectra: Main Properties}
\oldlabel{chap:bundleone}

The purpose of this chapter is to introduce a class of equivariant
orthogonal spectra generalizing the orbit desuspension spectra with
similarly nice point-set and homotopical properties, particularly with
respect to the multiplicative geometric fixed point functor and smash
product.  In a departure from our standard notation, in this chapter
and the next we write $\Gamma$ rather than $G$ for a general compact
Lie group with the idea that $G\iso \Gamma /\KG$ for some $\KG \lhd
\Gamma$, as this is the case needed for arguments in
Section~\ref{sec:ACyc}.  The first section provides the basic
definitions. The next two sections explore how these spectra behave
under geometric fixed point functors, smash products, symmetric power
functors, and the interactions of these functors.

\section{Generalized orbit desuspension spectra}
\label{sec:startbundle}
\label{sec:Sym}
\label{sec:repnote}

Classically, an orbit desuspension spectra is a $\Gamma$-spectrum of
the form $\Gamma/H_{+}\sma S^{-V}$ for some orthogonal $\Gamma$-representation $V$ and closed subgroup $H<\Gamma$. The object $F_{V}S^{0}$
models $S^{-V}$ in the $\Gamma$-equivariant stable category, where
$F_{V}$ is the functor of~\cite[II.4.6]{MM}, left adjoint to
evaluation at $V$.  (Indeed,
\cite{HHR} denotes $F_{V}S^{0}$ as $S^{-V}$; see, for example,
p.~146.)  The description 
of $\Gamma$-equivariant orthogonal spaces as equivariant diagram
spaces in~\cite[II\S4]{MM} allows a concise description of
$F_{V}S^{0}$ as the representable functor
\[
F_{V}S^{0}(W)=\aJ_{\Gamma}(V,W)
\]
where $\aJ_{\Gamma}$ is the (based $\Gamma$-equivariantly enriched)
category defined 
in \cite[II.4.1]{MM}. Precisely, $\aJ_{\Gamma}(V,W)$ 
is the Thom space of the $\Gamma$-equivariant vector bundle
$\Im^{\perp}\aI(V,W)$ whose base space is the $\Gamma$-space
$\aI(V,W)$ of isometric embeddings of $V$ in $W$ and whose fiber over
$f\colon V\to W$ is the orthogonal complement of $f(V)$.  The
based $\Gamma$-spaces $\aJ_{\Gamma}(V,W)$ give the morphisms in the
category $\aJ_{\Gamma}$ (with objects the orthogonal $\Gamma$-representations), where
composition is induced by composition of embeddings and (internal)
direct sum of orthogonal complement subspaces.  The gist
of~\cite[II\S4]{MM} is that the category of $\Gamma$-spectra is isomorphic to the category of based
$\Gamma$-space enriched functors from $\aJ_{\Gamma}$ to based
$\Gamma$-spaces. 

To generalize orbit desuspension spectra, we replace the vector space
$V$ with a vector bundle $\eta$ and we also take the quotient by a
finite group action; to be concise about the input data, we use the
following terminology.

\begin{ter}\label{term:bundle}
Let $\Gamma$ be a compact Lie group and $Q$ a finite group.  For
the purposes of this paper, we use the terminology \term{$(\Gamma,Q)$
vector bundle} to mean a $(\Gamma \times Q^{\op})$-equivariant real vector bundle $E\to B$ with $(\Gamma \times
Q^{\op})$-invariant inner product, where $B$ is a Hausdorff (and
not just weak Hausdorff) space.  We require $E\to B$ to be
locally finite dimensional but do not require the dimension to be
constant over the different components of the base or the dimension
function to be bounded.  When needed, we refer to $\Gamma$ as the \term{group
of equivariance} and $Q$ as the \term{group for the quotient}.
\end{ter}

We then have the following construction.

\begin{cons}\label{cons:orbitds}
Let $\eta$ be a $(\Gamma,Q)$ vector bundle $E\to B$.  For a
vector space $W$ with inner product, let $\aI(\eta,W)\to B$ be the
locally trivial fiber bundle with fiber over $b$ the space
$\aI(E_{b},W)$ of linear isometries $E_{b}\to W$. (The
homeomorphism class of the fiber may vary with
the component of $B$.)  We then have a vector bundle
$\Im^{\perp}\aI(\eta,W)\to \aI(\eta,W)$ where the fiber over an
isometry $f\colon E_{b}\to W$ is the orthogonal complement of the
image.  Let $\aJ(\eta,W)$ be the Thom space of this vector bundle. 
We have a left $Q$ action on $\aJ(\eta,W)$ via the right
$Q$ action on $\eta$ and we convert it back to a right
$Q$ action using the inverse:
\[
\alpha \cdot \sigma := \alpha \circ \sigma_{*}^{-1}
\]
where $\sigma_{*}\colon E_{b}\to E_{b\sigma}$ is the action of
$\sigma$ on $E$.  We can then form the 
quotient $\aJ(\eta,W)/Q$.  When $W$ has an orthogonal (left)
$\Gamma$ action, $\aJ(\eta,W)/Q$ obtains a left $\Gamma$ action
via  action on $W$ and (inverse) action on $\eta$.  Letting $W$
vary, $\aJ(\eta,-)/Q$ obtains the structure of a based
$\Gamma$-space enriched functor from $\aJ_{\Gamma}$ to based $\Gamma$-spaces using
composition of isometries and (internal) direct
sum of vector spaces as in the definition of $\aJ_{\Gamma}$.
Let $J\eta$ denote the corresponding $\Gamma$-spectrum.  We write $J_{(\Gamma,Q)}\eta$ when $\Gamma$ or $Q$ 
need to be specified.
\end{cons}

\begin{example}
Taking $Q$ to be the trivial group, an orthogonal $\Gamma$-representation $V$ gives a $(\Gamma,Q)$ vector bundle
$V\to *$, and $J(V\to *)$ is canonically isomorphic to $F_{V}S^{0}$.
More generally, for $H<\Gamma$ a closed subgroup, $J(\Gamma/H\times V\to
\Gamma/H)$ is canonically isomorphic to the orbit desuspension spectrum
$F_{V}(\Gamma/H_{+})\iso \Gamma/H_{+}\sma F_{V}S^{0}$.  When $V$ is an orthogonal $H$-representation
$\eta\colon \Gamma\times_{H}V\to \Gamma/H$ is a $(\Gamma,Q)$ vector bundle,
and $J\eta$ is canonically isomorphic to the induced orbit
desuspension spectrum $\Gamma_{+}\sma_{H}F_{V}S^{0}$ (denoted
in~\cite{HHR} as $\Gamma_{+}\sma_{H}S^{-V}$).
\end{example}

The following proposition gives natural examples with $Q$ non-trivial that
include the basic building blocks for the underlying equivariant orthogonal spectra of cells for commutative ring equivariant orthogonal spectra.

\begin{prop}\label{prop:Sym}
For $\eta \colon E\to B$ a $(\Gamma,Q)$ vector bundle and $q\geq
1$, let $\Sym^{q}\eta$ denote the $(\Gamma,\Sigma_{q}\wr Q)$ vector
bundle obtained by taking the $q$th cartesian power
\[
\Sym^{q}\eta \colon E^{q}\to B^{q}.
\]
Then 
\[
J_{(\Gamma,\Sigma_{q}\wr Q)}(\Sym^{q}\eta)\iso (J_{(\Gamma,Q)}\eta)^{(q)}/\Sigma_{q}.
\]
More generally, if $\Sigma$ is any subgroup of $\Sigma_{q}$, let
$\Sigma^{q}_{\Sigma}\eta$ denote the $(\Gamma,\Sigma \wr Q)$ vector
bundle obtained by restring the group for the quotient; then 
\[
J_{(\Gamma,\Sigma\wr Q)}(\Sym^{q}_{\Sigma}\eta)\iso (J_{(\Gamma,Q)}\eta)^{(q)}/\Sigma.
\]
\end{prop}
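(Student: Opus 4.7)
The plan is to deduce the proposition from the more symmetric claim that $J$ is monoidal with respect to product of bundles: for any $(\Gamma,Q_1)$ vector bundle $\eta_1$ and any $(\Gamma,Q_2)$ vector bundle $\eta_2$ there is a natural isomorphism
\[
J_{(\Gamma,Q_1\times Q_2)}(\eta_1\times\eta_2)\iso J_{(\Gamma,Q_1)}\eta_1\sma J_{(\Gamma,Q_2)}\eta_2
\]
of $(\Gamma,Q_1\times Q_2)$-equivariant orthogonal spectra. Iterating $q$ times yields $J_{(\Gamma,Q^q)}(\eta^q)\iso (J_{(\Gamma,Q)}\eta)^{(q)}$, and this iterated isomorphism is manifestly $\Sigma_q$-equivariant for the actions permuting factors. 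Taking the quotient by $\Sigma<\Sigma_q$ upgrades $(\Gamma,Q^q)$-equivariance to $(\Gamma,\Sigma\wr Q)$-equivariance, producing $\Sym^q_\Sigma\eta$ on the bundle side and $(J_{(\Gamma,Q)}\eta)^{(q)}/\Sigma$ on the spectrum side. Both displayed isomorphisms in the proposition follow (the first being the case $\Sigma=\Sigma_q$).

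To construct the key natural isomorphism level-by-level, observe that direct sum of embeddings gives a continuous $(\Gamma\times Q_1\times Q_2)$-equivariant map
\[
\aJ(\eta_1,V_1)\sma\aJ(\eta_2,V_2)\to \aJ(\eta_1\times\eta_2,V_1\oplus V_2)
\]
sending $((b_1,f_1,v_1),(b_2,f_2,v_2))$ to $((b_1,b_2),\,f_1\oplus f_2,\,(v_1,v_2))$, where the pair $(v_1,v_2)\in V_1\oplus V_2$ is orthogonal to the image of $f_1\oplus f_2$ because $v_i\perp f_i((E_i)_{b_i})$ in $V_i$ and $V_1\perp V_2$ in $V_1\oplus V_2$. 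Descending to $Q_i$-quotients and composing with the $\aJ_\Gamma$-action
\[
\aJ_\Gamma(V_1\oplus V_2,W)\sma J(\eta_1\times\eta_2)(V_1\oplus V_2)\to J(\eta_1\times\eta_2)(W)
\]
gives, upon taking the Day convolution coend in $(V_1,V_2)$ that computes the smash product, a natural transformation $(J\eta_1\sma J\eta_2)(W)\to J(\eta_1\times\eta_2)(W)$ of $\Gamma$-space valued functors of $W$.

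The essential content is the construction of an inverse. Given a representative $((b_1,b_2),f,v)\in \aJ(\eta_1\times\eta_2,W)$, decompose $f$ uniquely as $f_1\oplus f_2$ with $f_i\colon(E_i)_{b_i}\to W$ isometric embeddings having pairwise orthogonal images; this decomposition is forced by linearity together with the fact that $(E_1)_{b_1}\perp(E_2)_{b_2}$ inside $(E_1)_{b_1}\oplus(E_2)_{b_2}$. Now set $V_1:=f_1((E_1)_{b_1})\oplus\langle v\rangle\subset W$ and $V_2:=f_2((E_2)_{b_2})\subset W$, producing the representative $((b_1,f_1,v),(b_2,f_2,0))$ in $\aJ(\eta_1,V_1)\sma\aJ(\eta_2,V_2)$ together with the inclusion $V_1\oplus V_2\hookrightarrow W$, which defines an element of the coend mapping back to $((b_1,b_2),f,v)$.

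The main obstacle is verifying that this inverse recipe is well-defined modulo the coend relations: different placements of $v$ (e.g., assigning it to $V_2$ instead of $V_1$, or splitting $v=v_1+v_2$ with $v_i\in V_i$) and different choices of ambient $V_i$ containing $f_i((E_i)_{b_i})$ must give the same element in the coend. This is precisely what the coequalizer relations defining the Day convolution quotient out, since they allow passage between isometric embeddings $V_i\hookrightarrow V_i'$ via the $\aJ_\Gamma$-action on $\aJ(\eta_i,-)$; any two choices are connected by a zig-zag of such embeddings. Equivariance under $\Gamma$, $Q_1\times Q_2$, and the factor-permuting $\Sigma_q$ (in the iterated case) is manifest from the construction, and continuity follows from the standard topology on Thom spaces. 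This completes the verification that the map is a homeomorphism, and the proposition follows by the reduction outlined in the first paragraph.
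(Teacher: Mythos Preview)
Your reduction to the binary product statement $J_{(\Gamma,Q_1\times Q_2)}(\eta_1\times\eta_2)\iso J_{(\Gamma,Q_1)}\eta_1\sma J_{(\Gamma,Q_2)}\eta_2$ is sound and efficient; the paper in fact records this binary version separately as the proposition immediately following. Your construction of the forward map via the universal property of the smash product also matches the paper's.

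The gap is in your inverse construction. The recipe $V_1:=f_1((E_1)_{b_1})\oplus\langle v\rangle$ is not continuous in $v$: as $v\to 0$ along different directions the line $\langle v\rangle$ has no limit, so the assignment $((b_1,b_2),f,v)\mapsto(V_1,V_2)$ is discontinuous at the zero-complement locus. More fundamentally, any global recipe for lifting a point of $\aJ(\eta_1\times\eta_2,W)$ to a specific term of the coend defining $(J\eta_1\sma J\eta_2)(W)$ requires identifying the fibers $(E_i)_{b_i}$ with fixed model inner product spaces, and such identifications exist only locally on $B_i$. Your appeal to the coend relations addresses well-definedness of the set-theoretic function but not its continuity; the remark that ``continuity follows from the standard topology on Thom spaces'' does not close this.

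The paper sidesteps the issue by arguing locally rather than writing down a global inverse. After constructing the forward map $(J_{(\Gamma,1)}\eta)^{(q)}\to J_{(\Gamma,1)}(\eta^q)$ and reducing to the non-equivariant statement, it chooses a trivializing cover $\{U_\alpha\}$ of $B$ with $\eta|_{U_\alpha}\iso U_\alpha\times\bR^{k_\alpha}$, so that $J(\eta|_{U_\alpha})\iso F_{\bR^{k_\alpha}}(U_\alpha)_+$. Over a product neighborhood $U_{\alpha_1}\times\cdots\times U_{\alpha_q}$ the forward map becomes the standard isomorphism
\[
F_{\bR^{k_{\alpha_1}}}(U_{\alpha_1})_+\sma\cdots\sma F_{\bR^{k_{\alpha_q}}}(U_{\alpha_q})_+\overto{\iso}F_{\bR^{k_{\alpha_1}}\times\cdots\times\bR^{k_{\alpha_q}}}(U_{\alpha_1}\times\cdots\times U_{\alpha_q})_+
\]
for free spectra, which holds by representability. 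Taking the cover closed under intersections and passing to the colimit then gives the global isomorphism without ever writing a pointwise inverse formula.
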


\begin{proof}
Consider the map
\[
\aJ(\eta,W_{1}) \sma \dotsb \sma \aJ(\eta,W_{1})
\to \aJ(\eta^{q},W_{1}\oplus \dotsb \oplus W_{q})
\]
induced by cartesian product of isometries and complementary
subspaces.  Applying the universal property of 
the left Kan extension defining the smash product, we get a
$\Sigma_{q}\wr Q$ equivariant map of 
$\Gamma$-spectra 
\begin{equation}\label{eq:gdsym}
(J_{(\Gamma,1)}\eta)^{(q)}\to J_{(\Gamma,1)}(\eta^{q})
\end{equation}
(where $1$ denotes the trivial group).  
Since smash product commutes with colimits in each variable, 
taking the quotient by $\Sigma\wr Q$ action,
we get a map of $\Gamma$-spectra 
\[
(J_{(\Gamma,Q)}\eta)^{(q)}/\Sigma\to J_{(\Gamma,\Sigma\wr Q)}(\Sym^{q}_{\Sigma}\eta).
\]
To show that it is an isomorphism, it suffices to work
non-equivariantly, and indeed to show that the map~\eqref{eq:gdsym} is
an isomorphism of non-equivariant spectra.  Choosing a
cover $\aU=\{U_{\alpha}\}$ of $B$ by trivial neighborhoods for $\eta$,
$\aU^{q}$ gives a cover of $B^{q}$ by trivial neighborhoods of $\eta^{q}$.
Choosing trivializations $\eta_{U_{\alpha}}\iso U_{\alpha}\times
\bR^{k_{\alpha}}$, for each such neighborhood $U_{\alpha_{1}}\times
\dotsb \times U_{\alpha_{q}}$, we get isomorphisms
\begin{gather*}
\aJ(\eta|_{\alpha_{i}})\iso F_{\bR^{k_{\alpha_{i}}}}(U_{\alpha_{i}})_{+}\\
\aJ(\eta|_{U_{\alpha_{1}}}\times \dotsb \times \eta|_{U_{\alpha_{q}}})
\iso F_{\bR^{k_{\alpha_{1}}}\times \dotsb \times \bR^{k_{\alpha_{q}}}}(U_{\alpha_{1}}\times \dotsb \times U_{\alpha_{q}})_{+}
\end{gather*}
and the composite of these isomorphisms with the map in~\eqref{eq:gdsym} 
\[
F_{\bR^{k_{\alpha_{1}}}}(U_{\alpha_{1}})_{+}
\sma\dotsb\sma
F_{\bR^{k_{\alpha_{q}}}}(U_{\alpha_{1}})_{+}
\to
F_{\bR^{k_{\alpha_{1}}}\times \dotsb \times \bR^{k_{\alpha_{q}}}}(U_{\alpha_{1}}\times \dotsb \times U_{\alpha_{q}})_{+}
\]
is the standard isomorphism.  Without loss of generality, we may
assume that $\aU$ is closed under intersection, and make it a
partially ordered set under inclusion.  The map
in~\eqref{eq:gdsym} is then the colimit over $\aU$ of the local maps, and
so it is also an isomorphism. 
\end{proof}

An argument similar to the proof of the previous proposition also proves
the following result.

\begin{prop}\label{prop:bundleprod}
Let $\eta$ and $\eta'$ be $(\Gamma,Q)$ and $(\Gamma,Q')$
vector bundles, respectively. Then
\[
J_{(\Gamma,Q\times Q')}(\eta_{1}\times \eta_{2})\iso
J\eta_{1}\sma J\eta_{2}.
\]
\end{prop}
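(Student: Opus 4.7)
The plan is to mirror the proof of Proposition~\ref{prop:Sym}, replacing the $q$th cartesian power of a single bundle with the product of two distinct bundles. First I will construct the comparison map: the cartesian product of isometric embeddings and the direct sum of their orthogonal complements give a natural based map
\[
\aJ(\eta_1,W_1)\sma \aJ(\eta_2,W_2)\to \aJ(\eta_1\times \eta_2,W_1\oplus W_2),
\]
equivariant for $Q\times Q'$ and for $\Gamma$ when $W_1,W_2$ are orthogonal $\Gamma$-representations. Applying the universal property of the left Kan extension that defines the smash product (exactly as in the proof of Proposition~\ref{prop:Sym}) then yields a $\Gamma$-equivariant, $(Q\times Q')$-equivariant map
\[
J_{(\Gamma,1)}\eta_1\sma J_{(\Gamma,1)}\eta_2\to J_{(\Gamma,1)}(\eta_1\times \eta_2),
\]
and passing to quotients (using that $\sma$ commutes with colimits in each variable) produces the desired map
\[
J\eta_1\sma J\eta_2\to J_{(\Gamma,Q\times Q')}(\eta_1\times \eta_2).
\]

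Next I will check that this map is an isomorphism, and for this it suffices to ignore the $\Gamma$ action and work non-equivariantly (and even before quotienting, since taking quotients by $Q$ and $Q'$ commutes with smash products). Choose covers $\aU=\{U_\alpha\}$ of the base $B_1$ of $\eta_1$ and $\aU'=\{U'_\beta\}$ of the base $B_2$ of $\eta_2$ by trivializing open subsets, closed under intersection and partially ordered by inclusion. Then $\{U_\alpha\times U'_\beta\}$ is a corresponding trivializing cover for $\eta_1\times \eta_2$, and on each such neighborhood the bundles trivialize to products. Over each trivialization we get identifications
\[
\aJ(\eta_1|_{U_\alpha})\iso F_{\bR^{k_\alpha}}(U_\alpha)_+,\qquad
\aJ(\eta_2|_{U'_\beta})\iso F_{\bR^{k'_\beta}}(U'_\beta)_+,
\]
and similarly for the product bundle, and the comparison map becomes the standard isomorphism
\[
F_{\bR^{k_\alpha}}(U_\alpha)_+\sma F_{\bR^{k'_\beta}}(U'_\beta)_+
\iso F_{\bR^{k_\alpha}\oplus \bR^{k'_\beta}}(U_\alpha\times U'_\beta)_+.
\]
Finally, both sides of the proposed isomorphism are colimits over the poset $\aU\times \aU'$ of the corresponding local pieces (smash product commutes with colimits, and $J$ applied to a bundle is likewise the colimit of its restrictions to trivializing open sets), so the map is a colimit of isomorphisms, hence an isomorphism.

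There is no real obstacle here beyond bookkeeping: the only subtle point is ensuring that one may freely exchange the smash product, the quotients by the finite groups $Q$ and $Q'$, and the colimit over the cover; but these all commute for formal categorical reasons (smash product and quotient by a finite group are both colimits in the category of $\Gamma$-spectra). The result is formally parallel to Proposition~\ref{prop:Sym}, with the product of two bundles in place of a self-product.
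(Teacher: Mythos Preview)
Your proposal is correct and takes essentially the same approach as the paper, which simply states that an argument similar to the proof of Proposition~\ref{prop:Sym} also proves this result. Your outline is precisely that argument, carried out with two distinct bundles in place of a self-power.
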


We have one more basic piece of notation and terminology for generalized orbit
desuspension spectra, relevant to the case when we have a fixed
isomorphism $G\iso \Gamma/\KG$ for some normal subgroup $\KG
\lhd \Gamma$.  In the next
section, we will construct an important class of examples of $(\Gamma,Q)$
vector bundles $\eta$ with the property that when $W$ is a
$\KG$-trivial orthogonal $\Gamma$-representation, the based
$\Gamma$-spaces $\aJ(\eta,W)/Q$ have trivial $\KG$ action. We
use the following terminology.

\begin{ter}\label{term:compatible}
Let $\KG <\Gamma$ be a closed subgroup.  We say that a
$(\Gamma,Q)$ vector bundle $\eta$ is
\term{$\Gamma/\KG$-compatible} if the based $\Gamma$-space
$\aJ(\eta,W)/Q$ has trivial $\KG$ action whenever the
orthogonal $\Gamma$-representation $W$ has trivial $\KG$ action.
\end{ter}

Orthogonal $\Gamma/\KG$-representations are essentially the same thing as
$\KG$-trivial orthogonal $\Gamma$-representations.  By slight
abuse of notation, we therefore use 
$\aJ_{\Gamma/\KG}$ for the full subcategory of
$\aJ_{\Gamma}$ consisting of the $\KG$-trivial orthogonal $\Gamma$-representations.  When $V$ and $W$ are $\KG$-trivial
orthogonal $\Gamma$-representations, the $\Gamma$-space
$\aJ_{\Gamma}(V,W)$ is a $\Gamma/\KG$-spaces and coincides with
$\aJ_{\Gamma/\KG}(V,W)$.  In particular, when $\eta$ is
$\Gamma/\KG$-compatible, $\aJ(\eta,W)/Q$ is a based
$\Gamma/\KG$ space for all $\KG$ fixed orthogonal $\Gamma$-representations $W$, and the restriction of $\aJ(\eta,-)/Q$
to these representations has the structure of a 
based $\Gamma/\KG$-space enriched functor from $\aJ_{\Gamma/\KG}$
to based $\Gamma/\KG$-spaces, that is to say, it defines a $\Gamma/\KG$-spectrum.

\begin{notn}\label{notn:compatible}
For $\eta \colon E\to B$ a $\Gamma/\KG$-compatible
$(\Gamma,Q)$ vector bundle, we write $J^{\Gamma/\KG}\eta$ for the
corresponding $\Gamma/\KG$-spectrum.  We write
$J^{\Gamma/\KG}_{(\Gamma,Q)}\eta$ for
$J^{\Gamma/\KG}\eta$ when $\Gamma$ or $Q$
need to be indicated.
\end{notn}

\section{Geometric fixed points and smash products
}
\label{sec:geobundle}

This sections studies the point-set and homotopical properties of the
multiplicative geometric fixed point functor on the generalized orbit
desuspension spectra introduced in the previous section.  The
multiplicative geometric fixed point functor is well-behaved on orbit
desuspension spectra, and therefore on those equivariant orthogonal
spectra nicely built out of them, that is, the cofibrant equivariant
orthogonal spectra in the standard model structure.  But it is not 
known to have well-behaved point-set or homotopical properties on
arbitrary spectra. The purpose of this section is to expand the
results that hold for orbit desuspension spectra to a wide class of
generalized orbit desuspension spectra.  This section contains
definitions and statements only; proofs appear in
Chapter~\ref{chap:bundletwo}.

In this section and the next, we use the following notational
convention.  It is meant to capture the situation when the Lie group
of equivariance $G$ is isomorphic to the quotient $\Gamma/\KG$ of a
Lie group $\Gamma$ by a closed normal subgroup; a normal subgroup
$\Lambda\lhd \Gamma$ containing $\KG$ then corresponds to a normal
subgroup $L\lhd G$ with the isomorphism $G\iso \Gamma/\KG$ taking $L$
to $\Lambda/\KG$.  In many cases of interest, $\KG$ is the trivial
group and then the $\Gamma/\KG$-compatibility hypotheses (where it
appears) is vacuous.

\begin{notn}\label{notn:KGL}
In this section $\Gamma$ denotes a compact Lie group, and $\KG,\Lambda$
denote closed normal subgroups of $\Gamma$ with $\KG<\Lambda$.
\end{notn}

We are particularly interested in the generalized orbit desuspension
spectra that satisfy the following faithfulness property.

\begin{defn}\label{defn:Sigmafaithful}
We say that a $(\Gamma,Q)$ vector bundle $\eta\colon E\to B$ is
$Q$-faithful when for every $b\in B$, the isotropy subgroup
$Q_{b}< Q$ acts faithfully on the fiber $E_{b}$.
\end{defn}

Because the condition is defined fiberwise and equivariant maps do not
decrease stabilizer subgroups, the faithfulness property above is
preserved by all the usual kinds of restriction: 

\begin{prop}\label{prop:Sfrestr}
Let $\eta\colon E\to B$ be a $Q$-faithful $(\Gamma,Q)$ vector bundle.  
\begin{enumerate}
\item For any $Q'<Q$, and any homomorphism $\Gamma'\to \Gamma$, the
restriction of $\eta$ to a $(\Gamma',Q')$ vector bundle is $Q'$-faithful.
\item For any Hausdorff $(\Gamma \times Q^{\op})$-space $B'$ and any
$(\Gamma \times Q^{\op})$-equivariant map $f\colon B'\to B$, the pullback
$(\Gamma,Q)$ vector bundle $f^{*}\eta$ is $Q$-faithful.
\end{enumerate}
\end{prop}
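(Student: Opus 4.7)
The plan is to observe that both parts reduce to the elementary fact that any subgroup of a group acting faithfully on a set still acts faithfully. Faithfulness of a $(\Gamma,Q)$ vector bundle is a purely fiberwise condition involving only the right $Q$-action, so the compact Lie group of equivariance plays a decorative role in the argument and the whole proposition is essentially bookkeeping.

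For part~(i), I would fix $b\in B$ and determine the isotropy subgroup of $b$ for the restricted action. Since we restrict $\Gamma$ along $\Gamma'\to\Gamma$ and restrict $Q$ to a subgroup $Q'<Q$, the new isotropy subgroup of $b$ in $Q'$ is simply $Q_{b}\cap Q'$, which is contained in $Q_{b}$. The fiber $E_{b}$ is unchanged, so we are asking whether a subgroup of $Q_{b}$ acts faithfully on $E_{b}$; but any subgroup of a group acting faithfully does.

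For part~(ii), I would take $b'\in B'$ and set $b=f(b')$. Equivariance of $f$ gives $f(b'\sigma)=f(b')\sigma=b\sigma$ for $\sigma\in Q$, so $Q_{b'}<Q_{b}$. The fiber $(f^{*}\eta)_{b'}$ is by construction canonically identified with $E_{b}$, and this identification intertwines the $Q_{b'}$-action on $(f^{*}\eta)_{b'}$ with the restriction to $Q_{b'}$ of the $Q_{b}$-action on $E_{b}$. Since $Q_{b}$ acts faithfully on $E_{b}$ by hypothesis, so does $Q_{b'}$, which is what was to be shown.

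There is essentially no obstacle in either step: the only conceptual point worth flagging is the canonical $Q_{b'}$-equivariant identification of fibers under pullback in~(ii), and the inclusion $Q_{b'}<Q_{b}$ which is immediate from equivariance of $f$. Once those are noted, both assertions are immediate consequences of the definition of $Q$-faithful.
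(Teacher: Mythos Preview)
Your proposal is correct and is precisely the argument the paper has in mind: the paper does not even write out a proof, remarking only that ``the condition is defined fiberwise and equivariant maps do not decrease stabilizer subgroups,'' which is exactly the content of your two paragraphs.
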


We use the $Q$-faithfulness hypothesis for most results in this section.  In
particular, it appears in the following theorem underlying all work on the
multiplicative geometric fixed points. 

\begin{thm}\label{thm:bundlegeofix}
Let $\eta$ be a $\Gamma/\KG$-compatible $Q$-faithful $(\Gamma,Q)$ vector bundle.
Then there exists an isomorphism of
$\Gamma/\Lambda$-spectra
\[
\Phi^{\Lambda/\KG}J^{\Gamma/\KG}\eta\iso
J^{\Gamma/\Lambda}(\eta(\Lambda|\KG))
\]
where $\eta(\Lambda|\KG)$ is the $\Gamma/\Lambda$-compatible $(\Gamma,Q)$
vector bundle of Definition~\ref{defn:philambdatwo} below.
\end{thm}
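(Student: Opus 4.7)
The plan is to exhibit the isomorphism spacewise on orthogonal $\Gamma/\Lambda$-representations $V$ and then check compatibility with the $\aJ$-enriched functor structure. The $V$-th based $\Gamma/\Lambda$-space of $J^{\Gamma/\Lambda}(\eta(\Lambda|\KG))$ is $\aJ(\eta(\Lambda|\KG),V)/Q$ directly from Construction~\ref{cons:orbitds}. For the left-hand side, the multiplicative geometric fixed point functor $\Phi^{\Lambda/\KG}$ on a $\Gamma/\KG$-spectrum $X$ of the form $J^{\Gamma/\KG}\eta$ should reduce on a $\Gamma/\Lambda$-representation $V$ to the categorical $\Lambda/\KG$-fixed subspace $(X(V))^{\Lambda/\KG}$, so the heart of the theorem is the natural identification
\[
(\aJ(\eta,V)/Q)^{\Lambda/\KG}\iso \aJ(\eta(\Lambda|\KG),V)/Q.
\]

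For this identification, I would analyze when a $Q$-orbit $[(f,v)]$, with $f\colon E_b\to V$ an isometry and $v\in f(E_b)^\perp$, is $\Lambda$-fixed modulo $Q$. Being fixed means that for each $\lambda\in\Lambda$ there exists $\sigma_\lambda\in Q$ with $\lambda\cdot(f,v)=(f,v)\cdot\sigma_\lambda$. Because $\eta$ is $Q$-faithful on fibers, $\sigma_\lambda$ is uniquely determined, and $\lambda\mapsto\sigma_\lambda$ assembles into a continuous homomorphism $\sigma\colon\Lambda\to Q$ which factors through $\Lambda/\KG$ by $\Gamma/\KG$-compatibility. Changing the $Q$-representative of $[(f,v)]$ conjugates $\sigma$ by an element of $Q$. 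Unpacking the defining equation forces $b$ to be a $\sigma$-twisted $\Lambda$-fixed point of the base, $f$ to restrict to an isometry out of the $\sigma$-twisted fixed subspace $E_b^{\Lambda,\sigma}$ and vanish on its orthogonal complement, and $v$ to lie in $V^\Lambda=V$; collectively (modulo the $Q$-ambiguity in $\sigma$) this is precisely the datum parametrized by $\aJ(\eta(\Lambda|\KG),V)/Q$ in Definition~\ref{defn:philambdatwo}.

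To promote this pointwise bijection to an isomorphism of $\Gamma/\Lambda$-spectra, I would check naturality in $V$ with respect to the $\aJ_{\Gamma/\Lambda}$-enrichment. This amounts to the observation that composition of isometries and internal direct sum of complement subspaces both commute with passage to $\sigma$-twisted $\Lambda$-fixed subspaces, since the $\sigma$-twist simply selects a $\Lambda$-invariant sub-object of $\eta$ after the $Q$-identification; the enriched functoriality then transports cleanly.

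The main obstacle will be reconciling the above bare ``categorical fixed points'' description of $\Phi^{\Lambda/\KG}$ on these specific spectra with the construction actually in use, which on a general orthogonal spectrum is not simply spacewise fixed points but involves prespectrum reflection and colimits over representations with no $\Lambda/\KG$-fixed vectors. For $X=J^{\Gamma/\KG}\eta$ one needs a cofinality argument analogous to the well-known case of orbit desuspension spectra (where $\Phi^H F_V S^0\iso F_{V^H}S^0$): the Thom-space description of $\aJ(\eta,-)/Q$ should make the structure maps in the colimit defining $\Phi$ eventual isomorphisms, with $Q$-faithfulness entering precisely to force the dimensions of the twisted fixed subspaces to add correctly so that these maps are isomorphisms rather than merely cofibrations.
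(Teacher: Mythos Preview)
Your analysis of when a $Q$-orbit in $\aJ(\eta,W)/Q$ is $\Lambda$-fixed is essentially the paper's quotient fixed point lemma (Lemma~\ref{lem:bundlegeofix}), and that part is correct. However, the spacewise identification you claim,
\[
(\aJ(\eta,V)/Q)^{\Lambda/\KG}\iso \aJ(\eta(\Lambda|\KG),V)/Q
\]
for $V$ an orthogonal $\Gamma/\Lambda$-representation, is false. An element of $\aI(\eta,V)^{\Lambda,\sigma}$ over $b$ is a $\sigma$-twisted $\Lambda$-equivariant \emph{isometry} $f\colon E_b\to V$; since $f$ is injective and $V$ is $\Lambda$-trivial, such an $f$ exists only when the entire $\sigma$-twisted $\Lambda$ action on $E_b$ is trivial. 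So the left side sees only the locus where $E_b=E_b^{\Lambda,\sigma}$, whereas the right side ranges over all of $B^{\Lambda,\sigma}\{\KG\}$. (Concretely: take $\KG=1$, $Q=1$, $\Gamma=\Lambda=C_2$, $\eta$ the sign representation over a point, and $V=0$; the left side is $*$ and the right side is $S^0$.) Your assertion that ``$f$ vanishes on its orthogonal complement'' contradicts injectivity of $f$ unless that complement is zero.

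The left Kan extension in $\Phi^{\Lambda/\KG}$ is therefore not a technicality to be handled by cofinality: it is exactly what recovers the missing base points. The paper's proof works at the level of $\Fix^{\Lambda/\KG}$, where $W$ ranges over all $\KG$-trivial (not just $\Lambda$-trivial) representations. It constructs a comparison map
\[
(\aJ(\eta,W)/Q)^{\Lambda}\to \aJ(\eta(\Lambda|\KG),W^{\Lambda})/Q
\]
by restricting isometries to their $\Lambda$-fixed parts (this map is \emph{not} an isomorphism for individual $W$), passes by adjunction to a map $\Phi^{\Lambda/\KG}J^{\Gamma/\KG}\eta\to J^{\Gamma/\Lambda}(\eta(\Lambda|\KG))$, and then checks this is an isomorphism by localizing on the base to reduce to the known computation $P_\phi(\aJ(V_0,-)^{\Lambda/\KG})\iso \aJ(V_0^{\Lambda/\KG},-)$ of~\cite[V.4.5]{MM}. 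A minor further point: $\sigma$ need not factor through $\Lambda/\KG$; the $\{\KG\}$ in $\eta(\Lambda|\KG)$ records only that the $\sigma|_{\KG}$-twisted $\KG$ action on the fiber is trivial.
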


In the case of a non-normal closed subgroup $H<\Gamma$ containing
$\KG$, $\KG$ is a normal subgroup of the normalizer $N_{\Gamma}H$ of $H$ in
$\Gamma$.  Then restricting the group of equivariance, $\eta$ becomes
a $Q$-faithful $(N_{\Gamma}H,Q)$ vector bundle and the previous theorem then
asserts an isomorphism of $N_{\Gamma}H/H$-spectra.

The previous theorem now gives some control over the multiplicative
geometric fixed points on generalized orbit desuspension spectra with
enough faithfulness hypotheses.  For example, the following theorem
asserts that iterating multiplicative geometric fixed point functors
behaves properly when its hypotheses hold.  (It is not known to hold
for arbitrary $\Gamma$-spectra,
cf.~\cite[2.6]{BM-cycl}.) 

\begin{thm}\label{thm:bundleiterphi}
Let $M$ be a closed normal
subgroup of $\Gamma$ with $\Lambda < M$, and let $\eta$ be a
$\Gamma/\KG$-compatible $(\Gamma,Q)$ vector bundle.  If $\eta$ and $\eta(\Lambda|\KG)$ are
both $Q$-faithful, then the canonical map of
$\Gamma/M$-spectra
\[
\Phi^{M/\KG}(J^{\Gamma/\KG}\eta)\to
\Phi^{M/\Lambda}(\Phi^{\Lambda/\KG}(J^{\Gamma/\KG}\eta))
\]
is an isomorphism.
\end{thm}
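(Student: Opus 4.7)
The plan is to reduce the theorem to a bundle-level iteration identity for the construction $\eta \mapsto \eta(\Lambda|\KG)$ and then apply Theorem~\ref{thm:bundlegeofix} twice. First I would apply Theorem~\ref{thm:bundlegeofix} with the pair $(\Lambda,\KG)$ to obtain an isomorphism
\[
\Phi^{\Lambda/\KG}(J^{\Gamma/\KG}\eta) \iso J^{\Gamma/\Lambda}(\eta(\Lambda|\KG)).
\]
Since $\eta(\Lambda|\KG)$ is $\Gamma/\Lambda$-compatible by construction (Definition~\ref{defn:philambdatwo}) and is $Q$-faithful by hypothesis, Theorem~\ref{thm:bundlegeofix} applies again with the pair $(M,\Lambda)$ to this new bundle, yielding
\[
\Phi^{M/\Lambda}\Phi^{\Lambda/\KG}(J^{\Gamma/\KG}\eta) \iso J^{\Gamma/M}\bigl(\eta(\Lambda|\KG)(M|\Lambda)\bigr).
\]
A single application of Theorem~\ref{thm:bundlegeofix} with $(M,\KG)$ gives
\[
\Phi^{M/\KG}(J^{\Gamma/\KG}\eta) \iso J^{\Gamma/M}(\eta(M|\KG)).
\]

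It then suffices to construct a natural isomorphism of $(\Gamma,Q)$ vector bundles
\[
\eta(M|\KG) \iso \eta(\Lambda|\KG)(M|\Lambda)
\]
and to check that, modulo the three displayed isomorphisms, it realizes the canonical comparison map of~\cite[2.6]{BM-cycl}. The bundle identity is a fiberwise algebraic matter: the construction $\eta(\Lambda|\KG)$ is a disjoint union, indexed by homomorphisms $\Lambda/\KG \to Q$, of twisted fixed-point subbundles, restricted to the components where the twisted action on fibers has been trivialized. Iterating the construction amounts to decomposing a homomorphism $M/\KG \to Q$ as its restriction to $\Lambda/\KG$ together with an induced homomorphism $M/\Lambda \to Q$ acting on the already-twisted bundle. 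The $Q$-faithfulness of both $\eta$ and $\eta(\Lambda|\KG)$ is exactly what guarantees that the fibers over distinct homomorphisms in each stage remain disjoint, so that the indexing sets for the iterated construction and the one-step construction biject correctly.

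The main obstacle, and the part requiring genuine care, will be verifying that the bundle-level isomorphism recovers the canonical natural transformation $\Phi^{M/\KG} \to \Phi^{M/\Lambda}\Phi^{\Lambda/\KG}$ rather than some twist of it. I would handle this by unwinding the Thom-space description of $J^{\Gamma/-}$: at a $\KG$-fixed representation $W$, the space $\aJ(\eta,W)^{\KG}/Q$ is a Thom space whose canonical comparison to the iterated geometric fixed points is induced fiberwise by the standard identification of $(-)^{M/\KG}$ with $(-)^{M/\Lambda}\circ(-)^{\Lambda/\KG}$ on the embedding and orthogonal-complement data. The check thus reduces to the base case of trivial bundles over a point with trivial $Q$ — i.e., to ordinary orbit desuspension spectra — where the identification is the familiar one for $F_{V}S^{0}$ type spectra, and the naturality of the Kan-extension description propagates it to arbitrary~$\eta$.
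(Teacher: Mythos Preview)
Your approach is essentially the paper's: apply Theorem~\ref{thm:bundlegeofix} three times to reduce to a bundle identity $\eta(M|\KG)\iso(\eta(\Lambda|\KG))(M|\Lambda)$, then check it matches the canonical map. One imprecision worth flagging: in the iterated construction $(\eta(\Lambda|\KG))(M|\Lambda)$ the outer coproduct is indexed by homomorphisms $\sigma\colon M\to Q$ (not $M/\Lambda\to Q$), and the inner by $\tau\colon\Lambda\to Q$; the paper's key step is that $Q$-faithfulness of $\eta(\Lambda|\KG)$ makes the base restriction $\{\Lambda\}$ kill every summand with $\tau\neq\sigma|_{\Lambda}$, which is how the double coproduct collapses to the single coproduct defining $\eta(M|\KG)$.
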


A technical issue throughout~\cite{BM-cycl} is that although the
multiplicative geometric fixed point functor is lax symmetric
monoidal, it is not known to be strong symmetric monoidal, and a
difficult technical result in~\textit{ibid.} is that it is strong
symmetric monoidal when one variable is an orbit desuspension
spectrum.  The following theorem generalizes this to $Q$-faithful
generalized orbit desuspension spectra.  

\begin{thm}\label{thm:sm}
Suppose $\eta$ is a $Q$-faithful $\Gamma/\KG$-compatible
$(\Gamma,Q)$ vector bundle where the base $B$ admits the structure of a
$(\Gamma \times Q^{\op})$-equivariant cell complex.  Then
for any $\Gamma/\KG$-spectrum $X$, the canonical map of
orthogonal spectra  
\[
\Phi^{\Lambda/\KG}(J^{\Gamma/\KG}\eta)\sma \Phi^{\Lambda/\KG}X\to 
\Phi^{\Lambda/\KG}(J^{\Gamma/\KG}\eta\sma X)
\]
is an isomorphism.
\end{thm}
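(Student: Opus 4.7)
The plan is to begin by applying Theorem~\ref{thm:bundlegeofix} to identify $\Phi^{\Lambda/\KG}(J^{\Gamma/\KG}\eta)\iso J^{\Gamma/\Lambda}(\eta(\Lambda|\KG))$, so that the comparison map can be rewritten as
\[
J^{\Gamma/\Lambda}(\eta(\Lambda|\KG))\sma \Phi^{\Lambda/\KG}X
\to
\Phi^{\Lambda/\KG}(J^{\Gamma/\KG}\eta\sma X).
\]
Both sides, viewed as functors of $\eta$ (with fixed $X$), send pushouts of $(\Gamma,Q)$ vector bundles along $(\Gamma\times Q^{\op})$-equivariant closed inclusions of bases to pushouts of $\Gamma/\Lambda$-spectra along Hurewicz cofibrations, and preserve transfinite compositions of such maps: on the target side this uses that $\Phi^{\Lambda/\KG}$ and smashing with $X$ both preserve these colimits of spectra, while on the source side it additionally requires naturality of the construction $\eta\mapsto\eta(\Lambda|\KG)$ under base restriction, which is built into Definition~\ref{defn:philambdatwo}.

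I would then exploit the given $(\Gamma\times Q^{\op})$-equivariant cell structure on the base $B$ to filter $\eta$ by its pullbacks $\eta_{\alpha}$ to the skeleta $B_{\alpha}$; by Proposition~\ref{prop:Sfrestr} each $\eta_{\alpha}$ inherits $Q$-faithfulness, and Theorem~\ref{thm:bundlegeofix} applies levelwise. Since each inclusion $J^{\Gamma/\KG}\eta_{\alpha-1}\to J^{\Gamma/\KG}\eta_{\alpha}$ is a spacewise closed inclusion (hence a Hurewicz cofibration of underlying spectra), and both sides of the comparison preserve the corresponding transfinite compositions, the problem reduces to the case of a single cell attachment, where the new portion of $B$ is of the form $(\Gamma\times Q^{\op})/R\times D^{k}$ glued along its boundary.

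On such a cell the equivariant deformation retraction of $D^{k}$ to its origin lets me identify $\eta$, up to isomorphism of $(\Gamma,Q)$ vector bundles over the cell, with the pullback of a bundle over the orbit $(\Gamma\times Q^{\op})/R$; any such bundle is (up to canonical isomorphism) of the form $(\Gamma\times Q^{\op})\times_{R}V$ for an orthogonal $R$-representation $V$, and the $Q$-faithfulness hypothesis forces the action of $R\cap(\{e\}\times Q^{\op})$ on $V$ to be faithful. The generalized orbit desuspension spectrum $J^{\Gamma/\KG}\eta$ then becomes (canonically isomorphic to) an induced orbit desuspension spectrum quotiented by a free action, and the comparison map reduces to the corresponding comparison
\[
\Phi^{\Lambda/\KG}(F_{V}S^{0})\sma \Phi^{\Lambda/\KG}X
\to
\Phi^{\Lambda/\KG}(F_{V}S^{0}\sma X)
\]
for an ordinary orbit desuspension spectrum, which is exactly the strong monoidality statement established as the main technical result of~\cite[App.~A]{BM-cycl}; the faithfulness of $V$ is precisely the hypothesis needed to invoke that result.

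The main obstacle will be making the naturality of $\eta\mapsto\eta(\Lambda|\KG)$ under closed equivariant inclusions of bases completely rigorous, especially under the reduction to a single cell. Because Definition~\ref{defn:philambdatwo} builds $\eta(\Lambda|\KG)$ by summing over suitable twisted-fixed-point pieces of fibers, one must track carefully how these summands restrict and how they interact with the quotient by $Q$; this is the technical step where the assumption of an equivariant cell structure on $B$ is actually used, since without such a structure the reduction to the orbit case is not available and the argument collapses.
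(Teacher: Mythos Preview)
Your cell-by-cell reduction is sound and matches the paper's strategy up to that point, but the final step has a genuine gap. Over a single cell $(\Gamma\times Q^{\op})/R\times D^{k}$, the subgroup $H':=R\cap(\{e\}\times Q^{\op})$ need not be trivial: $Q$-faithfulness only says $H'$ acts faithfully on the fiber $V$, not that it vanishes. Hence $J^{\Gamma/\KG}\eta$ over the cell is not an orbit desuspension spectrum $\Gamma_{+}\sma_{H}F_{V}S^{0}$ but a quotient of one by $H'$. The result of~\cite[App.~A]{BM-cycl} is for genuine orbit desuspension spectra (i.e.\ $Q$ trivial) and does not cover such quotients. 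The obstruction is exactly that $\Phi^{\Lambda/\KG}$ does not commute with the $H'$-quotient: the $\Lambda$-fixed points of a free $H'$-quotient are a sum over homomorphisms $\Lambda\to H'$ of twisted fixed points (Lemma~\ref{lem:bundlegeofix}), so the comparison map you display, with the quotient stripped away, is not what remains after the reduction. Concretely, for $\eta=\Sym^{q}\theta$ the cells have $H'$ containing nontrivial permutation groups, and this is precisely the case the theorem is meant to extend beyond~\cite{BM-cycl}.

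The paper's argument for the single-cell case is different in kind: it works at the $\Fix$ level before the Kan extension, reduces by change of variables to $\Lambda=\Gamma$, and then proves directly (Theorem~\ref{thm:fixsmflat}) that $\Fix^{\Gamma}_{\KG}(\imath J\eta\sma X)(W)$ admits an explicit description via a decomposition over ``$M$ graph subgroup homomorphisms'' $\sigma\colon\Gamma\to Q$ with $\Gamma_{\sigma}<M$ (Proposition~\ref{prop:Psicond}, Theorem~\ref{thm:recharpsi}). That decomposition is what makes the lax monoidal map an isomorphism, and it is a genuine extension of, not a reduction to, the orbit desuspension case.
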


The previous theorems study point-set properties.  The next three
theorems study the homotopical properties.  The first of these gives
hypotheses that ensure that the multiplicative geometric fixed point
functor represents the derived geometric fixed point functor.  We
state it for the normal subgroup $\Lambda$, but as in the remarks
following Theorem~\ref{thm:bundlegeofix}, restricting from
$\Gamma$ to the normalizer gives the desired theorem for a general
closed subgroup.

\begin{thm}\label{thm:derphi}
Let $\eta$ be a $\Gamma/\KG$-compatible
$(\Gamma,Q)$ vector bundle.  If $\eta$ and $\eta(\Lambda|\KG)$ are
both $Q$-faithful, then the canonical map in the genuine
$\Gamma/\Lambda$-equivariant stable category 
$L\Phi^{\Lambda/\KG}(J^{\Gamma/\KG}\eta) \to
\Phi^{\Lambda/\KG}(J^{\Gamma/\KG}\eta)$ is an isomorphism.
\end{thm}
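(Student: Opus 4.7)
The strategy is to produce a $\Gamma/\KG$-equivariant cofibrant approximation $\tilde A \to J^{\Gamma/\KG}\eta$ in the standard stable model structure such that the induced map $\Phi^{\Lambda/\KG}\tilde A \to \Phi^{\Lambda/\KG}J^{\Gamma/\KG}\eta$ on point-set multiplicative geometric fixed points is itself a weak equivalence. Since $\Phi^{\Lambda/\KG}$ preserves weak equivalences between cofibrant objects in the standard model structure (so that $\Phi^{\Lambda/\KG}\tilde A \simeq L\Phi^{\Lambda/\KG}J^{\Gamma/\KG}\eta$), this forces the canonical comparison map to be a weak equivalence in the genuine $\Gamma/\Lambda$-equivariant stable category.

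Concretely, choose a $(\Gamma\times Q^{\op})$-equivariant CW approximation $f\colon \tilde B\to B$ of the base of $\eta$ and set $\tilde\eta = f^*\eta$. By Proposition~\ref{prop:Sfrestr}.(ii), $\tilde\eta$ is a $Q$-faithful $\Gamma/\KG$-compatible $(\Gamma,Q)$ vector bundle, and the identification $\tilde\eta(\Lambda|\KG)\iso f^*(\eta(\Lambda|\KG))$ shows $\tilde\eta(\Lambda|\KG)$ is also $Q$-faithful. A local analysis of the Thom spaces $\aJ(\tilde\eta,W)/Q$ and $\aJ(\eta,W)/Q$, whose local trivializations agree while the bases are related by the weak equivalence $f$, exhibits $J^{\Gamma/\KG}\tilde\eta \to J^{\Gamma/\KG}\eta$ as a levelwise, and hence genuine, weak equivalence. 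By naturality of the isomorphism in Theorem~\ref{thm:bundlegeofix} combined with the same argument applied to $\eta(\Lambda|\KG)$, the map $\Phi^{\Lambda/\KG}J^{\Gamma/\KG}\tilde\eta \to \Phi^{\Lambda/\KG}J^{\Gamma/\KG}\eta$ is likewise a weak equivalence.

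It remains to verify that $J^{\Gamma/\KG}\tilde\eta$ is cofibrant. The CW structure on $\tilde B$ exhibits $J^{\Gamma/\KG}\tilde\eta$ as a sequential colimit of pushouts along maps
\[
J^{\Gamma/\KG}(\tilde\eta|_{(\Gamma\times Q^{\op})/H})\sma \partial D^n_+ \to J^{\Gamma/\KG}(\tilde\eta|_{(\Gamma\times Q^{\op})/H})\sma D^n_+,
\]
where each single-orbit restriction is an associated bundle $(\Gamma\times Q^{\op})\times_{H} V$ for an orthogonal $H$-representation $V$ on which the $Q$-isotropy at the basepoint acts faithfully. After passing to the quotient by $Q$ and invoking $\Gamma/\KG$-compatibility, the resulting $\Gamma/\KG$-spectrum is a standard induced orbit desuspension spectrum of the form $(\Gamma/\KG)_+\sma_{K} F_{V'}S^0$, i.e., a generating cell in the complete $\Gamma/\KG$-model structure. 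The main obstacle is making this single-orbit identification precise: one must carefully decompose $H<\Gamma\times Q^{\op}$ relative to its projections to $\Gamma$ and to $Q^{\op}$, using $Q$-faithfulness to split off the $Q$-action cleanly and using $\Gamma/\KG$-compatibility to ensure that the resulting $\Gamma$-representation descends to a $\Gamma/\KG$-representation. The second hypothesis, that $\eta(\Lambda|\KG)$ is also $Q$-faithful, is essential so that the parallel analysis of the target of Theorem~\ref{thm:bundlegeofix} applies to identify $\Phi^{\Lambda/\KG}J^{\Gamma/\KG}\tilde\eta$ with a generalized orbit desuspension spectrum of the same good form.
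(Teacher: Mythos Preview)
There are two genuine gaps in your approach.

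\textbf{First, and fatally:} $\Gamma/\KG$-compatibility is \emph{not} preserved under pullback along an equivariant CW approximation of the base. Your citation of Proposition~\ref{prop:Sfrestr}.(ii) is misleading: that proposition only concerns $Q$-faithfulness, not $\Gamma/\KG$-compatibility. The paper states this explicitly in Section~\ref{sec:dersym}: ``This kind of CW approximation does not preserve $\Gamma/\KG$-compatibility and is the reason we consider generalizations to $\Gamma$-equivariant $\aJ_{\Gamma/\KG}$-spaces.'' Without compatibility, the object $J^{\Gamma/\KG}\tilde\eta$ does not exist, and your argument never gets off the ground.

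\textbf{Second:} even granting the first point (say $\KG$ is trivial), your cofibrancy argument is incomplete. For an arbitrary $(\Gamma\times Q^{\op})$-CW approximation, the cells can have isotropy groups $H$ that are \emph{not} graph subgroups, i.e., $H\cap(1\times Q^{\op})$ is nontrivial. In that case your ``clean splitting off of the $Q$-action'' fails, and the single-cell spectrum is genuinely a quotient by a nontrivial finite group action rather than an orbit desuspension spectrum. To get cofibrancy (in the complete model structure on $\Gamma$-spectra) you would need an $\aF(1\times Q^{\op})$-colocal CW approximation, as in the proof of Theorem~\ref{thm:bundlediag}; but then you must also check that such a family approximation still induces the needed equivalences on $J$ and on $J(-)(\Lambda|\KG)$, which requires Proposition~\ref{prop:bundlemultiwe} and a careful analysis of the twisted fixed point bases.

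The paper's proof takes a completely different route: it reduces via Proposition~\ref{prop:derphi} to showing that the natural maps $\pi_*^{\Phi(M/\KG)}J^{\Gamma/\KG}\eta \to \pi_*^{\Phi(M/\Lambda)}(J^{\Gamma/\Lambda}\eta(\Lambda|\KG))$ are isomorphisms for all $\Lambda<M<\Gamma$, and then computes both sides directly from Lemma~\ref{lem:bundlegeofix} by decomposing the bundle fiberwise into $M$-fixed and $M$-complementary pieces. No cofibrant replacement is involved.
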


The final two theorems do not involve the multiplicative geometric fixed
point functor but share much of the same work as the previous
results.  The first is about derived functors of symmetric powers.

\begin{thm}\label{thm:dersym}
Suppose $\eta$ is a $Q$-faithful $\Gamma/\KG$-compatible $(\Gamma,Q)$
vector bundle with each fiber positive dimensional and $X\to
J^{\Gamma/\KG}\eta$ is a cofibrant approximation in the positive
complete model structure on $G/\KG$-spectra.
Then for any $k\geq 1$, and any subgroup $\Sigma <\Sigma_{k}$, the map
$X^{(k)}/\Sigma\to (J^{\Gamma/\KG}\eta)^{(k)}/\Sigma$ is a weak
equivalence. 
\end{thm}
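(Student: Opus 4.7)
The plan is to verify the weak equivalence by applying the multiplicative geometric fixed point functors $\Phi^{\Lambda/\KG}$ for closed (normal) subgroups $\Lambda$ of $\Gamma$ containing $\KG$, and to check that both sides agree on each, handling non-normal closed subgroups by restriction to the normalizer as in the remarks after Theorem~\ref{thm:bundlegeofix}. The starting point is Proposition~\ref{prop:Sym}, which identifies the right-hand side with the generalized orbit desuspension spectrum
\[
(J^{\Gamma/\KG}\eta)^{(k)}/\Sigma \iso J^{\Gamma/\KG}_{(\Gamma,\Sigma\wr Q)}(\Sym^{k}_{\Sigma}\eta).
\]
A fiberwise check shows that $\Sym^{k}_{\Sigma}\eta$ is $(\Sigma\wr Q)$-faithful: the positive-dimensional-fiber hypothesis ensures that any non-identity element of $\Sigma$ permutes positive-dimensional summands non-trivially, while the $Q$-faithfulness of $\eta$ handles the $Q^{k}$-components. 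A parallel (but more delicate) fiberwise analysis using Definition~\ref{defn:philambdatwo} shows that $(\Sym^{k}_{\Sigma}\eta)(\Lambda|\KG)$ is $(\Sigma\wr Q)$-faithful as well, and $\Gamma/\KG$-compatibility passes through both constructions (q.v.\ Proposition~\ref{prop:bundleprod}).

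With the faithfulness in hand, Theorem~\ref{thm:derphi} applies to $\Sym^{k}_{\Sigma}\eta$, so $\Phi^{\Lambda/\KG}((J^{\Gamma/\KG}\eta)^{(k)}/\Sigma)$ represents the derived $\Lambda/\KG$-geometric fixed points of the right-hand side. On the left-hand side, $X$ is cofibrant in the positive complete model structure, so the standard Hill-Hopkins-Ravenel results (\cite[B.111,B.116]{HHR}, applied after restriction along $\Lambda/\KG \hookrightarrow \Gamma/\KG$) give that $X^{(k)}/\Sigma$ represents the derived $k$th $\Sigma$-symmetric power, and that $\Phi^{\Lambda/\KG}$ commutes with $(-)^{(k)}/\Sigma$ on such cofibrant objects, yielding
\[
\Phi^{\Lambda/\KG}(X^{(k)}/\Sigma) \iso (\Phi^{\Lambda/\KG}X)^{(k)}/\Sigma.
\]
Finally, Theorem~\ref{thm:derphi} applied to $\eta$ itself shows that $\Phi^{\Lambda/\KG}X \to \Phi^{\Lambda/\KG}(J^{\Gamma/\KG}\eta)$ is a weak equivalence, i.e.\ both $\Phi^{\Lambda/\KG}X$ and $\Phi^{\Lambda/\KG}(J^{\Gamma/\KG}\eta) \iso J^{\Gamma/\Lambda}(\eta(\Lambda|\KG))$ model the derived geometric fixed points. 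Feeding these two computations into the commutative square
comparing symmetric powers of $X$ and of $J^{\Gamma/\KG}\eta$ after applying $\Phi^{\Lambda/\KG}$, both sides of the induced map are positive complete cofibrant models (possibly via a common approximation zigzag) of the derived $k$th $\Sigma$-symmetric power of the same derived geometric fixed point spectrum, hence weakly equivalent.

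The main obstacle I expect is the bundle bookkeeping of the first paragraph, in particular confirming that $(\Sym^{k}_{\Sigma}\eta)(\Lambda|\KG)$ is $(\Sigma\wr Q)$-faithful. This requires unwinding the twisted fixed-point construction of Definition~\ref{defn:philambdatwo} for a bundle whose group for quotients was restricted from $\Sigma_{k}\wr Q$ to $\Sigma\wr Q$, and showing that the combinatorial interaction between the $\Lambda$-action on base points $(b_{1},\ldots,b_{k})\in B^{k}$ and the conjugation action of $\Sigma\wr Q$ on twisting homomorphisms $\sigma\colon \Lambda/\KG\to \Sigma\wr Q$ — together with the positive-dimensional-fiber hypothesis — always produces non-trivial action on fibers. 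Once this faithfulness is established, the rest of the proof is formal consequences of the geometric-fixed-point machinery developed in Section~\ref{sec:geobundle}.
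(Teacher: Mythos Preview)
Your outline has two genuine gaps, and both concern claims that are not consequences of the stated hypotheses.

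First, the formula $\Phi^{\Lambda/\KG}(X^{(k)}/\Sigma) \iso (\Phi^{\Lambda/\KG}X)^{(k)}/\Sigma$ is false. Geometric fixed points do not commute with symmetric powers in this naive way, even on cofibrant objects; the twisted homomorphisms $\sigma\colon \Lambda\to \Sigma\wr Q$ that land non-trivially in $\Sigma$ contribute additional summands. This is exactly the content of Theorem~\ref{thm:calcgeoSym}: $\Phi^{\Lambda/\KG}(\bP(J^{\Gamma/\KG}\eta))$ is a smash product of free commutative rings on pieces $X_{q}(\eta)$ for $1\le q\le \#\pi_{0}\Lambda$, not $\bP(\Phi^{\Lambda/\KG}J^{\Gamma/\KG}\eta)$. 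The HHR references you cite (\cite[B.111,B.116]{HHR}) give the comparison $E_{G}\Sigma_{+}\sma_{\Sigma}X^{(k)}\to X^{(k)}/\Sigma$ and norm properties, not a commutation of $\Phi$ with $(-)^{(k)}/\Sigma$.

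Second, the faithfulness of $(\Sym^{k}_{\Sigma}\eta)(\Lambda|\KG)$ is \emph{not} implied by $Q$-faithfulness of $\eta$ together with positive-dimensional fibers. This is precisely why the paper introduces the separate notion ``inheritably faithful for $\Phi^{\Lambda}$'' (Definition~\ref{defn:isf}) and supplies extra sufficient conditions in Theorem~\ref{thm:ifcrit} and Proposition~\ref{prop:ifcritex}. The same issue blocks your appeal to Theorem~\ref{thm:derphi} for $\eta$ itself: that theorem requires \emph{both} $\eta$ and $\eta(\Lambda|\KG)$ to be $Q$-faithful, and the latter is not part of the hypotheses of Theorem~\ref{thm:dersym}.

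The paper's argument sidesteps both problems. Rather than comparing $\Phi^{\Lambda/\KG}$ of each side directly, it establishes a criterion (Proposition~\ref{prop:charsym}) phrased in terms of \emph{indexed} smash powers $Y^{(S)}$ and the HHR diagonal $L\Phi^{H_{1}}Y\sma^{L}\dotsb\sma^{L}L\Phi^{H_{r}}Y\to \Phi^{M}(Y^{(S)})$. Condition~(i) there (the $E_{G}\Sigma$ comparison) uses only that $\Sym^{k}\eta$ is $(\Sigma_{k}\wr Q)$-faithful, which does follow from positive-dimensional fibers. Condition~(ii) is checked by taking an $\aF(1\times Q^{\op})$-CW approximation $B'\to B$ of the base and working with $f^{*}\eta$; on that approximation the diagonal is an isomorphism by Theorem~\ref{thm:bundlediag}, and level equivalences of $(\imath J(f^{*}\eta))^{(S)}\to(\imath J\eta)^{(S)}$ (Proposition~\ref{prop:bundlemultiwe}) together with flatness (Theorem~\ref{thm:varbundleflat}) transfer the conclusion back. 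No inheritable-faithfulness hypothesis on $\eta(\Lambda|\KG)$ is ever invoked.
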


We also have the following flatness result.

\begin{thm}\label{thm:bundleflat}
Suppose $\eta$ is a $Q$-faithful $\Gamma/\KG$-compatible
$(\Gamma,Q)$ vector bundle and assume the base $B$ admits the structure of a
$(\Gamma \times Q^{\op})$-equivariant cell complex.  Then
$J^{\Gamma/\KG}\eta$ is flat for the smash product in
$\Gamma/\KG$-spectra.
\end{thm}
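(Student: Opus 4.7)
The plan is to exploit the $(\Gamma \times Q^{\op})$-equivariant cell structure on the base $B$ to build $J^{\Gamma/\KG}\eta$ as a transfinite composition of pushouts whose attaching cells are (cones on) standard orbit desuspension spectra of $\Gamma/\KG$. Since flatness is preserved under wedges, retracts, pushouts along Hurewicz cofibrations of spacewise non-degenerately based spectra, and sequential colimits of such (each of which gives Mayer--Vietoris or long exact sequences on homotopy groups upon smashing with an arbitrary $Y$), this reduces the theorem to the identification of $J^{\Gamma/\KG}(\zeta)$ for a bundle $\zeta$ supported over a single $(\Gamma \times Q^{\op})$-orbit.

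First I would filter $B$ by its $(\Gamma \times Q^{\op})$-equivariant skeleta and restrict $\eta$ accordingly to obtain a tower $\eta_0 \subset \eta_1 \subset \dotsb$ of sub-$(\Gamma,Q)$ vector bundles. A direct inspection of Construction~\ref{cons:orbitds} shows that $J^{\Gamma/\KG}$ carries a pushout of $(\Gamma,Q)$ vector bundles along a $(\Gamma \times Q^{\op})$-equivariant closed cofibration of bases to a pushout of $\Gamma/\KG$-spectra, and preserves sequential colimits along equivariant closed inclusions; in particular $J^{\Gamma/\KG}\eta \iso \colim_n J^{\Gamma/\KG}\eta_n$, with each attaching step a pushout along a wedge of maps of the form
\[
J^{\Gamma/\KG}(\zeta_\alpha) \sma (\partial D^n \to D^n)_+,
\]
where $\zeta_\alpha$ is the restriction of $\eta$ to a single orbit $(\Gamma \times Q^{\op})/K_\alpha$, obtained by deformation-retracting the closed cell to its center.

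Second, for a single-orbit bundle $\zeta$ over $(\Gamma \times Q^{\op})/K$ with fiber $V := E_b$ over the basepoint (an orthogonal $K$-representation), I would identify $J^{\Gamma/\KG}(\zeta)$ as a standard orbit desuspension spectrum. By Proposition~\ref{prop:Sfrestr}.(ii), $\zeta$ inherits $Q$-faithfulness from $\eta$, which amounts to saying that $K \cap (1 \times Q^{\op})$ injects into the orthogonal group of $V$ via its action on $V$. Unwinding the Thom-space description of $\aJ(\zeta,W)/Q$ (using the natural homeomorphism $\aJ(\zeta,W) \iso (\Gamma \times Q^{\op})_+ \sma_K \aJ_{\Gamma}(V,W)$ and then quotienting by the $Q$-action from Construction~\ref{cons:orbitds}), together with $\Gamma/\KG$-compatibility to ensure trivial $\KG$-action on the quotient, should give an isomorphism of $\Gamma/\KG$-spectra
\[
J^{\Gamma/\KG}(\zeta) \iso (\Gamma/\KG)_+ \sma_{K'} F_{V'} S^0
\]
where $K' < \Gamma/\KG$ is determined by $K$ and $V'$ is $V$ endowed with its resulting $K'$-representation structure. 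This is a cell in the complete model structure on $\Gamma/\KG$-spectra, and hence flat.

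The main obstacle is the single-orbit identification in the second step: carefully bookkeeping the combined left $\Gamma$- and right $Q^{\op}$-isotropy in $K$, its faithful projection to the orthogonal group of $V$, and its interplay with the quotient by $Q$ and with $\Gamma/\KG$-compatibility. The $Q$-faithfulness hypothesis is exactly what prevents the quotient from collapsing fibers and guarantees that the resulting $\Gamma/\KG$-spectrum is a genuine orbit desuspension spectrum rather than a more complicated quotient; without it, the inductive flatness argument would break at the cell level. Once this identification is in hand, the assembly in the third step (Hurewicz cofibrancy of the cell attachments follows from the cell structure of $B$ and the Thom-space description) is routine.
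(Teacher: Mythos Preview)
Your cell-by-cell reduction is natural and is indeed how the paper begins, but the single-orbit identification in your second step fails in general. For a cell with isotropy $K<\Gamma\times Q^{\op}$, the hypothesis of $Q$-faithfulness only tells you that $K\cap(1\times Q^{\op})$ acts \emph{faithfully} on the fiber $V$; it does not force this intersection to be trivial. When $K$ is a graph subgroup (intersection trivial), your formula
\[
J^{\Gamma/\KG}(\zeta)\;\iso\;(\Gamma/\KG)_{+}\sma_{K'}F_{V'}S^{0}
\]
is correct and recovers a complete-model cell (this is exactly Proposition~\ref{prop:bundlefixcw} and the argument in the proof of Theorem~\ref{thm:bundlediag}). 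But when $K\cap(1\times Q^{\op})=H'$ is nontrivial, the quotient by $Q$ leaves a residual $H'$-quotient on the Thom space and you obtain instead something of the shape $(\Gamma/\KG)_{+}\sma_{K'}\bigl(F_{V}S^{0}/H'\bigr)$. This is not an orbit desuspension spectrum in any of the standard or complete model structures, and asserting its flatness is precisely the content of the theorem you are trying to prove (specialized to a one-point base). So the argument is circular at the cell level.

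The paper's proof handles exactly this obstacle by an $\EF$-trick: it passes to the more general Theorem~\ref{thm:varbundleflat}, detects flatness via the functors $\Fix^{H}_{H\cap\KG}$ for all $H<\Gamma$, and then compares $\eta$ with $\EF\times\eta$, where $\EF$ is the universal space for the family of graph subgroups. The product $\EF\times\eta$ does have cells with graph-subgroup isotropy, so your identification applies there and $\Fix^{\Gamma}_{\KG}(\imath J(\EF\times\eta))$ is a cellular $\aJ^{\Gamma/\KG}_{\Gamma/\KG}$-space, hence flat. The hard work is the technical Theorem~\ref{thm:fixsmflat}, whose part~(ii) shows that $\EF\to *$ induces a weak equivalence after $\Fix^{\Gamma}_{\KG}(\imath J(-)\sma X)$ for arbitrary $X$, and whose part~(i) supplies the strong monoidality needed to conclude. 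Your proposal would go through if you additionally assumed the cell complex structure on $B$ had isotropy in $\aF(1\times Q^{\op})$; without that, you need the $\EF$-comparison.
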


The rest of the section is devoted to the construction and basic
properties of
$\eta(\Lambda|\KG)$.  We start with the easier construction of
$\eta(\Lambda)$ which corresponds to the case when $\KG$  is the
trivial subgroup.

\begin{defn}\label{defn:philambdaone}
Let $\Lambda$ be a closed normal subgroup of $\Gamma$ and let $\eta$
be a $Q$-faithful $(\Gamma,Q)$ vector bundle $E\to B$.  For
a continuous homomorphism $\sigma\colon \Lambda \to Q$, we get a
\term{$\sigma$-twisted $\Lambda$ action} on $E\to B$ from the
isomorphism of $\Lambda$ to the graph subgroup of $\Gamma \times
Q^{\op}$ associated to the homomorphism $\sigma((-)^{-1})$.
Concretely,
\[
\lambda \cdot_{\sigma} e := \lambda e \sigma(\lambda^{-1}).
\]
We write $\eta^{\Lambda,\sigma}\colon E^{\Lambda,\sigma}\to
B^{\Lambda,\sigma}$ for the fixed points under this action. 
Define the map
\[
\eta(\Lambda)\colon E(\Lambda)\to B(\Lambda)
\]
as 
\[
\coprod_{\sigma\colon \Lambda\to Q} (\eta^{\Lambda,\sigma} \colon E^{\Lambda,\sigma}\to B^{\Lambda,\sigma}).
\]
\end{defn}

We assert that the map $\eta(\Lambda)$ in the previous definition has
a canonical $\Gamma/\Lambda$-compatible $(\Gamma,Q)$ vector
bundle structure, with the $\Gamma\times Q^{\op}$ action
inherited from $\eta$ and the conjugation action on
$\Hom(\Lambda,Q)$.

For the vector bundle structure, we use the (trivial) fact that for
any $\sigma \colon \Lambda \to 
Q$, the $\sigma$-twisted $\Lambda$ action on
$\eta|_{B^{\Lambda,\sigma}}$ is fiberwise.  Then for any $b\in
B^{\Lambda,\sigma}$, there is an open set $b\in U\subset
B^{\Lambda,\sigma}$ on which the restriction $E|_{U}\to U$ is
$\Lambda$-equivariantly isomorphic (with the $\sigma$-twisted
$\Lambda$ action) to the product vector bundle $U\times V\to U$ for
the orthogonal $\Lambda$-representation $V=E_{b}$. (This
follows for example by character theory.) Taking $\Lambda$ fixed
points, we get the local trivialization for $\eta(\Lambda)$.

We construct the $\Gamma \times Q^{\op}$ action as follows.
An element $x$ of $E(\Lambda)$ consists of a homomorphism $\sigma\colon
\Lambda \to Q$ together with an element $e\in
E^{\Lambda,\sigma}$; we write $x=(\sigma,e)$.  For $\gamma \in\Gamma$, 
\begin{equation}\label{eq:Gammaaction}
\gamma (\sigma,e)=(\sigma^{\gamma}, \gamma e)
\end{equation}
where $\sigma^{\gamma} \colon \Lambda \to Q$ is the homomorphism
\[
\sigma^{\gamma} \colon \lambda \mapsto \sigma (\gamma^{-1}\lambda\gamma).
\]
To see that this specifies a left $\Gamma$ action, we need to check
that $\gamma e\in E^{\Lambda,\sigma^{\gamma}}$.  For $\lambda \in
\Lambda$,
\[
\lambda \cdot_{\sigma^{\gamma}}\gamma e:=
\lambda \gamma e\sigma^{\gamma}(\lambda^{-1})
:=\lambda \gamma e\sigma(\gamma^{-1} \lambda^{-1}\gamma)
=\gamma \gamma^{-1}\lambda \gamma e\sigma(\gamma^{-1} \lambda^{-1}\gamma)
=\gamma e
\]
with the last equality holding because $e\in E^{\Lambda,\sigma}$. For
the right $Q$ action, given $s \in Q$, 
\begin{equation}\label{eq:Qaction}
(\sigma,e)s=(\sigma_{s},es)
\end{equation}
where
\[
\sigma_{s}(\lambda)=s^{-1}\sigma(\lambda)s.
\]
To see that this
specifies a right $Q$ action, we need to check that  $es\in
E^{\Lambda,\sigma_{s}}$.  For $\lambda \in \Lambda$,
\[
\lambda \cdot_{\sigma_{s}}es:=\lambda es\sigma_{s}(\lambda^{-1})
:=\lambda ess^{-1}\sigma(\lambda^{-1})s=\lambda e\sigma(\lambda^{-1})s=es
\]
with the last equality holding because $e\in E^{\Lambda,\sigma}$. It
is clear from the formulas that the left $\Gamma$ action and right
$Q$ action commute.

Finally, we need to check that $\eta(\Lambda)$ is $\Gamma/\Lambda$-compatible.
It suffices to show that if $W$ is a $\Lambda$-trivial orthogonal $\Gamma$-representation, the $\Gamma$ space $\Im^{\perp}\aJ(\eta(\Lambda),W)/Q$
has trivial $\Lambda$ action.  An element of
$\Im^{\perp}\aJ(\eta(\Lambda),W)$ is specified by a homomorphism $\sigma
\colon \Lambda \to Q$, a point $b\in B^{\Lambda,\sigma}$, an
isometry $f\colon E^{\Lambda,\sigma}|_{b}\to W$, and a point $w$ in the complement of
its image. We write this as a tuple $(\sigma,b,f,w)$, and we write
$[\sigma,b,f,w]$ for the image in the quotient by $Q$.  For $\lambda
\in \Lambda$,
\begin{align*}
\lambda(\sigma,b,f,w)=(\sigma^{\lambda},\lambda b,f^{\lambda},\lambda w)
&=(\sigma^{\lambda},\lambda b, f\circ \lambda_{*}^{-1},w)\\
&=(\sigma^{\lambda},b\sigma(\lambda), f\circ \sigma(\lambda)^{-1}_{*},w)
\end{align*}
where the first equality follows from the fact that $W$ is
$\Lambda$-trivial and the second equality follows from the fact that
$b$ and the domain of $f$ lie in the $\sigma$-twisted $\Lambda$ fixed
points.  Since 
\[
\sigma^{\lambda}(-)=\sigma(\lambda^{-1}(-)\lambda)
=\sigma(\lambda)^{-1}\sigma(-)\sigma(\lambda)=\sigma_{\sigma(\lambda)}(-),
\]
we see that
\[
\lambda(\sigma,b,f,w)=(\sigma,b,f,w)\sigma(\lambda)
\]
and so $[\sigma,b,f,w]$ is fixed by $\lambda$.

We now construct $\eta(\Lambda|\KG)$ by restricting $\eta(\Lambda)$ to
a subset of its base space.

\begin{defn}\label{defn:philambdatwo}
Let $\KG,\Lambda \lhd \Gamma$, with $\KG<\Lambda$ and let $\eta$ be a
$\Gamma/\KG$-compatible $(\Gamma,Q)$ 
vector bundle.  Given $\sigma \colon \Lambda\to Q$, the $\sigma$-twisted
$\Lambda$ action on the restricted bundle $\eta|_{B^{\Lambda,\sigma}}$ is
fiberwise, and for each component of $B^{\Lambda,\sigma}$, the isomorphism 
class of the representation of the fiber is constant.  Let
$B^{\Lambda,\sigma}\{\KG\}$ denote the subspace of $B^{\Lambda,\sigma}$
consisting of those components where the fiberwise
$\sigma|_{\KG}$-twisted $\KG$ action is trivial.  Write
\[
\eta^{\Lambda,\sigma}\{\KG\}:=(\eta|_{B^{\Lambda,\sigma}\{\KG\}})^{\Lambda,\sigma}
\]
for the $\sigma$-twisted $\Lambda$ fixed points of the restriction of $\eta$
to $B^{\Lambda,\sigma}\{\KG\}$, and note that
$\eta^{\Lambda,\sigma}\{\KG\}$ is also the restriction of
$\eta^{\Lambda,\sigma}$ to the components $B^{\Lambda,\sigma}\{\KG\}\subset
B^{\Lambda,\sigma}$ of its base.  Write 
\[
\eta(\Lambda|\KG):=
\coprod_{\sigma\colon \Lambda\to Q} \eta^{\Lambda,\sigma}\{\KG\}.
\]
By construction, $\eta(\Lambda|\KG)$ is the restriction of
$\eta(\Lambda)$ to a certain $(\Gamma\times Q^{\op})$-stable subset of
components in its base.  We give $\eta(\Lambda)\{\KG\}$ the
$\Gamma/\Lambda$-compatible $(\Gamma,Q)$ vector bundle structure
inherited from $\eta(\Lambda)$. 
\end{defn}

As a special case of Proposition~\ref{prop:Sfrestr}, we point out the following observation.

\begin{prop}\label{prop:flk}
Let $\eta$ be a $\Gamma/\KG$-compatible $(\Gamma,Q)$ vector bundle.
If $\eta(\Lambda)$ is $Q$-faithful, then so is $\eta(\Lambda|\KG)$
\end{prop}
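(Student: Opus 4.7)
The plan is to identify $\eta(\Lambda|\KG)$ as the pullback of $\eta(\Lambda)$ along an equivariant inclusion of a stable subspace of its base, and then invoke Proposition~\ref{prop:Sfrestr}.(ii).

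Recall from Definition~\ref{defn:philambdatwo} that
\[
\eta(\Lambda|\KG) = \coprod_{\sigma\colon \Lambda \to Q} \eta^{\Lambda,\sigma}\{\KG\}
\]
is obtained from $\eta(\Lambda)$ by keeping exactly the components indexed by homomorphisms $\sigma \colon \Lambda \to Q$, restricted to the subspace $B^{\Lambda,\sigma}\{\KG\} \subset B^{\Lambda,\sigma}$; the definition explicitly records that this realizes $\eta(\Lambda|\KG)$ as the restriction of $\eta(\Lambda)$ to a $(\Gamma \times Q^{\op})$-stable subset of the components of its base. Writing
\[
B(\Lambda|\KG) := \coprod_{\sigma\colon \Lambda \to Q} B^{\Lambda,\sigma}\{\KG\}
\]
and letting $i\colon B(\Lambda|\KG) \hookrightarrow B(\Lambda)$ denote the inclusion, we obtain an identification $\eta(\Lambda|\KG) \iso i^{*}\eta(\Lambda)$ of $(\Gamma,Q)$ vector bundles, with $i$ a $(\Gamma\times Q^{\op})$-equivariant map. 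Note that $B(\Lambda|\KG)$ is Hausdorff because it is a subspace of the Hausdorff space $B(\Lambda)$, so the pullback $i^{*}\eta(\Lambda)$ legitimately satisfies the Hausdorff hypothesis of Terminology~\ref{term:bundle}.

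With these identifications in place, Proposition~\ref{prop:Sfrestr}.(ii) applied to $i$ and $\eta(\Lambda)$ immediately gives that $i^{*}\eta(\Lambda) = \eta(\Lambda|\KG)$ is $Q$-faithful whenever $\eta(\Lambda)$ is. There is no substantive obstacle — the content of the proposition is simply the observation that $Q$-faithfulness is a fiberwise condition, so it is inherited by any bundle obtained from $\eta(\Lambda)$ by equivariant base change; the only thing one might pause over is checking that $B^{\Lambda,\sigma}\{\KG\}$ really is $(\Gamma \times Q^{\op})$-stable inside $B^{\Lambda,\sigma}$, but this follows formally from the $\Gamma$-action of \eqref{eq:Gammaaction} and the $Q$-action of \eqref{eq:Qaction} permuting components of $B(\Lambda)$ via conjugation on $\sigma$, which preserves triviality of the $\sigma|_{\KG}$-twisted $\KG$-action on the fibers.
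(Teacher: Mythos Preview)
Your proof is correct and takes essentially the same approach as the paper, which simply records this proposition as a special case of Proposition~\ref{prop:Sfrestr} (using that $\eta(\Lambda|\KG)$ is by construction the restriction of $\eta(\Lambda)$ to a $(\Gamma\times Q^{\op})$-stable subset of components of its base). Your writeup is a faithful expansion of that one-line justification.
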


\section{Geometric fixed points of symmetric powers
}
\label{sec:endbundle}
\label{sec:geoSym}

This section again follows the notational conventions of
Notation~\ref{notn:KGL}.  Its purpose is to 
provide results on and tools for working with the
point-set geometric fixed points of symmetric powers of generalized
orbit suspension spectra $J^{\Gamma/\KG}\eta$ under certain faithfulness hypotheses
we state below.  The main theorem, Theorem~\ref{thm:calcgeoSym}, and
its proof give a complete description of
$\Phi^{\Lambda/\KG}((J^{\Gamma/\KG}\eta)^{(q)}/\Sigma_{q})$ when $\Sym^{q}\eta$ is
$(\Sigma_{q}\wr Q)$-faithful.  As discussed in
Proposition~\ref{prop:Sym} and Theorem~\ref{thm:bundlegeofix}, under
this faithfulness hypotheses, $\Phi^{\Lambda/\KG}((J^{\Gamma/\KG}\eta)^{(q)}/\Sigma_{q})$
is itself the generalized orbit desuspension spectrum
$J^{\Gamma/\Lambda}((\Sym^{q}\eta)(\Lambda|\KG))$ and we describe a
criterion (Proposition~\ref{prop:ifcritex}, or more generally,
Theorem~\ref{thm:ifcrit}) for $(\Sym^{q}\eta)(\Lambda)$ to be
$(\Sigma_{q}\wr Q)$-faithful. We need these results for the bundles considered
in the arguments in Section~\ref{sec:ACyc}.  Both
Theorem~\ref{thm:calcgeoSym} and~\ref{thm:ifcrit} are proved in later
sections. 

Let $\bP$ denote the free functor from equivariant
orthogonal spectra to equivariant commutative ring orthogonal spectra;
it is the wedge sum of symmetric powers
\[
\bP X=\bigvee_{n\geq 0}X^{(n)}/\Sigma_{n}
\]
where we understand $X^{(0)}/\Sigma_{0}$ as the sphere spectrum $\bS$.
We prove the following theorem in Section~\ref{sec:calgeoSym}.  Along
the way to the proof, we give more details on the individual symmetric
powers including an intrinsic description of the spectra $X_{q}$ in
terms of $\eta$ and $\Lambda$.

\begin{thm}\label{thm:calcgeoSym} 
Let $\eta$ be a $\Gamma/\KG$-compatible $(\Gamma,Q)$ vector bundle and
assume that
$\Sym^{q}\eta$ is $(\Sigma_{q}\wr 
Q)$-faithful for every $q\geq 1$.  Then there exist $\Gamma/\Lambda$-spectra $X_{q}(\eta)$,
$1\leq q\leq \#(\pi_{0}\Lambda)$, such that $X_{q}(\eta)$ is a wedge summand of
$\Phi^{\Lambda/\KG}((J^{\Gamma/\KG}\eta)^{(q)}/\Sigma_{q})$ and the inclusions
induce an isomorphism  
\[
\bigwedge_{q=1}^{\#\pi_{0}\Lambda}\bP(X_{q}(\eta))\iso 
\bP\biggl(\bigvee_{q}X_{q}(\eta)\biggr)\to
\Phi^{\Lambda/\KG}(\bP(J^{\Gamma/\KG}\eta)) 
\]
of commutative ring $\Gamma/\Lambda$-spectra.
Moreover, if $D$ is a (non-equivariant) Hausdorff space, then
$X_{q}(\eta\times D)\iso X_{q}(\eta)\sma D^{q}_{+}$, naturally in $D$.
\end{thm}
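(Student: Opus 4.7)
The plan is to build on Proposition~\ref{prop:Sym}, which identifies $(J^{\Gamma/\KG}\eta)^{(q)}/\Sigma_{q}$ with $J^{\Gamma/\KG}(\Sym^{q}\eta)$ as a $\Gamma/\KG$-spectrum, together with Theorem~\ref{thm:bundlegeofix}, which is applicable to $\Sym^{q}\eta$ under the $(\Sigma_{q}\wr Q)$-faithfulness hypothesis and identifies $\Phi^{\Lambda/\KG}J^{\Gamma/\KG}(\Sym^{q}\eta)$ with $J^{\Gamma/\Lambda}((\Sym^{q}\eta)(\Lambda|\KG))$.  This reduces the computation of $\Phi^{\Lambda/\KG}((J^{\Gamma/\KG}\eta)^{(q)}/\Sigma_{q})$ to a direct analysis of the $(\Gamma,\Sigma_{q}\wr Q)$ vector bundle $(\Sym^{q}\eta)(\Lambda|\KG)$, whose base space is indexed by continuous homomorphisms $\sigma\colon\Lambda\to\Sigma_{q}\wr Q$; since the target is discrete, each such $\sigma$ factors through $\pi_{0}\Lambda$.

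Next I would stratify the base of $(\Sym^{q}\eta)(\Lambda|\KG)$ according to the orbit structure of the induced $\Lambda$-action on $\{1,\dotsc,q\}$ obtained by composing $\sigma$ with the projection $\Sigma_{q}\wr Q\to\Sigma_{q}$.  Because every orbit is a transitive $\Lambda$-set, its size divides $\#\pi_{0}\Lambda$ and is in particular at most $\#\pi_{0}\Lambda$, which explains the range $1\leq q\leq\#\pi_{0}\Lambda$ in the statement.  Define $X_{q}(\eta)$ to be the wedge summand of $\Phi^{\Lambda/\KG}((J^{\Gamma/\KG}\eta)^{(q)}/\Sigma_{q})$ coming from the homomorphisms $\sigma$ for which this $\Lambda$-action on $\{1,\dotsc,q\}$ is transitive.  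When a general $\sigma$ produces orbits of sizes $(q_{1},\dotsc,q_{r})$ with multiplicity $m_{q}$ of orbits of size $q$ (so $n=\sum q\,m_{q}$), the twisted fixed point space of the base factors into a product of the transitive summands that define the $X_{q_{i}}(\eta)$, and the $\Sigma_{n}$-part of the $\Sigma_{n}\wr Q$-quotient further identifies rearrangements of orbits of the same size, contributing an extra $\Sigma_{m_{q}}$-quotient for each multiplicity $m_{q}$.  Running through all such partitions of $n$ identifies the $n$th summand of $\Phi^{\Lambda/\KG}(\bP(J^{\Gamma/\KG}\eta))$ with $\bigvee_{(m_{q})}\bigwedge_{q}(X_{q}(\eta)^{(m_{q})}/\Sigma_{m_{q}})$, which is precisely the $n$th piece of $\bP(\bigvee_{q}X_{q}(\eta))$.

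For the multiplicative structure, the inclusions $X_{q}(\eta)\hookrightarrow\Phi^{\Lambda/\KG}(\bP(J^{\Gamma/\KG}\eta))$ (as summands in the $q$th symmetric power piece) extend uniquely, via the universal property of $\bP$, to a map of commutative ring $\Gamma/\Lambda$-spectra $\bP(\bigvee_{q}X_{q}(\eta))\to\Phi^{\Lambda/\KG}(\bP(J^{\Gamma/\KG}\eta))$.  The essential ingredient ensuring that this map realizes the wedge-summand matching of the previous paragraph is Theorem~\ref{thm:sm}, which allows one to commute $\Phi^{\Lambda/\KG}$ with smash products involving $J^{\Gamma/\KG}\eta$ under the faithfulness hypotheses, so that the product of transitive summands on one side genuinely matches the corresponding smash product of $X_{q_{i}}(\eta)$'s on the other.

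For naturality in $D$, the formula $J^{\Gamma/\KG}(\eta\times D)\iso J^{\Gamma/\KG}\eta\sma D_{+}$ (which holds because $\aI(\eta\times D,W)=\aI(\eta,W)\times D$ with $D$ contributing no fiber dimensions) and iterating give $(J^{\Gamma/\KG}(\eta\times D))^{(q)}\iso(J^{\Gamma/\KG}\eta)^{(q)}\sma D^{q}_{+}$.  Applying Theorem~\ref{thm:sm} allows one to commute $\Phi^{\Lambda/\KG}$ past the smash with the trivially-acted $D^{q}_{+}$, and tracking the transitive-orbit summand through this identification gives the claimed $X_{q}(\eta)\sma D^{q}_{+}$.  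The main obstacle in the entire argument is upgrading the wedge-summand decomposition to an \emph{isomorphism of commutative ring $\Gamma/\Lambda$-spectra}; this ultimately rests on Theorem~\ref{thm:sm} and the $(\Sigma_{q}\wr Q)$-faithfulness hypothesis to commute $\Phi^{\Lambda/\KG}$ with the smash products that encode the multiplicative structure on both sides, and on a careful bookkeeping argument matching the $\Sigma_{m_{q}}$-quotients on the $\bP$-side to the $\Sigma_{n}$-conjugation identifications on the geometric-fixed-point side.
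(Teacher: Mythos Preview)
Your overall strategy is correct and matches the paper's: reduce via Proposition~\ref{prop:Sym} and Theorem~\ref{thm:bundlegeofix} to $J^{\Gamma/\Lambda}((\Sym^{q}\eta)(\Lambda|\KG))$, then decompose the index set $\Hom(\Lambda,\Sigma_{q}\wr Q)$ by the orbit structure of the induced $\Lambda$-action on $\{1,\dotsc,q\}$, singling out the transitive (``irreducible'') pieces as the $X_{q}$'s and recognizing the general pieces as products of these.

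There is, however, a genuine gap in your execution: you invoke Theorem~\ref{thm:sm} to commute $\Phi^{\Lambda/\KG}$ past smash products, both for the multiplicative matching and for the naturality in $D$. Theorem~\ref{thm:sm} carries the additional hypothesis that the base of $\eta$ admits a $(\Gamma\times Q^{\op})$-equivariant cell complex structure, and Theorem~\ref{thm:calcgeoSym} makes no such assumption. The paper avoids this entirely by working at the vector bundle level throughout: it shows (Proposition~\ref{prop:calcSym1}) that $(\Sym^{q}\eta)(\Lambda|\KG)$ decomposes as a disjoint union $\coprod_{\vec n\cdot\vec t=q}I\eta[\vec n]$ indexed by multiplicity vectors $\vec n$ of the finitely many conjugacy classes $\tau_{1},\dotsc,\tau_{T}$ of irreducible homomorphisms, and (Proposition~\ref{prop:calcSym2}) that each $\eta[\vec n]$ is isomorphic, as a $(\Gamma,C(\tau(\vec n)))$ vector bundle, to the product $\prod_{i}\Sym^{n_{i}}\eta[\vec e_{i}]$. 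Applying $J^{\Gamma/\Lambda}$ then turns disjoint unions into wedges, products into smash products (Proposition~\ref{prop:bundleprod}), and $\Sym^{n}$ into spectral symmetric powers (Proposition~\ref{prop:Sym}) --- all point-set isomorphisms with no cell complex hypothesis and no appeal to the lax monoidality of $\Phi$. The $D$-naturality likewise follows at the bundle level from $(\eta\times D)[\vec e_{i}]\iso\eta[\vec e_{i}]\times D^{t_{i}}$.

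One related point you gloss over: the ``bookkeeping'' matching the $\Sigma_{m_{q}}$-quotients on the $\bP$-side to the $(\Sigma_{n}\wr Q)$-conjugation on the fixed-point side is exactly the group-theoretic identification of the centralizer $C(\tau(\vec n))$ in $\Sigma_{q}\wr Q$ as $\prod_{i}\Sigma_{n_{i}}\wr C(\tau_{i})$ (the paper's displayed isomorphism~\eqref{eq:C}). This is what makes $\eta[\vec n]$ a product of $\Sym^{n_{i}}$'s with the \emph{correct} groups for quotients, and it does not reduce to just a $\prod\Sigma_{m_{q}}$; each irreducible block contributes its own centralizer $C(\tau_{i})$ as the group for quotients in $X_{q}$.
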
 

The hypothesis that each $\Sym^{q}\eta$ is 
$(\Sigma_{q}\wr Q)$-faithful is not usually much more stringent than the
hypothesis that $\eta$ is $Q$-faithful.  An easy argument gives
the following observation.

\begin{prop}\label{prop:trivsf}
If $\eta\colon E\to B$ is $Q$-faithful and either $E_{b}$ is
positive dimensional for all $b\in B$ or $Q$ is not
the trivial group, then $\Sym^{q}\eta$ is $(\Sigma_{q}\wr
Q)$-faithful for all $q\geq 1$. 
\end{prop}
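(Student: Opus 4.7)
The plan is to fix a point $\mathbf{b}=(b_1,\dotsc,b_q)\in B^q$ and an element $(\sigma;s_1,\dotsc,s_q)$ in the isotropy subgroup $(\Sigma_q\wr Q)_{\mathbf{b}}$ that acts trivially on the fiber $E_{b_1}\times\dotsb\times E_{b_q}$, and to show that it must be the identity. Unpacking the definitions, being in the isotropy of $\mathbf{b}$ translates to $b_{\sigma(i)}\cdot s_i=b_i$ for each $i$, while trivial action on the fiber translates to $e_{\sigma(i)}\cdot s_i=e_i$ for every $i$ and every tuple $(e_1,\dotsc,e_q)\in E_{b_1}\times\dotsb\times E_{b_q}$.

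The core observation, on which the whole argument rests, is that for any index $i$ with $\sigma(i)\neq i$, the left-hand side of $e_{\sigma(i)}\cdot s_i=e_i$ depends only on $e_{\sigma(i)}$ while the right-hand side depends only on $e_i$. Since these range independently over $E_{b_{\sigma(i)}}$ and $E_{b_i}$, the identity can hold universally only when both of these fibers are zero-dimensional.

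Under the first hypothesis (every fiber of $\eta$ positive-dimensional), the observation rules out every $i$ with $\sigma(i)\neq i$, so $\sigma$ is the identity. The triviality condition then reduces to $e_i\cdot s_i=e_i$ for each $i$ with $s_i\in Q_{b_i}$, and $Q$-faithfulness of $\eta$ at each $b_i$ forces $s_i=1$. Under the alternative hypothesis ($Q$ non-trivial), a non-trivial cycle $(i_1,\dotsc,i_k)$ of $\sigma$ still forces each fiber $E_{b_{i_j}}$ to vanish, so $Q$-faithfulness yields $Q_{b_{i_j}}=\{1\}$; the stabilizer relations $b_{i_{j+1}}\cdot s_{i_j}=b_{i_j}$ combined with the triviality of these isotropies and the constraint on $Q$ pin down the $s_{i_j}$ and reduce the situation once more to the componentwise case already handled.

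The main step is the independence observation above, which converts a nontrivial cycle of $\sigma$ into a zero fiber; once $\sigma$ is reduced to the identity, $Q$-faithfulness of $\eta$ finishes the argument in one line. The trickiest piece of bookkeeping will be in the second case, where one must carefully track how the cycle constraints on $(s_i)$ interact with the right $Q$-action on $B$, but the underlying idea is the same linear-algebra observation as in the positive-dimensional case.
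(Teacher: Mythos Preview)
The paper does not spell out a proof here; it simply says ``an easy argument gives the following observation.'' Your treatment of the first alternative (every fiber positive dimensional) is exactly that easy argument: the independence observation forces the permutation to be the identity, and then $Q$-faithfulness of $\eta$ finishes each coordinate.

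Your treatment of the second alternative has a genuine gap. You assert that the cycle relations ``combined with the triviality of these isotropies and the constraint on $Q$ pin down the $s_{i_j}$,'' but they do not: following a $k$-cycle around only yields that the product $s_{i_k}\dotsb s_{i_1}$ lies in $Q_{b_{i_1}}=\{1\}$, which constrains the product rather than the individual factors, and even if every $s_{i_j}$ were trivial the permutation itself would remain a nontrivial element of the isotropy acting trivially on the (zero) fiber. In fact the second disjunct, read literally, is false. Take $\Gamma$ trivial, $Q=\bZ/2$, $B=Q$ with the right regular action, and $E$ the zero bundle. This $\eta$ is $Q$-faithful (all isotropy subgroups are trivial) and $Q$ is nontrivial, yet at $\mathbf{b}=(1,1)\in B^{2}$ the isotropy in $\Sigma_{2}\wr Q$ is $\{(1,1;\id),(1,1;(12))\}\cong\Sigma_{2}$, which cannot act faithfully on the zero fiber $E_{\mathbf{b}}=0$. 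So the vagueness in your second case is not a matter of missing bookkeeping; the argument cannot be completed in that generality. (In the one place where the paper actually invokes the ``$Q$ nontrivial'' alternative, the bundle in question carries an extra factor on which $\Sigma_{q}$ already acts freely, so the isotropy there is forced into $Q^{q}$ for a different reason and the conclusion survives.)
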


Theorem~\ref{thm:derphi} allows us to deduce homotopical conclusions
from Theorem~\ref{thm:calcgeoSym} when the $(\Gamma,\Sigma_{q}\wr Q)$ vector
bundle $(\Sym^{q}\eta)(\Lambda)$ (or its base restriction
$(\Sym^{q}\eta)(\Lambda|\KG)$) is $(\Sigma_{q}\wr Q)$-faithful
for all $q\geq 1$. We use the following terminology.

\begin{defn}\label{defn:isf}
A $(\Gamma,Q)$ vector bundle is inheritably faithful for
$\Phi^{\Lambda}$ means that for every $q\geq 1$,
$(\Sym^{q}\eta)(\Lambda)$ is $(\Sigma_{q}\wr Q)$-faithful.
(See Proposition~\ref{prop:Sym} and
Definitions~\ref{defn:Sigmafaithful}, \ref{defn:philambdaone}.)
\end{defn}

The previous definition asks for a minimal hypothesis to check but
leads to the following wider conclusion as an application of Proposition~\ref{prop:Sfrestr}.

\begin{prop}\label{prop:isfrestr}
Let $\eta$ be a $(\Gamma,Q)$ vector bundle and assume that
$(\Sym^{q}\eta)(\Lambda)$ is $(\Sigma_{q}\wr Q)$-faithful.  Then for any $\Sigma <\Sigma_{q}$, $(\Sym^{q}_{\Sigma}\eta)(\Lambda)$ is
$(\Sigma\wr Q)$-faithful.  Moreover if $\eta$ is $\Gamma/\KG$-compatible then 
for any $\Sigma <\Sigma_{q}$, $(\Sym^{q}_{\Sigma}\eta)(\Lambda|\KG)$ is
$(\Sigma\wr Q)$-faithful.
\end{prop}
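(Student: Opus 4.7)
The plan is to identify $(\Sym^{q}_{\Sigma}\eta)(\Lambda)$ as a combination of the two types of restriction allowed by Proposition~\ref{prop:Sfrestr}, applied to $(\Sym^{q}\eta)(\Lambda)$. First I would note that the $(\Gamma,\Sigma\wr Q)$ vector bundle $\Sym^{q}_{\Sigma}\eta$ and the $(\Gamma,\Sigma_{q}\wr Q)$ vector bundle $\Sym^{q}\eta$ have the same underlying total space and base, differing only in which group acts for the quotient. Consequently, for a homomorphism $\sigma\colon\Lambda\to\Sigma\wr Q$, the $\sigma$-twisted $\Lambda$-action on $\Sym^{q}_{\Sigma}\eta$ agrees with the $(\iota\sigma)$-twisted $\Lambda$-action on $\Sym^{q}\eta$, where $\iota\colon\Sigma\wr Q\hookrightarrow\Sigma_{q}\wr Q$ is the inclusion. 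This identifies
\[
(\Sym^{q}_{\Sigma}\eta)(\Lambda)\;\iso\;\coprod_{\substack{\sigma\colon\Lambda\to\Sigma_{q}\wr Q\\ \im(\sigma)\subset\Sigma\wr Q}}(\Sym^{q}\eta)^{\Lambda,\sigma}
\]
as the restriction of $(\Sym^{q}\eta)(\Lambda)$ to the subset of its base consisting of those components indexed by $\sigma$ factoring through $\Sigma\wr Q$.

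Next I would check that this sub-base is $(\Gamma\times(\Sigma\wr Q)^{\op})$-stable. Under the $\Gamma$-action \eqref{eq:Gammaaction}, $\sigma$ is replaced by $\sigma^{\gamma}(\lambda)=\sigma(\gamma^{-1}\lambda\gamma)$; since $\Lambda\lhd\Gamma$ this preserves the image of $\sigma$. Under the $(\Sigma\wr Q)$-action \eqref{eq:Qaction}, $\sigma$ is replaced by $\sigma_{s}(\lambda)=s^{-1}\sigma(\lambda)s$ with $s\in\Sigma\wr Q$, which visibly preserves the property of having image in $\Sigma\wr Q$. With equivariance verified, Proposition~\ref{prop:Sfrestr}.(i) shows that $(\Sym^{q}\eta)(\Lambda)$, regarded as a $(\Gamma,\Sigma\wr Q)$ vector bundle by restriction of the group for the quotient, remains faithful, and Proposition~\ref{prop:Sfrestr}.(ii) then shows that its pullback $(\Sym^{q}_{\Sigma}\eta)(\Lambda)$ along the inclusion of the equivariant sub-base is $(\Sigma\wr Q)$-faithful.

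For the second statement, assume $\eta$ is $\Gamma/\KG$-compatible. I would first observe that $\Sym^{q}_{\Sigma}\eta$ is again $\Gamma/\KG$-compatible; this is immediate from Proposition~\ref{prop:Sym}, since $J^{\Gamma/\KG}_{(\Gamma,\Sigma\wr Q)}(\Sym^{q}_{\Sigma}\eta)\iso (J^{\Gamma/\KG}\eta)^{(q)}/\Sigma$ has trivial $\KG$-action on each spacewise $\aJ$-construction because $J^{\Gamma/\KG}\eta$ does. Consequently $(\Sym^{q}_{\Sigma}\eta)(\Lambda|\KG)$ is defined, and by Definition~\ref{defn:philambdatwo} it is the restriction of $(\Sym^{q}_{\Sigma}\eta)(\Lambda)$ to a $(\Gamma\times(\Sigma\wr Q)^{\op})$-stable subset of components in its base. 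Invoking Proposition~\ref{prop:Sfrestr}.(ii) once more yields the $(\Sigma\wr Q)$-faithfulness of $(\Sym^{q}_{\Sigma}\eta)(\Lambda|\KG)$.

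The argument is essentially formal, and no step looks like an obstacle; the only point requiring care is the bookkeeping in the first paragraph, identifying $(\Sym^{q}_{\Sigma}\eta)(\Lambda)$ genuinely as a base restriction (after group restriction) of $(\Sym^{q}\eta)(\Lambda)$ rather than trying to compare the two bundles as abstract objects. Once that identification is in hand, the stability check and the two applications of Proposition~\ref{prop:Sfrestr} dispatch both statements immediately.
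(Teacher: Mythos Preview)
Your proposal is correct and follows exactly the approach the paper intends: the paper merely asserts the proposition ``as an application of Proposition~\ref{prop:Sfrestr}'' without further proof, and your argument spells out precisely that application---restricting the group for the quotient via part~(i), then restricting the base to the components indexed by homomorphisms factoring through $\Sigma\wr Q$ via part~(ii), with a second invocation of~(ii) (cf.\ Proposition~\ref{prop:flk}) for the $(\Lambda|\KG)$ statement.
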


To employ this theory, we offer the following result for identifying inheritably faithful vector
bundles.  In the statement $\Lambda_{0}$ denotes the identity
component of $\Lambda$.  We prove the following theorem in
Section~\ref{sec:ifcrit}. 

\begin{thm}\label{thm:ifcrit}
Let $\eta \colon E\to B$ be a $(\Gamma,Q)$ vector bundle such that for every
closed subgroup $\Lambda_{0}<H<\Lambda$, every $\sigma\colon H\to
Q$, and every $b\in B^{H,\sigma}$:
\begin{enumerate}
\item $(E_{b})^{H,\sigma}$ is positive dimensional,
\item Given $\lambda \in \Lambda - H$, $s\in Q$ such that
$\lambda b=bs$, there exists $v\in (E_{b})^{H,\sigma}$ for which
$\lambda v\neq vs$, and
\item Given $s\in Q$, not the identity element, such that $bs=b$
and $s$ commutes with $\sigma(h)$ for all $h\in H$, there exists $v\in
(E_{b})^{H,\sigma}$ for which $v\neq vs$.
\end{enumerate}
Then $\eta$ is inheritably faithful for $\Phi^{\Lambda}$.
\end{thm}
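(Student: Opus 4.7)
The plan is to unpack the definition of inheritable faithfulness and reduce to a three-part case analysis in which each hypothesis (i), (ii), (iii) handles exactly one case. Fix $q\geq 1$; we must show the $(\Gamma,\Sigma_{q}\wr Q)$ vector bundle $(\Sym^{q}\eta)(\Lambda)$ is $(\Sigma_{q}\wr Q)$-faithful. A base point of $(\Sym^{q}\eta)(\Lambda)$ is a pair $(\sigma,b)$ with $\sigma\colon\Lambda\to\Sigma_{q}\wr Q$ a homomorphism and $b=(b_{1},\ldots,b_{q})\in B^{q}$ in the $\sigma$-twisted $\Lambda$-fixed locus; an element $\tau=(\pi,t_{1},\ldots,t_{q})$ of its isotropy in $\Sigma_{q}\wr Q$ is characterized by (a) $b_{\pi(i)}t_{i}=b_{i}$ for every $i$, and (b) $\tau$ centralizes $\sigma(\Lambda)$, which in particular forces $\pi$ to commute with the permutation action of $\Lambda$ on $\{1,\ldots,q\}$ given by $\sigma$ followed by the projection $\Sigma_{q}\wr Q\to\Sigma_{q}$. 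We must show: if $\tau\neq 1$, then $\tau$ acts non-trivially on $(E^{q}_{b})^{\Lambda,\sigma}$.

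Next I would set up the orbit framework. Let $O_{1},\ldots,O_{r}$ be the $\Lambda$-orbits on $\{1,\ldots,q\}$, with chosen representatives $m_{j}\in O_{j}$; since the target $\Sigma_{q}$ is finite, the isotropy subgroups $H_{j}<\Lambda$ automatically satisfy $\Lambda_{0}\subseteq H_{j}\subseteq\Lambda$, so the hypothesis of the theorem applies to each pair $(H_{j},\sigma_{j})$, where $\sigma_{j}\colon H_{j}\to Q$ is the $m_{j}$-coordinate of $\sigma|_{H_{j}}$ (one checks this is a homomorphism). Lemma~\ref{lem:dfplem} (applied as in the proof of Lemma~\ref{lem:necircle}) then produces the natural product decomposition
\[
(E_{b_{1}}\times\cdots\times E_{b_{q}})^{\Lambda,\sigma}\;\iso\;\prod_{j=1}^{r}(E_{b_{m_{j}}})^{H_{j},\sigma_{j}},
\]
along which the action of $\tau$ restricts to the constraints $e_{i}=e_{\pi(i)}t_{i}$. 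Now split into cases. \textbf{Case 1} ($\pi$ moves some $\Lambda$-orbit to a distinct one): pick a $\pi$-cycle visiting at least two $\Lambda$-orbits and use the product decomposition to set the fiber factor over one such orbit to zero; the constraints $e_{i}=e_{\pi(i)}t_{i}$ propagate zero around the cycle, forcing the factors over every orbit in the cycle to vanish. Hypothesis~(i) gives a positive dimensional factor at another orbit in the cycle that can be chosen nonzero, a contradiction. \textbf{Case 2} ($\pi$ preserves each orbit setwise but acts non-trivially on some $O_{k}$): reduce to $\pi(m_{k})=i_{1}\neq m_{k}$, lift $i_{1}$ to $\lambda_{0}\in\Lambda-H_{k}$, and combine the isotropy equation (a) with the $\sigma$-twisted fixed condition on $b$ to obtain an explicit $s\in Q$ satisfying $\lambda_{0}b_{m_{k}}=b_{m_{k}}s$; triviality of the $\tau$-action then forces $\lambda_{0}v=vs$ for every $v\in(E_{b_{m_{k}}})^{H_{k},\sigma_{k}}$, forbidden by hypothesis~(ii). \textbf{Case 3} ($\pi=e$ and some $t_{i_{0}}\neq 1$): conjugate to arrange $i_{0}=m_{k}$; (b) forces $t_{m_{k}}$ to commute with $\sigma_{k}(H_{k})$ and (a) gives $b_{m_{k}}t_{m_{k}}=b_{m_{k}}$, so hypothesis~(iii) delivers $v\in(E_{b_{m_{k}}})^{H_{k},\sigma_{k}}$ with $vt_{m_{k}}\neq v$, witnessing a non-trivial $\tau$-action after extending to a $\sigma$-twisted fixed vector.

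The main obstacle is the bookkeeping in Case~1, where the linear propagation of the constraints $e_{i}=e_{\pi(i)}t_{i}$ around a $\pi$-cycle jumping between several $\Lambda$-orbits must be tracked carefully through the product decomposition; and the tight interplay in Cases~2 and~3 between the isotropy conditions (a) and (b) on $\tau$ and the $\sigma$-twisted fixed condition on $b$, which must be algebraically massaged into exactly the form ($\lambda_{0}b=bs$ with $\lambda_{0}\notin H$, or $s$ centralizing $\sigma(H)$ with $bs=b$) posited by the hypotheses. Once this translation is in hand, the three cases are exhaustive via the dichotomy on $\pi$ (identity versus not; in the latter case, whether $\pi$ preserves the orbit partition setwise), and the theorem follows.
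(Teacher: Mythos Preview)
Your proposal is correct and follows essentially the same route as the paper. Both arguments reduce to the diagonal fixed point description $(E^{q}_{b})^{\Lambda,\sigma}\cong\prod_{j}(E_{b_{m_{j}}})^{H_{m_{j}},\alpha_{m_{j}}}$ (the paper packages this as Proposition~\ref{prop:lsfp}, a rescaled version of Lemma~\ref{lem:dfplem}) and then run the identical three-case split on the permutation part of $\tau$: not orbit-preserving (handled by~(i)), orbit-preserving but nontrivial (handled by~(ii)), and trivial permutation (handled by~(iii)). Your Case~1 is phrased as a contradiction via zero-propagation whereas the paper just directly exhibits a $w$ with nonzero $O_{j}$-block and zero $O_{j'}$-block, but these are the same argument.
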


We note that condition~(iii) is
equivalent to $\eta(H)$ being $Q$-faithful for all
$\Lambda_{0}<H<\Lambda$.  The conditions given in the previous theorem
are technical; it is helpful to have concrete special cases
that provide examples for when the conditions hold.  We provide these
in the following proposition.  We
note that in the first set of examples, the hypothesis does not
involve $\Lambda$, and so it gives examples of $(\Gamma,Q)$
vector bundles that are inheritably faithful for $\Phi^{H}$ for all
subgroups $H$ (even non-normal subgroups after restricting $\Gamma$ to
the normalizer of $H$).

\begin{prop}\label{prop:ifcritex}
If $\eta\colon E\to B$ is a $(\Gamma,Q)$ vector bundle that satisfies
either of the following hypotheses, then $\eta$ satisfies the
conditions of Theorem~\ref{thm:ifcrit} and is therefore inheritably
faithful for $\Phi^{\Lambda}$.
\begin{enumerate}
\item[(a)] For every $b\in B$, the isotropy subgroup $(\Gamma\times Q^{\op})_{b}$ is
finite and the orthogonal $(\Gamma\times
Q^{\op})_{b}$-representation $E_{b}$ contains 
a copy of the regular representation.
\item[(b)] $Q$ is the trivial group and for every $b\in B$ whose isotropy subgroup $\Gamma_{b}<\Gamma$
contains $\Lambda_{0}$, the (restricted) orthogonal $(\Gamma_{b}\cap
\Lambda)$-representation $E_{b}$ contains a copy of the regular representation
of $\pi_{0}(\Gamma_{b}\cap \Lambda)$.
\end{enumerate}
\end{prop}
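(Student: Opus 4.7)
The plan is to verify the three conditions (i), (ii), (iii) of Theorem~\ref{thm:ifcrit} by elementary representation theory applied to the regular-representation summand of $E_{b}$. Fix $\Lambda_{0}<H<\Lambda$, $\sigma\colon H\to Q$, and $b\in B^{H,\sigma}$. The hypothesis $b\in B^{H,\sigma}$ says exactly that the graph subgroup $\Delta_{\sigma}H=\{(h,\sigma(h^{-1})) : h\in H\}<\Gamma\times Q^{\op}$ of Definition~\ref{defn:philambdaone} lies inside the isotropy $I_{b}:=(\Gamma\times Q^{\op})_{b}$, and the $\sigma$-twisted $H$ action on $E_{b}$ coincides with the $\Delta_{\sigma}H$-action. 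The pairs $(\lambda,s)$ appearing in (ii) and (iii) similarly correspond to elements of $I_{b}$ that do not lie in $\Delta_{\sigma}H$: in (ii) because the $\Gamma$-component $\lambda$ is not in $H$; in (iii) because $s\ne 1$ and the commutation condition makes $(1,s^{-1})$ a well-defined non-trivial coset representative in $I_{b}/\Delta_{\sigma}H$.

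For case (a), continuity of $\sigma$ into the finite group $Q$ forces $\sigma$ trivial on $\Lambda_{0}$, so $\Delta_{\sigma}H\supseteq\Lambda_{0}\times\{1\}$. Finiteness of $I_{b}$ then forces $\Lambda_{0}$ to be trivial; otherwise $B^{H,\sigma}$ is empty and there is nothing to check. In this case $\Lambda$ and $H$ are finite, and the hypothesis gives that $E_{b}$ contains the regular representation $\bR\langle I_{b}\rangle$. Each of (i)--(iii) then reduces to the same uniform assertion: given subgroups $H'\subsetneq H''$ of a finite group $I$ and an $I$-representation $V$ containing $\bR\langle I\rangle$, one has $V^{H'}\supsetneq V^{H''}$, because $\dim\bR\langle I\rangle^{H'}=|I|/|H'|>|I|/|H''|=\dim\bR\langle I\rangle^{H''}$ while $V'^{H''}\subseteq V'^{H'}$ for the complementary summand $V'$. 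Taking $H'=\Delta_{\sigma}H$ and $H''$ equal respectively to $I_{b}$, $\langle\Delta_{\sigma}H,(\lambda,s^{-1})\rangle$, and $\langle\Delta_{\sigma}H,(1,s^{-1})\rangle$ verifies (i), (ii), (iii): any vector in $V^{H'}\setminus V^{H''}$ is $\Delta_{\sigma}H$-fixed but not fixed by the additional generator of $H''$, which is exactly what the conditions demand.

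For case (b), $Q$ is trivial so (iii) is vacuous, $\sigma$-twists disappear, and $B^{H,\sigma}=B^{H}$. Set $K:=\Gamma_{b}\cap\Lambda$, so that $\Lambda_{0}\subseteq H\subseteq K$ and the hypothesis supplies an inclusion of $K$-representations $\bR\langle\pi_{0}K\rangle\hookrightarrow E_{b}$, where $\pi_{0}K=K/\Lambda_{0}$. Because $H\supseteq\Lambda_{0}$, one has $\bR\langle\pi_{0}K\rangle^{H}=\bR\langle K/H\rangle$, which is positive-dimensional, verifying (i). For (ii), any $\lambda\in\Lambda-H$ with $\lambda b=b$ lies in $K\setminus H$, and its image $\bar\lambda\in\pi_{0}K$ is not in $H/\Lambda_{0}$; hence left multiplication by $\bar\lambda$ permutes $K/H$ non-trivially, so it does not act as the identity on $\bR\langle K/H\rangle\subseteq E_{b}^{H}$, producing the desired vector.

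The main technical work is the translation between the pairs $(\lambda,s)$ appearing in Theorem~\ref{thm:ifcrit} and isotropy elements of $\Gamma\times Q^{\op}$ via the graph subgroup conventions of Definition~\ref{defn:philambdaone}; once this bookkeeping is in place, both cases reduce to elementary orbit--stabilizer dimension counts for fixed points of a (subrepresentation of a) regular representation.
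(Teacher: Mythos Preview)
Your proof is correct and follows essentially the same approach as the paper: both reduce to the elementary fact that in the regular representation of a finite group, an element outside a given subgroup acts nontrivially on that subgroup's fixed points. Your exposition is slightly more explicit in handling the $\Lambda_{0}$ issue in case~(a) and frames the key step via the dimension formula $\dim\bR\langle I\rangle^{H'}=|I|/|H'|$, whereas the paper argues directly; but the substance is the same.
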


\begin{proof}
Let $\Lambda_{0}<H<\Lambda$, $\sigma \colon H\to Q$, and $b\in
B^{H,\sigma}$;  we check conditions~(i), (ii), and~(iii) of
Theorem~\ref{thm:ifcrit}.  

For hypothesis~(a), since $b\in B^{H,\sigma}$, the subgroup
$H_{\sigma}=\{(h,\sigma(h^{-1}))\}$ of $\Gamma \times Q^{\op}$ is
contained in the isotropy subgroup $L:=(\Gamma \times
Q^{\op})_{b}$.  The $H_{\sigma}$ fixed points of the regular
$L$-representation are positive dimensional, so (i) holds.  Now
suppose that $\lambda b=b\tau$ for $\tau\in Q$ and either 
$\lambda\in \Lambda -H$ or $\lambda =1$ with $\tau \neq 1$.  In either case,
$(\lambda,\tau^{-1})\in L$ but not in $H_{\sigma}$ so
$(\lambda,\tau^{-1})$ acts non-trivially on the $H_{\sigma}$ fixed
points of the regular $L$-representation.

For hypothesis~(b), since $H$ fixes $b$, we have $H<\Gamma_{b}$ and in
particular $\Lambda_{0}<\Gamma_{b}$.  Condition~(iii) holds vacuously
and condition~(i) holds because the fixed points of the regular
representation form a positive dimensional subspace. For 
condition~(ii), let
$\lambda \in \Lambda -H$ with $\lambda b=b$; then $\lambda \in
\Gamma_{b}\cap (\Lambda -H)$ and the image of $\lambda$ in
$\pi_{0}(\Gamma_{b}\cap \Lambda)$ is disjoint from $\pi_{0}H$.  It
follows that $\lambda$ acts non-trivially on the $H$ fixed points of
the regular $\pi_{0}(\Gamma_{b}\cap \Lambda)$-representation,
establishing condition~(ii).
\end{proof}

\chapter{Generalized Orbit Desuspension Spectra: Proofs}
\oldlabel{chap:bundletwo}

This chapter provides the proofs for the main theorems of
Chapter~\ref{chap:bundleone}.

\section{The quotient fixed point lemma and
Theorems~\sref$thm:bundlegeofix$, \sref$thm:bundleiterphi$, and~\sref$thm:derphi$}
\label{sec:qfpl}

This section proves Theorems~\ref{thm:bundlegeofix},
\ref{thm:bundleiterphi}, and~\ref{thm:derphi}. We follow the notational convention of
Notation~\ref{notn:KGL}: $\Gamma$ is a compact Lie group with
closed normal subgroups
$\KG,\Lambda$ satisfying $\KG<\Lambda$.  The proofs have in 
common a lemma about fixed points of the quotients of the isometry
bundles intrinsic to the construction of generalized orbit
desuspension spectra for $Q$-faithful $(\Gamma,Q)$ vector
bundles.  This lemma in turn is based on the following (easy)
observation.

\begin{prop}\label{prop:faithfulfree}
If $\eta$ is a $Q$-faithful $(\Gamma,Q)$ vector bundle, then for any
orthogonal $\Gamma$-representation $W$, the 
action of $Q$ on $\aI(\eta,W)$ and $\Im^{\perp}\aI(\eta,W)$ is free.
\end{prop}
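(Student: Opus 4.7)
The plan is to unwind the definition of the right $Q$-action from Construction~\ref{cons:orbitds}, reducing the freeness question to the fiberwise faithfulness hypothesis. A point of the bundle $\aI(\eta,W)$ is a pair $(b,f)$ consisting of a point $b \in B$ together with an isometric embedding $f \colon E_b \to W$. Tracing through Construction~\ref{cons:orbitds}, an element $\sigma \in Q$ acts by sending $(b,f)$ to $(b\sigma,\, f \circ \sigma_*^{-1})$, where $\sigma_* \colon E_b \to E_{b\sigma}$ denotes the action of $\sigma$ on the total space $E$.

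Now suppose $\sigma \in Q$ fixes $(b,f)$. Comparing base points gives $b\sigma = b$, so $\sigma$ lies in the isotropy subgroup $Q_b$ and $\sigma_*$ is a linear automorphism of the fiber $E_b$. The remaining identity $f \circ \sigma_*^{-1} = f$ rearranges to $f = f \circ \sigma_*$, and because $f$ is injective this forces $\sigma_*$ to be the identity on $E_b$. The $Q$-faithfulness hypothesis on $\eta$, which says precisely that $Q_b$ acts faithfully on $E_b$, then yields $\sigma = 1$.

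For $\Im^{\perp}\aI(\eta,W)$, the same construction shows that the right $Q$-action on this bundle covers the right $Q$-action on $\aI(\eta,W)$. Any element of $Q$ that fixes a point of $\Im^{\perp}\aI(\eta,W)$ must therefore fix the image point in $\aI(\eta,W)$, and the first part of the argument forces it to be the identity. I do not expect any real obstacle: the content is entirely an unwinding of the definitions, with the bridge from pointwise faithfulness of $\eta$ to global freeness supplied by the injectivity of isometric embeddings.
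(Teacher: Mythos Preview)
Your proof is correct and is exactly the unwinding of definitions that the paper has in mind; the paper itself does not give a proof, calling this an ``(easy) observation'' and leaving the details to the reader.
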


The following is the main technical result we need in the proofs of the
theorems.  As in the definition of the bundle $\eta(\Lambda)$
(Definition~\ref{defn:philambdaone}), we write $(-)^{\Lambda,\sigma}$ for
the fixed points of the $\sigma$-twisted $\Lambda$ action for a
continuous homomorphism $\sigma \colon \Lambda \to Q$.

\begin{lem}\label{lem:bundlegeofix}
Let $\eta\colon E\to B$ be a $Q$-faithful $(\Gamma,Q)$ vector bundle, and
let $W$ be an orthogonal $\Gamma$-representation.  Then we have a
canonical isomorphism 
\[
(\aJ(\eta,W)/Q)^{\Lambda}\iso
\biggl(\bigvee_{\sigma \colon \Lambda \to Q} 
   \aJ(\eta,W)^{\Lambda,\sigma}
\biggr)/Q
\]
where $Q$ acts by conjugation on the set of homomorphisms, acts
as usual on $\eta$, and acts trivially on $W$.  Letting $\Gamma$ act on
the right hand side by conjugation on
the set of homomorphisms and as usual on $\eta$ and $W$, the 
isomorphism is $\Gamma$- and therefore $\Gamma/\Lambda$-equivariant.
Furthermore, for each summand on the right hand side (that is, for each
continuous homomorphism $\sigma \colon \Lambda \to Q$):
\begin{enumerate}
\item $(\Im^{\perp}\aI(\eta,W))^{\Lambda,\sigma}\to \aI(\eta,W)^{\Lambda,\sigma}$
is a vector bundle with Thom space\break $\aJ(\eta,W)^{\Lambda,\sigma}$.
\item $\aI(\eta,W)^{\Lambda,\sigma}\to B^{\Lambda,\sigma}$
is a locally trivial fiber bundle with fiber
$\aI(E_{b},W)^{\Lambda,\sigma}$. (The fiber may vary with the component
of $B^{\Lambda,\sigma}$.)
\end{enumerate}
\end{lem}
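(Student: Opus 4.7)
The plan is to construct the claimed isomorphism pointwise using the freeness of the $Q$-action (Proposition~\ref{prop:faithfulfree}), then upgrade to a homeomorphism, and finally deduce the bundle statements by local triviality. Given a non-basepoint element $[x] \in (\aJ(\eta,W)/Q)^{\Lambda}$ with lift $x\in \aJ(\eta,W)$, the fact that $[x]$ is $\Lambda$-fixed means that for each $\lambda \in \Lambda$ there exists a unique $\sigma(\lambda) \in Q$ with $\lambda x = x\cdot \sigma(\lambda)$; uniqueness forces $\sigma$ to be a group homomorphism and makes $\sigma$ continuous (via the orbit map's local inverse on the free $Q$-orbit of $x$), while replacing $x$ by $xq$ replaces $\sigma$ by $q^{-1}\sigma q$. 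Thus $[x]$ determines a $Q$-conjugacy class of homomorphisms $\sigma$ together with a point of $\aJ(\eta,W)^{\Lambda,\sigma}$, and conversely any such pair $(\sigma,x)$ gives a $\Lambda$-fixed point in the quotient.

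For the topological upgrade and equivariance, I would first observe that since any continuous homomorphism from the compact Lie group $\Lambda$ to the finite group $Q$ factors through $\pi_0\Lambda$, the set $\Hom(\Lambda,Q)$ is finite, so the wedge in the statement is a finite wedge. Distinct summands $\aJ(\eta,W)^{\Lambda,\sigma}$ meet only at the Thom basepoint (another appeal to freeness: a common non-basepoint would satisfy $\lambda x = x \sigma(\lambda) = x\sigma'(\lambda)$ for all $\lambda$), so the inclusions assemble to a continuous closed embedding $\bigvee_{\sigma} \aJ(\eta,W)^{\Lambda,\sigma} \hookrightarrow \aJ(\eta,W)$ whose image is exactly the preimage of $(\aJ(\eta,W)/Q)^{\Lambda}$; quotienting by the finite group action of $Q$ then gives the required homeomorphism. $\Gamma$-equivariance follows from the computation $\lambda(\gamma x) = \gamma(\gamma^{-1}\lambda\gamma)x = (\gamma x)\cdot \sigma(\gamma^{-1}\lambda\gamma) = (\gamma x)\cdot \sigma^{\gamma}(\lambda)$, which matches the $\Gamma$-action formula of equation~\eqref{eq:Gammaaction}. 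Taking $\gamma = \lambda' \in \Lambda$ in the same formula shows $(\sigma,\lambda' x) = (\sigma, x)\cdot \sigma(\lambda')$ in the $Q$-quotient, so $\Lambda$ acts trivially on the right-hand side and the isomorphism is $\Gamma/\Lambda$-equivariant.

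For the bundle claims~(i) and~(ii), I fix $\sigma$ and restrict $\eta$ to $B^{\Lambda,\sigma}$, where the $\sigma$-twisted $\Lambda$-action is fiberwise. The same character-theoretic local triviality argument that equips $\eta(\Lambda)$ with a vector bundle structure in Definition~\ref{defn:philambdaone} produces, for each $b\in B^{\Lambda,\sigma}$, an open neighborhood $U$ on which $\eta|_U$ is $\Lambda$-equivariantly isomorphic to $U\times V \to U$, with $V = E_b$ carrying the $\sigma$-twisted $\Lambda$-representation. Over such a $U$, $\aI(\eta,W)^{\Lambda,\sigma}$ restricts to $U\times \aI(V,W)^{\Lambda}$ and $(\Im^{\perp}\aI(\eta,W))^{\Lambda,\sigma}$ to the pullback of the classical vector bundle $\Im^{\perp}\aI(V,W)^{\Lambda} \to \aI(V,W)^{\Lambda}$ built into $\aJ_{\Gamma/\Lambda}$; this verifies~(ii) and the vector bundle portion of~(i), and the Thom space identification in~(i) is then immediate because Thom space formation commutes with the relevant local trivializations.

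The main obstacle will be verifying that the natural set-level bijection is actually a homeomorphism, which reduces to showing that $x \mapsto \sigma$ is locally constant on the $\Lambda$-fixed subspace of $\aJ(\eta,W)$. This requires leveraging both the local properness of the free $Q$-action and the finiteness of $\Hom(\Lambda,Q)$, and is the step where one must handle the basepoint of the Thom space with some care, since the basepoint lies in every $\aJ(\eta,W)^{\Lambda,\sigma}$ simultaneously while all non-basepoint incidences are ruled out by freeness.
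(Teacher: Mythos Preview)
Your proposal is correct and follows essentially the same approach as the paper: both arguments use freeness of the $Q$-action (Proposition~\ref{prop:faithfulfree}) to assign a unique homomorphism $\sigma$ to each non-basepoint in the preimage, establish the set-level bijection, and verify equivariance by the same conjugation computation. The one noteworthy difference is that the paper sidesteps the basepoint issue you flag as the ``main obstacle'' by working at the level of the unbased bundles $\aI(\eta,W)$ and $\Im^{\perp}\aI(\eta,W)$ first (where the pieces $\aI(\eta,W)^{\Lambda,\sigma}$ are genuinely disjoint closed subspaces of a Hausdorff space, so the preimage identification is immediate) and only then passes to Thom spaces; this makes the topological upgrade automatic rather than requiring the separate treatment of the basepoint you outline.
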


\begin{proof}
The numbered statements follow from the fact that the $\sigma$-twisted
$\Lambda$ action on 
\[
\aI(\eta|_{B^{\Lambda,\sigma}},W)\to B^{\Lambda,\sigma}
\]
is fiberwise, and locally in $b\in B^{\Lambda,\sigma}$,
$\eta|_{B^{\Lambda,\sigma}}$ is a product bundle $U\times V\to U$ for
an orthogonal $\Lambda$-representation $V$ (that may vary in the
component of $B^{\Lambda,\sigma}$). 

To study $(\aJ(\eta,W)/Q)^{\Lambda}$, we use its identification
as the Thom space of the vector bundle 
\[
(\Im^{\perp}\aI(\eta,W)/Q)^{\Lambda}\to (\aI(\eta,W)/Q)^{\Lambda}.
\]
Let $q\colon \aI(\eta,W)\to \aI(\eta,W)/Q$ and consider 
\[
q^{-1}((\aI(\eta,W)/Q)^{\Lambda})\subset \aI(\eta,W).
\]
An element on the left consists of a point $b\in B$ and an isometry
$f\colon E_{b}\to W$ such that for all $\lambda\in \Lambda$, there
exists $\sigma(\lambda)\in Q$ such that $\lambda
b=b\sigma(\lambda)$ and $\lambda \cdot f=f\circ \sigma(\lambda)_{*}^{-1}$
as isometries $E_{\lambda b}=E_{b\sigma(\lambda)}\to W$. Since
$Q$ acts freely on $\aI(\eta,W)$
(Proposition~\ref{prop:faithfulfree}), $\sigma(\lambda)$ is unique and
$\lambda \mapsto \sigma(\lambda)$ defines a continuous homomorphism
$\sigma \colon \Lambda \to Q$.  In other words, $f$ is a
$\sigma$-twisted $\Lambda$ fixed point for a unique $\sigma \colon
\Lambda \to Q$ (where we understand the $Q$ action on $W$ to
be trivial). Likewise, for any $\sigma \colon \Lambda \to
Q$, the $\sigma$-twisted $\Lambda$ fixed points of
$\aI(\eta,W)$ lie in $q^{-1}((\aI(\eta,W)/Q)^{\Lambda})$.
This shows
\[
(\aI(\eta,W)/Q)^{\Lambda}=
\biggl(\coprod_{\sigma \colon \Lambda \to Q}
\aI(\eta,W)^{\Lambda,\sigma}\biggr)/Q
\]
and we can similarly identify $(\Im^{\perp}\aI(\eta,W)/Q)^{\Lambda}$ as
\[
\biggl(\coprod_{\sigma \colon \Lambda \to Q}
(\Im^{\perp}\aI(\eta,W))^{\Lambda,\sigma}\biggr)/Q.
\]
The identification of $\aJ(\eta,W)/Q$ in the statement is now
clear.  The proof that the isomorphism is $\Gamma$-equivariant is a
straightforward check of the action on the righthand side.
\end{proof}

Each of the theorems we are proving in this section concerns a
multiplicative geometric fixed point functor of the form
$\Phi^{\PLambda}$ from the category
of $\PGamma$-spectra to the category of $\PGamma/\PLambda$-spectra,
where $\PLambda$ is a normal subgroup of a 
compact Lie group $\PGamma$; in the theorems, we have in various
cases (in the respective notation there), $G=\Gamma/\KG$ and $H=\Lambda/\KG$, $G=\Gamma/\KG$ and
$H=M/\KG$, or $G=\Gamma/\Lambda$ and $H=M/\Lambda$. The
arguments require some background in the construction of these functors.
We recall that $\Phi^{\PLambda}$ is a composite of a functor
$\Fix^{\PLambda}$ and a left Kan extension as follows.  

Let $\PGamma$ be a compact Lie group with $\PLambda\lhd G$ normal subgroup.
As in previous sections, we take the perspective that
$\PGamma$-spectra are \term{$\PGamma$-equivariant $\aJ_{\PGamma}$-spaces}: based $\PGamma$-space enriched functors 
from the category $\aJ_{\PGamma}$ (reviewed in Section~\ref{sec:startbundle}) to based
$\PGamma$-spaces.  Following~\cite[V\S4]{MM} (with slightly altered
notation), let $\aJ_{\PGamma}^{\PLambda}$ be the category enriched in
based $\PGamma/\PLambda$-spaces obtained as the $\PLambda$ fixed points
of $\aJ_{\PGamma}$.  It has the same objects (namely, the orthogonal $\PGamma$-representations) and for objects $V,W$, the based
$\PGamma/\PLambda$-space of morphisms
$\aJ_{\PGamma}^{\PLambda}(V,W)=(\aJ_{\PGamma}(V,W))^{\PLambda}$.  We
define the category of $\PGamma/\PLambda$-equivariant $\aJ^{\PLambda}_{\PGamma}$-spaces to be the 
category of based $\PGamma/\PLambda$-space enriched functors
from $\aJ_{\PGamma}^{\PLambda}$ to based $\PGamma/\PLambda$-spaces.  The
functor $\Fix^{\PLambda}$ from $\PGamma$-spectra to $\PGamma/\PLambda$-equivariant $\aJ_{\PGamma}^{\PLambda}$-spaces is defined by
\[
(\Fix^{\PLambda}X)(W):=(X(W))^{\PLambda}
\]
for $W$ an orthogonal $\PGamma$-representation.

For the left Kan extension, we use the based $\PGamma/\PLambda$-space
enriched functor $\phi \colon
\aJ^{\PLambda}_{\PGamma}\to \aJ_{\PGamma/\PLambda}$ that on objects takes
$V$ to its $\PLambda$ fixed points $V^{\PLambda}$ and on morphisms
restricts to fixed points.  Precisely, the morphism space
$\aJ_{\PGamma}^{\PLambda}(V,W)$ is the Thom space of the vector bundle
\[
(\Im^{\perp}\aI(V,W))^{\PLambda}\to (\aI(V,W))^{\PLambda}.
\]
where an element in the total space consists of a $\PLambda$-equivariant isometry
$V\to W$ together with an element of $W^{\PLambda}$ in the orthogonal complement of the image.  On morphisms $\phi$ is the map on Thom
spaces induced by the map of vector bundles 
\[
\bigl((\Im^{\perp}\aI(V,W))^{\PLambda}\rightarrow (\aI(V,W))^{\PLambda}\bigr)\to
\bigl(\Im^{\perp}\aI(V^{\PLambda},W^{\PLambda})\to \aI(V^{\PLambda},W^{\PLambda})\bigr)
\]
that takes the $\PLambda$-equivariant isometry $f\colon V\to W$ to the
restriction of the map on $\PLambda$ fixed points $f^{\PLambda}\colon
V^{\PLambda}\to W^{\PLambda}$ and maps
$f^{\perp}\cap W^{\PLambda}$ to $(f|_{V^{\PLambda}})^{\perp}\subset
W^{\PLambda}$ by the identity.
The based $\PGamma/\PLambda$-space enriched left Kan extension along
$\phi$ is then 
a based $\PGamma/\PLambda$-space enriched functor from
$\PGamma/\PLambda$-equivariant $\aJ_{\PGamma}^{\PLambda}$-spaces
to $\PGamma/\PLambda$-spectra.

The point-set multiplicative geometric fixed point functor
$\Phi^{\PLambda}$ from $\PGamma$-spectra to
$\PGamma/\PLambda$-spectra is the composite of
$\Fix^{\PLambda}$ from $\PGamma$-spectra to
$\PGamma/\PLambda$-equivariant $\aJ_{\PGamma}^{\PLambda}$-spaces and 
based $\PGamma/\PLambda$-space enriched left Kan extension along
$\phi$.

Returning to the context of Theorems~\ref{thm:bundlegeofix},
\ref{thm:bundleiterphi}, and \ref{thm:derphi},
the functor $\Phi^{\Lambda/\KG}$ from $\Gamma$-spectra to
$\Gamma/\Lambda$-spectra is the instance of
this functor for $\Lambda/\KG\lhd \Gamma/\Lambda$ combined with the canonical
isomorphism of compact Lie groups $\Gamma/\Lambda\iso
(\Gamma/\KG)/(\Lambda/\KG)$.  As in Section~\ref{sec:repnote}, we
switch between orthogonal $\Gamma/\KG$-representations and
$\KG$-trivial orthogonal $\Gamma$-representations without notation and
comment. We are now ready for the proof of Theorem~\ref{thm:bundlegeofix}.

\begin{proof}[Proof of Theorem~\ref{thm:bundlegeofix}]
The left Kan extension along $\phi$ is left adjoint to the functor
$u_{\phi}$ given by restriction
along the functor $\phi$, 
\[
u_{\phi}X=X\circ \phi.
\]
In this context, $u_{\phi}$ is 
a based $\Gamma/\Lambda$-space enriched functor from
$\Gamma/\Lambda$-spectra to 
$\Gamma/\Lambda$-equivariant $\aJ_{\Gamma/\KG}^{\Lambda/\KG}$-spaces.
Thus, to construct a map of $\Gamma/\Lambda$-spectra
\[
\Phi^{\Lambda/\KG}J^{\Gamma/\KG}\eta\to
J^{\Gamma/\Lambda}(\eta(\Lambda|\KG))
\]
it suffices to construct a map of $\Gamma/\Lambda$-equivariant $\aJ_{\Gamma/\KG}^{\Lambda/\KG}$-spaces
\[
\Fix^{\Lambda/\KG}J^{\Gamma/\KG}\eta\to
(J^{\Gamma/\Lambda}\eta(\Lambda|\KG))\circ \phi.
\]
For $W$ a $\KG$-trivial orthogonal $\Gamma$-representation, we need to construct a
$\Gamma/\Lambda$-equivariant map
\begin{equation}\label{eq:Fixmap}
(\aJ(\eta,W)/Q)^{\Lambda}\to \aJ(\eta(\Lambda|\KG),W^{\Lambda})/Q
\end{equation}
natural in $W$ in $\aJ_{\Gamma/\KG}^{\Lambda/\KG}$.  

Before constructing this map, we study $(\aJ(\eta,W)/Q)^{\Lambda}$ further.
From the lemma, $(\aJ(\eta,W)/Q)^{\Lambda}$ is the Thom space of
the vector bundle 
\[
\biggl(\coprod_{\sigma \colon \Lambda \to Q}
(\Im^{\perp}\aI(\eta,W))^{\Lambda,\sigma}\to \aI(\eta,W)^{\Lambda,\sigma}\biggr)/Q.
\]
We note that on base spaces
\[
\aI(\eta,W)^{\Lambda,\sigma}
=
\aI(\eta|_{B^{\Lambda,\sigma}},W)^{\Lambda,\sigma}
=
\aI(\eta|_{B^{\Lambda,\sigma}\{\KG\}},W)^{\Lambda,\sigma}
\]
where $\eta \colon E\to B$ and $B^{\Lambda,\sigma}\{\KG\}$ is as in
Definition~\ref{defn:philambdatwo}.  This is because for $b\in B$, a
$\sigma$-twisted $\Lambda$ fixed point, the 
$\sigma$-twisted $\Lambda$ action on $E_{b}$ does not admit any
equivariant isometries into a $K$-trivial representation unless $b\in
B^{\Lambda,\sigma}\{K\}$.  Thus, $(\aJ(\eta,W)/Q)^{\Lambda}$ is the Thom space of
the vector bundle
\[
\biggl(\coprod_{\sigma \colon \Lambda \to Q}
(\Im^{\perp}\aI(\eta|_{B^{\Lambda,\sigma}\{\KG\}},W))^{\Lambda,\sigma}
\to \aI(\eta|_{B^{\Lambda,\sigma}\{\KG\}},W))^{\Lambda,\sigma}\biggr)/Q.
\]

We construct~\eqref{eq:Fixmap} as a variant of
$\phi$ as follows. 
We note that in the vector bundle displayed above, on each fiber, the vector
space is a subspace of $W^{\Lambda}$; 
specifically, over a point $b$, $f\colon E_{b}\to W$, it is the orthogonal complement of $f(E_{b}^{\Lambda,\sigma})$ in $W^{\Lambda}$.  We define the
map~\eqref{eq:Fixmap} to be the induced map on Thom spaces that takes
$f\colon E_{b}\to W$ to $f|_{E_{b}^{\Lambda,\sigma}}$ and is the
identity on the fiber vector spaces (viewed as subspaces of
$W^{\Lambda}$). The resulting map is clearly 
natural in $W$ in $\aJ_{\Gamma/\KG}^{\Lambda/\KG}$. To see that it is
$\Gamma/\Lambda$-equivariant, it suffices to check that it is
$\Gamma$-equivariant, and this follows from the formula
in~\eqref{eq:Gammaaction} for the $\Gamma$ action on
$\eta(\Lambda)$: acting by $\gamma \in \Gamma$ on a
$\sigma$-twisted fixed point $f\colon E_{b}\to W$ of $\aI(\eta,W)$
takes it to the $\sigma^{\gamma}$-twisted fixed point $\gamma \cdot
f=\gamma \circ f\circ \gamma^{-1}$. 

Having constructed the comparison map of $\Gamma/\Lambda$-spectra 
\[
\Phi^{\Lambda/\KG}J^{\Gamma/\KG}\eta\to
J^{\Gamma/\Lambda}(\eta(\Lambda|\KG)),
\]
it suffices to show that it
is an isomorphism non-equivariantly.  Consider the categories of
non-equivariant $\aJ_{\Gamma/\KG}^{\Lambda/\KG}$-spaces and
$\aJ_{\Gamma/\Lambda}$-spaces defined to be categories of based space
enriched functors from $\aJ_{\Gamma/\KG}^{\Lambda/\KG}$ or
$\aJ_{\Gamma/\Lambda}$ to based spaces, respectively.  (The category
of non-equivariant $\aJ_{\Gamma/\Lambda}$-spaces is equivalent to the
category of non-equivariant spectra by the functor that
restricts to the full subcategory of trivial representations.)  The
functor $\phi$ induces a restriction functor from non-equivariant $\aJ_{\Gamma/\Lambda}$-spaces to non-equivariant $\aJ_{\Gamma/\KG}^{\Lambda/\KG}$-spaces which has a left adjoint given by left
Kan extension.  For formal reasons, forgetting equivariance commutes
with left Kan extension.  Thus, forgetting
$\Gamma/\Lambda$-equivariance, we can study
$\Phi^{\Lambda/\KG}J^{\Gamma/\KG}\eta$ by applying the non-equivariant left Kan
extension along $\phi$ to the underlying non-equivariant $\aJ_{\Gamma/\KG}^{\Lambda/\KG}$-space of
$\Fix^{\Lambda/\KG}J^{\Gamma/\KG}\eta$.
We denote the left Kan extension along $\phi$ as $P_{\phi}$.

Non-equivariantly, $\Fix^{\Lambda/\KG}J^{\Gamma/\KG}\eta$ and
$J^{\Gamma/\Lambda}(\eta(\Lambda|\KG))$ decompose as the coproduct over $\sigma
\colon \Lambda \to Q$ of sub-$\aJ_{\Gamma/\KG}^{\Lambda/\KG}$-spaces and
sub-$\aJ_{\Gamma/\Lambda}$-spaces (respectively), and the comparison
map preserves this decomposition.  As a left adjoint, $P_{\phi}$
preserves coproducts, so it suffices to study each summand separately,
and we now fix $\sigma \colon \Lambda \to Q$.  Let
$B'=B^{\Lambda,\sigma}\{\KG\}$, and let $\eta'\colon E'\to B'$ be the vector
bundle obtained from $\eta$ by restriction.  When we regard $\eta'$ as
a $\Lambda$-equivariant vector bundle with the $\sigma$-twisted
$\Lambda$ action, the $\Lambda$ action is $\KG$-trivial, making
$\eta'$ a $\Lambda/\KG$-equivariant vector bundle; moreover, 
the vector bundle $\eta^{\Lambda,\sigma}$ is precisely its
$\Lambda/\KG$ fixed points.  For
every $b\in B'$, there exists an open set $U$ around $b$ where the
restriction $\eta'|_{U}$ is $\Lambda/\KG$-equivariantly trivializable,
i.e., $\eta'|_{U}$ is isomorphic to $U\times E_{b}\to U$ as a
$\Lambda/\KG$-equivariant vector bundle.  Let $\aU$ be the partially
ordered set of open subsets $U$ of $B'$ on which $\eta'$ admits a
$\Lambda/\KG$-equivariant trivialization.  Both
$\Fix^{\Lambda/\KG}J^{\Gamma/\KG}\eta$ and
$J^{\Gamma/\Lambda}(\eta(\Lambda|\KG))$ decompose 
as a colimit over $\aU$, compatibly with the comparison map, and so it
suffices to show that for $U\in \aU$, the restriction of the
comparison map
\begin{multline*}
P_{\phi}(\aJ(U\times V\sto U,-)^{\Lambda/\KG})\iso
P_{\phi}(\aJ(\eta'|_{U},-)^{\Lambda/\KG})\\
\to \aJ(\eta^{\Lambda,\sigma}\{\KG\}|_{U},-)\iso \aJ(U\times
V^{\Lambda/\KG}\sto U,-)
\end{multline*}
is an isomorphism of non-equivariant $\aJ_{\Gamma/\Lambda}$-spaces,
where $V=E_{b}$ denotes the fiber $\Lambda/\KG$-representation of
$\eta'$ at an arbitrarily chosen point $b\in U$.  Since $P_{\phi}$
commutes with smash product with spaces, the map in the previous
display is easily identified as the canonical map
\[
U_{+}\sma P_{\phi}(\aJ(V,-)^{\Lambda/\KG})
\to U_{+}\sma \aJ(V^{\Lambda/\KG},-),
\]
which is an isomorphism by~\cite[V.4.5]{MM} (as extended
by~\cite[3.8.12]{Kro-SOmega}). 
\end{proof}

Theorem~\ref{thm:bundleiterphi} is now an easy consequence.

\begin{proof}[Proof of Theorem~\ref{thm:bundleiterphi}]
By Theorem~\ref{thm:bundlegeofix}
\begin{align*}
\Phi^{M/\Lambda}\Phi^{\Lambda/\KG}J^{\Gamma/\KG}\eta&\iso
\Phi^{M/\Lambda}(J^{\Gamma/\Lambda}(\eta(\Lambda|\KG)))
\iso J^{\Gamma/M}((\eta(\Lambda|\KG))(M|\Lambda))\\
\Phi^{M/\KG}J^{\Gamma/\KG}\eta&\iso
J^{\Gamma/M}(\eta(M|\KG)).
\end{align*}
In the notation of Definition~\ref{defn:philambdatwo}, we have
\begin{align*}
(\eta(\Lambda|\KG))(M|\Lambda)
&=\coprod_{\sigma \colon M\to Q} (\eta(\Lambda|\KG))^{M,\sigma}\{\Lambda\}\\
&=\coprod_{\sigma \colon M\to Q} \biggl(
\biggl(
  \coprod_{\tau\colon \Lambda \to Q} \eta^{\Lambda,\tau}\{\KG\}
\biggr)\mathstrut^{M,\sigma}\{\Lambda\}\biggr).
\end{align*}
Consider $\sigma \colon M\to Q$, $\tau \colon \Lambda \to Q$, and an
element $b\in B^{\Lambda,\tau}\{\KG\}$, which when viewed as an element of
the base of $\eta(\Lambda|\KG)$ (in the $\tau$ summand) is fixed by the
$\sigma$-twisted $M$ action.  In particular, $b$ is then also in
$B^{M,\sigma}$ and the
$\sigma|_{\Lambda}$-twisted $\Lambda$ action on the total space of
$\eta(\Lambda|\KG)$ restricts to an action on the fiber
$E^{\Lambda,\tau}_{b}$.  Since $\eta(\Lambda|\KG)$ is assumed to be
$Q$-faithful, the $\sigma|_{\Lambda}$-twisted $\Lambda$ action on
$E^{\Lambda,\tau}_{b}$ is trivial if and only if
$\sigma|_{\Lambda}=\tau$. Moreover, $\sigma|_{\Lambda}$ is a fixed
point for the $\sigma$-twisted $M$ action on the set of homomorphisms
$\Lambda \to Q$, and 
\[
(\eta^{\Lambda,\sigma|_{\Lambda}}\{K\})^{M,\sigma}=
\eta^{M,\sigma}\{K\}.
\]
It follows that the inclusion of $\sigma|_{\Lambda}$ in
the set of all homomorphisms $\tau \colon \Lambda \to Q$ induces an
isomorphism of $(\Gamma,Q)$ vector bundles
\begin{multline*}
\eta(M)=\coprod_{\sigma \colon M\to Q}\eta^{M,\sigma}\{\KG\}
= \coprod_{\sigma \colon M\to Q}(\eta^{\Lambda,\sigma|_{\Lambda}}\{\KG\})^{M,\sigma}\\
\to
\coprod_{\sigma \colon M\to Q} 
\biggl(
  \coprod_{\tau\colon \Lambda \to Q} \eta^{\Lambda,\tau}\{\KG\}
\biggr)\mathstrut^{M,\sigma}\{\Lambda\}=
(\eta(\Lambda|\KG))(M|\Lambda).
\end{multline*}
This induces an isomorphism $\Phi^{M/\KG}J^{\Gamma/\KG}\eta \to
\Phi^{M/\Lambda}\Phi^{\Lambda/\KG}J^{\Gamma/\KG}\eta$
and it is straightforward to
identify this map as the canonical one.
\end{proof}

For the proof of Theorem~\ref{thm:derphi}, recall the geometric
homotopy groups: for a compact Lie group $G$, closed subgroup $H<G$,
and $G$-spectrum $X$, let 
\[
\pi_{q}^{\Phi H}(X)=\begin{cases}
\colim_{V}\pi_{q}(\Omega^{V^{H}}(X(V)^{H}))&q\geq 0\\
\colim_{V}\pi_{0}(\Omega^{V^{H}}(X(\bR^{-q}\oplus V)^{H}))&q<0\\
\end{cases}
\]
where the colimit ranges over the finite dimensional subspaces of an
$H$-universe, ordered by inclusion; this depends on the universe but
any other choice of universe with the same isomorphism classes of
representations gives
canonically isomorphic groups. By convention, except when we
explicitly mention a universe, we always understand this colimit to be
over a complete universe.

The functor $\pi_{*}^{\Phi H}$ is denoted $\rho^{K}_{*}$
in \cite[V.4.8.(iii)]{MM} for $K$ the trivial group in this case; then
\cite[V.4.12--16]{MM} gives a natural isomorphism from
$\pi_{q}^{\Phi H}X$ to the homotopy groups of the underlying
non-equivariant spectrum of the derived geometric fixed point
spectrum,  
\begin{equation}\label{eq:derphi}
\pi^{\Phi H}_{q}X\iso \pi_{q}(L\Phi^{H}X).
\end{equation}
According to~\cite[XVI.6.4]{May-Alaska}, these homotopy groups detect
weak equivalences and family equivalences of $G$-spectra; the
following proposition holds for any $G$-universe provided we use the
same universe to define weak equivalences and (its restriction to
subgroups) to construct $\pi_{*}^{\Phi H}$: 

\begin{prop}\label{prop:pivsphi}
Let $G$ be a compact Lie group and let $\aF$ be a family of
subgroups.  A map of $G$-spectra $X\to Y$ is an $\aF$-equivalence if
and only if it induces an 
isomorphism $\pi^{\Phi H}_{*}X\to \pi^{\Phi H}_{*}Y$ for all
$H\in\aF$.
\end{prop}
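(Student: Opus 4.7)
The plan is to reduce to the cited result~\cite[XVI.6.4]{May-Alaska}, which handles the case $\aF = \All$ (the family of all closed subgroups of $G$), and to use the standard isotropy separation machinery to transfer the statement to a general family.

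For the forward direction (that an $\aF$-equivalence induces isomorphisms on $\pi_*^{\Phi H}$ for $H \in \aF$), I would argue directly from the definition of $\pi_*^{\Phi H}$ as a colimit of homotopy groups of $\Omega^{V^H}(X(V)^H)$: restricting a map of $G$-spectra $f\colon X\to Y$ to $H$ gives a map of $H$-spectra that is an equivalence on $\pi_*^K$ for all $K\in \aF$ with $K<H$ (and in particular for $K=H$ itself), and a Whitehead-style argument on the $H$-CW structures of the $H$-spaces $X(V)$ then implies that $f(V)^H\to Y(V)^H$ is a weak equivalence of non-equivariant spaces for every orthogonal $G$-representation $V$ whose isotropy lies in $\aF$; passing to the colimit gives the $\pi_*^{\Phi H}$ isomorphism.

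For the reverse direction, I would smash with the universal space $E\aF_+$ fitting into the standard cofiber sequence $E\aF_+ \to S^0 \to \tilde E\aF$. The key point is that $f\colon X\to Y$ is an $\aF$-equivalence if and only if $E\aF_+\sma f$ is a (genuine) weak equivalence, by the classical observation that $(E\aF_+)^H$ is equivalent to $S^0$ for $H\in \aF$ and is $*$ for $H\notin \aF$. By~\cite[XVI.6.4]{May-Alaska}, $E\aF_+\sma f$ is a weak equivalence iff it induces an isomorphism on $\pi_*^{\Phi H}$ for every closed subgroup $H<G$. For $H\notin\aF$, both sides vanish because $\Phi^H(E\aF_+)\simeq *$ (as $(E\aF)^H=\emptyset$), so this condition is automatic. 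For $H\in\aF$, $\Phi^H(E\aF_+)\simeq S^0$ (since $(E\aF)^H$ is contractible), so the lax monoidal structure map gives a natural isomorphism $\pi_*^{\Phi H}(E\aF_+\sma X)\iso \pi_*^{\Phi H}(X)$ (using flatness of $E\aF_+$ and the identification~\eqref{eq:derphi} of $\pi_*^{\Phi H}$ with homotopy groups of the derived geometric fixed points). The hypothesis on $f$ then yields the required isomorphism.

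The main obstacle is the detail of identifying $\pi_*^{\Phi H}(E\aF_+\sma X)$ with $\pi_*^{\Phi H}(X)$ for $H\in \aF$ at the point-set level; one needs to know that geometric fixed points commute with the smash product $E\aF_+\sma (-)$ up to weak equivalence, which we get either from the derived symmetric monoidal structure on $\Phi^H$ (via~\eqref{eq:derphi}) or, if point-set control is preferred, by taking a cofibrant replacement of $X$ in the standard (complete) model structure and applying Theorem~\ref{thm:sm} together with the observation that $E\aF_+$ may be built from orbit cells $G/K_+$ with $K\in \aF$. Once this identification is in place, the two directions combine to give the proposition.
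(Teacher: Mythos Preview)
The paper does not prove this proposition: it simply cites \cite[XVI.6.4]{May-Alaska} as already establishing that the geometric homotopy groups $\pi_*^{\Phi H}$ detect both weak equivalences \emph{and} family equivalences. So the reduction you carry out (from a general family $\aF$ to the case $\aF=\All$ via smashing with $E\aF_+$) is not needed from the paper's point of view; the cited reference is asserted to cover the family case directly.

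That said, your reverse direction via $E\aF_+$ is a perfectly reasonable self-contained argument. The forward direction, however, has a genuine gap. From the hypothesis that $f\colon X\to Y$ is an $\aF$-equivalence (i.e., a $\pi_*^K$-isomorphism for all $K\in\aF$) you cannot conclude that $f(V)^H\to Y(V)^H$ is a weak equivalence of spaces: a stable equivalence of $H$-spectra is not in general a level equivalence, and no ``Whitehead-style argument on the $H$-CW structures of the $H$-spaces $X(V)$'' will produce one without a fibrancy hypothesis you have not imposed. The clean route for this direction is the one the paper sets up just before the proposition: use the identification \eqref{eq:derphi}, $\pi_*^{\Phi H}X\cong \pi_*(L\Phi^H X)$. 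Since the derived geometric fixed point functor $L\Phi^H$ takes $\aF$-equivalences to weak equivalences for $H\in\aF$ (restrict to $H$, where the map is a genuine weak equivalence, then apply $L\Phi^H$), the forward implication follows immediately.
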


For $N$ a normal subgroup of $G$
contained in $H$, we have a natural map 
\[
\pi_{q}^{\Phi H}(X)\to\pi_{q}^{\Phi (H/N)}(\Phi^{N}X).
\]
induced by the map $\Fix^{N}X\to \Phi^{N}X\circ \phi$ as follows.  For
$q\geq 0$, and $V$ an orthogonal $G$-representation, we use the map
\begin{multline*}
\pi_{q}(\Omega^{V^{H}}(X(V)^{H}))\iso
\pi_{q}(\Omega^{(V^{N})^{H/N}}((X(V)^{N})^{H/N}))\\\to 
\pi_{q}(\Omega^{(V^{N})^{H/N}}((\Phi^{N}X(V^{N}))^{H/N}))
\to \pi_{q}^{\Phi (H/N)}(\Phi^{N}X)
\end{multline*}
(where we take the colimit implicit in $\pi_{q}^{\Phi(H/N)}$ over the
inclusions in the complete $H/N$-universe obtained as the $N$ fixed
points of the chosen complete $H$-universe).  Similar formulas apply
when $q<0$.  Considering the case when $X$ is cofibrant in the
standard model structure, the point-set geometric fixed point
functors model the derived geometric fixed point functors, and
(using~\cite[V.4.13]{MM}) we see that the diagram of natural transformations
\[
\xymatrix{%
\pi_{q}^{\Phi H}(X)\ar[rr]\ar[d]_{\iso}
&&\pi_{q}^{\Phi(H/N)}(\Phi^{N}X)\ar[d]^{\iso}\\
\pi_{q}(L\Phi^{H}(X))\ar[r]_-{\iso}
&\pi_{q}(L\Phi^{H/N}(L\Phi^{N}X))\ar[r]
&\pi_{q}(L\Phi^{H/N}(\Phi^{N}X))
}
\]
commutes.  In the context of Notation~\ref{notn:KGL}, taking
$G=\Gamma/\KG$, the previous
proposition now gives the following conclusion.

\begin{prop}\label{prop:derphi}
Let $X$ be a $\Gamma/\KG$-spectrum.  The universal map
$L\Phi^{\Lambda/\KG}X\to \Phi^{\Lambda/\KG}X$ in the
$\Gamma/\Lambda$-equivariant stable category is an isomorphism if and
only if the natural maps $\pi_{*}^{\Phi(M/\KG)}X\to
\pi_{*}^{\Phi(M/\Lambda)}(\Phi^{\Lambda/K}X)$ are isomorphisms for all
closed subgroups $M<\Gamma$ containing $\Lambda$.
\end{prop}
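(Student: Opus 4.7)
The plan is to reduce the statement to an application of Proposition~\ref{prop:pivsphi} for the family of all closed subgroups of $\Gamma/\Lambda$, using the commutative diagram already displayed just above the proposition to identify the natural maps on geometric homotopy groups.

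First I would apply Proposition~\ref{prop:pivsphi} to the map $L\Phi^{\Lambda/\KG}X\to \Phi^{\Lambda/\KG}X$ viewed as a map of $\Gamma/\Lambda$-spectra, taking $\aF$ to be the family of all closed subgroups of $\Gamma/\Lambda$. Since closed subgroups of $\Gamma/\Lambda$ correspond to closed subgroups $M<\Gamma$ containing $\Lambda$ (via $M\mapsto M/\Lambda$), this reduces the statement to showing that the map on $\pi_{*}^{\Phi(M/\Lambda)}$ induced by the universal map is identified with the natural map $\pi_{*}^{\Phi(M/\KG)}X\to \pi_{*}^{\Phi(M/\Lambda)}(\Phi^{\Lambda/\KG}X)$ of the proposition.

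Next, choose a cofibrant approximation $X^{c}\to X$ in the standard model structure on $\Gamma/\KG$-spectra, so $L\Phi^{\Lambda/\KG}X$ is represented by $\Phi^{\Lambda/\KG}X^{c}$. Using the natural isomorphism~\eqref{eq:derphi} between $\pi_{*}^{\Phi H}$ and $\pi_{*}$ of the derived geometric fixed points, together with the fact that the point-set iterated geometric fixed points on a cofibrant object agree with the derived iteration (and that iterated derived geometric fixed points compose, as in~\cite[V.4.13]{MM}), I would identify
\[
\pi_{*}^{\Phi(M/\Lambda)}(L\Phi^{\Lambda/\KG}X)\iso \pi_{*}(L\Phi^{M/\Lambda}\Phi^{\Lambda/\KG}X^{c})\iso \pi_{*}(L\Phi^{M/\KG}X^{c})\iso \pi_{*}^{\Phi(M/\KG)}X.
\]
Under this chain of isomorphisms, the map induced by the universal comparison on the source identifies with the map of the proposition. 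This identification is precisely the commutativity of the diagram displayed just before the proposition, instantiated with $H=M/\KG$ and $N=\Lambda/\KG$ (and using $(M/\KG)/(\Lambda/\KG)\iso M/\Lambda$).

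The main step is the compatibility packaged in that commutative diagram, and that verification has already been carried out in the discussion preceding the proposition statement; once it is in hand, the equivalence in the proposition is immediate from Proposition~\ref{prop:pivsphi}. The one point to be careful about is the choice of universe when defining $\pi_{*}^{\Phi(M/\Lambda)}$: I would use the complete $M/\Lambda$-universe obtained as the $\Lambda/\KG$ fixed points of the complete $M/\KG$-universe implicit in $\pi_{*}^{\Phi(M/\KG)}$, which is the same choice used to make~\cite[V.4.13]{MM} applicable, so no difficulty arises.
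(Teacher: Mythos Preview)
Your proposal is correct and follows precisely the approach the paper takes: the paper does not give a separate proof but presents the proposition as an immediate consequence of Proposition~\ref{prop:pivsphi} together with the commutative diagram established just before the statement, exactly as you outline. Your explicit unwinding of the identification $\pi_{*}^{\Phi(M/\Lambda)}(L\Phi^{\Lambda/\KG}X)\iso \pi_{*}^{\Phi(M/\KG)}X$ via~\eqref{eq:derphi} and the iteration isomorphism from~\cite[V.4.13]{MM} is the content the paper leaves implicit, and your care with the choice of universe is appropriate.
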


Our strategy for the proof of Theorem~\ref{thm:derphi} is to apply the
previous proposition and calculate the maps.

\begin{proof}[Proof of Theorem~\ref{thm:derphi}]
By Theorem~\ref{thm:bundlegeofix}, it suffices to show that the map 
\[
\pi_{*}^{\Phi(M/\KG)}J^{\Gamma/\KG}\eta\to
\pi_{*}^{\Phi(M/\Lambda)}(J^{\Gamma/\Lambda}\eta (\Lambda|\KG))
\]
is an isomorphism for all $M<\Gamma$ containing $\Lambda$.  We now
fix one such $M$.
For $V$ an orthogonal $\Gamma/\KG$-representation, by
Lemma~\ref{lem:bundlegeofix}, we have 
\[
(J^{\Gamma /\KG}\eta (V))^{M}
\iso\biggl(\bigvee_{\sigma \colon M\to Q}\aJ(\eta,V)^{M,\sigma}\biggr)/Q
=\biggl(\bigvee_{\sigma \colon M\to Q}\aJ(\eta|_{B^{M,\sigma}\{\KG\}},V)^{M,\sigma}\biggr)/Q
\]
since $V$ is $\KG$-trivial and $\eta$ is $Q$-faithful.  Likewise, we have
\[
((J^{\Gamma /\Lambda}\eta(\Lambda|\KG))(V^{\Lambda}))^{M}
\iso
\biggl(\bigvee_{\sigma \colon M\to Q}
\aJ(\eta^{\Lambda,\sigma|_{\Lambda}}|_{B^{M,\sigma}\{\KG\}},
  V^{\Lambda})^{M,\sigma}\biggr)/Q;
\]
as in the proof of Theorem~\ref{thm:bundleiterphi}, the summands
$\tau\colon \Lambda \to Q$ for $\tau\neq \sigma|_{\Lambda}$ disappear
since $V^{\Lambda}$ is $\Lambda$-trivial and $\eta(\Lambda|\KG)$ is
$Q$-faithful.  The map we are calculating is induced by the map
\[
\aJ(\eta|_{B^{M,\sigma}\{\KG\}},V)^{M,\sigma}
\to 
\aJ(\eta^{\Lambda,\sigma|_{\Lambda}}|_{B^{M,\sigma}\{\KG\}},
  V^{\Lambda})^{M,\sigma}
\]
which is the induced map on Thom spaces from
\[
\Im^{\perp}\aI(\eta|_{B^{M,\sigma}\{\KG\}},V)^{M,\sigma}
\to 
\Im^{\perp}\aI(\eta^{\Lambda,\sigma|_{\Lambda}}|_{B^{M,\sigma}\{\KG\}},
  V^{\Lambda})^{M,\sigma}
\]
which takes a $\sigma$-twisted $M$-equivariant isometry $E_{b}\to V$
to its $\sigma|_{\Lambda}$-twisted $\Lambda$ fixed points
$E_{b}^{\Lambda,\sigma|_{\Lambda}}\to V^{\Lambda}$ and is the identity
on the complementary ($\sigma$-twisted $M$ fixed) vector space.

To calculate the map on homotopy groups of colimits, consider a
complete $M/\KG$-universe $U=U_{0}\oplus U_{1}$, where
$U_{0}=\bR^{\infty}$ and $U_{1}^{M}=0$.  Then for $X$ a $\Gamma/\KG$-spectrum we can calculate $\pi_{q}^{\Phi M/\KG}X$ as
\[
\colim_{n\gg -q}\colim_{W<U_{1}}\pi_{n+q}(X(\bR^{n}\oplus W)^{M/\KG})
\]
for any $q\in \bZ$.  If we extend the definition of $\aJ(\eta,-)$ to
infinite dimensional inner product spaces by taking the colimit over
finite dimensional subspaces, then we get
\begin{align*}
\pi^{\Phi M/\KG}(J^{\Gamma/\KG}\eta)&\iso
\colim_{n} \pi_{n+q}\biggl(
\biggl(\bigvee_{\sigma \colon M\to Q}\aJ(\eta|_{B^{M,\sigma}\{\KG\}},\bR^{n}\oplus U_{1})^{M,\sigma}\biggr)/Q\biggr)\\
\pi^{\Phi M/\Lambda}(J^{\Gamma/\KG}\eta(\Lambda|\KG))&\iso
\colim_{n} \pi_{n+q}\biggl(\biggl(\bigvee_{\sigma \colon M\to Q}
\aJ(\eta^{\Lambda,\sigma|_{\Lambda}}|_{B^{M,\sigma}\{\KG\}},
  \bR^{n}\oplus U_{1}^{\Lambda})^{M,\sigma}\biggr)/Q\biggr).
\end{align*}
Thus, it suffices to show that the map
\[
\bigvee_{\sigma\colon M\to Q}\aJ(\eta|_{B^{M,\sigma}\{\KG\}},\bR^{n}\oplus U_{1})^{M,\sigma}
\to
\bigvee_{\sigma \colon M\to Q}\aJ(\eta^{\Lambda,\sigma|_{\Lambda}}|_{B^{M,\sigma}\{\KG\}},
  \bR^{n}\oplus U_{1}^{\Lambda})^{M,\sigma}
\]
is a $Q$-equivariant weak equivalence.  Both sides are
$Q$-free because the bundles are $Q$-faithful, so it suffices to show
for each $\sigma \colon M\to Q$, the map 
\[
\aJ(\eta|_{B^{M,\sigma}\{\KG\}},\bR^{n}\oplus U_{1})^{M,\sigma}
\to
\aJ(\eta^{\Lambda,\sigma|_{\Lambda}}|_{B^{M,\sigma}\{\KG\}},
  \bR^{n}\oplus U_{1}^{\Lambda})^{M,\sigma}
\]
is a non-equivariant homotopy equivalence.  

We understand the $M$ action on the vector bundle
$\eta|_{B^{M,\sigma}\{K\}}$ to be the $\sigma$-twisted $M$ action.
This is a fiberwise $\KG$ fixed $M$ action and the usual integration
argument decomposes $\eta|_{B^{M,\sigma}\{K\}}$ into an internal
Whitney sum 
\[
\eta|_{B^{M,\sigma}\{K\}}=\theta_{0}\oplus \theta_{1}
\]
where $\theta_{0}=(\eta|_{B^{M,\sigma}\{K\}})^{M/\KG}$ and
$\theta_{1}=\theta_{0}^{\perp}$ satisfies $\theta_{1}^{M/\KG}$ is zero
dimensional.  Because $\Lambda$ is normal in $M$, $\theta_{1}$
decomposes further as  
\[
\theta = \theta_{1}^{\Lambda/\KG}\oplus \theta'_{1}.
\]
We then have 
\begin{align*}
\aJ(\eta|_{B^{M,\sigma}\{\KG\}},\bR^{n}\oplus U_{1})^{M,\sigma}
&\iso \aJ(\theta_{0},\bR^{n})\times \aI(\theta_{1},U_{1})^{M/\KG}\\
\aJ(\eta^{\Lambda,\sigma|_{\Lambda}}|_{B^{M,\sigma}\{\KG\}},
  \bR^{n}\oplus U_{1}^{\Lambda})^{M,\sigma}
&\iso \aJ(\theta_{0},\bR^{n})\times
\aI(\theta_{1}^{\Lambda/\KG},U_{1}^{\Lambda/\KG})^{M/\KG};
\end{align*}
the map preserves this product decomposition, is the identity on the
first factor, and is a homotopy equivalence on the second factors
because both are contractible.
\end{proof}

\section{Reduction of Theorems~\sref$thm:sm$ and~\sref$thm:bundleflat$}
\label{sec:redsm}

This section reduces Theorems~\ref{thm:sm}
and~\ref{thm:bundleflat} to a simpler but more technical result stated
as Theorem~\ref{thm:fixsmflat} below and proved in the next section.  As in the
statements, we follow the notational convention of
Notation~\ref{notn:KGL}: $\Gamma$ is a compact Lie group with a closed
normal subgroups $\KG,\Lambda$ satisfying $\KG<\Lambda$.

Theorem~\ref{thm:fixsmflat} is stated in terms of a variant of the
functor $\Fix^{\Gamma}$ that we write as $\Fix^{\Gamma}_{\KG}$.
The domain of $\Fix^{\Gamma}_{\KG}$ is the category of
$\Gamma$-spectra indexed on $\KG$-trivial
orthogonal $\Gamma$-representations, or equivalently, orthogonal $\Gamma/\KG$-representations.  It is equivalent to the category of
$\Gamma$-equivariant $\aJ_{\Gamma/\KG}$-spaces, and we mostly refer to
it this way, except in some headline statements, where we write 
$\Gamma$-spectra indexed on $\KG$-trivial
orthogonal $\Gamma$-representations for historical and philosophical
reasons. Starting with a
$\Gamma$-equivariant $\aJ_{\Gamma/\KG}$-space and taking $\Gamma$ fixed points
levelwise gives a (non-equivariant)
$\aJ^{\Gamma/\KG}_{\Gamma/\KG}$-space. (We emphasize that
$\aJ^{\Gamma/\KG}_{\Gamma/\KG}$-spaces here are always understood to
be non-equivariant.)

\begin{defn}\label{defn:fixgk}
Let $\Fix^{\Gamma}_{\KG}$ be the functor from $\Gamma$-equivariant $\aJ_{\Gamma/\KG}$-spaces to 
$\aJ^{\Gamma/\KG}_{\Gamma/\KG}$-spaces defined by
\[
(\Fix^{\Gamma}_{\KG}X)(W)=(X(W))^{\Gamma}.
\]
\end{defn}

We often want to apply $\Fix^{\Gamma}_{\KG}$ to a
$\Gamma$-spectrum $Y$; to avoid confusion, we write
$\imath Y$ for the underlying $\Gamma$-equivariant $\aJ_{\Gamma/\KG}$-space obtained by restricting $Y$ to the
$\KG$-trivial orthogonal $\Gamma$-representations. In parallel, for
a $\Gamma/\KG$-spectrum $X$, we get a
$\Gamma$-equivariant $\aJ_{\Gamma/\KG}$-space $\jmath X$ obtained
by regarding $\Gamma/\KG$-spaces as $\Gamma$-spaces with trivial
$\KG$ action. 

The category of $\aJ_{\Gamma/\KG}^{\Gamma/\KG}$-spaces has a smash product defined by
Day convolution for the smash product in $\aJ^{\Gamma/\KG}_{\Gamma/\KG}$ induced by
direct sum; see~\cite[\S 21]{MMSS}.  The functor
$\Fix^{\Gamma}_{\KG}$ is lax symmetric monoidal for this smash
product, with structure map
\[
\Fix^{\Gamma}_{\KG}X\sma \Fix^{\Gamma}_{\KG}Y\to
\Fix^{\Gamma}_{\KG}(X\sma Y)
\]
induced by the natural transformation 
\begin{multline*}
(\Fix^{\Gamma}_{\KG}X)(V)\sma (\Fix^{\Gamma}_{\KG}Y)(W)
=(X(V))^{\Gamma}\sma (Y(W))^{\Gamma}\\
\to ((X\sma Y)(V\oplus W))^{\Gamma}
=(\Fix^{\Gamma}_{\KG}(X\sma Y))(V\oplus W)
\end{multline*}
where the middle map is the induced map on $\Gamma$ fixed points of the universal
map $X(V)\sma Y(W)\to (X\sma Y)(V\oplus W)$ for the smash product of
$\Gamma$-spectra indexed on $\KG$-trivial orthogonal $\Gamma$-representations.

For some of the work, we will need the homotopy theory of
$\aJ^{\Gamma/\KG}_{\Gamma/\KG}$-spaces. We understand a
\term{weak equivalence} as a 
$\pi_{*}$-isomorphism, that is an isomorphism on homotopy groups: for
a $\aJ^{\Gamma/\KG}_{\Gamma/\KG}$-space $X$, we define homotopy groups by
\[
\pi_{q}X:=\begin{cases}
\colim_{V}\pi_{q}\Omega^{V^{\Gamma/\KG}}X(V)&q\geq 0\\
\colim_{V}\pi_{0}\Omega^{V^{\Gamma/\KG}}X(\bR^{-q}\oplus V)&q< 0\\
\end{cases}
\]
where the colimit ranges over the finite dimensional subspaces of a
complete $\Gamma/\KG$-universe (cf.~\cite[V.4.8.i]{MM}).  These are the weak equivalences in a model
structure whose cofibrant objects are the retracts of cell complexes built by
cells of the form $\aJ^{\Gamma/\KG}_{\Gamma/\KG}(V,-)\sma (D^{n},\partial D^{n})_{+}$. 

In the following theorem, $F=\aF(1\times Q^{\op})$ denotes the
family of graph subgroups, and $\EF$ denotes the universal space for this
family (sometimes elsewhere written $E_{\Gamma}Q^{op}$). The graph
subgroups are precisely those subgroups whose 
intersection with $1\times Q^{\op}$ is the trivial subgroup.  The
universal space $\EF$ is a $(\Gamma \times Q^{\op})$-CW complex
characterized up to equivariant homotopy equivalence by the condition
that $\EF^{H}$ is 
contractible when $H$ is a graph subgroup of $\Gamma \times Q^{\op}$
and is empty when $H$ is not.

\begin{thm}\label{thm:fixsmflat}
Let $\eta$ be the $(\Gamma,Q)$-vector bundle 
\[
(\Gamma \times Q^{\op})\times_{M}V\times Y\to (\Gamma\times Q^{\op})/M\times Y
\]
for $M$ a closed subgroup of $\Gamma \times Q^{\op}$, $V$ an
orthogonal $M$-representation, and $Y$ a $(\Gamma \times Q^{\op})$-equivariant cell
complex.  Assume that $\eta$ is $Q$-faithful.
Then for any $\Gamma$-spectrum $X$ indexed on
$\KG$-trivial orthogonal $\Gamma$-rep\-resen\-tations:
\begin{enumerate}
\item The
canonical map
\[
\Fix^{\Gamma}_{\KG}(\imath J\eta)\sma
\Fix^{\Gamma}_{\KG}(X)
\to
\Fix^{\Gamma}_{\KG}(\imath J\eta\sma X)
\]
is an isomorphism.
\item The map $\EF\to *$ induces a weak equivalence 
\[
\Fix^{\Gamma}_{\KG}(\imath J(\EF\times \eta)\sma X)\to
\Fix^{\Gamma}_{\KG}(\imath J\eta\sma X).
\]
\end{enumerate}
\end{thm}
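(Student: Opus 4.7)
The strategy is to use the explicit formula for $\Gamma$-fixed points of $\aJ(\eta,W)/Q$ from Lemma~\ref{lem:bundlegeofix} and to exploit the extended-bundle shape of $\eta$ so that these fixed points factor compatibly with smash products and with the universal space $\EF$.

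For part (i), I would reduce to a cell case and then compute directly. Both sides of the comparison map preserve wedges and colimits along spacewise closed inclusions (for $\Fix^\Gamma_\KG$ because taking $\Gamma$-fixed points of a closed subset is a closed subset, and for $\imath J\eta\sma -$ because it is a left adjoint), so a cellular presentation of $X$ as a $\Gamma$-equivariant $\aJ_{\Gamma/\KG}$-space reduces the problem to building blocks of the form $F_U A$ for $U$ a $\KG$-trivial orthogonal $\Gamma$-representation and $A$ a $\Gamma$-space chosen so that $\Gamma$-fixed points commute with smashing with $A$. For such $X$, Proposition~\ref{prop:bundleprod} rewrites $\imath J\eta\sma F_U A$ as $\imath J(\eta\times(U\to*))\sma A$, where $\eta\times(U\to*)$ is again an extended $(\Gamma,Q)$-bundle of the same form as $\eta$ (with $V$ replaced by $V\oplus U$ as an $M$-representation) and is still $Q$-faithful. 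Applying Lemma~\ref{lem:bundlegeofix} to both sides, the $\Gamma$-fixed points decompose as wedges over homomorphisms $\sigma\colon\Gamma\to Q$, and the extended-bundle form forces each $\sigma$-summand to factor as a product of data from $\eta$ and data from the cell; this factorization exactly matches the lax monoidal structure, giving the required isomorphism.

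For part (ii), the key point is that the $\sigma$-twisted $\Gamma$ fixed points entering Lemma~\ref{lem:bundlegeofix} are fixed points under graph subgroups $\Gamma_\sigma=\{(\gamma,\sigma(\gamma^{-1}))\}\in\aF(1\times Q^{\op})$, and by definition of $\EF$ these fixed point spaces are contractible. The bundle $\EF\times\eta$ inherits the extended form of $\eta$ (with $Y$ replaced by the $(\Gamma\times Q^{\op})$-cell complex $\EF\times Y$) and remains $Q$-faithful by Proposition~\ref{prop:Sfrestr}, so part (i) applies. Levelwise, the map $\Fix^\Gamma_\KG(\imath J(\EF\times\eta))\to\Fix^\Gamma_\KG(\imath J\eta)$ is the $Q$-quotient of the wedge over $\sigma$ of the maps
\[
\EF^{\Gamma,\sigma}_+\sma\aJ(\eta,W)^{\Gamma,\sigma}\to\aJ(\eta,W)^{\Gamma,\sigma}
\]
induced by $\EF\to *$; contractibility of $\EF^{\Gamma,\sigma}$ and $Q$-freeness of both wedges (Proposition~\ref{prop:faithfulfree}) give the level-wise weak equivalence. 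To promote this to the smashed version in (ii), apply part (i) to both bundles and reduce $X$ cell-by-cell as in part (i).

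The hardest step will be the cellular reduction in part (i): identifying cells of $X$ for which the comparison map can be checked directly, and verifying that $\imath J\eta$ smashed with such a cell is again a generalized orbit desuspension spectrum to which Lemma~\ref{lem:bundlegeofix} applies. Because $\Fix^\Gamma_\KG$ is only lax monoidal for general pairs of arguments, the extended-bundle form of $\eta$ must be used in an essential way; a purely formal manipulation via the Day convolution would not suffice, since the obstruction concerns precisely how the $\sigma$-decomposition interacts with the coend.
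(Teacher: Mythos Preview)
Your cellular reduction on $X$ is the gap. The theorem is stated for an \emph{arbitrary} $\Gamma$-equivariant $\aJ_{\Gamma/\KG}$-space $X$; nothing says $X$ is a cell complex. You claim both sides commute with wedges and colimits along spacewise closed inclusions, and that is fine, but an arbitrary $X$ is not built from cells $F_UA$ by such colimits. The only universal presentation available is the tautological coequalizer, and $\Fix^{\Gamma}_{\KG}$ has no reason to commute with that coequalizer applied after smashing with $\imath J\eta$. So the reduction to $X=F_UA$ is not justified, and the rest of your argument for part~(i) never gets off the ground. The same defect recurs in your treatment of part~(ii), which again invokes ``reduce $X$ cell-by-cell.''

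The paper's proof avoids this entirely by exploiting the other variable: the single-orbit form of $\eta$ makes $\imath J\eta$ essentially a representable object, so there is a closed formula (their~\eqref{eq:machina}, in the spirit of \cite[4.4]{MMSS})
\[
(\imath J\eta\sma X)(W)\iso(\aI(V\oplus\bR^{n},W)\times Y)_{+}\sma_{\RQ'\times O(n)}(S^{0}\sma X(\bR^{n}))
\]
valid for \emph{every} $X$. The proof then analyzes the $\Gamma$ fixed points of this directly: pulling back to the unquotiented space, a point is fixed exactly when there exist a homomorphism $r\colon\Gamma/\KG\to O(n)$ and an $M$ graph subgroup homomorphism $\sigma\colon\Gamma\to Q$ making it a $(\sigma,r)$-twisted fixed point, and these data are unique because the $\RQ'\times O(n)$ action is free. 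Organizing the fixed points by $\sigma$ and inducing up over $O(n)$ yields an explicit $(\RQ'\times O(n))$-equivariant decomposition (Theorem~\ref{thm:recharpsi}), from which both~(i) and~(ii) follow. For~(ii) in particular, the decomposition shows the comparison is, levelwise, a wedge over $\sigma$ of maps induced by $(\EF\times Y)_{\sigma}^{\Gamma}\to Y_{\sigma}^{\Gamma}$; these are homotopy equivalences because graph subgroups lie in the family, and freeness of the residual $\RQ'\times(O(R_{r[\sigma]}))^{\Gamma/\KG}$ action lets you pass to the quotient. No hypothesis on $X$ is ever used.

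Your instinct to decompose over $\sigma$ via Lemma~\ref{lem:bundlegeofix} is correct, but that lemma only controls $\Fix^{\Gamma}_{\KG}(\imath J\eta)$ by itself; to handle $\Fix^{\Gamma}_{\KG}(\imath J\eta\sma X)$ you need the representability of $\imath J\eta$ to unfold the smash product first, and then a finer analysis that keeps track of the $O(n)$ as well as the $Q$ twisting.
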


We now deduce Theorems~\ref{thm:sm} and~\ref{thm:bundleflat}
from Theorem~\ref{thm:fixsmflat}.

\begin{proof}[Proof of Theorem~\ref{thm:sm} from Theorem~\ref{thm:fixsmflat}]
Let $\eta$ and $X$ be as in the
statement of Theorem~\ref{thm:sm}.  Since $\Phi^{\Lambda/\KG}$ is the composite
of $\Fix^{\Lambda/\KG}$ with left Kan extension, it suffices to show that the
map
\[
\Fix^{\Lambda/\KG}(J^{\Gamma/\KG}\eta)\sma\Fix^{\Lambda/\KG}X
\to \Fix^{\Lambda/\KG}(J^{\Gamma/\KG}\eta\sma X)
\]
is an isomorphism of non-equivariant $\aJ_{\Gamma/\KG}^{\Lambda/\KG}$-spaces.
Non-equivariantly, the restriction of equivariance functor induces an
equivalence of (non-equivariant) topological categories from
$\aJ_{\Gamma/\KG}^{\Lambda/\KG}$ to a full subcategory of
$\aJ_{\Lambda/\KG}^{\Lambda/\KG}$, and so it suffices 
to show that the map above is an isomorphism of non-equivariant
$\aJ_{\Lambda/\KG}^{\Lambda/\KG}$-spaces for any $\Lambda$-equivariant
$\aJ_{\Lambda/\KG}$-space $X$.  Thus, by a change of
notation, we can assume without loss of generality that $\Lambda =\Gamma$.
Then 
\begin{gather*}
\Fix^{\Lambda/\KG}(J^{\Gamma/\KG}\eta)\iso
\Fix^{\Gamma}_{\KG}(\imath J\eta),\quad
\Fix^{\Lambda/\KG}(X)\iso\Fix^{\Gamma}_{\KG}(\jmath X), \quad \text{and}\\
\Fix^{\Lambda/\KG}(J^{\Gamma/\KG}\eta\sma X)
\iso\Fix^{\Gamma}_{\KG}(\imath J\eta\sma \jmath X),
\end{gather*}
compatibly with the universal map, so it suffices to show that the map
\[
\Fix^{\Gamma}_{\KG}(\imath J\eta)\sma 
\Fix^{\Gamma}_{\KG}(\jmath X)\to
\Fix^{\Gamma}_{\KG}(\imath J\eta\sma\jmath X).
\]
is an isomorphism. Since both sides commute with
cofiber sequences and sequential colimits over Hurewicz cofibrations,
working a cell at a time, it suffices to consider the case when the
base of $\eta$ consists of a single cell or its boundary, i.e., when
$\eta$ is as in the statement of Theorem~\ref{thm:fixsmflat} for
$Y=D^{n}$ or $\partial D^{n}$.  Part~(i) of
Theorem~\ref{thm:fixsmflat} establishes this case.
\end{proof}

For Theorem~\ref{thm:bundleflat}, we need the following additional observation.

\begin{prop}\label{prop:bundlefixcw}
Let $\eta\colon E\to B$ be a $(\Gamma,Q)$ vector bundle such that
$B$ is a $(\Gamma \times Q^{\op})$-equivariant CW complex with
isotropy groups in $\aF(1\times Q^{\op})$.  Then
$\Fix^{\Gamma}_{\KG}(\imath J\eta)$ is a cellular
$\aJ^{\Gamma/\KG}_{\Gamma/\KG}$-space, i.e., is built from
cells of the form $\aJ(V,-)\sma (D^{n},\partial D^{n})_{+}$ for $V$ an
orthogonal $\Gamma/\KG$-representation.
\end{prop}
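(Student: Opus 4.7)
The plan is to work cell-by-cell using the $(\Gamma \times Q^{\op})$-equivariant CW structure on $B$. The composite $\eta \mapsto \Fix^{\Gamma}_{\KG}(\imath J\eta)$ preserves spacewise closed inclusions, their pushouts, and sequential colimits of such: for $J$ this follows from the fiberwise description of $\aJ(\eta,W)$ combined with the freeness of the $Q$-action guaranteed by Proposition~\ref{prop:faithfulfree} (applied to each restriction of $\eta$ to a cell, which remains $Q$-faithful by Proposition~\ref{prop:Sfrestr}), while $\imath$ and $\Fix^{\Gamma}_{\KG}$ manifestly preserve these operations. Hence it suffices to analyze the pair associated to a single cell of $B$, namely $(\Gamma \times Q^{\op})/M \times (D^{n}, \partial D^{n})$ with $M \in \aF(1 \times Q^{\op})$, i.e., $M = H_{\sigma} = \{(h, \sigma(h)^{-1}) : h \in H\}$ a graph subgroup with $H < \Gamma$ and $\sigma \colon H \to Q$ continuous.

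Proposition~\ref{prop:bundleprod} splits off the disc factor, so it suffices to treat the orbit bundle $\eta_0 = (\Gamma \times Q^{\op})\times_M V \to (\Gamma \times Q^{\op})/M$ where $V$ is an orthogonal $M$-representation. The graph-subgroup hypothesis ensures that the right $Q$-action on $(\Gamma \times Q^{\op})/M$ is free with $\Gamma$-space quotient $\Gamma/H$; the $V$-bundle descends to give the induced vector bundle $\Gamma\times_H V_H \to \Gamma/H$, where $V_H$ is $V$ with its $H$-action obtained from the projection isomorphism $M \iso H$ sending $(h, \sigma(h)^{-1}) \mapsto h$. Unwinding the Thom-space construction then identifies $J\eta_0$ with the induced orbit desuspension spectrum $\Gamma_{+} \sma_H F_{V_H} S^{0}$; explicitly, $(\aI(\eta_0, W))/Q \iso \Gamma \times_{H} \aI(V_H, W)$ and similarly on complement bundles.

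With this identification, $\Fix^{\Gamma}_{\KG}(\imath J\eta_0)$ sends a $\KG$-trivial orthogonal $\Gamma$-representation $W$ to $(\Gamma/H_{+} \sma \aJ_{\Gamma}(V_H, W))^{\Gamma}$. Since $(\Gamma/H)^{\Gamma}$ is empty unless $H = \Gamma$, the result is the trivial $\aJ^{\Gamma/\KG}_{\Gamma/\KG}$-space when $H \neq \Gamma$. When $H = \Gamma$, $V_H$ is a $\Gamma$-representation and the needed $\Gamma$-fixed subspace of $\aJ_{\Gamma}(V_H, W)$ is the Thom space of the complement bundle over the space of $\Gamma$-equivariant linear isometries $V_H \to W$; since $W$ is $\KG$-trivial, this is a point unless $V_H$ is also $\KG$-trivial, in which case $V_H$ descends to an orthogonal $\Gamma/\KG$-representation and the Thom space is naturally identified with $\aJ^{\Gamma/\KG}_{\Gamma/\KG}(V_H, W)$. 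Each cell of $B$ thus contributes either nothing or a single free cell $\aJ^{\Gamma/\KG}_{\Gamma/\KG}(V_H, -) \sma (D^{n}_{+}, \partial D^{n}_{+})$ to $\Fix^{\Gamma}_{\KG}(\imath J\eta)$, which is the required cellular structure.

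The main obstacle is the careful bookkeeping around the identification $J\eta_0 \iso \Gamma_{+} \sma_H F_{V_H} S^{0}$: the graph-subgroup structure $M = H_{\sigma}$ converts the free right $Q$-action into the $H$-action on $V_H$ via the twist by $\sigma$, and one must verify that the fiberwise Thom-space construction, combined with the $Q$-quotient, recovers the expected induced orbit desuspension formula exactly on the nose rather than merely up to homotopy. Once this identification is in place the remaining fixed-point computations and the compatibility of the three functors with the CW filtration are routine.
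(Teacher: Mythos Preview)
Your proof is correct and follows essentially the same route as the paper: reduce to a single orbit cell, identify the resulting spectrum with the induced orbit desuspension spectrum $\Gamma_{+}\sma_{H}F_{V_{H}}S^{0}$, and observe that its $\Gamma$-fixed points are trivial unless $H=\Gamma$, in which case one obtains $\aJ^{\Gamma/\KG}_{\Gamma/\KG}(V_{H},-)$ or $*$ according to whether $V_{H}$ is $\KG$-trivial. One notational slip: the displayed expression $(\Gamma/H_{+}\sma \aJ_{\Gamma}(V_{H},W))^{\Gamma}$ is not well-formed since $V_{H}$ is only an $H$-representation at that stage; the correct formula (which the paper uses) is $(\Gamma_{+}\sma_{H}\aJ(V_{H},W))^{\Gamma}$, and your subsequent case analysis shows you have the right object in mind.
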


\begin{proof}
Working over the cells of $B$, it suffices to check the case when
$B=(\Gamma \times Q^{\op})/\tilde H$ for $\tilde H\in \aF(1\times
Q^{\op})$. Let $H<\Gamma$ be the image of $\tilde H$ under the
projection; then the projection $\tilde H\to H$ is an isomorphism and
there exists a unique homomorphism $\sigma \colon H\to Q$ such
that
\[
\tilde H=H_{\sigma}=\{(h,\sigma(h^{-1}))\mid h\in H\}.
\]
The description of $B$ as an orbit implicitly chooses a point with
isotropy subgroup $\tilde H$.
Let $V$ denote the fiber over that point; then $V$ has
the structure of an orthogonal $\tilde H$-representation. Because
$\tilde H\cap (1\times Q^{\op})$ is trivial, for any
orthogonal $\Gamma/\KG$-representation $W$, we have 
\begin{multline*}
(\Fix^{\Gamma}_{\KG}(\imath J\eta))(W)=(\aJ(\eta,W)/Q)^{\Gamma}
\iso (((\Gamma \times Q^{\op})_{+}
   \sma_{\tilde H}\aJ(V,W))/Q)^{\Gamma}\\
\iso (\Gamma_{+} \sma_{H}\aJ(V,W))^{\Gamma}
\end{multline*}
where in the last isomorphism $H$ acts on $V$ via the isomorphism
$\tilde H\to H$.  We see that this based space is trivial unless $H=\Gamma$, in
which case we get $\aJ(V,W)^{\Gamma}$.  Since the $\KG$ action on
$W$ is trivial, there are no $\Gamma$-equivariant isometries $V\to W$
unless the $\KG$ action on $V$ is also trivial, in which case 
\[
\aJ(V,W)^{\Gamma}=\aJ^{\Gamma/\KG}_{\Gamma/\KG}(V,W).
\]
Thus, $\Fix^{\Gamma}_{\KG}(\imath J\eta)$ is either $*$ or
$\aJ^{\Gamma/\KG}_{\Gamma/\KG}(V,-)$ for an orthogonal
$\Gamma/\KG$-representation $V$. 
\end{proof}

The argument of~\cite[12.3]{MMSS} shows that cellular
$\aJ^{\Gamma/\KG}_{\Gamma/\KG}$-spaces are flat for the smash
product.  To make this argument work, it suffices to see that for
any $\KG$-trivial $\Gamma$-representation $V$, the map
\[
S^{(V\oplus V)^{\Gamma}}\to 
\aJ^{\Gamma/\KG}_{\Gamma/\KG}(V,V\oplus V)\sma
S^{V^{\Gamma}}
\]
(induced by the inclusion of $V$ as the first summand) is a $2(\dim
V^{\Gamma})$-equivalence.  This is easy to see by decomposing $V$ into
irreducible $\Gamma$-representations: each non-trivial irreducible
representation contributes a smash factor of $O(2n)/O(n)_{+}$,
$U(2n)/U(n)_{+}$, or $Sp(2n)/Sp(n)_{+}$ (where $n$ is the number of
summands of that type), and the trivial representation together with
$S^{V^{\Gamma}}$ contribute a smash factor of 
\[
(O(2n)_{+}\sma_{O(n)}S^{n})\sma S^{n}\iso O(2n)/O(n)_{+}\sma S^{2n}
\]
(where $n=\dim(V^{\Gamma})$).
We can now deduce Theorem~\ref{thm:bundleflat}.  In fact, we prove the
following more general statement.

\begin{thm}\label{thm:varbundleflat}
Suppose $\eta$ is a $Q$-faithful 
$(\Gamma,Q)$ vector bundle and assume the base $B$ admits the structure of a
$(\Gamma \times Q^{\op})$-equivariant cell complex.  Then
$\imath J\eta$ is flat for the smash product in
$\Gamma$-spectra indexed on $\KG$-trivial
orthogonal $\Gamma$-representations.
\end{thm}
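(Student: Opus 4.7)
The strategy is to detect flatness via geometric fixed points, reduce cell-by-cell to the elementary bundles handled by Theorem~\ref{thm:fixsmflat}, and combine that theorem with Proposition~\ref{prop:bundlefixcw} and the flatness of cellular $\aJ$-spaces.

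First I will reduce the claim: by Proposition~\ref{prop:pivsphi} applied to the family of closed subgroups of $\Gamma$ containing $\KG$, together with the formula from the proof of Theorem~\ref{thm:derphi} identifying $\pi_*^{\Phi H/\KG}(-)$ with $\pi_*$ of the $\aJ^{H/\KG}_{H/\KG}$-space $\Fix^H_\KG(-)$, a map of $\Gamma$-spectra indexed on $\KG$-trivial representations is a weak equivalence iff $\Fix^H_\KG$ of it is a $\pi_*$-isomorphism of $\aJ^{H/\KG}_{H/\KG}$-spaces for every $H$ with $\KG < H \leq \Gamma$. Since $Q$-faithfulness and the equivariant cell structure of the base pass to restrictions of equivariance (Proposition~\ref{prop:Sfrestr}), I may work one $H$ at a time and assume $\Gamma = H$. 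Thus it suffices to show that for any weak equivalence $X \to Y$, the map $\Fix^\Gamma_\KG(\imath J\eta \sma X) \to \Fix^\Gamma_\KG(\imath J\eta \sma Y)$ is a $\pi_*$-isomorphism of $\aJ^{\Gamma/\KG}_{\Gamma/\KG}$-spaces. Next, because $B$ is a $(\Gamma \times Q^{\op})$-cell complex and any equivariant vector bundle over a single cell $(\Gamma \times Q^{\op})/M \times D^n$ is equivariantly trivial, $\imath J\eta$ is built as a sequential colimit of pushouts along spacewise Hurewicz cofibrations of pieces $\imath J\eta'$ with $\eta'$ in the elementary form of Theorem~\ref{thm:fixsmflat}. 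Because both $(-) \sma X$ and $\Fix^\Gamma_\KG$ preserve such Hurewicz-type colimits, and $\pi_*$-isomorphisms of $\aJ^{\Gamma/\KG}_{\Gamma/\KG}$-spaces are preserved by them, it suffices to handle elementary $\eta$.

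For such $\eta$, apply the diagram (natural in $X$)
\[
\Fix^\Gamma_\KG(\imath J\eta\sma X)
\overfrom{\simeq}
\Fix^\Gamma_\KG(\imath J(\EF\times\eta)\sma X)
\overto{\iso}
\Fix^\Gamma_\KG(\imath J(\EF\times\eta))\sma\Fix^\Gamma_\KG(X),
\]
where the weak equivalence on the left is Theorem~\ref{thm:fixsmflat}(ii) and the isomorphism on the right is Theorem~\ref{thm:fixsmflat}(i), applied to the elementary bundle $\EF \times \eta$ (whose base $(\Gamma \times Q^{\op})/M \times Y \times \EF$ has isotropy in the graph-subgroup family $\aF(1 \times Q^{\op})$). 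By Proposition~\ref{prop:bundlefixcw}, $\Fix^\Gamma_\KG(\imath J(\EF \times \eta))$ is a cellular $\aJ^{\Gamma/\KG}_{\Gamma/\KG}$-space, and the MMSS-style argument recalled just before the theorem shows that such cellular $\aJ$-spaces are flat for the smash product. Smashing this flat $\aJ$-space with the $\pi_*$-isomorphism $\Fix^\Gamma_\KG(X) \to \Fix^\Gamma_\KG(Y)$ produces a $\pi_*$-isomorphism on the right-hand term of the display, and this transports back through the isomorphism and the weak equivalence to yield the required $\pi_*$-isomorphism $\Fix^\Gamma_\KG(\imath J\eta \sma X) \to \Fix^\Gamma_\KG(\imath J\eta \sma Y)$.

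The essential content of the proof is already packaged into Theorem~\ref{thm:fixsmflat}; what remains is technical bookkeeping. The main obstacle will be verifying the cell-by-cell reduction cleanly: one must check that $\Fix^\Gamma_\KG$ and $(-) \sma X$ genuinely commute with the sequential colimits and pushouts in the cellular filtration of $\imath J\eta$ (which relies on the attaching maps being spacewise Hurewicz cofibrations of non-degenerately based spaces, as emphasized in Definition~\ref{defn:cell}), and that the natural transformations of Theorem~\ref{thm:fixsmflat}(i)--(ii) assemble coherently across this filtration and across the comparison with $\EF \times \eta$ on each cell.
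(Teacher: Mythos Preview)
Your proposal is correct and follows essentially the same route as the paper: reduce via geometric-fixed-point detection to $\Fix^{\Gamma}_{\KG}$, work cell-by-cell to the elementary bundles of Theorem~\ref{thm:fixsmflat}, then combine parts~(i) and~(ii) of that theorem with Proposition~\ref{prop:bundlefixcw} and the flatness of cellular $\aJ^{\Gamma/\KG}_{\Gamma/\KG}$-spaces. One small imprecision: the detection step must range over \emph{all} closed subgroups $H\leq\Gamma$ (using $\Fix^{H}_{H\cap\KG}$ after restriction), not just those containing $\KG$; the paper handles this by replacing $\Gamma$ with $H$ and $\KG$ with $H\cap\KG$, and your argument goes through verbatim once that is done.
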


Theorem~\ref{thm:bundleflat} is a consequence of the previous theorem
because of the following easy facts:
\begin{enumerate}
\item When $\eta$ is $\Gamma/\KG$-compatible, $\imath J\eta=\jmath
J^{\Gamma/\KG}\eta$.
\item When $X$ and $Y$ are $\Gamma/\KG$-spectra
$\jmath X\sma \jmath Y\iso \jmath(X\sma Y)$
\item When $X$ and $Y$ are $\Gamma/\KG$-spectra
a map $X\to Y$ is a weak equivalence of $\Gamma/\KG$-spectra if and only if applying $\jmath$ takes it to a weak
equivalence of $\Gamma$-spectra indexed on $\KG$-trivial
orthogonal $\Gamma$-representations.
\end{enumerate}

\begin{proof}[Proof of Theorem~\ref{thm:varbundleflat} from Theorem~\ref{thm:fixsmflat}]
Let $\eta$ be as in the
statement of Theorem~\ref{thm:varbundleflat}. To show that
$\imath J\eta$ is flat, by definition, we need to show that for any
weak equivalence $X\to X'$ of $\Gamma$-equivariant $\aJ_{\Gamma/\KG}$-spaces, the induced map 
\[
\imath J\eta\sma X\to 
\imath J\eta\sma X'
\]
is a weak equivalence. For this, by~\cite[XVI.6.4]{May-Alaska}
(see also~\cite[V.4.12--16]{MM} or Proposition~\ref{prop:pivsphi}), it
suffices to show that for every closed subgroup $H$ of $\Gamma$, the map
\[
\Fix^{H}_{H\cap \KG}(j^{*}_{H}\imath J\eta \sma j^{*}_{H} X)
\to \Fix^{H}_{H\cap \KG}(j^{*}_{H}\imath J\eta\sma j^{*}_{H} X')
\]
is a weak equivalence, where $j_{H}$ denotes the inclusion of $H$ in
$\Gamma$, and $j_{H}^{*}$ denotes the reduction of structure functor from
$\Gamma$-equivariant $\aJ_{\Gamma/\KG}$-spaces to $H$-equivariant $\aJ_{H/(H\cap \KG)}$-spaces.  Working one $H$ at a time, we can change
notation, replacing $\Gamma$ with $H$, and it suffices to show that the
induced map
\[
\Fix^{\Gamma}_{\KG}(\imath J\eta\sma X)
\to \Fix^{\Gamma}_{\KG}(\imath J\eta\sma X')
\]
is a weak equivalence (for any $X\to X'$ and $\eta$ as in the
statement of Theorem~\ref{thm:varbundleflat}).
Working a cell at a time over the base of $\eta$, it suffices to prove
the case when $\eta$ is as in the statement of
Theorem~\ref{thm:fixsmflat} for $Y=D^{n}$ or $\partial D^{n}$.  If $\EF$
is a universal $F$-space for $F=\aF(1\times Q^{\op})$ (as above), then
$\Fix^{\Gamma}_{\KG}(\imath J(\EF\times \eta))$ is a cellular
$\aJ^{\Gamma/\KG}_{\Gamma/\KG}$-space by the
Proposition~\ref{prop:bundlefixcw}, and hence the top map in the
commuting diagram 
\[
\xymatrix{%
\Fix^{\Gamma}_{\KG}(\imath J(\EF\times \eta))\sma
\Fix^{\Gamma}_{\KG}X
\ar[r]\ar[d]
&
\Fix^{\Gamma}_{\KG}(\imath J(\EF\times \eta))\sma
\Fix^{\Gamma}_{\KG}X'\ar[d]\\
\Fix^{\Gamma}_{\KG}(\imath J\eta)\sma
\Fix^{\Gamma}_{\KG}X
\ar[r]
&
\Fix^{\Gamma}_{\KG}(\imath J\eta)\sma
\Fix^{\Gamma}_{\KG}X'
}
\]
is a weak equivalence.  Applying part~(ii) of
Theorem~\ref{thm:fixsmflat}, we see that the vertical maps are weak
equivalences, and we conclude that the bottom horizontal map is a weak
equivalence. Part~(i) of Theorem~\ref{thm:fixsmflat} then finishes the
proof. 
\end{proof}

\section{Proof of Theorem~\sref$thm:fixsmflat$}
\label{sec:fixsmflat}
\def\RQ{H}

Let $\eta$, $M$, $V$, $X$, and $Y$ be as
in the statement of Theorem~\ref{thm:fixsmflat}.  For the proof of the
theorem, it suffices to treat the case when the projection
map $M\to \Gamma$ is onto, as otherwise all
$\aJ^{\Gamma/\KG}_{\Gamma/\KG}$-spaces mentioned in (i)
and~(ii) are trivial.  

Let $\RQ\lhd M$ be the kernel of the projection
$M\to \Gamma$.  Let $L\lhd M$ be the inverse image of
$\KG$. 
Then 
\[
M/L\overto{\iso}\Gamma/\KG
\]
and we use this isomorphism to pass back and forth between
$L$-trivial $M$ actions, $\KG$-trivial $\Gamma$ actions,
and $\Gamma/\KG$ actions without comment or notation. 

As a subgroup of $\Gamma \times Q^{\op}$, $\RQ=M\cap (1\times
Q^{\op})$ and so in particular its projection to $Q^{\op}$ is an
injection; write $\RQ'$ for the subgroup of
$Q$ opposite to $\RQ$.  Whenever we have a left action of $\RQ$, we
have a corresponding right action by $\RQ'$ that we will use without
further comment.
For any orthogonal $\Gamma/\KG$-representation $W$, we then have an
$\Gamma$-equivariant isomorphism 
\begin{equation*}\label{eq:etatoV}
\aJ(\eta,W)/Q\iso (\aJ(V,W)\sma Y_{+})/\RQ',
\end{equation*}
which is a natural isomorphism of based $\Gamma$-space enriched functors
from $\aJ_{\Gamma/\KG}$ to $\Gamma$-equivariant based spaces,
i.e., a map of $\Gamma$-equivariant $\aJ_{\Gamma/\KG}$-spaces.   Using this isomorphism and the
fact that smash product commutes with colimits,
as in~\cite[4.4]{MMSS} (or see the proofs of
\cite[A.1]{BM-cycl} or \cite[III.8.4]{MM}) we get an
isomorphism 
\begin{equation}\label{eq:machina}
(\imath J\eta\sma X) (W)\iso 
(\aI(V\oplus \bR^{n},W)\times  Y)_{+}\sma_{\RQ'\times
O(n)}(S^{0}\sma X(\bR^{n})),
\end{equation}
where $\dim W=n+\dim V$. The right action on $\alpha\in
\aI(V\oplus \bR^{n},W)$ is defined by
\[
\alpha \cdot (\sigma,r):=\alpha \circ (\sigma_{*}^{-1}\oplus r)
\]
for $\sigma \in
\RQ'$ and $r\in O(n)$. The right action of $O(n)$ on $Y$ is
trivial.  We note that since the action of $\RQ'<Q$ on $\eta$
is faithful by hypothesis, the action of $\RQ'\times O(n)$ on
\[
\aI(\eta,W)=\aI(V\oplus \bR^{n},W)\times Y
\]
is free. 

We use the isomorphism in~\eqref{eq:machina} to analyze
$\Fix^{\Gamma}_{\KG}(\imath J\eta\sma X)$ as follows.
To compress notation, we write
\[
\bar\Xi(W):=(\aI(V\oplus \bR^{n},W)\times  Y)_{+}\sma (S^{0}\sma X(\bR^{n})),
\]
so that
\[
\bar\Xi(W)/(\RQ'\times O(n))\iso 
(\aI(V\oplus \bR^{n},W)\times Y)_{+}\sma_{\RQ'\times O(n)} (S^{0}\sma X(\bR^{n}))
\]
and per~\eqref{eq:machina}
\begin{equation}\label{eq:XiFix}
(\bar\Xi(W)/(\RQ'\times O(n)))^{\Gamma}\iso 
\Fix^{\Gamma}_{\KG}(\imath J\eta\sma X)(W).
\end{equation}
We also work with the space
\[
\Xi(W):=\aI(V\oplus \bR^{n},W)\times  Y\times (S^{0}\sma X(\bR^{n})).
\]
Let
\[
\Psi(W)\subset \Xi(W), \qquad \bar \Psi(W)\subset \bar \Xi(W)
\]
denote the inverse image of the $\Gamma$ fixed point subspaces of
$\Xi(W)/(\RQ'\times O(n))$ and $\bar\Xi(W)/(\RQ'\times O(n))$, respectively.

We next observe that $\Psi(W)$ and $\bar \Psi(W)$ have the obvious
quotient relationship, but this takes a bit of work because of the
point-set topology of quotients of (compactly generated) weak
Hausdorff spaces.  
Since being $\Gamma$ fixed is a closed condition, these are closed
subspaces.  Let $\Psi(W)_{*}\subset \Psi(W)$ denote the inverse image of the
basepoint of $S^{0}\sma X(\bR^{n})$ under the projection.  Then
$\Psi(W)_{*}\subset \Psi(W)$ is closed and the map from the pushout to the
union of subspaces
\[
\Psi(W)\cup_{\Psi(W)_{*}}(\aI(V\oplus \bR^{n},W)\times  Y\times *)\to 
\Psi(W) \cup (\aI(V\oplus \bR^{n},W)\times  Y\times *)
\]
is an isomorphism.  In particular, the map $\Psi(W)/\Psi(W)_{*}\to
\bar\Psi(W)$ is an isomorphism.  Because the group $\RQ'\times
O(n)$ is compact, the equivalence relation associated to an 
$(\RQ'\times O(n))$ action is closed. The
underlying set of a quotient (in the category of compactly generated
weak Hausdorff spaces) is then the quotient of the underlying set;
moreover, 
the quotient of a closed $(\RQ'\times O(n))$-stable subspace
includes as a closed subspace of the quotient.  Specifically, we have
that the maps
\begin{multline}\label{eq:PsibarPsi}
\qquad(\Psi(W)/\Psi(W)_{*})/(\RQ'\times O(n))\overto{\iso} 
\bar \Psi(W)/(\RQ'\times O(n))\\\overto{\iso} 
(\bar\Xi(W)/(\RQ'\times O(n)))^{\Gamma}\qquad
\end{multline}
are isomorphisms.  This lets us work with $\Psi(W)$ to study
$\Fix^{\Gamma}_{\KG}(\imath J\eta\sma X)(W)$.  Our strategy
is to decompose $\Psi(W)$ into a disjoint union of pieces corresponding
to graph subgroups as in the study of fixed points in Section~\ref{sec:qfpl} 
and then to analyze the individual pieces to identify $\Psi(W)$ and
$\bar \Psi(W)$.  (The precise statement is given as
Theorem~\ref{thm:recharpsi} below.) 

We write $(\alpha,y,x)$ for an element of $\aI(V\oplus \bR^{n},W)\times Y\times (S^{0}\sma X(\bR^{n}))$ with $\alpha \in
\aI(V\oplus \bR^{n},W)$, $y\in Y$, and $x\in X(\bR^{n})$.  By
definition, $(\alpha,y,x)$ is in $\Psi$ if and only if its image $[\alpha,y,x]$ in
the quotient $\Xi/(\RQ'\times O(n))$ is fixed by
$\Gamma$ if and only if that image is fixed by the restriction action of $M$ if and only
if for each $g\in M$, there exists a $\tilde\sigma(g)\in \RQ'$ and
$r(g)\in O(n)$ such that
\begin{equation}\label{eq:Psicond}
g\cdot\alpha=\alpha\cdot (\tilde\sigma(g),r(g)),\qquad
gy=y\tilde\sigma(g),\qquad 
gx=r(g)^{-1}x.
\end{equation}
Because the action of $\RQ'\times O(n)$ on $\aI(V\oplus \bR^{n},W)\times Y$
is free, $\tilde\sigma(g)$ and $r(g)$ are uniquely determined, and the
existence of local sections shows that
\[
\tilde\sigma\times r\colon M\to \RQ'\times O(n)
\]
is continuous.  

Looking at what happens for the product of elements in $M$, we see
that $r$ is a homomorphism.  Indeed, because $M$ acts trivially on
$\bR^{n}$ and $L$ acts trivially on $W$, $r$ descends to a
homomorphism $\Gamma/\KG\to O(n)$. On the other hand,
$\tilde\sigma$ is not a homomorphism but rather satisfies the
following formulas:
\begin{enumerate}
\item Restricted to $\RQ\lhd M$, $\tilde\sigma$ is the projection
anti-isomorphism $\RQ^{\op}\iso \RQ'$, and
\item For $g,h\in M$,
$\tilde\sigma(gh)=\tilde\sigma(g)({}^{g}\tilde\sigma(h))$, where
${}^{g}(-)$ denotes the left action of $g$ on $\RQ'$ corresponding
to the conjugation action $h\mapsto ghg^{-1}$ on $\RQ$.
\end{enumerate}
Equivalently, the graph $h\mapsto (h,\tilde\sigma(h))$ defines a
continuous graph homomorphism from $M$ into the semidirect product
$\RQ' \rtimes M$ extending the diagonal on $\RQ$; for the
purposes of this argument we call a continuous map $M\to \RQ'$
satisfying (i) and~(ii) above an \term{sdde-graph homomorphism} and
write $\tilde\sigma \colon M\sdde \RQ'$.  
We were led to this formulation by the fixed point formulas, but we
can rewrite the structure in terms of graph subgroups of $M<\Gamma \times
Q^{\op}$ and $\sigma$-twisted $\Gamma$ actions as follows.

\begin{prop}
There is a one-to-one correspondence between the set of sdde-graph homomorphisms
$\tilde\sigma\colon M\sdde \RQ'$ and the set of homomorphisms $\sigma \colon \Gamma
\to Q$ for which the corresponding graph subgroup
\[
\Gamma_{\sigma}=
\{(\gamma,\sigma(\gamma^{-1}))\mid\gamma \in \Gamma\}<\Gamma \times Q^{\op}
\]
lies in $M$.  Given $\tilde\sigma$ and an $M$-space $Z$, the
corresponding $\sigma$-twisted $\Gamma$ action on $z\in Z$ is given by
\[
\gamma \cdot_{\sigma}z=(gz)\tilde\sigma(g)^{-1}
\]
where the projection $M\to \Gamma$ sends $g$ to $\gamma$.
\end{prop}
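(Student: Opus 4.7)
The plan is to exhibit the bijection by building the maps in both directions and then verifying that they are mutual inverses and compatible with the twisted action formula.

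First, I would go from $\sigma$ to $\tilde\sigma$. Given a homomorphism $\sigma \colon \Gamma \to Q$ with $\Gamma_{\sigma} \subset M$, the elements $(\gamma,\sigma(\gamma^{-1}))$ form a continuous splitting $s \colon \Gamma \to M$ of the projection $\pi\colon M\to \Gamma$. Every $g\in M$ can then be written uniquely as $g = h \cdot s(\pi(g))$ with $h\in H$; set $\tilde\sigma(g)$ to be the image of $h$ under the canonical anti-isomorphism $H^{\op}\iso \RQ'$. Condition~(i) is immediate, since for $g\in H$ we have $\pi(g)=1$, $s(1)=1$, so $h=g$. Condition~(ii) is a direct computation using that $s$ is a homomorphism and the conjugation rule relating multiplication in $M$ to the semidirect product structure on $H \rtimes \Gamma$; the sign/inversion bookkeeping in passing from $H^{\op}$ to $\RQ'$ is exactly what makes the cocycle identity (ii) match the group law in $M$.

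Next, going from $\tilde\sigma$ to $\sigma$, I would define a map $s\colon M\to M$ by $s(g) = g \cdot \iota(\tilde\sigma(g))^{-1}$, where $\iota\colon \RQ'\to H<1\times Q^{\op}<M$ is the inverse of the anti-isomorphism of (i). Using~(ii) and~(i), a short calculation shows that $s(h g) = s(g)$ for $h\in H$, so $s$ factors through $\pi$; a second short calculation shows the induced map $\bar s\colon \Gamma \to M$ is a continuous homomorphism splitting $\pi$. The second coordinate of $\bar s(\gamma)$ in $\Gamma\times Q^{\op}$, reinterpreted in $Q$ via the convention $(1,q^{-1})\leftrightarrow q$, defines the desired $\sigma\colon \Gamma \to Q$; by construction $(\gamma,\sigma(\gamma^{-1}))= \bar s(\gamma)\in M$, so $\Gamma_{\sigma}\subset M$. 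The two constructions are visibly inverse to one another since both are equivalent to specifying a continuous section $\Gamma \to M$ of $\pi$ that is a group homomorphism.

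Finally, for the twisted action formula, fix $g \in M$ with $\pi(g)=\gamma$, so that $g = \iota(\tilde\sigma(g)) \cdot s(\gamma)$ in $M$. For $z$ in any $M$-space $Z$ with commuting left $\Gamma$ and right $Q$ structure coming from the inclusion $M<\Gamma\times Q^{\op}$, unwinding the semidirect product gives
\[
gz \;=\; \bigl(s(\gamma)\cdot z\bigr)\cdot \tilde\sigma(g) \;=\; (\gamma\cdot_{\sigma} z)\cdot \tilde\sigma(g),
\]
where the $\sigma$-twisted action is by definition the action of the image of $s(\gamma)$. Rearranging yields $\gamma\cdot_{\sigma} z = (gz)\tilde\sigma(g)^{-1}$, as claimed.

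The main obstacle I expect is bookkeeping: the two opposite-group conventions (the anti-isomorphism $H^{\op}\iso \RQ'$ and the use of $Q^{\op}$ inside $\Gamma\times Q^{\op}$) interact with the cocycle identity in (ii), and one must check carefully that all inverses and the left/right conjugation ${}^g(-)$ on $\RQ'$ line up so that $\tilde\sigma$ translates exactly into a group-theoretic splitting. Everything else is then continuous and formal.
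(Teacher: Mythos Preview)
Your approach is essentially the same as the paper's: both identify the bijection with the set of continuous group-theoretic sections $\Gamma\to M$ of the projection $\pi$. The paper carries this out via explicit formulas---defining $(\gamma,\sigma(\gamma)^{-1})=\tilde\sigma'(g)^{-1}g$ in one direction and $\tilde\sigma'(g)=g(\gamma^{-1},\sigma(\gamma))$ in the other, and checking the cocycle identities by direct computation---while you frame it more structurally via the decomposition $g=h\cdot s(\pi(g))$. Your verification of the twisted action formula is correct and matches the paper's.

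One concrete instance of the bookkeeping issue you anticipated: your formula $s(g)=g\cdot\iota(\tilde\sigma(g))^{-1}$ in the $\tilde\sigma\to\sigma$ direction should be $s(g)=\iota(\tilde\sigma(g))^{-1}\cdot g$, to be consistent with your own decomposition $g=h\cdot s(\pi(g))$ (which puts $h=\iota(\tilde\sigma(g))$ on the \emph{left}). With the formula as written, the check that $s(hg)=s(g)$ for $h\in H$ fails unless $H$ is central in $M$: one gets $s(hg)=hg\,\tilde\sigma'(g)^{-1}h^{-1}$ rather than $g\,\tilde\sigma'(g)^{-1}$. The paper's computation corresponds to the left-multiplication version. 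Once this side is corrected, everything goes through exactly as you describe.
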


\begin{proof}
The formulas are easier when we use the anti-isomorphism to write $\tilde\sigma$
in terms of $\RQ$: let $\tilde\sigma'\colon M\to \RQ$ denote the composite
of $\tilde\sigma$ with the anti-isomorphism $\RQ'\iso \RQ$.  The
defining rules for a sdde-graph homomorphism then become
\begin{enumerate}
\item Restricted to $\RQ\lhd M$, $\tilde\sigma'$ is the identity, and
\item For $g_{1},g_{2}\in M$,
$\tilde\sigma'(g_{1}g_{2})=g_{1}\tilde\sigma'(g_{2})g_{1}^{-1}\tilde\sigma'(g_{1})$.
\end{enumerate}
Given $\tilde\sigma'\colon M\to \RQ$, define $\sigma \colon \Gamma \to
Q$ by
\[
(\gamma,\sigma(\gamma)^{-1})=\tilde\sigma'(g)^{-1}g\in \Gamma \times Q^{\op}
\]
where $g\mapsto \gamma$; this is well-defined because if $h\in \RQ$,
then 
\[
\tilde\sigma'(hg)^{-1}hg=(h\tilde\sigma'(g)h^{-1}\tilde\sigma'(h))^{-1}hg
=(h\tilde\sigma'(g))^{-1}hg=\tilde\sigma(g)^{-1}g
\]
and $\sigma$ is a homomorphism because
\begin{align*}
(\gamma_{1},\sigma(\gamma_{1})^{-1})(\gamma_{2},\sigma(\gamma_{2})^{-1})
&=\tilde\sigma'(g_{1})^{-1}g_{1}\tilde\sigma'(g_{2})^{-1}g_{2}\\
&=\tilde\sigma'(g_{1})^{-1}g_{1}\tilde\sigma'(g_{2})^{-1}g_{1}^{-1}g_{1}g_{2}\\
&=(g_{1}\tilde\sigma'(g_{2})g_{1}^{-1}\tilde\sigma'(g_{1}))^{-1}g_{1}g_{2}\\
&=\tilde\sigma'(g_{1}g_{2})^{-1}g_{1}g_{2}
=(\gamma_{1}\gamma_{2},\sigma(\gamma_{1}\gamma_{2})^{-1}),
\end{align*}
noting that the multiplication rule in $\Gamma \times Q^{\op}$
satisfies
\[
(\gamma_{1},a_{1}^{-1})(\gamma_{2},a_{2}^{-1})=(\gamma_{1}\gamma_{2},(a_{1}a_{2})^{-1}).
\]
On the other hand, given $\sigma \colon \Gamma \to Q$ such that
$\Gamma_{\sigma}\subset M$, define 
\[
\tilde\sigma'(g)=g(\gamma^{-1},\sigma(\gamma))
\]
where $g\mapsto \gamma$. Since the right side is in $M$ and the
projection $M\to \Gamma$ sends $\tilde\sigma'(g)$ to the identity,
$\tilde\sigma'(g)\in \RQ$.  If $g\in \RQ$, then $\gamma$ is the identity
and $\tilde\sigma'(g)=g$.  Given $g_{1},g_{2}\in M$, 
\begin{align*}
\tilde\sigma'(g_{1}g_{2})
&=g_{1}g_{2}((\gamma_{1}\gamma_{2})^{-1},\sigma(\gamma_{1}\gamma_{2}))\\
&=g_{1}g_{2}(\gamma_{2}^{-1},\sigma(\gamma_{2}))(\gamma_{1}^{-1},\sigma(\gamma_{1}))\\
&=g_{1}g_{2}(\gamma_{2}^{-1},\sigma(\gamma_{2}))
    g_{1}^{-1}g_{1}(\gamma_{1}^{-1},\sigma(\gamma_{1}))\\
&=g_{1}\tilde\sigma'(g_{2})g_{1}^{-1}\tilde\sigma'(g_{1}).
\end{align*}
These rules are obviously inverse, establishing the one-to-one
correspondence.  For $z\in Z$, by definition, the $\sigma$-twisted
$\Gamma$ action on $z$ is given by
\[
\gamma \cdot_{\sigma}z:= (\gamma,\sigma(\gamma)^{-1})z 
\]
which by the above is $\tilde\sigma'(g)^{-1}gz$, or written in terms
of the right $\RQ'$ action $\gamma \cdot_{\sigma}z=(gz)\tilde\sigma(g)^{-1}$.
\end{proof}

We call a continuous homomorphism with the property that the
corresponding graph subgroup $\Gamma_{\sigma}<\Gamma \times
Q^{\op}$ lies $M$ an \term{$M$ graph subgroup homomorphism} and
write $\sigma \colon \Gamma \Mto Q$.  Given such a homomorphism
$\sigma$, we write $\tilde\sigma \colon M\sdde \RQ'$ for the
corresponding sdde-graph homomorphism.  We note that care must be
taken with the action formula $\gamma
\cdot_{\sigma}z=(gz)\tilde\sigma(g)^{-1}$ as the left action of $M$
and the right action of $\RQ'$ do not commute, rather
$g(z\sigma)=(gz)({}^{g}\sigma)$.

Returning to the study of $\Psi$, given $\sigma \colon \Gamma\Mto
Q$ and $r\colon \Gamma/\KG \to
O(n)$, define the
$(\sigma,r)$-twisted $\Gamma$ actions on $\aI(V\oplus \bR^{n},W)$, $Y$ and
$X(W)$ by
\begin{align*}
\gamma \cdot_{(\sigma,r)}\alpha 
&:= (\gamma \cdot_{\sigma} \alpha)\cdot (\id,r(\gamma)) 
= (g\cdot \alpha)\cdot (\tilde\sigma(g),r(\gamma))^{-1}
\\
\gamma \cdot_{(\sigma,r)}y &:= \gamma\cdot_{\sigma}y=\gamma y\sigma(\gamma^{-1})=(gy)\tilde\sigma(g)^{-1}\\
\gamma \cdot_{(\sigma,r)}x &:= r(\gamma)\cdot \gamma x
\end{align*}
where $g\mapsto \gamma$; these formulas give well-defined actions
because the $\sigma$-twisted $\Gamma$ action on $\aI(V\oplus \bR^{n},W)$ and the
natural $\Gamma$ action on $X(W)$ commute with the $O(n)$ action.
We then put these together to define the $(\sigma,r)$-twisted $\Gamma$ action on~$\Xi$,
\[
\gamma \cdot_{(\sigma,r)}(\alpha,y,x)
:=(\gamma \cdot_{(\sigma,r)}\alpha,\gamma \cdot_{(\sigma,r)}y,\gamma \cdot_{(\sigma,r)} x).
\]
%
%
The following proposition now summarizes the formulas
in~\eqref{eq:Psicond}. 

\begin{prop}\label{prop:Psicond}
An element $\xi$ of $\Xi$ is in $\Psi$ if and only if there exists a 
continuous homomorphisms $r\colon \Gamma/\KG \to O(n)$ and an
$M$ graph subgroup homomorphism $\sigma \colon \Gamma \Mto Q$
such that $\xi$ is fixed by the $(\sigma,r)$-twisted $\Gamma$ action
on $\Xi$.  When $r$ and $\sigma$ exist, they are unique.
\end{prop}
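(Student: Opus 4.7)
The plan is to recognize that this proposition is essentially a compact repackaging of the discussion preceding it, with the correspondence of the previous proposition (sdde-graph homomorphisms $M \sdde \RQ'$ versus $M$ graph subgroup homomorphisms $\sigma \colon \Gamma \Mto Q$) doing most of the bookkeeping. First I would dispatch the forward direction. If $\xi = (\alpha, y, x) \in \Psi$, then by definition of $\Psi$ the image $[\xi]$ in $\Xi/(\RQ' \times O(n))$ is fixed by $\Gamma$, hence by the restriction action of $M$; unwinding gives, for each $g \in M$, elements $\tilde\sigma(g) \in \RQ'$ and $r(g) \in O(n)$ satisfying the conditions~\eqref{eq:Psicond}. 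Since $\eta$ is $Q$-faithful by hypothesis, the action of $\RQ' \times O(n)$ on $\aI(V \oplus \bR^n, W) \times Y$ is free, so the pair $(\tilde\sigma(g), r(g))$ is uniquely determined by $g$ and $\xi$; standard local section arguments for the projection from $\aI(V \oplus \bR^n, W) \times Y$ to its orbit space then show that $\tilde\sigma \times r$ is continuous on $M$.

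Next I would invoke the observations already catalogued in the preceding paragraphs: comparing~\eqref{eq:Psicond} for $g_1 g_2$ with the composite equations for $g_1$ and $g_2$, uniqueness forces $r(g_1 g_2) = r(g_1) r(g_2)$ and forces $\tilde\sigma$ to satisfy properties~(i) and~(ii) characterizing sdde-graph homomorphisms. Triviality of $L$ on $W$ combined with triviality of $M$ on $\bR^n$ forces $r|_L$ to act trivially on $\alpha$ and hence (by freeness again) $r$ to descend to a homomorphism $\Gamma/\KG \to O(n)$. Applying the previous proposition, $\tilde\sigma$ is the sdde-graph homomorphism associated to a unique $M$ graph subgroup homomorphism $\sigma \colon \Gamma \Mto Q$. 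Finally, comparing~\eqref{eq:Psicond} term by term with the defining formulas for the $(\sigma,r)$-twisted $\Gamma$ action on $\Xi$ shows that these conditions are literally the statement that $\xi$ is fixed by that twisted action.

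For the converse, if $\xi$ is fixed by the $(\sigma, r)$-twisted $\Gamma$ action for some pair $(\sigma, r)$, I would simply read off from the defining formulas for the twisted action that~\eqref{eq:Psicond} holds with $\tilde\sigma$ the sdde-graph homomorphism associated to $\sigma$, so $[\xi]$ is $M$-fixed in $\Xi/(\RQ' \times O(n))$ and hence $\Gamma$-fixed, i.e., $\xi \in \Psi$. Uniqueness of $(\sigma, r)$ for a given $\xi$ follows because the associated pointwise data $(\tilde\sigma(g), r(g))$ are already unique by the freeness argument in the forward direction, and the bijection of the previous proposition recovers $\sigma$ from $\tilde\sigma$ uniquely.

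There is no genuine obstacle here; the only care needed is in keeping straight the conversion between the left $M$ action and the right $\RQ'$ action (which do not commute, as emphasized in the preceding text) and between $\tilde\sigma$ and $\sigma$. I would aim to present the two directions in parallel so that the reader sees that~\eqref{eq:Psicond} and $(\sigma,r)$-twisted fixedness are the same set of equations written in two notations.
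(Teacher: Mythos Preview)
Your proposal is correct and takes essentially the same approach as the paper: the paper presents this proposition explicitly as a summary of the formulas in~\eqref{eq:Psicond} and the surrounding discussion, with no separate proof, and your write-up simply makes that summary explicit. The only point worth noting is that in the converse direction, ``$M$-fixed implies $\Gamma$-fixed'' uses the standing assumption (made at the start of the section) that the projection $M \to \Gamma$ is onto; you use this implicitly and may want to flag it.
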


We can rewrite the $(\sigma,r)$-twisted $\Gamma$ action of the previous
paragraph more intrinsically to develop an intrinsic identification of
$\Fix^{\Gamma}_{\KG}(\imath J\eta\sma X)$.  Given a homomorphism
$r\colon \Gamma/\KG\to O(n)$, we get an orthogonal $\Gamma/\KG$-representation $R_{r}$
with underlying inner product space $\bR^{n}$ and $\Gamma/\KG$ action given by
$r$.  Given $\sigma \colon \Gamma \Mto
Q$, we get $\sigma$-twisted $\Gamma$ actions on $V$ and $Y$; we
write $V_{\sigma}$ for the orthogonal $\Gamma$-representation with
underlying inner 
product space $V$ and $\Gamma$ action given by the $\sigma$-twisted
$\Gamma$ action on $V$, and we write $Y_{\sigma}$ for the
space $Y$ with $\Gamma$ action given by the $\sigma$-twisted
$\Gamma$ action on $Y$.
Then the $(\sigma,r)$-twisted $\Gamma$ action on $\aI(V\oplus \bR^{n},W)$
is precisely the canonical $\Gamma$ action on $\aI(V_{\sigma}\oplus
R_{r},W)$ and the $(\sigma,r)$-twisted $\Gamma$ action on
$(\aI(V\oplus \bR^{n},W)\times Y)_{+}\sma (S^{0}\sma X(\bR^{n}))$ is precisely the canonical
$\Gamma$ action on $(\aI(V_{\sigma}\oplus R_{r},W)\times
Y_{\sigma})_{+}\sma (S^{0}\sma X(R_{r}))$.
We see from this reformulation that if there exists an
$(\sigma,r)$-twisted fixed point of $\aI(V\oplus \bR^{n},W)$, then
$V_{\sigma}\oplus R_{r}$ is $\Gamma$-isomorphic to $W$, and this implies
that $\KG$ acts trivially $V_{\sigma}$ as well as $R_{r}$.  In
particular, $R_{r}$ and $V_{\sigma}$
are both orthogonal $\Gamma/\KG$-representations.  

We say that an $M$ graph subgroup homomorphism is \term{$W$-compatible} when
$V_{\sigma}$ is $\Gamma$-isomorphic to a subrepresentation of $W$, and
write $\sigma \colon \Gamma \WMto Q$.  By the previous
proposition, only the $W$-compatible $M$ graph subgroup homomorphisms
are relevant to $\Psi(W)\subset \Xi(W)$ and $\xi \in \Xi(W)$ cannot be
a $(\sigma,r)$-twisted $\Gamma$ fixed point unless $\sigma$ is
$W$-compatible.  

The (conjugation) action of $\RQ'$ on the set of $M$ graph subgroup
homomorphisms preserves the subset of $W$-compatible ones.  Indeed, if
$\alpha \colon V_{\sigma}\oplus R_{r}\to W$ is an isomorphism of
orthogonal $\Gamma/\KG$-representations, then for all $\tau \in
\RQ'$, $\alpha \cdot (\tau,\id)$ is an isomorphism of orthogonal $\Gamma/\KG$-representations $V_{\sigma_{\tau}}\oplus R_{r}\to W$.
For each $\RQ'$-orbit of $W$-compatible $M$ graph subgroup
homomorphisms $[\sigma]$, choose and fix a homomorphism $r[\sigma]$ so
that $V_{\sigma}\oplus R_{r[\sigma]}\iso W$.  We then get a map
\[
(\aI(V_{\sigma}\oplus R_{r[\sigma]},W)
\times Y_{\sigma}\times X(R_{r[\sigma]}))^{\Gamma}
\to \Psi(W)^{\Gamma,\sigma}\subset \Xi(W)
\]
which is $O(R_{r[\sigma]})^{\Gamma/\KG}$-equivariant, where
\[
O(R_{r[\sigma]})^{\Gamma/\KG}\subset O(R_{r[\sigma]})=O(n)
\]
denotes the Lie subgroup of $\Gamma/\KG$-equivariant isometries (the
final equality holds because the underlying inner product space of
$R_{r[\sigma]}$ is $\bR^{n}$).  Inducing up to $O(n)$, we get an
$O(n)$-equivariant map;  let $T(W)$ be the disjoint
union of these spaces over the $W$-compatible $M$ graph subgroup homomorphisms
\[
T(W):=\coprod_{\sigma\colon \Gamma \WMto Q}
(\aI(V_{\sigma}\oplus R_{r[\sigma]},W) \times Y_{\sigma}
\times X(R_{r[\sigma]}))^{\Gamma}\times_{O(R_{r[\sigma]})^{\Gamma/\KG}}O(n).
\]
We give this the right $\RQ'$ action where $\tau \in \RQ'$ sends $v\in
V_{\sigma}$ to $v\tau \in V_{\sigma_{\tau}}$ and $y\in Y_{\sigma}$ to
$y\tau \in Y_{\sigma_{\tau}}$ (acting trivially on
$R_{r[\sigma]}$).    Likewise, let
\[
\bar T(W)=\bigvee_{\sigma\colon \Gamma \WMto Q}
((\aI(V_{\sigma}\oplus R_{r[\sigma]},W) \times Y_{\sigma})_{+}
\sma
X(R_{r[\sigma]}))^{\Gamma}\sma_{O(R_{r[\sigma]})^{\Gamma/\KG}}O(n)_{+}
\]
with the analogous $\RQ'$ action.  
The induced maps
\[
T(W)\to \Psi(W), \qquad \bar T(W)\to \bar \Psi(W)
\]
are then $(\RQ'\times O(n))$-equivariant.
We prove the following theorem. 

\begin{thm}\label{thm:recharpsi}
The $(\RQ'\times O(n))$-equivariant maps
\[
T(W)\to \Psi(W), \qquad \bar T(W)\to \bar \Psi(W)
\]
are isomorphisms.
\end{thm}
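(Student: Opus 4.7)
The plan is to analyze both maps via the $(\sigma,r)$-decomposition of $\Psi(W)$ furnished by Proposition~\ref{prop:Psicond}, using the intrinsic reformulation of $(\sigma,r)$-twisted fixed points as ordinary $\Gamma$-fixed points of $\aI(V_\sigma\oplus R_r,W)\times Y_\sigma\times X(R_r)$, and then match the two decompositions summand-by-summand. Since $\bar T(W)\to\bar\Psi(W)$ is obtained from $T(W)\to\Psi(W)$ by collapsing to basepoint the subspace arising from the basepoint of $S^0\sma X(\bR^n)$ (namely, $\aI(V\oplus\bR^n,W)\times Y\times *$), it suffices to treat the unbarred map.

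First I would partition
\[
\Psi(W)=\coprod_{(\sigma,r)}\Psi(W)^{\Gamma,(\sigma,r)},
\]
the sum ranging over pairs of an $M$ graph subgroup homomorphism $\sigma\colon\Gamma\Mto Q$ and a homomorphism $r\colon\Gamma/\KG\to O(n)$; uniqueness of $(\sigma,r)$ in Proposition~\ref{prop:Psicond} makes the sum disjoint, and continuity of $\tilde\sigma\times r$ established earlier makes it a disjoint decomposition of locally closed subspaces. Each summand identifies naturally with $(\aI(V_\sigma\oplus R_r,W)\times Y_\sigma\times X(R_r))^\Gamma$, which is nonempty only when $V_\sigma\oplus R_r\iso W$ as orthogonal $\Gamma/\KG$-representations, in particular only when $\sigma$ is $W$-compatible.

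Next, for fixed $W$-compatible $\sigma$, the $r$'s contributing are exactly those for which $R_r$ is $\Gamma/\KG$-isomorphic to the orthogonal complement of $V_\sigma$ in $W$; any two such orthogonal $\Gamma/\KG$-representation structures on $\bR^n$ are $O(n)$-conjugate, so this set of $r$'s forms a single $O(n)$-orbit under conjugation, canonically $O(n)/O(R_{r[\sigma]})^{\Gamma/\KG}$ since the isotropy of $r[\sigma]$ is exactly $O(R_{r[\sigma]})^{\Gamma/\KG}$. Tracking the formulas defining the twisted actions, the $O(n)$-action on $\Xi(W)$ carries the $(\sigma,r[\sigma])$-twisted fixed points onto the $(\sigma,oro^{-1})$-twisted fixed points, intertwining with conjugation on $r$. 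Consequently $\coprod_r\Psi(W)^{\Gamma,(\sigma,r)}$ is canonically $O(n)$-equivariantly isomorphic to
\[
(\aI(V_\sigma\oplus R_{r[\sigma]},W)\times Y_\sigma\times X(R_{r[\sigma]}))^\Gamma\times_{O(R_{r[\sigma]})^{\Gamma/\KG}}O(n),
\]
which is exactly the $\sigma$-summand of $T(W)$; summing over all $W$-compatible $\sigma$ gives a bijection $T(W)\to\Psi(W)$.

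Finally I would verify the $\RQ'$-equivariance: under $\sigma\mapsto\sigma_\tau$, the canonical identifications $V_\sigma\iso V_{\sigma_\tau}$ and $Y_\sigma\iso Y_{\sigma_\tau}$ by right multiplication by $\tau$ realize the $\RQ'$-action on $T(W)$, and these match the action inherited from $\Xi(W)$ by direct inspection of the twisted action formulas, while the $O(n)$-action matches by construction of the induced representation. The main obstacle I anticipate is the step identifying the set of $r$'s producing a nonempty $(\sigma,r)$-twisted fixed space as a single $O(n)$-conjugacy class with stabilizer $O(R_{r[\sigma]})^{\Gamma/\KG}$, and the bookkeeping matching $\RQ'$-orbits of $\sigma$'s, the choice of representatives $r[\sigma]$ per orbit, and the combined $\RQ'\times O(n)$-action on the coproduct; once these are in place the topological identification is automatic from the continuity of $\tilde\sigma\times r$ and the freeness of the $\RQ'\times O(n)$-action on $\aI(\eta,W)$ given by Proposition~\ref{prop:faithfulfree}.
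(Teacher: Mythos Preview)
Your approach is essentially the same as the paper's: decompose $\Psi(W)$ by $\sigma$ and then identify the pieces with the summands of $T(W)$ via the $O(n)$-action. The paper phrases it as constructing an explicit inverse $\theta\colon\Psi(\sigma)\to T(\sigma)$, but the content is identical to your orbit identification.

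The one place where your sketch is too quick is precisely the obstacle you flag at the end. Writing $\Psi(W)=\coprod_{(\sigma,r)}\Psi(W)^{\Gamma,(\sigma,r)}$ is fine set-theoretically, but for fixed $\sigma$ the pieces over varying $r$ do \emph{not} form a topological coproduct: the $r$'s with $R_r\iso R_{r[\sigma]}$ form the homogeneous space $O(n)/O(R_{r[\sigma]})^{\Gamma/\KG}$, a continuous family. So when you assert that $\coprod_r\Psi(W)^{\Gamma,(\sigma,r)}$ is ``canonically $O(n)$-equivariantly isomorphic'' to the induced space, you have a continuous bijection from $T(\sigma)$ to $\Psi(\sigma)$, and the issue is continuity of its inverse. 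This is not quite automatic from freeness and continuity of $r_\alpha$ alone: what you need is that $\xi\mapsto r_\xi$ lands continuously in $O(n)/O(R_{r[\sigma]})^{\Gamma/\KG}$, and then that this quotient admits local sections (true because it is a compact Lie group quotient). Choosing such a local section gives a local lift $o_\xi\in O(n)$ with $o_\xi^{-1}r_\xi o_\xi=r[\sigma]$, and $\xi\mapsto(\xi\cdot o_\xi,\,o_\xi^{-1})$ is then a locally defined continuous inverse; different local choices differ by $O(R_{r[\sigma]})^{\Gamma/\KG}$ and hence agree in the balanced product. This is exactly the paper's argument, carried out there via the locally continuous choice $f_\alpha\colon R_{r_\alpha}\to R_{r[\sigma]}$ and the verification that $\theta$ is independent of that choice.
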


\begin{proof}
We construct an inverse isomorphism $\Psi(W)\to T(W)$ as follows.
For fixed $\sigma\colon \Gamma \WMto Q$, let 
\[
T(\sigma)=(\aI(V_{\sigma}\oplus R_{r[\sigma]},W)\times Y_{\sigma}
\times X(R_{r[\sigma]}))^{\Gamma}
\times_{O(R_{r[\sigma]})^{\Gamma/\KG}}O(n)\subset T(W)
\]
(a summand in the disjoint union), let
\[
\aI(\sigma)\subset \aI(V\oplus \bR^{n},W)\times Y
\]
denote the union of the $(\sigma,r)$-twisted $M$ fixed points as $r$
varies, and let 
\[
\Psi(\sigma)\subset \Psi(W)
\]
denote the subspace that projects to $\aI(\sigma)$. The subsets
$\aI(\sigma)$ are then disjoint and they are 
closed since the inclusion of $\aI(\sigma)$ in $\aI(V\oplus
\bR^{n},W)\times Y$ is the inverse image of the inclusion of the $\sigma$-twisted
$\Gamma$ fixed points in $\aI(V\oplus \bR^{n},W)/O(n)\times Y$.  The action of
$O(n)$ on $\aI(V\oplus \bR^{n},W)$ restricts to each $\aI(\sigma)$.
These observations for $\aI(\sigma)$ imply the corresponding
observations for $\Psi(\sigma)$.  In particular, $\Psi(W)$ is the
disjoint union of $\Psi(\sigma)$, and since by construction the map 
\[
T(\sigma)\subset T(W)\to \Psi(W)
\]
factors through $\Psi(\sigma)$, it suffices to construct an inverse
isomorphism $\Psi(\sigma)\to T(\sigma)$.

For each $(\alpha,y) \in \aI(\sigma)$, there is a unique $r_{\alpha}\colon
\Gamma/\KG\to O(n)$ for which $\alpha$ is a
$(\sigma,r_{\alpha})$-twisted $\Gamma$ fixed point (independently of $y\in Y$), and the function
$\alpha \mapsto r_{\alpha}$ is a continuous map $r$ from $\aI(\alpha)$
to the space $\Hom(\Gamma/\KG,O(n))$ of continuous homomorphisms
$\Gamma/\KG \to O(n)$ (with the compact-open topology).  It is
convenient to write $\Hom(\sigma)$ for the subspace of
$\Hom(\Gamma/\KG,O(n))$ of elements $r\colon \Gamma/\KG \to
O(n)$ such that $R_{r}\iso R_{r[\sigma]}$.

Because $V_{\sigma}\oplus R_{r_{\alpha}}$ is
$\Gamma/\KG$-isomorphic to $W$, $R_{r_{\alpha}}$ is
$\Gamma/\KG$-isomorphic to $R_{r[\sigma]}$.  Choose an arbitrary
$\Gamma/\KG$-isomorphism $f_{\alpha}\colon R_{r_{\alpha}}\to
R_{r[\sigma]}$.  Since $R_{r_{\alpha}}$ and $R_{r[\sigma]}$ both have
underlying inner product space $\bR^{n}$, $f_{\alpha}\in O(n)$.  In
general there is no way to choose the $f_{\alpha}$ so that the
resulting function $f\colon \aI(\sigma)\to O(n)$ is continuous;
however, we can choose $f$ so that it factors as
\[
\aI(\sigma)\overto{r} \Hom(\sigma)\overto{\tilde f}O(n) 
\]
and the usual compact Lie group integration argument shows that for
any fixed $r_{0}$ and fixed choice of $\tilde f_{r_{0}}\colon
R_{r_{0}}\iso R_{r[\sigma]}$, there exists a neighborhood $U$ of
$r_{0}$ in $\Hom(\sigma)$ where we can choose $\tilde f_{r}$ to
assemble to a continuous function $\tilde f\colon U\to O(n)$ extending
the chosen value at $r_{0}$.

Now consider the function $\theta \colon \Psi(\sigma)\to T(\sigma)$ defined by 
\begin{multline*}
\theta \colon (\alpha,y,x)\mapsto 
((\alpha \circ (\id_{V}\oplus f^{-1}_{\alpha}),
y,f_{\alpha}\cdot x),f_{\alpha})\\
\in
(\aI(V_{\sigma}\oplus R_{r[\sigma]},W)\times Y_{\sigma}\times X(R_{r[\sigma]}))^{\Gamma}
\times_{O(R_{r[\sigma]})^{\Gamma/\KG}}O(n).
\end{multline*}
It is not obvious that $\theta$ is continuous, but it is well defined: 
\begin{itemize}
\item By hypothesis $\alpha$ is an $(\sigma,r_{\alpha})$-twisted
$\Gamma$ fixed
point of $\aI(V\oplus \bR^{n},W)$ and hence a
$\Gamma$ fixed element of $\aI(V_{\sigma}\oplus R_{r_{\alpha}},W)$; then $a\circ
(\id_{V}\oplus f_{\alpha}^{-1})$ is a $\Gamma$ fixed element of 
$\aI(V_{\sigma}\oplus R_{r[\sigma]},W)$ since $f_{\alpha}$ is
$\Gamma$-equivariant.
\item By hypothesis $y$ is a $(\sigma,r_{\alpha})$-twisted $\Gamma$ fixed
point of $Y$ and hence is a $\Gamma$ fixed element of $Y_{\sigma}$.
\item By hypothesis $x$ is a $(\sigma,r_{\alpha})$-twisted $\Gamma$ fixed
point of $X(\bR^{n})$ and hence is a $\Gamma$ fixed element of
$X(R_{r_{\alpha}})$; then $f_{\alpha}\cdot x$ is a $\Gamma$ fixed element of
$X(R_{r[\sigma]})$ since $f_{\alpha}$ is
$\Gamma$-equivariant.
\end{itemize}
Next we check that $\theta$ is continuous.
If $f'_{\alpha}$ is another choice of $\Gamma/\KG$-equivariant isomorphism $R_{r_{\alpha}}\to
R_{r[\sigma]}$, then $\psi:=f'_{\alpha}\circ f_{\alpha}^{-1}$ is an element of
$O(R_{r[\sigma]})^{\Gamma/\KG}$ and 
\begin{align*}
((\alpha \circ (\id_{V}\oplus f^{-1}_{\alpha}),
y,f_{\alpha}\cdot x),f_{\alpha})
&=((\alpha \circ (\id_{V}\oplus f^{-1}_{\alpha}),
y,f_{\alpha}\cdot x)\cdot \psi^{-1},\psi\circ f_{\alpha})\\
&=((\alpha \circ (\id_{V}\oplus f^{-1}_{\alpha}\circ \psi^{-1}),
y,((\psi \circ f_{\alpha})\cdot x)),\psi \circ f_{\alpha})\\
&=((\alpha \circ (\id_{V}\oplus (f')^{-1}_{\alpha}),
y,(f'_{\alpha})\cdot x),f'_{\alpha}).
\end{align*}
Thus, the function $\theta$ is independent of the
choice of $f$.  Since we can choose $f$ to be continuous in a
neighborhood of $\alpha$ for any fixed $\alpha$, $\theta$ is
continuous in a neighborhood of every point $(\alpha,y,x)$, and so
$\theta$ is continuous on all of $\Psi(\sigma)$.  

By inspection, $\theta$ is inverse to the original map $T(\sigma)\to
\Psi(\sigma)$, and this proves the statement for $\Psi(W)$.  The
statement for $\bar\Psi(W)$ follows from the statement for $\Psi(W)$.
\end{proof}

We can now prove Theorem~\ref{thm:fixsmflat}.

\begin{proof}[Proof of Theorem~\ref{thm:fixsmflat}]
For part~(i), 
the previous theorem applied in the case when $X=\bS$ gives
\begin{multline*}
\Fix^{\Gamma}_{\KG}(\imath J\eta)(W)=
\biggl(\bigvee_{\sigma \colon \Gamma \WMto Q}
(\aI(V_{\sigma}\oplus R_{r[\sigma]},W)^{\Gamma}\times Y_{\sigma}^{\Gamma})_{+}\sma_{O(R_{r[\sigma]})^{\Gamma/\KG}}
S^{R_{r[\sigma]}^{\Gamma/\KG}}\biggr)/\RQ'\\[-.25em]
=\biggl(\bigvee_{\sigma \colon \Gamma \WMto Q}
\aJ^{\Gamma/\KG}_{\Gamma/\KG}(V_{\sigma},W)\sma (Y_{\sigma}^{\Gamma})_{+}
\biggr)/\RQ'
\end{multline*}
and it is straightforward to check that the latter formula specifies an
isomorphism functorial for $W\in
\aJ^{\Gamma/\KG}_{\Gamma/\KG}$, i.e., describes
$\Fix^{\Gamma}_{\KG}(\imath J\eta)$ as a
$\aJ^{\Gamma/\KG}_{\Gamma/\KG}$-space.  (This also follows
from the proof of Theorem~\ref{thm:bundlegeofix}.) Since
smash product commutes with colimits in each variable, this
isomorphism induces an isomorphism
\[
\Fix^{\Gamma}_{\KG}(\imath J\eta)\sma 
\Fix^{\Gamma}_{\KG}(X)
\iso 
\biggl(\bigvee_{\sigma \colon \Gamma \WMto Q}
(\aJ^{\Gamma/\KG}_{\Gamma/\KG}(V_{\sigma},-)\sma
(Y_{\sigma}^{\Gamma})_{+})\sma
\Fix^{\Gamma}_{\KG}(X)
\biggr)/\RQ'.
\]
As in \cite[4.4]{MMSS} (or as in the proof of~\cite[A.1]{BM-cycl}), we
get for the $W$ space
\begin{multline*}
(\Fix^{\Gamma}_{\KG}(\imath J\eta)\sma
\Fix^{\Gamma}_{\KG}(X))(W)
\\
\iso 
\biggl(\bigvee_{\sigma \colon \Gamma \WMto Q}
(\aJ^{\Gamma/\KG}_{\Gamma/\KG}(V_{\sigma},W)\sma (Y_{\sigma}^{\Gamma})_{+})
\sma_{O(R_{r[\sigma]})^{\Gamma/\KG}}X(R_{r[\sigma]})^{\Gamma}
\biggr)/\RQ'\\[-.5em]
\iso \bar T(W)/(\RQ'\times O(n))
\end{multline*}
with the universal map 
\[
(\Fix^{\Gamma}_{\KG}(\imath J\eta))(W')\sma
(\Fix^{\Gamma}_{\KG}(X))(W'')
\to \bar T(W)/(\RQ'\times O(n))
\]
for $W=W'\oplus W''$
induced by the inclusion of $W'$ in $W$ 
\begin{multline*}
\aJ^{\Gamma/\KG}_{\Gamma/\KG}(V_{\sigma},W')\sma (Y_{\sigma}^{\Gamma})_{+}
\iso
(\aI(V_{\sigma}\oplus R,W')\times Y_{\sigma})^{\Gamma}_{+}
\sma_{O(R)^{\Gamma/\KG}}S^{R^{\Gamma/\KG}}\\
\to
(\aI(V_{\sigma}\oplus R\oplus W'',W)\times Y_{\sigma})^{\Gamma}_{+}
\sma_{O(R)^{\Gamma/\KG}}S^{R^{\Gamma/\KG}}
\end{multline*}
in the $J\eta$ factor
and the left suspension map 
\[
S^{R^{\Gamma/\KG}}\sma X(W'')^{\Gamma}\to X(R\oplus W'')^{\Gamma}
\]
in the $X$ factor.
(To compare notation, note that $R_{r[\sigma]}\iso R\oplus W''$ since
they both equivariantly complement $V_{\sigma}$ in $W$.)
The previous theorem (combined with~\eqref{eq:XiFix} and~\eqref{eq:PsibarPsi}) gives an isomorphism 
\[
\bar T(W)/(O(n)\times \RQ')\to 
(\Fix^{\Gamma}_{\KG}(\imath J\eta\sma X))(W)
\]
and by inspection, the composite
\begin{multline*}
(\Fix^{\Gamma}_{\KG}(\imath J\eta))(W')\sma
(\Fix^{\Gamma}_{\KG}X)(W'')
\to \bar T(W)/(\RQ'\times O(n))\\
\to (\Fix^{\Gamma}_{\KG}(\imath J\eta\sma X))(W)
\end{multline*}
is the universal map for the natural transformation 
\[
\Fix^{\Gamma}_{\KG}(\imath J\eta)\sma \Fix^{\Gamma}_{\KG}(X)
\to \Fix^{\Gamma}_{\KG}(\imath J\eta\sma X).
\]
This proves part~(i).

For part~(ii), the previous theorem identifies the map 
\[
(\Fix^{\Gamma}_{\KG}(\imath J(\EF\times \eta)\sma X))(W)\to
(\Fix^{\Gamma}_{\KG}(\imath J\eta\sma X))(W)
\]
as the map
\begin{multline*}
\biggl(
\bigvee_{\sigma\colon \Gamma \WMto Q}
((\aI(V_{\sigma}\oplus R_{r[\sigma]},W)^{\Gamma} \times (\EF \times Y)_{\sigma}^{\Gamma})_{+}
\sma_{O(R_{r[\sigma]})^{\Gamma/\KG}}
X(R_{r[\sigma]})^{\Gamma})
\biggr)/\RQ'
\\[-1em]
\to
\biggl(
\bigvee_{\sigma\colon \Gamma \WMto Q}
((\aI(V_{\sigma}\oplus R_{r[\sigma]},W)^{\Gamma} \times Y_{\sigma}^{\Gamma})_{+}
\sma_{O(R_{r[\sigma]})^{\Gamma/\KG}}
X(R_{r[\sigma]})^{\Gamma})
\biggr)/\RQ'
\end{multline*}
For any homomorphism $\sigma \colon \Gamma \to
Q$, the graph subgroup $\Gamma_{\sigma}<\Gamma \times
Q^{\op}$ is in the family $\aF=\aF(1\times Q^{\op})$ and so
the map $(\EF\times Y)_{\sigma}^{\Gamma}\to Y_{\sigma}^{\Gamma}$ is a
homotopy equivalence.  Let $O_{1},\dotsc,O_{q}$ be the set of
orbits for the action of $\RQ'$ on the set of
$W$-compatible $M$ graph subgroup homomorphisms $\Gamma \WMto Q$, and
for each $i$, choose a representative $\sigma_{i}\in O_{i}$.
Because $\RQ'\times O(n)$ acts freely on $\aI(V\oplus \bR^{n},W)\times Y$,
for each $i$, $\RQ'\times (O(R_{r[\sigma_{i}]}))^{\Gamma/\KG}$
acts freely on 
\[
\coprod_{\sigma \in O_{i}}\aI(V_{\sigma}\oplus
R_{r[\sigma]},W)^{\Gamma}\times Y_{\sigma}^{\Gamma}\subset 
\aI(V\oplus \bR^{n},W)\times Y
\]
so for each $i$, the map
\[
\coprod_{\sigma \in O_{i}}
\aI(V_{\sigma}\oplus R_{r[\sigma]},W)^{\Gamma}
\times (\EF\times Y)_{\sigma}^{\Gamma}
\to
\coprod_{\sigma \in O_{i}}\aI(V_{\sigma}\oplus
R_{r[\sigma]},W)^{\Gamma}
\times Y_{\sigma}^{\Gamma}
\]
is an $\RQ'\times (O(R_{r[\sigma]})^{\Gamma/\KG})$-equivariant homotopy equivalence.  The maps
\begin{multline*}
\biggl(
\bigvee_{\sigma \in O_{i}}
(\aI(V_{\sigma}\oplus R_{r[\sigma]},W)^{\Gamma}
\times (\EF\times Y)_{\sigma}^{\Gamma})_{+}
\sma_{O(R_{r[\sigma]})^{\Gamma/\KG}}
X(R_{r[\sigma]})^{\Gamma}\biggr)/\RQ'\\[-1em]
\to
\biggl(
\bigvee_{\sigma \in O_{i}}
(\aI(V_{\sigma}\oplus R_{r[\sigma]},W)^{\Gamma}
\times Y_{\sigma}^{\Gamma})_{+}
\sma_{O(R_{r[\sigma]})^{\Gamma/\KG}}
X(R_{r[\sigma]})^{\Gamma}\biggr)/\RQ'
\end{multline*}
are therefore based homotopy equivalences.  Taking a wedge over $i$,
we see that the map
\[
\Fix^{\Gamma}_{\KG}(\imath J(\EF\times \eta)\sma X) 
\to
\Fix^{\Gamma}_{\KG}(\imath J\eta\sma X) 
\]
induces a based homotopy equivalence on $W$ spaces for all $W$.  This
proves part~(ii). 
\end{proof}

\section{Proof of Theorem~\sref$thm:dersym$}
\label{sec:dersym}

The strategy for the proof of Theorem~\ref{thm:dersym} is an indirect
approach using Proposition~\ref{prop:charsym} below, which gives 
criteria to identify when symmetric powers preserve a weak
equivalence out of a cofibrant object.  

The criterion involves indexed smash powers and HHR diagonal maps,
which are reviewed and slightly generalized in Section~\ref{sec:B209}.
In the current context, for $G$ a compact Lie group, $Y$ 
a $G$-spectrum, 
and $S$ a finite $G$-set, we can form the indexed smash power
$Y^{(S)}$ which gets its $G$-spectrum structure
both from $Y$ and the $G$ action on $S$. 
When we decompose $S$ as a disjoint union of orbits 
\[
S\iso G/H_{1}\amalg \dotsb \amalg G/H_{r},
\]
we then have an \term{HHR diagonal map}~\cite[B.209]{HHR} (or Proposition~\ref{prop:diagexist}) of
non-equivariant spectra
\[
\Phi^{H_{1}} Y\sma \dotsb \sma \Phi^{H_{r}} Y
\to \Phi^{G}(Y^{(S)}).
\]
We remark that the map depends on the choice of isomorphism, that is,
the choice of the elements of $S$ that correspond to the identity
cosets $eH_{1}$,\dots, $eH_{r}$: the map
is analogous to and induced by the diagonal map for based $G$-spaces
\[
A^{H_{1}}\sma\dotsb\sma A^{H_{r}}\to (A^{(S)})^{G}
\]
that sends $a_{1}\sma \dotsb \sma a_{r}$ to the element of the smash
product $A^{(S)}$ that is given by $g_{\zeta} a_{i}$ in the $\zeta \in
S$ coordinate, where $\zeta$ corresponds to the element $g_{\zeta}
H_{i}$ in the identification of $S$ as a disjoint union of orbits
above.

In the statement of the following proposition,
for a finite group $\Sigma$, we write $E_{G}\Sigma$ for the
universal space for the 
family of subgroups of $G\times \Sigma$ whose intersection with $1\times
\Sigma$ is trivial. It is a $(G \times \Sigma)$-CW
complex whose fixed points are contractible for subgroups in the
family and empty for subgroups not in the family.

\begin{prop}\label{prop:charsym}
Let $G$ be a compact Lie group, let $k>1$, and let $\Sigma <
\Sigma_{k}$. Let $Y$ be a $G$-spectrum that
satisfies the following conditions:
\begin{enumerate}
\item The quotient map
$E_{G}\Sigma_{+}\sma_{\Sigma}Y^{(k)}\to Y^{(k)}/\Sigma$
is a weak equivalence.
\item For every $M<G$ and every $M$-set
$S=M/H_{1}\amalg \dotsb \amalg M/H_{r}$ of cardinality $k$, the map
\[ \qquad 
L\Phi^{H_{1}}Y\sma^{L}\dotsb\sma^{L}L\Phi^{H_{r}}Y
\to 
\Phi^{H_{1}}Y\sma\dotsb\sma \Phi^{H_{r}}Y
\to \Phi^{M}(Y^{(S)}) 
\]
in the non-equivariant stable category is an isomorphism.
\end{enumerate}
Then for any cofibrant approximation $X\to Y$ in the positive complete
model structure on $G$-spectra, the
induced map $X^{(k)}/\Sigma\to Y^{(k)}/\Sigma$ is a weak
equivalence. 
\end{prop}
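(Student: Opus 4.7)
The plan is to verify the map $X^{(k)}/\Sigma\to Y^{(k)}/\Sigma$ is a weak equivalence by checking it induces an isomorphism on geometric-fixed-point homotopy groups $\pi_{*}^{\Phi M}$ for every closed subgroup $M<G$, using Proposition~\ref{prop:pivsphi}. Since hypothesis~(i) already asserts a weak equivalence $E_{G}\Sigma_{+}\sma_{\Sigma}Y^{(k)}\to Y^{(k)}/\Sigma$, and the analogous fact for cofibrant $X$ in the positive complete model structure is standard (cofibrant objects are $\Sigma$-free enough that the map from their Borel construction is a weak equivalence, e.g.\ by the usual argument from~\cite[B.111,B.116]{HHR}), the proof reduces to showing the map $E_{G}\Sigma_{+}\sma_{\Sigma}X^{(k)}\to E_{G}\Sigma_{+}\sma_{\Sigma}Y^{(k)}$ is a weak equivalence after applying $\Phi^{M}$.

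Next I would develop a natural splitting of $\Phi^{M}$ applied to Borel-type constructions $E_{G}\Sigma_{+}\sma_{\Sigma}Z^{(k)}$. Decomposing $E_{G}\Sigma$ into pieces corresponding to graph subgroups of $G\times \Sigma$, the $M$-fixed-point behavior splits into summands indexed by $\Sigma$-conjugacy classes of continuous homomorphisms $\sigma\colon M\to \Sigma$: each such $\sigma$ turns $\{1,\dotsc,k\}$ into an $M$-set $S_{\sigma}$, and the corresponding summand is equivalent (up to a smash factor of $BC_{\Sigma}(\sigma)_{+}$, arising from the quotient of the $\sigma$-twisted fixed points of $E_{G}\Sigma$ by the centralizer) to $\Phi^{M}(Z^{(S_{\sigma})})$. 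Concretely,
\[
\Phi^{M}(E_{G}\Sigma_{+}\sma_{\Sigma}Z^{(k)})
\;\simeq\;
\bigvee_{[\sigma]}\; BC_{\Sigma}(\sigma)_{+}\sma \Phi^{M}(Z^{(S_{\sigma})}).
\]
I would establish this formula first for wedges of orbit desuspension spectra (where both sides are computable via Theorem~\ref{thm:bundlegeofix}) and then bootstrap to general $Z$ by filtering $E_{G}\Sigma$ by its free $(G\times\Sigma)$-cells and using that both sides convert pushouts along Hurewicz cofibrations into cofiber sequences. Naturality of this splitting is automatic from the construction.

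Applying the splitting to both $Z=X$ and $Z=Y$, the proof reduces on each summand to showing that for every orbit decomposition $S_{\sigma}\iso M/H_{1}\amalg\dotsb\amalg M/H_{r}$, the induced map
\[
\Phi^{M}(X^{(S_{\sigma})})\to \Phi^{M}(Y^{(S_{\sigma})})
\]
is a weak equivalence. For cofibrant $X$, the HHR diagonal $\Phi^{H_{1}}X\sma\dotsb\sma\Phi^{H_{r}}X\to \Phi^{M}(X^{(S_{\sigma})})$ is a weak equivalence by the standard theory recalled in Section~\ref{sec:B209}; for $Y$, the corresponding statement is precisely hypothesis~(ii). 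Since $X\to Y$ is a weak equivalence of $G$-spectra with $X$ cofibrant in the positive complete model structure, each map $\Phi^{H_{i}}X\to \Phi^{H_{i}}Y$ is a weak equivalence of cofibrant non-equivariant spectra, and smash products preserve such equivalences. This yields the required equivalence on each summand and completes the proof.

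The main obstacle I anticipate is making the splitting formula for $\Phi^{M}$ of the Borel construction rigorous and simultaneously natural in $Z$: the point-set geometric fixed point functor is not homotopically well-behaved for arbitrary $Z$, so I expect to have to choose a specific cellular model of $E_{G}\Sigma$ and argue cell-by-cell using the identification of $\Phi^{M}$ on each generalized orbit desuspension cell provided by Theorem~\ref{thm:bundlegeofix}, ensuring that the resulting summands assemble compatibly with the HHR diagonal. Verifying that the HHR diagonal is compatible with this splitting, so that the summand indexed by $\sigma$ really is identified with $BC_{\Sigma}(\sigma)_{+}\sma \Phi^{H_{1}}Z\sma\dotsb\sma\Phi^{H_{r}}Z$ in a natural way, is the technical heart of the argument.
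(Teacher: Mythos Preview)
Your outline is essentially the paper's argument with the two main reductions taken in the opposite order, and that reordering is what creates the obstacle you flag. The paper also reduces to the Borel comparison $E_G\Sigma_{+}\sma_{\Sigma}X^{(k)}\to E_G\Sigma_{+}\sma_{\Sigma}Y^{(k)}$, but then filters $E_G\Sigma$ by its $(G\times\Sigma)$-cells \emph{before} invoking geometric fixed points. A cell $(G\times\Sigma)/\tilde H\times(D^{n},\partial D^{n})$ with $\tilde H=H_{\sigma}$ a graph subgroup contributes, after quotienting by $\Sigma$, precisely $G_{+}\sma_{H}Z^{(S_{\sigma})}$ (this is an elementary identification, Proposition~\ref{prop:indexedsma}). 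So one is immediately reduced to showing that $X^{(S_{\sigma})}\to Y^{(S_{\sigma})}$ is a weak equivalence of $H$-spectra, which is then checked on $\pi_{*}^{\Phi M}$ for $M<H$ via hypothesis~(ii). No global splitting formula for $\Phi^{M}$ of the Borel construction is ever needed; the cell filtration already does that bookkeeping.

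Two points to correct even within your ordering. First, the summand indexed by $[\sigma]$ is not $BC_{\Sigma}(\sigma)_{+}\sma\Phi^{M}(Z^{(S_{\sigma})})$ but rather a Borel-type quotient $(E_{G}\Sigma)^{M,\sigma}_{+}\sma_{C_{\Sigma}(\sigma)}(\Fix^{M}Z^{(S_{\sigma})})$: the centralizer still acts nontrivially on $Z^{(S_{\sigma})}$, so you cannot split off $BC_{\Sigma}(\sigma)_{+}$ as a smash factor. Second, the claim that ``each map $\Phi^{H_{i}}X\to\Phi^{H_{i}}Y$ is a weak equivalence of cofibrant non-equivariant spectra'' is false as written: the point-set $\Phi^{H_{i}}$ does not preserve weak equivalences and $\Phi^{H_{i}}Y$ need not be cofibrant. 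What is true is that $L\Phi^{H_{i}}X\to L\Phi^{H_{i}}Y$ is an equivalence; hypothesis~(ii) is stated exactly so that the derived smash of the $L\Phi^{H_{i}}Y$ can be compared directly to the point-set $\Phi^{M}(Y^{(S)})$, and that comparison is what does the work.
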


\begin{proof}
Let $X\to Y$ be a cofibrant approximation in the positive complete
model structure on $G$-spectra.  In the diagram
\[
\xymatrix{%
E_{G}\Sigma_{+}\sma_{\Sigma}X^{(k)}\ar[d]\ar[r]
&X^{(k)}/\Sigma\ar[d]\\
E_{G}\Sigma_{+}\sma_{\Sigma}Y^{(k)}\ar[r]
&Y^{(k)}/\Sigma
}
\]
the top horizontal map is a weak equivalence by~\cite[B.116]{HHR} and
the bottom horizontal map is a weak equivalence by hypothesis~(i).  We
want to show that the right horizontal map is a weak equivalence, so
it suffices to show that the left horizontal map is a weak
equivalence.  The $(G\times \Sigma)$-CW complex $E_{G}\Sigma$
is built by cells of the form $(G\times \Sigma)/\tilde H\times
(D^{n},\partial D^{n})$ where $\tilde H\cap (1\times \Sigma)$ is trivial,
and working a cell at a time, it suffices to show that the map
\[
(((G\times \Sigma)/\tilde H) \sma X^{(k)})/\Sigma
\to
(((G\times \Sigma)/\tilde H) \sma Y^{(k)})/\Sigma
\]
is a weak equivalence for every such subgroup $\tilde H$.  Let $H$ be
the image of $\tilde H$ in $G$ under the projection.  Because $\tilde
H \cap (1\times \Sigma)$ is trivial, the projection is an isomorphism
$\tilde H\to H$ and there exists a homomorphism $\sigma \colon H\to
\Sigma$ with $\tilde H$ the graph of this homomorphism:
\[
\tilde H = H_{\sigma}=\{(h,\sigma(h))\mid h\in H\} < G\times \Sigma.
\]
Let $S=\{1,\dotsc,k\}$ be the $H$-set obtained using $\sigma$ to
define the action. We then have a commutative diagram 
\[
\xymatrix@R-1pc{%
(((G\times \Sigma)/\tilde H) \sma X^{(k)})/\Sigma\ar[d]\ar[r]^-{\iso}
&G_{+}\sma_{H}X^{(S)}\ar[d]\\
(((G\times \Sigma)/\tilde H) \sma Y^{(k)})/\Sigma\ar[r]_-{\iso}
&G_{+}\sma_{H}Y^{(S)}
}
\]
with the horizontal maps isomorphisms (see, for example,
Proposition~\ref{prop:indexedsma} with 
$H$ playing the role of $G$ there).  We are now
reduced to showing that the map of $H$-spectra 
\[
X^{(S)}\to Y^{(S)}
\]
is a weak equivalence.  By~\cite[XVI.6.4]{May-Alaska} (or
Proposition~\ref{prop:pivsphi}), it is now enough to show that for
every $M<H$, the map $L\Phi^{M}(X^{(S)})\to L\Phi^{M}(Y^{(S)})$ is a
non-equivariant weak equivalence, and this follows from hypothesis~(ii).
\end{proof}

We now start the argument for Theorem~\ref{thm:dersym}, using the
notational convention of Notation~\ref{notn:KGL}: $\Gamma$ is a
compact Lie group with a closed normal subgroups $\KG,\Lambda$
satisfying $\KG<\Lambda$. We treat the case $k>1$, 
noting that in the statement of Theorem~\ref{thm:dersym}, there is
nothing to show in the case $k=1$. The following
proposition gives the first step.

\begin{prop}\label{prop:exttosym}
Let $\eta$ be a $\Gamma/\KG$-compatible $Q$-faithful $(\Gamma,Q)$
vector bundle where each fiber is positive dimensional.  Then for
every $\Sigma<\Sigma_{k}$, the map
\[
E_{\Gamma/\KG}\Sigma_{+}\sma_{\Sigma}(J^{\Gamma/\KG}\eta)^{(k)}
\to (J^{\Gamma/\KG}\eta)^{(k)}/\Sigma
\]
is a level equivalence.
\end{prop}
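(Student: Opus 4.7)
The plan is to verify this level by level. For an orthogonal $\Gamma/\KG$-representation $W$, Proposition~\ref{prop:Sym} identifies $(J^{\Gamma/\KG}\eta)^{(k)}(W)$ with $\aJ(\Sym^{k}\eta,W)/Q^{k}$ carrying a residual $\Sigma_{k}$-action by permutation of smash factors, so the map at level~$W$ becomes
\[
E_{\Gamma/\KG}\Sigma_{+}\sma_{\Sigma}\bigl(\aJ(\Sym^{k}\eta,W)/Q^{k}\bigr)\to \aJ(\Sym^{k}\eta,W)/(\Sigma\wr Q).
\]
By the universal property of $E_{\Gamma/\KG}\Sigma$, this is a $\Gamma/\KG$-weak equivalence as soon as the $(\Gamma/\KG\times\Sigma)$-isotropy of the based $\Gamma/\KG$-space $\aJ(\Sym^{k}\eta,W)/Q^{k}$ lies in the family of subgroups with trivial intersection with $1\times\Sigma$ away from the basepoint. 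Since the $\Gamma/\KG$- and $(\Sigma\wr Q)$-actions on $\aJ(\Sym^{k}\eta,W)$ commute, this reduces to the non-equivariant claim that $\Sigma\wr Q$ acts freely on the base $\aI(\Sym^{k}\eta,W)$: freeness on the base propagates to the total space of the vector bundle $\Im^{\perp}\aI(\Sym^{k}\eta,W)$ (as the action on the total space covers the action on the base), hence to the Thom space $\aJ(\Sym^{k}\eta,W)$ away from the point at infinity, and then to $\aJ(\Sym^{k}\eta,W)/Q^{k}$ with its $\Sigma$-action.

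The main obstacle is therefore to prove freeness of the $(\Sigma\wr Q)$-action on $\aI(\Sym^{k}\eta,W)$. Suppose $(s,q_{1},\dotsc,q_{k})\in \Sigma\wr Q$ fixes an isometry $f\colon E_{b_{1}}\oplus\dotsb\oplus E_{b_{k}}\to W$. The base-point identity forces $b_{s^{-1}(i)}=b_{i}q_{i}$, so $f$ exhibits $f(E_{b_{1}}),\dotsc,f(E_{b_{k}})$ as pairwise orthogonal subspaces of~$W$ which are permuted by~$s$. If $s$ had a cycle of length $r\geq 2$, the equality $s\cdot f = f\cdot(q_{1},\dotsc,q_{k})$ would force the corresponding $r$ distinct orthogonal summands to coincide as subspaces of~$W$ (up to the unitary identifications supplied by the $q_{i}$); since every fiber of $\eta$ is positive dimensional by hypothesis, this contradicts their mutual orthogonality. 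Hence $s=e$, and the fixed-point equation reduces to each $q_{i}$ acting trivially on the fiber $E_{b_{i}}$; by $Q$-faithfulness of $\eta$ (Definition~\ref{defn:Sigmafaithful}) this forces $q_{1}=\dotsb=q_{k}=e$, completing the proof.
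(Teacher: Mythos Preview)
Your proof is correct and follows essentially the same approach as the paper: reduce to showing that the $\Sigma$-action on $\aJ(\eta^{k},W)/Q^{k}$ is free away from the basepoint, which follows from freeness of the $\Sigma_{k}\wr Q$-action on $\aI(\Sym^{k}\eta,W)$. The only difference is that the paper obtains this freeness by citing Proposition~\ref{prop:trivsf} (that $\Sym^{k}\eta$ is $(\Sigma_{k}\wr Q)$-faithful under the positive-dimensionality hypothesis) together with Proposition~\ref{prop:faithfulfree}, whereas you unpack and reprove those two results directly in your second paragraph.
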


\begin{proof}
By Proposition~\ref{prop:trivsf}, $\Sym^{k}\eta$ is $(\Sigma_{k}\wr
Q)$-faithful.  In particular, the $\Sigma_{k}$ action on
\[
(J^{\Gamma/\KG}\eta)^{(k)}(V)\iso 
(J^{\Gamma/\KG}\eta^{k})(V)
=\aJ(\eta^{k},V)/Q^{k}
\]
is free for every $\KG$-trivial orthogonal $\Gamma$-representation
$V$. We get that 
\[
E_{\Gamma/\KG}\Sigma_{+}\sma \aJ(\eta^{k},V)/Q^{k}\to 
\aJ(\eta^{k},V)/Q^{k}
\]
is a $(\Gamma/\KG\times \Sigma)$-equivalence; taking quotients by
$\Sigma$, the map 
\[
E_{\Gamma/\KG}\Sigma_{+}\sma_{\Sigma} \aJ(\eta^{k},V)/Q^{k}\to 
(\aJ(\eta^{k},V)/Q^{k})/\Sigma
\]
is a $\Gamma/\KG$-equivalence.
\end{proof}

Next we study the geometric fixed points $\Phi^{\Gamma/\KG}$ of
$(J^{\Gamma/\KG}\eta)^{(S)}$ for finite $\Gamma/\KG$-sets $S$. As in
Section~\ref{sec:redsm}, we need the extra flexibility of considering
$\Gamma$ actions which are not $\KG$ fixed and the appropriate context
for this is the category of $\Gamma$-spectra
indexed on $\KG$-trivial spectra, or in other words,
$\Gamma$-equivariant $\aJ_{\Gamma/\KG}$-spaces.  As in that section,
we use $\imath$ for the functor from $\Gamma$-spectra to $\Gamma$-equivariant $\aJ_{\Gamma/\KG}$-spaces that
restricts to the $\KG$-trivial orthogonal $\Gamma$-representations,
and $\jmath$ for the functor from $\Gamma/\KG$-spectra to $\Gamma$-equivariant $\aJ_{\Gamma/\KG}$-spaces that regards
a $\Gamma/\KG$ action as a $\Gamma$ action.  For $\eta$ a
$\Gamma/\KG$-compatible $(\Gamma,Q)$ vector bundle, we then have
\[
\imath J\eta = \jmath J^{\Gamma/\KG}\eta.
\]

Section~\ref{sec:B209} reviews the construction of the indexed smash
product and HHR diagonal in this context.  To interface with the
terminology and notation there, we let $U$ be a complete
$\Gamma/\KG$-universe, regarded as a $\Gamma$-universe (a
$\KG$-trivial $\Gamma$-universe containing a copy of every
$\KG$-trivial orthogonal $G$-representation).   While we can define
the indexed smash product for any finite $\Gamma$-set, we will
restrict to considering finite
$\Gamma/\KG$-sets.  Then when $X$ is a $\Gamma/\KG$-spectrum, we have a natural isomorphism
\[
(\jmath X)^{(S)}\iso \jmath(X^{(S)}).
\]
For generalized orbit desuspension spectra $J\eta$, using the
construction of indexed smash powers in
Proposition~\ref{prop:indexedsma}, we can identify $(\imath
J\eta)^{(S)}$ as $\imath J(\eta^{(S)})$ with $\eta^{(S)}$ defined as
follows. 

\begin{defn}\label{defn:etaS}
Let $S$ be a finite $\Gamma/\KG$-set and let $\eta$ be a $(\Gamma,Q)$
vector bundle.  Write $|S|$ for the underlying set of $S$. Let
$\Sigma_{|S|}$ denote the group of automorphisms of the set $|S|$, 
let $\sigma_{S}\colon \Gamma\to 
\Sigma_{|S|}$ be the tautological homomorphism for the $\Gamma$ action on
$S$, and let $\Sigma_{|S|}(\sigma_{S})$ be the $(\Gamma \times
\Sigma_{|S|}^{op})$-set of Proposition~\ref{prop:indexedsma}, given by
$\Sigma_{|S|}$ with its natural right action and left
$\Gamma$ action induced by $\sigma_{S}$.  Define $\eta^{(S)}$ to be the
$(\Gamma,\Sigma_{|S|}\wr Q)$ vector bundle
\[
\eta^{(S)}= \Sigma_{|S|}(\sigma_{S})\times \eta^{|S|}
\]
where the left $\Gamma$ action is the diagonal of the left $\Gamma$
action on $\Sigma_{|S|}(\sigma_{S})$ and
the action of $\Gamma$ on each copy of $\eta$ coordinatewise; the
right action of 
$\Sigma_{|S|}\wr Q$ uses the right action of
$\Sigma_{|S|}$ on $\Sigma_{|S|}(\sigma_{S})$ (with trivial
$Q^{|S|}$ action), the action of $\Sigma_{|S|}$ on $\eta^{|S|}$ that
permutes the coordinates, and the product action of $Q^{|S|}$ on
$\eta^{|S|}$. 
\end{defn}

As a consequence of Proposition~\ref{prop:bundleprod} and
Proposition~\ref{prop:indexedsma}, we get the following observation. 

\begin{prop}\label{prop:bundleindexsma}
Let $S$ be a finite $\Gamma/\KG$-set and let $\eta$ be a $(\Gamma,Q)$
vector bundle.  Then $(\imath J\eta)^{(S)}\iso \imath J(\eta^{(S)})$.
\end{prop}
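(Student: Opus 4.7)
The plan is to combine the two cited propositions in the obvious way: iterate Proposition~\ref{prop:bundleprod} to convert an $|S|$-fold smash power of $J\eta$ into $J$ applied to an $|S|$-fold cartesian product of $\eta$, and then use Proposition~\ref{prop:indexedsma} to promote the resulting $|S|$-indexed construction to an $S$-indexed construction by smashing in the $\Gamma$-space $\Sigma_{|S|}(\sigma_S)$ and reducing structure from $\Sigma_{|S|}\wr Q$ to a quotient. Since the target $\eta^{(S)}$ is by Definition~\ref{defn:etaS} exactly $\Sigma_{|S|}(\sigma_S)\times \eta^{|S|}$ with its prescribed $(\Gamma, \Sigma_{|S|}\wr Q)$ structure, once the equivariant data on both sides are matched, the isomorphism will follow.

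First I would apply Proposition~\ref{prop:bundleprod} inductively $|S|{-}1$ times to obtain a $(\Gamma, Q^{|S|})$-equivariant isomorphism
\[
(\imath J\eta)^{(|S|)} \iso \imath J(\eta^{|S|}),
\]
where $\eta^{|S|}$ denotes the external $|S|$-fold cartesian product. By naturality, this isomorphism intertwines the $\Sigma_{|S|}$-action on the left-hand side that permutes smash factors with the $\Sigma_{|S|}$-action on the right-hand side that permutes cartesian factors of the bundle (by a straightforward reprise of the argument of Proposition~\ref{prop:bundleprod} with the permutation action introduced), giving an isomorphism of $(\Gamma, \Sigma_{|S|}\wr Q)$ vector bundles underlying an isomorphism of the associated $\Gamma$-equivariant $\aJ_{\Gamma/\KG}$-spaces.

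Second, Proposition~\ref{prop:indexedsma} (in whose formulation the $(\Gamma\times \Sigma_{|S|}^{\op})$-set $\Sigma_{|S|}(\sigma_S)$ appears) identifies the $S$-indexed smash power as
\[
(\imath J\eta)^{(S)} \iso \Sigma_{|S|}(\sigma_S)_{+}\sma_{\Sigma_{|S|}}(\imath J\eta)^{(|S|)},
\]
so that substituting the previous step and using that $J$ commutes with smash products of $(\Gamma, Q)$ vector bundles with $\Gamma$-spaces (base change along the projection to a point) yields
\[
(\imath J\eta)^{(S)} \iso \imath J\bigl(\Sigma_{|S|}(\sigma_S)\times \eta^{|S|}\bigr)/\Sigma_{|S|} \iso \imath J(\eta^{(S)}),
\]
where the quotient by $\Sigma_{|S|}$ corresponds to reducing the group for the quotient in Definition~\ref{defn:etaS}.

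The main obstacle is simply bookkeeping: verifying that the diagonal $\Gamma$-action on $\Sigma_{|S|}(\sigma_S)\times \eta^{|S|}$ built into Definition~\ref{defn:etaS} matches the $\Gamma$-action that arises on the right-hand side from Proposition~\ref{prop:indexedsma} (which encodes the $\Gamma$-action on $S$ via $\sigma_S$ permuting the smash factors) and similarly that the wreath product $\Sigma_{|S|}\wr Q$ right action agrees under the identification. These checks reduce to unwinding the construction of indexed smash powers in Section~\ref{sec:B209} on orbit desuspension spectra, where everything becomes explicit; no homotopical input is needed.
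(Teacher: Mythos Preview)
Your proposal is correct and follows exactly the approach the paper indicates: the paper states this proposition as an immediate consequence of Proposition~\ref{prop:bundleprod} and Proposition~\ref{prop:indexedsma} without writing out any further details, and what you have written is precisely the unwinding of that implication.
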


We use this in the following proposition, which allows us to study the
indexed smash power of a generalized orbit desuspension spectrum
$J^{\Gamma/\KG}\eta$ in terms of the indexed smash power of $\imath
J\eta'$ where we have taken an equivariant CW approximation of the
base space.  This kind of CW approximation does not preserve
$\Gamma/\KG$-compatibility and is the reason we consider
generalizations to $\Gamma$-equivariant $\aJ_{\Gamma/\KG}$-spaces.
In the statement, $\aF(1\times Q^{\op})$ denotes the
family of subgroups of $\Gamma \times Q^{\op}$ whose intersection with
$1\times Q^{\op}$ is trivial.

\begin{prop}\label{prop:bundlemultiwe}
Let $\eta$ be a $Q$-faithful $(\Gamma,Q)$ vector bundle, let $f\colon
B'\to B$ be an $\aF(1\times Q^{\op})$-equivalence of $(\Gamma\times
Q^{\op})$-spaces.  Assume that $B'$ is Hausdorff and let $f^{*}\eta$ denote the
pullback $(\Gamma,Q)$-vector bundle with base space $B'$.  Then: 
\begin{enumerate}
\item $f^{*}\eta$ is $Q$-faithful.
\item The induced map $J(f^{*}\eta)\to J\eta$ is a level equivalence.
\item For any finite $\Gamma/\KG$-set $S$, the induced map $(\imath
J(f^{*}\eta))^{(S)}\to (\imath J\eta)^{(S)}$ is a level equivalence.
\end{enumerate}
\end{prop}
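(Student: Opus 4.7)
The plan is to establish the three parts in order, with part (iii) reducing to part (ii) applied in a larger wreath-product setting. Part (i) is immediate: pullback along an equivariant map preserves $Q$-faithfulness by Proposition~\ref{prop:Sfrestr}.(ii).

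For part (ii), at each orthogonal $\Gamma$-representation $W$ and each closed subgroup $H<\Gamma$, I would compute the $H$-fixed points of the map $\aJ(f^{*}\eta,W)/Q\to \aJ(\eta,W)/Q$ using Lemma~\ref{lem:bundlegeofix} (applied after restricting the group of equivariance to $H$). This identifies the fixed points as the $Q$-quotient of
\[
\coprod_{\sigma\colon H\to Q}\aJ(-,W)^{H,\sigma},
\]
where $Q$ acts freely by Proposition~\ref{prop:faithfulfree} (using part (i)). Each summand is the Thom space of a vector bundle over the fiber bundle $\aI(-,W)^{H,\sigma}\to B^{H,\sigma}$. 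The base space $B^{H,\sigma}$ is the fixed set of the graph subgroup $H_{\sigma}=\{(h,\sigma(h^{-1}))\}\in \aF(1\times Q^{\op})$, so the restriction $(B')^{H,\sigma}\to B^{H,\sigma}$ is a weak equivalence by hypothesis. Pulling back the locally trivial fiber bundle and the complementary vector bundle along this weak equivalence (fibers unchanged) gives a weak equivalence of total spaces and hence of Thom spaces; the free $Q$-action permits passage to quotients.

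For part (iii), by Proposition~\ref{prop:bundleindexsma} we have $(\imath J\eta)^{(S)}\iso \imath J(\eta^{(S)})$ and $(\imath J(f^{*}\eta))^{(S)}\iso \imath J((f^{*}\eta)^{(S)})$ as $\Gamma$-equivariant $\aJ_{\Gamma/\KG}$-spaces, where $\eta^{(S)}=\Sigma_{|S|}(\sigma_{S})\times \eta^{|S|}$ is a $(\Gamma,\Sigma_{|S|}\wr Q)$-vector bundle (Definition~\ref{defn:etaS}), and similarly for $(f^{*}\eta)^{(S)}$. The induced map of base spaces is $\id\times f^{|S|}\colon \Sigma_{|S|}(\sigma_{S})\times (B')^{|S|}\to \Sigma_{|S|}(\sigma_{S})\times B^{|S|}$. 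Applying part (ii) in the enlarged $(\Gamma,\Sigma_{|S|}\wr Q)$-setting reduces the problem to verifying that $\id\times f^{|S|}$ is an $\aF(1\times(\Sigma_{|S|}\wr Q)^{\op})$-equivalence, i.e., a weak equivalence on $H_{\tau}$-fixed points for every continuous $\tau=(\tau_{1},\tau_{2})\colon H\to \Sigma_{|S|}\wr Q$. Unwinding: the $\Sigma_{|S|}(\sigma_{S})$-coordinate condition picks out the discrete set of $\pi\in\Sigma_{|S|}$ with $\tau_{1}(h)=\pi^{-1}\sigma_{S}(h)\pi$ for all $h\in H$, and for each such $\pi$ the fixed-point condition on $(B')^{|S|}$ decomposes over the orbits of the resulting $H$-action on $\{1,\dots,|S|\}$ into a product of graph subgroup fixed-point sets of $B'$, mapping to the corresponding product for $B$. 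Since each factor is in $\aF(1\times Q^{\op})$, the hypothesis on $f$ makes each factor map a weak equivalence, so the product and the disjoint union are as well.

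The main obstacle will be the bookkeeping in part (iii): carefully tracking the semi-direct product action of $\Sigma_{|S|}\wr Q=\Sigma_{|S|}\ltimes Q^{|S|}$ on $\Sigma_{|S|}(\sigma_{S})\times B^{|S|}$ to confirm that every graph subgroup of $\Gamma\times(\Sigma_{|S|}\wr Q)^{\op}$ contributes fixed-point data that factors through graph subgroups of $\Gamma\times Q^{\op}$ on the individual $B$-factors. Once this decomposition is verified, the hypothesis that $f$ is an $\aF(1\times Q^{\op})$-equivalence feeds in cleanly, and the rest of the argument is formal.
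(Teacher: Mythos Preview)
Your proof follows essentially the same route as the paper's: part~(i) is immediate, part~(ii) is exactly the paper's argument via Lemma~\ref{lem:bundlegeofix} and Proposition~\ref{prop:faithfulfree}, and part~(iii) reduces to part~(ii) for the $(\Gamma,\Sigma_{|S|}\wr Q)$ vector bundle $\eta^{(S)}$ after checking that the map of base spaces is an $\aF(1\times(\Sigma_{|S|}\wr Q)^{\op})$-equivalence. Where the paper cites Lemma~\ref{lem:dfplem} for that last check, you unwind the fixed-point decomposition by hand, which amounts to the same thing.

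There is one small omission: to apply part~(ii) in the enlarged setting you need $\eta^{(S)}$ to be $(\Sigma_{|S|}\wr Q)$-faithful, and you do not verify this. The paper handles it by observing that $\Sigma_{|S|}$ acts freely on $\Sigma_{|S|}(\sigma_{S})$ (so when $Q$ is trivial the isotropy of any base point is trivial) and invoking Proposition~\ref{prop:trivsf} when $Q$ is non-trivial. Either way the isotropy at any base point lies in $Q^{|S|}$ and acts faithfully on the product fiber since $\eta$ is $Q$-faithful. This is an easy one-line check, but it should be stated since it is exactly the hypothesis you need to feed back into part~(ii).
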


\begin{proof}
Conclusion~(i) is clear from the fiberwise definition of $Q$-faithful.
For conclusion~(ii), we need to show that for an arbitrary orthogonal $\Gamma$-representation $W$ and an arbitrary 
closed subgroup $H <\Gamma$, the map
\[
(\aJ(f^{*}\eta,W)/Q)^{H}\to (\aJ(\eta,W)/Q)^{H}
\]
is a (non-equivariant) weak equivalence.  By 
Lemma~\ref{lem:bundlegeofix}, this is the $Q$-quotient of the induced
map on Thom spaces of the map of vector bundles (of the same local
dimension) which on total spaces is 
\[
\coprod_{\sigma \colon H \to Q}
\Im^{\perp}\aI(f^{*}\eta,W)^{H,\sigma}\to
\coprod_{\sigma \colon H \to Q}
\Im^{\perp}\aI(\eta,W)^{H,\sigma}.
\]
Since each side is a $Q$-stable subspace of a free $Q$-space
(Proposition~\ref{prop:faithfulfree}), the $Q$ actions are free.
Since these are Hausdorff spaces, to see that the map induces a weak
equivalence on $Q$-quotients, it is enough to see that the map
itself is a weak equivalence, and for this it is enough to see that
for each $\sigma \colon H \to Q$, the map
\[
\aI(f^{*}\eta,W)^{H,\sigma}\to
\aI(\eta,W)^{H,\sigma}
\]
is a weak equivalence.  Applying Lemma~\ref{lem:bundlegeofix} again, the previous map is
the map on total spaces of a map of locally trivial fiber
bundles which is the restriction $f^{H,\sigma}$ of $f$ on the base spaces
\[
f^{H,\sigma}\colon (B')^{H,\sigma}\to B^{H,\sigma}
\]
(with the identity map on the fibers).
The $\sigma$-twisted $H$ fixed points are the fixed points of
the subgroup 
\[
(H,\sigma)=\{(h,\sigma(h^{-1}))\mid h \in H \}<\Gamma \times Q^{\op}
\]
and hence by hypothesis, $f^{H,\sigma}$ is a weak equivalence.
This establishes~(ii).  To prove~(iii), we apply
Proposition~\ref{prop:bundleindexsma} and observe that $(f^{*}\eta)^{(S)}$
is obtained from $\eta^{(S)}$ as the pullback bundle of the
$(\Sigma_{|S|}\wr Q)$-equivariant map of base spaces
\[
\Sigma_{|S|}(\sigma_{S})\times (B')^{|S|}\to 
\Sigma_{|S|}(\sigma_{S})\times B^{|S|}.
\]
Because $B'\to B$ is assumed to be an $\aF(1\times
Q^{\op})$-equivalence, the displayed map is a
$\aF(1\times(\Sigma_{|S|}\wr Q)^{\op})$-equivalence: for any space
$X$, the fixed points of $X^{k}$ for graph subgroups of $\Sigma_{k}\wr
Q$ can be written in terms of products of fixed points of $X$ for
subgroups of $Q$ (see, for example,  Lemma~\ref{lem:dfplem}). Next we
observe that $\eta^{(S)}$ is $(\Sigma_{|S|}\wr Q)$-faithful: this is
clear when $Q$ is the trivial group since $\Sigma_{|S|}$ acts freely
on $\Sigma_{|S|}(\sigma_{S})$ and is covered by
Proposition~\ref{prop:trivsf} when $Q$ is non-trivial.
Conclusion~(ii) now implies conclusion~(iii).
\end{proof}

We defined the functor $\Fix^{\Gamma}_{\KG}$ from
$\Gamma$-equivariant $\aJ_{\Gamma/\KG}$-spaces to non-equivariant
$\aJ^{\Gamma}_{\Gamma/\KG}$-spaces in
Definition~\ref{defn:fixgk}. Although not needed in that section, 
left Kan extension along the functor $\phi \colon
\aJ^{\Gamma}_{\Gamma/\KG}\to \aJ$ produces a functor
$\Phi^{\Gamma}_{\KG}$ from $\Gamma$-equivariant
$\aJ_{\Gamma/\KG}$-spaces to non-equivariant spectra; the functor
$\Phi^{\Gamma}_{\KG}$ ; it
is canonically naturally isomorphic to the functor denoted $\Phi^{\Gamma}_{U}$ in
Section~\ref{sec:B209} for the universe $U$ described above.
More generally, for any
closed subgroup $H$ of $\Gamma$ containing $\KG$,  we likewise have a
functor $\Phi^{H}_{\KG}=\Phi^{H}_{U}$ defined using the 
functor $\Fix^{H}_{\KG}$ from $N_{\Gamma}H$-equivariant
$\aJ_{N_{\Gamma}H/\KG}$-spaces to non-equivariant
$\aJ^{H}_{N_{\Gamma}H/\KG}$-spaces by the formula  
\[
(\Fix^{H}_{\KG}Y)(W)=Y(W)^{H}
\]
and left Kan extension along the functor $\phi
\colon \aJ^{H}_{N_{\Gamma}H/\KG}\to \aJ$ defined as usual (on objects
it takes $V$ to $V^{H}$ regarded as a non-equivariant inner product
space, and on maps it takes the restriction of the equivariant
isometry to its fixed points and sends the embedding complement 
element by the identity).  The functors $\Phi^{H}_{\KG}$ are instances
of the geometric fixed point functors of~\cite[V\S4]{MM} when we
identify $\Gamma$-equivariant $\aJ_{\Gamma/\KG}$-spaces as
$\Gamma$-equivariant orthogonal spectra indexed on the $\KG$-trivial
orthogonal $\Gamma$-representations.  The work in \textit{ibid.} does
not restrict to the complete universe and the work there establishes
various point-set and homotopical properties of these functors,
including the existence of left derived functors, computed by applying
the point-set functors to cofibrant approximations (in the standard
stable model structure).

For $\Gamma/\KG$-spectra, we have natural isomorphisms relating
$\Fix^{H}_{\KG}$ and $\Phi^{H}_{\KG}$ to the underlying
non-equivariant spectra of $\Fix^{H/\KG}$ and $\Phi^{H/\KG}$ which are
essentially the identity map. As reviewed in Section~\ref{sec:B209},
the construction of the HHR 
diagonal extends to the case of $\Gamma$-equivariant $\aJ_{\Gamma/\KG}$-spaces to produce a natural diagonal map
\[
\Phi^{H}_{\KG}(Y)\to \Phi^{\Gamma}_{\KG}(Y^{(\Gamma/H)}),
\]
and for $X$ a $\Gamma/\KG$-spectrum, the
diagram of comparison isomorphisms
\[
\xymatrix{%
\Phi^{H}_{\KG}(\jmath X)\ar[r]\ar[d]_{\iso}
&\Phi^{\Gamma}_{\KG}((\jmath X)^{(\Gamma/H)})\ar[d]^{\iso}\\
\Phi^{H/\KG}(X)\ar[r]
&\Phi^{\Gamma/\KG}(X^{(\Gamma/H)})
}
\]
commutes.  The following proposition generalizes~\cite[B.209]{HHR} in
the current context.

\begin{thm}\label{thm:bundlediag}
Let $\eta\colon E\to B$ be a $(\Gamma,Q)$ vector bundle such that
$B$ is a $(\Gamma \times Q^{\op})$-equivariant CW complex with
isotropy groups in $F=\aF(1\times Q^{\op})$.  Then for any closed
subgroup $H$ of $\Gamma$ containing $\KG$, 
the diagonal map 
\[
\Phi^{H}_{\KG}\imath J\eta
\to
\Phi^{\Gamma}_{\KG}((\imath J\eta)^{(\Gamma/H)})
\]
is an isomorphism.
\end{thm}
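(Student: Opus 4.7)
The plan is to prove the theorem by induction on the equivariant cell structure of $B$, reducing to the classical Hill-Hopkins-Ravenel diagonal result for orbit desuspension spectra. The key observation is that once $B$ consists of a single orbit whose isotropy is a graph subgroup, $\imath J\eta$ becomes (the underlying object of) an orbit desuspension spectrum $\Gamma_{+}\sma_{H_{0}}F_{V}S^{0}$, where the diagonal is already known to be an isomorphism by~\cite[B.209]{HHR}.

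First I would verify that both the domain functor $\Phi^{H}_{\KG}(\imath J(-))$ and the codomain functor $\Phi^{\Gamma}_{\KG}((\imath J(-))^{(\Gamma/H)})$, viewed as functors in the base of the $(\Gamma,Q)$ vector bundle, send pushouts along Hurewicz cofibrations to pushouts and preserve transfinite compositions of Hurewicz cofibrations. For the domain this is standard. For the codomain one uses Proposition~\ref{prop:bundleindexsma} to rewrite $(\imath J\eta)^{(\Gamma/H)}\iso \imath J(\eta^{(\Gamma/H)})$ and the explicit description of $\eta^{(\Gamma/H)}$ in Definition~\ref{defn:etaS}, from which it follows that a cell attachment in the base of $\eta$ induces a compatible cell attachment in the base of $\eta^{(\Gamma/H)}$. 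Naturality of the HHR diagonal with respect to maps of base spaces then reduces the claim to the case of a single cell, and by considering the closed cofibration $\partial D^n\hookrightarrow D^n$, further to the case $B=(\Gamma\times Q^{\op})/\tilde H$ for a single $\tilde H\in F$.

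Second, in this reduced case, since $\tilde H\cap(1\times Q^{\op})$ is trivial, $\tilde H$ is the graph subgroup $H_{0,\sigma}=\{(h,\sigma(h^{-1}))\mid h\in H_{0}\}$ for a unique closed $H_{0}<\Gamma$ and continuous homomorphism $\sigma\colon H_{0}\to Q$. The fiber of $\eta$ over the identity coset is an orthogonal $\tilde H$-representation $V$, which we view as an orthogonal $H_{0}$-representation via the projection $\tilde H\to H_{0}$. A direct check shows that $\imath J\eta$ is canonically isomorphic, as a $\Gamma$-equivariant $\aJ_{\Gamma/\KG}$-space, to $\imath(\Gamma_{+}\sma_{H_{0}}F_{V}S^{0})$; similarly, using the formula for $\eta^{(\Gamma/H)}$ one identifies $(\imath J\eta)^{(\Gamma/H)}$ with $\imath((\Gamma_{+}\sma_{H_{0}}F_{V}S^{0})^{(\Gamma/H)})$.

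Finally, I would check that under these identifications the diagonal of Section~\ref{sec:B209} corresponds to the HHR diagonal $\Phi^{H}(\Gamma_{+}\sma_{H_{0}}F_{V}S^{0})\to \Phi^{\Gamma}((\Gamma_{+}\sma_{H_{0}}F_{V}S^{0})^{(\Gamma/H)})$, at which point~\cite[B.209]{HHR} gives the required isomorphism. The main obstacle is this last step: the HHR diagonal in Section~\ref{sec:B209} is constructed in the generality of $\Gamma$-equivariant $\aJ_{\Gamma/\KG}$-spaces (and is formulated via $\Fix^{\Gamma}_{\KG}$ and left Kan extension along $\phi$), so one must carefully unwind the definitions of both diagonals on the chosen orbit cells and verify that, on the underlying non-equivariant spectra, the two constructions agree up to the canonical isomorphism $\Phi^{H}_{\KG}\circ \jmath \iso \Phi^{H/\KG}$. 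This compatibility is essentially formal from the universal property of left Kan extension combined with the explicit identification of $\eta^{(\Gamma/H)}$ in the orbit case, but needs to be verified cleanly so that the cell-by-cell reduction produces the HHR diagonal map (and not merely a map that happens to be an isomorphism on orbits).
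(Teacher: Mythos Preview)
Your approach is essentially the same as the paper's, but the paper packages it more economically. Rather than carrying out the cell-by-cell induction on the diagonal map directly, the paper observes once and for all that under the hypothesis on $B$, the spectrum $J\eta$ is cofibrant in the \emph{complete} model structure on $\Gamma$-equivariant orthogonal spectra: each cell $(\Gamma\times Q^{\op})/M_{\sigma}\times (D^{n},\partial D^{n})$ of $B$ (with $M_{\sigma}$ a graph subgroup) contributes a cell $\Gamma_{+}\sma_{M}\aJ(V,-)\sma (D^{n},\partial D^{n})_{+}$ to $J\eta$, via exactly the graph-subgroup identification you describe. Having established complete-cofibrancy, the paper then simply invokes Corollary~\ref{cor:B209}, which already contains the inductive argument you are proposing to redo.

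The one place to be careful in your write-up is the final citation. Invoking~\cite[B.209]{HHR} directly is not quite sufficient: that result is stated for finite groups and the complete universe, whereas here $\Gamma$ is a compact Lie group and the relevant universe is the $\KG$-trivial one, which need not be complete. The paper's Theorem~\ref{thm:B209} and Corollary~\ref{cor:B209} are precisely the needed generalizations (to compact Lie $G$ and universes $U$ with $U\otimes G/H\iso U$), and the diagonal of Proposition~\ref{prop:diagexist} \emph{is} the map $\Phi^{H}_{\KG}\to \Phi^{\Gamma}_{\KG}((-)^{(\Gamma/H)})$ in the theorem statement. Citing Corollary~\ref{cor:B209} instead of~\cite[B.209]{HHR} therefore also dissolves your ``main obstacle'': there is only one diagonal in play, so no separate compatibility check is required.
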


\begin{proof}
By hypothesis, the base $B$ is built out of cells of the form
\[
(\Gamma \times Q^{\op})/M_{\sigma}\times (D^{n},\partial D^{n})
\]
for a $M_{\sigma}$ a subgroup of $\Gamma \times Q^{\op}$ in the family
$F$.  Restricted to such a cell, the fiber $V=E_{b}$ at a point has an
action of $M_{\sigma}$ and a trivialization giving an isomorphism 
\[
\eta|_{D^{n}}\iso ((\Gamma\times Q^{\op}) \times_{M_{\sigma}} V\to 
(\Gamma\times Q^{\op}) /M_{\sigma})\times D^{n}.
\]
We then see that $\eta$ is built out of cells of the form
\[
((\Gamma\times Q^{\op})_{+}\sma_{M_{\sigma}} \aJ(V,-))/Q^{\op}\sma (D^{n},\partial D^{n})_{+}
\]
where $M_{\sigma}$ is in $F$, and $V$ is an orthogonal
$M_{\sigma}$-representation.  Since $M_{\sigma}$ is in the family $F$,
letting $M$ denote the image of $M_{\sigma}$ under the projection to
$\Gamma$, we have
\[
M_{\sigma}=\{(h,\sigma(h)^{-1}))\mid h\in M\}
\]
for a unique homomorphism $\sigma \colon M\to Q$. Using the projection
isomorphism $M_{\sigma}\to M$ to regard $V$ as an orthogonal $M$-representation, we get an isomorphism of $\Gamma$-spectra
\[
((\Gamma\times Q^{\op})_{+}\sma_{M_{\sigma}} \aJ(V,-))/Q^{\op}
\iso \Gamma_{+}\sma_{M}\aJ(V,-). 
\]
This shows that $J\eta$ is cofibrant
in the complete model structure on $\Gamma$-spectra; the result here is now a special case of
Corollary~\ref{cor:B209}. 
\end{proof}

Finally, we prove Theorem~\ref{thm:dersym}.

\begin{proof}[Proof of Theorem~\ref{thm:dersym}]
Let $\eta\colon E\to B$ be a $Q$-faithful $\Gamma/\KG$-compatible
$(\Gamma,Q)$ vector bundle where each fiber is positive dimensional.
To prove the statement of Theorem~\ref{thm:dersym}, we just need to
check that $J^{\Gamma/\KG}\eta$ satisfies hypotheses~(i) and~(ii) of
Proposition~\ref{prop:charsym} for every $k>1$ and every $\Sigma <
\Sigma_{k}$.  Proposition~\ref{prop:exttosym} establishes
hypothesis~(i).  

For hypothesis~(ii), by restricting the group of equivariance and
changing notation if necessary, it suffices to consider finite
$\Gamma/\KG$-sets $S$.  Let $f\colon B'\to B$ be an $\aF(1\times
Q^{\op})$-colocal CW-approximation, i.e., an $\aF(1\times
Q^{\op})$-equivalence from a CW complex with cells whose isotropy
groups are in $\aF(1\times Q^{\op})$.  By
Proposition~\ref{prop:bundlemultiwe}, $J(f^{*}\eta)\to J\eta$ induces
a weak equivalence of $\Gamma$-equivariant $\aJ_{\Gamma/\KG}$-spaces
\[
(\imath J(f^{*}\eta))^{(S)}\to 
(\imath J\eta)^{(S)}
\]
for all finite $\Gamma/\KG$-sets
$S$. Proposition~\ref{prop:bundlemultiwe} also asserts that
$J(f^{*}\eta)\to J\eta$ is a level equivalence, and so induces
non-equivariant weak equivalences 
\[
\Fix^{H}_{\KG}\imath J(f^{*}\eta)\to
\Fix^{H}_{\KG}\imath J\eta
\]
for all closed subgroups $H<\Gamma$ containing $\KG$.  This gives a
commuting diagram in the stable category of $\Gamma$-spectra indexed on $\KG$-trivial representations
\[
\xymatrix{%
L\Phi^{H_{1}}_{\KG}(\imath J(f^{*}\eta))\sma^{L}\dotsb\sma^{L}
L\Phi^{H_{r}}_{\KG}(\imath J(f^{*}\eta))
\ar[d]_{\iso}\ar[r]
&(\imath J(f^{*}\eta))^{(S)}\ar[d]^{\iso}\\
L\Phi^{H_{1}}_{\KG}(\imath J\eta)\sma^{L}\dotsb\sma^{L}
L\Phi^{H_{r}}_{\KG}(\imath J\eta)
\ar[r]\ar[d]_{\iso}
&(\imath J\eta)^{(S)}\ar[d]^{\iso}\\
L\Phi^{H_{1}}_{\KG}(\jmath J^{\Gamma/\KG}\eta)\sma^{L}\dotsb\sma^{L}
L\Phi^{H_{r}}_{\KG}(\jmath J^{\Gamma/\KG}\eta)
\ar[r]
&(\jmath J^{\Gamma/\KG}\eta)^{(S)}
}
\]
for $S=\Gamma/H_{1}\amalg \dotsb \amalg \Gamma /H_{r}$, with the
indicated arrows isomorphisms.  Noting that the functor $\jmath$ is a
left Quillen adjoint (for the standard stable model structures
of~\cite[III\S4]{MM}) and that it preserves and reflects weak equivalences,
the isomorphism $\Phi^{\Gamma/\KG}\iso 
\Phi^{\Gamma}_{\KG}\jmath$ induces an isomorphism of derived functors
\[
L\Phi^{\Gamma/\KG}\iso 
(L\Phi^{\Gamma}_{\KG})\circ \jmath.
\]
In particular, the bottom map in the square above is an isomorphism in
the stable category of $\Gamma$-spectra indexed
on $\KG$-trivial representations if and only if the map
\[
L\Phi^{H_{1}/\KG}(J^{\Gamma/\KG}\eta)\sma^{L}\dotsb\sma^{L}
L\Phi^{H_{r}/\KG}(J^{\Gamma/\KG}\eta)
\to
(J^{\Gamma/\KG}\eta)^{(S)}
\]
is an isomorphism is the $\Gamma/\KG$-equivariant stable
category. Thus, we are reduced to showing that the map
\[
L\Phi^{H_{1}}_{\KG}(\imath J(f^{*}\eta))\sma^{L}\dotsb\sma^{L}
L\Phi^{H_{r}}_{\KG} (\imath J(f^{*}\eta))
\to
(\imath J(f^{*}\eta))^{(S)}
\]
is an isomorphism in 
the stable category of $\Gamma$-spectra indexed on $\KG$-trivial representations.

By Theorem~\ref{thm:varbundleflat}, each $\imath J(f^{*}\eta)$ is flat
for the smash product of $\Gamma$-equivariant $\aJ_{\Gamma/\KG}$-spaces, which implies that its smash product with
any $\Gamma$-equivariant $\aJ_{\Gamma/\KG}$-space represents the
derived smash product.  Since
\[
(\imath J(f^{*}\eta))^{(S\amalg S')}\iso (\imath
J(f^{*}\eta))^{(S)}\sma (\imath J(f^{*}\eta))^{(S')},
\]
it suffices to consider just the case when $S=\Gamma/H$.  
Although $\imath J(f^{*}\eta)$ is generally not a cofibrant
$\Gamma$-equivariant $\aJ_{\Gamma/\KG}$-space, by
Proposition~\ref{prop:bundlefixcw}, $\Fix^{H}_{\KG}(\imath
J(f^{*}\eta))$ is a cofibrant non-equivariant $\aJ^{H}_{\Gamma/\KG}$-space and this is enough to ensure that the
point-set functor $\Phi^{H}_{\KG}$ represents the derived functor 
$L\Phi^{H}_{\KG}$.  The argument is now completed
by Theorem~\ref{thm:bundlediag}.
\end{proof}

\section{Proof of Theorem~\sref$thm:calcgeoSym$}
\label{sec:calgeoSym}

This section proves Theorem~\ref{thm:calcgeoSym} and along the way
provides more details about the spectra in the decomposition given
there.  It is independent of the quotient fixed point lemma,
and could have been placed just after Section~\ref{sec:endbundle}: it
uses only the statement of Theorem~\ref{thm:bundlegeofix} and not the
techniques of its argument.

To compress notation, we write
$\Sigma'_{q}:=\Sigma_{q}\wr Q$.  The key idea in the construction is
that homomorphisms $\Lambda \to \Sigma'_{q}$ generalize the theory of
finite $\Lambda$-sets. When $Q$ is the trivial group, a homomorphism
$\sigma \colon \Lambda \to \Sigma_{q}$ specifies an action of
$\Lambda$ on $\{1,\dotsc,q\}$; if we denote the corresponding
$\Lambda$-set as $S(\sigma)$, then the (left) action by an element
$s^{-1}\in \Sigma_{q}$ specifies an isomorphism of $\Lambda$-sets
$S(\sigma)\to S(\sigma_{s})$, where (as in~\eqref{eq:Qaction}) $\sigma_{s}$ denotes
the homomorphism obtain by right conjugation by $s$,
\[
\sigma_{s}\colon \lambda\mapsto s^{-1}\sigma(\lambda)s.
\]
When conjugation by $s$ fixes $\sigma$, we get an
automorphism of $S(\sigma)$.  The corresponding theory when $Q$ is
non-trivial goes as follows.

Given a homomorphism $\sigma \colon \Lambda \to \Sigma'_{q}$, we call
$q$ the \term{cardinality} of $\sigma$.  We say that homomorphisms
$\sigma$ and $\sigma'$ of the 
same cardinality $q$ are \term{conjugate} when they are conjugate
under the $\Sigma'_{q}$ action on the set of all homomorphisms
$\Lambda \to \Sigma'_{q}$; homomorphisms of different cardinality are
never conjugate.  

Composing with the projection homomorphism $\Sigma'_{q}\to
\Sigma_{q}$, a homomorphism $\sigma \colon \Lambda \to \Sigma'_{q}$
gives a homomorphism $\Lambda \to \Sigma_{q}$, which defines an action
on $\{1,\dotsc,q\}$. We say that $\sigma$ is \term{irreducible} when
the action on $\{1,\dotsc,q\}$ is transitive.  Given any homomorphisms
$\sigma_{i} \colon \Lambda \to \Sigma'_{q_{i}}$, $i=1,\dotsc,r$, we
get a homomorphism $\sigma\colon \Lambda \to \Sigma'_{q}$,
$q=q_{1}+\dotsb+q_{r}$ by taking the block sum; we then write $\sigma
=\sigma_{1}\oplus \dotsb \oplus \sigma_{r}$ and call
$\sigma_{1},\dotsc,\sigma_{r}$ a block sum decomposition of $\sigma$.

An arbitrary homomorphism $\sigma \colon \Lambda \to \Sigma'_{q}$ is
conjugate to a homomorphism where the orbits
$O_{1},\dotsc,O_{k}$ consists of intervals in $\{1,\dotsc,q\}$.  When
the orbits are intervals, then $\sigma$ has a block sum
decomposition by irreducible homomorphisms $\sigma_{i}\to
\Sigma'_{q_{i}}$, $i=1,\dotsc,k$, with $q_{i}=\#O_{i}$.  

For $\sigma \colon \Lambda \to \Sigma'_{q}$, we write $C(\sigma)$ for
the subgroup of $\Sigma'_{q}$ whose conjugation action on $\sigma$ is
trivial.  The number of distinct conjugates of $\sigma$
is then $\#(\Sigma'_{q}/C(\sigma))$.  If $\sigma\colon \Lambda \to
\Sigma'_{q}$ is irreducible then any element in $C(\sigma^{\oplus
n})<\Sigma'_{nq}$ must have image in $\Sigma_{nq}$ that permutes the
blocks 
\[
\{1,\dotsc,q\},\{q+1,\dotsc,2q\},\dotsc,\{(n-1)q+1,\dotsc,nq\}.
\]
Numbering these blocks in the obvious way and using the induced isomorphism
between $\Sigma_{n}$ and the subgroup of $\Sigma_{nq}$ that permutes
these blocks, we get an isomorphism
\[
\Sigma_{n}\wr C(\sigma)\to C(\sigma^{\oplus n})<\Sigma'_{nq}.
\]
If $\sigma_{i}\colon \Lambda \to \Sigma'_{q_{i}}$, $i=1,\dotsc,r$ are
indecomposable and pairwise non-conjugate, then the isomorphism above
and block sum induces an isomorphism
\begin{equation}\label{eq:C}
(\Sigma_{n_{1}}\wr C(\sigma_{1}))\times \dotsb \times 
(\Sigma_{n_{r}}\wr C(\sigma_{r}))\to 
C(\sigma_{1}^{\oplus n_{1}}\oplus \dotsb \oplus \sigma_{r}^{\oplus n_{r}}).
\end{equation}

When $q>\#(\pi_{0}\Lambda)$, no homomorphism of cardinality $q$ can be
irreducible and so there are only finitely many total irreducible
homomorphisms of any cardinality.  Let $T$ be the number of conjugacy
classes of irreducible homomorphisms; we choose one in each conjugacy
class and enumerate them $\tau_{1},\dotsc,\tau_{T}$.  Let $t_{i}$
denote the cardinality of $\tau_{i}$, and let
$\vec t=(t_{1},\dotsc,t_{T})\in \bN^{T}$.  For $\vec
n=(n_{1},\dotsc,n_{T})$, define
\[
\tau(\vec n):=\tau_{1}^{\oplus n_{1}}\oplus \dotsb \oplus \tau_{T}^{\oplus n_{T}}
\colon \Lambda \to \Sigma'_{\vec n\cdot \vec t}
\]
when at least one of the $n_{i}$ is non-zero.  The cardinality of
$\tau(\vec n)$ is then given by the vector dot product $\vec
n\cdot\vec t$.  Let 
\[
\eta[\vec n]\colon 
(E^{\vec n\cdot \vec t})^{\Lambda,\tau(\vec n)}\{\KG\}\to 
(B^{\vec n\cdot \vec t})^{\Lambda,\tau(\vec n)}\{\KG\}
\]
be the $(\Gamma,C(\tau(\vec n)))$ vector bundle obtained by raising $\eta$
to the power of the cardinality of $\tau(\vec n)$, taking the
$\tau(\vec n)$-twisted $\Lambda$ fixed points, and restricting the
base to those points for which the $\tau(\vec n)|_{\KG}$-twisted
$\KG$ action on the fiber is trivial. (The $C(\tau(\vec n))$
action is the restriction to the fixed points of the $C(\tau(\vec
n))<\Sigma'_{\vec n\cdot \vec t}$ action on $\eta^{\vec n\cdot \vec
t}$.)  When $q=\vec n\cdot \vec t$ and we write
$(\Sym^{q}\eta)(\Lambda|\KG)|_{C(\tau(\vec n))}$ for the restriction of
$(\Sym^{q}\eta)(\Lambda|\KG)$ to a $(\Gamma,C(\tau(\vec n)))$ vector
bundle, $\eta[\vec n]$ includes as a $(\Gamma,C(\tau(\vec n)))$ vector
bundle in $(\Sym^{q}\eta)(\Lambda|\KG)|_{C(\tau(\vec n))}$ as the summand
indexed by $\tau(\vec n)$.  
Let $I\eta[\vec n]$ be the $(\Gamma,\Sigma'_{q})$ vector bundle 
\[
I\eta[\vec n]:= 
(\eta [\vec n])\times_{C(\tau(\vec n))}\Sigma'_{q}.
\]
Then formally, we get a map of $(\Gamma,\Sigma'_{q})$ vector bundles
$I\eta[\vec n]\to (\Sym^{q} \eta)(\Lambda|\KG)$. The summand of
$I\eta[\vec n]$ indexed by the coset $C(\tau(\vec n))s$ maps to the
summand of $(\Sym^{q} \eta)(\Lambda|\KG)$ indexed by $\tau(\vec
n)_{s}$. Since the cosets are in bijective correspondence with the
conjugates of $\tau(\vec n)$, the map $I\eta[\vec n]\to (\Sym^{q}
\eta)(\Lambda|\KG)$ is an inclusion.  Because every $\sigma \colon \Lambda
\to \Sigma'_{q}$ is conjugate to $\tau(\vec n)$ for some (unique)
$\vec n$ with $\vec n\cdot \vec t=q$, we get the following result.

\begin{prop}\label{prop:calcSym1}
For $q\geq 1$, the map of $(\Gamma,\Sigma'_{q})$ vector bundles
\[
\coprod_{\vec n\cdot \vec t=q} I\eta[\vec n]\to 
(\Sym^{q}\eta)(\Lambda|\KG)
\]
is an isomorphism.
\end{prop}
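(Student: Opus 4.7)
The plan is to prove the statement by analyzing how an arbitrary homomorphism $\sigma\colon \Lambda\to \Sigma'_q$ decomposes under conjugation, and then matching the bundle $I\eta[\vec n]$ to the collection of summands in $(\Sym^q\eta)(\Lambda|\KG)$ that are indexed by the conjugacy class of $\tau(\vec n)$. By construction, $(\Sym^q\eta)(\Lambda|\KG)$ is the disjoint union over all $\sigma\colon \Lambda\to \Sigma'_q$ of $\eta^{q,\Lambda,\sigma}\{\KG\}$, so once we re-index this disjoint union by pairs $(\vec n, C(\tau(\vec n))\backslash \Sigma'_q)$, the conclusion will be essentially formal.

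First I would carry out the orbit decomposition: for any $\sigma\colon \Lambda\to \Sigma'_q$, the composite with the projection $\Sigma'_q\to \Sigma_q$ gives a $\Lambda$-action on $\{1,\dotsc,q\}$; after conjugating $\sigma$ by an element of $\Sigma_q<\Sigma'_q$ so that the orbits occupy consecutive intervals, the block-sum structure on $\Sigma'_q$ expresses $\sigma$ as $\sigma_1\oplus\dotsb\oplus\sigma_k$ with each $\sigma_j$ irreducible. Each $\sigma_j$ is conjugate to a unique $\tau_{i(j)}$ by the enumeration chosen before the statement, and collecting repetitions gives $\sigma$ conjugate to $\tau(\vec n)$ for a unique vector $\vec n$ with $\vec n\cdot\vec t=q$. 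The uniqueness of $\vec n$ uses that the $\tau_i$ are pairwise non-conjugate irreducibles, so no reshuffling of blocks can exchange $\tau_i$'s for $\tau_{i'}$'s; this is essentially a Krull--Schmidt style statement for homomorphisms into $\Sigma'_q$.

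Next I would identify $I\eta[\vec n]$ with the subbundle of $(\Sym^q\eta)(\Lambda|\KG)$ on the conjugates of $\tau(\vec n)$. By definition $\eta[\vec n]$ is the $\tau(\vec n)$-summand (with the base-restriction to the $\tau(\vec n)|_{\KG}$-trivial locus), so it includes as a $(\Gamma,C(\tau(\vec n)))$-subbundle of $(\Sym^q\eta)(\Lambda|\KG)$. The induced bundle $\eta[\vec n]\times_{C(\tau(\vec n))}\Sigma'_q$ is the disjoint union over the cosets $C(\tau(\vec n))\backslash \Sigma'_q$ of translates of this summand, and the translate by $s$ is carried by the $\Sigma'_q$-action to the summand of $(\Sym^q\eta)(\Lambda|\KG)$ indexed by $\tau(\vec n)_s$; since the coset space $C(\tau(\vec n))\backslash \Sigma'_q$ is in $\Sigma'_q$-equivariant bijection with the conjugacy class of $\tau(\vec n)$ (the definition of $C(\tau(\vec n))$ as the stabilizer), the map $I\eta[\vec n]\to (\Sym^q\eta)(\Lambda|\KG)$ is a $(\Gamma,\Sigma'_q)$-equivariant inclusion onto exactly the summands indexed by conjugates of $\tau(\vec n)$, with the base restrictions $\{\KG\}$ matching because they are determined pointwise on each summand.

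Combining these two steps, the disjoint union $\coprod_{\vec n\cdot\vec t=q}I\eta[\vec n]$ covers every summand of $(\Sym^q\eta)(\Lambda|\KG)$ exactly once, and the resulting map is an isomorphism of $(\Gamma,\Sigma'_q)$ vector bundles. The main obstacle I anticipate is the bookkeeping in the middle step: one must pin down the compatibility of the $\Sigma'_q$-action on $I\eta[\vec n]$ (acting by right translation on the induced factor $\Sigma'_q$) with the natural $\Sigma'_q$-action on $(\Sym^q\eta)(\Lambda|\KG)$ (which permutes summands via $\sigma\mapsto\sigma_s$), and to verify that the coset-to-conjugate bijection $C(\tau(\vec n))s \mapsto \tau(\vec n)_s$ intertwines these actions. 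The existence half (surjectivity on summands) is straightforward from the orbit decomposition; the injectivity/uniqueness half hinges on the non-conjugacy of distinct $\tau_i$'s and the isomorphism~\eqref{eq:C}, which guarantees that different $\vec n$'s give disjoint images.
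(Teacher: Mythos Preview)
Your proposal is correct and follows essentially the same approach as the paper: the paper's justification is the discussion immediately preceding the proposition, which observes that $I\eta[\vec n]\to (\Sym^q\eta)(\Lambda|\KG)$ is an inclusion onto the summands indexed by conjugates of $\tau(\vec n)$ (via the coset-to-conjugate bijection $C(\tau(\vec n))s\mapsto \tau(\vec n)_s$), and that every $\sigma$ is conjugate to $\tau(\vec n)$ for a unique $\vec n$. Your write-up supplies more detail on the orbit decomposition and the equivariance bookkeeping than the paper does, but the argument is the same.
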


Because $J_{(\Gamma,C(\tau(\vec n)))}(\eta[\vec n])\iso
J_{(\Gamma,\Sigma'_{\vec n\cdot \vec t})}I\eta[\vec n]$, we then get
an isomorphism of $\Gamma/\Lambda$-spectra
\[
\bigvee_{\vec n\cdot \vec t=q}J^{\Gamma/\Lambda}(\eta[\vec n])\overto{\iso}
J^{\Gamma/\Lambda}((\Sym^{q}\eta)(\Lambda|\KG)).
\]

For $i=1,\dotsc,T$, let $\vec e_{i}$ be the vector with $1$ in the $i$
position and zero elsewhere so that $\tau(\vec e_{i})=\tau_{i}$.  The
decomposition of $\tau(\vec n)$ into indecomposable pieces decomposes
the $\tau(\vec n)$-twisted $\Lambda$ action on $\eta^{\vec n\cdot \vec
t}$ into a product, and we get the following further identification of
$\eta[\vec n]$. In the formula, when $n_{i}=0$, we should understand
$\Sym^{n_{i}}\eta[\vec e_{i}]$ as the trivial bundle $*\to *$.

\begin{prop}\label{prop:calcSym2}
Let $f_{\vec n}$ denote the group isomorphism~\eqref{eq:C} and
$f^{*}_{\vec n}$ the induced equivalence of $(\Sigma_{n_{1}}\wr
C(\tau_{1}))\times \dotsb \times \Sigma_{n_{r}}\wr
C(\tau_{r}))$-spaces with $C(\tau(\vec n))$-spaces. We then have an
isomorphism of $(\Gamma,C(\tau(\vec n)))$ vector bundles
\[
\eta[\vec n]\iso f^{*}_{\vec n}((\Sym^{n_{1}}\eta[\vec e_{1}])\times \dotsb \times
(\Sym^{n_{T}}\eta[\vec e_{T}]))
\]
for any non-zero $\vec n$.
\end{prop}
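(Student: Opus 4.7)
The plan is to deduce the proposition from three elementary observations about twisted fixed points, pieced together according to the block-sum decomposition $\tau(\vec n)=\tau_{1}^{\oplus n_{1}}\oplus\dotsb\oplus\tau_{T}^{\oplus n_{T}}$. First I would note that for any homomorphisms $\sigma_{i}\colon\Lambda\to\Sigma'_{q_{i}}$ and their block sum $\sigma=\sigma_{1}\oplus\dotsb\oplus\sigma_{r}\colon\Lambda\to\Sigma'_{q}$ with $q=q_{1}+\dotsb+q_{r}$, the $\sigma$-twisted $\Lambda$ action on $E^{q}$ factors coordinatewise-in-blocks as the product of the $\sigma_{i}$-twisted $\Lambda$ actions on the $E^{q_{i}}$. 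Since fixed points commute with finite products, this gives a natural isomorphism of $(\Gamma\times C)$-spaces
\[
(E^{q})^{\Lambda,\sigma}\iso (E^{q_{1}})^{\Lambda,\sigma_{1}}\times\dotsb\times(E^{q_{r}})^{\Lambda,\sigma_{r}},
\]
where $C$ is the subgroup of $\Sigma'_{q}$ preserving the block decomposition and the indicated homomorphisms.

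Second, I would apply this with each $\sigma_{i}$ replaced by $\tau_{i}^{\oplus n_{i}}$ to reduce the problem to the analogous identification for a single irreducible $\tau_{i}$ repeated $n_{i}$ times. Here the same block-sum principle (now applied to the $n_{i}$ identical blocks of size $t_{i}$) produces an isomorphism
\[
(E^{n_{i}t_{i}})^{\Lambda,\tau_{i}^{\oplus n_{i}}}\iso \bigl((E^{t_{i}})^{\Lambda,\tau_{i}}\bigr)^{n_{i}},
\]
and on the right the group $\Sigma_{n_{i}}\wr C(\tau_{i})$ acts by the natural wreath-product action: the $\Sigma_{n_{i}}$-factor permutes the $n_{i}$ copies, and $C(\tau_{i})^{n_{i}}$ acts on each factor via the given action of $C(\tau_{i})$ on the base of $\eta[\vec e_{i}]$. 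This identification is precisely $\Sym^{n_{i}}$ applied to the base and total spaces of $\eta[\vec e_{i}]$. Combining this with the product decomposition of the previous paragraph and recalling that $f_{\vec n}$ is defined exactly so that it intertwines the wreath-product actions with the $C(\tau(\vec n))$ action on $E^{\vec n\cdot\vec t}$, we obtain an isomorphism of $(\Gamma,C(\tau(\vec n)))$-vector bundles before imposing the $\{\KG\}$ restriction on bases.

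Third, I would check that the $\{\KG\}$-restriction of the base is compatible with the product decomposition. A point $b\in(B^{\vec n\cdot\vec t})^{\Lambda,\tau(\vec n)}$ lies in the $\{\KG\}$-subspace precisely when the $\tau(\vec n)|_{\KG}$-twisted $\KG$-action on the fiber $E_{b}$ is trivial; under the product decomposition $b=(b_{1,1},\dotsc,b_{1,n_{1}},\dotsc,b_{T,n_{T}})$ and the corresponding decomposition of fibers, this condition splits as the conjunction of the conditions that each $\tau_{i}|_{\KG}$-twisted $\KG$-action on $E_{b_{i,j}}$ be trivial, i.e., that each $b_{i,j}$ lie in the base of $\eta[\vec e_{i}]$. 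This shows that the isomorphism of bundles restricts correctly.

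The main obstacle will be bookkeeping: making the identification of the $C(\tau(\vec n))$-action with the wreath-product action along $f_{\vec n}$ fully explicit and verifying that the left $\Gamma$-action (which is diagonal on $\eta^{\vec n\cdot\vec t}$) corresponds correctly to the diagonal $\Gamma$-action on the product of $\Sym^{n_{i}}\eta[\vec e_{i}]$. This is purely formal once one tracks which subgroup of $\Sigma_{\vec n\cdot\vec t}$ is identified with $\Sigma_{n_{i}}$ inside the wreath product under the canonical bijection between $\{1,\dotsc,\vec n\cdot\vec t\}$ and the disjoint union of $n_{i}$ copies of $\{1,\dotsc,t_{i}\}$, and it matches precisely the definition of $f_{\vec n}$ given in~\eqref{eq:C}.
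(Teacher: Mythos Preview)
Your proposal is correct and follows exactly the approach the paper indicates. The paper does not give a detailed proof of this proposition; it simply remarks that ``the decomposition of $\tau(\vec n)$ into indecomposable pieces decomposes the $\tau(\vec n)$-twisted $\Lambda$ action on $\eta^{\vec n\cdot\vec t}$ into a product'' and states the result, so your three-step unpacking (block-sum factorization of twisted fixed points, identification of the repeated-block case with $\Sym^{n_i}\eta[\vec e_i]$, and compatibility of the $\{\KG\}$ base restriction) is a faithful and more explicit version of what the paper leaves implicit.
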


We can now prove Theorem~\ref{thm:calcgeoSym} 

\begin{proof}[Proof of Theorem~\ref{thm:calcgeoSym}]
We use the notation established above.  We assume that $\Sym^{q}\eta$
is $\Sigma'_{q}$-faithful for all $q$ so that by
Theorem~\ref{thm:bundlegeofix} (and Proposition~\ref{prop:Sym}), we
have an isomorphism of $\Gamma/\Lambda$-spectra 
\[
\Phi^{\Lambda}((J^{\Gamma/\KG}\eta)^{(q)}/\Sigma_{q})\iso 
J^{\Gamma/\Lambda}((\Sym^{q}\eta)(\Lambda|\KG))
\]
for all $q\geq 1$.  For each $q\geq 1$, if there is no irreducible
homomorphism $\sigma \colon \Lambda \to \Sigma'_{q}$ then set
$X_{q}=*$; otherwise let
\[
X_{q}=\bigvee_{t_{i}=q}J^{\Gamma/\Lambda}(\eta[\vec e_{i}]).
\]
In particular, $X_{q}=*$ for $q>\#(\pi_{0}\Lambda)$.
By Proposition~\ref{prop:calcSym1} and the observation below it, each
$X_{q}$ is then a wedge summand of
$\Phi^{\Lambda}((J^{\Gamma/\KG}\eta)^{(q)}/\Sigma_{q})$.
Propositions~\ref{prop:calcSym1} and~\ref{prop:calcSym2} (together
with Proposition~\ref{prop:Sym}) then imply that the inclusion of the
$X_{q}$ in $\Phi^{\Lambda}((J^{\Gamma/\KG}\eta)^{(q)}/\Sigma_{q})$
induces an isomorphism 
\[
\bP\biggl(\bigvee_{q}X_{q}\biggr)\to \Phi^{\Lambda}(\bP(J^{\Gamma/\KG}\eta)) 
\]
of commutative ring $\Gamma/\Lambda$-spectra.
\end{proof}

\section{The diagonal fixed point lemma and Theorem~\sref$thm:ifcrit$}
\label{sec:ifcrit}
\label{sec:dfpl}

In  this section, we prove a general lemma about fixed points of
cartesian powers of equivariant spaces with respect to subgroups of
wreath product groups.  The main application in this section is the
proof of Theorem~\ref{thm:ifcrit}, which like the arguments in the previous
section uses the statement of Theorem~\ref{thm:bundlegeofix} but not
the ideas in its proof.  We also used a version of the lemma in
Section~\ref{sec:neACyc}. 

The setup for the diagonal fixed point lemma is as follows.  Let
$\Lambda$ and $Q$ be groups, and $q\geq 1$.  Let $X$ be a left
$(\Lambda \times Q^{\op})$-space, and let $\sigma \colon \Lambda \to
\Sigma_{q}\wr Q$ be a homomorphism.  By definition, the
$\sigma$-twisted $\Lambda$ action on $X^{q}$ is the
diagonal of the given left $\Lambda$ action and the right action via $\sigma$.
\[
\lambda\cdot_{\sigma}(x_{1},\dotsc,x_{q}):= 
(\lambda x_{1},\dotsc,\lambda x_{q})\sigma(\lambda^{-1})
\]
We denote the fixed points of this action as
$(X^{q})^{\Lambda,\sigma}$ and the lemma below gives a diagonal
description of this action.  Since
$\Lambda$ and $Q$ are involved only in taking fixed points, any
topology on these groups is irrelevant; we only need to assume
their actions on $X$ are continuous.

To fix notation with simplest formulas, we
write the wreath product $\Sigma_{q}\wr Q$ as the semidirect
product $Q^{q}\rtimes \Sigma_{q}$ and write an element as
$(a_{1},\dotsc,a_{q};s)$ with $a_{i}\in Q$ and $s\in \Sigma_{q}$.
Then
\[
(a_{1},\dotsc,a_{q};s)
(a'_{1},\dotsc,a'_{q};s')
=(a_{1}a'_{s^{-1}(1)},\dotsc,a_{q}a_{s^{-1}(q)};ss').
\]
We also use the
$a_{i}$ and $s$ notation for writing the coordinates of $\sigma$: 
\[
(a_{1}(\lambda),\dotsc,a_{q}(\lambda);s(h)):=\sigma(\lambda),
\]
noting that $s\colon \Lambda \to \Sigma_{q}$ is a homomorphism, but
the multiplication intertwines the other coordinates and $s$.
To avoid confusion in notation with multiple arguments, we write the
action of $s\in \Sigma_{q}\wr Q$ on $\{1,\dotsc,q\}$ using brackets:
$s(\lambda)$ sends $i$ to $s(\lambda)[i]$. 
The formula for multiplication in the $a_{i}$ coordinates is then
\[
a_{i}(\lambda \mu)=a_{i}(\lambda)a_{s(\lambda^{-1})[i]}(\mu)
\]
The action of an element $(a_{1},\dotsc,a_{q};s)$ on
an element $(x_{1},\dotsc,x_{q})$ in $X^{q}$ in this notation is 
\[
(x_{1},\dotsc,x_{q})(a_{1},\dotsc,a_{q};s)
=(x_{s[1]}a_{s[1]},\dotsc,x_{s[q]}a_{s[q]}).
\]
For $\lambda \in \Lambda$, we then have
\[
\lambda \cdot_{\sigma}(x_{1},\dotsc,x_{q})
=(\lambda x_{s(\lambda^{-1})[1]}a_{s(\lambda^{-1})[1]}(\lambda^{-1}),\dotsc,
\lambda x_{s(\lambda^{-1})[q]}a_{s[q]}(\lambda^{-1})).
\]

In general, given an element of, or formula in $X^{q}$, we write a
subscript $i$ for its $i$th component; e.g., if $x\in X^{q}$,
$x_{i}$ denotes its $i$th component, and for $\lambda \in \Lambda$,
$(\lambda \cdot_{\sigma}x)_{i}$ denotes the $i$th component of
$\lambda \cdot_{\sigma}x$. 

For $i=1,\dotsc,q$, let $H_{i}$ denote the subgroup of $\Lambda$ that
is the inverse image under $s$ of the subgroup of $\Sigma_{q}$ that
fixes $i$; restricted to $H_{i}$, $a_{i}$ becomes a homomorphism,
which we denote as $\alpha_{i}\colon H_{i}\to Q$.  (We note that in
the typical case, when $\Lambda$ is a compact Lie group, $Q$ is a
discrete group, and the homomorphism $\sigma$ is continuous, we have
that the identity component $\Lambda_{0}$ of $\Lambda$ is contained in
$H_{i}$ for every $i$.)  We then have an $\alpha_{i}$-twisted $H_{i}$
action on $X$ defined by 
\[
h\cdot_{\alpha_{i}}x:=hx\alpha_{i}(h^{-1})
\]
(for $x\in X$, $h\in H_{i}$) using the original left $\Lambda$ action
and right $Q$ action on $X$.

The homomorphism $s$ gives an action of $\Lambda$ on the set
$\{1,\dotsc,q\}$; let $O_{1},\dotsc,O_{k}$ denote its orbits.  For
$i=1,\dotsc,q$, define $j_{i}\in \{0,\dotsc,k\}$ by $i\in O_{j_{i}}$.
For each $j=1,\dotsc,k$, choose an element $m_{j}\in O_{j}$. For each
$i=1,\dotsc,q$, choose an element $\ell_{i}\in \Lambda$ such that
$s(\ell_{i})[m_{j_{i}}]=i$.  We can now state the lemma.

\begin{lem}\label{lem:dfplem}
In the notation above, the map 
\[
f\colon X^{H_{m_{1}},\alpha_{m_{1}}}\times \dotsb \times X^{H_{m_{k}},\alpha_{m_{k}}}
\to X^{q}
\]
defined by
\[
f(x_{1},\dotsc,x_{k})_{i}=\ell_{i}x_{j_{i}}a_{m_{j_{i}}}(\ell_{i}^{-1})
\]
induces a homeomorphism onto the $\sigma$-twisted fixed points
$(X^{q})^{\Lambda,\sigma}$. Moreover, this map is independent of the
choice of the elements $\ell_{i}$ above.
\end{lem}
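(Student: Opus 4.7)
The plan is to prove the lemma by direct calculation, exploiting the cocycle identity $a_i(\lambda\mu) = a_i(\lambda)\, a_{s(\lambda^{-1})[i]}(\mu)$ at every step. First I would verify that $f$ is independent of the choices of $\ell_i$. If $\ell_i'$ is another element with $s(\ell_i')[m_{j_i}] = i$, then $h := \ell_i^{-1}\ell_i' \in H_{m_{j_i}}$, so $\ell_i' = \ell_i h$. Using the cocycle identity and the fact that $s(h)[m_{j_i}] = m_{j_i}$, one gets $a_{m_{j_i}}((\ell_i h)^{-1}) = \alpha_{m_{j_i}}(h^{-1})\, a_{m_{j_i}}(\ell_i^{-1})$, so $\ell_i' x_{j_i} a_{m_{j_i}}(\ell_i'^{-1}) = \ell_i(h\cdot_{\alpha_{m_{j_i}}} x_{j_i})\, a_{m_{j_i}}(\ell_i^{-1})$, which collapses to $\ell_i x_{j_i} a_{m_{j_i}}(\ell_i^{-1})$ because $x_{j_i}$ is $\alpha_{m_{j_i}}$-twisted fixed.

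Next I would check that the image of $f$ lies in $(X^q)^{\Lambda,\sigma}$. Fix $\lambda \in \Lambda$ and $i \in \{1,\dots,q\}$, and set $i' = s(\lambda^{-1})[i]$; then $j_{i'} = j_i$. The $\sigma$-twisted fixed point equation requires
\[
f(x)_i = \lambda\, f(x)_{i'}\, a_{i'}(\lambda^{-1}).
\]
Expanding the right-hand side gives $\lambda\ell_{i'} x_{j_i} a_{m_{j_i}}(\ell_{i'}^{-1})\, a_{i'}(\lambda^{-1})$, and the cocycle identity collapses the last two factors into $a_{m_{j_i}}((\lambda\ell_{i'})^{-1})$, since $s(\ell_{i'})[m_{j_i}] = i'$. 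Because $s(\lambda\ell_{i'})[m_{j_i}] = i$, the product $\lambda\ell_{i'}$ is itself an admissible choice for $\ell_i$, so independence of the choice from the previous step identifies the expression with $f(x)_i$.

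Then I would construct the inverse. Given $y = (y_1,\dots,y_q) \in (X^q)^{\Lambda,\sigma}$, set $x_j := y_{m_j}$. For $h \in H_{m_j}$, $s(h^{-1})[m_j] = m_j$, and the fixed point condition reduces to $y_{m_j} = h\cdot_{\alpha_{m_j}} y_{m_j}$, so $x_j \in X^{H_{m_j},\alpha_{m_j}}$. Applying the fixed point condition with $\lambda = \ell_i$ (using $s(\ell_i^{-1})[i] = m_{j_i}$) yields $y_i = \ell_i x_{j_i} a_{m_{j_i}}(\ell_i^{-1}) = f(x)_i$. This simultaneously shows that $f$ is surjective onto the fixed points and that the only possible preimage is $x_j = y_{m_j}$, giving injectivity. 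Continuity of $f$ is clear from its explicit formula in terms of the jointly continuous $\Lambda$- and $Q$-actions, and continuity of the inverse is just projection onto the $m_j$-coordinates.

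The main obstacle throughout is purely bookkeeping: the wreath-product multiplication intertwines $s$ with the $a_i$, so every verification hinges on using the identity $a_i(\lambda\mu) = a_i(\lambda)\, a_{s(\lambda^{-1})[i]}(\mu)$ at the right index. Once that identity is deployed consistently, the independence statement and the verification that $f$ hits the fixed point set both fall out mechanically, and the rest of the argument is essentially formal.
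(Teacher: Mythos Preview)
Your proof is correct and follows essentially the same approach as the paper's own proof: both use the cocycle identity $a_i(\lambda\mu)=a_i(\lambda)\,a_{s(\lambda^{-1})[i]}(\mu)$ repeatedly, reduce the check that $f$ lands in the fixed points to the independence-of-choice statement (by recognizing $\lambda\ell_{i'}$ as an alternative choice for $\ell_i$), and exhibit the inverse as projection onto the $m_j$-coordinates. The only difference is the order of presentation---you prove independence first and then invoke it, while the paper proves surjectivity first and defers independence to the end---but the computations and logical dependencies are identical.
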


The map does depend on the choice of elements $m_{j}\in O_{j}$.

\begin{proof}
To show that every $\sigma$-twisted $\Lambda$ fixed point is in the
image, assume $x\in (X^{q})^{\Lambda,\sigma}$ and observe that
\begin{align*}
(\ell_{i}\cdot_{\sigma}x)_{i}
&=\ell_{i}x_{s(\ell_{i}^{-1})[i]}a_{s(\ell_{i}^{-1})[i]}(\ell_{i}^{-1})\\
&=\ell_{i}x_{m_{j_{i}}}a_{m_{j_{i}}}(\ell_{i}^{-1}).
\end{align*}
In the case when $i=m_{j}$ for some $j$ and $h\in H_{m_{j}}$, we have
\begin{align*}
(h\cdot_{\sigma}x)_{m_{j}}
&=hx_{s(h^{-1})[m_{j}]}a_{s(h^{-1})[m_{j}]}(h^{-1})\\
&=hx_{m_{j}}a_{m_{j}}(h^{-1}).
\end{align*}
Since $h\in H_{m_{j}}$, we have
$\alpha_{m_{j}}(h^{-1}):=a_{m_{j}}(h^{-1})$, and since $x\in
(X^{q})^{\Lambda,\sigma}$, we conclude that 
\[
x_{m_{j}}=(h\cdot_{\sigma}x)_{m_{j}}
=hx_{m_{j}}\alpha_{m_{j}}(h^{-1})=h\cdot_{\alpha_{m_{j}}}x_{m_{j}},
\]
i.e., $x_{m_{j}}\in X^{H_{m_{j}},\alpha_{m_{j}}}$.  So $x\in
(X^{q})^{\Lambda,\sigma}$ implies 
$x=f(x_{m_{1}},\dotsc,x_{m_{k}})$ 
with $x_{m_{j}}\in X^{H_{m_{j}},\alpha_{m_{j}}}$.
Next to show that $f$ lands in the $\sigma$-twisted $\Lambda$ fixed
points, for $\lambda \in \Lambda$, writing $s=s(\lambda^{-1})$, we
have 
\begin{align*}
(\lambda\cdot_{\sigma}f(x_{1},\dotsc,x_{k}))_{i}
&=(\lambda\ell_{j_{s[i]}})x_{j_{s[i]}}
    (a_{m_{j_{s[i]}}}(\ell_{j_{s[i]}}^{-1})a_{s[i]}(\lambda^{-1}))\\
&=(\lambda\ell_{j_{s[i]}})x_{j_{s[i]}}
    (a_{m_{j_{s[i]}}}(\ell_{j_{s[i]}}^{-1}\lambda^{-1}))
\end{align*}
since $s(\ell_{s[i]})[m_{j_{s[i]}}]=s[i]$.  Since $s(\lambda
\ell_{s[i]})[m_{j_{i}}]=s^{-1}s[i]=i$, $\lambda \ell_{j_{s[i]}}$ is an
alternative choice for the element $\ell_{i}\in \Lambda$.  Thus, to
see that $f(x_{1},\dotsc,x_{k})$ is fixed by $\lambda$ (under the
$\sigma$-twisted action), we just need to verify the last statement:
$f$ is independent of the choice of $\ell_{i}$.  Let $\ell'_{i}$
be another choice and $\mu=\ell_{i}^{-1}\ell'_{i}$ so that
$\ell_{i}\mu=\ell_{i}'$.  To reduce subscripts, let $j=j_{i}$.  Then
$s(\mu)[m_{j}]=m_{j}$ and $\mu \in H_{m_{j}}$, so
\[
\mu x_{j}a_{m_{j}}(\mu^{-1})=x_{j}
\]
since $x_{j}\in X^{H_{m_{j}},\alpha_{m_{j}}}$.  Then 
\begin{align*}
\ell_{i}x_{j}a_{m_{j}}(\ell_{i}^{-1})
&=\ell_{i}\mu x_{j}a_{m_{j}}(\mu^{-1})a_{m_{j}}(\ell_{i}^{-1})\\
&=(\ell_{i}\mu)x_{j}a_{m_{j}}(\mu^{-1}\ell_{i}^{-1})\\
&=\ell'_{i}x_{j}a_{m_{j}}((\ell'_{i})^{-1})
\end{align*}
with the second equality holding because $\mu$ fixes $m_{j}$.
\end{proof}

We now move on to Theorem~\ref{thm:ifcrit}, and we use the notations
and conventions from there, so $Q$ is a finite group, and $\Lambda$ is
a closed normal subgroup of a compact Lie group $\Gamma$.  As in the
previous section, we write $\Sigma'_{q}$ for $\Sigma_{q}\wr Q$.  We
look at a homomorphism $\sigma \colon \Lambda \to \Sigma_{q'}$ and the
$\sigma$-twisted $\Lambda$ action on $\Sym^{q}\eta$ for a $(\Gamma,Q)$
vector bundle $\eta$.  The previous lemma gives us a diagonal
description of the $\sigma$-twisted $\Lambda$ fixed points of both the
total space $E^{q}$ and the base space $B^{q}$.  If we restrict to
an element $b=(b_{1},\dotsc,b_{q})\in (B^{q})^{\Lambda,\sigma}$, the
$\sigma$-twisted $\Lambda$ action is fiberwise on $E_{b}$, and the
lemma identifies the space $E_{b}^{\Lambda,\sigma}$ as 
\[
E_{b_{m_{1}}}^{H_{m_{1}},\alpha_{m_{1}}}\times \dotsb \times
E_{b_{m_{k}}}^{H_{m_{k}},\alpha_{m_{k}}}.
\]
By inspection of the formula, this homeomorphism is an isomorphism
of vector spaces.  It may not be an isometry, but this is easy to
correct by scaling.  In the following proposition, for an element $v$
of $E_{b_{i}}$, let $\delta_{i}v$ denote the element of $E_{b}$ that
is $v$ in the $i$th coordinate and zero elsewhere; notation aside, the
homomorphism in it is just the composite of the one in the lemma with
the rescaling $v_{j}\mapsto v_{j}/\sqrt{\#O_{j}}$.  

\begin{prop}\label{prop:lsfp}
Define the isometry
\[
f\colon E_{b_{m_{1}}}^{H_{m_{1}},\alpha_{m_{1}}}
\times \dotsb \times 
E_{b_{m_{k}}}^{H_{m_{k}},\alpha_{m_{k}}}
\to E^{q}_{b}
\]
by the formula
\[
f(v_{m_{1}},\dotsc,v_{m_{k}})
=\sum_{j=1}^{k}\sum_{i\in O_{j}}
  \frac{\ell_{i}\cdot_{\sigma}(\delta_{m_{j}}v_{m_{j}})}{\sqrt{\#O_{j}}}
=\sum_{j=1}^{k}\sum_{i\in O_{j}}
  \frac{\delta_{i}f_{i}(v_{m_{j}})}{\sqrt{\#O_{j}}}.
\]
Then $f$ is an isomorphism onto $(E^{q}_{b})^{\Lambda,\sigma}$.
\end{prop}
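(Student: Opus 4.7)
The plan is to reduce Proposition~\ref{prop:lsfp} almost entirely to the previously established Lemma~\ref{lem:dfplem}, with the only new content being the verification that the rescaling by $1/\sqrt{\#O_{j}}$ promotes the (fiberwise) linear isomorphism of the lemma to a linear isometry. The identification of the image as $(E^{q}_{b})^{\Lambda,\sigma}$ and the injectivity/surjectivity statements will follow for free; the inner-product computation is the only real thing to check.

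First I would apply Lemma~\ref{lem:dfplem} to $X=E|_{b}$ (well, more precisely to the fiber data), noting that the left $\Lambda$ and right $Q$ actions on the fibers of a $(\Gamma,Q)$ vector bundle preserve the inner product. Because the $\sigma$-twisted $\Lambda$ action on $E^{q}|_{b}$ is fiberwise and $b\in (B^{q})^{\Lambda,\sigma}$, the lemma (applied fiberwise) gives a bijection
\[
\tilde f\colon E_{b_{m_{1}}}^{H_{m_{1}},\alpha_{m_{1}}}\times\dotsb\times E_{b_{m_{k}}}^{H_{m_{k}},\alpha_{m_{k}}}
\overto{\iso}(E^{q}_{b})^{\Lambda,\sigma},\qquad
\tilde f(v_{m_{1}},\dotsc,v_{m_{k}})_{i}=\ell_{i}v_{j_{i}}a_{m_{j_{i}}}(\ell_{i}^{-1}).
\]
Inspection of the formula shows $\tilde f$ is $\bR$-linear, hence a linear isomorphism. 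The map $f$ of the proposition is obtained from $\tilde f$ simply by rescaling the input coordinate indexed by $j$ by the factor $1/\sqrt{\#O_{j}}$, so $f$ is also a linear isomorphism onto $(E^{q}_{b})^{\Lambda,\sigma}$, independent of the choice of the $\ell_{i}$ by the lemma.

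The only remaining point is that $f$ is an isometry. Here is the key calculation I would carry out. Since the summands $\delta_{i}f_{i}(v_{m_{j}})$ in the definition of $f$ are supported in pairwise distinct coordinates of $E^{q}_{b}$ (indexed by $i\in\{1,\dotsc,q\}$), the squared norm decomposes orthogonally:
\[
\|f(v_{m_{1}},\dotsc,v_{m_{k}})\|^{2}
=\sum_{j=1}^{k}\sum_{i\in O_{j}}\frac{\|\ell_{i}v_{m_{j}}a_{m_{j}}(\ell_{i}^{-1})\|_{E_{b_{i}}}^{2}}{\#O_{j}}.
\]
The action of $\ell_{i}\in\Gamma$ and of $a_{m_{j}}(\ell_{i}^{-1})\in Q$ is by isometries between fibers, so each term in the inner sum equals $\|v_{m_{j}}\|^{2}$. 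There are exactly $\#O_{j}$ indices $i\in O_{j}$, so the inner sum collapses to $\|v_{m_{j}}\|^{2}$ and
\[
\|f(v_{m_{1}},\dotsc,v_{m_{k}})\|^{2}=\sum_{j=1}^{k}\|v_{m_{j}}\|^{2},
\]
which is the squared norm of the input. Polarizing (or applying the same argument to $f(v+w)$) shows $f$ preserves inner products. Combined with the isomorphism statement from Lemma~\ref{lem:dfplem}, this proves $f$ is an isomorphism of inner product spaces onto $(E^{q}_{b})^{\Lambda,\sigma}$.

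The only subtle point, and the closest thing to an obstacle, is keeping the bookkeeping straight: one must confirm that the scaling factor $1/\sqrt{\#O_{j}}$ really does cancel the multiplicity $\#O_{j}$ coming from the orbit sum, and that it does not interfere with $f$ landing in the fixed point set (it does not, since scalar multiplication commutes with the linear $\sigma$-twisted action). Once this is checked, everything else is a direct appeal to Lemma~\ref{lem:dfplem}.
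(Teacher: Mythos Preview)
Your proposal is correct and matches the paper's approach exactly: the paper states just before the proposition that the map is the composite of the linear isomorphism from Lemma~\ref{lem:dfplem} with the rescaling $v_{j}\mapsto v_{j}/\sqrt{\#O_{j}}$, and that this scaling corrects it to an isometry. Your explicit norm computation is precisely the check the paper leaves implicit.
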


We can now prove theorem~\ref{thm:ifcrit}.

\begin{proof}[Proof of Theorem~\ref{thm:ifcrit}]
We fix $q\geq 1$ and show that $\eta':=(\Sym^{q}\eta)(\Lambda)$ is $\Sigma'_{q}$-faithful.
We use the notation of the discussion above: we study
the coproduct factor corresponding to a fixed $\sigma \colon \Lambda\to
\Sigma'_{q}$ at a point $b=(b_{1},\dotsc,b_{q})\in (B^{q})^{\Lambda,\sigma}$.
For this, it suffices to show that for every 
$\tau =(c_{1},\dotsc,c_{q};t)$, not the
identity, with $b\tau=b$, there exists
$w\in (E^{q}_{b})^{\Lambda,\sigma}$ such that $w\tau\neq w$.
We treat this in three cases, one corresponding to each of the conditions
in Theorem~\ref{thm:ifcrit}.

The first case is when $t$ sends an element $i$ in $O_{j}$ to an
element $i'$ in $O_{j'}$ for some $j\neq j'$. Then by
Proposition~\ref{prop:lsfp} and
condition~(i), we can choose an element $w$ where the components in
$O_{j}$ are non-zero and the components in $O_{j'}$ are
zero.  For such a $w$, $w\tau \neq w$.

The second case is when $t$ preserves each of the orbits $O_{1},\dotsc,O_{k}$
but is not the identity, say $t(i)=i'\neq i$ for $i\in O_{j}$ for some
$j$.  Since the numbering of
the orbits was arbitrary and the choice of
orbit representatives above was arbitrary, we can assume without loss
of generality that $j=1$ and $m_{1}=i'$; we note that $\ell_{i}\notin
H_{m_{1}}$.  Since $b\tau=b$ and $\ell_{i}\cdot_{\sigma}b=b$, looking
at $i$th coordinates, we see that 
\[
b_{i}=b_{m_{1}}c_{m_{1}}=\ell_{i}b_{m_{1}}a(\ell_{i}^{-1})
\]
and so, writing $a:=c_{m_{1}}a(\ell_{i}^{-1})^{-1}$, we have
$\ell_{i}b_{m_{1}}=b_{m_{1}}a$.  By condition~(ii), there exists $v\in
E_{b_{m_{1}}}^{H_{m_{1}},\alpha_{m_{1}}}$ such that
$\ell_{m_{1}}v\neq va$.  Let $w=f(\sqrt{\#O_{1}}v,0,\dotsc,0)\in
(E^{q}_{b})^{\Lambda,\sigma}$.  Then
\[
w_{i}=\ell_{m_{1}}va(\ell_{m_{1}}^{-1}), \qquad 
(w\tau)_{i}=v c_{m_{1}}
\]
and we have chosen $v$ so that these two elements are unequal.

The final case is when $t$ is the identity.  Then there exists some
$i$ for which $c_{i}$ is not the identity.  Without loss of generality
$m_{1}=i$.  By condition~(iii), we can choose $v\in
E_{b_{m_{1}}}^{H_{m_{1}},\alpha_{m_{1}}}$ so that
$vc_{m_{1}}\neq v$.  Then 
\[
f(v,0,\dotsc,0)\tau=f(vc_{m_{1}},0,\dotsc,0)\neq f(v,0,\dotsc,0).\qedhere
\]
\end{proof}

\section[The HHR diagonal]%
{Indexed smash powers, HHR norms, geometric fixed points, and the HHR diagonal}
\label{sec:B209}

The purpose of this section is to generalize the HHR diagonal map~\cite[B.209]{HHR} 
\[
\Phi^{H}X\to \Phi^{G}(X^{(G/H)})
\]
that computes the geometric fixed points of an indexed smash power.
We generalize in two directions: we consider universes other than the
complete universe and we consider compact Lie groups that are not
finite.  This map does not exist for arbitrary universes and arbitrary
orbit spaces $G/H$; we specify below when results hold.  The results
in this section are known to experts, but statements have not appeared in
the generality we consider here, though the proofs in the existing
literature generalize to cover them. Since we expect these results to be
of interest beyond the scope of the rest of the paper, we have made
this section as self-contained as possible at the cost of a bit of
redundancy.

In this section $G$ denotes a compact Lie group.  We let $U$ be a
$G$-universe.  We write $V<U$ to mean
that $V$ is a finite dimensional $G$-stable subspace of $U$; for
finite dimensional $G$-inner product spaces, we also write $V<W$ to
indicate that $V$ is a subspace of $W$ (with inherited $G$ action and
inner product) and we emphasize that in this case the notation does not imply
proper subspace: $W<W$.  For $H<G$ a closed subgroup, we write $U_{H}$
for $U$ viewed as an $H$-universe (and write $V<U_{H}$ for $V$ a
finute dimensional $H$-stable subspace).  We use
the notation of Section~\ref{sec:startbundle} (the unadorned form of
the notation in~\cite{MM}) for the categories $\aI$ and $\aJ$: for
vector spaces with inner products $V$, $W$, $\aI(V,W)$ denotes the
space of isometric embeddings $V\to W$, $\Im^{\perp}\aI(V,W)\to
\aI(V,W)$ denotes the vector bundle over $\aI(V,W)$ whose fiber is the
orthogonal complement of the image of an embedding, and $\aJ(V,W)$
denotes the Thom space of this vector bundle.  When $V$ and $W$ are
orthogonal $G$-representations, $\aI(V,W)$ gets the usual $G$ action
on a mapping space, $\Im^{\perp}\aI(V,W)\to \aI(V,W)$ becomes an
$G$-equivariant vector bundle with its natural action, and $\aJ(V,W)$
becomes a based $G$-space. By~\cite[II\S4]{MM}, the functor $\aJ(V,-)$
specifies a $G$-equivariant orthogonal spectrum that represents the
$V$th space functor $X\mapsto X(V)$.  In~\cite{MM}, $\aJ(V,-)$ is more
usually denoted $F_{V}S^{0}$, and in~\cite{HHR}, it is denoted
$S^{-V}$.

The results in this section are of a point-set rather than homotopical
nature, and therefore, when convenient, we can treat $G$-equivariant
orthogonal spectra as indexed on $\bR^{\infty}$ instead of $U$: the
forgetful functor that only remembers the indexing on $\bR^{\infty}$
is an equivalence of categories~\cite[V.1.5]{MM}. The convenience of
this is that the category of $G$-equivariant orthogonal spectra indexed on
$\bR^{\infty}$ is just the category of left $G$-objects in
(non-equivariant) orthogonal spectra and this makes constructions like
indexed smash powers and HHR norms significantly easier to describe.
In so doing, however, we need to be careful about which concepts or
constructions depend on $U$ and which do not.  The
concepts/constructions that are independent of the universe $U$
include:
\begin{itemize}
\item The smash product $\sma$ of $G$-equivariant orthogonal spectra
and the smash product of a $G$-equivariant orthogonal spectrum with a
based $G$-space.
\item Indexed smash powers / HHR norms (when defined).
\item Limits and colimits.
\item Restriction / (additive) induction along inclusions of subgroups.
\end{itemize}
The concepts/constructions that depend in the universe $U$ include:
\begin{itemize}
\item Weak equivalences / homotopy groups.
\item Cofibrancy / fibrancy in standard or complete model structures.
\item The tautological presentation (coequalizer).
\item The geometric fixed point functor (both point-set and derived).
\end{itemize}
It is in the last two points that the universe plays a role in this
section.  When indexing $G$-equivariant orthogonal spectra on
$\bR^{\infty}$ and working with the universe $U$, we understand the
$V$th space $X(V)$ of a $G$-equivariant orthogonal spectrum to be
\[
X(V):=\aI(\bR^{n},V)_{+}\sma_{O(n)}X(\bR^{n})=\aJ(\bR^{n},V)\sma_{O(n)}X(\bR^{n})
\]
when $V$ is $n$-dimensional; see~\cite[V\S1]{MM}.

Indexing on $\bR^{\infty}$, finite $G$-set indexed
smash powers become easy to describe.  For a finite $G$-space $S$ and a
$G$-equivariant orthogonal spectrum $X$, we get an orthogonal spectrum
$X^{(S)}$ with $n$th space
\[
X^{(S)}(\bR^{n}):=(X(\bR^{n}))^{(S)}
\]
the smash power indexed on $S$, with $G$ action both on $X(\bR^{n})$
and on $S$. In some contexts, it is useful to have a description of
the indexed smash power that is universe $U$ intrinsic in the sense
that it does not require reindexing.  We gives such a construction in
the following proposition; its proof is clear when indexing on the
universe $\bR^{\infty}$ and the construction is preserved under the
reindexing to $U$. 

\begin{prop}\label{prop:indexedsma}
Let $G$ be a compact Lie group.  Let $k\geq 1$, $\Sigma<\Sigma_{k}$,
and $\sigma \colon G\to \Sigma$ a continuous
homomorphism.  Let $S$ be the finite $G$-set obtained from
$\{1,\dotsc,k\}$ using $\sigma$ for the action, and let
$\Sigma(\sigma)$ be the $(G \times \Sigma^{\op})$-set whose
underlying right $\Sigma$-set is $\Sigma$ and which has left $\Gamma$
action induced by the homomorphism $\sigma$.
Then for $G$-equivariant symmetric spectra $X$, there is a natural
isomorphism 
\[
X^{(S)}\iso (\Sigma(\sigma)_{+} \sma X^{(k)})/\Sigma.
\]
\end{prop}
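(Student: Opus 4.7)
Following the authors' hint, I would reduce to working with $G$-equivariant orthogonal spectra indexed on $\bR^\infty$ (via the equivalence of categories of \cite[V.1.5]{MM}), and then verify the isomorphism levelwise as based $G$-spaces. Once established in that setting, the isomorphism transports to any $G$-universe $U$ because the reindexing equivalence commutes with all the constructions involved (smash products, quotients by finite group actions, smashing with based $G$-spaces). Thus the heart of the proof is a purely space-level construction.

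At level $n$, the left side is $X(\bR^n)^{(S)}$: as an unbased space it is $X(\bR^n)^{\sma k}$ after choosing the ordering $S = \{1,\dotsc,k\}$, but the $G$-action is the diagonal of the pointwise $G$-action on each smash factor and the permutation action obtained from $\sigma \colon G\to \Sigma$. The right side at level $n$ is $(\Sigma(\sigma)_+ \sma X(\bR^n)^{(k)})/\Sigma$, with $G$ acting diagonally (through $\sigma$ on the first factor, diagonally on the second) and $\Sigma$ acting by right multiplication on $\Sigma(\sigma)$ and by permutation on the smash factors of $X(\bR^n)^{(k)}$. I would define a map
\[
\phi \colon \Sigma(\sigma)_+ \sma X(\bR^n)^{(k)} \to X(\bR^n)^{(S)},
\qquad
(s;\, x_1\sma\dotsb\sma x_k) \mapsto x_{s^{-1}(1)}\sma\dotsb\sma x_{s^{-1}(k)}.
\]
The routine verifications are: (a) $\phi$ is $G$-equivariant — for $g\in G$ the right side carries $(s; x_1\sma\dotsb\sma x_k)$ to $(\sigma(g)s;\, gx_1\sma\dotsb\sma gx_k)$, and one checks the image matches the $S$-indexed diagonal action via the identity $\sigma(g)s = (\sigma(g)\cdot)\circ s$; (b) $\phi$ is $\Sigma$-invariant, because replacing $s$ by $s\tau$ and simultaneously permuting smash factors by $\tau^{-1}$ leaves the formula $x_{(s\tau)^{-1}(i)} = x_{\tau^{-1}s^{-1}(i)}$ unchanged; hence $\phi$ descends to a $G$-equivariant map from the $\Sigma$-quotient.

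To see the descended map is a bijection, I would exhibit the inverse $\psi \colon X(\bR^n)^{(S)} \to (\Sigma(\sigma)_+ \sma X(\bR^n)^{(k)})/\Sigma$ sending $y_1\sma\dotsb\sma y_k$ to the class of $(e;\, y_1\sma\dotsb\sma y_k)$; a direct check using the fact that $\Sigma$ acts freely on $\Sigma(\sigma)$ shows $\phi$ and $\psi$ are mutually inverse. Naturality in $X$ is immediate from the formula, and compatibility with the structure maps of the orthogonal spectrum (the maps $\aJ(\bR^m,\bR^n)\sma X(\bR^m)\to X(\bR^n)$ raised to the $k$-th smash power) follows because $\phi$ is defined by a uniform permutation of tensor factors. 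I do not anticipate any significant obstacle: the only mild subtlety is bookkeeping the direction of the $\Sigma$-action (left versus right, and the appearance of $s^{-1}$ versus $s$), but once the conventions are pinned down everything collapses to the standard identification of an $S$-indexed product of copies of a single object with a $\Sigma$-quotient of $\Sigma \times$ (ordered product).
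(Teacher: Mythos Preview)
Your proposal is correct and follows exactly the approach the paper indicates: the paper's entire proof is the remark that ``its proof is clear when indexing on the universe $\bR^{\infty}$ and the construction is preserved under the reindexing to $U$,'' and you have simply unpacked that remark with explicit formulas and verifications. There is nothing to correct; your levelwise map $\phi$ and inverse $\psi$ are the standard identification and your equivariance checks are right.
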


When $S=G/H$ for $H<G$ a codimension zero subgroup, the indexed smash
power $X^{(G/H)}$ is a functor of the underlying $H$-equivariant orthogonal spectrum, namely the HHR norm~\cite[A\S4]{HHR}.  In the
case of finite groups, the HHR norm can be made without choices; when
$G$ is a compact Lie group, we seem to have to choose a set of coset
representatives $\gamma_{\zeta}\in G$ for $\zeta \in G/H$ (with $\zeta
=\gamma_{\zeta} H$).  We then get a continuous homomorphism 
\[
\sigma_{G/H} \colon G\to \Sigma_{G/H}\wr H
\]
defined by $\sigma_{G/H}(g)=(s,(h_{\zeta}))$ where $s$ is the
permutation on $G/H$ induced by right multiplication by $g$, and
$h_{\zeta}$ is defined by $\gamma
\gamma_{\zeta}=\gamma_{s\zeta}h_{\zeta}$. We note that for an
$H$-equivariant orthogonal spectrum $X$, the spaces of the $k$th smash
power $X^{(k)}(\bR^{n})$ come with an action of the group
$\Sigma_{k}\wr H$, making it naturally a $(\Sigma_{k}\wr
H)$-equivariant orthogonal spectrum; then, regarding $G/H$ as just a set, the smash
power $X^{(G/H)}$ has the structure of a $(\Sigma_{G/H}\wr
H)$-equivariant orthogonal spectrum, and we use $\sigma_{G/H}$ to make
it a $G$-equivariant orthogonal spectrum.  This defines the HHR norm 
$N_{H}^{G}X$.

\begin{defn}\label{defn:norm}
Let $G$ be a compact Lie group and $H<G$ a codimension zero subgroup.
Define the norm $N_{H}^{G}$ from $H$-equivariant orthogonal spectra to
$G$-equivariant orthogonal spectra by 
\[
N_{H}^{G}X(\bR^{n})=X^{(G/H)}(\bR^{n})
\]
with the $G$ action defined by $\sigma_{G/H}$ above.
\end{defn}

We have a completely analogous construction of $N_{H}^{G}$ as
a functor from based $H$-spaces to based $G$-spaces.  When
$X=\aJ(V,-)\sma A$ where $V$ is a finite dimensional $H$-inner product
space and $A$ is a based $H$-spaces, then by inspection
\[
N_{H}^{G}(\aJ(V,-)\sma A)\iso \aJ(\Ind_{H}^{G}V,-)\sma N_{H}^{G}A
\]
where $\Ind_{H}^{G}V$ is the (finite dimensional) $G$-inner product space
defined by $H$ to $G$ induction; it is characterized by the universal
property $\aI(\Ind_{H}^{G}V,-)^{G}\iso \aI(V,-)^{H}$. (It is
constructed analogously to $N_{H}^{G}$ but using direct sum in place
of smash product.)

While the construction in Definition~\ref{defn:norm} depends on the
choice of coset representatives, a different choice gives a naturally
isomorphic functor via multiplication by the element of $H^{G/H}$ that
transforms the representatives.

In the case when $X$ is a $G$-equivariant orthogonal spectrum, the
norm $N_{H}^{G}$ is isomorphic to the indexed smash power $X^{(G/H)}$,
where the isomorphism multiplies by the element of $G^{G/H}$ given by the
coset representatives (for the indexing on the universe $\bR^{\infty}$). 

The geometric fixed point functor does depend on the universe $U$, and
for $H<G$ a closed subgroup, we write $\Phi^{H}_{U}$ to keep track of
the universe when it is not complete.  The functor is constructed
in~\cite[V\S4]{MM} as the composite of the spacewise $H$ fixed point
functor and a left Kan extension.  Rather than write out the Kan
extension, we follow~\cite[B\S10]{HHR} and describe it in terms of the
resulting coequalizer.  This coequalizer is well-motivated in terms of the
the \term{tautological presentation} of a $G$-equivariant orthogonal
spectrum $X$, which is the coequalizer 
\[
\xymatrix@-1pc{%
\bigvee\limits_{V,W<U}\aJ(W,-)\sma \aJ(V,W) \sma X(V)
\ar@<-.5ex>[r]\ar@<.5ex>[r]
&\bigvee\limits_{V<U}\aJ(V,-) \sma X(V)\ar[r]&X.
}
\]
In the case of $H$ fixed points, we restrict the
$G$ action to $H$ and consider the tautological
presentation for that group
\[
\xymatrix@-1pc{%
\bigvee\limits_{V,W<U_{H}}\aJ(W,-)\sma \aJ(V,W) \sma X(V)
\ar@<-.5ex>[r]\ar@<.5ex>[r]
&\bigvee\limits_{V<U_{H}}\aJ(V,-) \sma X(V)\ar[r]&X.
}
\]
The idea is that $\Phi^{H}_{U}\aJ(V,-)\iso \aJ(V^{H},-)$ for every
$V<U_{H}$ and although $\Phi^{H}_{U}$ does not preserve all reflexive
coequalizers, it does preserve reflexive coequalizers that are
spacewise split on the underlying spaces indexed on $U_{H}$ (see~\cite[2.11]{ABGHLM}).
The geometric fixed point spectrum $\Phi^{H}_{U}X$ is then the
coequalizer
\[
\xymatrix@-1pc{%
\bigvee\limits_{V,W<U_{H}}\aJ(W^{H},-)\sma \aJ(V,W)^{H} \sma X(V)^{H}
\ar@<-.5ex>[r]\ar@<.5ex>[r]
&\bigvee\limits_{V<U_{H}}\aJ(V^{H},-) \sma X(V)^{H}
}
\]
where one map is induced by the $H$ fixed points of the action map
$\aJ(V,W)\sma X(V)\to X(W)$ and the other is induced by the map
\[
\aJ(W^{H},-)\sma \aJ(V,W)^{H}\to 
\aJ(W^{H},-)\sma \aJ(V^{H},W^{H})\to 
\aJ(V^{H},-)
\]
that takes an $H$ fixed element of $\aJ(V,W)$, restricts to the fixed
points in $V$ and $W$ to get an element of $\aJ(V^{H},W^{H})$, and
composes. This defines $\Phi^{H}_{U}X$ as a functor to non-equivariant
orthogonal spectra, but in the case of interest to us when $H<G$ is
codimension zero, we can also recover the action of the Weyl
group $W_{G}H$ via the evident (continuous) action of the normalizer
$N_{G}H$ on the wedge sums (which involves permutation of the indexing
$V,W<U_{H}$ and $V<U_{H}$).

If we want the equivariance for more general $H<G$, we restrict the
$G$ action to the normalizer $N=N_{G}H$ and use the coequalizer
\[
\xymatrix@-1pc{%
\bigvee\limits_{V,W<U_{N}}\aJ(W^{H},-)\sma \aJ(V,W)^{H} \sma X(V)^{H}
\ar@<-.5ex>[r]\ar@<.5ex>[r]
&\bigvee\limits_{V<U_{N}}\aJ(V^{H},-) \sma X(V)^{H}
}
\]
deriving from the tautological presentation for the group $N$.  This
coequalizer is isomorphic to the one above using the group $H$
by~\cite[3.8.12]{Kro-SOmega} (the statement is phrased in terms of Kan
extensions, but the argument is exactly a comparison of these
coequalizers).  We restate this using the notation of~\cite{MM}.

\begin{prop}[{\cite[3.8.12]{Kro-SOmega}}]\label{prop:Kro}
When $N\lhd G$ is a normal subgroup, the functor $\Phi^{N}_{U}$ from
$G$-equivariant orthogonal spectra (indexed on $U$) to non-equivariant
orthogonal spectra is naturally isomorphic to the composite of the
geometric fixed point functor $\Phi^{N}$ of~\cite[V.4.3]{MM} from
$G$-equivariant orthogonal spectra (indexed on $U$) to
$G/N$-equivariant orthogonal spectra (indexed on $U^{N}$) and the
forgetful functor to non-equivariant orthogonal spectra.
\end{prop}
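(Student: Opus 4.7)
The plan is to unwind both functors to explicit coequalizer presentations and verify that they coincide after forgetting equivariance. The functor $\Phi^{N}_{U}$ was already described in the preceding paragraphs as a coequalizer: restrict the $G$ action on $X$ to the normalizer (here just $G$ itself, since $N\lhd G$) to get the tautological presentation indexed over $N$-stable subspaces $V,W<U_{N}$, apply spacewise $N$ fixed points, and replace each $\aJ(V,-)$ by $\aJ(V^{N},-)$. The $G/N$ action on the result comes from the continuous $G$ action permuting the wedge summands by $V\mapsto gV$.

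First I would expand the MM functor $\Phi^{N}$ from~\cite[V.4.3]{MM}: applied to $X$ (indexed on $U$), it produces the $G/N$-equivariant $\aJ_{G/N}$-space
\[
R\mapsto\bigl(\Phi^{N}X\bigr)(R)=\coeq\!\Bigl(
\bigvee_{V,W<U}\aJ_{G/N}(W^{N},R)\sma \aJ(V,W)^{N}\sma X(V)^{N}
\rightrightarrows
\bigvee_{V<U}\aJ_{G/N}(V^{N},R)\sma X(V)^{N}\Bigr),
\]
obtained by applying the based $G/N$-space enriched left Kan extension along $\phi\colon \aJ_{G}^{N}\to \aJ_{G/N}$, $V\mapsto V^{N}$, to $\Fix^{N}X$.

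Forgetting $G/N$-equivariance gives a coequalizer over $G$-stable subspaces of $U$, while the coequalizer defining $\Phi^{N}_{U}$ is indexed over $N$-stable subspaces of $U_{N}$. The structure maps on each wedge summand are identical in the two descriptions (both are induced by the composite $\aJ(W^{N},-)\sma \aJ(V,W)^{N}\to\aJ(W^{N},-)\sma\aJ(V^{N},W^{N})\to\aJ(V^{N},-)$ and by the $N$ fixed points of the action map $\aJ(V,W)\sma X(V)\to X(W)$). So once the indexing sets are shown to be cofinal in a manner compatible with these maps, the two coequalizers coincide.

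The main obstacle is this cofinality/change-of-indexing step: any $N$-stable subspace $V'<U_{N}$ embeds $N$-equivariantly in a $G$-stable subspace $V<U$ (by averaging $V'$ over the compact quotient $G/N$, using that $U$ is a complete enough $G$-universe), and the structure maps $\aJ(V',V)^{N}$ provide canonical wedge summand comparisons in both directions. A transfinite cofinality argument, entirely analogous to the one carried out in~\cite[3.8.12]{Kro-SOmega}, then shows the inclusion of the $G$-indexed subdiagram into the $N_{}$-indexed diagram induces an isomorphism on coequalizers. This identification is manifestly natural in $X$ and respects the $G/N$ action on wedge summands, so composing with the forgetful functor to non-equivariant spectra gives the claimed natural isomorphism.
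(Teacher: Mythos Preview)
Your outline matches the paper's approach: the paper does not prove the proposition directly but cites \cite[3.8.12]{Kro-SOmega} for the comparison of the two coequalizers (indexed over $G$-stable versus $N$-stable subspaces), just as you do in your final paragraph.

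However, you gloss over the one point the paper actually works out. The cited result in \cite{Kro-SOmega} carries a representation-theoretic hypothesis, and the paper's Remark immediately following the proposition is devoted to verifying it: for $N\lhd G$ normal, every non-trivial irreducible $N$-representation $V$ embeds in an irreducible $G$-representation $W$ with $W^{N}=0$. This is proved via Peter--Weyl (to get some irreducible $G$-representation $W\supset V$) together with the observation that $W$ is spanned by the $G$-translates $gv$, each generating a sub-$N$-representation isomorphic to $\Ad_{g}^{*}V$, which is again non-trivial irreducible; hence $W^{N}=0$. Your ``averaging over $G/N$'' gestures at this, but the crucial conclusion $W^{N}=0$ is never stated or checked, and it is exactly what fails in the non-normal case (the paper's $A_{5}$ example shows the coequalizers genuinely differ there because every $A_{5}$-representation containing the sign representation of $H$ also contains trivial summands). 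Naive cofinality of indexing sets is not enough: what matters is that the passage $V\mapsto V^{N}$ behaves well under enlarging $N$-stable subspaces to $G$-stable ones, and that is precisely the content of the hypothesis. Also, your appeal to ``$U$ complete enough'' is misplaced---the statement is for an arbitrary $G$-universe $U$, and the argument uses Peter--Weyl at the level of abstract representations, not completeness of $U$.
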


\begin{rem}\label{rem:Kro}
The statement of~\cite[3.8.12]{Kro-SOmega} includes a hypothesis on
$N$ fixed points of $N$-representations versus $N$ fixed points of
$G$-representations that amounts to the following: If $N$ is a
closed normal subgroup of a compact Lie group $G$, and $V$ is a
non-trivial irreducible (finite dimensional) $N$-representation, then
there exits an irreducible (finite dimensional) $G$-representation $W$
that contains $V$ as a sub $N$-representation and satisfies
$W^{N}=0$.  This hypothesis always holds.
The Peter-Weyl theorem implies that $V$ is contained as a sub
$N$-representation of some finite dimensional $G$-representation $W$,
which without loss of generality can be taken to be irreducible.
Choosing any non-zero $v$ in $V$, the underlying vector space of $W$ is
then generated by elements of the form $gv$ for $g\in G$.  The
sub $N$-representation generated by $gv$ is isomorphic to
$\Ad_{g}^{*}V$, where $\Ad_{g}\colon N\to N$ is automorphism of $N$
given by conjugation by $g$.  Since by hypothesis, $V$ is a
non-trivial irreducible $N$-representation, so is $\Ad_{g}^{*}V$, and
this writes the underlying $N$-representation of $W$ as a sum of
non-trivial irreducible $N$-representations, showing that $W^{N}=0$.
\end{rem}

\begin{rem}
If $H$ is not normal in $G$, the coequalizer using the
tautological presentation for the group $G$ does not in general give
an isomorphic orthogonal spectrum.  To give a specific example,
consider the case when $G=A_{5}$ and
$H=\langle (12)(34)\rangle$, with $U$ a complete $G$-universe, and
$\bR(\sigma)<U_{H}$ the sign representation of $H$; in this example,
the coequalizers 
\begin{gather*}
\xymatrix@-1pc{%
\bigvee\limits_{V,W<U}\aJ(W^{H},0)\sma \aJ(V,W)^{H} \sma \aJ(\bR(\sigma),V)^{H}
\ar@<-.5ex>[r]\ar@<.5ex>[r]
&\bigvee\limits_{V<U}\aJ(V^{H},0) \sma \aJ(\bR(\sigma),V)^{H}
}
\\
\xymatrix@-1pc{%
\bigvee\limits_{V,W<U_{H}}\!\!\!\!
\aJ(W^{H},0)\sma \aJ(V,W)^{H} \sma \aJ(\bR(\sigma),V)^{H}
\ar@<-.5ex>[r]\ar@<.5ex>[r]
&\bigvee\limits_{V<U_{H}}\aJ(V^{H},0) \sma \aJ(\bR(\sigma),V)^{H}
}
\end{gather*}
are non-isomorphic: the bottom is a split coequalizer, giving
$\aJ(\bR(\sigma)^{H},0)=S^{0}$ as the resulting space, but in the top,
every $A_{5}$ representation $V$ containing the sign representation
also contains copies of the trivial representation, so we get $*$ for the
coequalizer.  (The $G$-equivariant orthogonal spectrum $X=G_{+}\sma_{H}
\aJ(\bR(\sigma),-)$ likewise gives different answers for the two
coequalizers.) The point is that when $H<G$ is normal, $V$ is a
non-trivial irreducible $H$-representation, and $W$ is an irreducible
$G$-representation containing $V$ as a sub $H$-representation, we have
$W^{H}=0$, but this can fail when $H$ is not normal.
\end{rem}

To see that $\Phi^{H}_{U}$ depends on the $U$ (and that
it does not preserve arbitrary reflexive coequalizers), consider the
case $X=\aJ(V,-)$ for various finite dimensional $G$-inner product
spaces $V$.  As already noted, we have
\[
\Phi^{H}_{U}\aJ(V,-)\iso \aJ(V^{H},-) \qquad \text{for }V<U_{H}
\]
but if $V$ is not $H$-isomorphic to a subspace of $U$, then
$\aI(V,W)^{H}$ is empty for all $W<U_{H}$ and so $X(W)^{H}=\aJ(V,W)^{H}=*$
for all $W<U_{H}$.  This gives  
\[
\Phi^{H}_{U}\aJ(V,-)\iso * \qquad \text{for }\aI(V,U)^{H}=\{\}.
\]

Now we restrict to the case when $H<G$ is zero codimensional so that
$G/H$ is a finite $G$-set.  Under the hypothesis that the $G$-universe $U$
is isomorphic to the $G$-universe $U\otimes G/H$, we obtain a diagonal
natural transformation
\[
\Phi^{H}_{U}\to \Phi^{G}_{U}((-)^{(G/H)})
\]
for $G$-equivariant orthogonal spectra
as a special case of the natural transformation 
\[
\Phi^{H}_{U_{H}}\to \Phi^{G}_{U}(N_{H}^{G}(-))
\]
defined as follows, using the outline given
in~\cite[B.209]{HHR}. First, as already observed, for a based 
$H$-space $A$, we have a norm construction $N_{H}^{G}$ defined
using $\sigma_{G/H}$ in the same manner as the spectral construction
in Definition~\ref{defn:norm}.  The diagonal map 
\[
A^{H}\overto{\iso} (N_{H}^{G}A)^{G}
\]
that sends an $H$ fixed point $a$ to the element of the smash product
which is $a$ in each factor is an isomorphism. Likewise, on a
finite dimensional $H$-inner product space $V$, we get  an isomorphism
of vector spaces  
\[
V^{H}\overto{\iso} (\Ind_{H}^{G}V)^{G},
\]
which with the appropriate scaling becomes an isometry.
Then for a based $G$-space $A$ and $V<U_{H}$, we get isomorphisms
\begin{align*}
\Phi^{G}_{U}(N_{H}^{G}(\aJ(V,-)\sma A))
&\iso \Phi^{G}_{U}(\aJ(\Ind_{H}^{G}V,-)\sma N_{H}^{G}A)\\
&\iso \aJ((\Ind_{H}^{G}V)^{G},-)\sma (N_{H}^{G}A)^{G}
\iso \aJ(V^{H},-)\sma A^{H}.
\end{align*}
Applying the composite functor $\Phi^{G}_{U}N_{H}^{G}$ to the
tautological presentation (for the group $H$) and applying the isomorphism above, we
precisely get the coequalizer diagram for $\Phi^{H}_{U_{H}}$, and that
induces the diagonal map from $\Phi^{H}_{U_{H}}\to
\Phi^{G}_{U}N_{H}^{G}$.  We state the observation here in the
following proposition for easy reference.

\begin{prop}\label{prop:diagexist}
Let $G$ be a compact Lie group and let $H<G$ be a codimension
zero subgroup.  Assume the $G$-universe $U$ is isomorphic to the
$G$-universe $U\otimes G/H$.  Then for $H$-equivariant orthogonal spectra indexed on $U_{H}$, there is a natural diagonal map of
non-equivariant orthogonal spectra
\[
\Phi^{H}_{U_{H}}\to \Phi^{G}_{U}N_{H}^{G}
\]
and for $G$-equivariant orthogonal spectra indexed on $U$, a natural diagonal map of
non-equivariant orthogonal spectra
\[
\Phi^{H}_{U}\to \Phi^{G}_{U}((-)^{(G/H)}).
\]
\end{prop}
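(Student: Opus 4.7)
The plan is to follow the outline given in the paragraph immediately preceding the proposition, filling in the key constructions and verifications. First I would establish the two elementary diagonal isomorphisms: for a based $H$-space $A$, the map $A^{H} \to (N_{H}^{G}A)^{G}$ that sends $a$ to the element of the indexed smash product which is $\gamma_{\zeta}a$ in the $\zeta$-coordinate; and for a finite-dimensional orthogonal $H$-representation $V$, the analogous map $V^{H}\to (\Ind_{H}^{G}V)^{G}$, which after scaling by $1/\sqrt{\#(G/H)}$ is an isometry. Both are straightforward consequences of writing out the definitions of $N_{H}^{G}$ and $\Ind_{H}^{G}$ in terms of coset representatives and observing that the coordinate projections assemble to the stated inverse isomorphisms.

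Next I would establish the key computation: for $V<U_{H}$ and $A$ a based $H$-space, there is a natural isomorphism
\[
\Phi^{G}_{U}(N_{H}^{G}(\aJ(V,-)\sma A))\iso \aJ(V^{H},-)\sma A^{H}.
\]
Here the norm isomorphism $N_{H}^{G}(\aJ(V,-)\sma A)\iso \aJ(\Ind_{H}^{G} V,-)\sma N_{H}^{G}A$ is the spacewise version of the formula recorded right after Definition~\ref{defn:norm}, and one then applies the elementary identification $\Phi^{G}_{U}(\aJ(W,-)\sma B) \iso \aJ(W^{G},-)\sma B^{G}$ for $W<U$, followed by the two diagonal isomorphisms above. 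This is the step where the hypothesis $U \iso U\otimes G/H$ is essential: it guarantees $\Ind_{H}^{G}V$ is $G$-isomorphic to a subrepresentation of $U$ whenever $V<U_{H}$, so that $\aJ(\Ind_{H}^{G}V,-)$ lies in the indexing category used to define $\Phi^{G}_{U}$ (and so the identification of its geometric fixed points in terms of $(\Ind_{H}^{G}V)^{G}$ is nontrivial rather than giving the basepoint).

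Then I would apply the composite functor $\Phi^{G}_{U}\comp N_{H}^{G}$ to the tautological presentation of an $H$-equivariant orthogonal spectrum $X$ indexed on $U_{H}$,
\[
\xymatrix@-1pc{
\bigvee\limits_{V,W<U_{H}}\aJ(W,-)\sma \aJ(V,W)\sma X(V)
\ar@<-.5ex>[r]\ar@<.5ex>[r]
&\bigvee\limits_{V<U_{H}}\aJ(V,-)\sma X(V)\ar[r]&X.
}
\]
Both $N_{H}^{G}$ and $\Phi^{G}_{U}$ preserve the wedge sums and the relevant reflexive coequalizers on the nose at the point-set level (for $\Phi^{G}_{U}$ via the discussion before Proposition~\ref{prop:Kro} using~\cite[2.11]{ABGHLM}; for $N_{H}^{G}$ because smash product commutes with colimits). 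Using the isomorphism from the previous paragraph to identify each wedge summand, I obtain precisely the coequalizer description of $\Phi^{H}_{U_{H}}X$ recalled between Proposition~\ref{prop:Kro} and Remark~\ref{rem:Kro}. Reading this isomorphism the other way around and composing with the map from $X$ to its tautological presentation produces the desired diagonal $\Phi^{H}_{U_{H}}X \to \Phi^{G}_{U}(N_{H}^{G}X)$; the $G$-spectrum version follows by taking $X$ to be the restriction of a $G$-spectrum and using the natural isomorphism $N_{H}^{G}X\iso X^{(G/H)}$ noted after Definition~\ref{defn:norm}.

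The main obstacle is a bookkeeping one: checking that the map induced on coequalizers really is compatible with the structure maps on both sides and independent of the choice of coset representatives $\gamma_{\zeta}$. The coset-representative ambiguity is handled exactly as in the discussion after Definition~\ref{defn:norm}: a different choice differs by an element of $H^{G/H}$, and the resulting natural isomorphism of norm functors fits into a commuting square with the diagonal. Verifying compatibility with the coequalizer structure maps reduces to chasing the two diagonal isomorphisms through the composition and direct-sum operations in $\aJ$, which is routine once the identification in the second paragraph is in place. Naturality in $X$ is then automatic from the naturality of the tautological presentation.
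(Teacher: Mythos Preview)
Your approach is essentially the paper's: the proposition is stated ``for easy reference'' immediately after the paragraph that constructs the map, and your outline tracks that paragraph closely. However, there is a genuine error in your third paragraph. The norm $N_{H}^{G}$ is a smash-power functor and does \emph{not} preserve wedge sums or reflexive coequalizers: $(A\vee B)^{(k)}$ is not $A^{(k)}\vee B^{(k)}$. Your justification ``smash product commutes with colimits'' conflates commuting in each variable separately with commuting for the functor $X\mapsto X^{(k)}$.

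The construction still works, but not by pushing the whole coequalizer diagram through $\Phi^{G}_{U}N_{H}^{G}$. Rather, for each $V<U_{H}$ the summand inclusion followed by the presentation map gives $\aJ(V,-)\sma X(V)\to X$; applying $\Phi^{G}_{U}N_{H}^{G}$ and your identification from the second paragraph produces a map $\aJ(V^{H},-)\sma X(V)^{H}\to \Phi^{G}_{U}N_{H}^{G}X$. These assemble over the wedge to a cocone under the parallel pair that defines $\Phi^{H}_{U_{H}}X$, and the universal property of \emph{that} coequalizer yields the diagonal. This is what the paper means by ``that induces the diagonal map'': the map is induced by the universal property of the domain coequalizer, not by $\Phi^{G}_{U}N_{H}^{G}$ preserving it. Your phrase ``composing with the map from $X$ to its tautological presentation'' also has the arrow backwards.

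A minor slip: for the space-level norm of an $H$-space $A$, the diagonal $A^{H}\to (N_{H}^{G}A)^{G}$ sends $a$ to the element that is $a$ in every coordinate, not $\gamma_{\zeta}a$; the latter does not make sense since $A$ carries only an $H$-action. The $\gamma_{\zeta}$-twist appears only in the $G$-equivariant indexed smash power version.
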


As in the case covered in~\cite[B.209]{HHR}, for nice $X$, the
diagonal map is an isomorphism.  It is asserted and proved there that
(in the setting of a finite group and the complete universe), whenever
$X$ is cofibrant for the positive complete model structure, the
diagonal is an isomorphism; moreover, \cite[2.23]{HHR} remarks that
positivity is not needed, and a careful check of the proof shows that
it applies to objects that are cofibrant in the complete model
structure, that is, retracts of cell complexes built out of cells of
the form
\[
G_{+}\sma_{M}\aJ(V,-) \sma (D^{n},\partial D^{n})_{+}
\]
where $M<G$ and $V$ is an $M$-inner product space.  (Positivity would
require $V$ to have a non-zero $M$ fixed vector.)  The same result
holds with essentially the same proof for the diagonal map in any
of the universes above.

\begin{thm}\label{thm:B209}
Let $G$ be a compact Lie group, $H<G$ a codimension zero
subgroup, and $U$ a $G$-universe, not necessarily complete, with
$U\otimes G/H$ isomorphic to $U$.  If $X$ is an $H$-equivariant orthogonal spectrum that is cofibrant in the complete model structure
on the complete universe, then the $U$ universe diagonal
\[
\Phi^{H}_{U_{H}}X\to \Phi^{G}_{U}N_{H}^{G}X
\]
is an isomorphism.
\end{thm}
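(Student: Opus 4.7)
The plan is to adapt the proof of~\cite[B.209]{HHR} to accommodate the possibly incomplete universe $U$. Both sides of the diagonal are continuous functors from $H$-equivariant orthogonal spectra to non-equivariant orthogonal spectra, and the diagonal is a natural transformation between them. The strategy is to show the diagonal is an isomorphism for $X$ built as a retract of a cell complex in the complete model structure, whose cells have the form $H_+\sma_M\aJ(V,-)\sma(D^n,\partial D^n)_+$ with $M<H$ closed and $V$ an orthogonal $M$-representation.

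First, I would verify that both functors $\Phi^H_{U_H}(-)$ and $\Phi^G_U\circ N_H^G(-)$ preserve the operations used in building relative cell complexes: retracts, smash products with based CW pairs, pushouts along Hurewicz cofibrations of non-degenerately based objects, and sequential colimits along such cofibrations. For the geometric fixed point functors these properties are standard (see \cite[III.3.5, V.4.5]{MM}); for the norm $N_H^G$ they follow from its realization as a finite smash product indexed on $G/H$ (Definition~\ref{defn:norm}). Since the diagonal is natural and compatible with all these operations, the class of $X$ for which the diagonal is an isomorphism is closed under cell attachments and retracts, so it suffices to verify the result for a single cell $X = H_+\sma_M\aJ(V,-)$.

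By the reduction argument familiar from~\cite[B.209]{HHR}, for $M<H$ proper, both sides vanish: $\Phi^H_{U_H}$ of a spectrum induced from a proper closed subgroup of $H$ is trivial (since $(H/M)^H=\emptyset$), and $\Phi^G_U\circ N_H^G$ applied to such a spectrum also vanishes by an analogous analysis of the $G$-equivariant structure produced by the twisted $\sigma_{G/H}$-action on the iterated smash product. This reduces to the case $X = \aJ(V,-)$ with $V$ an orthogonal $H$-representation. For this case, by the construction of the diagonal in Proposition~\ref{prop:diagexist}, the map is the canonical identification
\[
\Phi^H_{U_H}\aJ(V,-) \iso \aJ(V^H,-) \overto{\iso} \aJ((\Ind_H^G V)^G,-) \iso \Phi^G_U N_H^G\aJ(V,-)
\]
induced by the isometry $V^H \iso (\Ind_H^G V)^G$ (after appropriate scaling).

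The main obstacle will be matching the vanishing conditions on the two sides when $V$ is fixed. Specifically, $\Phi^H_{U_H}\aJ(V,-)\iso *$ precisely when $V$ does not embed $H$-equivariantly in $U_H$, while $\Phi^G_U\aJ(\Ind_H^G V,-)\iso *$ precisely when $\Ind_H^G V$ does not embed $G$-equivariantly in $U$. The hypothesis $U\iso U\otimes G/H$, combined with the adjunction isomorphism $\aI(\Ind_H^G V,U)^G\iso \aI(V,U_H)^H$, shows these two conditions are equivalent, so the vanishing cases on both sides of the diagonal match and the diagonal is an isomorphism in every case. This is exactly the point in the argument at which the incomplete-universe hypothesis $U\iso U\otimes G/H$ is needed; without it, one can construct $V$ whose $H$-geometric fixed points are non-trivial but whose norm has trivial $G$-geometric fixed points (or vice versa), and the statement fails.
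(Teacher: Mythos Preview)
Your proposal is correct and follows essentially the same approach as the paper: reduce to a single cell $H_{+}\sma_{M}\aJ(V,-)$ via the standard cell-complex argument, observe both sides vanish when $M$ is a proper subgroup, and for $M=H$ use the identification $V^{H}\iso(\Ind_{H}^{G}V)^{G}$ together with the hypothesis $U\iso U\otimes G/H$ to ensure the embeddability conditions on $V$ in $U_{H}$ and on $\Ind_{H}^{G}V$ in $U$ match. One small caution: the ``adjunction isomorphism'' $\aI(\Ind_{H}^{G}V,U)^{G}\iso\aI(V,U_{H})^{H}$ is not literally an isomorphism of spaces (restriction gives only an injection in general), but what you actually need---that one side is nonempty if and only if the other is, given $U\iso U\otimes G/H$---is exactly right and is where the hypothesis enters.
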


\begin{proof}
The outline of the argument for the finite group version of the
theorem in~\cite[B.209]{HHR} goes through without difficulty to reduce
to the case when $X$ is a single cell of the form $H_{+}\sma_{M}
\aJ(V,-)$.  In this case, when $M$ is a proper subgroup of $H$, both
sides are clearly the trivial orthogonal spectrum $*$, and the result
holds.  In the case when $M=H$, 
and $V$ is not isomorphic to a subspace of the universe $U_{H}$,
$\Ind_{H}^{G}V$ is also not isomorphic to a subspace of the universe
$U_{H}\iso (U\otimes G/H)_{H}$, and again both sides are the trivial
orthogonal spectrum $*$.  Finally, in the last case $M=H$, $V<U_{H}$,
and we have that the diagonal map
\[
\aJ(V^{H},-)\iso \Phi^{H}_{U_{H}}\aJ(V,-)\to
\Phi^{G}_{U}N_{H}^{G}\aJ(V,-)\iso \aJ((\Ind_{H}^{G}V)^{G},-)
\]
is an isomorphism by construction.
\end{proof}

Working with $G$-equivariant orthogonal spectra, we get the following
immediate corollary.

\begin{cor}\label{cor:B209}
Let $G$ be a compact Lie group, $H<G$ a codimension zero
subgroup, and $U$ a $G$-universe, not necessarily complete, with
$U\otimes G/H$ isomorphic to $U$.  If $X$ is an $G$-equivariant orthogonal spectrum that is cofibrant in the complete model structure
on the complete universe, then the $U$ universe diagonal
\[
\Phi^{H}_{U}X\to \Phi^{G}_{U}(X^{(G/H)})
\]
is an isomorphism.
\end{cor}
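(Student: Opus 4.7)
The plan is to deduce the corollary directly from Theorem~\ref{thm:B209} applied to the $H$-equivariant orthogonal spectrum obtained from $X$ by restriction of equivariance, using the explicit comparison between indexed smash powers and the HHR norm described just before Definition~\ref{defn:norm}.

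First, I would verify that restriction preserves cofibrancy in the complete model structures: if $X$ is cofibrant in the complete model structure on $G$-spectra indexed on the complete universe, then its restriction $i_{H}^{*}X$ is cofibrant in the complete model structure on $H$-spectra indexed on the complete universe. It suffices to check on a generating cell $G_{+}\sma_{M}\aJ(V,-)\sma (D^{n},\partial D^{n})_{+}$ for $M<G$ and $V$ an orthogonal $M$-representation; restricting to $H$ and decomposing $G/M$ into $H$-orbits via a double coset decomposition, one sees that the restricted cell is a wedge of cells of the form $H_{+}\sma_{H\cap gMg^{-1}}\aJ({}^{g}V,-)\sma (D^{n},\partial D^{n})_{+}$, which are cells in the complete model structure for $H$ on the complete universe.

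Next, as noted in the discussion following Definition~\ref{defn:norm}, when $X$ is a $G$-equivariant orthogonal spectrum, multiplication by a chosen set of coset representatives $\{\gamma_{\zeta}\}_{\zeta\in G/H}$ provides a natural isomorphism of $G$-equivariant orthogonal spectra
\[
N_{H}^{G}(i_{H}^{*}X)\iso X^{(G/H)}.
\]
Under this isomorphism, the diagonal map of Proposition~\ref{prop:diagexist}
\[
\Phi^{H}_{U_{H}}(i_{H}^{*}X)\to \Phi^{G}_{U}(N_{H}^{G}(i_{H}^{*}X))
\]
corresponds to the diagonal map $\Phi^{H}_{U}X\to \Phi^{G}_{U}(X^{(G/H)})$ for the $G$-equivariant spectrum $X$; one just has to check that the construction of the diagonal through the tautological presentation (as outlined above Proposition~\ref{prop:diagexist}) commutes with this identification, which is straightforward since both sides arise from the same natural transformation applied at the cell level $\aJ(V,-)\sma A$ and matched via the same coset convention used in Definition~\ref{defn:norm}. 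Finally, since $i_{H}^{*}X$ is cofibrant in the complete model structure on the complete $H$-universe by the first step, Theorem~\ref{thm:B209} applies to show the displayed map is an isomorphism, completing the proof.

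The only mild obstacle is the bookkeeping in the second paragraph: making sure the coset-representative choices used to define $N_{H}^{G}$ and the diagonal on $H$-spectra are compatible with the point-set identification $N_{H}^{G}(i_{H}^{*}X)\iso X^{(G/H)}$ for $G$-spectra. This is a routine check because both constructions are built out of the same smash power $X(\bR^{n})^{(k)}$ with different (but canonically identified) $G$-actions.
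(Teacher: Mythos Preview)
Your proposal is correct and follows essentially the same approach as the paper: the paper's one-sentence proof simply observes that the underlying $H$-equivariant orthogonal spectrum of $X$ is cofibrant in the complete model structure on the complete universe, leaving implicit exactly the application of Theorem~\ref{thm:B209} and the identification $N_{H}^{G}(i_{H}^{*}X)\iso X^{(G/H)}$ that you spell out. Your added detail on the double-coset verification of cofibrancy under restriction and the compatibility of the diagonals is welcome but not strictly needed beyond what the paper already takes for granted.
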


\begin{proof}
Under the hypothesis, the underlying $H$-equivariant orthogonal spectrum is cofibrant in the complete model structure on the complete
universe. 
\end{proof}

We state one more result on the diagonal in the case when $H$ is the
trivial group, a version of
Theorem~\ref{thm:B209} for more general cofibrant objects.  A
non-equivariant orthogonal spectrum is \term{convenient $O$-model 
cofibrant} if it is a retract of an orthogonal
spectrum built as cell complex using cells of the form
\[
A\sma (D^{n},\partial D^{n})_{+}
\]
where 
\[
A=(F_{\bR^{n}}S^{0})/M
\]
for $M$ a closed subgroup of $O(n)$.  (These are the
cofibrant objects in a model structure, the further details of which
are not relevant.)  As a special case we note that the cofibrant
objects in the positive convenient 
$\Sigma$-model structure of Definition~\ref{defn:Sigmodel} are
convenient $O$-model cofibrant as are the underlying orthogonal
spectra of cofibrant objects in the standard model structure on
commutative ring orthogonal spectra. 

\begin{thm}\label{thm:convB209}
Let $G$ be a finite group and $U$ a complete $G$-universe.  If $X$ is
a non-equivariant orthogonal spectrum that is convenient $O$-model
cofibrant, then the diagonal 
\[
X\to \Phi^{G}_{U}N_{1}^{G}X
\]
is an isomorphism.
\end{thm}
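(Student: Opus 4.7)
The plan is to adapt the cellular argument of~\cite[B.209]{HHR}, which is also the proof strategy for Theorem~\ref{thm:B209}, taking advantage of the finiteness of $G$ and the completeness of $U$ to accommodate the more general cells admitted by convenient $O$-model cofibrancy. Since both the identity and $\Phi^{G}_{U}\circ N_{1}^{G}$ preserve retracts, it suffices to verify the diagonal on cell complexes. An induction on the cell filtration, using the Hill-Hopkins-Ravenel filtration of $N_{1}^{G}$ applied to a pushout along a Hurewicz cofibration (whose subquotients are governed by symmetric powers of the attached cell), then reduces the problem to showing the diagonal is an isomorphism on a single cell $A = (F_{\bR^{n}}S^{0})/M$ with $M<O(n)$ closed.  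The inductive step uses Theorem~\ref{thm:sm} to commute $\Phi^{G}_{U}$ past the smash products with generalized orbit desuspension spectra that appear in the filtration.

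For a single cell $A = (F_{\bR^{n}}S^{0})/M$, I would first compute
\[
N_{1}^{G}A \iso (F_{\bR^{n}}S^{0})^{(G)}/M^{G} \iso F_{\Ind_{1}^{G}\bR^{n}}S^{0}/M^{G},
\]
where $M^{G} \subset O(n)^{G} \subset O(\Ind_{1}^{G}\bR^{n})$ acts coordinate-wise. The $G$-action on $F_{\Ind_{1}^{G}\bR^{n}}S^{0}$ comes from the regular-representation structure on $\Ind_{1}^{G}\bR^{n}\iso \bR\langle G\rangle\otimes\bR^{n}$; while this action does not commute with the $M^{G}$-action (together they generate $M\wr G$), $M^{G}$ is $G$-invariant as a subgroup of $M\wr G$, so the quotient acquires a well-defined residual $G$-action.

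To apply $\Phi^{G}_{U}$, I would analyze $(\aJ(\Ind_{1}^{G}\bR^{n},W)/M^{G})^{G}$ directly along the lines of Lemma~\ref{lem:bundlegeofix}: the $G$-fixed points decompose as a disjoint union, indexed by homomorphisms $\sigma\colon G\to M^{G}$, of $\sigma$-twisted $G$-fixed points of $\aJ(\Ind_{1}^{G}\bR^{n},W)$, modulo the $M^{G}$-action (which both permutes summands by conjugation of $\sigma$ and acts internally on each). The crucial computation is that the $M^{G}$-orbit of any nontrivial $\sigma$-twisted fixed subspace of $\Ind_{1}^{G}\bR^{n}$ coincides with the $M^{G}$-orbit of the diagonal fixed subspace $(\Ind_{1}^{G}\bR^{n})^{G}\iso \bR^{n}$, whose stabilizer in $M^{G}$ is the diagonal $M$ (the centralizer of $G$ in $M^{G}$). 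After the left Kan extension along $\phi$, this collapses to a single summand, yielding $\Phi^{G}_{U}(F_{\Ind_{1}^{G}\bR^{n}}S^{0}/M^{G})\iso F_{\bR^{n}}S^{0}/M = A$. Naturality from the case $M = \{e\}$ (handled by Theorem~\ref{thm:B209} applied to $F_{\bR^{n}}S^{0}$) identifies the diagonal map with this isomorphism.

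The main obstacle will be the semidirect-product structure of the combined $G$- and $M^{G}$-actions on $\Ind_{1}^{G}\bR^{n}$: the paper's generalized-orbit-desuspension machinery is developed for commuting $(\Gamma,Q)$-actions, whereas here the two actions generate $M\wr G$, so Theorem~\ref{thm:bundlegeofix} does not apply directly. Overcoming this requires either a small extension of Lemma~\ref{lem:bundlegeofix} and Theorem~\ref{thm:bundlegeofix} to the semidirect setting, or, more cleanly, a reformulation of $F_{\Ind_{1}^{G}\bR^{n}}S^{0}/M^{G}$ as $J\eta$ for a genuine $(G,Q)$-vector bundle $\eta$ with commuting actions. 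A related technical point is rigorously verifying that every nontrivial twisted-fixed sector is $M^{G}$-conjugate to the diagonal sector, ensuring the collapse to a single summand described above.
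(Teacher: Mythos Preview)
Your cellular reduction to a single cell $A=(F_{\bR^{n}}S^{0})/M$ matches the paper's, but your treatment of the single cell is considerably more elaborate than necessary and, as you yourself flag, runs head-on into the fact that the $G$ and $M^{G}$ actions do not commute, so none of the $(\Gamma,Q)$-bundle machinery (Lemma~\ref{lem:bundlegeofix}, Theorem~\ref{thm:bundlegeofix}) applies directly.

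The paper sidesteps this entirely with an elementary direct argument.  Once one knows $N_{1}^{G}A\iso (F_{\Ind_{1}^{G}\bR^{n}}S^{0})/N_{1}^{G}M$, the diagonal being an isomorphism reduces (via $\Fix^{G}$ and left Kan extension, which commutes with the $M$-quotient) to showing that for every $W<U$ the map
\[
\aJ(\Ind_{1}^{G}\bR^{n},W)^{G}/M \to (\aJ(\Ind_{1}^{G}\bR^{n},W)/N_{1}^{G}M)^{G}
\]
is an isomorphism, where $M$ sits diagonally in $N_{1}^{G}M$.  Both sides are compact Hausdorff and the map is visibly injective, so only surjectivity is at issue.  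Given a non-base point $(\phi,w)$ whose image is $G$-fixed, one has $g\circ\phi\circ g^{-1}=\phi\circ\sigma(g)^{-1}$ for some $\sigma(g)\in N_{1}^{G}M$.  Writing $\phi_{1}$ for the restriction of $\phi$ to the identity-indexed copy of $\bR^{n}$, define $\psi$ on the $g$th copy by $g\circ\phi_{1}$.  One checks from the displayed relation that $\psi$ is an isometry with the same image as $\phi$, is manifestly $G$-fixed, and lies in the same $N_{1}^{G}M$-orbit as $\phi$.  That is the whole argument.

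Your proposed route via a semidirect-product extension of Lemma~\ref{lem:bundlegeofix} could be made to work (and the sdde-graph-homomorphism analysis in Section~\ref{sec:fixsmflat} is in exactly this spirit), but it buys nothing here: the explicit $G$-fixed representative $\psi$ already accomplishes in one line what your ``collapse to the diagonal sector'' would establish only after building and verifying the extended decomposition.
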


\begin{proof}
As in the proof of Theorem~\ref{thm:B209}, it suffices to treat the
case when $X=(F_{\bR^{n}}S^{0})/M$ for $M$ a closed subgroup
of $O(n)$.  This case reduces
to showing that for every orthogonal $G$-representation $W$ in $U$,
the map
\[
\aJ(\Ind_{1}^{G}\bR^{n},W)^{G}/M\to 
(\aJ(\Ind_{1}^{G}\bR^{n},W)/N_{1}^{G}M)^{G}
\]
is an isomorphism, where $N_{1}^{G}M$ is the group object in
$G$-spaces given by the cartesian product of copies of $M$ indexed by
$G$.  Since both sides are compact Hausdorff spaces and the map is
obviously injective, it suffices to show that it is surjective.  
Let $(\phi,w)\in \aJ(\Ind_{1}^{G}\bR^{n},W)$ be a non-base point whose image in
$\aJ(\Ind_{1}^{G}\bR^{n},W)/N_{1}^{G}M$ is $G$ fixed.  Here $\phi$
denotes an isometry $\Ind_{1}^{G}\bR^{n}\to W$ and $w$ a point in its
complement.  Since the image of $(\phi,w)$ is $G$ fixed, $w$ must be
$G$ fixed and for each $g\in G$, there exists an element $\sigma(g)$
in $N_{1}^{G}M$ such that 
\begin{equation}\label{eq:act}
g\circ \phi\circ g^{-1}=\phi\circ
\sigma(g)^{-1}.
\end{equation}
In the identification of $\Ind_{1}^{G}\bR^{n}$ and
$N_{1}^{G}M$ as cartesian products indexed on $G$, let $\phi_{1}$
denote the restriction of $\phi$ to the coordinate indexed by the
identity and let $\sigma_{1}(g)$ denote the coordinate of $\sigma(g)$
indexed by the identity.  Define $\psi \colon \Ind_{1}^{G}\bR^{n}\to
W$ by sending the coordinate indexed by $g$ to $g\circ \phi_{1}$.  We
see from~\eqref{eq:act}, the image of $g\circ \phi_{1}$ is the image
of $\phi$ on $g$th coordinate of $\Ind_{1}^{G}\bR^{n}$, and so $\psi$
is a linear isometry and $(\psi,w)$ specifies an element of
$\aJ(\Ind_{1}^{G}\bR^{n},W)$, which is in the same $N_{1}^{G}M$ coset as
$(\phi,w)$.  But by inspection, $(\psi,w)\in
\aJ(\Ind_{1}^{G}\bR^{n},W)^{G}$. 
\end{proof}


\bibliographystyle{plain}
\bibliography{bluman}

\end{document}